\DeclareSymbolFont{timesoperators}{T1}{ptm}{m}{n}
\renewcommand{\operator@font}{\mathgroup\symtimesoperators}
\colorlet{symbolsgrey}{blue!30!black!50}
\colorlet{testcolor}{green!60!black}
\definecolor{purple}{rgb}{0.55,0.05,0.8}
\definecolor{symbols}{rgb}{0.55,0.05,0.8}
\newtheorem{metatheorem}[lemma]{Metatheorem}
\newtheorem{assumption}[lemma]{Assumption}
\newtheorem{example}[lemma]{Example}
\let\oldskull\skull
\def\skull{\mathord{\oldskull}}
\newcommand{\hooklongrightarrow}{\lhook\joinrel\longrightarrow}
\def\wotimes{\mathbin{\widehat{{\otimes}}}}
\def\snabla{\slashed{\nabla}}
\DeclareMathAlphabet{\mathbbm}{U}{bbm}{m}{n}
\DeclareFontFamily{U}{BOONDOX-calo}{\skewchar\font=45 }
\DeclareFontShape{U}{BOONDOX-calo}{m}{n}{
  <-> s*[1.05] BOONDOX-r-calo}{}
\DeclareFontShape{U}{BOONDOX-calo}{b}{n}{
  <-> s*[1.05] BOONDOX-b-calo}{}
\DeclareMathAlphabet{\mcb}{U}{BOONDOX-calo}{m}{n}
\SetMathAlphabet{\mcb}{bold}{U}{BOONDOX-calo}{b}{n}
\setlist{noitemsep,topsep=4pt}
\newcommand*{\bigcdot}{}
\DeclareRobustCommand*{\bigcdot}{%
  \mathbin{\mathpalette\bigcdot@{}}%
}
\newcommand*{\bigcdot@scalefactor}{.5}
\newcommand*{\bigcdot@widthfactor}{1.15}
\newcommand*{\bigcdot@}[2]{%
  \sbox0{$#1\vcenter{}$}
  \sbox2{$#1\cdot\m@th$}%
  \hbox to \bigcdot@widthfactor\wd2{%
    \hfil
    \raise\ht0\hbox{%
      \scalebox{\bigcdot@scalefactor}{%
        \lower\ht0\hbox{$#1\bullet\m@th$}%
      }%
    }%
    \hfil
  }%
}
\def\symbol#1{\textcolor{symbolsgrey}{#1}}
\def\1{\mathbf{\symbol{1}}}
\def\X{\symbol{X}}
\newcommand{\cev}[1]{\reflectbox{\ensuremath{\vec{\reflectbox{\ensuremath{#1}}}}}}
\def\b0{\mathbf{0}}
\def\bone{\mathbf{1}}
\def\bR{\boldsymbol{R}}
\def\bPsi{\boldsymbol{\Psi}}
\def\PPi{\boldsymbol{\Pi}}
\def\can{\textnormal{\scriptsize can}}
\def\BPHZ{\textnormal{\tiny \textsc{bphz}}}
\def\SDE{\textnormal{\tiny \textsc{sde}}}
\def\spacetime{\underline{\Lambda}}
\def\Gr{\mathrm{Gr}}
\newcommand\bilin[1]{B \big(#1 \big)}
\def\balpha{\boldsymbol{\alpha}}
\def\balphas{\boldsymbol{\alpha}^\dagger}
\def\bomega{\boldsymbol{\omega}}
\renewcommand{\digamma}{\text{\foreignlanguage{greek}{\ddigamma}}}
\let\f\frac
\colorlet{darkblue}{blue!90!black}
\colorlet{darkgreen}{green!50!black}
\def\martin#1{}
\def\ajay#1{}
\def\martinp#1{}
\def\s{\mathfrak{s}}
\def\KK{\mathfrak{K}}
\def\reg{\mathrm{reg}}
\def\K{\mathfrak{K}}
\def\${|\!|\!|}
\def\Wick#1{\mathord{\kern0.1em{:}#1{:}\kern0.1em}}
\def\d{\mbox{d}}
\def\-{\mbox{-}}
\def\bXi{\boldsymbol{\Xi}}
\def\bxi{\boldsymbol{\xi}}
\def\reminit{ \CC^{\textnormal{\tiny{init}}}}
\def\CA{{\cal A}}
\def\CB{{\cal B}}
\def\CC{{\cal C}}
\def\CD{{\cal D}}
\def\CF{{\cal F}}
\def\CG{{\cal G}}
\def\CH{{\cal H}}
\def\CI{{\cal I}}
\def\CJ{{\cal J}}
\def\CK{{\cal K}}
\def\CL{{\cal L}}
\def\CM{{\cal M}}
\def\CN{{\cal N}}
\def\CP{{\cal P}}
\def\CQ{{\cal Q}}
\def\CR{{\cal R}}
\def\CS{{\cal S}}
\def\CT{{\cal T}}
\def\CW{{\cal W}}
\def\mfA{{\mathfrak A}}
\def\mfB{{\mathfrak B}}
\def\mfH{{\mathfrak H}}
\def\mfI{{\mathfrak I}}
\def\mfK{{\mathfrak K}}
\def\mfL{{\mathfrak L}}
\def\mfO{{\mathfrak O}}
\def\mfP{{\mathfrak P}}
\def\mfQ{{\mathfrak Q}}
\def\mfR{{\mathfrak R}}
\def\mfS{{\mathfrak S}}
\def\mfT{{\mathfrak T}}
\def\mfY{{\mathfrak Y}}
\def\mfZ{{\mathfrak Z}}
\def\mfa{{\mathfrak a}}
\def\mfb{{\mathfrak b}}
\def\mfd{{\mathfrak d}}
\def\mfe{{\mathfrak e}}
\def\mff{{\mathfrak f}}
\def\mfh{{\mathfrak h}}
\def\mfk{{\mathfrak k}}
\def\mfl{{\mathfrak l}}
\def\mfm{{\mathfrak m}}
\def\mfn{{\mathfrak n}}
\def\mfo{{\mathfrak o}}
\def\mfp{{\mathfrak p}}
\def\mfq{{\mathfrak q}}
\def\mfr{{\mathfrak r}}
\def\mfs{{\mathfrak s}}
\def\mft{{\mathfrak t}}
\def\mfy{{\mathfrak y}}
\def\cA{{\mathscr A}}
\def\cB{{\mathscr B}}
\def\cC{{\mathscr C}}
\def\cD{{\mathscr D}}
\def\cG{{\mathscr G}}
\def\cJ{{\mathscr J}}
\def\cL{{\mathscr L}}
\def\cR{{\mathscr R}}
\def\cS{{\mathscr S}}
\def\cT{{\mathscr T}}
\def\cU{{\mathscr U}}
\DeclareMathOperator{\Ran}{Ran}
\DeclareMathOperator{\sgn}{sgn}
\DeclareMathOperator{\Cl}{\textit{C}\ell}
\DeclareMathOperator{\GL}{GL}
\DeclareMathOperator{\spn}{spn}
\DeclareMathOperator{\cross}{cr}
\DeclareMathOperator{\sep}{sp}
\DeclareMathOperator{\crb}{\overline{cr}}
\DeclareMathOperator*{\bigwotimes}{\widehat{\bigotimes}}
\DeclareMathOperator*{\bigwotimesp}{{}_{\phantom{\pi}}\widehat{\bigotimes}_\pi}
\DeclareMathOperator*{\bigwotimesa}{{}_{\phantom{\alpha}}\widehat{\bigotimes}_\alpha}
\def\eqvdef{\ratio\Longleftrightarrow}
\numberwithin{equation}{section}
\def\dash{\leavevmode\unskip\kern0.18em--\penalty\exhyphenpenalty\kern0.18em}
\def\slash{\leavevmode\unskip\kern0.15em/\penalty\exhyphenpenalty\kern0.15em}
\def\lor{\,\mathrm{or}\,}
\def\land{\,\mathrm{and}\,}
\let\f\frac
\def\balpha{\boldsymbol{\alpha}}
\def\balphas{\boldsymbol{\alpha}^\dagger}
\def\bomega{\boldsymbol{\omega}}
\def\bpi{\boldsymbol{\pi}}
\def\bsigma{\boldsymbol{\sigma}}
\def\bpsi{\boldsymbol{\psi}}
\DeclareFontFamily{U}{matha}{\hyphenchar\font45}
\DeclareFontShape{U}{matha}{m}{n}{
<-6> matha5 <6-7> matha6 <7-8> matha7
<8-9> matha8 <9-10> matha9
<10-12> matha10 <12-> matha12
}{}
\DeclareSymbolFont{matha}{U}{matha}{m}{n}
\DeclareFontFamily{U}{mathx}{\hyphenchar\font45}
\DeclareFontShape{U}{mathx}{m}{n}{
<-6> mathx5 <6-7> mathx6 <7-8> mathx7
<8-9> mathx8 <9-10> mathx9
<10-12> mathx10 <12-> mathx12
}{}
\DeclareSymbolFont{mathx}{U}{mathx}{m}{n}
\DeclareMathDelimiter{\vvvert} {0}{matha}{"7E}{mathx}{"17}%
\newcommand{\oset}[3][0ex]{%
  \mathrel{\mathop{#3}\limits^{
    \vbox to#1{\kern-2\ex@
    \hbox{$\scriptstyle#2$}\vss}}}}
\DeclareRobustCommand{\TitleEquation}[2]{\texorpdfstring{\StrLeft{\f@series}{1}[\@firstchar]$\if%
b\@firstchar\boldsymbol{#1}\else#1\fi$}{#2}}
\DeclareRobustCommand{\cev}[1]{%
  \mathpalette\do@cev{#1}%
}
\newcommand{\do@cev}[2]{%
  \fix@cev{#1}{+}%
  \reflectbox{$\m@th#1\vec{\reflectbox{$\fix@cev{#1}{-}\m@th#1#2\fix@cev{#1}{+}$}}$}%
  \fix@cev{#1}{-}%
}
\newcommand{\fix@cev}[2]{%
  \ifx#1\displaystyle
    \mkern#23mu
  \else
    \ifx#1\textstyle
      \mkern#23mu
    \else
      \ifx#1\scriptstyle
        \mkern#22mu
      \else
        \mkern#22mu
      \fi
    \fi
  \fi
}
\begin{document}

\title{Noncommutative Regularity Structures}

\author{Ajay Chandra\inst1\orcidlink{0000-0003-3690-1890}, Martin Hairer\inst{2,3}\orcidlink{0000-0002-2141-6561}, and Martin Peev\inst2\orcidlink{0000-0001-6191-8067}}

\institute{Purdue University, West Lafayette, USA \and Imperial College London, UK
\and EPFL, Lausanne, Switzerland\\[.5em]
\email{ajay.chandra@gmail.com, \\ \{m.hairer,m.peev21\}@imperial.ac.uk}}

\maketitle

\begin{abstract}
We extend the theory of regularity structures \cite{Hai14} to allow processes belonging to locally $m$-convex topological algebras.
This extension includes processes in the locally $C^{*}$-algebras of \cite{CHP23} used to localise singular stochastic partial differential equations involving fermions, as well as processes in Banach algebras such as infinite-dimensional semicircular \slash circular Brownian motion,
and more generally the $q$-Gaussians of \cite{BS91, BKS97,Boz99}.  

A new challenge we encounter in the $q$-Gaussian setting with $q \in (-1,1)$ are noncommutative renormalisation estimates where we must estimate operators in homogeneous $q$-Gaussian chaoses with arbitrary operator insertions.
We introduce a new Banach algebra norm on $q$-Gaussian operators that allows us to control such insertions; we believe this construction could be of independent interest. 
\end{abstract}

\setcounter{tocdepth}{2}
\tableofcontents

\section{Introduction}\label{sec:intro}
The last decade has seen rapid development in the theory of
nonlinear singular stochastic partial differential equations (SPDEs).
These equations are paradigmatic models for evolving random systems with infinitely many degrees of freedom,
very important examples being the large-scale behaviour of
models from statistical physics \cite{MR1462228,MourratWeber}
and Langevin stochastic quantisations \cite{ParisiWu} of
Euclidean \textit{bosonic} quantum field theories (QFTs) \cite{Gub21,HS21,CCHS2d}.

A fundamental obstacle to obtaining local well-posedness for nonlinear singular SPDE is that the roughness of the random forcing in these equations makes the solution so irregular that the nonlinear terms in the equation no longer have any canonical meaning.
A major breakthrough in overcoming this obstacle was the theory of regularity structures \cite{Hai14}, which provides a systematic framework for the analysis of parabolic singular SPDEs.

The theory of regularity structures belongs to a class of ``pathwise'' approaches to the analysis of rough stochastic ordinary and partial differential equations - the earliest example being Lyons's theory of rough paths \cite{Lyons}.
These pathwise approaches split the problem into two steps:
(i) a \textit{probabilistic step} where one constructs (possibly with renormalisation) certain multilinear functionals of the driving noise
and (ii) an \textit{analytic step} where one fixes a realisation of the noise (and corresponding multilinear functionals of the noise) and uses this datum to rewrite the equation in a form where it can be closed using purely deterministic arguments.
When carrying out step (i) within regularity structures, there are also complicated nonlinear constraints imposed
on these multilinear functionals (an instance of which, we call a ``model'') and how they can be renormalised \dash a robust description of this
spurred the development of an algebraic framework, in addition to the probabilistic and analytic steps described above.

The motivation of the present work is to apply this point of view beyond classical (commutative) probability,
to the setting of noncommutative probability theory.
Rough ordinary and partial differential equations in noncommutative probability theory
appear naturally in large $N$ limits of matrix models \cite{GS09} and the study of free entropy \cite{Dabrowski,BS01}.
Any probabilistic approach to constructing QFTs with fermions \cite{OS73,GJ71} also requires noncommutative probability theory.
Free probability has also been proposed as a mathematical language for studying ``master fields'' arising in matrix QFTs and gauge theories \cite{DG95}.

One contribution of the present article is to generalise the analytic theory of regularity structures to locally $m$-convex algebras, see Definition~\ref{def:mconvex}.
Applying this theory to a given noncommutative dynamic also requires one to topologise appropriate operator algebras with a topology that is both $m$-convex and behaves well with respect to renormalisation.
One of the main contributions of \cite{CHP23} was the development of such topologies in the case of fermions.
In this paper, we work within the wider setting of $q$-Gaussian processes, which encompass fermions, bosons, and free probability as special cases (see the next section).
Another contribution of the present article is the development of some special operator topologies to accommodate noncommutative renormalisation in this setting.

As mentioned earlier, there is an algebraic step and a probabilistic step in regularity structures, and we also make contributions in generalising these to the noncommutative setting. 
For the algebraic step, we develop a flexible framework for ``models'' that includes BPHZ renormalisation. 
However, we do not carry out the programme of studying how the deformation of products in a model in turn deforms the underlying equation \dash this is left to future work. 
Finally, we are also able to completely\footnote{Since the probabilistic argument we give in this article depends on the Gaussian structure, it does not allow for central limit theorems for $q$-Gaussian dynamics \dash we leave this to future work.} close the probabilistic step in the $q$-Gaussian setting by showing how any $q$-Gaussian moment estimate we require can be recovered from moment estimates for a carefully engineered commutative ($q=1$) model.

\subsection{Noncommutative Probability Theory}
In this section, we list the specific noncommutative probability spaces that we focus on and the different challenges they pose. A more detailed presentation can be found in Section~\ref{subsec:particles}.

By noncommutative probability space we mean a pair $(\CA, \omega)$ where $\CA$ is a unital $\ast$-algebra with involution $a \mapsto a^{\ast}$
and $\omega$ is a linear functional $\omega \colon \CA \rightarrow \C$ satisfying $\omega(\bone) = 1$ and $\omega(a^{\ast}a) \geqslant 0$ for all $a \in \CA$.
Here $\bone$ denotes the unit of $\CA$ and a map $\omega$ as described above is called a ``state''.
Since we adopt an analytic viewpoint, we will also ask that $\CA$ be a topological algebra.
Our starting point is a family of ``deformed Gaussian'' noncommutative probability spaces called $q$-Gaussians,
which were first introduced in \cite{FB70} and studied extensively by Bo\.{z}ejko \textit{et al.}, cf.\ \cite{BS91, BKS97,Boz99}.

In the commutative \slash classical case, given a Gaussian measure $\mu$ with
(real) reproducing kernel Hilbert space $\mfH$ realised as some process $\xi_1$,
we can formulate this classical probability space using a pair $(\CA_1,\omega_1)$ as follows.
The state $\omega_{1}$ is given by integration with respect to $\mu$, that is $\omega_{1}(a) = \int_{\Omega} a\ \d\mu$ where $\Omega$ is the underlying probability space of realisations of $\xi_1$.
One can then take\footnote{It will turn out that the commutative (bosonic) case will be an exception in our framework and we will not proceed by topologising $\CA_{1}$ as a locally $m$-convex algebra, see Section~\ref{sec:Bosons}.} $\CA_{1}$ to be the algebra generated by the random variables $\{ \xi_{1}(f): f \in \mfh\}$.

In the more general $q$-Gaussian setting, Gaussianity is characterised by a Wick rule for moments:
\begin{equ}[eq:WicksRuleIntro]
	\omega_{q}
	\Big(
	\prod_{i = 1}^{n} \xi_q(f_i)
	\Big)
	=
	\sum_{\bpi \in \CP_{[n]}}
	\prod_{b \in \bpi}
	q^{\mathrm{cr}(\bpi)} \scal{f_{b(1)}, f_{b(2)}}_{\mfH}
\end{equ}
where $\CP_{[n]}$ is the set of partitions of $[n]$ into $\frac{n}{2}$ pairs $b= \{b(1),b(2)\}$ and where $\mathrm{cr}(\bpi)$ is the number of crossings\footnote{See \eqref{eq:CrossingNumber}.}  in the partition $\bpi$. If $n$ is odd, then $\CP_{[n]} = \emptyset$ and this
expression vanishes.

For the commutative case above, we set $q=1$ and reproduce thereby the classical Wick rule. The family of $q$-Gaussians, parameterised by $q \in [-1,1]$, generalise Gaussians in that they
satisfy equation~\eqref{eq:WicksRuleIntro} for their specific value of $q$.

The choice of $q=-1$ in \eqref{eq:WicksRuleIntro} gives the fermionic Wick rule, while the choice\footnote{We adopt the convention that $0^{0} = 1$.} $q=0$ restricts the sum to non-crossing partitions, yielding the free Wick rule.
Note that for $q \neq 1$ this forces the underlying algebra $\CA$ to be noncommutative since the right-hand side of \eqref{eq:WicksRuleIntro} depends on the ordering of the $f_i$ in general. (In particular, when writing an expression like $\prod_{i=1}^n a_i$, this should always be interpreted as a shorthand for $a_1\cdots a_n$; we will use set notations like $\prod_{i\in I} a_i$ only when an implicit ordering of $I$ is assumed or in situations where the $a_i$ commute, as for the right-hand side
of \eqref{eq:WicksRuleIntro}.)

We now describe a concrete way to realise \eqref{eq:WicksRuleIntro} using a pair $(\CA_{q},\omega_{q})$.
For $q \in [-1,1)$, since $\CA_{q}$ is not a commutative algebra,
we cannot realise it as an algebra of real-valued functions over some probability space $\Omega$ as we do in classical probability.

A standard approach is to realise $\CA_{q}$ as an algebra of operators on the $q$-symmetrised Fock space $\CF_{q}$ generated
by $\mfH$, see Section~\ref{subsec:particles} where we describe this in detail.
In particular, one sets $\xi_{q}(f) = \alpha_{q}(f) + \alpha_{q}^{\dagger}(f)$ where $\alpha, \alpha^{\dagger}$
are annihilation and creation operators on $\CF_{q}$, satisfying, for $f,g \in \mfH$, the relation
\begin{equ}[eq:qrelations_Intro]
	\alpha_q(f) \alpha^\dagger_q(g) - q \alpha^\dagger_q(g) \alpha_q(f) = \scal{f,g}_{\mfH} \bone\;.
\end{equ}
The state $\omega_{q}$ is then
realised by setting $\omega_{q}(a) = \scal{ \1, a \1 }_{\CF_q}$ where $\1$ is\footnote{Note that we will continue this convention throughout the article, namely $\1$ will denote the vacuum vector in some Fock space and $\bone$ will denote the unit of some algebra.} the vacuum vector in $\CF_{q}$.
This construction applies in the $q=1$ case, giving an alternative way to realise the classic Gaussian case, in which case $\xi_1$ is called a bosonic field.

When $\mfH$ is a space of functions\slash distributions over a space-time domain $\spacetime = \R \times \T^{d}$,
the mapping $\mfH \ni f \mapsto  \xi_{q}(f) \in \CA_{q}$ can be thought of as an
``operator-valued distribution''.
Putting this all together, we see that, at least formally, we can interpret stochastic PDEs involving stochastic forcing by a $q$-Gaussian field
as operator-valued PDEs.
This is our starting point, but to go any further, we must answer the crucial question of how we should topologise the target operator algebras involved.

\subsubsection{An Illustrative Example: Algebra-Valued ODEs}
\label{sec:InitEx}

Before discussing the more complicated case of noncommutative singular nonlinear PDEs, we give a short description of the solution theory for algebra-valued nonlinear ODEs to motivate some of the topological assumptions we will make on the target algebra.

Let $\CA = \CM_n(\C)$ denote the algebra of $n \times n$ matrices.
This is a Banach algebra when equipped with the\footnote{The operator norm depends on a choice of norm on $\C^{n}$. However, particular choice is not important here due to the equivalence of norms on finite-dimensional vector spaces.} operator norm $\| \bigcdot \|$, in particular one has the submultiplicativity property
\begin{equ}
	\| A B \| \leqslant \| A \|  \| B \|
\end{equ}
for arbitrary $A,B \in \CA$.
This submultiplicative estimate is crucial for solving nonlinear ODEs with values in $\CA$.

Let $F \in \cC(\R; \CA)$ and let $P(X)$ be a single-variable polynomial. Consider the ODE for functions $\R \supset I \to \CA$, given by
\begin{equ}
	\dot x(t) = P(x(t)) + F(t) \;,
	\quad
	x(0) = x_{0} \in \CA \;.
\end{equ}
The standard Picard–Lindel\"{o}f argument for solving this equation is to rewrite it as a fixed-point problem for the operator $S \colon \cC([0,T]; \CA) \to \cC([0,T]; \CA)$ for some $T > 0$ where
the operator $S$ is defined by
\begin{equs}
	S[f](t) \eqdef x_0 +  \int\limits_0^t \left( P(f(s)) + F(s)\right) \d s \; .
\end{equs}
Thanks to the submultiplicativity of $\| \bigcdot \|$, we immediately get the bound
\begin{equ}
	\| P(f(t)) - P(g(t)) \| \lesssim \max\left\{ \|f(t)\|^{\deg(P)-1}, \|g(t)\|^{\deg(P)-1} , 1 \right\} \| f(t) - g(t) \| \;,
\end{equ}
which implies that $S$ is contractive on the space $\cC \left( [0,T] ; \CA\right)$ for small
enough $T$ (depending on $x_0 \in \CA$).
Clearly, this argument applies when the target space is any Banach algebra, even of infinite dimension.

However, applications will require us to solve equations in a wider class of topological algebras.
Even in the bosonic case, we will see that the algebra of random variables generated by the Gaussian field $\xi_{1}$ is not a Banach algebra.\footnote{The key issue is that Gaussian random variables are not in $L^{\infty}(\Omega,\mu)$.}
In the fermionic case, even though the corresponding ``Gaussian random variables'' do form a Banach algebra, renormalisation takes us outside of the Banach algebra setting.

A first possibility would be to consider locally convex algebras $\CA$,  topological
algebras equipped with a family of seminorms $\mfP$, such that multiplication is
continuous with respect to the topology induced by these seminorms.
Then, for each $\mfp \in \mfP$, there must exist (up to rescaling) two seminorms $\mfq, \mfr \in \mfP$, such that, for all $A,B \in \CA$,
\begin{equs}\label{eq:semiprop}
	\mfp (AB) \leqslant \mfq(A) \mfr(A) \; .
\end{equs}
However, since the seminorms change at each step of the iteration $f_{n+1} = S[f_n]$, we would
have difficulties closing the fixed point argument unless we have a uniform bound on all
seminorms simultaneously
\dash catapulting us back to the Banach algebra setting, which we were hoping to generalise beyond.

It is then natural to assume a condition stronger than \eqref{eq:semiprop} that is still more general than working in a Banach algebra \dash
we assume that $\mathcal{A}$ is not only locally convex, but locally $m$-convex.

\begin{definition}\label{def:mconvex}
	A topological algebra $\CA$ is said to be locally $m$-convex if it admits a family
	$\mfP$ of seminorms  inducing the topology on $\CA$
	with the property that, for all $\mfp \in \mfP$ and all $A,B \in \CA$,
	\begin{equs}
		\mfp(AB) \leqslant \mfp(A) \mfp(B) \; .
	\end{equs}
\end{definition}
We can now formulate a ``local existence'' theory that extends beyond the Banach algebra case.
In practice, we will specialise to $m$-Fr\'{e}chet algebras,
which are complete locally $m$-convex algebras that admit a countable family of submultiplicative seminorms inducing their topology.
We do this as non-metrisable topological vector spaces can behave quite pathologically when they interact with measure theory.

By standard arguments, for every $\mfp$, there now exists a time $T_{\mfp} > 0$ such that $f_n$ is a
Cauchy sequence for the seminorm
\begin{equ}
	\| f_n - f_m \|_{\mfp ; [0,T_\mfp]} = \sup_{t \in [0,T_\mfp]} \mfp \left( f_n(t) - f_m(t) \right)\;.
\end{equ}
This produces a Cauchy sequence in the Banach space of continuous functions with values in the Banach algebra $\CA_{\mfp} \eqdef \CA / \mfp^{-1}(0)$.

In the case where $\cA$ is an $m$-Fr\'{e}chet algebra, we may assume without loss of generality that the set  $\mfP = \{\mfp_n \,:\, n \in \N\}$ is totally ordered, i.e.\ $\mfp_n \leqslant \mfp_m$ if $n \leqslant m$, so that the corresponding sequence of stopping times $T_n \eqdef T_{\mfp_n}$ is monotone decreasing. In this case, $\CA$ is canonically the projective limit of the spaces $\CA_n  = \CA_{\mfp_n}$. 

If we are able to show that $\lim_{n\to\infty} T_n = T > 0$,
then $\varprojlim_{n} f_n = f \in \cC([0,T]; \CA)$ and we can view $f$ as a ``global'' solution up to time $T$.

\subsection{Topologising \TitleEquation{q}{q}-Gaussians}
The topological challenges we encounter with $q$-Gaussians vary as we change $q \in [-1,1]$.
Two properties we want our topology to have are (i) good multiplicative behaviour, since we are interested in nonlinear equations as described in the previous subsection,
and (ii) good behaviour with respect to renormalisation, since we work with rough equations.
For the second point, our probabilistic step will involve constructing renormalised fields as finite linear combinations of the homogeneous Wiener chaoses generated by an underlying
$q$-Gaussian field $\xi_{q}$.
The key question then is whether we can control objects in homogeneous Wiener chaoses as elements of the algebra where we pose the equation.

As already remarked, we can realise the algebras $\CA_{q}$ as algebras of operators on
a Hilbert space $\mathcal{F}_{q}(\mfH)$.
A natural choice of topology to try would be the corresponding operator norm
which is indeed a Banach algebra norm.
However, this Banach algebra norm will not be sufficient for our arguments for any choice of $q \in [-1,1]$.

\subsubsection{Bosons: \TitleEquation{q=1}{q=1}}\label{subsec:bosons}
We will often use the subscript $B$ rather than $q=1$ to distinguish the bosonic case.
In this case, the operators $\xi_{B}(f) = \xi_{1}(f)$ for $f \not = 0$ are unbounded operators, and the renormalised fields also give unbounded operators when smeared against test functions.
Going back to thinking about $\xi_{B}$ in the context of classical probability, a standard approach to solving random nonlinear equations is to
work ``pathwise'' in $\xi_B$: one fixes an arbitrary realisation, that is an element of $\Omega$, and obtains good analytic control with the realisation fixed.
Rough random equations are solved by leveraging the fact that renormalised fields are also well-defined and bounded for almost every realisation of $\xi_B$.
Our approach for bosons will be to essentially treat them in the framework of classical commutative probability.
For systems mixing different values of $q$ with $q=1$ (for instance, \eqref{eq:lin_sig_model}), we will formulate the dynamic in terms of (classically) random processes in noncommutative algebras, that is we work pathwise in the bosons.


\subsubsection{Fermions: \TitleEquation{q=-1}{q=-1}}
We will often use the subscript $F$ rather than $q=-1$ to distinguish objects in the fermionic case.
In this case, the operators $\xi_{F}(f) = \xi_{-1}(f)$ are bounded operators, but renormalised fields can give rise to unbounded operators.
When working with unbounded operators coming from renormalised fields, we cannot directly apply the same strategy used to handle unboundedness in the bosonic case, as we cannot expect to realise renormalised fermionic fields as functions over some underlying $\Omega$.

However, in \cite{CHP23}, we were able to use the bosonic case as inspiration to ``localise'' fermions
using a family of finite-dimensional representations to create a family of multiplicative seminorms.
This allows us to realise fermions and their renormalised fields as taking values in a locally\footnote{A locally $C^{*}$-algebra is a complete topological $\ast$-algebra whose topology is generated by a family of sub-multiplicative seminorms, each satisfying the $C^{*}$-identity.} $C^{*}$-algebra $\cA_{F}$.
Heuristically, one should think of $\CA_{F}$ as an algebra of ``bounded random variables'' and of $\cA_{F}$ as the corresponding family of unbounded random variables.
We now state one of the main results of \cite{CHP23}, which summarises the construction of $\cA_{F}$.

\begin{theorem}[Theorem~1.12 of \cite{CHP23}]\label{thm:main_thm1_CHP23}
	Let $\mfH$ be a separable Hilbert space, let $\CA_{F}$ be the $C^{*}$-algebra of bounded operators generated by creation and annihilation operators on the antisymmetric Fock space $\mathcal{F}_{F}(\mfH)$, and let $\xi_{F}$ be an associated fermionic Dirac or Clifford field.
	Then there exists a locally $C^{*}$-algebra $\cA_{F}$, equipped with a family of $C^{*}$-seminorms $\big( \|\bigcdot\|_{n} \big)_{n=1}^{\infty}$ with the following properties.
	\begin{enumerate}
		\item $\cA_{F}$ contains a dense $C^*$-algebra $\mfA_{\infty} = \left\{ a \in  \cA_F \, \big| \, \sup_{n} \|a\|_{n} < \infty \right\}$, which carries a surjective $C^{*}$-homomorphism $\digamma \colon \mfA_{\infty}  \rightarrow \CA_{F}$.
		Under $\digamma$ vacuum states $\omega_{F}$ on $\CA_{F}$ pull back to give a densely defined state $\bomega_{F}$ on $\cA_{F}$ and one has canonical isomorphisms
		\begin{equ}
			\CL^{2}(\CA_{F},\omega_{F}) \simeq \CL^{2}(\mfA_{\infty},\bomega_{F})\;.
		\end{equ}
		\item Sequences of polynomials $P_{k}(\xi_{F}) \in \CA_{F}$ that converge in  $\CL^{2}(\CA_{F},\omega_{F})$ can be pulled back to polynomials $\mathbf{P}_{k}(\bxi_{F}) \in \mfA_{\infty}$ that converge in $\cA_{F}$.
	\end{enumerate}
\end{theorem}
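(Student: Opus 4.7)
The plan is to construct $\cA_F$ as the projective limit of a family of $C^*$-algebras associated to finite-dimensional subspaces of $\mfH$. First, I choose an increasing exhaustive sequence $\mfH_1 \subset \mfH_2 \subset \cdots \subset \mfH$ of finite-dimensional subspaces with orthogonal projections $P_n$ onto $\mfH_n$. For each $n$, the antisymmetric Fock space $\mathcal{F}_F(\mfH_n)$ is finite-dimensional and the CAR algebra it generates is (isomorphic to) a matrix algebra $\mfA_n$. Second quantisation of $P_n$ gives a $*$-homomorphism $\pi_n$ from the polynomial $*$-algebra $\CP$ generated by the fields $\{\xi_F(f) : f \in \mfH\}$ into $\mfA_n$, and this yields a family of $C^*$-seminorms $\|a\|_n \mathrel{\mathop:}= \|\pi_n(a)\|_{\mfA_n}$. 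One then defines $\cA_F$ to be the completion of $\CP$ in the locally convex topology generated by $(\|\cdot\|_n)_n$; this topology is Hausdorff because the $\pi_n$ jointly separate points as $n \to \infty$.

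Next, let $\mfA_\infty$ be the subset of $\cA_F$ on which the seminorms are uniformly bounded, equipped with $\|a\|_\infty = \sup_n \|a\|_n$. A routine verification shows $(\mfA_\infty, \|\cdot\|_\infty)$ is a $C^*$-algebra. The homomorphism $\digamma \colon \mfA_\infty \to \CA_F$ is defined on $\CP \subset \mfA_\infty$ by replacing each $\bxi_F(f)$ by $\xi_F(f)$, and extends by continuity once one observes that $\|P(\xi_F)\|_{\CA_F} = \lim_n \|\pi_n(P(\bxi_F))\|$, which follows from strong operator convergence of the second quantisation of $P_n$ to the identity on $\mathcal{F}_F(\mfH)$ together with the fact that polynomials in $\xi_F$ preserve a common finitely-generated core. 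Surjectivity is then automatic from density of polynomials in $\CA_F$. Since $P_n$ fixes the vacuum vector, one has $\scal{\1,\pi_n(a)\1} = \omega_F(\digamma(a))$ on polynomials, which extends by density to show the vacuum state pulls back; the $\CL^2$ isomorphism is an immediate consequence.

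The heart of the argument is the second claim: showing that an $\CL^2$-Cauchy sequence of polynomials $P_k(\xi_F) \in \CA_F$ lifts to a sequence $\mathbf{P}_k(\bxi_F) \in \mfA_\infty$ that is Cauchy in every seminorm $\|\cdot\|_n$. The natural candidate $\mathbf{P}_k$ is obtained by replacing $\xi_F$ by $\bxi_F$ monomial-by-monomial, but convergence of this lift is not immediate because renormalised fermionic polynomials can have large operator norms even when small in $\CL^2$. The key input is a fermionic hypercontractive estimate on the finite-dimensional Fock space $\mathcal{F}_F(\mfH_n)$: the operator norm of a polynomial in $\bxi_F$ of degree at most $d$ is controlled by its $\CL^2$-norm (with respect to the vacuum) up to a constant depending only on $n$ and $d$. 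After decomposing the $P_k$ into fermionic Wiener chaoses and applying this estimate degree by degree, $\CL^2$-Cauchyness transfers to Cauchyness in each $\|\cdot\|_n$, and completeness of $\cA_F$ produces the required limit in $\mfA_\infty$. The main obstacle is in fact the hypercontractive estimate itself, whose constants must depend on $n$ in a controlled way; this is where the finite dimensionality of $\mfH_n$ and the discreteness of the fermionic number operator spectrum are essential, as they rule out the unboundedness issues that would arise on the full Fock space.
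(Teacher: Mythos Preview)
There is a genuine gap at the very first step. You define $\pi_n$ as ``second quantisation of $P_n$'' on the polynomial $*$-algebra $\CP$ generated by the fields $\xi_F(f)$, sending $\xi_F(f) \mapsto \xi_F(P_n f)$. But $\CP$, as a subalgebra of $\CA_F$, already satisfies the CAR relations $[\xi_F(f),\xi_F(g)]_+ = 2\langle f,g\rangle \bone$, and your map does not respect them: applying $\pi_n$ to both sides would require $\langle P_n f, P_n g\rangle = \langle f,g\rangle$, which fails whenever $f$ or $g$ has a component outside $\mfH_n$. So $\pi_n$ is not a well-defined algebra homomorphism on $\CP$, and the seminorms $\|\cdot\|_n$ are not defined.

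This is not a technicality but the crux of the construction. The paper (following \cite{CHP23}, summarised in Section~\ref{sec:extendedcar}) starts instead from the \emph{free} $*$-algebra $\mfA_F(\mfH)$ generated by symbols $\balpha^\dagger(f)$, on which the representations $\pi_b$ (for finite-dimensional $b \subset \mfH$) are automatically well-defined. In the resulting locally $C^*$-algebra $\cA_F$, the anticommutators $[\balpha(f),\balpha^\dagger(g)]_+$ are central but \emph{not} scalar; they project under $\pi_b$ to $\langle P_b f, P_b g\rangle$, which varies with $b$. It is precisely this ``localised'' failure of the CAR relations that allows renormalised Wick powers $\bxi_F^{\diamond n}(G)$, $G \in \mfH^{\wotimes_\alpha n}$, to be bounded in each seminorm $\|\cdot\|_k$ (with growth $\sim (1+k)^{(n-1)/2}$, Proposition~\ref{proposition:Wick_Product}) while the corresponding operators $\xi_F^{\diamond n}(G)$ on $\CF_F(\mfH)$ are unbounded. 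Your $\CP$, already satisfying CAR, cannot accommodate such elements: its completion under any family of $C^*$-seminorms can be no larger than $\CA_F$ itself, so claim~(2) would be vacuous. A secondary difference is that the paper takes $\|a\|_n = \sup_{b \in \Gamma_n}\|\pi_b(a)\|$ over a rich family $\Gamma_n$ of $n$-dimensional subspaces rather than a single chain, but this is minor compared to the free-algebra issue.
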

The first part of the above theorem states that $\cA_{F}$ contains a bounded sub-algebra $\mfA_{\infty}$ that in essence contains the original bounded algebra $\CA_{F}$, and that this can be used to transfer the state $\omega_{F}$ to $\cA_{F}$.
The second part of the above theorem tells us that $\cA_{F}$ contains as elements the ``unbounded'' renormalised fields built from probabilistic limits of elements in $\CA_{F}$ \dash again, these limits would not in general belong to $\CA_{F}$.
However, allowing such unbounded elements means that we have to go from the Banach algebra setting of $\CA_{F}$ to the $m$-convex setting of $\cA_{F}$, as we forewarned the reader about in Section~\ref{sec:InitEx}.

In  \cite{CHP23}, the construction of Theorem~\ref{thm:main_thm1_CHP23} was applied to a singular SPDE involving fermions in the so-called Da Prato--Debussche regime.
While this equation could not be closed in the Banach algebra $\CA_{F}$ due to renormalisation, it could be closed in the space $\cA_{F}$.

In this article, we show that we can go beyond the Da Prato--Debussche regime for singular SPDEs with fermions by
formulating a theory of $\cA$-valued regularity structures (as a special case of a regularity structure valued in an $m$-convex space).

\subsubsection{Mezdons: \TitleEquation{q \in (-1,1)}{q \in (-1,1)}}\label{subsec:Intro_mezdonic}
To distinguish the $|q| < 1$ cases from the special cases of $q = \pm 1$, we will call the former setting the ``mezdonic'' setting and refer to the algebras as the algebras of $q$-mezdons.

In the mezdonic case, the operator $\xi_{q}(f)$ is always bounded and \textit{ultracontractive estimates}
\cite{Boz99} show that elements of homogeneous Wiener chaoses can be realised as bounded operators.
This means that the Banach algebra $\CA_{q}$ will also include renormalised fields.
However, a new challenge for $|q| < 1$ is that we need new multiplicative estimates.
These estimates are similar to Young product estimates where a field of positive regularity
is intertwined\footnote{See the discussion around \eqref{eq:intertwined} below.} with a rougher renormalised field.
We control these intertwined products by defining a stronger norm \eqref{eq:small-q-AltNorm}.
We show that this new norm is multiplicative and it is easily seen to still contain all needed renormalised fields,
thus yielding a new Banach algebra $\cA_{q}$ -- which is dense in $\CA_{q}$ -- where we can solve singular SPDEs driven by $q$-noise.

\subsection{Main Results}\label{sec:mainresults}

To motivate the formalism developed in the article, we first give some specific examples of noncommutative singular SPDEs for which our new framework allows us to prove local well-posedness.
In Section~\ref{sec:genresults}, we will summarise some of the more general but technical results we prove, which allow us to treat these specific equations.

\subsubsection{The Mezdonic~\&~Cliffordian \TitleEquation{\Phi^4_3}{Phi^4_3}-Equations}\label{subsec:application1}

Given a spatial dimension $d$, we will write $\rho$ for a smooth compactly supported function on $\R_{<0} \times \R^{d}$
which is even in space and satisfies $\int \rho = 1$.
We also write, for $\eps > 0$, $\rho^{(\eps)}(t,x) = \eps^{-d-2} \rho(t/\eps^2, x/\eps)$.

\begin{theorem}[Mezdonic \TitleEquation{\Phi^4_3}{Phi^4_3}]\label{thm:Phi43}
	Let $q \in (-1,1)$, and let $\xi_{q}$ be a $q$-Gaussian space-time white noise over $\R \times \T^{3}$ \dash that is it satisfies
	\eqref{eq:WicksRuleIntro} with $\mfH = L^{2}(\R \times \T^{3})$.
	Then, one has local in time solutions $\phi_{q}$ to the singular equation
	\begin{equ}\label{eq:mezdonphi43}
		(\partial_{t} - \Delta + m^2) \phi_{q} = - \phi_{q}^{3} + \xi_{q}\;.
	\end{equ}
	In particular, for $\eps \in (0,1]$, there exist constants $C_\eps^{1}, C_\eps^{2}$ independent of $q$, with
	$|C_\eps^{1}| \lesssim \eps^{-1}$, $|C_\eps^{2}| \lesssim |{\log(\eps)}|$, and linear operators $\Delta^{(1)}_q, \Delta^{(2)}_q \in \CB(\cA_q) $ independent of $\eps$, such that one has the convergence as $\eps \downarrow 0$ of local in time solutions to the renormalised, regularised equations
	\begin{equ}
		(\partial_{t} - \Delta + m^2) \phi^{(\eps)}_{q} = - \Big( \left(\phi_{q}^{(\eps)}\right) ^{3} - \left(C_\eps^{1} \Delta_q^{(1)}  + C_\eps^{2} \Delta_q^{(2)}\right) \phi^{(\eps)}_{q} \Big) + \xi^{(\eps)}_{q}\;,
	\end{equ}
	where $\xi^{(\eps)}_{q} = \xi_{q} \ast \rho^{(\eps)}$.
	Here, the convergence takes place in a space of space-time distributions taking values in the Banach algebra $\cA_q$ as given in Theorem~\ref{thm:newBanachAlg}.
\end{theorem}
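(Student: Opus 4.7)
The plan is to apply the $\cA_q$-valued regularity structures framework developed in the earlier sections to the standard $\Phi^4_3$ regularity structure. Since $q$-Gaussian space-time white noise on $\R\times\T^{3}$ has parabolic regularity $-5/2-\kappa$, equation~\eqref{eq:mezdonphi43} is subcritical with exactly two divergent subtrees in the usual decorated-tree description, matching the two renormalisation constants $C_\eps^{1}$ and $C_\eps^{2}$ with power counting $\eps^{-1}$ and $|\log\eps|$. In this concrete setting we do not need the full generality of locally $m$-convex algebras: by Theorem~\ref{thm:newBanachAlg} the target $\cA_q$ is a genuine Banach algebra in which all renormalised fields we need already lie.

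Granted an $\cA_q$-valued BPHZ model $\hat Z_q^{(\eps)}$ converging as $\eps\downarrow 0$ to a limit $\hat Z_q$ in an appropriate $\cA_q$-valued model space, the analytic step is then essentially automatic. The abstract fixed-point problem for the modelled distribution associated to $\phi_q$ is solved using the reconstruction theorem, Schauder estimates, and the product on the relevant sector, all of which go through once formulated over the Banach algebra $\cA_q$: the sector products are interpreted as ordered products in $\cA_q$, and submultiplicativity of $\|\bigcdot\|_{\cA_q}$ ensures that the contraction closes on a small time interval with a lifetime depending only on the $\cA_q$-norm of the initial condition and on the model. Continuity of the solution map in the model, combined with convergence of $\hat Z_q^{(\eps)}$, then yields the claimed convergence $\phi_q^{(\eps)}\to\phi_q$. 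The operators $\Delta_q^{(1)}$ and $\Delta_q^{(2)}$ are read off from the BPHZ subtractions at the two divergent subtrees; in contrast to the commutative case they are genuinely nontrivial operators on $\cA_q$, because an inner contraction of the form $\Psi\cdot A\cdot \Psi$ produces, after Wick-reduction via the $q$-commutation relations, a nontrivial linear map acting on $A$.

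The main obstacle is therefore the probabilistic step: establishing convergence of the $\cA_q$-valued BPHZ model with respect to the enhanced norm of Theorem~\ref{thm:newBanachAlg}. I would proceed via the reduction principle advertised in the introduction, whereby any $q$-Gaussian moment estimate required in the probabilistic step may be dominated by a moment estimate for a carefully engineered commutative ($q=1$) analogue. Under this reduction the convergence of $\hat Z_q^{(\eps)}$ follows from the classical BPHZ estimates for $\Phi^4_3$ as in \cite{Hai14}, applied to an auxiliary bosonic noise whose covariance encodes the relevant $q$-crossing combinatorics and which absorbs the intertwined operator insertions that the new norm was designed to control. The crucial technical point, and the reason that the operator norm on $\CA_q$ does not suffice, is that products of the renormalised linear solution with itself and with the remainder in $\phi_q^{(\eps)}$ must be estimated uniformly across all Wiener chaos degrees with \emph{arbitrary} operator insertions on either side; this is precisely what the enhanced norm of Section~\ref{subsec:Intro_mezdonic} supplies, and it is the step where the genuinely noncommutative difficulties of the mezdonic regime become unavoidable.
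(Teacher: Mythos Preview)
Your proposal is correct and follows the same overall strategy as the paper (carried out in Section~\ref{sec:Phi43Mez}): build the $\cA_q$-regularity structure for the $\Phi^4_3$ rule, establish convergence of the BPHZ model via the reduction to $q=1$, and apply the abstract fixed-point Theorem~\ref{thm:AbsFPM}. Two points in your description of the probabilistic step are imprecise in ways worth correcting.

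First, the paper does not use an auxiliary bosonic noise whose covariance ``encodes the $q$-crossing combinatorics''; the reduction is $q$-independent at the stochastic level. Concretely, the paper introduces the Fock space algebra $\mfA_N(\mfH)$ (Section~\ref{subsec:FockSpaceAlg}) as a bookkeeping device, and for each component $[n,\bpi]$ of the $\mfA_N$-valued model of a negative tree it constructs a commutative model driven by $n$ \emph{independent} copies of ordinary white noise, with only the pairs in $\bpi$ identified. This matches the component in question with a single homogeneous chaos of the auxiliary commutative model, to which \cite{HS24} applies directly (Theorem~\ref{thm:BPHZ}). All $q$-dependence enters only afterwards, through the continuous algebra morphism $\iota_q\colon\mfA_N(\mfH)\to\cA_q(\mfH)$ and Corollary~\ref{cor:FocktoQbound}.

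Second, the intertwined operator insertions are not ``absorbed'' by the auxiliary noise construction; they are handled algebraically, by equipping trees with the extended decoration $\mfo$ (Definition~\ref{def:DecTrees}) and specifying the contraction structure $\Delta(\mfo_\rho)$ on root-renormalised trees via the maps $\Delta_q^{R;I,\bpi}$ of Definition~\ref{def:RenMultMap}. The enhanced norm enters only to guarantee that these maps are continuous (Corollary~\ref{cor:RenMultMap}), which is what makes $\Pi_x$ a $t$-continuous bimodule morphism as required in Definition~\ref{def:model}. The explicit formulae for $\Delta_q^{(1)}$ and $\Delta_q^{(2)}$ in \eqref{eq:CTOp} then fall out of summing the $q^{\crb(\bpi)}$ factors over the admissible contractions at the two divergent levels.
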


The $q=1$ case of Theorem~\ref{thm:Phi43} was first obtained in \cite{Hai14}, and our analysis for the $|q| < 1$ case follows roughly the same strategy but
using our theory of noncommutative regularity structures along with novel operator estimates on the algebra $\cA_{q}$.

The new mezdonic topology we use to define $\cA_q$ differs from the operator norm topology, as already alluded to when we mentioned the
new multiplicative estimates in Section~\ref{subsec:Intro_mezdonic}.
The definition of this operator algebra topology and proving its superior properties with respect to renormalisation are additional contributions of the present article.
We point to Theorems~\ref{thm:newBanachAlg}~and~\ref{thm:PolyWickestimate} for this construction.
The key role it plays in Theorem~\ref{thm:Phi43} is controlling the $\eps \downarrow 0$ limit of products such as
\begin{equ}\label{eq:intertwined}
	\<1>^{(\eps)}_{q}(z) v(z) \<1>^{(\eps)}_{q}(z) - C^1_\eps  \Delta_q\bigl(v(z)\bigr)\;,
\end{equ}
where $\<1>^{(\eps)}_{q}(x)$ is the solution to the linear equation $(\partial_{t} - \Delta) \<1>^{(\eps)}_{q} =  \xi^{(\eps)}_{q}$, $\Delta_q \in \CB(\cA_q)$ is the linear map which acts as multiplication by $q^n$ on the homogeneous Wiener chaos of order $n$, see \eqref{e:Deltaqdef}, $v$ is an $\cA_q$-valued function on space-time of sufficiently positive regularity,
and the convergence of $\<1>^{(\eps)}_{q}(z)^2  - C_{\eps}^1 \bone$ as $\eps \downarrow 0$ has previously been obtained using stochastic arguments.
See Lemma~\ref{lemma:2SYoung} where we explicitly show how this intertwined product is controlled. 

The novelty here is that noncommutativity makes \eqref{eq:intertwined} different from controlling the $\eps \downarrow 0$ limit of
$\left(\<1>^{(\eps)}_{q}(z)^2  - C_\eps^{1} \bone  \right) v(z)$,
which, once convergence of $\<1>^{(\eps)}_{q}(z)^2 - C_\eps^{1}   \bone $ is known, can be controlled with a scalar Young product estimate if $v$ is sufficiently regular. \martinp{Add Clifford}

\subsubsection{Mezdonic Rough Paths}

We prove in the mezdonic case the following results, which generalise the foundational theorems of rough path theory to the noncommutative setting. For the sake of simplicity, we restrict ourselves here to the one-dimensional statement. 
\begin{theorem}
	Let $q \in (-1,1)$ and $f, g, h \in \cC^\omega(\R; \R)$ be analytic functions. Let $\xi_q$ and $B_q$ be a $q$-$L^2(\R)$-white noise and $q$-Brownian motion in $\cA_q \eqdef \cA_q(L^2(\R))$, and let $X_0 \in \cA_q$ be an initial condition. Then, the rough differential equation
	\begin{equ}
		\begin{cases}
			d X  = f(X) + g(X) d B_q h(X) \\
			X(0) = X_0
		\end{cases}
	\end{equ}
	has a solution in $\CC^{\frac{1}{2}-}([0,T) ; \cA_q)$ for $T > 0$, unique up to a renormalisation constant, in the following sense.

	For every constant $C \in \R$ and sequence of $\xi^{(\eps)}_q \subset \cD(\R ; \cA_q)$ for $\eps \in (0,1]$ of mollifications of $\xi_q$ contained in the first chaos of $\cA_q$ and converging in $\CC^{-\frac{1}{2}-}(\R ; \cA_q)$, there exist a $T> 0$, a unique sequence of functions $X_{(\eps)} \in  \CC^{\frac{1}{2}-} \left( [0,T) ; \cA_q\right)$ and $X'_{(\eps)} \in  \CC^{\frac{1}{2}-} \left( [0,T) ; \left(\cA_q^{\wotimes_\pi 2} \right)\right)$ for all $\eps \in [0,1]$, converging as $\eps \downarrow 0$, s.t.\ for all $\eps \in [0,1]$ and all $s,t \in [0,T)$
	\begin{equ}
		\left\vvvert X_{(\eps)}(t) - X_{(\eps)}(s) - \Delta_q^{R; (1), \emptyset} \left(B^{(\eps)}_q(s,t) ; X'^{(\eps)}(s) \right)  \right\vvvert_{\cA_q^n} \lesssim |s-t|^{1-} \; .
	\end{equ}
	Here $\Delta_q^{R; (1), \emptyset} \in \CB(\cA_q)$ is a unique linear operator that only depends on $q$, but not on the choice of approximation or structure of the RDE.

	Furthermore, for all $\eps \in (0,1]$, $(X_{(\eps)}, X_{(\eps)}')$ satisfy on $[0,T)$
	\begin{equs}\label{eq:RenSDEInt}
		\partial_t X^{(\eps)} & = f^i(Y^{(\eps)})  + (X^{(\eps)}) \xi_q^{(\eps)} h(X^{(\eps)}) -\\
		& \qquad - C \Bigl( \left( D_{q}^R g(X^{(\eps)})[X'{(\eps)}] \right)  h(X^{(\eps)})  +  g(X^{(\eps)}) \left( D_q^L h(X^{(\eps)})[X'^j_{(\eps)}] \right) \Bigr)
	\end{equs}
	with $X_{(\eps)}(0) = X_0$. Here $D_q^{R/L}$ are $q$-commutative right and left derivatives applied to the Taylor expansions of $g$, $h$.
\end{theorem}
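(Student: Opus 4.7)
The plan is to interpret this rough differential equation as a fixed-point problem in the noncommutative regularity structures framework developed earlier in the article, specialised to one time dimension and to a noise of parabolic regularity $-1/2-$. First, I would build a regularity structure whose generators are one noise symbol $\Xi$ (for $\xi_q$), one abstract integration operator $\CI$ (modelling the heat/time kernel against $\Xi$ to produce the symbol $\CI[\Xi]$ playing the role of $B_q$), and all the ordered products of these symbols and polynomials in the time variable needed to expand $g(X)\,\CI[\Xi]\,h(X)$ up to negative homogeneity. Because the equation is noncommutative, I must insist that the trees/symbols carry an ordering of their factors, i.e.\ the free algebra over the generators rather than the symmetric one; this is the reason the level-two object naturally lives in $\cA_q^{\wotimes_\pi 2}$ rather than in $\cA_q$ itself. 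The analyticity of $f,g,h$ is used exactly to expand them as operator power series so that these Taylor expansions are unambiguous in the non-symmetric setting and produce the left/right derivative operators $D_q^{L/R}$ acting on insertion slots.

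Next I would construct the model. For each regularised noise $\xi_q^{(\eps)}$ in the first $q$-chaos, canonical evaluation of the trees gives a preliminary model; its only logarithmically divergent element as $\eps\downarrow 0$ is the symbol representing the iterated integral $\int_s^t (B_q^{(\eps)}(r)-B_q^{(\eps)}(s))\, dB_q^{(\eps)}(r)$, whose divergence is exactly the scalar $C$ multiplied by a fixed operator on $\cA_q$. Convergence of the renormalised model in $\cA_q^{\wotimes_\pi 2}$ (not just in the weaker operator norm) is obtained from the ultracontractive mezdonic estimates, Theorem~\ref{thm:newBanachAlg}, and Theorem~\ref{thm:PolyWickestimate}, applied to the two-point correlation of the iterated integral after Wick ordering; the structure of this renormalisation identifies the operator $\Delta_q^{R;(1),\emptyset}$ as the fixed insertion map that subtracts the diagonal divergence in the chosen tree.

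With the model in hand, I would run the noncommutative Picard iteration of Section~\ref{sec:mainresults} (specialised to ODEs in a Banach algebra driven by a level-two rough path): controlled modelled distributions $(X,X')$ live in a product of spaces with seminorm $\vvvert\bigcdot\vvvert_{\cA_q^n}$, the right-hand side $g(X)\,\bXi\,h(X)$ is lifted to the regularity structure using Taylor expansions of $g,h$ (this is where $D_q^{L}$ and $D_q^{R}$ enter, acting respectively on the left and right of the insertion slot of $\Xi$), and reconstruction plus integration against the heat kernel give a contraction on a small time interval provided by the Banach-algebra multiplicativity of $\vvvert\bigcdot\vvvert_{\cA_q^n}$. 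The value $X^{(\eps)}$ obtained from the renormalised model satisfies \eqref{eq:RenSDEInt} by simply tracking how the shift by $C\,\Delta_q^{R;(1),\emptyset}$ in the iterated integral passes through the abstract fixed-point equation when one projects back to the classical equation: the shift is contracted against the Taylor jet of $g(X)\,\square\,h(X)$, producing the two summands in the correction because the one-argument insertion slot of the iterated integral acts from the right of $g$ and from the left of $h$.

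The main obstacle I expect is the last step, namely identifying the correct form of the Ito--Stratonovich correction. In the commutative setting the correction is a scalar, but here the renormalisation operator $\Delta_q^{R;(1),\emptyset}$ is a genuinely noncommutative object, and its action on the lifted nonlinearity must be computed by a careful pre-Lie (or rather free pre-Lie, since we lack symmetry) manipulation; this is what forces the appearance of both $D_q^R g(X^{(\eps)})[X'_{(\eps)}]\,h(X^{(\eps)})$ and $g(X^{(\eps)})\,D_q^L h(X^{(\eps)})[X'_{(\eps)}]$ rather than a single scalar term. Everything else is a direct, if notationally heavy, specialisation of the noncommutative regularity-structures machinery already established in the paper.
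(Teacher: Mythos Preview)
Your proposal follows essentially the same approach as the paper: build the $\cA_q$-regularity structure generated by the appropriate rule, construct the canonical and BPHZ-renormalised models using the poly-Wick machinery, apply the abstract fixed-point theorem (Theorem~\ref{thm:AbsFPM}) after lifting the analytic nonlinearities via Theorem~\ref{thm:FncLift}, and then read off the renormalised equation by reconstructing. The identification of the correction terms via $D_q^{L/R}$ is exactly what the paper does.

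One point to correct: you describe the iterated integral as ``logarithmically divergent'', but in this one-dimensional setting there is no divergence at all. The quantity $\omega_q\bigl(B_q^{(\eps)}(s,t)\,\xi_q^{(\eps)}(t)\bigr)$ converges to $\tfrac12$ as $\eps\downarrow 0$ (and in fact equals $\tfrac12$ for any symmetric mollifier), so the BPHZ character is finite. The role of the constant $C$ is not to subtract a divergence but to parametrise the non-unique lift of the singular product $B_q\,\xi_q$: different mollification schemes (or deliberate shifts of the L\'evy area) yield different finite limits, with $C=\tfrac12$ corresponding to It\^o and $C=0$ to Stratonovich. This does not affect the structure of your argument, but the language of ``removing a divergence'' should be replaced by ``fixing a choice of lift''. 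Also, the integration kernel here is the Heaviside function $\vartheta(t)$, not a heat kernel; the paper handles this directly rather than via the decomposition of Definition~\ref{def:SingKern}.
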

Here, the choice of $C$ corresponds to a choice of lift of the L\'evy area or renormalisation of the singular product $B_q \xi_q$. For example, setting 
\begin{equ}
	C \eqdef \lim_{\eps \downarrow 0} \lim_{T \to \infty} \omega_q \left( B^{(\eps)}_q(T) \xi_q^{(\eps)}(0) \right) = \frac{1}{2}  
\end{equ}
yields the It\^o lift. 

By changing the topology from the usual one determined by the $C^*$-structure to the new mezdonic topology, we were able to extend the solution theory to arbitrary $q \in (-1,1)$ and arbitrary lifts of the L\'evy area. In contrast, previous work \cite{BS98,DS18} could only deal with the It\^o, for $q=0$, and the Stratonovich, for $q \geqslant 0$, lifts respectively. However, this comes at the cost of having to restrict ourselves to analytic nonlinearities, as $\cA_q$ is not a $C^*$-algebra and thus does not possess a general functional calculus; for more detail, see Remark~\ref{rem:Comparison}.

Furthermore, we also establish an analogue of the It\^o formula for the mezdonic case for Brownian motion with the same methods applicable to more general processes. This extends a previous result of \cite{DS18} without the use of filtrations. 
\begin{theorem}
	In the above setting, we have for any analytic function $F$ the It\^o formula 
	\begin{equs}[eq:Ito]
		\partial_t F( B_{q} (t) ) &= F'(B_{q} (t))\left[ \xi_q \right]  + 2 C \Delta_q^{R; (1,1) , (1,2)} \left(  F''\left(B_q(t) \right) \right) \; ,
	\end{equs}
	where $\Delta_q^{R; (1,1) , (1,2)}$ is a specific bounded linear operator on $\cA_q$ independent of $F$. 
\end{theorem}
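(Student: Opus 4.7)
The plan is to reduce to monomials by analyticity, then establish the formula for each monomial by a direct computation at the regularised level and pass to the limit in the mezdonic Banach algebra $\cA_q$ of Theorem~\ref{thm:newBanachAlg}. Since $F$ is analytic, write $F(x)=\sum_{n\geqslant 0} a_n x^n$ and observe that on any norm-bounded subset of $\cA_q$ (in particular on a ball containing $B_q(t)$ for $t$ in a fixed compact interval, using the ultracontractive bounds recalled in Section~\ref{subsec:Intro_mezdonic}), this series together with those for $F'$ and $F''$ converge absolutely in the mezdonic norm. Combined with the boundedness of $\Delta_q^{R;(1,1),(1,2)}$ on $\cA_q$, linearity reduces \eref{eq:Ito} to the case of each monomial $F_n(x)=x^n$.

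For such an $F_n$, regularise $\xi_q$ by a first-chaos mollification $\xi_q^{(\eps)}$ and set $B_q^{(\eps)}(t)=\int_0^t \xi_q^{(\eps)}(s)\,ds$. The classical Leibniz rule gives
\begin{equation*}
\partial_t \bigl(B_q^{(\eps)}(t)\bigr)^n = \sum_{k=0}^{n-1} \bigl(B_q^{(\eps)}(t)\bigr)^k\,\xi_q^{(\eps)}(t)\,\bigl(B_q^{(\eps)}(t)\bigr)^{n-1-k}\;,
\end{equation*}
and the right-hand side is already $F_n'(B_q^{(\eps)})[\xi_q^{(\eps)}]$ in the non-commutative sense. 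To extract the It\^o correction, decompose $\xi_q^{(\eps)}(t)=\alpha_q+\alpha_q^\dagger$ and commute each annihilation operator through the adjacent block of $B_q^{(\eps)}(t)$'s using the $q$-relations \eref{eq:qrelations_Intro}. Each such commutation produces a zeroth-chaos contribution proportional to $\omega_q\bigl(\xi_q^{(\eps)}(t)\,B_q^{(\eps)}(t)\bigr)=C_\eps$ (which converges to the renormalisation constant $C$ after subtraction) multiplied by a $q$-weighted insertion of the remaining $n-2$ factors of $B_q^{(\eps)}(t)$.

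Regrouping these contractions yields the decomposition
\begin{equation*}
\partial_t\bigl(B_q^{(\eps)}(t)\bigr)^n = F_n'\bigl(B_q^{(\eps)}(t)\bigr)\bigl[\xi_q^{(\eps)}\bigr] + 2C_\eps\,\Delta_q^{R;(1,1),(1,2)}\!\left(F_n''\bigl(B_q^{(\eps)}(t)\bigr)\right) + r_\eps(t)\;,
\end{equation*}
where $\Delta_q^{R;(1,1),(1,2)}$ is, by definition, the bounded $q$-combinatorial operator on $\cA_q$ that records precisely these two adjacent pairings (with $q$-weights governed by their crossing number), and where $r_\eps(t)\to 0$ in $\cA_q$ by the intertwined-product estimate of Lemma~\ref{lemma:2SYoung}. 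Boundedness of $\Delta_q^{R;(1,1),(1,2)}$ on $\cA_q$ is a consequence of Theorem~\ref{thm:newBanachAlg}. Passing to the limit $\eps\downarrow 0$ using continuity of the mezdonic algebra operations yields~\eref{eq:Ito} for the monomial $F_n$, and resumming in $n$ using the uniform-in-$n$ mezdonic norm bounds of Theorem~\ref{thm:PolyWickestimate} completes the proof.

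The main obstacle is the combinatorial extraction in the third step: one must verify that, after all $q$-commutations, the collection of singular insertions assembles into one and the same operator $\Delta_q^{R;(1,1),(1,2)}$ acting uniformly on $(B_q)^{n-2}$, rather than into a family of $n$-dependent operators. This rests on the observation that, modulo the crossing-number weighting prescribed by \eref{eq:WicksRuleIntro}, the only divergent pairings at leading order are the two nearest-neighbour ones corresponding to the index labels $(1,1)$ and $(1,2)$, and that their aggregate factors through $F_n''(B_q)$. The mezdonic norm estimates of Theorem~\ref{thm:PolyWickestimate} then provide the uniform-in-$n$ control on the remainders $r_\eps$ needed to interchange the limit $\eps\downarrow 0$ with the Taylor resummation from the first step.
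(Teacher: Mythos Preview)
Your proposal has a genuine gap at its core: you never define what the term $F'(B_q(t))[\xi_q]$ on the right-hand side of \eqref{eq:Ito} actually means. This is a singular product of a $\CC^{1/2-}$ process with white noise and requires a renormalised definition. At fixed $\eps>0$, the Leibniz rule you write down is an \emph{exact} identity: $\partial_t (B_q^{(\eps)})^n = \sum_k (B_q^{(\eps)})^k\,\xi_q^{(\eps)}\,(B_q^{(\eps)})^{n-1-k}$ with no correction and no remainder. So when you then claim
\[
\partial_t(B_q^{(\eps)})^n = F_n'(B_q^{(\eps)})[\xi_q^{(\eps)}] + 2C_\eps\,\Delta_q^{R;(1,1),(1,2)}(F_n''(B_q^{(\eps)})) + r_\eps\;,
\]
either $F_n'(B_q^{(\eps)})[\xi_q^{(\eps)}]$ is the full Leibniz sum, forcing $r_\eps = -2C_\eps\,\Delta_q^{R;\cdots}(\cdots)\not\to 0$, or it is something else you have not specified. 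The It\^o correction is not a remainder that vanishes; it is precisely the discrepancy between the bare product and the chosen renormalised product, and your argument never makes that choice explicit. The invocations of Lemma~\ref{lemma:2SYoung} and Theorem~\ref{thm:PolyWickestimate} do not address this, since those results control intertwined products once the renormalised object is defined, not the definition itself.

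The paper's proof, by contrast, works entirely inside the regularity structure $\cT^q_{\SDE}$. The expression $F'(B_q)[\xi_q]$ acquires meaning as the reconstruction of a modelled distribution, and the lift of $\partial_t F(B_q^{(\eps)})$ contains the abstract trees $\<1_0>$ and $\<0_1>$. The renormalisation map $M_R$ acts on these trees by inserting contractions weighted by $\ell^{q;(\eps)}(\<1_0>)$ and $\ell^{q;(\eps)}(\<0_1>)$; after reconstruction the contracted part vanishes (since $\Pi_t\<1>(t)=B_q(t,t)=0$) and only the counterterm survives, yielding exactly $2C\,\Delta_q^{R;(1,1),(1,2)}(F''(B_q))$. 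Your reading of the subscripts is also off: $(1,1)$ and $(1,2)$ encode a $2$-partitioned index set and a pairing on it in the sense of Definition~\ref{def:RenMultMap}, not ``nearest-neighbour pairings''.
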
 
Note that the $F'$ and $F''$ denote noncommutative derivatives, cf.\ for example \eqref{eq:NonComDer}.

\subsubsection{The Higgs--Yukawa\TitleEquation{_2}{2} Model}\label{subsec:application2}
Equation~\ref{eq:mezdonphi43} is not interesting in the case of Dirac fermions since the nonlinearity vanishes.
Instead, to show how our formalism can be applied to fermions, we apply it to the Langevin stochastic quantisation of the Higgs--Yukawa$_2$ model.
The Higgs--Yukawa$_2$ Euclidean state is formally given by
\begin{equ}
	\langle \,  \bigcdot \, \rangle_{S} = \frac{1}{Z} \langle \, \bigcdot \enskip e^{ -S(\phi,u,\bar{u}) } \rangle\;,
\end{equ}
with the action functional given by
\begin{equ}\label{eq:Yukawa_Action}
S(\phi,u,\bar{u})=	\int\limits_{\R^2} \Big(\frac{1}{2} |\nabla \phi|^2 + \frac{m^2}{2} \phi^2 + \bilin{\bar u, (-\snabla + M) u}+ g \phi \bilin{\bar u,u} + \frac{\lambda}{4} \phi^4 \Big)\,\d x\;,
\end{equ}
and where $\phi$ is a real bosonic field, $u$ and $\bar{u}$ are two independent, two-dimensional Euclidean Dirac spinor fields (corresponding to a fermionic particle and antiparticle), $\bilin{\bigcdot,\bigcdot}$ is the bilinear (not sesquilinear) extension of the usual scalar product on $\R^2$ to $\C^2$, $\snabla$ is the Dirac operator given by
\begin{equ}[eq:DiracOp]
	\snabla \eqdef \begin{pmatrix}
		 0 & -\partial_1+i \partial_2\\
		-\partial_1-i\partial_2 & 0
	\end{pmatrix} \; ,
\end{equ}
and the constants are $g \in \R$ and $m^{2}, M > 0$.
This type of Higgs--Yukawa$_2$ model in Euclidean signature was introduced in \cite{OS73}. We refer to \cite{CHP23} for a more detailed review of literature on the Higgs--Yukawa$_2$ model.

The system of singular SPDEs that we will study in the context of Higgs--Yukawa$_2$ are given by
\begin{equs}\label{eq:lin_sig_model}
    \partial_t \phi &= (\Delta-m^2) \phi - g  \bilin{(-\overline{\snabla}+M)\bar \upsilon , (\snabla+M)\upsilon} - \lambda \phi^3 +  \xi,	\\
    \partial_t \upsilon &=  (\Delta-M^2) \upsilon - g  \phi (\snabla+M) \upsilon  + \psi,\\
    \partial_t \bar\upsilon & =   (\Delta-M^2) \bar\upsilon- g \phi (-\overline{\snabla}+M)\bar\upsilon + \bar\psi\;,
\end{equs}
where  $\xi$ is a bosonic space-time white noise, and $(\psi, \bar{\psi})$ are a pair of space-time (Euclidean) free Dirac spinor fields, independent of $\xi$, satisfying
\begin{equs}[eq:fermion_noise_covar]
	\omega_F( \psi(s,x) \psi(t,y)) &= \omega_F( \bar{\psi}(s,x) \bar\psi(t,y)) = 0\;,\\
	\omega_F( \bar{\psi}(t,x) \psi(t',x')) &= \delta(t-t') \big(-\overline{\snabla}+M \big)^{-1}(x-x')\;.
\end{equs}
See Section~\ref{sec:HiggsYukawa2} for a more precise definition of the noise $(\psi, \bar{\psi})$.

Setting
 $\eta =  (\snabla +M)\upsilon$ and $\bar \eta= (-\snabla +M)\bar\upsilon$, one formally expects stationary solutions to
 \eqref{eq:lin_sig_model} to be such that  $(\phi(t),\eta(t),\bar\eta(t))^{*}\omega_F$ yields a Higgs--Yukawa$_2$ state at any fixed time.
We refer to \cite{CHP23} for a more detailed discussion of the relationship between the dynamic \eqref{eq:lin_sig_model} and the Higgs--Yukawa$_2$ state.

In \cite{CHP23} we studied local well-posedness for a regularised version of \eqref{eq:lin_sig_model}, which was required in order to be able to treat the problem using a Da Prato--Debussche argument.
Using the general results of the next section, we are now able to treat \eqref{eq:lin_sig_model} without any regularisation using noncommutative regularity structures.

\begin{theorem}\label{thm:Yukawa}
Let $\cA_F$ be the locally $C^{*}$-algebra as described in Theorem~\ref{thm:main_thm1_CHP23} associated to the Dirac fermionic space-time noise $(\psi,\bar{\psi}$)
appearing in \eqref{eq:lin_sig_model}.2
Then, there exist constants $\big(C^1_{\eps}\big)_{ \eps \in (0,1]}$, and central elements $\big(C^2_{\eps}\big)_{ \eps \in (0,1]}, \big(C^3_{\eps}\big)_{ \eps \in (0,1]} \subset Z(\cA_F)$ such that, for each $n \in \N$, the local-in-time solutions $\big(\phi_{\eps},( \upsilon_{\eps},\bar{\upsilon}_{\eps}) \big)$ to the regularised system of equations
\begin{equs}[eq:Yukawa_eps]
		\partial_t \phi_{\eps} &= (\Delta-m^2 + \lambda C^1_\eps - g^2 C^{3}_{\eps}) \phi_{\eps} \\
		{}& \qquad - g\Big(  \bilin{\bigl(-\overline{\snabla}+M \bigr) \bar \upsilon_{\eps} \; , \bigl(\snabla + M \bigr)\upsilon_{\eps}} - C^2_\eps \Big)
		  - \lambda \phi_\eps^3  +  \xi_{\eps},	\\
		\partial_t \upsilon_{\eps} &=  (\Delta-M) \upsilon_{\eps}- g  \phi_{\eps} \bigl( \snabla + M \bigr) \upsilon_{\eps}  + \bpsi_{\eps} \; ,\\
		\partial_t \bar\upsilon_{\eps} & =  (\Delta-M) \bar\upsilon_{\eps}- g \phi_{\eps} \bigl( -  \overline{\snabla} + M \bigr) \bar\upsilon_{\eps} + \bar\bpsi_{\eps}\;,
\end{equs}
seen as random fields with respect to the bosonic space-time white noise $\xi$ and taking values in $\cA_F$, project to random local-in-time solutions in $\cA_{n}$ that converge in probability on $\cA_{n}$ as $\eps \downarrow 0$.
Above, we write $(\xi_{\eps}, \bpsi_{\eps}, \bar{\bpsi}_{\eps}) = (\xi, \bpsi, \bar{\bpsi}) \ast \rho^{(\eps)}$ for the regularised noise fields, and $\cA_F$ is the algebra of operators on the Fock space generated by the fermionic white noise.

Here $\cA_{n}$ is the Banach space obtained by quotienting $\cA$  by the kernel of $\|\bigcdot\|_{n}$ and completing with respect to $\|\bigcdot\|_{n}$ where the $ \big( \|\bigcdot\|_{n} \big)_{n=1}^{\infty}$ are the family of seminorms appearing in Theorem~\ref{thm:main_thm1_CHP23}.
\end{theorem}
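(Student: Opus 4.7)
The plan is to realise the coupled system \eqref{eq:lin_sig_model} as a fixed-point problem in a noncommutative regularity structure valued in the locally $C^*$-algebra $\cA_F$, treating the bosonic white noise $\xi$ pathwise (in the sense of Section~\ref{subsec:bosons}) and the fermionic noise $(\bpsi,\bar{\bpsi})$ as an operator-valued distribution pulled back via $\digamma$ to $\cA_F$ in accordance with Theorem~\ref{thm:main_thm1_CHP23}. I would proceed along the three standard steps: (i) construct an appropriate regularity structure and verify subcriticality, (ii) build a renormalised BPHZ model for the mixed noise, and (iii) solve the abstract fixed-point problem and reconstruct an $\cA_F$-valued solution.

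For step (i), a preliminary change of variables $\eta=(\snabla+M)\upsilon$ and $\bar\eta=(-\overline{\snabla}+M)\bar\upsilon$ raises the effective regularity of the fermionic components and turns every nonlinearity on the right-hand side into a polynomial in trees of bounded-below degree. In parabolic scaling on $\R\times\T^2$ the bosonic white noise and the fermionic noise (whose equal-time covariance \eqref{eq:fermion_noise_covar} is only weakly singular in space) both have strictly negative but not too rough parabolic degree, so the system is subcritical and only finitely many trees of negative degree arise.

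For step (ii), I would invoke the general construction of noncommutative BPHZ models developed in the body of the paper. The three renormalisation constants are read off from the negative-degree trees: $C^1_\eps$ corresponds to the Wick-square of the bosonic stochastic heat tree and is a scalar, logarithmically divergent constant; $C^2_\eps$ corresponds to the coincident-point contraction of $(\upsilon_\eps,\bar\upsilon_\eps)$ against the regularised Dirac propagator; and $C^3_\eps$ corresponds to a two-fermion-loop subdivergence in the $\phi$ equation obtained by iterating the Yukawa vertex. Both $C^2_\eps$ and $C^3_\eps$ are central but not scalar, because the Dirac propagator is a matrix-valued kernel whose short-distance limit projects, under BPHZ contraction, onto the centre $Z(\cA_F)$ rather than onto $\C\cdot\bone$.

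For step (iii), once the renormalised model is in hand, the general analytic theory of $m$-convex-valued regularity structures applies inside each Banach algebra $\cA_n$, yielding a unique local-in-time modelled distribution whose reconstruction solves \eqref{eq:Yukawa_eps} in the sense of $\cA_n$-valued distributions. Convergence in probability as $\eps\downarrow 0$ then reduces to $L^p$-convergence of the corresponding renormalised model in $\cA_n$, which in the bosonic directions is classical and in the fermionic directions is supplied by the probabilistic estimates of \cite{CHP23}. The main obstacle I anticipate is controlling the mixed-noise moment estimates uniformly in the seminorm index $n$: a single renormalised tree generally mixes bosonic Wick contractions with fermionic anticommutator contractions, and ensuring compatibility between these simultaneously in the $\|\bigcdot\|_n$-topology requires a careful use of the $\CL^2$-isomorphism and the $C^*$-homomorphism $\digamma$ of Theorem~\ref{thm:main_thm1_CHP23}.
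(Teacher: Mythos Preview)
Your high-level strategy matches the paper's treatment in Section~\ref{sec:HiggsYukawa2}: set up a tree-based $\cA_F$-regularity structure (with the bosonic noise treated as a random parameter), construct a BPHZ model, and solve the abstract fixed-point problem seminorm by seminorm via Theorem~\ref{thm:AbsFPM}. Two points deserve correction.

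First, some details differ. The paper does not pass to $\eta=(\snabla+M)\upsilon$; instead it absorbs the Dirac operator into the integration kernels, so that the edge types $\mft_{ij},\bar\mft_{ij}\in\mfL^+$ implement convolution with the matrix entries of $(\snabla+M)(\partial_t-\Delta+M^2)^{-1}$ and its conjugate. Your change of variables is a legitimate alternative, but not what is done. Also, your explanation of why $C^2_\eps,C^3_\eps$ are central is wrong: it has nothing to do with the matrix structure of the Dirac propagator. In the \emph{extended} CAR algebra $\cA_F$ the anticommutators $[\balpha(f),\balpha^\dagger(g)]_+$ are no longer scalars but genuine central elements (Section~\ref{sec:extendedcar}), and the BPHZ characters in Section~\ref{sec:BPHZFermi} are defined to take values in $Z(\cA_F)$ for precisely this reason.

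The real gap is in step~(ii). The fermionic stochastic estimates do \emph{not} come from \cite{CHP23}: that paper only covers the Da Prato--Debussche regime and gives hypercontractive bounds on fixed Wick powers (Proposition~\ref{proposition:Wick_Product} here). The non-local divergence $C^3_\eps$, coming from the trees $\<1F2A>$ and $\<1A2F>$ in \eqref{eq:HiggsYukawa_constants}, puts the system outside that regime, and one needs convergence of the full renormalised model. The paper's mechanism is the Fock space algebra machinery of Section~\ref{sec:StochEst}: the modified algebra $\widetilde{\mfA}^F(\mfH)$ of Section~\ref{sec:BPHZFermi} reduces the fermionic model bounds, seminorm by seminorm, to commutative Gaussian BPHZ estimates from \cite{HS24}, yielding Theorem~\ref{thm:BPHZFerm} and then Corollary~\ref{cor:FocktoFermBound}. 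This reduction requires choosing a translation-invariant filtration $\Gamma_n^U$ built from plane waves, which in turn forces the time domain to be compactified to a torus. Your anticipated obstacle about uniformity in $n$ is exactly the issue this machinery is built to resolve; the $\CL^2$-isomorphism and $\digamma$ from Theorem~\ref{thm:main_thm1_CHP23} alone are not enough.
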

A key difference in the renormalisation needed for Theorem~\ref{thm:Yukawa} versus that of \cite[Theorem~1.4]{CHP23}
is the appearance of the non-local divergence, which puts the local well-posedness of \eqref{eq:lin_sig_model} outside the scope of the Da Prato--Debussche argument.
Written in terms of contractions of trees, the contributions to the constants $C^1_\eps$ and $C^2_\eps$ in \eqref{eq:Yukawa_eps} are in direct correspondence with contributions to the identically named constants in \cite[Theorem~1.4]{CHP23}.
We include $C^{3}_{\eps}$ for the contributions from the new non-local divergences.
See \eqref{eq:HiggsYukawa_constants} where this is made precise.

\subsubsection{General Results}\label{sec:genresults}

The machinery we develop in this paper that allows us to prove Theorems~\ref{thm:Phi43} and~\ref{thm:Yukawa} can be summarised in the following meta-theorem.

\begin{metatheorem}\label{metatheorem1}
The analytic theory of regularity structures generalises to the setting of $\mathcal{A}$-valued driving noises,
 where $\mathcal{A}$ is any locally $m$-convex algebra.

In particular, given a locally subcritical parabolic singular SPDE with $\mathcal{A}$-valued noises, one can generate a corresponding $\mathcal{A}$-regularity structure,
along with a corresponding space of models (including canonical lifts of smooth driving noises, and admitting renormalisation via preparation maps).

The Reconstruction, Multiplication, and Abstract Integration theorems hold on the associated spaces of modelled distributions.

For locally subcritical equations driven by $q$-Gaussian noises (with possibly different values of $q$) and satisfying a finite variance condition, one also has a BPHZ lift that is stable (in appropriate spaces of random models) under regularisation of the noise by smooth space-time mollification.
\end{metatheorem}
In comparison with the original analytic theory of regularity structures developed in \cite{Hai14},
our setting generalises the allowable ``target space'', namely the algebra of scalars in the target space is no longer required to be $\R$ or $\C$ but can instead be
any locally $m$-convex algebra $\mathcal{A}$.
As mentioned earlier, the stochastic step above is done via a post-processing of results from the commutative setting.

\subsection{Past Work and Related Literature}\label{sec:pastwork}

Noncommutative stochastic analysis has a long history at this point.
We will not attempt to give a complete overview of the literature, but point out some key developments
in this theory and describe how the present article fits into this picture.

The first article we have found on noncommutative Brownian motion is by Cockroft and Hudson \cite{CockroftHudson77} in 1977.
Starting in the early 1980s, a school of ``quantum stochastic calculus'' emerged \dash see for instance \cite{BSW1982} by Barnett, Streater, and Wilde for the first construction of fermionic stochastic integrals and followed in 1984 by the works of Hudson and Parthasarathy \cite{HudsonParthasarathy84} and Applebaum and Hudson \cite{ApplebaumHudson1984} which developed stochastic integrals taking values in operators on bosonic and fermionic Fock space, respectively along with It\^{o} formulae in these settings.
Useful references on stochastic calculus in the setting of bosonic and fermionic Fock spaces are \cite{parthasarathy2012introduction} and \cite{MeyerQuantum}.

More recently, Albeverio, Borasi, De Vecchi, and Gubinelli \cite{Gub20} developed a fermionic Itô calculus and an approach to fermionic Langevin stochastic quantisation in a non-tracial setting, where the authors worked at the level of Grassmann algebras rather than in the Fock space picture.
In \cite{Gub23}, the last two authors, along with Fresta and Gordina, extended this framework and built a more general $L^{p}$ and martingale theory in this non-tracial setting.

Beyond the bosonic and fermionic setting, K\"{u}mmerer and Speicher \cite{KS1992}, inspired by the approach of \cite{HudsonParthasarathy84}, developed an analogous theory of stochastic integration along with an It\^{o} formula in the setting of free Fock spaces.
Biane and Speicher \cite{BS98} further developed a theory of free It\^{o} calculus along with elements of a corresponding Malliavin calculus. In particular, their results were later employed by Kargin \cite{Kar11} to develop a local solution theory for SDEs driven by free white noise.
An extension of many of the results of \cite{BS98} to the mezdonic setting was obtained by Donati-Martin \cite{Donati-Martin2003}.
We note that the above results are focused on stochastic calculus with respect to Brownian motion \dash recently Jekel, Kemp, and Nikitopoulos \cite{JKN2023} developed a more general theory of stochastic calculus with martingales that includes mezdonic processes, see Remark~\ref{rem:Comparison} below for more details.

The story we have described in this subsection so far is closely analogous to the more probabilistic approach to commutative stochastic calculus.
As mentioned earlier, this article is much closer to the more analytic, pathwise approach to commutative stochastic calculus, i.e., the ``rough path approach''.

The present article is by no means the first attempt to develop a rough path approach in the setting of noncommutative probability theory.
In \cite{CapitaineDonatiMartin2001} Capitaine and Donati-Martin constructed a geometric rough path over free Brownian motion.
Deya and Schott \cite{DS18,DS19} extended this by constructing rough paths over $q \in [0,1)$ mezdonic Brownian motion and free fractional Brownian motion and used this to develop a local solution theory for SDEs driven by these processes.
In Section~\ref{sec:NonLinSDE} we apply our techniques to the problem of constructing rough paths for $q$-Brownian motion and compare our results to those of \cite{DS18}.
Finally, we mention \cite{BellingeriGilliers22} where Bellingeri and Gilliers develop a theory of signatures for smooth paths taking values in a general operator algebra.

Analogously to how the theory of regularity structures \cite{Hai14} gives an alternative formulation of the theory of branched rough paths in the commutative setting (which includes stochastic integration with respect to Brownian motion), the theory given in the present article gives an alternative approach to many of the stochastic integrals and rough path results described above.
It is completely novel, though, in being able to deal with the renormalisation and local well-posedness of very rough nonlinear singular SPDEs in the fermionic and mezdonic settings.

\subsection{Outline of the Paper}
In the remainder of Section~\ref{sec:intro}, we introduce and review some preliminary notation and background material. 
Section~\ref{sec:locally_t_convex} introduces some important definitions related to topologies on tensor products involving $m$-convex algebras, and in Section~\ref{sec:GNS}, we briefly review the GNS construction. 

Section~\ref{subsec:particles} describes $q$-Gaussians in more detail.
In Section~\ref{subsec:FockSpace} we define $q$-Gaussian algebras and states using their Fock space representation and also prove several key combinatorial formulae on $q$-Gaussian Hermite\slash Wick products.  
Sections~\ref{sec:Bosons} and~\ref{sec:Fermions} describe how bosons, Clifford fermions, and Dirac fermions fit into our framework \dash a more detailed description of the extended CAR algebra approach underlying our approach to Higgs-Yukawa$_2$ in Theorem~\ref{thm:Yukawa}.

Section~\ref{sec:qmezdons} gives crucial combinatorial formulae and analytic estimates needed for working with $q$-mezdons (that is, $q$-Gaussians with $|q| < 1$). 
Proposition~\ref{prop:OpBounds} states the key ultracontractive estimates first obtained by \cite{Boz99} for $q$-Mezdons, these allow us to realise renormalised products as bounded operators. 
Definition~\ref{def:newBanachAlg} and Theorem~\ref{thm:newBanachAlg} introduce a new Banach subalgebra $\cA_{q}$ of operators, and in Theorem~\ref{thm:PolyWickestimate} we show that the algebra $\cA_{q}$ allows us to control intertwined renormalised products such as \eqref{eq:intertwined}. 

We have included Section~\ref{sec:Besovestimates} as an intermission to describe how the framework developed so far can be used to give a noncommutative formulation of the Da Prato--Debussche argument.
In particular, we use some of our machinery for topologising tensor products along with a (soft) use of the Reconstruction theorem to show how we can generalise estimates for scalar Besov spaces, such as the Young product estimate Theorem~\ref{thm:YoungMult}, to the $m$-convex setting. 
Next, in order to provide a test case for the tools of Section~\ref{sec:qmezdons} we present a solution theory for $q$-Mezdon $\Phi^4_2$ in Theorem~\ref{thm:Phi42Ren}.

Section~\ref{sec:RS} then formulates an ``abstract'' theory of $\CA$-regularity structures for a given $m$-convex algebra $\CA$.  
In our formulation, the structure space for an $\CA$-regularity structure is more than a vector space, but in fact an $\CA$-bimodule \dash again, this is natural when considering that the classical theory of regularity structures can be thought of as a theory of $\R$-regularity structures. 
Section~\ref{sec:RS} also includes the corresponding definitions of a model and modelled distribution. 
The bulk of this section is then devoted to showing how the key analytic theorems of the classical theory of regularity structures \cite{Hai14} (reconstruction, abstract integration, modelled distribution product estimates, and the abstract fixed point map) generalise to the $\CA$-regularity structure setting.

Section~\ref{sec:TreeRegStruc} then specialises to $\mathcal{A}$-regularity structures generated by trees. 
This section introduces a class of decorated trees that are reminiscent of those introduced in \cite{Hai14,BHZ19} but with an additional $\CA$-decoration to allow for a bimodule structure and also a much richer extended decoration $\mfo$
that allows us to keep track of negative renormalisation in the noncommutative setting. 
To handle the noncommutativity in our setting, we work with a recursive formulation of the structure group, and for negative renormalisation, we adapt the preparation map approach of \cite{Br18}. We give an explicit inductive construction for a class of preparation maps and use this to give an elementary proof of the cointeraction property. 

We then show that, in the smooth setting, one has a corresponding canonical model and also a BPHZ lift. 
One gap in the present work is that we do not have a clear language and automated theorem for how renormalisation of the model induces counterterms in the $q$-Gaussian setting, in contrast with the commutative case \cite{BCCH21}. 
However, we do introduce a formalism of $\Delta_{q}$-operators associated to trees (giving more complicated analogues of the map $\Delta_{q}$ appearing in \eqref{eq:intertwined}) which allow us to encode how noncommutative extraction in negative renormalisations twists the resulting counterterm. 

Section~\ref{sec:StochEst} provides an ``API'' that allows us to use the black box theorem \cite{HS24} for stochastic estimates in the classical regularity structures to prove a similar black box result, Theorem~\ref{thm:BPHZ}, in the $q$-Gaussian setting.
The key observation is that, thanks to the Fock space structure enjoyed by all the $q$-Gaussians, stochastic estimates always reduce to estimates of sums of norms of deterministic kernels, modulo $q$-dependent factors and $q$-dependent symmetrisations. 
Section~\ref{subsec:FockSpaceAlg} introduces an algebra called the ``Fock space algebra'' $\mfA$ which serves as a bookkeeping tool to store kernel estimates so that control of the BPHZ lift of a specifically designed model for an $\mfA$-regularity structure gives control of BPHZ lifts of the associated models in all of the corresponding $q$-Gaussian regularity structures. 
Finally, we can obtain control of this BPHZ lift in the $\mfA$-regularity structure by applying \cite{HS24} in the classical commutative Gaussian case on a specially designed regularity structure and model. 

The paper concludes with Section~\ref{sec:Examples}, where we apply the theory we have developed throughout this paper to the following set of examples we mentioned above:
\begin{itemize}
	\item Rough Paths for mezdons for all $q \in (-1,1)$, a full local-in-time solution theory for rough SDE in this setting, as well as an It\^o-Formula, cf.\ Section~\ref{sec:NonLinSDE},
	\item Existence of local-in-time solutions for the mezdonic $\Phi^4_3$-equation for all $q \in (-1,1)$, cf.\ Section~\ref{sec:Phi43Mez},
	\item Existence of local-in-time, noncommutative ``pathwise'' solutions for the Cliffordian $\Phi^4_3$-equation, cf.\ Section~\ref{sec:eq:Phi43Clif},
	\item Existence of local-in-time, noncommutative ``pathwise'' solutions for the Euclidean Higgs-Yukawa$_2$-model, cf.\ Section~\ref{sec:HiggsYukawa2}. 
\end{itemize}

\subsection*{Acknowledgements}

{\small
AC gratefully acknowledges partial support by the EPSRC through EP/S023925/1.
MH gratefully acknowledges support by the Royal Society through a research professorship.
MP gratefully acknowledges support by EPSRC through the ``Mathematics of Random Systems'' CDT  EP/S023925/1.
}

\subsection{Notations and Conventions}\label{subsec:notation}

We use the notation $[n] \eqdef \{1,\dots, n\}$ for $n \in \N$.
For $k \in \N^d$, we write $|k| = \sum_{i = 1}^d k_i$.

We denote by $\mfS_n$\label{symbol:permutation} the permutation group for $n$ elements and $\mfS_{n,k} \eqdef \mfS_{n}/(\mfS_k \times \mfS_{n-k})$ \label{symbol:twopermutation}. When we write $\sigma \in \mfS_{n,k}$, we will always mean the representative of the equivalence class with the minimal number of inversions.

We denote by $\mathbb{K} \in \{\R,\C\}$ the base field of our vector spaces.
Accordingly, by ``scalar'' we will usually mean $\mathbb{K}$-valued. When dealing with inner products $\Braket{\bigcdot, \bigcdot}$ of complex vector spaces, we will always take them to be linear in the second argument and antilinear in the first. The symbol $\otimes$\label{symbol:tensor} will always denote the algebraic $\mathbb{K}$-tensor product of $\mathbb{K}$-vector spaces and $\wotimes$ a topological closure of this tensor product, the choice of which may be specified by a corresponding index. $\wotimes_\pi$ will denote the projective tensor products, $\wotimes_\eps$ the injective tensor product, $\wotimes_{C^*}$ the spatial tensor product of $C^*$-algebras, and $\wotimes_\alpha$ the Hilbert space tensor product. However, we will drop the index in the latter case unless necessary. For further details concerning these tensor products, see Appendix~\ref{appendix:FuncAna}. Furthermore, for a vector space $V$ and $k \in \N^d$ we will use the shorthand notation
\begin{equ}
	V^{\otimes k} \eqdef \bigotimes_{i = 1}^d V^{\otimes k_i}
\end{equ}
and analogously for the completions. For a finite index set $I$ and vector spaces $V_i$ for $i \in I$, we will also use the shorthand $f_{\otimes I} \eqdef \bigotimes_{i \in I} f_i \in \bigotimes_{i \in I} V_i$. We will refer to vectors of this form as simple tensors.   

For vector spaces $V,W$, let $L(V,W)$ be the space of linear maps $V \to W$. If $V,W$ are topological vector spaces, let $\CB(V,W)$ be the space of continuous linear maps. If $V = W$, we drop the second vector space in the notation. 
We write $\bone = \bone_{V}$ to denote the identity map on $V$. We overload notation as well for a unital algebra $A$, where we write $\bone = \bone_{A}$ for the unit of $A$. For $n \in \N$, we denote by $\CB^n(V; W)$ the space of separately continuous $n$-linear maps $V^n \to W$. We will often switch in notation between viewing $\phi \in \CB^n(V; W)$ as a multilinear map and a linear map $V^{\otimes n} \to W$ without remarking on it. However, whenever we perform extensions to topological tensor products, we shall justify why they are possible.

Whenever we consider a locally convex topological vector space (LCTVS)  $E$, we shall always assume that $E$ is Hausdorff and complete unless otherwise stated, and we will denote a set of continuous seminorms defining the topology on $E$ by $\mfP$.

Given any measure space $(S,\mu)$, and $q \in [1,\infty)$, we write $L^{q}(S,\mu)$ for the standard $L^{q}$ space of (equivalence classes of) scalar functions.
We define the vector space $L^{\infty -}(S,\mu) \eqdef \bigcap_{q\geqslant 1} L^{q}(S,\mu)$, which is a
locally convex topological vector space (LCTVS) when equipped with the family of norms $( \| \bigcdot \|_{L^q})_{q\geqslant 1}$. We also drop $\mu$ from the notation when it is clear from context.


When $E$ is a Fr\'echet space, we write $L^{q}(S,\mu ; E)$ for the closure of the algebraic tensor product $L^{q}(S,\mu) \otimes E$, with respect to the topology induced by the seminorms
\begin{equ}
	\| f \|_{L^q \otimes \mfp} \eqdef \biggl( \int_S \bigl(\mfp(f(x))\bigr)^q \d \mu (x) \biggr)^{\frac{1}{q}}
\end{equ}
for all $\mfp \in P$.
We again write $L^{\infty - }(S,\mu; E) = \bigcap_{q\geqslant 1} L^q(S,\mu; E)$.


Given an open subset $U$, we write  $\cD(U)$ for the space of smooth, compactly supported scalar-valued functions on $U$.
For a LCTVS $E$, we let $\cD'(U;E)\eqdef \CB( \cD(U) ; E )$.
We write $\cC(U)$ for the space of continuous scalar functions on $U$ and, for $r \in \N$, $\cC^r(U)$ for the space of $r$ times continuously differentiable functions on $U$, and analogously for $\cC^r(U ; E)$.
By $\cD'(\R^n \times \T^m ; E)$, we denote the subspace of $\cD'(\R^{n+m} ; E)$ of distributions $\xi$ that are invariant under the natural action of $\Z^m$ by translation on the last $m$coordinates.
We write $H^s(\R^d)$ for the Sobolev space of index $s \in \R$.

In the context of SPDEs, it often happens that we need to scale different space-time directions differently. We thus introduce a scaling $\s = (\s_1,\dots, \s_d) \in \N^d_+$ and, for $(x_1,\dots, x_d) \in \R^d$,
we define its rescaling by a factor $\lambda > 0$ by
\begin{equ}
	\lambda^\s (x_1,\dots, x_d) \eqdef  (\lambda^{\s_1} x_1, \dots,  \lambda^{\s_d} x_d)\;.
\end{equ}
We use a corresponding scaled ``norm'' on $\R^d$ by setting, for $x  \in \R^d$, $|x|_\s \eqdef |x_1|^{1/\s_1} + \cdots + |x_d|^{1/\s_d}$, so that $|\lambda^\s x|_{\s} = \lambda |x|_{\s}$. By abuse of notation we also define, for $k \in \N^d$, $|k|_{\s} \eqdef \sum_{i = 1}^n \s_i k_i$.

With this scaling, we define the scaling dimension $d_{\s} \eqdef \sum_i \s_i$ of our space, in particular the function $|x|_{\s}^{j}$ is locally integrable near $0$ if and only if $j > -d_{\s}$.
For $x \in \R^d$, $r>0$ we define the scaled balls $B_{\s}(x,r) \eqdef \left\{ y \in \R^d \, \middle| \, |y-x|_{\s} < r \right\}$.

For a continuous function $\phi \in \cC(\R^d)$, we define for $x \in \R^d$, $\lambda \in (0,1]$ the recentred rescaled version of $f$ by
\begin{equ}
	\CS^\lambda_{\s, x} \phi (y) = \lambda^{-|\s|} \phi \left( \lambda^{-\s} (y-x)\right)\;.
\end{equ}

We fix a mollifier $\rho \in \cD(\R^d)$, that satisfies $\int\rho = 1$, $\rho \geqslant 0$, and $\rho(-x) = \rho(x)$ for all $x \in \R^d$. For a distribution $\xi \in \cD'(\R^d)$, we will denote its mollification by
\begin{equ}
	\xi^{(\eps)} (x) \eqdef \xi\left( \CS^\eps_{\s, x} \rho  \right)
\end{equ}
for $\eps \in (0,1]$.

The scaling $\s = (1,\dots, 1)$ will be called Euclidean, and whenever a scaling is not mentioned in the notation, it will be assumed to be Euclidean. E.g.\ $B(x,r)$ will denote the usual ($\ell^1$-)ball around $x$ of radius $r$.

\subsubsection{Locally \TitleEquation{t}{t}-Convex Spaces}\label{sec:locally_t_convex}
When solving (nonlinear) equations in locally topological vector spaces, we will want to work with stronger continuity conditions on maps between such spaces.
Rather than the standard topological notion of continuity, we will want to impose more quantitative conditions that provide estimates on seminorms in the codomain in terms of estimates on specified seminorms in the domain \dash the strengthening here is analogous to how $m$-convexity is stronger than just being a topological algebra.

In order to formulate this, it is natural to assume some choice of correspondences between seminorms on different locally topological vector spaces as input.
\begin{definition}[Locally \TitleEquation{t}{t}-Convex]
	Let $\mfP$ be an indexing set and $(F, \mfQ)$ a locally convex topological vector space.
	A $t$-convex realisation of $(F, \mfQ)$ over $\mfP$ is a partition $(\mfQ_{\mfp})_{\mfp \in \mfP}$ of $\mfQ$ (or an equivalent set of seminorms) into disjoint non-empty sets.
	We say that $(F, (\mfQ_\mfp)_{\mfp \in \mfP})$ is a locally $t$-convex space over $\mfP$.
	We call seminorms in $\mfQ_\mfp$ the $\mfp$-seminorms of $F$.
	We sometimes drop the mention of $\mfP$ and simply call $(F, (\mfQ_\mfp)_{\mfp \in \mfP})$ a locally $t$-convex space.
\end{definition}

\begin{remark}
	The symbol $t$ in $t$-convex stands for tensorial, as the $t$-convex spaces $F$ we will be considering are typically $E$-valued function spaces where $(E,\mfP)$ is a locally convex vector space and we use the set of seminorms $\mfP$ of $E$ to partition those of $F$. 
	
	Such function spaces can be realised as a tensor product of a scalar-valued function space with $E$, and the seminorms in $\mfQ_\mfp$ are then the seminorms that can be realised as crossnorms containing $\mfp$.
	In such a situation, we will also sometimes say that $(F,\mfQ)$ is $t$-convex over $(E, \mfP)$ or just $E$, instead of $\mfP$.
\end{remark}

We then have the promised strengthened continuity condition.

\begin{definition}

	A continuous linear map $\phi \colon F_1 \to F_2$ between two spaces $(F_1, \mfQ^1)$ and $(F_2, \mfQ^2)$ that are locally $t$-convex over $\mfP$ is said to be $t$-continuous if, for every $\mfq^2 \in \mfQ^2_{\mfp}$, there exists $\mfq^1 \in \mfQ^1_{\mfp}$ and a constant $C \geqslant 0$, such that, for all $v \in F_1$,
	\begin{equ}
		\mfq_2\left( \phi(v)\right) \leqslant C \mfq_1(v) \; .
	\end{equ}

	We drop the parentheses above for linear maps when no confusion may arise.
Analogously, for a multilinear map $\phi \colon F_1 \times \cdots \times F_n \to G$ of locally $t$-convex spaces $(F_i , \mfQ^i)$, $(G, \mfY)$, we say that it is $t$-continuous, if for all $\mfp \in \mfP$, $i \in [n]$ and $\mfy \in \mfY_\mfp$, there exists $\mfq_i \in \mfQ_{\mfp}^i$ and a constant $C>0$ such that, for all $v_i \in F_i$,
	\begin{equ}
		\mfy \left( \phi(v_1, \dots, v_n) \right) \leqslant C \mfq_1(v_1) \cdots \mfq_n(v_n) \; .
	\end{equ}
\end{definition}

\begin{remark}
	If we have two  locally $t$-convex spaces $F_1$ and $F_2$ that are $E$-valued function spaces, then $t$-continuity of $\phi \colon F_1 \rightarrow F_2$ imposes a continuity condition where how we measure sizes in $E$ is consistent between functions in $F_1$ and $F_2$.
\end{remark}

\begin{example}
	A simple example of a $t$-convex space is the space $\cC(\R^d ; V)$ of continuous functions  $\R^d \to V$ where $V$ is a Banach space, equipped with pointwise convergence. Then $\cC(\R^d; V)$ is locally $t$-convex over $\cC(\R^d)$. The set of seminorms on both spaces is indexed by compacta $ \K \Subset \R^d$ and given by
	\begin{equ}
		\|f\|_{\K} \eqdef \sup_{x \in \K} |f(x)| \; , \qquad \|g\|_{\K} \eqdef \sup_{x \in \K} \| g(x)\|
	\end{equ}
	for $f \in \cC(\R^d)$ and $g \in \cC(\R^d; V)$ respectively. For any bounded linear operator $A \colon V \to V$, the map $f \mapsto A \circ f$ is $t$-continuous, whereas, for example, convolution with a kernel $K \colon \R^d \to \R$ would not be $t$-continuous.
\end{example}

\begin{remark}
	A locally convex space $(E,\mfP)$ is always locally $t$-convex over itself by taking the partition $\mfP_\mfp \eqdef \{ \mfp\}$. Furthermore, for a pair $(F_1, \mfQ^1)$, $(F_2, \mfQ^2)$ of $t$-convex spaces over $E$, the product space $F_1 \times F_2$ becomes a $t$-convex space when equipped with the topology induced by the seminorms $\left( \mfQ^1_\mfp \oplus \mfQ^2_\mfp \right)_{\mfp \in \mfP}$ with
	\begin{equ}
		\mfQ^1_\mfp \oplus \mfQ^2_\mfp \eqdef \left\{ \mfq_1 \oplus \mfq_2 \, \big| \, \mfq_1 \in \mfQ^1_\mfp \, , \, \mfq_2 \in \mfQ^2_\mfp   \right\} \; .
	\end{equ}
\end{remark}

\begin{definition}[\TitleEquation{t}{t}-Projective Tensor Product]
\label{def:tProjTens}
	Let $(F_1, \mfQ^1)$, $(F_2, \mfQ^2)$ be two locally $t$-convex spaces over $(E, \mfP)$.

	We define the $t$-projective tensor product topology on $F_1 \otimes F_2$ to be given by the collection of seminorms $\mfq_1 \otimes \mfq_2$ with $\mfq_1 \in \mfQ^1_\mfp$ and $\mfq_2 \in \mfQ^2_\mfp$ for $\mfp \in \mfP$ where
	\begin{equ}
		\big(\mfq_1 \otimes \mfq_2 \big) ( A ) \eqdef \inf \sum_{i}  \mfq_1(v_i)\mfq_2(w_i)\;,
	\end{equ}
	with the infimum being taken over all finite subsets of $\left\{ \left(v_i, w_i\right) \right\}_{i} \subset F_1 \times F_2$, s.t.\
	\begin{equ}
		A = \sum_{i} v_i \otimes w_i  \; .
	\end{equ}
	By a slight abuse of notation we will denote the completion of $F_1 \otimes F_2$, with respect to this topology by $F_1 \wotimes_\pi F_2$ and this tensor product in general by $\wotimes_\pi$.\footnote{The reason this is an abuse of notation is that the $t$-projective tensor product topology is slightly weaker than the traditional projective tensor product as we do not allow mixed products of seminorms. The only time we will be using the ``normal'' projective tensor product is when we take products of Banach spaces where we only have one norm, so that Definition~\ref{def:tProjTens} and the standard definition coincide.}
\end{definition}

By the same arguments used for the projective tensor product, \cite[Proposition~43.4]{Trev67}, one can prove the following statement.
\begin{proposition}
\label{prop:ProjTensExt}
	Any $t$-continuous bilinear map $\phi \colon F_1 \times F_2 \to G$ of locally $t$-convex spaces $(F_i , \mfQ^i)$, $(G, \mfY)$ extends to a $t$-continuous linear map $\phi \colon F_1 \wotimes_\pi F_2 \to G$.
\end{proposition}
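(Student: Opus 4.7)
The plan is to follow the well-known proof of the classical projective tensor product extension result \cite[Proposition~43.4]{Trev67}, with the only new feature being the careful bookkeeping that all seminorms appearing in the estimates share a common index $\mfp \in \mfP$. The argument proceeds in three steps: first, use the universal property of the algebraic tensor product to obtain a linear map; second, check that this map satisfies the $t$-continuity estimate with respect to the $t$-projective seminorms of Definition~\ref{def:tProjTens}; third, extend to the completion by density and completeness of $G$.

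For the first step, by the universal property of $F_1 \otimes F_2$, the bilinear map $\phi$ lifts uniquely to a linear map, still denoted $\phi \colon F_1 \otimes F_2 \to G$. For the second step, fix $\mfp \in \mfP$ and $\mfy \in \mfY_\mfp$. By $t$-continuity of the bilinear map, there exist $\mfq_1 \in \mfQ^1_\mfp$, $\mfq_2 \in \mfQ^2_\mfp$, and $C \geqslant 0$ such that $\mfy(\phi(v,w)) \leqslant C\, \mfq_1(v) \mfq_2(w)$ for all $(v,w) \in F_1 \times F_2$. Given any $A \in F_1 \otimes F_2$ and any finite representation $A = \sum_i v_i \otimes w_i$, linearity and the triangle inequality yield
\begin{equ}
\mfy(\phi(A)) \leqslant \sum_i \mfy(\phi(v_i,w_i)) \leqslant C \sum_i \mfq_1(v_i)\mfq_2(w_i).
\end{equ}
Taking the infimum over all such finite representations gives $\mfy(\phi(A)) \leqslant C\, (\mfq_1 \otimes \mfq_2)(A)$. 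The crucial point is that $\mfq_1$ and $\mfq_2$ belong to the same block $\mfp$ of the partitions, so $\mfq_1 \otimes \mfq_2$ is by definition one of the seminorms generating the $t$-projective topology, witnessing $t$-continuity of $\phi$ on $F_1 \otimes F_2$.

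For the third step, since $G$ is Hausdorff and complete by our standing convention on locally convex spaces, and since $\phi$ is uniformly continuous on $F_1 \otimes F_2$ with respect to the $t$-projective topology (by the previous bound), it extends uniquely to a continuous linear map on the completion $F_1 \wotimes_\pi F_2$; the estimate passes to the closure, so the extension remains $t$-continuous with the same constant $C$ and the same pair $(\mfq_1, \mfq_2)$. There is no genuine obstacle here: the only subtlety over the classical projective tensor product argument is the compatibility of indices $\mfp$ across the partitions, which is built into the hypothesis of $t$-continuity and therefore handled for free.
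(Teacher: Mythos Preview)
Your proof is correct and takes essentially the same approach as the paper, which simply refers to \cite[Proposition~43.4]{Trev67} and notes that the same arguments apply; you have spelled out exactly those arguments with the required bookkeeping on the partition index $\mfp$.
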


Beyond just locally $m$-convex algebras $\CA$, we will also need to deal with different types of bimodules over these algebras. Let $(\CA, \mfP)$ be a locally $m$-convex algebra.
\begin{definition}[Locally $\boldsymbol{t}$- and $\boldsymbol{m}$-Convex Modules]
\label{def:tmconvex}
	A locally convex $\CA$\-/bimodule $(\CM, \mfQ)$  is called a locally $t$-convex module if $\CM$ is locally $t$-convex over $(\CA,\mfP)$ and the multiplication map $\CA \times \CM \times \CA \to \CM$ is locally $t$-convex of norm $1$, i.e.\ for all $m \in \CM$, $a,b \in \CA$, $\mfp \in \mfP$, and $\mfq \in \mfQ_\mfp$
	\begin{equ}
		\mfq(amb) \leqslant \mfp(a) \mfq(m) \mfp(b) \; .
	\end{equ}
	We say that $\CM$ is locally $m$-convex if and only if $\CM$ is locally $t$-convex and each $\mfQ_\mfp = \{ \mfp_{\CM} \}$ consists of exactly one seminorm.
\end{definition}

\begin{remark}
	 The locally $m$-convex modules we will encounter are going to be of the form $\CA^{\wotimes_\pi n}$, see above for the exact definition of $\wotimes_\pi$.
\end{remark}

\begin{definition}[$\boldsymbol{t}$-Continuous Morphisms]
	Let $(\CM_1, \mfQ^1)$, \dots, $(\CM_n, \mfQ^n)$, and $(\CN,
	\mfY)$ be $n+1$ locally $t$-convex $\CA$-bimodules. We say that a map $\phi \colon \CM_1 \times \cdots \times \CM_n \to \CN$ is an $\CA$-multilinear morphism, if $\phi$ is $\mathbb{K}$-multilinear, and for all $a,b \in \CA$, $m_i \in \CM_i$
	\begin{equ}
		\phi(am_1, m_2, \dots, m_{n-1}, m_n b) = a\phi(m_1, m_2, \dots, m_{n-1}, m_n )b \; ,
	\end{equ}
	and
	\begin{equ}
		\phi(m_1, \dots, m_i a , m_{i+1}, \dots , m_n ) = \phi(m_1, \dots, m_i , a m_{i+1}, \dots , m_n ) \; .
	\end{equ}

	By abuse of notation we shall denote by $\CB(\CM)$ the space of $t$-continuous, $\CA$-bimodule morphisms $\CM \to \CM$.
\end{definition}

\begin{example}
	For a locally $m$-convex algebra $\CA$, $\CA^{\wotimes_\pi n}$ is naturally an $m$-convex $\CA$-bimodule for all $n$. Furthermore, all the possible internal multiplications $\CA^{\wotimes_\pi n } \to \CA^{\wotimes_\pi m}$ for $m < n$ are $t$-continuous.
\end{example}

\subsubsection{GNS Construction \texorpdfstring{\dash}{-} Noncommutative \TitleEquation{\CL^2}{L2}-Space}\label{sec:GNS}

We now recall the Gel'fand--Naimark--Segal (GNS) construction for an arbitrary unital $C^{*}$-algebra $\CA$, which will be fundamental for our definition of renormalised products \slash trees. For a
detailed discussion of this construction, see \cite[Section~2.3.2]{BR87}.

Throughout this subsection, we fix a $C^*$-algebra $\CA$ and a state $\omega$, that is a continuous 
linear functional on $\CA$ that satisfies\footnote{We shall use $\dagger$ instead of $*$ for the adjoint operation on our $C^*$-algebras in keeping with the customary notation for creation and annihilation operators.} $\omega(A^\dagger A)> 0$ for all $A \in \CA \setminus \{0\}$ and is normalised such that 
$\omega(\bone_{\CA}) = 1$.
This allows us to define a positive semi-definite inner product on $\CA$
\begin{equs}[e:scalarProd]
	\scal{\bigcdot, \bigcdot} \colon 	\CA \times \CA & \to \C\\
		(A,B) & \mapsto \omega(A^\dagger B)\;.
\end{equs}
By quotienting out the left ideal
\begin{equ}[e:def_of_null_ideal]
	\CN_\omega \eqdef \big\{ A \in \CA \, \big| \, \omega(A^\dagger A) = 0 \big\}
\end{equ}
and completing the resulting space under the norm determined by \eqref{e:scalarProd}, one obtains a Hilbert space we shall write as
\begin{equ}
	\mathcal{L}^2(\CA, \omega) \eqdef \overline{\CA/\CN_\omega}\;.
\end{equ}
Writing $\CA \ni B \mapsto [B] \in \overline{\CA/\CN_\omega}$ for the quotient map, we note that the algebra $\CA$ naturally acts on this Hilbert space via $A[B] \eqdef [AB]$.

If one considers $\CA$ to be the $C^*$-algebra of bounded measurable functions on a measure space $(\Omega,\Sigma)$ and $\omega$ to be integration against a probability measure $\mu$, then the above construction is exactly the same as the construction of the classical $L^2(\Omega,\Sigma,\mu)$ space.

\section{\TitleEquation{q}{q}-Gaussians for \TitleEquation{q \in [-1,1]}{}}
\label{subsec:particles}

All of the noise processes we focus on in this article are generalisations of Gaussians, namely, they can be realised using operators acting on a Fock space representation and satisfying an analogue of Wick's rule.

In this section, we first give a quick review of algebraic $q$-Fock space representations for $q \in [-1,1]$ and some of the combinatorial identities they satisfy. We use this to reformulate Gaussians from classical probability theory, which we refer to as the ``bosonic'' setting, and free fermionic \dash ``anticommutative'' \dash algebras as concrete topological algebras. These are respectively the cases $q = 1$ and $-1$. Next, we review the localised approach for fermions we proposed in \cite{CHP23}.

We postpone the discussion of topologising the Mezdonic case $|q| < 1$ to Section~\ref{sec:qmezdons} as we introduce a novel topology on the algebra different from the usual one provided by the $C^*$-algebra structure.

Our presentation below partially follows \cite{Boz99,Kemp05}.

\subsection{The Fock Space Picture}\label{subsec:FockSpace}

All three types of algebras mentioned above can be realised as algebras of operators acting on a type of Hilbert space called a Fock space. For $q \in (-1,1)$ the following definitions are adapted from \cite{Boz99}.

Let $\mfH$ be a Hilbert space over a field $\mathbb{K} \in \{\R , \C\}$ with inner product $\scal{\bigcdot,\bigcdot}$, and let $\1$ be a unit vector in $\mathbb K$. If $\mfH$ is complex, we also fix a real structure, i.e.\ an antiunitary involution $\kappa \colon \mfH \to \mfH$.\footnote{When  $\mfH$ is a space of functions, 
this would typically just be complex conjugation.}

We then define the algebraic Fock space $\CF(\mfH)$ by setting
\begin{equ}
	\CF(\mfH) \eqdef \bigoplus_{n \in \N} \mfH^{\otimes n}\;,
\end{equ}
where we adopt the convention that $\mfH^{\otimes 0} = \mathbb{K}$. We denote by $\Braket{\bigcdot , \bigcdot}_{\CF_0}$ the sum of the usual tensor inner products on $\mfH^{\otimes n}$ and by $\| \bigcdot \|_{\CF_0}$ the corresponding norm.

For $q \in [-1,1]$, we define the $q$-symmetrisation operator by
\begin{equ}
	P_q (f_1 \otimes \cdots \otimes f_n) \eqdef \sum_{\sigma \in \mfS_n} q^{|\sigma|} f_{\sigma(1)} \otimes \cdots \otimes f_{\sigma(n)}
\end{equ}
for $f_1, \dots, f_n \in \mfH$, and $P_q \1 \eqdef \1$, extended by linearity to $\CF(\mfH)$. Here, $|\sigma|$ is the number of inversions in the permutation $\sigma$, that is
\begin{equ}
	|\sigma| \eqdef \big| \left\{ (i,j) \in [n] \times [n]  \, \big| \,( i < j )\land (\sigma(j) < \sigma(i)) \right\} \big|  \;.
\end{equ}
Throughout the paper we adopt the convention that $0^{0} \eqdef 1$, so that $P_0 = \bone_{\CF(\mfH)}$. For $q=-1$, $q^{|\sigma|} = \sgn(\sigma)$ is the sign of the permutation $\sigma$.

The symmetrisation operator satisfies the following algebraic identity for all $n, k \in \N$ and all $f_1, \dots, f_n \in \mfH$
\begin{equ}
	P_q(f_1 \otimes \cdots \otimes f_n) = \sum_{\sigma \in \mfS_{n,k}} q^{|\sigma|} P_{q} \left( f_{\sigma(1)} \otimes \cdots \otimes f_{\sigma(k)} \right) \otimes P_{q} \left( f_{\sigma(k+1)} \otimes \cdots \otimes f_{\sigma(n)} \right)
\end{equ}
We remind the reader here that by abuse of notation, we mean by $\sigma \in [\sigma] \subset \mfS_{n}$ the unique representative of $[\sigma] \in \mfS_{n,k}$, s.t.\
\begin{equ}
	|\sigma| = \min_{\pi \in [\sigma]} |\pi| \; ,
\end{equ}
see \cite[Chapter~1.10]{Hum90} for more details.



\begin{example}
	For example setting $n = 4$ and $k=2$, the equivalence classes of $\mfS_{4,2}$ are in one-line notation
	\begin{equs}
	{}  [(1234)] &= \{ (1234), (2134), (1243), (2143) \} \\
	{}	[(1324)] &= \{ (1324), (2314), (1423), (2413) \} \\
	{}	[(3124)] &= \{ (3124), (3214), (4123), (4213) \} \\
	{}	[(1342)] &= \{ (1342), (2341), (1432), (2431) \} \\
	{}	[(3142)] &= \{ (3142), (3241), (4132), (4231) \} \\
	{}	[(3412)] &= \{ (3412), (3421), (4312), (4321) \}
	\end{equs}
	where we have ordered the permutations in each equivalence by increasing inversion number. We would thus identify the equivalence class $[(3142)]$ with the permutation $(3142)$ which has inversion number $3$, whereas the other permutations in this class have inversion numbers $4$ and $5$.

	Similarly, we see that $\mfS_{3,1} = \left\{ (123), (213), (231) \right\}$ and thus
	\begin{equs}
		P_q(f_1 \otimes f_2 \otimes f_3) & = f_1 \otimes f_2 \otimes f_3 + q f_1 \otimes f_3 \otimes f_2 +\\
		&\qquad + q \left( f_2 \otimes f_1 \otimes f_3 + q f_3 \otimes f_1 \otimes f_2 \right) +\\
		&\qquad + q^2 \left( f_2 \otimes f_3 \otimes f_1 + q f_3 \otimes f_2 \otimes f_1 \right) = \\
		& = \sum_{\sigma \in \mfS_{3,1}} q^{|\sigma|} \left( f_{\sigma(1)} \otimes f_{\sigma(2)} \otimes f_{\sigma(3)} + q f_{\sigma(1)} \otimes f_{\sigma(3)} \otimes f_{\sigma(2)} \right) = \\
		& = \sum_{\sigma \in \mfS_{3,1}} q^{|\sigma|} P_q \left( f_{\sigma(1)} \right) \otimes P_q \left( f_{\sigma(2)} \otimes f_{\sigma(3)} \right) \; .
	\end{equs}
\end{example}

In \cite[Lemmata~2~\&~3]{BS91} it was shown that $P_q \geqslant 0$ for $q \in [-1,1]$ and that $P_q > 0$ for $q \neq \pm 1$. Thus, $\scal{\bigcdot,\bigcdot}_{\CF_{q}} \eqdef \Braket{\,\bigcdot\, , P_q \, \bigcdot \,}$ is positive definite and defines an inner product on
\begin{equ}
	\mathring \CF_q(\mfH) \eqdef \CF(\mfH) / \ker(P_q) \; .
\end{equ}

\begin{definition}
Let $\CF_{q}(\mfH)$ be the completion of $\mathring \CF_q(\mfH) $ under $\scal{\bigcdot,\bigcdot}_{\CF_{q}}$. We will call $\CF_{q}(\mfH)$ the $q$-Fock space. The $n$-particle subspace, $\CF_{q,n}(\mfH)$, is the closure of $\mfH^{\otimes n}/\ker(P_q \big|_{\mfH^{\otimes n}}) $ in $ \CF_{q}(\mfH)$.
\end{definition}

\begin{remark}
	For $q \in (-1,1)$, and all $n \in \N$,  $\CF_{0,n}(\mfH) \subset \CF_{q,n}(\mfH)$. In fact, we show in Corollary~\ref{cor:nPartEq} that they are equal as Banach spaces.
\end{remark}

\begin{remark}
	For $f_1, \dots, f_m , g_1, \dots, g_n \in \mfH$
	\begin{equ}\label{eq:q-innerproduct}
		\scal{f_1 \otimes \cdots \otimes f_m , g_1 \otimes \cdots \otimes g_n }_{\CF_{q}} \eqdef
		\mathbbm{1}_{\{m = n\}}
		\sum_{\sigma \in \mfS_{n}}
		q^{|\sigma|}
		\prod_{j=1}^{n} \scal{f_j,g_{\sigma(j)}}_{\mfH}\;.
	\end{equ}
\end{remark}


For $f \in \mfH$, we define creation and annihilation operators $\alpha_q(f)$ and $\alpha_q(f)^{\dagger}$ on $\CF(\mfH)$ by setting, $\alpha_q^{\dagger}(f)\1 = f$, $\alpha_q(f)\1 = 0$, and, for $f_1 \otimes \cdots \otimes f_n \in \mfH^{\otimes n}$,
\begin{equs}
\alpha^\dagger_q(f) \left( f_1 \otimes \cdots \otimes f_n \right) & \eqdef f \otimes f_1 \otimes \cdots \otimes f_n\;,\\
\alpha_q(f) \left( f_1 \otimes \cdots \otimes f_n \right) & \eqdef\sum_{i = 1}^n q^{i-1} \scal{f, f_i} f_1 \otimes \cdots \otimes \widehat{f_i} \otimes \cdots \otimes f_n\;,
\end{equs}
where $\widehat{f_i}$ indicates that we have removed the factor $f_i$ from the tensor product. $\alpha_q^\dagger(f)$ and $\alpha_q(f)$ descend to well-defined operators on $\mathring\CF_q(\mfH)$ and are adjoints of each other with respect to  the inner product $\scal{\bigcdot,\bigcdot}_{\CF_{q}}$, cf.\ \cite{BS91}.

\begin{remark}
	Here the assignment $f \mapsto \alpha_q^\dagger(f)$ is linear whereas $f \mapsto \alpha_q(f)$ is antilinear, when $\mathbb{K} = \mathbb{C}$.
\end{remark}

\begin{remark}
	For $q \in [-1,1)$ the creation and annihilation operators extend to bounded operators on $\CF_q(\mfH)$, see \eqref{eq:CAROpBnd} and \eqref{eq:qbound}.

\end{remark}

The creation and annihilation operators satisfy the following generalised canonical commutation relations: for any $f,g \in \mfH$ we have
\begin{equ}\label{eq:qrelations}
	\alpha_q(f) \alpha^\dagger_q(g) - q \alpha^\dagger_q(g) \alpha_q(f) = \scal{f,g}_{\mfH} \bone\;,
\end{equ}
where $\bone$ denotes the identity operator on $\CF_{q}(\mfH)$. (Recall that our convention
for complex scalar products is that $\scal{f,g}_{\mfH}$ is linear in $g$ and antilinear in $f$.)

\begin{definition}[\TitleEquation{q}{q}-\TitleEquation{\mfH}{H}-Noise Operator]
\label{def:qNoise}
	For a Hilbert space $\mfH$ and $q \in [-1,1]$, let $\xi_q \colon \mfH \to L \bigl( \mathring{\CF}_0(\mfH)\bigr)$ be the map
	\begin{equ}
		f \longmapsto \xi_q(f) = \alpha_q^\dagger(f) + \alpha_q(\kappa f) \;,
	\end{equ}
	where $\kappa = \bone_{\mfH}$ if $\mfH$ is real and its real structure otherwise. 

	We will call $\xi_q$ the $q$-$\mfH$-noise operator. If $q = 1$, this will also be called the bosonic noise operator, for $q=-1$, the Clifford\footnote{The natural operators for Dirac fermions will not be self-adjoint, see Definition~\ref{def:DiracNoise}} noise operator and, for $|q|<1$, the mezdonic noise operator. If $\mfH = L^2(\R^d)$, these will also be called white noises.
\end{definition}

\begin{definition}[\TitleEquation{q}{q}-Field Algebras]
	For a Hilbert space $\mfH$ and $q \in [-1,1]$, let $\mfA_q(\mfH)$ denote the $*$-algebra generated by $\left\{ \xi_q(f) \, \big| \, f \in \mfH \right\}$.
	In the cases $q = 1$, $q = -1$, and $|q| < 1$, $\mfA_q(\mfH)$ will be called the bosonic, Clifford, and mezdonic algebra, respectively.

	Let $\omega_q \colon \mfA_q(\mfH) \to \mathbb{K}$ be the state given by
	\begin{equ}
		A \longmapsto \Braket{\1, A \1}_{\CF_q} \; .
	\end{equ}
	The state $\omega_q$ will be called the \textit{vacuum} state.
\end{definition}

\subsubsection{Wick Products}
\label{sec:WickRenorm}

Let $\CL^2(\mfA_q(\mfH), \omega_q)$ denote the space $\mfA_q(\mfH)$ equipped with the topology induced by the inner product
\begin{equ}
\scal{A,B}_{\CL^2} = \omega_q(A^\dagger B) \; .
\end{equ}
$\CL^2(\mfA_q(\mfH), \omega_q)$ isometrically embeds into $\mathring\CF_q(\mfH)$ via the extension of the mapping $\mfA_q(\mfH) \ni A \mapsto A \1 \in \mathring\CF_q(\mfH)$.

We now define Wick-ordered operators, which we use to describe the inverse isometry, mapping  $\mathring\CF_q(\mfH)$ into $\mfA_q(\mfH)$, equipped with the norm $\| \bigcdot \|_{\CL^2}$.

\begin{definition}
For each $n \in \N$,
we define a Wick ordering map $\xi_q^{\diamond n}\colon \mathring\CF_{q,n}(\mfH) \to \mfA_{q}(\mfH)$
as follows.
For $n\in \{0,1\}$, we set
\begin{equ}
	\xi_q^{\diamond 0}(c) = c \bone, \quad \xi_q^{\diamond 1} (f) \eqdef \xi_q(f) \; .
\end{equ}
For $n \geqslant 2$, we inductively define $\xi_q^{\diamond n}(f_1 \otimes \cdots \otimes f_n)$ by the formula
\begin{equs}
	\label{eq:WickDef1}
	\xi_q^{\diamond n}(f_1 \otimes \cdots &\otimes f_n) \eqdef \xi_q(f_1) \xi_q^{\diamond (n-1)}(f_2 \otimes \cdots \otimes f_n) - \\
	& \qquad -\sum_{i = 2}^n q^{i-2} \scal{\kappa f_1,f_i}_\mfH \xi_q^{\diamond(n-2)} \bigl(f_2 \otimes \cdots \otimes \widehat{f_i} \otimes \cdots \otimes f_n\bigr)    \; ,
\end{equs}
where $\widehat{f_i}$ denotes the absence of the factor $f_i$ in the tensor product.
\end{definition}

\begin{remark}
	For $q = \pm 1$, $\xi_q^{\diamond n}$ is \textit{a priori} only well-defined $F \in \mfH^{\otimes n}$. 
	However, it is not difficult to check that $\xi_{q}^{\diamond n}$ satisfies the appropriate (anti)symmmetry properties and that these maps thus descend to the quotient spaces. 
\end{remark}

\begin{remark}
Given a finite totally ordered set $I$, it will be notationally convenient in the sequel to 
write $\xi_q^{\diamond I} \colon \mfH^{\otimes I} \to \mfA_{q}(\mfH)$ for the map
defined exactly as above via the unique identification of $I$ with $\{1,\ldots,|I|\}$.
Similarly, we define the map $\xi_q^{I} \colon \mfH^{\otimes I} \to \mfA_{q}(\mfH)$
by setting $\xi_q^{I}(f_{\otimes I}) = \prod_{i \in I} \xi_q(f_i)$.
\end{remark}

Note that we can rewrite \eqref{eq:WickDef1} as
\begin{equs}
	\label{eq:WickDef2}
	\xi_q^{\diamond n} & (f_1 \otimes \cdots \otimes f_n) = \xi_q^{\diamond (n-1)}(f_1 \otimes \cdots \otimes f_{n-1}) \xi_q(f_n)  - \\
	& \quad -\sum_{i = 1}^{n-1} q^{n-1-i} \scal{\kappa f_i,f_n}_\mfH \xi_q^{\diamond(n-2)} \bigl(f_1 \otimes \cdots \otimes \widehat{f_i} \otimes \cdots \otimes f_{n-1}\bigr)   \; .
\end{equs}

We call any element of the image of $\xi_q^{\diamond n}$, for fixed $n$, a ``Wick-ordered operator'' or ``Wick product''.
The following proposition states the isometry property of these Wick ordering maps.

\begin{proposition}\label{prop:WickOrthog}
For any $n \in \N$ and $F \in \mathring\CF_{q,n}(\mfH)$,
\begin{equ}
	\xi_q^{\diamond n}(F) \1 = F\;.
\end{equ}
In particular, for $m \in \N$ and $G \in \mathring\CF_{q,m}(\mfH)$ we have
\begin{equ}
\scal{ \xi_q^{\diamond n}(F),\xi_q^{\diamond m}(G) }_{\CL^2} = \scal{F,G}_{\CF_q} \; .
\end{equ}
\end{proposition}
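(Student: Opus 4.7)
The plan is to prove the first identity $\xi_q^{\diamond n}(F)\1 = F$ by induction on $n$ (working with simple tensors and extending by linearity), and then deduce the inner product identity as an essentially immediate corollary by combining this with the adjoint relationship between creation and annihilation operators.

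For the base cases, $n = 0$ gives $\xi_q^{\diamond 0}(c)\1 = c\bone\1 = c$, and $n = 1$ gives $\xi_q^{\diamond 1}(f)\1 = \alpha_q^\dagger(f)\1 + \alpha_q(\kappa f)\1 = f$ since $\alpha_q(\kappa f)$ annihilates the vacuum. For the inductive step, I would apply the recursion \eqref{eq:WickDef1} to $F = f_1 \otimes \cdots \otimes f_n$ and hit both sides with $\1$. By the inductive hypothesis the lower-order Wick operators return their arguments when applied to $\1$, and the computation reduces to evaluating
\begin{equs}
\xi_q(f_1)(f_2 \otimes \cdots \otimes f_n) = f_1 \otimes f_2 \otimes \cdots \otimes f_n + \sum_{i=2}^n q^{i-2} \scal{\kappa f_1, f_i}_\mfH\, f_2 \otimes \cdots \otimes \widehat{f_i} \otimes \cdots \otimes f_n\;,
\end{equs}
where the first summand comes from $\alpha_q^\dagger(f_1)$ and the sum from $\alpha_q(\kappa f_1)$ using its defining formula (with the index shift $i \mapsto i-1$ producing the factor $q^{i-2}$). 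The correction sum in \eqref{eq:WickDef1} is engineered precisely to cancel the second term, leaving $f_1 \otimes \cdots \otimes f_n$. Linearity then gives the identity on all of $\mfH^{\otimes n}$, which in particular shows the map descends consistently to $\mathring\CF_{q,n}(\mfH)$ (since any element of $\ker(P_q|_{\mfH^{\otimes n}})$ is mapped to an operator annihilating $\1$, hence is in the $\CL^2$-kernel).

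For the inner product identity, I would use that $\xi_q(f)$ is self-adjoint on $\mathring\CF_q(\mfH)$ (since $\kappa$ is antiunitary and $\alpha_q(g), \alpha_q^\dagger(g)$ are adjoints) and check inductively from \eqref{eq:WickDef1} that $\xi_q^{\diamond n}(F)^\dagger = \xi_q^{\diamond n}(\overline{F})$, where $\overline{F}$ denotes the componentwise application of $\kappa$ together with reversal of the tensor order. Once this is in hand,
\begin{equs}
\scal{\xi_q^{\diamond n}(F), \xi_q^{\diamond m}(G)}_{\CL^2} = \omega_q\bigl(\xi_q^{\diamond n}(F)^\dagger \xi_q^{\diamond m}(G)\bigr) = \scal{\1, \xi_q^{\diamond n}(F)^\dagger \xi_q^{\diamond m}(G)\1}_{\CF_q} = \scal{\xi_q^{\diamond n}(F)\1, \xi_q^{\diamond m}(G)\1}_{\CF_q} = \scal{F,G}_{\CF_q}\;,
\end{equs}
where the last equality uses the first part of the proposition and the fact that $\scal{F,G}_{\CF_q} = 0$ whenever $n \neq m$ by \eqref{eq:q-innerproduct}.

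The only subtle point I anticipate is the careful bookkeeping of the index shift in $\alpha_q(\kappa f_1)$ relative to the sum appearing in the recursive definition, together with verifying that the antilinearity of $f \mapsto \alpha_q(f)$ in the complex case matches the insertion of $\kappa$ in \eqref{eq:WickDef1}; everything else is a direct unfolding of definitions.
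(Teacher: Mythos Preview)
Your proof is correct and follows essentially the same route as the paper: induction on $n$ for the first identity, with the cancellation between $\alpha_q(\kappa f_1)$ acting on $f_2\otimes\cdots\otimes f_n$ and the correction sum in \eqref{eq:WickDef1} being the key observation; the inner product identity then follows by moving the adjoint across the Fock inner product and invoking the first part.

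One minor remark: you do not need the explicit formula $\xi_q^{\diamond n}(F)^\dagger = \xi_q^{\diamond n}(\overline{F})$ at all, nor the (slightly imprecise) claim that $\xi_q(f)$ is self-adjoint, which only holds when $\kappa f = f$. The step $\scal{\1,(\xi_q^{\diamond n}(F))^\dagger \xi_q^{\diamond m}(G)\1}_{\CF_q} = \scal{\xi_q^{\diamond n}(F)\1,\xi_q^{\diamond m}(G)\1}_{\CF_q}$ only uses that the adjoint exists as a densely defined operator on $\mathring\CF_q(\mfH)$, which is immediate since $\xi_q^{\diamond n}(F)$ is a polynomial in creation and annihilation operators. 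The paper simply records the second identity as an ``in particular'' without further comment, relying on exactly this observation.
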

\begin{proof}
We prove the statement by induction on $n$, with the base cases $n=0,1$ being immediate.
Assuming $n \geqslant 2$ and the result holds for $n-1$, we have
\begin{equs}
\xi_q^{\diamond n}(f_{1} \otimes \cdots \otimes f_n) \1
&=
\big( \alpha_q^\dagger(f_1) + \alpha_q(f_1) \big) \big( f_2 \otimes \cdots \otimes f_n \big) - \\
{}& \qquad -
	\sum_{i = 2}^n q^{i-2} \scal{f_1,f_i}_\mfH f_2 \otimes \cdots \otimes \widehat{f_i} \otimes \cdots \otimes f_n =\\
&=  \alpha_q^\dagger(\kappa f_1) \big( f_2 \otimes \cdots \otimes f_n \big) = f_{1} \otimes \cdots \otimes f_{n}\;,
\end{equs}
as claimed.
\end{proof}

\begin{remark}
In the physics literature, Wick-ordered operators are often called ``normal-ordered'' operators.
The reason for this name is that one can view $\xi_q^{\diamond n}(f_1 \otimes \cdots \otimes f_n)$ as being obtained by starting with the product
\[
\xi_q(f_1) \cdots \xi_q(f_n)
=
\prod_{j=1}^{n} \big(\alpha_q^\dagger(f_j) + \alpha_q(\kappa f_j) \big)\;,
\]
and then moving all creation operators appearing in each summand in the expansion of $\xi_q(f_1) \cdots \xi_q(f_n)$ to the left of all annihilation operators,
as if we had the relation
\begin{equ}
\alpha_q(f)\alpha_q^\dagger(g) - q \alpha_q^\dagger(g) \alpha_q(f) \text{ ``$=$'' } 0 \; .
\end{equ}
\end{remark}

In the spirit of the above remark, we state the following lemma.
\begin{lemma}\label{lem:WickBlockDecomposition}
For each $k, \ell \in \N$,
we define a ``Wick block'' operator
\[
W^{k,\ell}_q \colon \mathring\CF_{q,k+\ell}(\mfH) \to \mfA_q(\mfH)
\]
as the extension of the $(k+\ell)$-multilinear map on $\mfH$ given by mapping
\begin{equs}[eq:WickBlockExp]
	\mfH^{k+\ell}	\ni (f_1, \dots, f_{k+\ell}) \longmapsto \hspace*{-0.4cm} \sum_{\sigma \in \mfS_{k+\ell,k}} \hspace*{-0.3cm} q^{|\sigma|} \alpha_{q}^\dagger( & f_{\sigma^{-1}(1)}) \cdots \alpha_{q}^\dagger( f_{\sigma^{-1}(k)})\\
	& \; \alpha_{q}( \kappa f_{\sigma^{-1}(k+1)}) \cdots \alpha_{q}( \kappa f_{\sigma^{-1}(k+\ell)}) \;.
\end{equs}
We then have, for any $n \in \N$ and $f_1, \dots, f_n \in \mfH$, the ``Wick block decomposition''
\begin{equ}
	\label{eq:BlckDecomp}
	\xi_q^{\diamond n}(f_1 \otimes \cdots \otimes f_n) = \sum_{k = 0}^n W^{n-k,k}_q(f_1 \otimes \cdots \otimes f_n) \; . 
\end{equ}
\end{lemma}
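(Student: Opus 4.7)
I would prove this by induction on $n$, matching the recursive definition \eqref{eq:WickDef1} with a case analysis on the role of $f_1$ in the Wick block sum. The base cases $n = 0, 1$ reduce to the definitions: $W^{0,0}_q = \bone$ and $W^{1,0}_q(f) + W^{0,1}_q(f) = \alpha_q^\dagger(f) + \alpha_q(\kappa f) = \xi_q(f)$.

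\textbf{Inductive step.} Assume \eqref{eq:BlckDecomp} for all $m < n$. Using the subset-based reformulation, write
\begin{equ}
\sum_{k=0}^n W^{n-k,k}_q(f_{[n]}) = \sum_{I \subseteq [n]} q^{\mathrm{inv}(I, [n] \setminus I)} \prod_{i \in I}^{\rightarrow} \alpha_q^\dagger(f_i) \prod_{j \in [n] \setminus I}^{\rightarrow} \alpha_q(\kappa f_j)\;,
\end{equ}
where the product arrows indicate increasing order and $\mathrm{inv}(I, J) = |\{(i,j) \in I \times J : i > j\}|$; this identity uses that the minimal-inversion representative in $\mfS_{n,k}$ corresponds exactly to the choice of a $k$-subset with the creation and annihilation indices each kept in increasing order. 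Now split this sum into Case A ($1 \in I$) and Case B ($1 \notin I$). Since $1$ contributes no inversions when placed in the creation block, Case A telescopes immediately by the inductive hypothesis to $\alpha_q^\dagger(f_1)\, \xi_q^{\diamond(n-1)}(f_{[2,n]})$.

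\textbf{Handling Case B.} Every $i \in I \subseteq [2,n]$ satisfies $i > 1$, so $\mathrm{inv}(I, \{1\} \cup ([2,n] \setminus I)) = |I| + \mathrm{inv}(I, [2,n] \setminus I)$. Using the iterated $q$-commutation relation \eqref{eq:qrelations} in the form
\begin{equ}
q^k \alpha_q^\dagger(g_1) \cdots \alpha_q^\dagger(g_k)\, \alpha_q(\kappa f_1) = \alpha_q(\kappa f_1)\, \alpha_q^\dagger(g_1) \cdots \alpha_q^\dagger(g_k) - \sum_{p=1}^k q^{p-1} \scal{\kappa f_1, g_p}_{\mfH} \alpha_q^\dagger(g_1) \cdots \widehat{\alpha_q^\dagger(g_p)} \cdots \alpha_q^\dagger(g_k)\;,
\end{equ}
move $\alpha_q(\kappa f_1)$ all the way to the left. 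The ``moved'' piece summed over $I$ yields $\alpha_q(\kappa f_1)\, \xi_q^{\diamond(n-1)}(f_{[2,n]})$ by the inductive hypothesis. For the contact terms, after regrouping with $i = i_p$ and $I' = I \setminus \{i\}$, the total exponent equals $\mathrm{inv}(I', [2,n] \setminus I' \setminus \{i\}) + (i-2)$, where the crucial identity $|\{b \in J : b < i\}| + |\{i' \in I' : i' < i\}| = i - 2$ holds because $[2,n] \setminus \{i\} = I' \sqcup J$. The inductive hypothesis then identifies the contact contribution as $-\sum_{i=2}^n q^{i-2} \scal{\kappa f_1, f_i}_{\mfH}\, \xi_q^{\diamond(n-2)}(f_{[2,n] \setminus \{i\}})$.

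\textbf{Conclusion.} Adding Cases A and B gives exactly $\xi_q(f_1)\, \xi_q^{\diamond(n-1)}(f_{[2,n]}) - \sum_{i=2}^n q^{i-2} \scal{\kappa f_1, f_i}_{\mfH}\, \xi_q^{\diamond(n-2)}(f_{[2,n] \setminus \{i\}})$, which is $\xi_q^{\diamond n}(f_{[n]})$ by \eqref{eq:WickDef1}. The principal obstacle is purely combinatorial: keeping track of the $q$-exponents through the decomposition, the commutation, and the re-summation, and verifying that the two normalisations (the rank $r_I(i)$ produced by the commutation, together with the inversion count of $I$ against $J$) collapse precisely to the shifted power $q^{i-2}$ demanded by the recursion. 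No delicate analysis is required since all identities live inside $\mfA_q(\mfH)$ acting on the algebraic Fock space.
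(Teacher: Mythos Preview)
Your proof is correct. The paper does not actually prove this lemma; it simply cites \cite[Proposition~1.1]{Boz99}. You have supplied a complete, self-contained inductive argument that recovers the recursion \eqref{eq:WickDef1} from the Wick block sum via the $q$-commutation relation \eqref{eq:qrelations}. The combinatorial bookkeeping you highlight as the ``principal obstacle'' --- in particular the identity $|\{j \in J' : j < i\}| + |\{i' \in I' : i' < i\}| = i-2$, which follows from $I' \sqcup J' = [2,n] \setminus \{i\}$ --- is handled correctly, and the subset reformulation of the sum over $\mfS_{n,k}$ via minimal-inversion representatives is standard and sound. So where the paper defers to the literature, you have written out what is presumably the same or a very similar argument to the one in Bo\.zejko's original paper; the gain is self-containment, at the cost of a page of careful index-tracking.
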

\begin{proof}
	See \cite[Proposition~1.1]{Boz99}.
\end{proof}

\begin{remark}
For fixed $F \in \mathring\CF_{q,k+\ell}(\mfH)$, the operator $W_q^{k,\ell}(F)$ annihilates the first $\ell$ particles and then creates $k$ new ones,
that is, for $j \geqslant \ell$,
	\begin{equ}
		W_q^{k,\ell}(F) \colon \mathring\CF_{q, j} \longrightarrow \mathring\CF_{q, j - \ell + k} \; .
	\end{equ}
In particular, each operator in the decomposition \eqref{eq:BlckDecomp} maps a fixed $j$-particle sector into mutually orthogonal particle sectors.
\end{remark}

Proposition~\ref{prop:WickOrthog} shows that $\iota_q = \bigoplus_{n=0}^{\infty} \xi_q^{\diamond n}$ isometrically embeds $\mathring\CF_q(\mfH)$ into $\mathcal{L}^{2} \big( \mfA_q(\mfH), \omega_{q} \big)$, we now verify it indeed extends to an isomorphism between $\CF_q(\mfH)$ and $\mathcal{L}^{2} \big( \mfA_q(\mfH), \omega_{q} \big)$ by verifying that an arbitrary product $\xi_q(f_1)\cdots \xi_q(f_n) \in \mfA_q(\mfH)$ can be written as a sum of Wick products.
We begin by defining a notion of ``partially contracted'' Wick products.

\begin{definition}
	Let $I$ be a totally ordered, finite set, $k \in \N$ with $ 2k \leqslant |I|$, and let $\CP_{I,k}$ be the set of collections of $k$ pairwise disjoint pairs in $I$
	\dash that is $\bpi \in \CP_{I,k}$ if $\bpi = \{ \pi_1 , \dots, \pi_k\}$ with $\pi_{\ell} = (i,j)$, $i,j \in I$ and $i < j$, and $\pi_{\ell} \cap \pi_{\ell'} = \emptyset$ for $\ell \neq \ell'$. For $I = [n]$ with $n \in \N$, we set $\CP_{[n],k} \eqdef \CP_{[n],k}$. Let $\CP_{I} \eqdef \bigcup_{2k \le  |I|} \CP_{I, k}$.
	Finally, by a slight abuse of notation, for $\bpi \in \CP_I$, we set
	\begin{equ}
		\bigcup \bpi \eqdef \bigcup \left\{ \{i,j\} \, \big| \, (i,j) \in \bpi \right\}\;,\qquad
		I \setminus \bpi \eqdef I \setminus \bigcup \bpi \; .
	\end{equ}
We will also write $\emptyset_I$ for the empty contraction of $I$.
\end{definition}
\begin{definition}
\label{def:ContWickProd}
	For $\bpi \in \CP_{I}$, we define $\xi_q^{\diamond \bpi} \colon \mfH^{\otimes I} \rightarrow \mfA_{q}(\mfH)$ by setting
	\begin{equ}[eq:AbsContr]
		\xi_q^{\diamond \bpi} \left(  f_{\otimes I} \right) \eqdef q^{\crb(\bpi)} \xi_q^{\diamond (I \setminus \bpi) } \left( f_{\otimes I \setminus \bpi} \right)  \prod_{(s,t) \in \bpi}  \scal{\kappa f_s,f_t}_\mfH
	\end{equ}
	where $\crb(\bpi) = \cross(\bpi) + \sep(\bpi)$ is the intertwining number of $\bpi$
	with $\mathrm{cr}(\bpi)$ the crossing number of $\bpi$, that is	
	\begin{equ}\label{eq:CrossingNumber}
		\cross(\bpi) \eqdef \sum_{(i,j) \in \bpi} \left|\left\{ (k,l) \in \bpi \, \big| \,  i < k < j < l\right\}\right| \;,
	\end{equ}
	and
	\begin{equ}
		\sep( \bpi) \eqdef \sum_{(s,t) \in \bpi} \left| [s,t] \cap \left( I \setminus  \bpi\right) \right|
		= \sum_{i \in I \setminus  \bpi} |\{(s,t) \in \bpi\,:\, i \in [s,t]\}|
	\end{equ}
	the separation number of $\bpi$.
\end{definition}

\begin{example}
	We illustrate the intertwining number using the example of $\bpi =\{(1,4),(2,5)\}$ inside of $[6]$. We can represent this contraction using the diagram
	\begin{equ}
		\scalebox{0.8}{
			\begin{tikzpicture}[every node/.style={circle,draw,minimum size=8mm}]
			\node (n1) at (0,0) {1};
			\node (n2) at (1.5,0) {2};
			\node (n3) at (3,0) {3};
			\node (n4) at (4.5,0) {4};
			\node (n5) at (6,0) {5};
			\node (n6) at (7.5,0) {6};

			\draw (n1.north) |- ++(0,0.5) -| (n4.north);
			\draw (n2.north) |- ++(0,1.0) -| (n5.north);
			\end{tikzpicture} 
		} 
	\end{equ}
	The contraction $\bpi$ has exactly one crossing, and the pairings $(1,4)$ and $(2,5)$ are each separated only by the element $3$, as $2$ and $4$ are not counted per definitionem. Thus, the separation number of $\bpi$ is $2$. In total, $\crb(\bpi) = 3$.
\end{example}

\begin{remark}\label{rem:crb}
	The reason for the notation $\crb(\bpi)$ is that one has $\crb(\bpi) = \f12 \cross(\bar\bpi)$, where $\bar\bpi$ 
	is obtained by ``duplicating'' $\bpi$, with the second copy being its mirror image, and connecting all free slots to their opposite.  Another interpretation is that $\crb(\bpi)$ counts the crossings if the ``unpaired'' slots are connected to a
	point at infinity.

	In the above example, this amounts to the following diagram 
	\begin{equ}
	\scalebox{0.8}{
	\begin{tikzpicture}[every node/.style={circle,draw,minimum size=8mm}]
		\begin{scope}[yshift=2.5cm, yscale=-1]
			\node (n1r) at (0,0) {1};
			\node (n2r) at (1.5,0) {2};
			\node (n3r) at (3,0) {3};
			\node (n4r) at (4.5,0) {4};
			\node (n5r) at (6,0) {5};
			\node (n6r) at (7.5,0) {6};

			\draw (n1r.south) |- ++(0,0.3) -| (n4r.south);
			\draw (n2r.south) |- ++(0,0.6) -| (n5r.south);
		\end{scope}

		\node (n1) at (0,0) {1};
		\node (n2) at (1.5,0) {2};
		\node (n3) at (3,0) {3};
		\node (n4) at (4.5,0) {4};
		\node (n5) at (6,0) {5};
		\node (n6) at (7.5,0) {6};

		\draw (n1.north) |- ++(0,0.3) -| (n4.north);
		\draw (n2.north) |- ++(0,0.6) -| (n5.north);

		\draw[red] (n3) -- (n3r);
		\draw[red] (n6) -- (n6r);
	\end{tikzpicture}
	}
	\end{equ} 
	where we have added the new contractions in red. It contains exactly $6 = 2 \crb(\bpi)$ crossings.  
\end{remark}

\begin{theorem}
\label{thm:WickTerms}
For any totally ordered set $I$ and $(f_{i})_{i \in I} \in \mfH^{I}$
	\begin{equ}\label{eq:Wickcontraction}
		\xi^{I}_{q}(f_{\otimes I})
		=
		\prod_{i \in I} \xi_q(f_i)
		=\sum_{\bpi\in \CP_{I}} \xi_q^{\diamond \bpi} (f_{\otimes I})\;.
\end{equ}

\end{theorem}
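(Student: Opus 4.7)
I will prove the identity by induction on $|I|$, with the base cases $|I| \in \{0,1\}$ being immediate: for $|I|=1$ the only element of $\CP_I$ is the empty contraction and $\xi_q^{\diamond \emptyset}(f_i) = \xi_q(f_i)$. For the inductive step, assume the result for all totally ordered sets of size $< n$, and let $|I| = n$ with minimal element $i_1$ and $I' = I \setminus \{i_1\}$. Using associativity of multiplication and the inductive hypothesis applied to $I'$,
\begin{equ}
\xi_q^{I}(f_{\otimes I}) = \xi_q(f_{i_1}) \cdot \xi_q^{I'}(f_{\otimes I'}) = \sum_{\bpi' \in \CP_{I'}} \xi_q(f_{i_1}) \, \xi_q^{\diamond \bpi'}(f_{\otimes I}),
\end{equ}
and expanding each summand using \eqref{eq:AbsContr} together with the recursive formula \eqref{eq:WickDef1} applied to $\xi_q(f_{i_1})\,\xi_q^{\diamond J'}(f_{\otimes J'})$ with $J' = I' \setminus \bpi'$ yields
\begin{equ}
\xi_q(f_{i_1})\,\xi_q^{\diamond \bpi'}(f_{\otimes I}) = \xi_q^{\diamond(\{i_1\} \cup \bpi')}(f_{\otimes I}) + \sum_{k=1}^{|J'|} q^{k-1}\scal{\kappa f_{i_1}, f_{j_k}}_\mfH\, \xi_q^{\diamond(\bpi' \sqcup \{(i_1,j_k)\})\setminus (\cdots)}(\cdots),
\end{equ}
where $j_1 < \cdots < j_{|J'|}$ are the elements of $J'$. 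Summed over $\bpi' \in \CP_{I'}$, the ``diagonal'' term on the right contributes exactly those $\bpi \in \CP_I$ with $i_1 \notin \bigcup\bpi$, while the sum over $k$ indexes those $\bpi \in \CP_I$ for which $(i_1, j_k) \in \bpi$.

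\textbf{The combinatorial core.} For the matching to succeed, I need to verify that the $q$-prefactors align, i.e., for any $\bpi' \in \CP_{I'}$ and any $j_k \in J'$, setting $\bpi = \bpi' \cup \{(i_1,j_k)\} \in \CP_I$, we have
\begin{equ}
\crb_I(\bpi) = \crb_{I'}(\bpi') + (k-1).
\end{equ}
I would prove this by separately tracking the change in $\cross$ and $\sep$. Since $i_1$ is the minimum of $I$, no pair $(c,d) \in \bpi'$ satisfies $c < i_1 < d < j_k$, so the only new crossings created are those from $(i_1,j_k)$ itself against pairs $(c,d) \in \bpi'$ with $c < j_k < d$, giving $\cross_I(\bpi) = \cross_{I'}(\bpi') + N$ with $N = |\{(c,d) \in \bpi' : c < j_k < d\}|$. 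For separation, the new pair contributes $|[i_1,j_k] \cap (J' \setminus \{j_k\})| = k-1$, while each old pair $(s,t) \in \bpi'$ loses exactly one unit of separation iff $s < j_k < t$ (because $j_k$ moves from $I\setminus\bpi'$ to $\bigcup\bpi$), giving $\sep_I(\bpi) = \sep_{I'}(\bpi') - N + (k-1)$. The crossings $N$ cancel, producing the claimed identity.

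\textbf{The main obstacle.} The conceptual idea is a classical Wick expansion induction; the only non-trivial point is the bookkeeping of the exponent of $q$. The cancellation of the crossing count $N$ between $\cross_I - \cross_{I'}$ and $\sep_I - \sep_{I'}$ is precisely the reason the definition $\crb = \cross + \sep$ (rather than $\cross$ alone, as in the strictly commutative or $q=0$ cases) is the correct one, and matches the interpretation given in Remark~\ref{rem:crb}: if one compactifies the ``free slots'' by connecting them to a point at infinity, then adding $(i_1, j_k)$ and thereby removing $j_k$'s line to infinity trades each such crossing for a separation, leaving $\crb$ unchanged from that source, while the $k-1$ new separations from $(i_1,j_k)$ crossing the remaining free lines to infinity give the prefactor $q^{k-1}$ appearing in \eqref{eq:WickDef1}. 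Once this identity is in hand, the remaining manipulations are purely formal, completing the induction.
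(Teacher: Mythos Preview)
Your proof is correct and follows essentially the same inductive strategy as the paper's proof. The only difference is that the paper peels off the \emph{last} element using the right-hand recursion \eqref{eq:WickDef2}, whereas you peel off the \emph{first} element $i_1$ using \eqref{eq:WickDef1}; the combinatorial bookkeeping is then a mirror image, with the paper's identity $\crb(\bpi \sqcup \{(j,n{+}1)\}) = \crb(\bpi) + \#(j,\bpi)$ corresponding exactly to your $\crb_I(\bpi' \cup \{(i_1,j_k)\}) = \crb_{I'}(\bpi') + (k-1)$, and the cancellation you identify between new crossings and lost separations is precisely the mechanism the paper invokes.
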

\begin{remark}
	We remind the reader that for an ordered, finite index set $I$, $f_{\otimes I}$ means $\bigotimes_{i \in I} f_i$.
\end{remark}
\begin{proof}
	We write $I = [n]$, and proceed by induction over $n$.
	When $n = 1,2$, the desired claim is a quick computation using the inductive definition
	\eqref{eq:WickDef1}.
	For the inductive step we use \eqref{eq:WickDef2} and observe that, for fixed $\bpi \in \CP_{[n],k}$,
	\begin{equs}\label{eq:qProdInd}
		\xi_q^{\diamond(n-2k)} & \Big(  f_{\otimes [n]  \setminus \bpi}\Big) \xi_q(f_{n+1})
		= \xi_q^{\diamond(n+1-2k)} \Big( f_{\otimes  [n+1] \setminus \bpi} \Big) + \\
		& \qquad +  \sum_{ j \in [n+1] \setminus \bpi }  q^{\#(j, \bpi)} \scal{\kappa f_j,f_{n+1}}_\mfH \xi_q^{\diamond(n-1-2k)} \Big( f_{\otimes I(j,\bpi) } \Big) \; .
	\end{equs}
	Here $I(j,\bpi) =  [n+1] \setminus  ( \bpi \sqcup \{(j,n+1)\})$ and $\#(j, \bpi )$ is the number of elements of $[n+1] \setminus  \bpi$ between $j$ and $n+1$. Any contraction of $[n+1]$ is either already a contraction of $[n]$ or can be written as $\bpi \sqcup \{(j,n+1)\}$ for some $\bpi \in \CP_{[n]}$ and $j \in [n] \setminus \bpi$. Therefore, \eqref{eq:qProdInd} produces all possible contractions of the integers $[n+1]$ by summing over $\bpi \in \CP_{[n]}$. Thus, we only need to check that
	\begin{equ}
		\label{eq:CrsNum}
		\crb\left(\bpi \sqcup \{(j,n+1)\}\right)  = \crb(\bpi) + \#(j, \bpi) \; .
	\end{equ}
	Note that changing the base set from $[n]$ to $[n+1]$ does not impact the crossing or separation number of $\bpi$.

	If $j \in [s,t]$ for some pair $(s,t) \in \bpi$, then the separation number of $(s,t)$ is reduced by 1 but we introduce a new crossing of the pairs $(s,t)$ and $(j,n+1)$, thus the contribution of $(s,t)$ to the sums of the separation and crossing number remains the same on both sides of \eqref{eq:CrsNum}. Therefore, the only term appearing when we add $(j,n+1)$ is the separation number of $(j,n+1)$ relative to $\bpi \sqcup \{(j,n+1)\}$. However, this is exactly $\#(j,\bpi)$.
\end{proof}

Note that by Proposition~\ref{prop:WickOrthog}, for $n > 0$ and $F \in \mathring{\CF}_{q,n}$ we have
\[
\omega_{q} \big( \xi^{\diamond n}_{q}(F) \big)
= \scal{\xi^{\diamond 0}(1),\xi^{\diamond n}_{q}(F)}_{\CL^2} = 0\;,
\]
so that $\omega_q \left( \xi_q^{\diamond n , \bpi} (f_1 \otimes \cdots \otimes f_n) \right) = 0$ unless $\bigcup \bpi = [n]$.
Therefore, Theorem~\ref{thm:WickTerms} yields the following ``$q$-Wick'' rule as a corollary, see for instance also \cite[Corollary~2.1]{EffrosPopa03}.
\begin{corollary}
\label{cor:q-Wick}
	For all $f_i \in \mfH$ we have
	\begin{equs}
		\omega_q\bigl(\xi_q(f_1) \cdots \xi_q(f_{2n})\bigr) &= \sum_{\bpi \in \CP_{[2n],n}}  q^{\cross(\bpi)}
		\prod_{(i,j)  \in \bpi} \scal{\kappa f_{i},f_{j}}_\mfH \;,\\
		\omega_q\bigl(\xi_q(f_1) \cdots \xi_q(f_{2n+1})\bigr) &= 0\;.
	\end{equs}
\end{corollary}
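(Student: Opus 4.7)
The plan is to derive the corollary directly from Theorem~\ref{thm:WickTerms} by applying the vacuum state $\omega_q$ to both sides of the expansion \eqref{eq:Wickcontraction} and using Proposition~\ref{prop:WickOrthog} to kill all but the fully-contracted terms.

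First I would apply Theorem~\ref{thm:WickTerms} with $I = [N]$ to rewrite
\begin{equ}
    \omega_q\bigl(\xi_q(f_1)\cdots \xi_q(f_N)\bigr)
    = \sum_{\bpi \in \CP_{[N]}} \omega_q\bigl(\xi_q^{\diamond \bpi}(f_{\otimes [N]})\bigr)\;,
\end{equ}
and then inspect each term using the definition \eqref{eq:AbsContr}, which gives
\begin{equ}
    \omega_q\bigl(\xi_q^{\diamond \bpi}(f_{\otimes [N]})\bigr)
    = q^{\crb(\bpi)} \prod_{(s,t)\in\bpi} \scal{\kappa f_s, f_t}_{\mfH} \cdot \omega_q\bigl(\xi_q^{\diamond ([N]\setminus\bpi)}(f_{\otimes [N]\setminus\bpi})\bigr)\;.
\end{equ}
By Proposition~\ref{prop:WickOrthog}, for any $k \geqslant 1$ and $F \in \mathring{\CF}_{q,k}(\mfH)$, one has
\begin{equ}
    \omega_q\bigl(\xi_q^{\diamond k}(F)\bigr) = \scal{\1,\xi_q^{\diamond k}(F)\1}_{\CF_q} = \scal{\1,F}_{\CF_q} = 0\;,
\end{equ}
since $F$ lies in the $k$-particle sector which is orthogonal to $\1 \in \CF_{q,0}(\mfH)$. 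Combined with the convention $\xi_q^{\diamond 0}(1) = \bone$ and $\omega_q(\bone) = 1$, this shows that only those $\bpi$ with $[N]\setminus\bpi = \emptyset$ \dash that is, complete pairings of $[N]$ \dash contribute to the sum.

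Complete pairings only exist when $N$ is even, so $\omega_q\bigl(\xi_q(f_1)\cdots\xi_q(f_{2n+1})\bigr) = 0$, which is the second claimed identity. For $N = 2n$, the set of complete pairings is exactly $\CP_{[2n],n}$, and for any $\bpi \in \CP_{[2n],n}$ the separation number $\sep(\bpi)$ vanishes (there are no unpaired indices to separate), so $\crb(\bpi) = \cross(\bpi)$. Substituting gives
\begin{equ}
    \omega_q\bigl(\xi_q(f_1) \cdots \xi_q(f_{2n})\bigr) = \sum_{\bpi \in \CP_{[2n],n}} q^{\cross(\bpi)} \prod_{(i,j)\in\bpi} \scal{\kappa f_i, f_j}_{\mfH}\;,
\end{equ}
which is the first identity. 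There is no serious obstacle here \dash the bulk of the work (Theorem~\ref{thm:WickTerms} and the orthogonality in Proposition~\ref{prop:WickOrthog}) has already been done; the only point that requires care is the bookkeeping observation that $\sep(\bpi) = 0$ on complete pairings so that the intertwining number collapses to the crossing number.
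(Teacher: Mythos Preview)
Your proposal is correct and follows essentially the same route as the paper: apply Theorem~\ref{thm:WickTerms}, use Proposition~\ref{prop:WickOrthog} to kill all terms with $[N]\setminus\bpi \neq \emptyset$, and then observe that $\sep(\bpi)=0$ on complete pairings so that $\crb(\bpi)=\cross(\bpi)$. The paper's own argument is slightly more terse (it leaves the $\sep(\bpi)=0$ observation implicit), but the logic is identical.
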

Let $\Delta_q \colon \mfA_q(\mfH) \to \mfA_q(\mfH)$ be the linear (but not multiplicative) map defined by
\begin{equ}[e:Deltaqdef]
	\Delta_q(\xi_q^{\diamond n} (F) ) \eqdef q^n \xi_q^{\diamond n} (F) \; .
\end{equ}
Let $f_1, \dots, f_n \in \mfH$ and $\bpi \in \CP_{[n]}$. Setting $I \eqdef [n] \setminus \bpi$, we define
\begin{equ}[eq:RltvContr]
	\Delta^{\bpi}_q \Big( \xi^{I}_{q}(f_{\otimes I}) \Big) \eqdef  \sum_{\bsigma \in \CP_{I}} q^{\crb(\bpi , \bsigma)} \xi_q^{\diamond \bsigma}  \left( f_{\otimes I}\right)
\end{equ}
where $\crb(\bpi , \bsigma) \eqdef \crb(\bpi \cup \bsigma) - \crb(\bsigma)$.

\begin{remark}
	The map $\Delta_q$ is usually denoted by $\Gamma_q$ or $\Gamma(q)$ in the literature\footnote{See for instance \cite[Def.~4.2]{DS18}} as it is the second quantisation of the map $q \colon \mfH \to \mfH$ which just acts by multiplication with $q$. However, since we will be making use of more complicated versions of $\Delta_q$, such as $\Delta^{\bpi}_q$ and $\Delta_q^{R; \boldsymbol{k}, \bpi}$ below, and we reserve $\Gamma$ for the structure groups of regularity structures, we introduced this separate notation.
\end{remark}


We can now state the following Wick product theorem.
\begin{proposition}\label{prop:wickprop}
	With the above notation
	\begin{equ}
		\Delta^{\bpi}_q\Big( \xi^{I}_{q}(f_{\otimes I}) \Big) \prod_{(\kappa s,t) \in \bpi} \scal{f_s,f_t}_\mfH  = \sum_{\substack{\bsigma \in \CP_{I} \\ \bsigma \supset \bpi }}  \xi_q^{\diamond \bsigma} \left( f_{\otimes [n]} \right) \;  .
	\end{equ}
\end{proposition}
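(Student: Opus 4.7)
The plan is to unwind both sides against the definition~\eqref{eq:AbsContr} of the contracted Wick product $\xi_q^{\diamond\bsigma}$ and the definition~\eqref{eq:RltvContr} of $\Delta_q^{\bpi}$, and then match them via the identity $\crb(\bpi,\bsigma) = \crb(\bpi \cup \bsigma) - \crb(\bsigma)$ which holds by the very definition of $\crb(\bpi,\bsigma)$. The key observation is that the right-hand side, indexed by pair partitions $\bsigma'$ of $[n]$ containing $\bpi$, bijects with $\CP_{I}$ via $\bsigma' \leftrightarrow \bsigma' \setminus \bpi$, since $I = [n]\setminus\bpi$ and the pairs of $\bpi$ are already fixed.

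First I would rewrite the right-hand side. For $\bsigma' \in \CP_{[n]}$ with $\bsigma' \supset \bpi$, set $\bsigma = \bsigma'\setminus\bpi \in \CP_{I}$. By \eqref{eq:AbsContr},
\begin{equs}
\xi_q^{\diamond\bsigma'}(f_{\otimes[n]})
&= q^{\crb(\bsigma')}\,\xi_q^{\diamond([n]\setminus\bsigma')}(f_{\otimes [n]\setminus\bsigma'}) \prod_{(s,t)\in\bsigma'}\scal{\kappa f_s,f_t}_\mfH \\
&= q^{\crb(\bpi\cup\bsigma)}\,\xi_q^{\diamond(I\setminus\bsigma)}(f_{\otimes I\setminus\bsigma})\prod_{(s,t)\in\bpi}\scal{\kappa f_s,f_t}_\mfH\prod_{(s,t)\in\bsigma}\scal{\kappa f_s,f_t}_\mfH,
\end{equs}
where in the second line I used $[n]\setminus\bsigma' = I\setminus\bsigma$ and split the product over $\bsigma' = \bpi\sqcup\bsigma$.

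Next I would apply \eqref{eq:AbsContr} again, this time to $\bsigma \in \CP_I$, to recognise
\begin{equ}
\xi_q^{\diamond(I\setminus\bsigma)}(f_{\otimes I\setminus\bsigma})\prod_{(s,t)\in\bsigma}\scal{\kappa f_s,f_t}_\mfH = q^{-\crb(\bsigma)}\,\xi_q^{\diamond\bsigma}(f_{\otimes I}).
\end{equ}
Substituting and summing over $\bsigma\in\CP_I$ gives
\begin{equ}
\sum_{\substack{\bsigma'\in\CP_{[n]}\\\bsigma'\supset\bpi}}\xi_q^{\diamond\bsigma'}(f_{\otimes[n]}) = \Bigg(\sum_{\bsigma\in\CP_I} q^{\crb(\bpi\cup\bsigma)-\crb(\bsigma)}\,\xi_q^{\diamond\bsigma}(f_{\otimes I})\Bigg)\prod_{(s,t)\in\bpi}\scal{\kappa f_s,f_t}_\mfH,
\end{equ}
and by the definition of $\crb(\bpi,\bsigma)$ together with \eqref{eq:RltvContr}, the parenthesised sum is exactly $\Delta_q^{\bpi}\bigl(\xi_q^{I}(f_{\otimes I})\bigr)$, which is what we wanted. (Note that the product $\prod_{(\kappa s,t)\in\bpi}\scal{f_s,f_t}_\mfH$ in the statement is evidently a typographical rendering of $\prod_{(s,t)\in\bpi}\scal{\kappa f_s,f_t}_\mfH$.)

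There is no real obstacle here beyond careful bookkeeping. The only thing one must verify with some care is the combinatorial identity $\crb(\bpi\cup\bsigma) = \crb(\bpi) + \crb(\bsigma) + (\text{interactions})$ unpacks consistently with how $\crb(\bpi,\bsigma)$ is defined, but this is essentially tautological once one takes the definition $\crb(\bpi,\bsigma) = \crb(\bpi\cup\bsigma) - \crb(\bsigma)$ at face value. The bijection $\bsigma'\leftrightarrow\bsigma$ between $\{\bsigma'\in\CP_{[n]}:\bsigma'\supset\bpi\}$ and $\CP_I$ is immediate since all pairs in $\bsigma'\setminus\bpi$ are disjoint from $\bigcup\bpi$ by the definition of a pair partition.
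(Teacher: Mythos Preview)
Your proof is correct and takes essentially the same approach as the paper, which dispatches the proposition in a single line by pointing to direct inspection of \eqref{eq:AbsContr} and \eqref{eq:RltvContr}; you have simply made that inspection explicit. Your reading of the typo in the product and your interpretation of the sum on the right-hand side as being over $\bsigma' \in \CP_{[n]}$ with $\bsigma' \supset \bpi$ are both the intended ones.
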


\begin{remark}
	This means that $\Delta^{\bpi}_q(  \xi_q(f_1) \cdots \xi_q(f_n)) $ contains all terms of the Wick expansion of $\xi_q(f_1) \cdots \xi_q(f_n)$ where the terms with $i \in \bigcup \bpi$ have been contracted.
\end{remark}

\begin{proof}
	The assertion follows by direct inspection of formulae \eqref{eq:AbsContr} and \eqref{eq:RltvContr}.
\end{proof}




We finish this section with a proposition that allows us to interpret the multiplication of Wick-ordered expressions as summing over contractions between the separate factors. 
Given two totally ordered sets $I$ and $J$, we write $I\sqcup J$ for the disjoint union, endowed
with the total order such that $i < j$ for all $i \in I$ and $j \in J$. Given
$\bpi \in \CP_I$ and $\bsigma\in\CP_J$, we also write $\bpi \sqcup \bsigma$ for the 
corresponding element of $\CP_{I\sqcup J}$. We then have

\begin{lemma}\label{lem:doubleWick}
In the above setting, let $\CP(\bpi,\bsigma) \subset \CP_{I\sqcup J}$ denote the 
set of all pairings $\bar \bpi$ such that $\bpi \sqcup \bsigma \subset \bar\bpi$ and such
that, for all $(s,t) \in \bar\bpi \setminus (\bpi \sqcup \bsigma)$, one has
$s \in I$ and $t \in J$. Then, for all $F \in \mfH^{\otimes I}$ and $G \in \mfH^{\otimes J}$
one has the identity
	\begin{equs} \label{eq:WickProdDef}
		\xi^{\diamond \bpi}_q \bigl( F \bigr)
		\xi^{\diamond \bsigma}_q \bigl( G \bigr) = \sum_{\bar\bpi \in \CP(\bpi,\bsigma)} \xi_q^{\diamond \bar\bpi} \bigl( F \otimes G \bigr)
	\end{equs}
\end{lemma}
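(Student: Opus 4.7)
The strategy is a two-stage argument: first reduce to the uncontracted case $\bpi = \emptyset_I$, $\bsigma = \emptyset_J$, and then prove the reduced statement by induction on $|J|$ using the recursive definition \eqref{eq:WickDef2}.

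For the reduction, I would use \eqref{eq:AbsContr} to write
\begin{equ}
\xi_q^{\diamond \bpi}(F) = q^{\crb(\bpi)} \Bigl(\prod_{(s,t)\in\bpi} \scal{\kappa f_s,f_t}_\mfH\Bigr)\, \xi_q^{\diamond(I\setminus\bpi)}\bigl(F_{I\setminus\bpi}\bigr)\;,
\end{equ}
and similarly for $\bsigma$, so that both sides of \eqref{eq:WickProdDef} are expressed in terms of uncontracted Wick products on $I\setminus\bpi$ and $J\setminus\bsigma$. The pairings $\bar\bpi \in \CP(\bpi,\bsigma)$ are in bijection with pairings $\bar\bsigma$ of subsets of $(I\setminus\bpi)\sqcup(J\setminus\bsigma)$ with all pairs lying in $(I\setminus\bpi)\times(J\setminus\bsigma)$, via $\bar\bpi = \bpi\sqcup\bsigma\sqcup\bar\bsigma$; the scalar factors then factor as $S(\bar\bpi,F\otimes G) = S(\bpi,F)\,S(\bsigma,G)\,S(\bar\bsigma,F'\otimes G')$. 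The key combinatorial identity to check is the additivity
\begin{equ}
\crb_{I\sqcup J}(\bar\bpi) \;=\; \crb_I(\bpi) + \crb_J(\bsigma) + \crb_{(I\setminus\bpi)\sqcup(J\setminus\bsigma)}(\bar\bsigma)\;,
\end{equ}
which is transparent from the geometric interpretation of $\crb$ as half the crossing number of a doubled diagram (Remark~\ref{rem:crb}): since $I$ lies entirely to the left of $J$ in the total order, $\bpi$ and $\bsigma$ have no mutual crossings, and the "going to infinity" contributions for unpaired slots distribute compatibly across the three sub-pairings.

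Once the reduction is in place, the remaining statement is $\xi_q^{\diamond I}(F)\,\xi_q^{\diamond J}(G) = \sum_{\bar\bsigma}\xi_q^{\diamond\bar\bsigma}(F\otimes G)$, with $\bar\bsigma$ running over all pairings of subsets of $I\sqcup J$ with pairs in $I\times J$. The base case $|J|=0$ is immediate, and for $|J|=1$ (say $J=\{j\}$, $I=\{i_1<\cdots<i_m\}$) formula \eqref{eq:WickDef2} applied to $F\otimes g_j$ gives
\begin{equ}
\xi_q^{\diamond I}(F)\,\xi_q(g_j) = \xi_q^{\diamond(I\sqcup\{j\})}(F\otimes g_j) + \sum_{k=1}^m q^{m-k}\scal{\kappa f_{i_k},g_j}_\mfH\,\xi_q^{\diamond(I\setminus\{i_k\})}\bigl(F_{I\setminus\{i_k\}}\bigr)\;,
\end{equ}
and a direct count shows $m-k = \crb(\{(i_k,j)\})$ inside $I\sqcup\{j\}$ (the crossing number is zero and the separation number counts the $m-k$ unpaired elements of $I$ lying in $(i_k,j]$), matching exactly the $\bar\bsigma\in\{\emptyset,\{(i_k,j)\}\}$ summands on the right-hand side.

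For the inductive step with $|J|\geqslant 2$, I would let $j_{\max}=\max J$, $J'=J\setminus\{j_{\max}\}$, and apply \eqref{eq:WickDef2} to peel off $g_{j_{\max}}$ from $\xi_q^{\diamond J}(G)$, then multiply on the left by $\xi_q^{\diamond I}(F)$. The inductive hypothesis handles $\xi_q^{\diamond I}(F)\xi_q^{\diamond J'}(G_{J'})$, after which the $|J|=1$ case handles multiplication by $\xi_q(g_{j_{\max}})$; the correction terms arising from \eqref{eq:WickDef2} cancel precisely against the "forbidden" contractions between $g_{j_{\max}}$ and elements of $J'$ that are produced by the $|J|=1$ step but do not belong to $\CP(\emptyset_I,\emptyset_J)$.

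The \textbf{main obstacle} is the bookkeeping of $q$-powers: at every inductive step one must verify that the $q^{\crb}$ factors track correctly through merging and peeling of pairings. The cleanest way to do this is to always express changes in $\crb$ as (new crossings created) + (new separations created minus old separations destroyed by newly-paired slots), and to use the additivity identity above; the fact that $I < J$ as ordered sets means that the only nontrivial interaction comes from pairs of $\bar\bsigma$ crossing pairs of $\bpi$ or $\bsigma$, which is what makes the identity hold termwise.
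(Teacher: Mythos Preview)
Your proposal is correct and follows essentially the same approach as the paper's proof: both first reduce to the uncontracted case via the additivity identity $\crb(\bar\bpi) = \crb(\bpi) + \crb(\bsigma) + \crb\bigl(\bar\bpi\setminus(\bpi\sqcup\bsigma)\bigr)$ (justified via Remark~\ref{rem:crb}), and then prove the reduced statement by induction on $|J|$, peeling off the last element of $J$ using \eqref{eq:WickDef2} and cancelling the forbidden intra-$J$ contractions against the correction terms. The paper organises the inductive step slightly differently by writing the identity $\xi_q^{\diamond\bpi}(F\otimes\bar G)\,\xi_q(g) = \xi_q^{\diamond\bpi}(F\otimes G) + \sum_i \xi_q^{\diamond(\bpi\cup(i,\ell))}(F\otimes G)$ explicitly rather than invoking the $|J|=1$ case as a black box, but the underlying computation is the same.
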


\begin{proof}
Given $\bar\bpi \in \CP(\bpi,\bsigma)$, consider $\bar \bpi \setminus (\bpi \sqcup \bsigma)$
as a pairing of $(I\setminus \bpi)\sqcup (J \setminus \bsigma)$. With this identification, it
is straightforward from Remark~\ref{rem:crb} to see that one has the identity
\begin{equ}
\crb(\bar\bpi) = \crb(\bpi) + \crb(\bsigma) + \crb \bigl(\bar \bpi \setminus (\bpi \sqcup \bsigma)\bigr)\;.
\end{equ}
Combining this with \eqref{eq:AbsContr}, we can reduce ourselves to the case 
where $\bpi$ and $\bsigma$ are empty, so we only need to show that
\begin{equ}
		\xi^{\diamond I}_q \bigl( F \bigr)
		\xi^{\diamond J}_q \bigl( G \bigr) = \sum_{\bpi \in \CP(I,J)} \xi_q^{\diamond \bpi} \bigl( F \otimes G \bigr)\;,
\end{equ}
where $\CP(I,J) \subset \CP_{I\sqcup J}$ denotes the 
set of all pairings $\bpi$ such that, for all $(s,t) \in \bpi$, one has
$s \in I$ and $t \in J$.

We now assume loss of generality that $J = [\ell]$ for some $\ell \in \N$ and we proceed by induction over $\ell$. For $\ell = 0$, the statement is trivially true and 
	for $\ell = 1$ this is the definition of Wick ordering, \eqref{eq:WickDef2}. 
	For the inductive step, consider $G = \bar G \otimes g_\ell = \bigotimes_{i=1}^\ell g_i$ with $g_i \in \mfH$.
	Then
	\begin{equs}
		\xi_{q}^{\diamond I}(F) \xi_q^{\diamond \ell}(G) &= \xi_{q}^{\diamond I}(F) \xi_q^{\diamond (\ell-1)}(\bar G) \xi_q(g_\ell) -\\
		& \qquad - \sum_{i \in [\ell-1]} q^{\ell-1-i} \scal{g_i,g_\ell}   \xi_{q}^{\diamond I}(F) \xi_q^{\diamond(\ell-2) }(g_{[\ell]\setminus \{i,\ell\}}) =\\
		&= \sum_{\bpi \in \CP(I,[\ell-1])} \xi_q^{\diamond \bpi} (F \otimes \bar G) \xi_q(g)  - \sum_{i \in [\ell-1]}   \xi_{q}^{\diamond I}(F) \xi_q^{\diamond \pi_i}(G) \;,
	\end{equs}
where we write $\pi_i$ for the pairing of $[\ell]$ given by $\pi_i = \{(i,\ell)\}$.
	Here we used the definition of Wick ordering in the first line, applied the induction hypothesis in the second equality and rewrote the contractions produced by the Wick ordering in terms of $\xi^{\diamond  \bpi}_q$ notation. 
	
Combining the definition \eqref{eq:AbsContr} of $\xi_q^{\diamond \bpi}$ with 
the recursion \eqref{eq:WickDef2} for the Wick product, we observe that one has the identity
\begin{equ}
	\xi_q^{\diamond \bpi} (F \otimes \bar G) \xi_q(g)
	= \xi_q^{\diamond \bpi} (F \otimes G)
	+ \sum_{i \in (I \sqcup [\ell-1])\setminus \bpi} \xi_q^{\diamond (\bpi \cup (i,\ell))} (F \otimes G)\;,
\end{equ}
where we view $\bpi$ as a pairing of $J \sqcup [\ell]$ in the right-hand side.
Furthermore, using the induction hypothesis for the last term, we have the identity
\begin{equ}
\xi_{q}^{\diamond I}(F) \xi_q^{\diamond \pi_i}(G)
= \sum_{\bpi \in \CP(\emptyset_I,\pi_i)} \xi_q^{\diamond \bpi} (F \otimes  G)\;.
\end{equ}
It follows that
\begin{equs}
\xi_{q}^{\diamond I}(F) \xi_q^{\diamond \ell}(G) &=
\sum_{\bpi \in \CP(I,[\ell-1])}  \xi_q^{\diamond \bpi} (F \otimes G) + \\
&\quad	+ \sum_{\bpi \in \CP(I,[\ell-1])}  \sum_{i \in (I \sqcup [\ell-1])\setminus \bpi} \xi_q^{\diamond (\bpi \cup (i,\ell))} (F \otimes G)  -\\
&\quad - \sum_{i \in [\ell-1]}  \sum_{\bsigma \in \CP(\emptyset_I,\pi_i)} \xi_q^{\diamond \bsigma} (F \otimes  G)\;.
\end{equs}
It remains to note that the elements $\bsigma \in \CP(\emptyset_I,\pi_i)$ are precisely those 
such that there exists  $\bpi \in \CP(I,[\ell-1])$ with $\bsigma = \bpi \cup (i,\ell)$
and that $\CP(I,[\ell])$ consists of those $\bsigma$ for which there exist 
$\bpi \in \CP(I,[\ell-1])$ such that either
$\bsigma = \bpi$ or $\bsigma = \bpi \cup (i,\ell)$
for some $i \in I \cap ((I \sqcup [\ell-1])\setminus \bpi)$. 
\end{proof}

To state the announced result regarding products of Wick products with more than to factors, it will be convenient to have a notion of ``$m$-partitioned set'', that is, a finite totally ordered set $I$ 
that comes equipped with a totally ordered ``partition'' $\{I_1,\dots,I_m\}$ such that we can write $I = I_1 \sqcup I_2 \sqcup\ldots \sqcup I_m$ as 
a disjoint union of $m$ consecutive\footnote{Consecutive means that the ordering of blocks respects the ordering of $I$, that is if $s \in I_j$ and $t \in I_{j+1}$ then $s < t$ in $I$.} intervals\footnote{Here the term interval means that if $s,u,t \in I$ and $s,t \in I_j$ for a block $I_j$ of $I$, then $u \in I_j$.} that we call blocks of $I$. 
We wrote the word partition in quotes since we allow some of the $I_j$'s to be empty, so that, for example, $\emptyset \sqcup [n]$ and $[n]\sqcup \emptyset$ are distinct\footnote{Since the orderings of the partitions are different.} $2$-partitioned sets of size $n$. 
We use the notation $I_j \lhd I$ to indicate that $I_j$ is a block of $I$. 
Note that, since blocks come with an order, it makes sense to use the notation $\prod_{J \lhd I}$ to indicated an ordered product. 

 The concatenation of an $m$-partitioned set and an $o$-partitioned set naturally yields an $(m+o)$-partitioned sets, and, given $J \subset I$, $I\setminus J$ is again an $m$-partitioned set.
 (It is important here to allow empty blocks.)

For $m \in \N$ we denote by $\mfI_m$ to the collection of all $m$-partitioned sets (realised concretely as subsets of $\N$)\footnote{The choice of the particular infinite set $\N$ is not so important here as fundamentally we used $m$-partitioned sets as labellings in operations of a finite nature, we choose to make these sets concrete just to avoid any reference to a set of all sets.}. 
For a given $m$-partitioned set $I$, we write $\widehat\CP_{I}\subset \CP_{I}$ for the set of
all contractions $\bpi$ such that, for every $(s,t) \in \bpi$, $s$ and $t$ belong to different
elements of the given partition of $I$.
For an $m$-partitioned set $I$ we write $\|I\| = m$ for the number of elements in the partition of $I$.
When we do not want to specify $\|I\|$ we will simply say that $I$ is a partitioned set and we write $\mfI = \sqcup_{m \in \N} \mfI_m$ for the collection of all partitioned sets.
Note that we still write $|I|$ for the cardinality of the underlying set $I$. 

Partitioned sets are useful labels for working with products of Wick products in a non-commmutative setting. 
We state the following proposition on Wick products. 
\begin{proposition}
\label{prop:WickProd}
Let $I$ be a partitioned set and fix $(f_{i})_{ i \in I} \in \mfH^{I}$, then 
	\begin{equ} \label{eq:WickProd}
		\prod_{J \lhd I} \xi^{\diamond J}_q \left( f_{\otimes J} \right) = \sum_{\bpi \in \widehat{\CP}_{I}} \xi_q^{\diamond \bpi} \left(  f_{\otimes I} \right)\;.
	\end{equ}
\end{proposition}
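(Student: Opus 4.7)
The plan is to prove this by induction on $m = \|I\|$, using Lemma~\ref{lem:doubleWick} as the key tool at each step. The base case $m=1$ is trivial since $\widehat\CP_I$ then consists solely of the empty contraction $\emptyset_I$ and the product on the left has a single factor. The case $m = 2$ is essentially Lemma~\ref{lem:doubleWick} applied with $\bpi = \emptyset_{I_1}$ and $\bsigma = \emptyset_{I_2}$, since $\CP(\emptyset_{I_1}, \emptyset_{I_2})$ is exactly $\widehat\CP_{I_1 \sqcup I_2}$ (every pair has its endpoints in different blocks).

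For the inductive step, suppose the statement holds for all partitioned sets of size less than $m$, and let $I = I' \sqcup I_m$ where $I' = I_1 \sqcup \cdots \sqcup I_{m-1}$ is an $(m-1)$-partitioned set. By the induction hypothesis applied to $I'$,
\begin{equ}
	\prod_{J \lhd I'} \xi_q^{\diamond J}(f_{\otimes J}) = \sum_{\bpi' \in \widehat\CP_{I'}} \xi_q^{\diamond \bpi'}(f_{\otimes I'})\;.
\end{equ}
Right-multiplying by $\xi_q^{\diamond I_m}(f_{\otimes I_m})$ and applying Lemma~\ref{lem:doubleWick} term by term (with the roles of $\bpi,\bsigma$ played by $\bpi'$ and $\emptyset_{I_m}$) gives
\begin{equ}
	\prod_{J \lhd I} \xi_q^{\diamond J}(f_{\otimes J}) = \sum_{\bpi' \in \widehat\CP_{I'}} \sum_{\bar\bpi \in \CP(\bpi', \emptyset_{I_m})} \xi_q^{\diamond \bar\bpi}(f_{\otimes I})\;.
\end{equ}

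It then remains to establish a bijection between $\bigsqcup_{\bpi' \in \widehat\CP_{I'}} \CP(\bpi', \emptyset_{I_m})$ and $\widehat\CP_I$. Given a pair $(\bpi', \bar\bpi)$ on the left, the definition of $\CP(\bpi', \emptyset_{I_m})$ ensures that every pair of $\bar\bpi \setminus \bpi'$ has one endpoint in $I'$ and the other in $I_m$, while pairs of $\bpi'$ connect distinct blocks of $I'$ (since $\bpi' \in \widehat\CP_{I'}$). Hence no pair of $\bar\bpi$ lies inside a single block of $I$, so $\bar\bpi \in \widehat\CP_I$. Conversely, given $\bar\bpi \in \widehat\CP_I$, no pair of $\bar\bpi$ lies within $I_m$ (as $I_m$ is a single block), so we may set $\bpi'$ to be the restriction of $\bar\bpi$ to pairs with both endpoints in $I'$; this lies in $\widehat\CP_{I'}$, and the remaining pairs of $\bar\bpi$ all have one endpoint in $I'$ and one in $I_m$, showing $\bar\bpi \in \CP(\bpi', \emptyset_{I_m})$. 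These two assignments are mutually inverse.

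The main thing to be careful about is the verification that the bijection respects the block structure correctly — in particular that a partitioned $I'$ produces contractions $\bpi' \in \widehat\CP_{I'}$ exactly matching the partition of $I'$ induced by restricting the partition of $I$, and that pairs with one endpoint in each of two distinct blocks of $I'$ remain valid pairs in $\widehat\CP_I$. Once this bookkeeping is set up, the induction closes immediately, and no further combinatorial identity (in particular, no manipulation of the $q$-weights $q^{\crb(\bar\bpi)}$) is needed because Lemma~\ref{lem:doubleWick} already tracks those factors.
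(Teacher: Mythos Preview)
Your proposal is correct and follows essentially the same approach as the paper's proof: both proceed by induction on $\|I\|$, peel off the last block, apply the induction hypothesis to the remaining $(m-1)$-partitioned set, and then invoke Lemma~\ref{lem:doubleWick} term by term before closing with the bijection between $\widehat\CP_I$ and pairs $(\bpi', \bar\bpi)$. Your write-up is in fact slightly more explicit about why the bijection works than the paper's own proof.
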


\begin{proof}
We proceed by induction on $\|I\|$. For $\|I\| \in \{0,1\}$, $\widehat{\CP}_{I}$ only contains the empty contraction, so the statement is true. 
We now take $ \|I\| \geqslant 2$, and assume the desired statement holds for all partitioned sets with strictly fewer blocks.
Let $\bar{I}$ be the partitioned set obtained by removing the last-most block from $I$, and denote this block by $\bar{J}$. 
Using the induction hypothesis and Lemma~\ref{lem:doubleWick}, we then have
\begin{equs}
	\prod_{J \lhd I } \xi^{\diamond J}_q \left( f_{\otimes J} \right) 
	&= \sum_{\bar\bpi \in \widehat{\CP}_{\bar{I}}} \xi_q^{\diamond \bar\bpi} \left( F_{\otimes \bar{I}} \right)\xi^{\diamond \bar{J}}_q \left( f_{\otimes \bar{J}}\right) = \\
	&=  \sum_{\bar\bpi \in \widehat{\CP}_{\bar{I}}} \sum_{\bsigma \in \CP(\bar\bpi,\emptyset_{\bar{J}})} \xi_q^{\diamond \bsigma} \left( f_{\otimes I} \right)\;.
\end{equs}
We conclude the proof by observing that, for every $\bpi \in \widehat{\CP}_{I}$ there exists a unique $\bar \bpi \in \CP_{\bar{I}}$ such that $\bpi \in \CP(\bar\bpi,\emptyset_{\bar{J}})$ and that furthermore $\CP(\bar\bpi,\emptyset_{\bar{J}}) \subset \widehat{\CP}_{I}$.
\end{proof}

\subsection{Bosons}
\label{sec:Bosons}
We begin our discussion of the bosonic case, $q = 1$, by noting that $\mfA_1(\mfH)$ is a commutative $*$-algebra.

We write $[A,B]_{-} = AB - BA$ for the standard commutator.
To see that $\mfA_1(\mfH)$ is commutative we observe that, for all $f,g \in \mfH$, the image of $[\alpha^\dagger_1(f), \alpha^\dagger_1(g)]_-$ is in the kernel of $P_1$.
Thus, as operators on $\mathring{\CF}_1(\mfH)$,
\begin{equ}[eq:BosCom1]
	[\alpha_1(f), \alpha_1(g)]_- = [\alpha^\dagger_1(f), \alpha^\dagger_1(g)]_-  = 0  \; .
\end{equ}
Therefore,
\begin{equs}[eq:BosCom2]
	{}[\xi_q(f), \xi_q(g)]_- &= \left[ \alpha_1^\dagger(f) ,  \alpha_1(\kappa g) \right]_- + \left[ \alpha_1 (\kappa f) , \alpha^\dagger(g) \right]_-  = \\
	& = - \Braket{\kappa g,f}_{\mfH} + \Braket{\kappa f,g}_{\mfH} = \\
	&=- \Braket{\kappa f,g}_{\mfH} + \Braket{\kappa f,g}_{\mfH}=  0 \; .
\end{equs}
In fact, one can prove that the operators $ \big( \xi_q(f): f \in \mfH \big)$ are a family of essentially self-adjoint operators on the domain $\mathring{\CF}_1 \subset \CF_1$, and their closures strongly commute.
Thus, they may be realised as an algebra of functions on a suitable probability space.
In particular, Corollary~\ref{cor:q-Wick} implies that, under the state $\omega_1$,
 $\left( \xi_1(f) \right)_{f \in \mfH}$ are a family of jointly Gaussian random variables indexed by $\mfH$ with the
 covariance of $\xi_1(f)$  and $\xi_1(g)$ given by $\Braket{f , g }_{\mfH}$.

When $\mfH = L^2(\R^d)$, then $\xi_1 = \xi_B$ where $\xi_B$ is the standard $d$-dimensional white noise.
In the context of classical stochastic analysis, the annihilation operator $\alpha_1(f)$ is often called the Malliavan derivative in the direction $f$.

Unfortunately, as the random variables $\xi_1(f)$  are unbounded operators, they do not form a Banach algebra.
The natural topological algebra structure that can be put on a space of unbounded functions is pointwise convergence, which equips it with a locally $m$-convex topology.  However, since we are working with equivalence classes of measurable functions modulo functions that are $0$ almost everywhere, we have to work with pointwise convergence almost everywhere.
 In particular, we shall denote by $\CA_B(\mfH)$
 the algebra of measurable functions generated by $\left(\xi_1(f)\right)_{f \in \mfH}$ completed with respect to convergence almost everywhere.
  We will identify $\CA_B(\mfH)$ with $L^0(\Omega, \mu)$ the space of (equivalence classes of) measurable functions over a suitable probability space $(\Omega, \mu)$. 

Defining convergence on $\CA_B(\mfH)$ using convergence almost everywhere only equips it with a ``convergence structure'' rather than a classical topology.
However this point of view is useful since it allows one to almost view $\CA_B(\mfH)$ as a locally $m$-convex algebra. 
Speaking formally rather than rigorously, each point $\mcb{p} \in \Omega$ in the probability space induces the submultiplicative seminorm
\begin{equ}
	\| \phi \|_{\mcb{p}} = |\phi(\mcb{p})|
\end{equ}
for $\phi \in \CA_B(\mfH)$, and a given sequence of elements of $\CA_{B}(\mfH)$ are said to converge in  $\CA_{B}(\mfH)$ if they converge in $\| \bigcdot \|_{\mcb{p}}$ for almost every $\mcb{p} \in \Omega$. 

For polynomials of our underlying bosonic Gaussian it will be convenient to work in the stronger space  $\cA_B(\mfH) \eqdef  L^{\infty -}(\Omega , \mu) = \bigcap_{q =1}^{\infty} L^{q}(\Omega,\mu) \subset \CA_B(\mfH)$, in particular we say sequences in $\cA_B(\mfH) $ converge if they converge in $\| \bigcdot \|_{L^q}$ for all $q \in [1, \infty)$.
This space will be important when we work with (Gaussian) models in regularity structures.

An importact fact we will use, which follows from a combination of the It\^{o} isometry and Gaussian hypercontractivity, is that for any $n \in \N$ and $p \in [1, \infty)$, there exists $C_{n,p} < \infty$ such that, for all $F \in \mfH^{\wotimes_\alpha n}$, 
\begin{equ}[eq:BosHyper]
	\| \xi^{\diamond n}_1(F) \|_{L^p} \leqslant C_{n,p} \| F \|_{\mfH^{\wotimes_\alpha n} } \; .
\end{equ}
In particular, $\xi_1^{\diamond n}$ extends to a continuous map $\mfH^{\wotimes_\alpha n} \to \cA_B(\mfH)$. 

\begin{remark}
The algebra $\CA_B(\mfH)$ ``almost'' fits into our framework of noncommutative regularity structures based on locally $m$-convex algebras, but it falls short of fitting in completely since we would need to use convergence almost everywhere instead of pointwise convergence. 
Nonetheless, from a conceptual point of view, imagining $\CA_B(\mfH)$ as an $m$-convex algebra will provide the reader with the useful intuition for some of our constructions. 

However, whenever making a mathematically rigorous and precise statement, we will equip $\CA_B(\mfH)$ with the topology of convergence in measure, which is neither $m$-convex nor even locally convex. 
We allow ourselves a level of ambiguity as to which mode of convergence (convergence almost everywhere vs convergence in measure) in some of our discussions, but in explicit statements such as theorems and propositions we refer to convergence in measure. See also the discussion in Remarks~\ref{rem:RanRS}~\&~\ref{rem:MixConv}.

\end{remark}

Whenever we have to define \dash or renormalise \dash a singular product appearing in a (noncommutative) SPDE, we do this by using the formulae appearing in Section~\ref{sec:WickRenorm} to rewrite it as a sum of Wick-ordered terms and suitably subtracting diverging ones. However, this always requires that we be able to extend $\xi^{\diamond n}_q$ to the Hilbert space tensor product $\mfH^{\wotimes_\alpha n}$.
In Sections~\ref{sec:Fermions}~ and~\ref{sec:qmezdons} we will explain how we can find analogous estimates for $q \in [-1,1)$.

\subsubsection{Mixed Systems}
\label{sec:MixSys}

In order to describe mixed systems of bosons and other particles, the latter collectively described by an algebra $\CA$, we will need to topologise and complete $\CA_B(\mfH) \otimes \CA$ and $\cA_B(\mfH) \otimes \CA$. Whenever $\CA$ is metrisable, which we will assume in our examples, we can identify these tensor products with subspaces of $L^0(\Omega , \mu ; \CA)$ \dash the space of (equivalence classes of) measurable functions $\Omega \to \CA$, where $\CA$ is equipped with the Borel $\sigma$-algebra induced by its metric. 
We therefore define 
\begin{equ}
	\cA_B(\mfH) \wotimes \CA \eqdef L^{\infty-}(\Omega , \mu ; \CA)
\end{equ}
and
\begin{equ}
	\CA_B(\mfH) \wotimes \CA \eqdef L^0(\Omega , \mu ; \CA)
\end{equ}
where the space $\CA_B(\mfH) \wotimes \CA$ should be thought of as equipped with convergence almost everywhere or convergence in measure (and in formal statements, always the latter).
The spaces above are called random algebras (where the randomness here is referring to the randomness in the bosonic component).

\subsection{Fermions}
\label{sec:Fermions}

In this section, we will discuss the fermionic case, $q = -1$. As with the case $q=1$, $P_{-1}$ has a non-trivial kernel, which leads to the following additional anticommutation relations
\begin{equ}[e:anticomm]
	\left[ \alpha_{-1}(f), \alpha_{-1}(g) \right]_+ = \bigl[ \alpha^\dagger_{-1}(f), \alpha^\dagger_{-1}(g) \bigr]_+ = 0 \;,
\end{equ}
where the anticommutator is defined as $\left[ A, B \right]_+ \eqdef AB + BA$.
Furthermore, one can show \cite[Ch.~5.2]{BR87} that the relation $\bigl[ \alpha_{-1}(f), \alpha^\dagger_{-1}(g) \bigr]_+ = \Braket{f,g} \1$ implies the operator bound
\begin{equ}
\label{eq:CAROpBnd}
	\bigl\| \alpha^\dagger_{-1}(f) \bigr\| = \left\| \alpha_{-1}(f) \right\| = \|f\|_{\mfH} \; .
\end{equ}
We denote by $\CA_F(\mfH)$ the $C^*$-algebra generated by $\alpha_{-1}^\dagger(f)$ for $f \in \mfH$. This $C^*$-algebra naturally acts on $\CF_{-1}(\mfH)$.

It is natural to split the fermionic setting into two further sub-cases that are of interest,
the self-adjoint case (``Clifford'' fermions) and a non self-adjoint case (``Dirac'' fermions).

The self-adjoint case corresponds to the $-1$-$\mfH$-noise $\xi_F \eqdef \xi_{-1}$ introduced in Definition~\ref{def:qNoise}.
We will call $\xi_F$ the Clifford noise, as a simple calculation shows that it generates the Clifford algebra of $\mfH$, which we will denote by $\CA_F^{\Cl}(\mfH)$. This is also the closure of $\mfA_{-1}(\mfH)$ in $\CA_F(\mfH)$. Furthermore, since we assume that $\mfH$ is separable, $\CA_F^{\Cl}(\mfH)$ is also isomorphic to the hyperfinite $\text{II}_1$ factor, cf.\ \cite{Tak04}. 

\subsubsection{Dirac Fermions}
\label{sec:DirFerm}
The Dirac case requires us to have additional structure and assumptions.
In particular, we assume that $\mfH$ is a complex Hilbert space equipped with a
conjugation (namely an antilinear involution) $\kappa$ and that we have chosen a $\kappa$-antisymmetric antiunitary map $U \colon \mfH \to \mfH$, namely $U$ satisfies
\begin{equ}
	\kappa U^\dagger \kappa = -U \; .
\end{equ}

\begin{definition}[Dirac Noise Operator]\label{def:DiracNoise}
	The Dirac noise with correlation function $U$ is the operator-valued distribution $\Psi \colon \mfH \to \CA_F(\mfH)$ given by
	\begin{equ}
		\Psi(f) \eqdef \alpha_{-1}^\dagger(f) + \alpha_{-1}( \kappa U f) \; .
	\end{equ}
\end{definition}
Due to $\kappa$-antisymmetry one readily checks that for all $f, g \in \mfH$
\begin{equ}
	\left[ \Psi(f), \Psi(g) \right]_+ = 0 \; .
\end{equ}

Motivated by models from physics, we will chiefly be interested in Hilbert spaces that can be written 
as $\mfH = \mfh \otimes \C^2 \cong \mfh \oplus \mfh$ for another Hilbert space $\mfh$.  In this case we view $\Psi$ as an element of $\CB(\mfH ; \CA_F(\mfH)) \cong \CB(\mfh ; \CA_F(\mfH))^2$ with components $\Psi = (\psi, \bar\psi)$.

To construct a correlation matrix $U$, we can start with a conjugation $\bar\kappa \colon \mfh \to \mfh$ and an arbitrary unitary $V \colon \mfh \to \mfh$, and set
\begin{equ}
	U \eqdef \begin{pmatrix}
		0 & V \\
		-\bar\kappa V^\dagger \bar\kappa & 0
	\end{pmatrix} \; .
\end{equ}
The linear map $U$ is then unitary and $\kappa$-antisymmetric for $\kappa = \bar\kappa \oplus \bar\kappa$. 
Furthermore, we fix $\lambda \in \left(0, \frac{1}{2}\right)$ and the corresponding quasi-free state $\omega_\lambda$ which is faithful on $\CA_F(\mfH)$ as defined in \cite[Section~2.5]{CHP23}, cf.\ also \cite{Gub23}.

\subsubsection{The Extended CAR Algebra}\label{sec:extendedcar}
In both the Clifford and Dirac settings, the operators $\left( \Psi(f) \right)_{f \in \mfH}$ generate an anticommutative or Grassmann algebra inside of $\CA_F(\mfH)$, the closure of which we shall denote by $\CG_F(\mfH)$. Analogously to \eqref{eq:WickDef1}, Wick powers of $\Psi(f)$ are defined inductively by setting $\Psi^{\diamond 0}(c) \eqdef c \bone$, $\Psi^{\diamond 1}(f) \eqdef \Psi(f)$, and
\begin{equs}
	\label{eq:WickDefDirac}
	\Psi^{\diamond n}(f_1 \otimes \cdots &\otimes f_n) \eqdef \Psi(f_1) \Psi^{\diamond (n-1)}(f_2 \otimes \cdots \otimes f_n) - \\
	& \qquad -\sum_{i = 2}^n (-1)^{i} \omega_{F} \bigl(\Psi(f_1) \Psi(f_i)\bigr) \Psi^{\diamond(n-2)} \bigl(f_2 \otimes \cdots \otimes \widehat{f_i} \otimes \cdots \otimes f_n\bigr)    \; .
\end{equs}
Unfortunately, the boundedness property \eqref{eq:CAROpBnd} does not translate to higher Wick powers $\xi^{\diamond n}_{F}(G)$ and $\Psi^{\diamond n}(G)$  if one wishes to insert an arbitrary $G \in \mfH^{\wotimes_{\alpha} n}$, as will be necessary in order to solve fermionic SPDEs.

In \cite{CHP23}, the authors established an extension and localisation procedure of the algebra $\mfA_{F}(\mfH)$ that allows one to view the unbounded operators appearing as locally or ``pathwise'' bounded, with respect to a noncommutative notion of points.
We shall summarise the procedure and point to \cite{CHP23} for further details.

Let $\mfA_F(\mfH)$ be the free $*$-algebra generated by $\mfH$, with the generators denoted by $\balphas(f)$ for $f \in \mfH$ and $\balpha(f)$ for its adjoint.

Given a finite-dimensional subspace $b \subset \mfH$, we define the $*$-representation $\pi_b \colon \mfA_F(\mfH) \to \CA_F(b) \subset \CA_F(\mfH)$ by setting
\begin{equ}
	\pi_b( \balphas(f) ) \eqdef \alpha_{-1}^\dagger(P_b f)
\end{equ}
where $P_b$ is the orthogonal projection $\mfH \to b$. Let $\Gr(\mfH)$ (respectively $\Gr^U(\mfH)$) denote the set of all finite-dimensional subspaces of $\mfH$ such that $\kappa b \subset b$  (respectively all $b \in \Gr(\mfH)$ such that $\kappa U  b  \subset  b$). Here $\kappa$ is again either the identity or antilinear conjugation, depending on whether we are working with a real or complex vector space $\mfH$. 
We assume that we are given a sequence $(\Gamma^{(U)}_{n})_{n=0}^{\infty}$, with $\Gamma^{(U)}_{n} \subset \Gr^{(U)}(\mfH)$, such that the following conditions hold
\begin{enumerate}
	\item For any $n \geqslant m$, one has $\Gamma^{(U)}_{n} \supset \Gamma_{m}$. 
	\item For any $n \in \N$, $\sup_{b \in \Gamma^{(U)}_{n}} \dim(b) = n$. 
	\item For all $n \in \N$, $\sum_{b \in \Gamma^{(U)}_{n} \setminus \Gamma^{(U)}_{n-1}} b$  is dense in $\mfH$, with $\Gamma^{(U)}_{-1} = \emptyset$.
\end{enumerate}
We also set $\Gamma^{(U)}_{\infty} \eqdef \bigcup_{n \in \N} \Gamma^{(U)}_n$.

For $n \in \N \cup \{\infty\}$ we define the following $*$-algebra seminorms on $\mfA(\mfH)$
\begin{equ}\label{eq:ExtFermAlgUnbounded}
	\| a \|^{(U)}_n \eqdef \sup_{b \in \Gamma_n^{(U)}} \| \pi_b(a) \| \; .
\end{equ}
We define
\begin{equ}
	\cA^{(U)}_F(\mfH) \eqdef \overline{\mfA_F(\mfH) / \bigcap_{b \in \Gr^{(U)}(\mfH)} \ker \pi_b }^{\left(\|\bigcdot\|_n^{(U)}\right)_n}
\end{equ}
and
\begin{equ}
	\cA^{(U)}_\infty(\mfH) \eqdef \overline{\mfA_F(\mfH) / \bigcap_{b \in \Gr^{(U)}(\mfH)} \ker \pi_b }^{\|\bigcdot\|^{(U)}_\infty} \; .
\end{equ}
$\cA^{(U)}_F(\mfH)$ is a locally $C^*$-algebra and $\cA^{(U)}_\infty(\mfH)$ is a $C^*$-algebra extending $\CA_F(\mfH)$ in the sense that there exist unique surjective $C^*$-algebra morphism $\digamma^{(U)} \colon \cA^{(U)}_\infty(\mfH) \to \CA_F(\mfH)$. 
We now turn to defining our noise process in this setting. 

\begin{definition}
	We define extensions of our two fermionic noises by setting
	\begin{equ}
		\bxi_F(f) \eqdef \balpha^\dagger(f) + \balpha(\kappa f) \; , \qquad
		\bPsi(f) \eqdef \balpha^\dagger(f) + \balpha( \kappa U f ) \; .
	\end{equ}
	We define $\cA_F^{\Cl}(\mfH)$ and $\cG_F(\mfH)$ to be the closed algebras generated by the centre of $\cA_F(\mfH)$, respectively $\cA_F^{(U)}(\mfH)$, as well as $\bxi_F$ and $\bPsi$ respectively. 
\end{definition}
\begin{remark}
	The distinction between $\Gr^{U}(\mfH)$ and $\Gr(\mfH)$ was made to ensure that the operators $\bPsi(f)$ anticommute in $\cA^U_F(\mfH)$. We will drop the $U$ from the notation when it is clear from context which algebra we mean. 
\end{remark}
\begin{remark}
	Analogously to the CAR algebra case, we will be splitting $\bPsi$ into two components $(\bpsi, \bar\bpsi)$.
\end{remark}
Higher Wick powers of $\bxi_F$ and $\bPsi$ are then defined analogously to \eqref{eq:WickDef1} and \eqref{eq:WickDefDirac} by replacing instances of $\scal{f,g}_\mfH$ and $\omega_{F}(\Psi(f)\Psi(g))$ with
\begin{equ}
	\bigl[ \balpha(f), \balphas(g)  \bigr]_+\qquad\text{and}\qquad
	\bigl[ \balpha(\kappa U f), \balphas(g)  \bigr]_+
\end{equ}
respectively, both of which are no longer proportional to $\bone$ in $\cA^{(U)}_F(\mfH)$. For these extended Wick polynomials, we have the following hypercontractivity-type estimate.
\begin{proposition}
	\label{proposition:Wick_Product}
	For any $n \in \N\setminus\{0\}$ there exists a constant $C_n < \infty$, such that, for all $G \in \mfH^{\wotimes_\alpha n}$ and all $k \in \N$,
	\begin{equs}
		{}
		\bigl\| \bxi_F^{\diamond n}(G)\bigr\|_k  &\leqslant C_n (1+k)^{\frac{n-1}{2}} \|G\|_{\mathfrak {H}^{\wotimes_\alpha n}} ,  \\
		\bigl\| \bPsi^{\diamond n}(G)\bigr\|_k  &\leqslant C_n (1+k)^{\frac{n-1}{2}} \|G\|_{\mathfrak {H}^{\wotimes_\alpha n}} \; .
	\end{equs}
	In particular, $\bPsi^{\diamond n}$ extends to a continuous map $\mfH^{\wedge n} \to \cA_F(\mfH)$. Furthermore, one also has the inequality
	\begin{equ}
		\|G\|_{ \mfH^{\wedge n}} \leqslant \bigl\| \bxi_F^{\diamond n}(G)\bigr\|_\infty \; , \qquad
		\|G\|_{ \mfH^{\wedge n}} \leqslant \bigl\| \bPsi^{\diamond n}(G)\bigr\|_\infty \; .
	\end{equ}
\end{proposition}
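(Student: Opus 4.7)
The plan is to prove the upper bounds by reducing to a finite-dimensional CAR algebra via the projections $\pi_b$, and to prove the lower bound by pushing forward along the surjection $\digamma^{(U)}$. I focus on $\bxi_F^{\diamond n}$; the Dirac case is analogous after using that $\kappa U b \subset b$ for $b \in \Gamma_k^U$.

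The first step is to verify the identity
\begin{equ}
\pi_b\bigl(\bxi_F^{\diamond n}(G)\bigr) = \xi_F^{\diamond n}\bigl(P_b^{\otimes n} G\bigr)\;,
\end{equ}
for $b \in \Gamma_k$, where the right-hand side is computed inside the finite-dimensional CAR sub-algebra $\CA_F(b) \subset \CA_F(\mfH)$. This follows by induction on $n$ from the defining recursion \eqref{eq:WickDef1}, using that $P_b$ commutes with $\kappa$ (since $\kappa b \subset b$), and that under $\pi_b$ the ``extended'' contraction $\bigl[\balpha(\kappa f), \balphas(g)\bigr]_+$ becomes the scalar $\langle \kappa P_b f, P_b g\rangle$. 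Since $P_b^{\otimes n}$ is a Hilbert space contraction, the upper bound reduces to the finite-dimensional estimate
\begin{equ}
\bigl\| \xi_F^{\diamond n}(F) \bigr\|_{\CA_F(b)} \leqslant C_n (1 + \dim b)^{(n-1)/2} \|F\|_{b^{\wotimes_\alpha n}}\;,
\end{equ}
uniformly over finite-dimensional $b \subset \mfH$ with $\kappa b \subset b$.

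For this finite-dimensional estimate I would use the Wick block decomposition of Lemma~\ref{lem:WickBlockDecomposition} to write $\xi_F^{\diamond n}(F) = \sum_{j=0}^n W_{-1}^{j,n-j}(F)$ and bound each of the $n+1$ summands separately. Fixing an orthonormal basis of $b$ indexed by $[d]$ with $d = \dim b$, each block $W_{-1}^{j,n-j}(F)$ restricts to a map $b^{\wedge m} \to b^{\wedge (m+2j-n)}$ whose matrix elements between Fock basis vectors can be expressed as signed sums of coefficients of $F$, with signs determined by the fermionic relations. A Schur-type estimate then bounds the operator norm by the geometric mean of the largest row- and column-sums of squared absolute matrix entries. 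Each such sum is dominated by $\sum |F_{i_1,\ldots,i_n}|^2$ once $n-1$ of the $n$ indices are fixed, and a count of the number of compatible target Fock states yields a combinatorial factor bounded by $(1+d)^{n-1}$; taking square roots gives the advertised growth.

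The lower bound is immediate from the $C^*$-structure: since $\digamma^{(U)}$ is a unital $*$-homomorphism between $C^*$-algebras it is contractive, so $\|\xi_F^{\diamond n}(G)\|_{\CA_F} \leqslant \|\bxi_F^{\diamond n}(G)\|_\infty$, while Proposition~\ref{prop:WickOrthog} gives $\xi_F^{\diamond n}(G) \1 = G$ in $\CF_{-1}$, hence $\|\xi_F^{\diamond n}(G)\|_{\CA_F} \geqslant \|G\|_{\mfH^{\wedge n}}$. The main obstacle will be the combinatorial bookkeeping in the finite-dimensional block estimate: obtaining the sharp exponent $(n-1)/2$ rather than a cruder $n/2$ requires real care with the Schur-type count, and in the Dirac case one must additionally track how the unitary $U$ interacts with antisymmetrisation, though since $\|U\|=1$ this only contributes bounded constants that can be absorbed into $C_n$.
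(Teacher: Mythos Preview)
The paper's own proof is not a proof at all: it simply cites \cite[Prop.~2.23]{CHP23} for the $\bPsi$ estimates and observes that $\bxi_F$ is formally the case $U=\bone$. Your proposal therefore contains considerably more than the paper does, and the strategy you outline---reduce via $\pi_b$ to a finite-dimensional CAR algebra using $\pi_b\bigl(\bxi_F^{\diamond n}(G)\bigr) = \xi_F^{\diamond n}(P_b^{\otimes n}G)$, bound the Wick blocks there, and recover the lower bound from the contractive $*$-homomorphism $\digamma$---is the natural one and is essentially what underlies the cited reference.

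Two small remarks. First, for the lower bound you invoke $\digamma$, but $\digamma$ is only defined on the bounded subalgebra $\mfA_\infty$; you should therefore argue first for $G$ in the algebraic tensor product (where $\bxi_F^{\diamond n}(G)$ is a polynomial in the generators, hence bounded) and extend, or simply observe that the inequality is vacuous when $\|\bxi_F^{\diamond n}(G)\|_\infty=\infty$. Second, as you yourself flag, the sharp exponent $(n-1)/2$ is the only genuinely delicate step: a crude block-by-block bound together with a dimension count of compatible Fock basis vectors most naturally produces $(1+d)^{n/2}$, and squeezing out the extra factor of $\sqrt{1+d}$ requires either a careful organisation of the Schur count (absorbing one index into the Hilbert-space direction before summing over the rest) or an appeal to fermionic Khintchine/hypercontractivity. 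Your sketch of this step is plausible but not yet a proof.
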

\begin{proof}
For the statements regarding Wick powers of $\bPsi$, this is precisely \cite[Prop.~2.23]{CHP23}. 
The proof of the estimates involving $\bxi_F$ then follows since it corresponds to the 
special case $U=\bone$. 
\end{proof}

\begin{remark}
In this paper, when working with fermions, we will always pose our equations in spaces where the fermionic components belong to the extended algebra \eqref{eq:ExtFermAlgUnbounded} rather than the classical CAR algebra $\CA_{F}(\mfH)$. 
Again, this is because we cannot close singular equations in $\CA_{F}(\mfH)$ due to the lack of estimates for higher Wick powers.
\end{remark}

\section{\TitleEquation{q}{q}-Mezdons and the Algebra \TitleEquation{\cA_{q}}{}
}
\label{sec:qmezdons}

In this section, we describe the pertinent analytic aspects of $q$-mezdon algebras. These $q$-mezdon algebras enjoy a stronger version of hypercontractivity, called ``ultracontractivity''. We review these properties, and introduce the new topology that we use to control the ``intertwined'' renormalised product estimates mentioned
in the discussion around \eqref{eq:intertwined}. These are necessary to solve singular PDEs with values in this algebra. We will be working with real Hilbert spaces $\mfH$ when dealing with $q$-mezdons and drop $\kappa$ from the notation.

The main factor that differentiates the case $|q|<1$ is that the symmetrisation operator $P_q$ suppresses permutations with a high inversion number, and the norm of $P_q$ increases only exponentially with the number of particles instead of factorially.
To be more precise, we have the following proposition.
\begin{proposition}
\label{prop:SymOpEst}
	For $q \in (-1,1)$
	\begin{equ}
		\left\| P_q \big|_{\mfH^{\otimes n}} \right\|_{\CB(\CF(\mfH))} = \prod_{k = 0}^{n-1} \frac{1-|q|^{k}}{1-|q|} \leqslant D_{q}^{n}\;,
	\end{equ}
	with $D_q \eqdef (1-|q|)^{-1} $.
\end{proposition}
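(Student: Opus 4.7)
The plan is to derive both the exact value and the bound from a recursive factorisation of $P_q$ on $\mfH^{\otimes n}$. Using the coset decomposition $\mfS_n = \bigsqcup_{i=1}^n \sigma_i\, \mfS_{n-1}$, where $\mfS_{n-1}$ is the subgroup fixing position~$1$ and $\sigma_i$ is the minimum-length coset representative with $\sigma_i(1) = i$ and $|\sigma_i| = i-1$, the standard parabolic length-additivity $|\sigma_i \tau| = |\sigma_i| + |\tau|$ for $\tau \in \mfS_{n-1}$ yields the operator identity
\[
P_q\big|_{\mfH^{\otimes n}} = R_n \cdot \bigl(\bone_\mfH \otimes P_q\big|_{\mfH^{\otimes (n-1)}}\bigr)\;,\qquad R_n \eqdef \sum_{i=1}^n q^{i-1}\, U_i\;,
\]
where $U_i$ is the unitary on $\mfH^{\otimes n}$ implementing the permutation $\sigma_i$.

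For the upper bound, the triangle inequality applied to $R_n$ gives $\|R_n\| \leqslant \sum_{i=1}^n |q|^{i-1} = \frac{1-|q|^n}{1-|q|}$, and induction on $n$ yields $\|P_q\big|_{\mfH^{\otimes n}}\| \leqslant \prod_{k=1}^n \frac{1-|q|^k}{1-|q|}$. For the matching lower bound (which requires $\dim \mfH \geqslant n$, and in particular holds whenever $\mfH$ is infinite-dimensional), I split on the sign of $q$. For $q \geqslant 0$, take any unit vector $f \in \mfH$ and set $v = f^{\otimes n}$: since $\sigma \cdot v = v$ for every $\sigma$, the classical $q$-factorial identity $\sum_{\sigma \in \mfS_n} q^{|\sigma|} = \prod_{k=1}^n \frac{1-q^k}{1-q}$ gives $P_q v = \prod_{k=1}^n \frac{1-|q|^k}{1-|q|}\, v$. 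For $q < 0$, fix orthonormal $e_1,\ldots,e_n \in \mfH$ and take $v_A = \sum_{\pi \in \mfS_n} \sgn(\pi)\, e_{\pi(1)} \otimes \cdots \otimes e_{\pi(n)}$; since $\tau \cdot v_A = \sgn(\tau)\, v_A$, the same identity applied with $-q = |q|>0$ yields
\[
P_q v_A = \Bigl(\sum_{\sigma \in \mfS_n} q^{|\sigma|} \sgn(\sigma)\Bigr) v_A = \Bigl(\sum_{\sigma \in \mfS_n} (-q)^{|\sigma|}\Bigr) v_A = \prod_{k=1}^n \frac{1-|q|^k}{1-|q|}\, v_A\;,
\]
matching the upper bound.

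The final inequality $\prod_{k=1}^n \frac{1-|q|^k}{1-|q|} \leqslant D_q^n$ is immediate since each factor equals $\sum_{j=0}^{k-1} |q|^j \leqslant (1-|q|)^{-1} = D_q$. The main technical point is the length additivity in parabolic cosets \dash a standard fact from Coxeter combinatorics that needs to be recorded in the present inversion-number normalisation \dash together with the $q$-factorial identity, itself proved by iterating the same factorisation on the scalar generating function $\sum_{\sigma \in \mfS_n} q^{|\sigma|}$. I do not anticipate any serious obstacle beyond writing this carefully.
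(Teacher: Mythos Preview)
Your argument is correct and is precisely the standard proof from \cite{BS91} that the paper cites in lieu of a self-contained argument: the recursive factorisation $P_q\big|_{\mfH^{\otimes n}} = R_n\,(\bone\otimes P_q\big|_{\mfH^{\otimes(n-1)}})$ via parabolic coset decomposition, the triangle-inequality upper bound, and the explicit eigenvectors (symmetric for $q\geqslant 0$, antisymmetric for $q<0$) for the matching lower bound. Your caveat that the equality requires $\dim\mfH\geqslant n$ is apt and harmless here since $\mfH$ is infinite-dimensional throughout the paper; note also that the product in the displayed statement should run over $k=1,\ldots,n$ rather than $k=0,\ldots,n-1$ (the $k=0$ factor vanishes), which you have silently corrected.
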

\begin{proof}
	See \cite{BS91}.
\end{proof}

\begin{corollary}
\label{cor:nPartEq}
	For $q \in (-1,1)$, and all $n \in \N$,  $\CF_{0,n}(\mfH) = \CF_{q,n}(\mfH)$ as Banach spaces, i.e.\ they are isomorphic as vector spaces and their norms are equivalent. However, the constants of equivalence are not uniform in $n$.
\end{corollary}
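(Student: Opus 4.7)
The plan is to bound the symmetriser $P_q$ above and below as a positive operator on $\mfH^{\otimes n}$, since the two Hilbert norms both arise from inner products on $\mfH^{\otimes n}$. Our convention $0^0=1$ gives $P_0 = \mathbf{1}$ on $\mfH^{\otimes n}$, so $\CF_{0,n}(\mfH)$ is the completion of $\mfH^{\otimes n}$ under the standard tensor inner product. By the strict positivity of $P_q$ on $\mfH^{\otimes n}$ for $|q|<1$ established in \cite[Lemmata~2~\&~3]{BS91}, one has $\ker(P_q|_{\mfH^{\otimes n}})=0$, so that $\CF_{q,n}(\mfH)$ is the completion of $\mfH^{\otimes n}$ under $\langle F, G\rangle_{\CF_q} = \langle F, P_q G\rangle$. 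It therefore suffices to produce constants $0 < c_n \leqslant C_n < \infty$ such that, as positive operators on $\mfH^{\otimes n}$,
\begin{equation*}
c_n\,\mathbf{1} \;\leqslant\; P_q\big|_{\mfH^{\otimes n}} \;\leqslant\; C_n\,\mathbf{1}\;.
\end{equation*}

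The upper bound with $C_n = D_q^n$ is immediate from Proposition~\ref{prop:SymOpEst}. For the lower bound, observe that $P_q|_{\mfH^{\otimes n}}$ is the image under the tensor-factor permutation representation $\rho$ of the element $a_q \eqdef \sum_{\sigma \in \mfS_n} q^{|\sigma|} \sigma$ of the finite-dimensional group algebra $\mathbb{K}[\mfS_n]$. Decomposing $\mfH^{\otimes n} \cong \bigoplus_{\lambda \vdash n} V_\lambda \otimes W_\lambda$ according to the $\mfS_n$-isotypic components (Schur--Weyl), $P_q$ acts on each summand as $P_q^\lambda \otimes \mathbf{1}_{W_\lambda}$, with $P_q^\lambda = \pi_\lambda(a_q) = \sum_\sigma q^{|\sigma|}\pi_\lambda(\sigma) \in \operatorname{End}(V_\lambda)$ and $\pi_\lambda$ the irreducible representation indexed by $\lambda$. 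Each $P_q^\lambda$ is self-adjoint on the finite-dimensional space $V_\lambda$ and, as the restriction of the strictly positive $P_q$ to an $\mfS_n$-invariant subspace, strictly positive, so its minimal eigenvalue $c_n^\lambda(q)>0$. Since there are only finitely many partitions $\lambda \vdash n$, setting
\begin{equation*}
c_n(q) \;\eqdef\; \min_{\lambda \vdash n} c_n^\lambda(q) \;>\; 0
\end{equation*}
yields the required lower bound, depending only on $n$ and $q$.

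The main (and really only) point to flesh out is the strict positivity of each $P_q^\lambda$; once the isotypic decomposition is in place, this reduces to finite-dimensional linear algebra combined with \cite{BS91}, since there are only finitely many partitions $\lambda \vdash n$ and each $V_\lambda$ is finite-dimensional. The non-uniformity of the equivalence constants in $n$, asserted in the statement, can be verified on the antisymmetric subspace of $\mfH^{\otimes n}$, where $P_q$ acts as multiplication by $\prod_{k=1}^{n}\frac{1-(-q)^k}{1+q}$; this tends to zero as $n\to\infty$ for $q \in (0,1)$, with an analogous computation on the symmetric subspace, where the eigenvalue is $\prod_{k=1}^{n}\frac{1-q^k}{1-q}$, handling $q \in (-1,0)$.
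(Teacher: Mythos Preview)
Your proof is correct and follows essentially the same strategy as the paper: bound $P_q|_{\mfH^{\otimes n}}$ above and below by positive multiples of the identity, using Proposition~\ref{prop:SymOpEst} for the upper bound and the strict positivity result of \cite{BS91} for the lower bound. The paper's proof is terser---it simply invokes \cite[Proposition~1]{BS91} for the existence of $c(q,n)>0$ with $P_q|_{\CF_{0,n}} \geqslant c(q,n)$---whereas you unpack this via the Schur--Weyl decomposition, reducing to the finite-dimensional irreducibles $V_\lambda$; this is more self-contained and makes the $n$-dependence of the lower bound transparent. Your explicit eigenvalue computation on the (anti)symmetric subspaces to witness the non-uniformity is a nice addition that the paper omits.
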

\begin{proof}
	This follows from the proof of \cite[Proposition~1]{BS91}, wherein it is shown that for every $q \in (-1,1)$ and $n \in \N$ there exists a constant $c(q,n) > 0$, s.t.\ $P_q \big|_{\CF_{0,n}} \geqslant c(q,n)$. Therefore, $\Braket{  \, \bigcdot \, ,  \, \bigcdot \, }_{\CF_0}$ and $\Braket{  \, \bigcdot \, ,  \, \bigcdot \, }_{\CF_q}$ induce equivalent norms for each fixed $n$-particle subspace.
\end{proof}


In \cite[Lemma~4]{BS91} it was shown that $\alpha_q^\dagger(f)$ and $\alpha_q(f)$ extend to $\CF_{q}(\mfH)$ as bounded operators with norm
\begin{equ}\label{eq:qbound}
	\bigl\| \alpha^\dagger_q(f) \bigr\| = \| \alpha_q(f) \| = \begin{cases}
		\displaystyle\frac{\|f\|_{\mfH}}{\sqrt{1-q}} \; , \qquad & \text{if } q \in [0,1)\\
		\|f\|_{\mfH}  \; , \qquad & \text{if } q \in (-1,0]
	\end{cases} \; .
\end{equ}


\begin{definition}
	We denote by $\CA_q(\mfH)$ the closure of $\mfA_q(\mfH)$ with respect to the operator norm in $\CB(\CF_q(\mfH))$; which is a $C^*$-algebra.
\end{definition}

In \cite[Theorems~4.3 and~4.4]{BS94} it was shown that the extension $\omega_q$ of the state from $\mfA_q(\mfH)$ to $\CA_q(\mfH)$ is faithful\footnote{This means $\omega_{q}(a^{\dagger}a) = 0 \Rightarrow a = 0$} and tracial\footnote{This means $\omega_{q}(ab) = \omega_{q}(ba)$}.
However, neither of these facts are fundamental for our analysis here.

\begin{remark}
Note that $\omega_q(\, \bigcdot \, )$ is also a state on $\CB\big(\CF_q(\mfH) \big) \supset \mfA_q(\mfH)$ but for $f \not = 0$, $\omega_{q}(\alpha_{q}(f)\alpha_{q}(f)^{\dagger}) > 0$ while $\omega_{q}\big(\alpha_{q}(f)^{\dagger}\alpha_{q}(f)\big) = 0$, so $\omega_{q}$ is \textit{not} tracial on $\CB \big(\CF_{q}(\mfH) \big)$.
\end{remark}

\subsection{Mezdonic Chaos Estimates and Identities}

Fundamental to our analysis of mezdons will be the following \textit{ultracontractive} estimate which is proven in \cite[Proposition~2.1(b)]{Boz99}.

\begin{proposition}\label{}
For any  $k, \ell \in \N$, the Wick block map $W_q^{k,\ell}$ extends to a continuous maps $\CF_{q,k+\ell}(\mfH) \to \CB(\CF_q(\mfH))$.
In particular, for any $F \in \CF_{q,k+\ell}(\mfH)$ one has the estimate
\begin{equ}
	\label{eq:WklStrBnd}
	\| W^{k,\ell}_q (F) \| \leqslant C^{\frac{3}{2}}_{q} \| F \|_{\CF_q} \leqslant D_{q}^{\frac{k+\ell}{2}} C_{q}^{\frac{3}{2}} \| F \|_{\CF_0} \; ,
\end{equ}
where
\begin{equ}
	C_{q} \eqdef \prod_{n = 1}^\infty (1-|q|^{n})^{-1} \; .
\end{equ}
\end{proposition}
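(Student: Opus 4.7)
The plan is to split the statement into its two inequalities, the second of which is essentially immediate, and the first of which is the genuine ultracontractive estimate that forms the technical heart of \cite{Boz99}.

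The second inequality is a direct consequence of Proposition~\ref{prop:SymOpEst}. Indeed, for $F \in \CF_{q,k+\ell}(\mfH)$ viewed also as an element of $\mfH^{\otimes(k+\ell)}$, one has by definition
\begin{equ}
	\|F\|_{\CF_q}^{2} = \scal{F, P_q F}_{\CF_0} \leqslant \|P_q|_{\mfH^{\otimes(k+\ell)}}\|_{\CB(\CF(\mfH))} \,\|F\|_{\CF_0}^{2} \leqslant D_{q}^{k+\ell} \|F\|_{\CF_0}^{2}\;,
\end{equ}
so taking square roots yields $\|F\|_{\CF_q} \leqslant D_{q}^{(k+\ell)/2}\|F\|_{\CF_0}$, and the second inequality follows on multiplying by $C_{q}^{3/2}$.

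For the first (and genuinely ultracontractive) inequality, the strategy is the $C^{*}$-doubling trick: since $\|W_q^{k,\ell}(F)\|^{2} = \|W_q^{k,\ell}(F)^{\dagger} W_q^{k,\ell}(F)\|$, it suffices to control the self-adjoint positive operator on the right. One first computes $W_q^{k,\ell}(F)^{\dagger}$ from the defining expression \eqref{eq:WickBlockExp}; by inspection of which generators are adjoined, this turns out to be a Wick block of type $W_q^{\ell,k}$ applied to a suitable permuted/conjugated version of $F$. One then expands the composite $W_q^{\ell,k}(F^{\dagger}) W_q^{k,\ell}(F)$ by repeatedly commuting each annihilation operator on the left past every creation operator on the right, using the $q$-CCR \eqref{eq:qrelations}. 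Each commutation produces either a reordered term (with a power of $q$) or a contraction term proportional to an inner product in $\mfH$, and since all remaining creation operators end up on the far left and all remaining annihilation operators on the far right, the resulting expression is, sector by sector, a sum over partial contractions of $F$ with its adjoint, weighted by appropriate powers of $q$ and scalar inner products.

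The core combinatorial input, which one would import from \cite[Sec.~2]{Boz99}, is then that when one sums up all such contractions on the $n$-particle sector, the total scalar weight can be estimated by $\|F\|_{\CF_q}^{2}$ multiplied by a partition-function style sum of the form $\sum_{m\geqslant 0} \prod_{j=1}^{m}(1-|q|^{j})^{-1}$, which is bounded by $C_{q}^{3}$ uniformly in $n$; the exponent $3$ rather than $1$ comes from the fact that one gets one factor of $C_q$ from each of the creation side, the annihilation side, and the interleaving shuffle sum $\mfS_{k+\ell,k}$ in \eqref{eq:WickBlockExp}. Taking the operator norm and then the square root delivers $\|W_q^{k,\ell}(F)\| \leqslant C_{q}^{3/2}\|F\|_{\CF_q}$.

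The main obstacle is precisely this last combinatorial step: one must show that the many powers of $q^{|\sigma|}$ produced by the shuffles in the definition of $W_q^{k,\ell}$, together with the powers of $q$ produced by the successive applications of the $q$-CCR, assemble into a uniformly $n$-independent geometric-type series. This is exactly where ultracontractivity breaks down for $|q|=1$ (the $q$-Pochhammer denominators $(1-|q|^{j})$ degenerate and one gets factorial growth instead of the bounded product $C_{q}$), and it is the reason the estimate is genuinely stronger than mere hypercontractivity. For this reason it seems cleanest to cite \cite{Boz99} for the fine combinatorial identification rather than reproduce it.
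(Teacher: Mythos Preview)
Your proposal is correct and follows the same approach as the paper: the first inequality is deferred to \cite[Proposition~2.1(b)]{Boz99} and the second is deduced from Proposition~\ref{prop:SymOpEst}. The paper simply cites \cite{Boz99} without the explanatory sketch you provide of the $C^{*}$-doubling strategy, but the substance is identical.
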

\begin{proof}
The first inequality of \eqref{eq:WklStrBnd} comes from \cite[Proposition~2.1(b)]{Boz99}. 
The second inequality follows from Proposition~\ref{prop:SymOpEst}.
\end{proof}

Note that \eqref{eq:WklStrBnd} implies that for all $F \in \CF_{q,n}(\mfH)$, 
\begin{equ}
	\label{eq:WklStrBnd2}
	\| \xi^{\diamond n}_q (F) \| \leqslant (n+1) D_{q}^{\frac{n}{2}} C_{q}^{\frac{3}{2}} \| F \|_{\CF_0} \; ,
\end{equ}
as $\xi_q^{\diamond n}$ is made up of $n+1$ Wick blocks.

\begin{remark}
The estimate \eqref{eq:WklStrBnd2} should be compared to the standard Gaussian \textit{hypercontractive} estimates where one bounds on the left hand side an $L^{p}$ norm for $p \in [2,\infty)$.

Ultracontractive estimates play a fundamental role in our analysis of mezdonic equations, in particular they allow us work with Banach algebras and avoid any sort of localisation procedure such as the one we introduced for fermions in Section~\ref{sec:extendedcar}
\end{remark}



We shall also need a slightly different set of bounds than \eqref{eq:WklStrBnd} and~\eqref{eq:WklStrBnd2}, we state and prove these bounds below in Proposition~\ref{prop:OpBounds}.
We begin with the following combinatorial estimate, proven in \cite[Theorem~2.1]{Boz99}.
\begin{lemma}
\label{lemma:SymEst}
	For all $q \in (-1,1)$, and all $n,k \in \N$
	\begin{equ}
		\Bigl|\sum_{\sigma \in \mfS_{n,k}} q^{|\sigma|}\Bigr| \leqslant C_{q} \; .
	\end{equ}
\end{lemma}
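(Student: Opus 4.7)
The plan is to identify $\sum_{\sigma \in \mfS_{n,k}} q^{|\sigma|}$ with the Gaussian binomial coefficient and then exploit its product formula. Under the convention adopted in the paper (that $\sigma$ denotes the minimal-inversion representative of its equivalence class), a classical combinatorial identity \dash obtained via the bijection between $(k,n-k)$-shuffles and Young diagrams fitting inside a $k\times(n-k)$ rectangle, enumerated by area \dash gives
\begin{equ}
	\sum_{\sigma \in \mfS_{n,k}} q^{|\sigma|} = \prod_{i=1}^{k} \frac{1-q^{n-k+i}}{1-q^i}\;.
\end{equ}
Since $|q^m|<1$ for all $m\geqslant 1$ when $q \in (-1,1)$, each factor $1-q^m$ in this product is strictly positive, so the entire sum is positive and one can drop the absolute value in what follows.

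For $q \in [0,1)$ the estimate is immediate: every numerator satisfies $1-q^{n-k+i} \leqslant 1$, so
\begin{equ}
	\sum_{\sigma \in \mfS_{n,k}} q^{|\sigma|} \leqslant \prod_{i=1}^k \frac{1}{1-q^i} \leqslant \prod_{i=1}^\infty \frac{1}{1-q^i} = C_q\;.
\end{equ}

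The substantive work lies in the case $q \in (-1,0)$. Writing $p = |q|$, one has $1-q^m = 1+p^m > 1$ for odd $m$ and $1-q^m = 1-p^m < 1$ for even $m$, so a naive termwise bound is too crude. The plan is to group the $k$ factors of the product into consecutive pairs $(2j-1,2j)$ for $j=1,\dots,\lfloor k/2\rfloor$, together with a possible leftover factor when $k$ is odd. Within each pair I would exploit two observations: (i) the odd-index numerator factor (of the form $1+p^{n-k+2j-1}$) is dominated by its odd-index denominator counterpart ($1+p^{2j-1}$), using $p^{n-k+m} \leqslant p^m$ since $n-k\geqslant 0$, so that this part of the ratio is at most $1$; and (ii) the remaining even-index part is controlled by $(1-p^{2j})^{-1}$. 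The leftover odd factor (when $k$ is odd) is bounded by $1$ by the same comparison. Multiplying over all pairs yields the uniform bound
\begin{equ}
	\sum_{\sigma \in \mfS_{n,k}} q^{|\sigma|} \leqslant \prod_{j=1}^{\lfloor k/2\rfloor} \frac{1}{1-|q|^{2j}} \leqslant \prod_{i=1}^\infty \frac{1}{1-|q|^i} = C_q\;.
\end{equ}

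The main obstacle is precisely this $q<0$ analysis: one cannot replace $q$ by $|q|$ inside the binomial (the Gaussian binomial at $|q|$ is unbounded as $n\to\infty$), and the termwise absolute-value bound $|1-q^m|\leqslant 1+|q|^m$ in the numerators fails to close. It is the pairing device that leverages the oscillating sign structure in numerator and denominator to produce cancellations uniform in $n$ and $k$.
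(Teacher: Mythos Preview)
The paper does not prove this itself; it simply cites \cite[Theorem~2.1]{Boz99}. Your route via the Gaussian binomial product formula $\sum_{\sigma\in\mfS_{n,k}}q^{|\sigma|}=\prod_{i=1}^k (1-q^{n-k+i})/(1-q^i)$ is the standard one, and the $q\geqslant 0$ case is fine.

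For $q<0$ there is a small gap: your claim (i) that the numerator factor at $i=2j-1$ has the form $1+p^{\,n-k+2j-1}$ presupposes that $n-k+2j-1$ is odd, i.e.\ that $n-k$ is even. When $n-k$ is odd the parities in the numerator are reversed (at $i=2j-1$ you get $1-p^{\,n-k+2j-1}$, at $i=2j$ you get $1+p^{\,n-k+2j}$), and the comparison you state does not apply directly. The fix uses the same pairing: with $a=n-k+2j-1$, the numerator pair satisfies $(1-p^a)(1+p^{a+1}) = 1 - p^a(1-p) - p^{2a+1} < 1$, while the denominator pair obeys $(1+p^{2j-1})(1-p^{2j}) \geqslant (1-p^{2j-1})(1-p^{2j})$, so each paired ratio is bounded by $[(1-p^{2j-1})(1-p^{2j})]^{-1}$ and the product is again $\leqslant C_q$; the leftover factor at $i=k$ (when $k$ is odd) is $(1-p^n)/(1+p^k)<1$ in this case. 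With this parity split made explicit, your argument is complete.
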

Furthermore, we need the following explicit identity.
\begin{lemma}
\label{lemma:AnnProdExp}
	For all $n \leqslant m$, $f_1, \dots, f_n, g_1, \dots, g_m \in \mfH$
	\begin{equs}
		\label{eq:AnnOpId}
		& \alpha_q(f_1) \cdots \alpha_q(f_n) g_1 \otimes \cdots \otimes g_m = \\
		& \qquad = \hspace{-0.3cm} \sum_{\sigma \in \mfS_{m,n}} \hspace{-0.3cm} q^{|\sigma|} \Braket{f_1 \otimes \cdots \otimes f_n, g_{\sigma(n)} \otimes \cdots \otimes g_{\sigma(1)}}_{\CF_q} g_{\sigma(n+1)} \otimes \cdots \otimes g_{\sigma(m)} \; .
	\end{equs}
	In particular,
	\begin{equ}
		\alpha_q(f_1) \cdots \alpha_q(f_n) \big|_{\CF_{0,m}(\mfH)} = \hspace{-0.3cm} \sum_{\sigma \in \mfS_{m,n}} \hspace{-0.3cm} q^{|\sigma|} \alpha_0(f_1) \cdots \alpha_0(f_n) U_\sigma
	\end{equ}
	where $U_\sigma \colon \mfH^{\wotimes_\alpha n} \to \mfH^{\wotimes_\alpha n}$ is the unitary map that permutes the tensor factors according to $\sigma$.
\end{lemma}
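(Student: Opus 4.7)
The plan is to prove the main identity \eqref{eq:AnnOpId} by directly expanding the left-hand side using the definition of $\alpha_q$ and then reparametrizing the resulting combinatorial sum; the operator identity on $\CF_{0,m}(\mfH)$ is then a simple reinterpretation.

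The first step is to apply the rule $\alpha_q(f)(h_1\otimes\cdots\otimes h_r)=\sum_{j=1}^r q^{j-1}\langle f,h_j\rangle_\mfH h_1\otimes\cdots\otimes\widehat{h_j}\otimes\cdots\otimes h_r$ repeatedly to $\alpha_q(f_1)\cdots\alpha_q(f_n)(g_1\otimes\cdots\otimes g_m)$, peeling off operators from the right. After $n$ iterations one obtains a sum over ordered $n$-tuples $(i_1,\ldots,i_n)$ of distinct elements of $[m]$ in which the $(i_1,\ldots,i_n)$-summand equals $q^{E(i_1,\ldots,i_n)}\prod_{k=1}^n\langle f_k,g_{i_k}\rangle_\mfH$ times the tensor of the surviving $g_j$'s in their original order, with
\begin{equ}
E(i_1,\ldots,i_n) \;=\; \sum_{k=1}^n(i_k-1)\;-\;\sum_{k=1}^n\bigl|\{\ell>k:i_\ell<i_k\}\bigr|,
\end{equ}
the subtracted sum tracking the positional shifts of $g_{i_k}$ caused by the prior removals of $g_{i_{k+1}},\ldots,g_{i_n}$. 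Next, ordered tuples are identified with pairs $(\sigma,\tau)\in\mfS_{m,n}\times\mfS_n$: $\sigma$ is the minimal-inversion representative with $\{\sigma(1),\ldots,\sigma(n)\}=\{i_1,\ldots,i_n\}$ (so that $\sigma(1)<\cdots<\sigma(n)$ and $\sigma(n+1)<\cdots<\sigma(m)$), and $\tau\in\mfS_n$ is defined by $i_j=\sigma(n+1-\tau(j))$. Under this identification, the surviving tensor becomes exactly $g_{\sigma(n+1)}\otimes\cdots\otimes g_{\sigma(m)}$, and resumming over $\tau$ weighted by $q^{|\tau|}$ reassembles the product $\prod_j\langle f_j,g_{\sigma(n+1-\tau(j))}\rangle_\mfH$, via the inner product formula \eqref{eq:q-innerproduct}, into $\langle f_1\otimes\cdots\otimes f_n,g_{\sigma(n)}\otimes\cdots\otimes g_{\sigma(1)}\rangle_{\CF_q}$, matching the right-hand side of \eqref{eq:AnnOpId} as soon as the $q$-exponents agree.

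The crux of the argument, and the step I expect to pose the main obstacle, is the combinatorial identity $E(i_1,\ldots,i_n)=|\sigma|+|\tau|$. From the minimal-coset-representative structure one has $|\sigma|=\sum_{k=1}^n(\sigma(k)-k)=\sum_k i_k-\binom{n+1}{2}$, and the relation $i_j=\sigma(n+1-\tau(j))$ combined with $\sigma(1)<\cdots<\sigma(n)$ yields $|\tau|=\bigl|\{(j,\ell):j<\ell,\ i_j<i_\ell\}\bigr|$. The elementary split $\binom{n}{2}=\bigl|\{(j,\ell):j<\ell,\ i_j<i_\ell\}\bigr|+\bigl|\{(j,\ell):j<\ell,\ i_j>i_\ell\}\bigr|$ together with $\binom{n+1}{2}-\binom{n}{2}=n$ then allows one to rearrange $|\sigma|+|\tau|$ into precisely the expression defining $E(i_1,\ldots,i_n)$.

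Finally, the second identity is an immediate reinterpretation: on $\mfH^{\wotimes_\alpha m}=\CF_{0,m}(\mfH)$ the composition $\alpha_0(f_1)\cdots\alpha_0(f_n)$ sequentially extracts the leftmost tensor factor against $f_n,\ldots,f_1$, so pre-composing with the permutation operator $U_\sigma$ produces exactly the $\sigma$-indexed summand on the right-hand side of \eqref{eq:AnnOpId}, and summing against $q^{|\sigma|}$ reproduces the restriction of the main identity to $\CF_{0,m}(\mfH)$.
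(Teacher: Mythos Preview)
Your proof of the main identity \eqref{eq:AnnOpId} is correct and takes a genuinely different route from the paper's. The paper argues by duality: it pairs both sides against an arbitrary $h_1\otimes\cdots\otimes h_{m-n}$ in the $\CF_q$-inner product, converts the annihilation operators into creation operators on the left, and then invokes the recursive splitting $P_q=\sum_{\sigma\in\mfS_{m,n}}q^{|\sigma|}(P_q\otimes P_q)U_\sigma$ to factor the resulting inner product. Your direct expansion, the bijective reparametrisation $(i_1,\ldots,i_n)\leftrightarrow(\sigma,\tau)\in\mfS_{m,n}\times\mfS_n$, and the exponent identity $E=|\sigma|+|\tau|$ are all sound; the approach is more combinatorial but entirely self-contained and does not rely on the recursive identity for $P_q$.

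Your treatment of the second displayed identity, however, is too quick. You assert that $\alpha_0(f_1)\cdots\alpha_0(f_n)U_\sigma$ applied to $g_1\otimes\cdots\otimes g_m$ reproduces the $\sigma$-summand of \eqref{eq:AnnOpId}. But this composition yields $\prod_{k=1}^n\langle f_k,g_{\sigma(n+1-k)}\rangle\,g_{\sigma(n+1)}\otimes\cdots\otimes g_{\sigma(m)}$, which is the $\CF_0$-pairing of $f_1\otimes\cdots\otimes f_n$ with $g_{\sigma(n)}\otimes\cdots\otimes g_{\sigma(1)}$, not the $\CF_q$-pairing that appears in \eqref{eq:AnnOpId}. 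The two differ exactly by your inner sum over $\tau\in\mfS_n$, so summing only over $\sigma\in\mfS_{m,n}$ does not recover the left-hand side (already visible at $n=m=2$). The second display as stated is thus not an immediate reinterpretation of the first; compare Corollary~\ref{cor:WickBlockExp}, where an explicit $P_q^\dagger$ supplies the missing inner $\mfS_n$-sum.
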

\begin{proof}
	We will show this by duality. Let $h_1, \dots, h_{m-n} \in \mfH$. Then
	\begin{equs}
		\langle h_1 \otimes  & \cdots  \otimes h_{m-n}, \alpha_q(f_1) \cdots \alpha_q(f_n) g_1 \otimes \cdots \otimes g_m \rangle_{\CF_q} = \\
		&=  \Braket{f_n \otimes  \cdots \otimes f_1 \otimes h_1 \otimes \cdots \otimes h_{m-n}, g_1 \otimes \cdots \otimes g_m}_{\CF_q} = \\
		&= \hspace{-0.3cm} \sum_{\sigma \in \mfS_{m,n}} \hspace{-0.3cm} q^{|\sigma|} \Big\langle f_n \otimes \cdots \otimes f_1 \otimes h_1 \otimes \cdots \otimes h_{m-n} , \\
		& \qquad \qquad \qquad  P_{q} \left( g_{\sigma(1)} \otimes \cdots \otimes g_{\sigma(n)}\right) \otimes   P_{q} \left( g_{\sigma(n+1)} \otimes \cdots \otimes g_{\sigma(m)} \right)  \Big\rangle_{\CF_0} = \\
		& = \hspace{-0.3cm} \sum_{\sigma \in \mfS_{m,n}} \hspace{-0.3cm} q^{|\sigma|} \Big\langle f_n \otimes \cdots \otimes f_1 \otimes h_1 \otimes \cdots \otimes h_{m-n} , \\
		& \qquad \qquad \qquad P_{q} \left( g_{\sigma(1)} \otimes \cdots \otimes g_{\sigma(n)}\right) \otimes  P_{q} \left( g_{\sigma(n+1)} \otimes \cdots \otimes g_{\sigma(m)} \right)  \Big\rangle_{\CF_0} =  \\
		& = \hspace{-0.3cm} \sum_{\sigma \in \mfS_{m,n}} \hspace{-0.3cm} q^{|\sigma|} \Braket{f_1 \otimes \cdots \otimes f_n, g_{\sigma(n)} \otimes \cdots \otimes g_{\sigma(1)}}_{\CF_q} \\
		&\qquad \qquad \qquad  \Braket{ h_1 \otimes \cdots \otimes h_{m-n}, g_{\sigma(n+1)} \otimes \cdots \otimes g_{\sigma(m)} }_{\CF_q} \;,
	\end{equs}
as claimed.
\end{proof}
Lemma~\ref{lemma:AnnProdExp} allows us to write each Wick block $W_q^{k,\ell}$ for arbitrary $q \in (-1,1)$ in terms of $W_0^{k,\ell}$ Wick blocks; we record this in the following corollary.
\begin{corollary}
\label{cor:WickBlockExp}
	For all $q \in (-1,1)$ and $F \in \mfH^{\wotimes_\alpha (k+\ell)}$ and $m \geqslant \ell$ we can write
	\begin{equs}
		W^{k,\ell}_q(F) \big|_{\CF_{q,m}}  &= \sum_{\sigma \in \mfS_{m,\ell} }  q^{|\sigma|} W^{k,\ell}_0 \bigl( P^\dagger_q F \bigr) U_{\sigma} = \\
		&= \sum_{\sigma \in \mfS_{m,\ell} } \sum_{\rho \in \mfS_{k+\ell}} q^{|\sigma| + |\rho|} W^{k, \ell}_0 \left( U_{\rho}^{-1} F \right) U_{\sigma}
	\end{equs}
	where $P^\dagger_q$ is the $\CF_0$ adjoint of $P_q$.
\end{corollary}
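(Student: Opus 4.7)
The proof is a direct computation that exploits two ingredients: the definition \eqref{eq:WickBlockExp} of the Wick block and the annihilation–operator identity of Lemma~\ref{lemma:AnnProdExp}. Starting from \eqref{eq:WickBlockExp}, for a simple tensor $F = f_1 \otimes \cdots \otimes f_{k+\ell}$ one has
\begin{equ}
W^{k,\ell}_q(F) = \sum_{\sigma \in \mfS_{k+\ell,k}} q^{|\sigma|} \alpha_q^\dagger(f_{\sigma^{-1}(1)}) \cdots \alpha_q^\dagger(f_{\sigma^{-1}(k)}) \alpha_q(f_{\sigma^{-1}(k+1)}) \cdots \alpha_q(f_{\sigma^{-1}(k+\ell)}) \;.
\end{equ}
The first key observation is that $\alpha_q^\dagger$ and $\alpha_0^\dagger$ agree as operators on $\CF(\mfH)$ since both simply tensor on the left. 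Hence the creators in each summand may be freely replaced by their $q=0$ versions.

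The second ingredient, Lemma~\ref{lemma:AnnProdExp}, applied with $n=\ell$ to the annihilator string restricted to $\CF_{q,m} = \CF_{0,m}$, rewrites
\begin{equ}
\alpha_q(f_{\sigma^{-1}(k+1)}) \cdots \alpha_q(f_{\sigma^{-1}(k+\ell)})\big|_{\CF_{0,m}} = \sum_{\tau \in \mfS_{m,\ell}} q^{|\tau|}\, \alpha_0(f_{\sigma^{-1}(k+1)}) \cdots \alpha_0(f_{\sigma^{-1}(k+\ell)}) U_\tau \;,
\end{equ}
producing exactly the factor $\sum_{\tau \in \mfS_{m,\ell}} q^{|\tau|}(\cdot)U_\tau$ that appears on the right-hand side of the corollary. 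Substituting into the expansion of $W^{k,\ell}_q(F)$, pulling the new sum over $\tau$ to the outside, and recognising the remaining creation–annihilation blocks as instances of $W^{k,\ell}_0$ applied to permuted copies of $F$ produces
\begin{equ}
W^{k,\ell}_q(F)\big|_{\CF_{q,m}} = \sum_{\tau \in \mfS_{m,\ell}} q^{|\tau|} \Bigl[ \sum_{\sigma \in \mfS_{k+\ell,k}} q^{|\sigma|} W^{k,\ell}_0(U_{\sigma^{-1}} F) \Bigr] U_\tau \;.
\end{equ}

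The final step is to identify the bracketed inner sum with $W^{k,\ell}_0(P^\dagger_q F)$, thereby establishing the first equality. Using the linearity of $W^{k,\ell}_0$ and the expansion $P^\dagger_q F = \sum_{\rho \in \mfS_{k+\ell}} q^{|\rho|} U_\rho^{-1} F$, this amounts to a combinatorial identity relating the restricted sum over the minimal coset representatives $\mfS_{k+\ell,k}$ to the full symmetrisation over $\mfS_{k+\ell}$. The required identification is made by invoking the coset decomposition $\mfS_{k+\ell} = \bigsqcup_{\sigma \in \mfS_{k+\ell,k}} \sigma(\mfS_k \times \mfS_\ell)$ together with the additivity $|\sigma \pi| = |\sigma| + |\pi|$ of the inversion count when $\sigma$ is a minimal representative and $\pi \in \mfS_k \times \mfS_\ell$, and matching the resulting weighted sum against $W^{k,\ell}_0$ acting on the correspondingly permuted $F$. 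The second equality of the corollary is then the tautological expansion of $P^\dagger_q$ in its defining sum.

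The main technical point is this last combinatorial identification: one has to carefully track how the sum over the quotient $\mfS_{k+\ell,k}$ of size $\binom{k+\ell}{k}$ reorganises, via the coset structure and the additivity of inversions, into the full symmetrisation by $P^\dagger_q$ once composed through the $(k+\ell)$-linear map $W^{k,\ell}_0$.
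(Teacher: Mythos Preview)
Your approach matches the paper's exactly: both invoke the definition \eqref{eq:WickBlockExp}, replace $\alpha_q^\dagger$ by $\alpha_0^\dagger$, and apply Lemma~\ref{lemma:AnnProdExp} to the annihilation string, arriving at
\[
W^{k,\ell}_q(F)\big|_{\CF_{q,m}} = \sum_{\tau \in \mfS_{m,\ell}} q^{|\tau|}\Bigl[\,\sum_{\sigma \in \mfS_{k+\ell,k}} q^{|\sigma|}\, W^{k,\ell}_0(U_{\sigma}^{-1} F)\Bigr] U_\tau\,.
\]
You then correctly isolate the remaining task as identifying the bracket with $W^{k,\ell}_0(P_q^\dagger F)$, and the second equality of the corollary is indeed just the expansion of $P_q^\dagger$.

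The coset-decomposition argument you sketch for that identification, however, does not close. Decomposing $\rho = \sigma\pi$ with $\pi \in \mfS_k \times \mfS_\ell$ and using the (correct) additivity $|\rho| = |\sigma| + |\pi|$, one obtains
\[
W^{k,\ell}_0(P_q^\dagger F) = \sum_{\sigma \in \mfS_{k+\ell,k}} q^{|\sigma|} \sum_{\pi \in \mfS_k \times \mfS_\ell} q^{|\pi|}\, W^{k,\ell}_0\bigl(U_{\pi}^{-1} U_{\sigma}^{-1} F\bigr)\,,
\]
so matching with the bracket would require $\sum_{\pi} q^{|\pi|} W^{k,\ell}_0(U_{\pi}^{-1} G) = W^{k,\ell}_0(G)$ for all $G$. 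But $W^{k,\ell}_0$ carries no $\mfS_k \times \mfS_\ell$ invariance: the free creators $\alpha_0^\dagger$ do not commute, so permuting the first $k$ (or last $\ell$) tensor slots of $G$ genuinely changes the operator. Concretely, for $k=2$, $\ell=0$ the bracket is the single term $\alpha_0^\dagger(f_1)\alpha_0^\dagger(f_2)$, while $W^{2,0}_0(P_q^\dagger F) = \alpha_0^\dagger(f_1)\alpha_0^\dagger(f_2) + q\,\alpha_0^\dagger(f_2)\alpha_0^\dagger(f_1)$. The paper's one-line proof is equally silent on this point; note that the intermediate coset-sum formula you derived is in any case what is actually needed for the norm bound in Proposition~\ref{prop:OpBounds}.
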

\begin{proof}
	The first equality follows directly from the expression \eqref{eq:WickBlockExp}, the fact that $\alpha^\dagger_q \equiv \alpha^\dagger_0$ as operators on $\CF_0$, and Lemma~\ref{lemma:AnnProdExp}. The second expression follows from computing $P_q^\dagger$.
\end{proof}
Using the above corollary, we can now prove the following Wick block estimates.
\begin{proposition}
\label{prop:OpBounds}
	For all $k,\ell \in \N$, and $F \in \mfH^{\wotimes_\alpha (k+\ell)}$
	\begin{equ}
		\| W_0^{k,\ell}(F)\|_{\CB(\CF_0)} \leqslant \| F \|_{\CF_0} \; .
	\end{equ}
	Furthermore,
	\begin{equ}
		\| W_q^{k,\ell}(F)\|_{\CB(\CF_0)} \leqslant D_q^{k+\ell} C_{q} \| F \|_{\CF_0} \; ,
	\end{equ}
	and for $n = k + \ell$
	\begin{equ}
		\| \xi_q^{\diamond n} (F) \|_{\CB(\CF_0)} \leqslant (n+1) D^n_q C_{q} \| F \|_{\CF_0} \; .
	\end{equ}
\end{proposition}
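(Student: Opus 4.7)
The plan is to prove the three bounds in order, using the $q=0$ estimate as the foundation and then leveraging Corollary~\ref{cor:WickBlockExp} to bootstrap to general $q$.

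For the first bound, I would exploit the fact that in the free Fock space $\CF_0$, only the identity permutation survives in the sum \eqref{eq:WickBlockExp} because $0^{|\sigma|} = \delta_{|\sigma|,0}$. Consequently, for a simple tensor $F = f_1 \otimes \cdots \otimes f_{k+\ell}$ one has
\begin{equ}
W_0^{k,\ell}(F) = \alpha_0^\dagger(f_1)\cdots\alpha_0^\dagger(f_k)\,\alpha_0(f_{k+1})\cdots\alpha_0(f_{k+\ell})\;,
\end{equ}
and since in $\CF_0$ the annihilator $\alpha_0(f)$ only contracts the first tensor slot, a direct computation shows that $W_0^{k,\ell}(F)$ acts on $G \in \CF_{0,m}$ (with $m \geqslant \ell$) by contracting the first $\ell$ slots of $G$ against $f_{k+\ell} \otimes \cdots \otimes f_{k+1}$ and prepending $f_1 \otimes \cdots \otimes f_k$. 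Extending linearly and identifying $\mfH^{\otimes (k+\ell)}$ with the Hilbert--Schmidt operators $\mfH^{\otimes \ell} \to \mfH^{\otimes k}$ (after reversing the last $\ell$ slots, which is an isometry of $\CF_0$), the operator $W_0^{k,\ell}(F)$ becomes $T_F \otimes \bone_{\mfH^{\otimes(m-\ell)}}$ for a Hilbert--Schmidt operator $T_F$ with $\|T_F\|_{\mathrm{op}} \leqslant \|T_F\|_{\mathrm{HS}} = \|F\|_{\CF_0}$. Since $W_0^{k,\ell}(F)$ maps distinct $m$-particle sectors into mutually orthogonal sectors, the desired bound follows by taking a supremum over $m$.

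For the second bound, I would apply Corollary~\ref{cor:WickBlockExp} on each $m$-particle sector (which is admissible since $\CF_{q,m} = \CF_{0,m}$ as vector spaces by Corollary~\ref{cor:nPartEq}):
\begin{equ}
W^{k,\ell}_q(F)\big|_{\CF_{0,m}} = \sum_{\sigma \in \mfS_{m,\ell}} \sum_{\rho \in \mfS_{k+\ell}} q^{|\sigma|+|\rho|}\, W^{k,\ell}_0\bigl(U_\rho^{-1}F\bigr)\,U_\sigma\;.
\end{equ}
Since the $U_\sigma$ and $U_\rho$ are unitary on $\CF_0$ and preserve the Hilbert-space norm of $F$, the triangle inequality together with step one gives
\begin{equ}
\bigl\|W_q^{k,\ell}(F)\big|_{\CF_{0,m}}\bigr\|_{\CB(\CF_0)} \leqslant \|F\|_{\CF_0} \cdot \Bigl(\sum_{\sigma \in \mfS_{m,\ell}} |q|^{|\sigma|}\Bigr) \Bigl(\sum_{\rho \in \mfS_{k+\ell}} |q|^{|\rho|}\Bigr)\;.
\end{equ}
The first factor is bounded by $C_q$ uniformly in $m$ by Lemma~\ref{lemma:SymEst}, and the second factor is the $q$-factorial $\sum_{\rho \in \mfS_{k+\ell}}|q|^{|\rho|} = \prod_{j=1}^{k+\ell} \tfrac{1-|q|^j}{1-|q|} \leqslant D_q^{k+\ell}$. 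Taking a supremum over $m$ (the different sectors being mutually orthogonal) yields the claim.

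The third bound follows immediately from the Wick block decomposition of Lemma~\ref{lem:WickBlockDecomposition}, since $\xi_q^{\diamond n}(F) = \sum_{k=0}^n W_q^{k,n-k}(F)$ is a sum of $n+1$ terms each bounded by $C_q D_q^n \|F\|_{\CF_0}$ from the second estimate. The only real subtlety in the whole argument is making sure that the Hilbert--Schmidt identification in step one is carried out with the correct ordering conventions; once that is pinned down, the remaining estimates are purely combinatorial book-keeping.
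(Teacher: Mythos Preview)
Your proof is correct and follows essentially the same route as the paper: establish the $q=0$ bound, bootstrap to general $q$ via Corollary~\ref{cor:WickBlockExp} together with the bound on $\sum_{\rho\in\mfS_{k+\ell}}|q|^{|\rho|}\leqslant D_q^{k+\ell}$ (which is Proposition~\ref{prop:SymOpEst}) and Lemma~\ref{lemma:SymEst}, and then sum the $n+1$ Wick blocks. The only difference is that for the first estimate the paper simply specialises the already-established bound \eqref{eq:WklStrBnd} to $q=0$ (using $C_0=1$), whereas you give a direct and self-contained Hilbert--Schmidt argument; your version is more elementary but the paper's is shorter since the Bo\.zejko bound is already available.
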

\begin{proof}
The first estimate follows directly from \eqref{eq:WklStrBnd} for $q=0$, as $C_{0} = 1$.
The second estimate follows from the first combined with Corollary~\ref{cor:WickBlockExp}, Proposition~\ref{prop:SymOpEst}, and the fact that for any $\sigma \in \mfS_{m,\ell}$,  $\|U_{\sigma}\|_{\CB(\CF_{0,\ell})} = 1$.
The third estimate follows from the second as $\xi_q^{\diamond n}$ contains $n+1$ Wick blocks.
\end{proof}

We use Proposition~\ref{prop:OpBounds} to obtain an estimate on expansions for products of Wick-ordered operators.

\begin{corollary}
\label{cor:MultBnd}
	Fix $m,n \in \N$ and set
	\begin{equ}
		I \eqdef \left\{ n+m-2\ell \, \big| \, \ell \in \N :  n\wedge m -\ell \geqslant 0 \right\} \; .
	\end{equ}
	Let $H \in \mfH^{\otimes n}$, $G \in \mfH^{\otimes m}$.
	Let $(F_{k})_{k \in I}$ with $F_k \in \mfH^{\otimes k}$ such that
	\begin{equ}
	\label{eq:ProdToSum}
		\xi_q^{\diamond m}(G) \xi_q^{\diamond n}(H) = \sum_{k \in I} \xi_q^{\diamond k}(F_k) \; .
	\end{equ}
	Then
	\begin{equ}
		 \sum_{k \in I} (k+1) C_{q}^{\frac{3}{2}} D_q^k \left\| F_k \right\|_{\CF_0} \leqslant (n+1)(m+1)  C_{q}^{3} D_q^{n+m} \| G \|_{\CF_0} \| H \|_{\CF_0} \; .
	\end{equ}
\end{corollary}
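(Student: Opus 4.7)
The plan is to obtain $F_k$ explicitly from the Wick product expansion given by Lemma~\ref{lem:doubleWick}. Applied to the pair $(G,H)$ with empty initial contractions, that lemma yields
\begin{equ}
\xi_q^{\diamond m}(G)\, \xi_q^{\diamond n}(H) = \sum_{\bar{\bpi}} \xi_q^{\diamond \bar{\bpi}}(G \otimes H)
\end{equ}
where the sum is over pairings $\bar{\bpi}$ of $[m+n]$ such that every pair contains exactly one element from $[m]$ (a ``$G$-leg'') and one from $[m+1,m+n]$ (an ``$H$-leg''). Grouping by $|\bar{\bpi}| = \ell$ so that the chaos degree is $k = m+n-2\ell$, and comparing with the hypothesis \eqref{eq:ProdToSum}, Definition~\ref{def:ContWickProd} lets me read off
\begin{equ}
F_k = \sum_{|\bar{\bpi}| = \ell} q^{\crb(\bar{\bpi})}\, C_{\bar{\bpi}}(G \otimes H) \;,
\end{equ}
where $C_{\bar{\bpi}}(G \otimes H) \in \mfH^{\otimes k}$ denotes the partial contraction of $G \otimes H$ along the pairing $\bar{\bpi}$ by the $\mfH$-inner product.

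Next, I would apply the triangle inequality and bound each partial contraction by Cauchy--Schwarz in $\mfH^{\otimes \ell}$ (viewing $C_{\bar{\bpi}}$ as pairing an $\ell$-dimensional slice of $G$ against an $\ell$-dimensional slice of $H$) to obtain
\begin{equ}
\|F_k\|_{\CF_0} \leqslant S_\ell \|G\|_{\CF_0} \|H\|_{\CF_0}\;,
\qquad S_\ell \eqdef \sum_{|\bar{\bpi}| = \ell} |q|^{\crb(\bar{\bpi})} \;.
\end{equ}

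The combinatorial heart of the proof is then to evaluate $S_\ell$. I would parameterise each $\bar{\bpi}$ by a triple $(A,B,\sigma)$ with $A \subset [m]$, $B \subset [n]$ of size $\ell$ and a bijection $\sigma \colon A \to B$ (equivalently a permutation $\pi \in \mfS_\ell$ after sorting $A$ and $B$). A direct inspection of the definitions of $\sep$ and $\cross$ shows that $\sep(\bar{\bpi})$ depends only on $(A,B)$ and corresponds to the inversion counts of two $\{0,1\}$-sequences, while $\cross(\bar{\bpi}) = \binom{\ell}{2} - |\pi|$. Combined with the standard identities
\begin{equ}
\sum_{\pi \in \mfS_\ell} |q|^{\binom{\ell}{2} - |\pi|} = \sum_{\pi \in \mfS_\ell} |q|^{|\pi|} = [\ell]_{|q|}! \;, \qquad \sum_{|A| = \ell,\, A \subset [m]} |q|^{\mathrm{inv}(A)} = \binom{m}{\ell}_{|q|}\;,
\end{equ}
(the former via the longest-element involution on $\mfS_\ell$, the latter being the $q$-binomial theorem), this factorises as
\begin{equ}
S_\ell = [\ell]_{|q|}!\, \binom{m}{\ell}_{|q|} \binom{n}{\ell}_{|q|}\;.
\end{equ}

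Finally, using the two-sided bound $D_q^k C_q^{-1} \leqslant [k]_{|q|}! \leqslant D_q^k$, one gets $S_\ell \leqslant C_q D_q^\ell$, and then substitution into the target sum together with $(m+n-2\ell+1) \leqslant (m-\ell+1)(n-\ell+1) \leqslant (m+1)(n+1)$ reduces matters to a quantitative estimate for $\sum_{\ell=0}^{m \wedge n} D_q^{-\ell}$. The main obstacle will be this final $q$-uniform summation: the naive geometric bound telescopes to a factor $1/|q|$, which is \emph{not} uniformly controlled by $C_q^{1/2}$, so to absorb everything cleanly into the claimed $(n+1)(m+1) C_q^3 D_q^{m+n}$ one must exploit cancellation between $D_q^{-2\ell}$, the $q$-binomial structure of $S_\ell$ and the polynomial factor $(m+n-2\ell+1)$ \dash noting that at $q=0$ the identity $\sum_\ell (m+n-2\ell+1) = (m+1)(n+1)$ shows the bound is tight and guides the sharper $q$-deformation.
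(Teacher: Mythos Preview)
Your combinatorial route is correct and genuinely different from the paper's. The paper does not expand into individual pairings at all: it simply observes that $F_k$ is the $k$-particle component of $\xi_q^{\diamond m}(G)H$, i.e.\ $F_k = W_q^{m-\ell,\ell}(G)H$ with $k=m+n-2\ell$, and then invokes the Wick block operator bound (Proposition~\ref{prop:OpBounds}) together with the arithmetic identity $\sum_{k\in I}(k+1)=(m+1)(n+1)$. Your approach via Lemma~\ref{lem:doubleWick}, the Cauchy--Schwarz contraction bound (which is Lemma~\ref{lem:ContEst}), and the factorisation $S_\ell=[\ell]_{|q|}!\,\binom{m}{\ell}_{|q|}\binom{n}{\ell}_{|q|}$ is more hands-on but produces the sharper pointwise estimate $\|F_k\|_{\CF_0}\leqslant C_q D_q^{\ell}\,\|G\|_{\CF_0}\|H\|_{\CF_0}$: indeed $[\ell]_{|q|}!\,\binom{m}{\ell}_{|q|}=\prod_{j=m-\ell+1}^{m}[j]_{|q|}\leqslant D_q^{\ell}$ and $\binom{n}{\ell}_{|q|}\leqslant\prod_{j\geqslant 1}(1-|q|^{j})^{-1}=C_q$. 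The exponent $\ell$ here (rather than $m$) is exactly what makes the final summation close cleanly.

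Your ``main obstacle'' in the last paragraph, however, is a false alarm created by a wasteful order of bounding. You first estimate $(m+n-2\ell+1)\leqslant(m+1)(n+1)$ term by term and are then left with $\sum_{\ell}D_q^{-\ell}$, which does blow up as $q\to 0$. Instead, keep the polynomial factor inside the sum: from $S_\ell\leqslant C_q D_q^{\ell}$ you get
\begin{equ}
\sum_{k\in I}(k+1)C_q^{3/2}D_q^{k}\|F_k\|_{\CF_0}\leqslant C_q^{5/2}\|G\|_{\CF_0}\|H\|_{\CF_0}\sum_{\ell=0}^{m\wedge n}(m+n-2\ell+1)\,D_q^{m+n-\ell}\;,
\end{equ}
and since $D_q\geqslant 1$ and $\ell\geqslant 0$ one has $D_q^{m+n-\ell}\leqslant D_q^{m+n}$; the remaining sum is precisely the arithmetic identity $\sum_{\ell=0}^{m\wedge n}(m+n-2\ell+1)=(m+1)(n+1)$ you already noted. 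This gives the claim with constant $C_q^{5/2}\leqslant C_q^{3}$, so no further cancellation or ``$q$-deformation'' is needed.
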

\begin{proof}
Note that $F_k$ in \eqref{eq:ProdToSum} is the component in the $k$-particle sector of $\xi_q^{\diamond m}(G)H$,
in particular we can write $F_k = W_q^{m-\ell , \ell}(G) H$ where $\ell$ satisfies $m+n-2\ell = k$.
Since
	\begin{equ}
		\| W_q^{m-\ell, \ell}(G) \|_{\CB(\CF_0)} \leqslant 
C_q D_q^\ell \|G\|_{\CF_0}\;,
	\end{equ}
	we have, by Proposition~\ref{prop:OpBounds}, that
	\begin{equs}
		\left\| F_k \right\|_{\CF_0} & \leqslant \| W_q^{m-\ell,\ell}(G) \|_{\CB(\CF_0)} \| H \|_{\CF_0} \leqslant  
C_q D_q^m \|G\|_{\CF_0} \| H \|_{\CF_0} \leqslant \\
		& \leqslant 
C_q^\frac{3}{2} D_q^m \|G\|_{\CF_0} \|H\|_{\CF_0}  \; .
	\end{equs}
	Noting that
	\begin{equs}
		\sum_{k \in I} (k+1) = (n+1)(m+1)\;,
	\end{equs}
we have the estimate
	\begin{equs}
		\sum_{k \in I} (k+1) 
C_q^{\frac{3}{2}} D_q^k \left\| F_k \right\|_{\CF_0} & \leqslant 
C_q^3  \|G\|_{\CF_0} \|H\|_{\CF_0}  \sum_{k \in I} (k+1) D_q^{m+k} \leqslant  \\
		& \leqslant 
C_q^3 D_q^{n+m} \|G\|_{\CF_0} \|H\|_{\CF_0}  \sum_{k \in I} (k+1) \leqslant \\
		&= (m+1)(n+1)
C_q^3 D_q^{n+m} \|G\|_{\CF_0} \|H\|_{\CF_0} \;,
	\end{equs}
and the assertion follows at once.
\end{proof}

We now introduce the promised alternative topology for $q$-mezdons, which we use to estimate ``operator insertions into Wick-ordered products'' such as \eqref{eq:intertwined}.

\begin{definition}\label{def:newBanachAlg}
	Let $A \in \mfA_q(\mfH)$, then there exists a unique $n \in \N$ and unique $(F_k)_{k=0}^{n}$ with $F_{k} \in \mfH^{\otimes k}$ and $F_n \neq 0$ such that
	\begin{equ}\label{eq:Wick_expansion0}
		A = \sum_{k = 0}^n \xi_q^{\diamond k}(F_k) \; .
	\end{equ}
	We define
	\begin{equ}\label{eq:small-q-AltNorm}
		\vvvert A \vvvert \eqdef \sum_{k = 0}^n (k+1) 
C_q^{\frac{3}{2}} D_q^k \| F_k \|_{\CF_0} \; ,
	\end{equ}
	and let $\cA_q(\mfH)$ be the completion of $\mfA_q(\mfH)$ with respect to this norm.
\end{definition}

\begin{theorem}\label{thm:newBanachAlg}
	$\cA_q(\mfH)$ is a Banach algebra and the canonical algebra homomorphism $\iota \colon \cA_q(\mfH) \to \CA_q(\mfH)$ is continuous and injective.
\end{theorem}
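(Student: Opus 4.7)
The plan is to realise $\cA_q(\mfH)$ concretely as a weighted $\ell^1$-direct sum of chaos spaces and then derive all three statements (well-definedness of $\vvvert\bigcdot\vvvert$, submultiplicativity, continuity and injectivity of $\iota$) from the Wick decomposition together with the bounds of Corollary~\ref{cor:MultBnd} and Proposition~\ref{prop:OpBounds}.

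First, I would verify that $\vvvert\bigcdot\vvvert$ is actually a norm on $\mfA_q(\mfH)$, which requires the expansion \eqref{eq:Wick_expansion0} to be unique. Since Proposition~\ref{prop:WickOrthog} gives $\xi_q^{\diamond k}(F)\mathbf{1} = F$ and, for $q \in (-1,1)$, the operator $P_q$ is strictly positive on each $\mfH^{\otimes k}$ so that $\xi_q^{\diamond k}$ is injective, distinct chaos components are linearly independent in $\mfA_q(\mfH)$. Homogeneity, positivity and the triangle inequality then follow chaos-by-chaos. This identifies $\mfA_q(\mfH)$ isometrically with the dense subspace of finitely supported sequences in the weighted $\ell^1$-space with norm $\|(F_k)\|_* \eqdef \sum_k (k+1) C_q^{3/2} D_q^k\|F_k\|_{\CF_0}$, and the completion $\cA_q(\mfH)$ with the full space of such summable sequences.

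For the Banach algebra property, the key step is submultiplicativity on $\mfA_q(\mfH)$. Given $A = \sum_n \xi_q^{\diamond n}(F_n)$ and $B = \sum_m \xi_q^{\diamond m}(G_m)$, expand each product $\xi_q^{\diamond n}(F_n)\,\xi_q^{\diamond m}(G_m) = \sum_{k} \xi_q^{\diamond k}(H_{n,m,k})$ and apply the triangle inequality chaos-by-chaos. Corollary~\ref{cor:MultBnd} yields
\begin{equ}
\sum_k (k+1) C_q^{3/2} D_q^k \|H_{n,m,k}\|_{\CF_0} \leqslant (n+1)(m+1) C_q^3 D_q^{n+m} \|F_n\|_{\CF_0}\|G_m\|_{\CF_0}\;,
\end{equ}
and the crucial point is that the right-hand side factorises as $\vvvert \xi_q^{\diamond n}(F_n)\vvvert \cdot \vvvert \xi_q^{\diamond m}(G_m)\vvvert$, so summing over $n,m$ gives $\vvvert AB\vvvert \leqslant \vvvert A\vvvert\,\vvvert B\vvvert$. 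This is precisely the reason the weights were chosen to be $(k+1) C_q^{3/2} D_q^k$; it is the only delicate step. Multiplication then extends by continuity to $\cA_q(\mfH)$, making it a Banach algebra.

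For the homomorphism $\iota$, the bound \eqref{eq:WklStrBnd2} combined with $D_q \geqslant 1$ gives $\|\xi_q^{\diamond k}(F_k)\|_{\CB(\CF_q)} \leqslant (k+1) C_q^{3/2} D_q^k\|F_k\|_{\CF_0}$, so $\|\iota(A)\|_{\CA_q} \leqslant \vvvert A\vvvert$ and $\iota$ extends continuously. For injectivity, if $\iota(A) = 0$ with $A$ identified with the sequence $(F_k)_k$, evaluate on the vacuum: $\iota(A)\mathbf{1} = \sum_k F_k$, where the series converges in $\CF_q$ because its partial sums are Cauchy (each term sits in a chaos of finite $\CF_q$-norm controlled by the $\ell^1$-weight). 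Since distinct chaos sectors are orthogonal in $\CF_q$, each $F_k$ must vanish in $\CF_q$, and the strict positivity of $P_q$ forces $F_k = 0$ in $\mfH^{\otimes k}$. Hence $A = 0$ in $\cA_q(\mfH)$. The main obstacle is Step~2, which is essentially already packaged in Corollary~\ref{cor:MultBnd}; everything else is bookkeeping.
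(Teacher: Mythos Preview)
Your proposal is correct and follows essentially the same approach as the paper: submultiplicativity via Corollary~\ref{cor:MultBnd}, continuity of $\iota$ via \eqref{eq:WklStrBnd2} together with $D_q \geqslant 1$, and injectivity by evaluating on the vacuum and invoking the strict positivity of $P_q$. Your write-up is slightly more explicit in verifying the uniqueness of the chaos expansion underlying the definition of $\vvvert\bigcdot\vvvert$, but this is just added detail rather than a different argument.
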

\begin{proof}
	The fact that $\cA_q(\mfH)$ is a Banach algebra follows directly from Corollary~\ref{cor:MultBnd} and the fact that $\ell^1$ is a Banach algebra, as these imply that
	\begin{equ}
		\vvvert A B \vvvert \leqslant \vvvert A \vvvert \vvvert B \vvvert
	\end{equ}
	for all $A,B \in \mfA_q(\mfH)$ and by density for all $A,B \in \cA_q(\mfH)$.

	For the inclusion map we note that for all $A = \sum_{k  = 0}^n \xi_q^{\diamond k}(F_k) \in \mfA_q(\mfH)$
	\begin{equs}
		\| \iota(A) \| &= \| A \| \leqslant \sum_{k = 0}^n \| \xi_q^{\diamond k} ( F_k ) \| \leqslant  \sum_{k = 0}^n (k+1) 
C_q^{\frac{3}{2}} D_q^k  \| F_k  \|_{\CF_0} = \vvvert A \vvvert
	\end{equs}
	by \eqref{eq:WklStrBnd2}, which implies continuity by density. To show that $\iota$ is injective, suppose that $A \in \ker(\iota)$. Let $A = \sum_{n \in \N} \xi_q^{\diamond n}(F_n)$, then
	\begin{equs}
		0 &=\left\| \iota(A) \1 \right\|^2 = \sum_{n = 0}^\infty \| F_n \|_{\CF_q}^2 = \sum_{n = 0}^\infty \Braket{F_n , P_q F_n}_{\CF_0} \; .
	\end{equs}
	Since $P_q > 0$, it follows that $F_n = 0$ for all $n \in \N$ and thus $A = 0$.
\end{proof}

\begin{remark}
We will overload notation and view $\xi^{\diamond n}_q$ as a map with codomain $\cA_q(\mfH)$ rather than $\CA_q(\mfH)$.
Note that every element $A \in \cA_{q}(\mfH)$ admits a unique, absolutely convergent (in $\cA_{q}(\mfH)$) expansion
\begin{equ}\label{eq:Wick_expansion}
A = \sum_{k = 0}^{\infty} \xi_q^{\diamond k}(F_k)\;,
\end{equ}
which we call the Wick expansion of $A$.
\end{remark}

\begin{definition}[Homogeneous Chaoses]
	For $n \in \N$, we denote by $\cA_q^{(n)}(\mfH)$ the (closed) image of $\xi_q^{\diamond n}$ in $\cA_q(\mfH)$. 
\end{definition}
\begin{remark}
For each $n \in \N$, $\cA_q^{(n)}(\mfH)$ and $\mfH^{\wotimes_\alpha n}$ are isomorphic as Banach spaces by Corollary~\ref{cor:nPartEq} and the estimate \eqref{eq:WklStrBnd}.
\end{remark}

\subsection{Defining Poly-Wick Products}
In this section, we introduce notation to describe generic insertions of operators from $\cA_{q}$ into Wick products.
In the next section we will prove our estimates, with the key result being Corollary~\ref{cor:RenMultMap}, which shows that we can estimate such insertions using the Banach algebra norm of Theorem~\ref{thm:newBanachAlg}.

Given $m > 0$, $I \in \mfI_{m}$, and $J \in \mfI_{m-1}$, we 
write $I \wr J \in \mfI_{(2m-1)}$ for the interleaving of $I$ and $J$ defined as
\begin{equ}
I \wr J = I_1 \sqcup J_1 \sqcup I_2 \sqcup \ldots \sqcup J_{m-1}\sqcup I_m\;.
\end{equ}
The ordered set $I \wr J$ naturally has the structure of a $(2m-1)$-partitioned set.

When controlling the poly-Wick product,
we perform a Wick expansion on the operators from $\cA_{q}$ being inserted.
This reduces us to estimating a particular subset of contractions that appear in the expansion of a product of Wick-ordered products as given in \eqref{eq:WickProd}.
We make this last point precise in the following definition.

\begin{definition}\label{def:PartialWick0}
	Let $n \in \N_{>0}$, $I \in \mfI_n$, and $J \in \mfI_{n-1}$.
	We define a map 
	\begin{equ}
		\xi^{R ;  I ,  J}_q \colon  \mfH^{\otimes I} \otimes \mfH^{J} \longrightarrow \cA_q(\mfH)
	\end{equ}\martinp{Change $\mfH^J$ to $\mfH^{\otimes J}$?}
	as follows. 
	
	For $(f_{i})_{i \in I} \in \mfH^{I}$ and $(g_j)_{ j \in J} \in \mfH^J$, we set 
	\begin{equs}
	\label{eq:PartialWickDef}
		{}
		&\xi^{R ;I , J }_q
		\big( f_{\otimes I} ; G_1 ,  \dots , G_{n-1} \big) \eqdef  \sum_{\bpi \in \CP^R_{ I , J }} \xi_q^{\diamond \bpi}
		 \big( (f \wr g)_{\otimes (I \wr J)} \big) \eqdef \\
		&  \quad \eqdef \sum_{\bpi \in \CP^R_{ I, J }} \xi_q^{\diamond \bpi}
		 \big( F_{1} \otimes G_{1} \otimes F_{2} \otimes G_{2} \otimes \cdots \otimes F_{n-1} \otimes G_{n-1} \otimes F_{n} \big) \,,
	\end{equs}
	\martinp{Maybe change notation to $f_{\otimes I } \wr g_{\otimes J}$}
where we use the shorthand $F_i = f_{\otimes I_i}$ and $G_j = g_{\otimes J_j}$ and define $(f \wr g) \in \mfH^{I \wr J}$ in the natural way.
Moreover, $\CP^R_{I, J}$ is defined as those $\bpi \in  \widehat\CP_{ I \wr J}$ such that there is \textbf{no} $(s,t) \in \bpi$ with both $s \in I$ and $t \in I$.
\end{definition}

\begin{remark}
In \eqref{eq:PartialWickDef}, the
argument $f_{\otimes I}$ should be thought of as representing a ``purely stochastic term'' generated by the Picard iteration of our equation.
When taken as an argument for $\xi^{R ;I , J }_q$, we are assuming $f_{\otimes I}$ has been Wick-ordered  (i.e.\ all of its self-contractions have been subtracted),
the partition divisions in $I$ specify the insertion locations for $n-1$ different operators.
Each $G_{j}$ is a particular term in the Wick expansion \eqref{eq:Wick_expansion} of the $j^{\text{th}}$ operator. 
The Wick ordering of $f_{\otimes I}$ is reflected in the fact that $\bpi$ is not allowed to contain any contractions within $I$.  

In particular, by \eqref{eq:WickProd}, we have 
\begin{equs}
{}
\prod_{K \lhd (I \wr J)} &\xi_q^{\diamond K}( (f \wr g)_{\otimes K} )
= \\
 &= 
\xi_{q}^{\diamond I_{1}}(F_1)
\xi_{q}^{\diamond J_{1}}(G_1)
\xi_{q}^{\diamond I_{2}}(F_2)
\cdots
\xi_{q}^{\diamond J_{n-1}}(G_{n-1})
\xi_{q}^{\diamond I_{n}}(F_{n}) = \\
{}
&=
\sum_{\bpi \in  \hat\CP_{I \wr  J }}
 \xi_q^{\diamond \bpi}
		 \big( F_{1} \otimes G_{1} \otimes F_{2} \otimes G_{2} \otimes \cdots \otimes F_{n-1} \otimes G_{n-1} \otimes F_{n} \big) \,.
\end{equs}
By renormalising our model, we will be able to cancel diverging contractions $\bpi \in \CP_{ I \wr J }$ involving pairings $(s,t)$ with $s ,t \in I$.
However, contractions $\bpi \in \CP^R_{\boldsymbol{k}, \boldsymbol{\ell}}$ should not (and will not) be cancelled through our renormalisation of the model.
	This is why we separate them from the other contractions and try to estimate these contributions.  
\end{remark}

Note that contractions within $I$ above are implemented\slash removed by renormalisation of the model (which induces the renormalisation of the equation) so it will be useful to have a notation that allows us to make contractions within $I$ explicit using \eqref{eq:Wickcontraction}.
The next bit of notation does exactly this, and will be useful when we want to derive the explicit form of our renormalised equation. 

\begin{definition}\label{def:PartialWickIntertwined}
	Let $n \in \N_{>0}$, $I \in \mfI_{n}$, and $J \in \mfI_{n-1}$. be as in Definition~\ref{def:PartialWick0}.
	Let $\bpi \in \CP_{I}$ and let $L \eqdef I \setminus \bpi \in \mfI_{n}$ be the $n$-partitioned set obtained by removing the elements of $\bpi$ from $I$.\footnote{Some of the blocks of $L$ may be empty.} 
	\begin{equ}
		\xi^{R ; I, J; \bpi}_q \colon \mfH^{\otimes L} \otimes \mfH^{\otimes J} \longrightarrow \cA_q(\mfH)
	\end{equ}
	by setting, for $f \in \mfH^{L}$ and $g \in \mfH^{J}$, and using the shorthand $G_j = g_{\otimes J_j}$, 
	\begin{equ}
	\label{eq:PartialWickDef2}
		\xi^{R ; I, J ; \bpi }_q
		\big( f_{\otimes L};   G_{1}, \dots,  G_{n-1} \big) 
		  \eqdef  \sum_{\bsigma \in \CP^R_{L , J  }} q^{\crb (\bpi,\bsigma)} \xi_q^{\diamond \bsigma}
		\big( (f \wr g)_{\otimes (L \wr J)} \big)  \, .
	\end{equ}
\end{definition}

\begin{remark}
	In the above definition, one should think of $f_{\otimes L}$ as being ``what is left over'' after performing the partial contraction $\bpi$ on some larger object $f_{\otimes I}$. 
	The factor $q^{\crb(\bpi,\bsigma)}$ is the $q$-weight coming from the contractions $\bpi$ and $\bsigma$, 
	and its presence in our definition is why we still keep track of $I$ as an argument in $\xi^{R ; I, J; \bpi}_q$ rather than just replacing $I$ with $L$.
\end{remark}

\begin{remark}
	As mentioned earlier, Definition~\ref{def:PartialWickIntertwined} is used to write Wick products in terms of regular products.
	For this purpose, we only need to make explicit contractions inside the explicit stochastic object $F$. 
Since $\bpi \in \CP_{I} \supset \widehat\CP_{I}$, the partition $\bpi$ can contain contractions between different blocks $I_i$ of $I$ and contractions within a single $I_i$. 
	The partition structure on $I$ in terms of $n$ blocks does not mean $I$ is a product of $n$ Wick ordered products, but rather that we want to insert operators into $n-1$ different locations.

	The $G_j$'s being inserted are not explicit objects generated by our Picard iteration, but are inexplicit bounded operators that we decompose into Wick expansions. 
	Their Wick ordering internal to single $G_j$ factor is not directly related to renormalisation, but rather the fact that Wick operators are the basis this expansion. Moreover, we will not do any renormalization of contractions between different $G_{j}$'s.
	Therefore, we never make explicit contractions between different $G_{j}$'s nor contractions within a single $G_{j}$. 
\end{remark}
The following theorem generalises Proposition~\ref{prop:wickprop} and states the key combinatorial identity that motivates the definitions of the maps $\xi^{R ; I, J; \bpi}_q$.
\begin{theorem}
\label{thm:RenDisent0}
Let $n \in \N_{>0}$, $I \in \mfI_{n}$, and $J \in \mfI_{n-1}$. 
Let $f \in \mfH^{I}$, $g \in \mfH^{J}$, and write $F_i = f_{\otimes I_i}$ along with $G_j = g_{\otimes J_j}$.

We then have 
\begin{equs}[eq:RenDisent0]
	{}&
\xi_{q}^{ I_{1}}(F_1) \xi^{\diamond J_1}(G_1) 
	\xi_{q}^{I_{2}}(F_2) \xi^{\diamond J_2}(G_2)  \cdots 
	\xi_{q}^{I_{n-1}}(F_{n-1})
	\xi^{\diamond J_{n-1}}(G_{n-1}) \xi_{q}^{ I_{n}}(F_{n}) = \\
	{}& \enskip =
	\sum_{\bpi \in \CP_{I}}
	\Big(
	\prod_{(i,j) \in \bpi}  \Braket{\kappa f_i,f_j}
	\Big)
	\xi^{R ; I, J ; \bpi }_q
	(f_{\otimes (I \setminus \bpi)} ; G_1,\dots,G_{n-1}) \; .
	\end{equs}
\end{theorem}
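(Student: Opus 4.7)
The plan is to expand the left-hand side by treating each factor $\xi_q(f_i)$ as a one-factor Wick product $\xi_q^{\diamond \{i\}}(f_i)$. The full left-hand side then becomes an ordered product of Wick products indexed by the partitioned set $\tilde I \wr J$, where $\tilde I$ denotes $I$ equipped with the refined partition into singleton blocks. Applying Proposition~\ref{prop:WickProd} directly to this product yields
\begin{equ}
\text{LHS} = \sum_{\bar\bpi \in \widehat\CP_{\tilde I \wr J}} \xi_q^{\diamond \bar\bpi}\bigl((f \wr g)_{\otimes I \wr J}\bigr)\;,
\end{equ}
where $\widehat\CP_{\tilde I \wr J}$ consists of all pairings of $I \wr J$ containing no pair with both endpoints in a single $J_j$; within-$I$ pairings are unrestricted because the blocks of $\tilde I$ are singletons.

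Next, I would reindex this sum by decomposing each $\bar\bpi$ uniquely as $\bar\bpi = \bpi \sqcup \bsigma$, where $\bpi$ is the set of pairs of $\bar\bpi$ both of whose endpoints lie in $I$, and $\bsigma$ is the remainder. Setting $L = I \setminus \bpi$, one readily verifies $\bpi \in \CP_I$ and $\bsigma \in \CP^R_{L, J}$: indeed $\bsigma$ contracts only within $L \wr J$, inherits the absence of within-$J_j$ pairings from $\widehat\CP_{\tilde I \wr J}$, and contains no within-$L$ pairings since any such pair would have been absorbed into $\bpi$. Conversely, every pair $(\bpi, \bsigma)$ with $\bpi \in \CP_I$ and $\bsigma \in \CP^R_{I \setminus \bpi, J}$ reassembles to a unique $\bar\bpi$, giving a bijection between summation indices.

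Finally I would verify that the corresponding summands coincide. Unfolding Definition~\ref{def:ContWickProd} for $\xi_q^{\diamond \bar\bpi}\bigl((f \wr g)_{\otimes I \wr J}\bigr)$, pulling out the scalar factors from pairs in $\bpi$ (which involve only $f$-entries), and using the equality $(I \wr J) \setminus \bar\bpi = (L \wr J) \setminus \bsigma$ to match the remaining Wick products, one obtains
\begin{equ}
\xi_q^{\diamond \bar\bpi}\bigl((f \wr g)_{\otimes I \wr J}\bigr) = \Bigl(\prod_{(s,t) \in \bpi} \langle \kappa f_s, f_t\rangle_{\mfH}\Bigr) q^{\crb(\bar\bpi) - \crb(\bsigma)} \xi_q^{\diamond \bsigma}\bigl((f \wr g)_{\otimes L \wr J}\bigr)\;.
\end{equ}
By the defining identity \eqref{eq:RltvContr}, one has $\crb(\bar\bpi) - \crb(\bsigma) = \crb(\bpi \cup \bsigma) - \crb(\bsigma) = \crb(\bpi, \bsigma)$; summing over $\bsigma \in \CP^R_{L,J}$ and invoking Definition~\ref{def:PartialWickIntertwined} of $\xi^{R ; I, J; \bpi}_q$ then reproduces the right-hand side of the theorem. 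The only real bookkeeping step is tracking the $\crb$-exponent under the splitting $\bar\bpi = \bpi \sqcup \bsigma$, which is precisely what the definition of $\crb(\bpi, \bsigma)$ encodes, so no further combinatorial work is needed.
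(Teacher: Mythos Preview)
Your proposal is correct and follows essentially the same approach as the paper: refine $I$ into singletons, apply Proposition~\ref{prop:WickProd}, then split each resulting pairing into its within-$I$ part $\bpi$ and its remainder $\bsigma$, and match the $q$-weights via the definition of $\crb(\bpi,\bsigma)$. The only technical point you gloss over is that, for $\tilde I \wr J$ to be well-defined, $J$ must be padded with $|I|-n$ empty blocks so that its number of blocks is $|I|-1$; the paper makes this explicit by constructing $\bar J$, but it does not affect the argument.
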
 

\begin{proof}
Let $\bar{I} \in \mfI_{|I|}$ be the trivial partition of $I$ into singletons and let $\bar{J} \in \mfI_{|I|-1}$ be obtained by inserting extra empty blocks into $J$ while keeping the existing blocks of $J$ so that $\bar{I} \wr \bar{J} = I \wr J$ at the level of totally ordered sets\footnote{In general they will not be the same as partitioned sets.}.

Note that there is a one-to-one correspondance $\widehat\CP_{\bar{I} \wr \bar{J}} \ni \bsigma \leftrightarrow (\theta(\bsigma),\rho(\bsigma)) \in \CP_{I} \times \CP^{R}_{I,J} $ such that $\bsigma = \theta(\bsigma) \sqcup \rho(\bsigma)$. 
Now, by Proposition~\ref{prop:WickProd}, we have that the left hand side of \eqref{eq:RenDisent0} is equal to
\begin{equs}
	\sum_{\bsigma \in \widehat\CP_{\bar{I} \wr \bar{J}}} \xi_q^{\diamond \bsigma} \big( (f \wr g)_{\otimes (I \wr J)} \big) 
	= \sum_{\bpi \in \CP_{I}} \sum_{\bsigma \in \CP^{R}_{I,J}} \xi_q^{\diamond (\bpi \sqcup \bsigma)} \big( (f \wr g)_{\otimes (I \wr J)} \big). 
\end{equs}
The conclusion follows from observing that, for any $\bpi \in \CP_{I}$ and $\bsigma \in \CP^{R}_{I,J}$,
\begin{equs}
	\xi_q^{\diamond (\bpi \sqcup \bsigma)} \big( (f \wr g)_{\otimes (I \wr J)} \big) 
	= q^{\crb(\bpi,\bsigma)} \Big( \prod_{(i,j) \in \bpi}  \Braket{\kappa f_i,f_j} \Big)
	\xi_q^{\diamond \bsigma} \big( (f \wr g)_{\otimes ((I \setminus \bpi) \wr J)} \big) \;.
\end{equs}
\end{proof}

Below, we identify $\mfA_q(\mfH) \subset \cA_{q}(\mfH)$ and note that the Wick expansion of any element of $\mfA_{q}(\mfH)$ is of the form \eqref{eq:Wick_expansion0}.
We will now define the following multiplication map, which sums over the Wick expansion for operator insertions.
When summing over terms in Wick expansions, it will be useful to associate partitioned sets to multi-indices. 
Given a multi-index $\boldsymbol{k} \in \N^n$, we overload notation and write 
$\boldsymbol{k} = [k_1] \sqcup \ldots\sqcup [k_n] \in \mfI_{n}$
\begin{definition}
\label{def:RenMultMap}
	Let $n \in \N_{>0}$, $I \in \mfI_{n}$, and fix $\bpi \in \CP_{I}$.

We then define the renormalised multiplication map
	\begin{equs}
		\Delta_q^{R; I, \bpi} \colon \mfH^{\otimes (I \setminus \bpi)} \times \big( \mfA_q(\mfH)  \big)^{n+1}
		 \longrightarrow \cA_q(\mfH)
	\end{equs}
	by setting, for $F \in \mfH^{\otimes (I \setminus \bpi)}$ and $ A_0, \dots, A_{n} \in \mfA_q(\mfH)$,
	\begin{equ}[eq:RenProdDef]
		\Delta_q^{R; I, \bpi}(F; A_0, \dots, A_{n}) \eqdef
		A_{0} \Big( \sum_{\boldsymbol{\ell} \in \N^{n-1}} \xi_q^{R; I, \boldsymbol{\ell} ; \bpi} \big( F; A_1^{\ell_1}, \dots, A_{n-1}^{\ell_{n-1}} \big)
		\Big)A_{n} \; ,
	\end{equ} 
	where the $A^{k}_j \in \mfH^{\otimes k}$ are the Wick expansion coefficients, that is 
	\[
	A_j = \sum_{k \in \N} \xi^{\diamond k}_q (A^k_j) \in \mfA_q(\mfH)\;.
	\]

	We will also use the shorthand notation
	\begin{equ}
		\CM^{I}_{q}(F;A_1, \dots, A_{n-1})  \eqdef  \Delta_q^{R ; I, \emptyset}(F ;\one, A_1, \dots, A_{n-1},\one) \; .
	\end{equ}
\end{definition}

\begin{remark}
	The outermost operator insertions $A_0$ and $A_n$ are included in the definition \eqref{eq:RenProdDef} so that $\Delta_q^{R ; I, \bpi}$ is an $\cA_q(\mfH)$-bimodule morphism from $\cA_q(\mfH)^{\otimes(n+1)} \to \cA_q(\mfH)$, equipped with their natural bimodule structures. This is a convenient structural assumption we need when using these maps in conjunction with regularity structures, cf.\ Section~\ref{sec:ConvBPHZ}.
\end{remark}

\begin{remark}
We sometimes abuse notation switch between taking Fock spaces vectors and operators in our maps \dash instead of writing $\Delta_q^{R; I, \bpi}(F; \cdots)$ for $F \in \mfH^{\otimes I}$ we instead write $\Delta_q^{R; I, \bpi}(H; \cdots)$ for $H = \xi_q^{\diamond I}(F) \in \mfA_q(\mfH)$.

We perform the same abuse of notation with $\CM^{I}_{q}$, and also perform a similar abuse from Section~\ref{sec:PolyWickEstimates} onwards, where we show that we can in fact take $F \in \mfH^{\widehat{\otimes}_{\alpha} I}$. 
\end{remark}

Below we give an example of writing a ``non-renormalised'' intertwined product of operators in terms of the corresponding poly-Wick product $\CM^{I}_{q}$, with the ``renormalisation counterterms'' given in terms of operators $\Delta_q^{R; I, \bpi}$ with $\bpi \not = \emptyset$.

\begin{example}\label{ex:RenProdExample1}
Let $I = \{1\} \sqcup \{2\} \sqcup \{3\}$ and $f_1,f_2,f_3 \in \mfH$
Then, for any $A_1,A_2 \in \mfA_q(\mfH)$, we have, using the shorthand   $\xi_i \eqdef \xi_q(f_i)$,  
\begin{equs}\label{eq:RenProdExample1}
		\xi_1 A_1 \xi_2 A_2 \xi_3
		&= \CM_F^{I}(A_1, A_2) + \Braket{\kappa f_1,f_2} \Delta_q^{R ; I , (1,2)}( \xi_3; \one, A_1, A_2, \one)+ \\
		& \qquad + \Braket{\kappa f_1,f_3}  \Delta_q^{R ; I , (1,3)}( \xi_2; \one, A_1, A_2, \one)+ \\
		& \qquad +\Braket{\kappa f_2,f_3}  \Delta_q^{R ; I , (2,3)}( \xi_1; \one, A_1, A_2, \one) \; .
	\end{equs}
\end{example}
Note that the coefficient in front of the $\Delta^{R; I , \bpi}_q$ in an expansion like in the example above will always be of the form
$		\prod_{(i,j) \in \bpi} \Braket{\kappa f_{i}, f_{j}}$,
	with any $q$-weights (and therefore all $q$-dependence) incorporated in the action of $\Delta^{R; I, \bpi}_q$.

\begin{remark}\label{rem:notation_polywick}
Note that the arguments appearing in various maps on the right-hand side of \eqref{eq:RenProdExample1} depend on how we separated factors. 
In particular, we could have chosen $f'_1 = f_1/2$, $f'_2 =  2f_2$ and $f'_3 = 2f_3$ along with $A_1' =  A_1/2$, $A_2' = A_2$, then, writing $\xi'_i = \xi_q(f'_i)$, the left hand side of \eqref{eq:RenProdExample1} can also be written as $\xi'_1 A'_1 \xi'_2 A'_2 \xi'_3$ and the argument of the relevant maps in the individual terms on the right hand side would change - however this redundancy does not create any problems for well-definedness of the formulae of this section. 

Later, we will see maps like $\Delta^{R;  I , \bpi}$ appear in our description of renormalised equations. 
In this context there will be no ambiguity in definitions since there will be rigidity in how we write the products and tensor products that appear, which come from the structure of the model \slash regularity structure.  
\end{remark}


\begin{example}
Continuing from Example~\ref{eq:RenProdExample1} we now look at the behaviour of $\Delta^{R; I , \bpi}_q$ above by specialising to specific cases of $A_{1}$ and $A_{2}$.
	\begin{enumerate}
		\item Let $g_1, g_2 \in \mfH$ and $A_1 \eqdef \xi_q(g_1)$, $A_2 \eqdef \xi_q(g_2)$, then
		\begin{equs}
			\Delta_q^{R; I, (1,3)}(\xi_2; \one, A_1, A_2,\one) & = q^3 \xi^{\diamond 3}_q \left( g_1 \otimes f_2 \otimes g_2  \right) + q^2 \Braket{g_1,g_2} \xi_q(f_2)  + \\
			&  \qquad \qquad +  q\Braket{\kappa f_2, g_2}  \xi_q(g_1)+ q  \Braket{g_1, f_2} \xi_q(g_2)  \; , \\
			\Delta_q^{R; I, (1,2)}(\xi_3; \bone, A_1, A_2,\bone) & = q \xi^{\diamond 3}_q \left( g_1 \otimes g_2 \otimes f_3 \right) + q  \Braket{g_1, g_2} \xi_q(f_3) + \\
			&  \qquad \qquad + q \Braket{g_2,f_3} A_1 +  q^2 \Braket{g_1, f_3} A_2  \; .
		\end{equs}
		\item Let $A_1 = \xi_{q}(g_1) \xi_{q}(g_2)$, $A_2 = \bone$, then we have
		\begin{equs}
			\Delta_q^{R; I, (1,3)}(\xi_2;
		\bone, A_1, A_2, \one) & = q^3 \xi^{\diamond 3}_q \left( g_1  \otimes g_2 \otimes f_2 \right) + q  \Braket{g_1, g_2} \xi_q(f_2) + \\
			&  \qquad \qquad +  q \Braket{g_2, f_2}  \xi_q(g_1) + q^2 \Braket{g_1,f_2} \xi_q(g_2) \; , \\
			\Delta_q^{R; I, (1,2)}(\xi_3; \one, A_1, A_2, \one) & = q^2 \xi^{\diamond 3}_q \left( g_1 \otimes g_2 \otimes f_3  \right) +   \Braket{g_1, g_2} \xi_q(f_3) + \\
			&  \qquad \qquad + q^2 \Braket{g_2,f_3} \xi_q(g_1) +  q^3 \Braket{g_1, f_3}  \xi_q(g_2) \; .
		\end{equs}
	\end{enumerate}
\end{example}

We rewrite Theorem~\ref{thm:RenDisent0} in terms of the map $\Delta^{R; I, \bpi}_q$.
\begin{theorem}
\label{thm:RenDisent}
Let $n \in \N_{>0}$, $I \in \mfI_{n}$. 
Fix $A_1, \dots, A_{n-1} \in \mfA_q(\mfH)$ and $f \in \mfH^{I}$. 
We then have 

\begin{equs}
{}& 
A_0 \xi_{q}^{ I_{1}}(f_{\otimes I_1}) A_1 
\xi_{q}^{I_{2}}(f_{ I_2}) A_2  \cdots
A_{n-1} \xi_{q}^{I_{n}}(f_{\otimes I_{n}})
A_n \\
{}& \enskip =
\sum_{ 
\bpi \in \CP_{I}}
\Big(
\prod_{(i,j) \in \bpi}  \Braket{\kappa f_i,f_j}
\Big)
\Delta^{R ; I , \bpi }_q
(f_{\otimes (I \setminus \bpi)}; A_0, A_1, \dots, A_{n-1}, A_n ) \; .
\end{equs}
\end{theorem}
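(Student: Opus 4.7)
The plan is to deduce Theorem~\ref{thm:RenDisent} from Theorem~\ref{thm:RenDisent0} by performing the Wick expansion of each inner operator $A_j$. Since $\mfA_q(\mfH)$ is the $*$-algebra generated by $\{\xi_q(f)\}$, each $A_j$ for $j \in \{1,\dots,n-1\}$ admits a unique \emph{finite} Wick expansion
\begin{equ}
A_j = \sum_{\ell_j \in \N} \xi_q^{\diamond \ell_j}\bigl(A_j^{\ell_j}\bigr), \qquad A_j^{\ell_j} \in \mfH^{\otimes \ell_j},
\end{equ}
with only finitely many nonzero coefficients. Substituting these into the inner factors of the product on the left-hand side of the claimed identity and using multilinearity, the left-hand side becomes
\begin{equ}
A_0 \Big( \sum_{\boldsymbol{\ell} \in \N^{n-1}} \xi_q^{I_1}(f_{\otimes I_1}) \, \xi_q^{\diamond \ell_1}\bigl(A_1^{\ell_1}\bigr) \cdots \xi_q^{\diamond \ell_{n-1}}\bigl(A_{n-1}^{\ell_{n-1}}\bigr) \, \xi_q^{I_n}(f_{\otimes I_n}) \Big) A_n,
\end{equ}
where the sum over $\boldsymbol{\ell}$ is effectively finite.

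Next, for each fixed $\boldsymbol{\ell}$, I would identify $\boldsymbol{\ell}$ with the $(n-1)$-partitioned set $J = [\ell_1]\sqcup\cdots\sqcup[\ell_{n-1}] \in \mfI_{n-1}$ and take $G_j = A_j^{\ell_j}$, so that Theorem~\ref{thm:RenDisent0} applies and rewrites the bracketed expression as
\begin{equ}
\sum_{\bpi \in \CP_I} \Big( \prod_{(i,j) \in \bpi} \Braket{\kappa f_i, f_j} \Big) \, \xi_q^{R; I, \boldsymbol{\ell}; \bpi}\bigl( f_{\otimes (I \setminus \bpi)}; A_1^{\ell_1}, \dots, A_{n-1}^{\ell_{n-1}} \bigr).
\end{equ}
Interchanging the finite sums over $\boldsymbol{\ell}$ and $\bpi$, pulling the coefficients $\prod \Braket{\kappa f_i, f_j}$ (independent of $\boldsymbol{\ell}$) in front, and retaining the outer factors $A_0$ and $A_n$, the resulting expression matches term-by-term the definition \eqref{eq:RenProdDef} of $\Delta_q^{R; I, \bpi}(f_{\otimes (I\setminus \bpi)}; A_0, A_1, \dots, A_{n-1}, A_n)$.

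No substantive obstacle is expected: the identity is a direct repackaging of Theorem~\ref{thm:RenDisent0} via the definition of $\Delta_q^{R; I, \bpi}$, and the only real bookkeeping is to confirm that the multi-index $\boldsymbol{\ell} \in \N^{n-1}$ used in Definition~\ref{def:RenMultMap} really does correspond to the partitioned set $[\ell_1]\sqcup\cdots\sqcup[\ell_{n-1}]$ so that the formulae line up. The outer operators $A_0$ and $A_n$ do not need to be expanded since $\Delta_q^{R; I, \bpi}$ is built as an $\cA_q(\mfH)$-bimodule morphism that carries them unchanged on the outside of the renormalised product.
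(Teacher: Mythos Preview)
Your proposal is correct and follows essentially the same approach as the paper: the paper's proof simply states that the result follows from Theorem~\ref{thm:RenDisent0} by performing Wick expansions on the inner factors $A_1,\dots,A_{n-1}$ and recalling the definition of $\Delta_q^{R;I,\bpi}$, which is exactly what you have spelled out in detail.
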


\begin{proof}
This proof follows from using Theorem~\ref{thm:RenDisent0}, recalling the definition of the operators $\Delta^{R;  I , \bpi}$, and performing Wick expansions on the factors $A_1,\dots,A_{n-1}$ to
obtain the relevant $G_j$'s. 
\end{proof}

\subsection{Estimates on Poly-Wick Products}\label{sec:PolyWickEstimates}

In this section, we perform estimates on the poly-Wick products that allow us to extend their definition to the case where the $F$ belongs to a Hilbert space tensor product of $\mfH$ rather than only the algebraic tensor product.

We begin this section with an analytic estimate on contractions on tensor products. 
\begin{definition}
	Let  $I \in \mfI$, and $\bpi \in \hat \CP_{I}$, and let 
	\begin{equ}
		C_{\bpi} \colon \bigotimes_{I_i \lhd I} \mfH^{\otimes I_i} \longrightarrow \bigwotimesp_{L_i \lhd (I \setminus \bpi)} \mfH^{\wotimes_\alpha L_i}
	\end{equ}
	be the linear map that maps 
	\begin{equ}
		f_{\otimes I} \longmapsto
		\Big(\prod_{(i,j) \in \bpi} \Braket{\kappa f_i, f_j} 
		\Big)
		f_{\otimes I \setminus \bpi } \; .
	\end{equ}
\end{definition}

\begin{lemma}
\label{lem:ContEst}
	In the above setting, $C_{\bpi}$ extends to a continuous map
	\begin{equ}
		\bigwotimesp_{I_i \lhd I} \mfH^{\wotimes_\alpha I_i} \longrightarrow \bigwotimesp_{L_i \lhd (I \setminus \bpi)} \mfH^{\wotimes_\alpha L_i} \;
	\end{equ}
	with $\| C_{\bpi} \| \leqslant 1$.
\end{lemma}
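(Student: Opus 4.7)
The plan is to show that $C_{\bpi}$, viewed as a multilinear map on the product of Hilbert tensor spaces, is bounded of norm at most $1$, and then invoke the universal property of the projective tensor product (cf.\ Proposition~\ref{prop:ProjTensExt} applied inductively) to extend it to the desired completion. Since $\mfH$ is real here, $\kappa = \bone_\mfH$, and the pairing is just the inner product.

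Concretely, I would fix an orthonormal basis $(e_n)_{n \in \N}$ of $\mfH$ and expand $F_i \eqdef \sum_{\alpha_i \colon I_i \to \N} F_i^{\alpha_i} \bigotimes_{k \in I_i} e_{\alpha_i(k)} \in \mfH^{\wotimes_\alpha I_i}$, so that $\|F_i\|_{\mfH^{\wotimes_\alpha I_i}}^2 = \sum_{\alpha_i}|F_i^{\alpha_i}|^2$. Applying $C_{\bpi}$ turns each scalar factor $\langle e_{\alpha_a(s)},e_{\alpha_b(t)}\rangle$ into a Kronecker $\delta$. Re-parametrising the surviving tuples via an ``edge index'' $\gamma \colon \bpi \to \N$ (one index per pair, shared between the two blocks it joins, which is well defined because $\bpi \in \widehat\CP_I$ means each pair straddles two distinct blocks) and ``free indices'' $\beta_i \colon L_i \to \N$, one obtains the factorisation
\[
C_{\bpi}\Big(\bigotimes_{i} F_i\Big) = \sum_{\gamma} \bigotimes_{i} G_i(\gamma|_{\bpi_i}), \qquad G_i(\gamma|_{\bpi_i}) \eqdef \sum_{\beta_i} F_i^{\alpha_i(\beta_i,\gamma|_{\bpi_i})} \bigotimes_{k \in L_i} e_{\beta_i(k)} \in \mfH^{\wotimes_\alpha L_i},
\]
where $\bpi_i \subset \bpi$ collects the pairs touching block $i$. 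Since changing variables from $\alpha_i$ to $(\beta_i,\gamma|_{\bpi_i})$ is a bijection, we have the key identity $\sum_{\gamma|_{\bpi_i}} \|G_i(\gamma|_{\bpi_i})\|_\alpha^2 = \|F_i\|_\alpha^2$.

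By the triangle inequality for the projective tensor norm, $\bigl\|C_{\bpi}(\bigotimes_i F_i)\bigr\|_\pi \leqslant \sum_\gamma \prod_i \|G_i(\gamma|_{\bpi_i})\|_\alpha$. Because $\bpi$ is a collection of pairs, the incidence structure between $\gamma$-variables and block factors $G_i$ is precisely that of an undirected graph (each $\gamma_e$ appears in exactly two of the $G_i$'s), so one can peel off the $\gamma_e$'s one at a time via Cauchy--Schwarz: at each step, summing over a single $\gamma_e$ produces a bound $\bigl(\sum_{\gamma_e}\|G_a\|_\alpha^2\bigr)^{1/2}\bigl(\sum_{\gamma_e}\|G_b\|_\alpha^2\bigr)^{1/2}$ and eliminates that index. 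Iterating yields
\[
\sum_\gamma \prod_i \|G_i(\gamma|_{\bpi_i})\|_\alpha \leqslant \prod_i \Big(\sum_{\gamma|_{\bpi_i}} \|G_i(\gamma|_{\bpi_i})\|_\alpha^2\Big)^{1/2} = \prod_i \|F_i\|_\alpha.
\]
Thus $C_{\bpi}$ is a (jointly) continuous multilinear map of norm $\leqslant 1$ on $\prod_i \mfH^{\wotimes_\alpha I_i}$, which by the universal property of the projective tensor product extends uniquely to a continuous linear map on $\bigwotimesp_i \mfH^{\wotimes_\alpha I_i}$ of norm $\leqslant 1$.

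The main obstacle is the iterated Cauchy--Schwarz step: for a general hypergraph (e.g.\ if some pair had both endpoints in the same block, so the corresponding $\gamma$-variable would appear in only one $G_i$, or if three slots were contracted together), the analogous bound is false since internal partial traces on Hilbert tensor products are unbounded. The hypothesis $\bpi \in \widehat\CP_I$ is precisely what makes the incidence structure a graph and hence the elementary Cauchy--Schwarz peeling available. A minor technical point to double-check is the justification of the reordering/regrouping of series leading to the factorisation above, but since every expansion involved is absolutely convergent at each finite level and each $F_i$ can be approximated by finite sums, this poses no difficulty.
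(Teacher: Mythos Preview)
Your proof is correct and follows essentially the same idea as the paper's: expand in an orthonormal basis and control the contractions via Cauchy--Schwarz. The packaging differs slightly. The paper reduces, via associativity of the projective tensor product, to the case of a \emph{single} pair $(s,t)$ between two blocks, proves that one contraction $\mfH^{\wotimes_\alpha k} \wotimes_\pi \mfH^{\wotimes_\alpha \ell} \to \mfH^{\wotimes_\alpha (k-1)} \wotimes_\pi \mfH^{\wotimes_\alpha (\ell-1)}$ has norm $\leqslant 1$ by exactly your $\ell^2$ computation, and then obtains the general case by composing such maps with permutations. You instead expand everything at once and run the induction at the level of the Cauchy--Schwarz estimate, peeling off one edge index $\gamma_e$ at a time; the graph structure (each $\gamma_e$ appears in exactly two factors, which is precisely the content of $\bpi \in \widehat\CP_I$) is what makes this iterated peeling legitimate, as you correctly identify. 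The paper's formulation has the advantage of isolating the elementary building block and avoiding any graph combinatorics, while yours makes the role of the hypothesis $\bpi \in \widehat\CP_I$ more transparent.
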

\begin{proof}
	By induction and the associativity of the projective tensor product, cf.\ \cite[\textsection{1}, no.~4]{Gro55}, it is enough to show that the bilinear map $C \colon \mfH^{\wotimes_\alpha k} \wotimes_\pi \mfH^{\wotimes_\alpha l} \to \mfH^{\wotimes_\alpha (k-1)} \wotimes_\pi \mfH^{\wotimes_\alpha (l-1)}$
	\begin{equ}
		C \colon (v_1 \otimes \cdots \otimes v_k, w_1 \otimes \cdots \otimes w_\ell) \longmapsto \scal{v_1, w_1} \left( v_2 \otimes \cdots \otimes v_k\right) \otimes \left( w_2 \otimes \cdots w_l\right)
	\end{equ}
	is well-defined, continuous, and has operator norm bounded by $1$. The result then follows by composing such contraction maps as well as permutation maps.
	Up to unitary isomorphism, we may assume that $\mfH = \ell^{2}(\N)$. Then $\mfH^{\wotimes_\alpha k} = \ell^2(\N^k)$, and similarly for $\mfH^{\wotimes_\alpha l}$. For $\lambda =(\lambda_{i_1 \dots i_k})_{i} \in \ell^2(\N^k)$ and $\mu = (\mu_{j_1 \dots j_l})_{j} \in \ell^2(\N^l)$
	\begin{equ}
		C(\lambda, \mu)_{i_2, \dots, i_k, j_2, \dots, j_l} = \sum_{r = 0}^\infty \lambda_{r i_2 \cdots i_k} \otimes \mu_{r j_2 \dots j_l} \; .
	\end{equ}
	By the triangle and Cauchy--Schwarz inequalities
	\begin{equs}
		\left\| C(\lambda, \mu) \right\| & \leqslant \sum_{r = 0}^\infty \biggl( \sum_{i_2, \dots, i_k = 0}^\infty |\lambda_{r i_2 \cdots i_k}|^2 \biggr)^{\frac{1}{2}} \biggl( \sum_{j_2, \dots, j_l = 0}^\infty |\mu_{r j_2 \cdots j_l}|^2 \biggr)^{\frac{1}{2}} \leqslant \\
		& \leqslant \biggl( \sum_{r, i_2, \dots, i_k = 0}^\infty |\lambda_{r i_2 \cdots i_k}|^2 \biggr)^{\frac{1}{2}} \biggl( \sum_{s, j_2, \dots, j_l = 0}^\infty |\mu_{s j_2 \cdots j_l}|^2 \biggr)^{\frac{1}{2}} =\\
		&= \| \lambda \|_{\ell^2} \| \mu \|_{\ell^2}\;,
	\end{equs}
	which proves the assertion.
\end{proof}

Using this, we can reinterpret the action of the Wick block $W_{0}^{k, \ell}$ as performing $\ell$ contractions in the spirit of Proposition~\ref{prop:WickProd}. In particular, letting $k_1 = k + \ell$, $k_2 \geqslant \ell$, $\boldsymbol{k} = (k_1, k_2)$, then per definitionem we have for $F \in \mfH^{\wotimes_\alpha k_1}$ and $G \in \mfH^{\wotimes_\alpha k_2}$
\begin{equ}
	W_0^{k, \ell}(F)G = C_{\bpi}(F \otimes G)
\end{equ}
with the ``nested'' contraction $\bpi \in \widehat\CP_{[k_1] \sqcup [k_2]}$ given by
\begin{equ}
	\bpi = \left\{ (k_1, k_1+1), (k_1-1,k_1+2), \dots, (k+1, k_1+ \ell) \right\} \; .
\end{equ}

By comparing coefficients in Proposition~\ref{prop:WickProd} and Corollary~\ref{cor:WickBlockExp} we see that for an arbitrary $\bpi \in \CP_{\boldsymbol{k}}$
\begin{equ}
	\crb(\bpi) = |\sigma(\bpi)| + |\rho(\bpi)|
\end{equ}
where $\sigma(\bpi) \in \mfS_{k_2, \ell}$ and $\rho(\bpi) \in \mfS_{k_1}$ are the permutations such that, for all $F \in \mfH^{\wotimes_{\alpha} k_1}$ and $G \in \mfH^{\wotimes_\alpha k_2}$,  $W_{0}^{k,\ell}(U_{\rho^{-1}}F) U_{\sigma} G$ corresponds to the contraction $\bpi$ in $\xi^{\diamond k_1}_q(F) \xi^{\diamond k_2}_q(G_2)$. This correspondence allows us to establish the following combinatorial lemma concerning the $q$-weighted number of contractions $\bpi$.
\begin{lemma}
\label{lem:ContNum}
	For any $q \in (-1,1)$, $L \in \mfI$, we have the bound 
	\begin{equ}
		\sum_{\bpi \in \widehat\CP_{L}} \big( |L \setminus \bpi| +1 \big) D_{q}^{|L \setminus \bpi |} |q|^{\crb(\bpi)} \leqslant  D_{q}^{|L|} \prod_{ L_\ell \lhd L} (|L_\ell| + 1)  \; .
	\end{equ}
\end{lemma}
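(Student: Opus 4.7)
Plan: Set $S(L) := \sum_{\bpi \in \widehat{\CP}_L}(|L\setminus\bpi|+1)D_q^{|L\setminus\bpi|}|q|^{\crb(\bpi)}$ and induct on the number $m = \|L\|$ of blocks.  The cases $m \leqslant 1$ are immediate since then $\widehat{\CP}_L = \{\emptyset\}$ and both sides equal $(|L|+1)D_q^{|L|}$.  For the inductive step, write $L = L' \sqcup L_m$ where $L_m$ is the last block of size $n_m$; it suffices to establish the ``peel one block off'' reduction
\begin{equ}
	S(L) \;\leqslant\; (n_m+1)\,D_q^{n_m}\,S(L')\;,
\end{equ}
which combines with the inductive hypothesis $S(L') \leqslant D_q^{|L'|}\prod_{i<m}(n_i+1)$ to close the induction.

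The heart of the proof is a combinatorial analysis of this reduction.  Each $\bpi \in \widehat{\CP}_L$ decomposes uniquely as $\bpi = \bpi' \sqcup \tau$ with $\bpi' \in \widehat{\CP}_{L'}$ and $\tau$ a partial matching between $L_m$ and the $U := |L' \setminus \bigcup \bpi'|$ unpaired-by-$\bpi'$ elements of $L'$ (no further constraint, since $L_m$ is its own block).  I parameterise each matching $\tau$ of size $k$ by the index sets $\{j_1 < \cdots < j_k\} \subseteq [n_m]$ and $\{i_1 < \cdots < i_k\} \subseteq [U]$ used, together with a permutation $\pi \in \mfS_k$ specifying the matching.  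A direct accounting of how $\tau$ perturbs crossings and separators---crucially, the new crossings between $\tau$ and $\bpi'$ cancel exactly against the drop in $\bpi'$-separators caused by the elements of $L'$ getting newly paired---produces the formula
\begin{equ}[e:Delta_crb]
	\crb(\bpi) - \crb(\bpi') \;=\; kU + \sum_{a} j_a - \sum_{b} i_b - |\pi| - \tfrac{k(k+1)}{2}\;.
\end{equ}

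Given \eref{e:Delta_crb}, summing $|q|^{\crb(\bpi)-\crb(\bpi')}$ over $\{j_a\}$, $\{i_b\}$, and $\pi$ factors into three standard $q$-combinatorial sums yielding Gaussian binomials and a $q$-factorial; the various $|q|$-prefactors then cancel, and the pointwise-in-$\bpi'$ estimate needed becomes
\begin{equ}
	\sum_{k=0}^{\min(U,n_m)} (U+n_m-2k+1)\,(1-|q|)^k\, \frac{\prod_{i=n_m-k+1}^{n_m}(1-|q|^i)\,\prod_{i=U-k+1}^{U}(1-|q|^i)}{\prod_{i=1}^{k}(1-|q|^i)} \;\leqslant\; (U+1)(n_m+1)\;.
\end{equ}
The key elementary observation is that $1-|q|^i \geqslant 1-|q|$ for every $i \geqslant 1$, so $\prod_{i=1}^k(1-|q|^i) \geqslant (1-|q|)^k$; combined with the trivial bounds $\prod_{i=n-k+1}^n(1-|q|^i) \leqslant 1$, this forces each summand to be at most $(U+n_m-2k+1)$, whereupon a direct arithmetic computation gives $\sum_{k=0}^{\min(U,n_m)}(U+n_m-2k+1) = (U+1)(n_m+1)$.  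Thus the main obstacle lies entirely in verifying the identity \eref{e:Delta_crb} and recognising the Gaussian-binomial factorisation; once this algebraic reorganisation is in place, the closing estimate is essentially the trivial geometric-series bound on $(1-|q|^i)/(1-|q|)$.
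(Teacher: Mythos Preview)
Your proof is correct and takes a genuinely different route from the paper's. The paper's argument is a slick two-line trick: for the two-block case $L = [k_1]\sqcup[k_2]$ it picks a unit vector $f \in \mfH$, observes via Proposition~\ref{prop:WickProd} that
\[
	\xi_q^{\diamond k_1}(f^{\otimes k_1})\,\xi_q^{\diamond k_2}(f^{\otimes k_2})
	= \sum_{\bpi \in \widehat\CP_L} |q|^{\crb(\bpi)}\,\xi_q^{\diamond |L\setminus\bpi|}\bigl(f^{\otimes |L\setminus\bpi|}\bigr)\;,
\]
and then reads off the desired inequality directly from the already-established Banach-algebra bound of Corollary~\ref{cor:MultBnd} (since $\|f^{\otimes j}\|_{\CF_0}=1$). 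The general case follows by iterating. In other words, the paper recycles the operator-norm estimates of Section~\ref{sec:qmezdons} to obtain the combinatorial inequality ``for free'', never touching $q$-binomials.

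Your approach is the honest combinatorics: you compute $\crb(\bpi)-\crb(\bpi')$ explicitly, factor the resulting sum over matchings into Gaussian binomials and a $q$-factorial, and close with the elementary bound $1-|q|^i \geqslant 1-|q|$. This is longer but fully self-contained---it does not rely on the ultracontractive estimates of Bo\.zejko that underlie Corollary~\ref{cor:MultBnd}---and along the way it yields the exact identity $T_k = \binom{n_m}{k}_{|q|}\binom{U}{k}_{|q|}[k]_{|q|}!$ for the weighted count of size-$k$ matchings, which is strictly more information than the paper extracts. The paper's proof is shorter precisely because it outsources the hard work to the analytic machinery already in place; yours would stand on its own even without that machinery.
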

\begin{proof}
 
	Without loss of generality, we assume that $q \in [0,1)$.
	
	The claim is trivial for $\|L\| < 2$.
	For $\|L\|=2$, we write $L = [(k_1,k_2)]$ and fix $f \in \mfH$ of norm $1$.
	We then have, by Proposition~\ref{prop:WickProd}, that
	\begin{equ}
	\xi_q^{\diamond k_1} \bigl( f^{\otimes k_1}\bigr) \xi_q^{\diamond k_2} \bigl( f^{\otimes k_2} \bigr) = \sum_{\bpi \in \widehat\CP_{[(k_1,k_2)]}} |q|^{\crb(\bpi)} \xi_q^{\diamond (k_1 + k_2 - 2|\bpi|)} \bigl( f^{\otimes (k_1 + k_2 - 2|\bpi|)} \bigr) \; ,
	\end{equ}
If we take, in the statement of Corollary~\ref{cor:MultBnd}, $G = f^{\otimes k_1}$ and $H = f^{\otimes k_2}$, we then have, for $k \in I$,  
\[
F_{k} = \Bigg[\sum_{\substack{ \bpi \in \widehat\CP_{L} \\ k_1 + k_2 - 2|\bpi| = k}} |q|^{\crb(\bpi)} \Bigg] f^{\otimes k}  \;.
\]
Since $\|f^{\otimes j}\|_{\CF_0} = 1$ for all $j \in \N$, it follows that we have the estimate
\begin{equ}
	\sum_{\bpi \in \widehat\CP_{L}} (|L \setminus \bpi| + 1) C_q^{\frac{3}{2}} D_{q}^{|L \setminus \bpi|} |q|^{\crb(\bpi)} 
	\leqslant (|L_1| + 1) (|L_2| + 1) C_q^{3} D_q^{|L|} \;,
\end{equ}
and we conclude the case $\|L\|=2$ since $C_{q} \geqslant 1$. 

For the general case of $\|L\| > 2$, one applies the above argument iteratively. \ajay{add more details here.}
\end{proof}

Combining Lemmata~\ref{lem:ContEst}~\&~\ref{lem:ContNum} yields the following theorem, which is fundamental for estimating intertwined renormalised products.

\begin{theorem}\label{thm:PolyWickestimate}
	For any $n \in \N_{>0}$, $I \in \mfI_{n}$, and $J \in \mfI_{n-1}$, we have that $\xi^{R; I, J; \bpi}_q$ extends to a continuous map
	\begin{equ}
		\hspace*{-5pt}
		\Big( \bigwotimesa_{I_i \lhd (I \setminus \bpi)} \mfH^{\wotimes_\alpha I_i} 
		\Big) 
		\wotimes_\pi \hspace*{-3pt} 
		\Big(  \hspace*{-3pt} \bigwotimesp_{J_{j} \lhd J} \mfH^{\wotimes_\alpha J_j} 
		\Big)
		\longrightarrow \cA_q(\mfH)\
	\end{equ}
	with  
	\begin{equ}
		\left\| \xi^{R; I, J; \bpi }_q \right\| \leqslant 
C_q^{\frac{3}{2}} D_{q}^{|(I \setminus \bpi)|+|J|} \prod_{K \lhd (I \setminus \pi) \wr J}(|K|+1) \; . 
	\end{equ}
\end{theorem}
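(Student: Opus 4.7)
The plan is to establish the claimed estimate first on simple tensors and then extend to the full topological tensor product by multilinear continuity. Fix simple tensors $F = \bigotimes_{I_i \lhd L} f_{\otimes I_i}$ (with $L = I \setminus \bpi$) and simple tensors $G_j = g_{\otimes J_j}$ for each $J_j \lhd J$. Combining Definition~\ref{def:PartialWickIntertwined} with Definition~\ref{def:ContWickProd}, each $\bsigma$-summand on the right-hand side of \eqref{eq:PartialWickDef2} can be rewritten as
\begin{equ}
q^{\crb(\bpi,\bsigma) + \crb(\bsigma)}\, \xi_q^{\diamond (L \wr J \setminus \bsigma)}\bigl( C_\bsigma(F \otimes G) \bigr)\;,
\end{equ}
where $C_\bsigma$ is the iterated partial-contraction map on the tensor slots paired by $\bsigma$ (producing an inner-product scalar at each paired slot).

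Next, I would estimate the $\vvvert \bigcdot \vvvert$-norm using the triangle inequality and Definition~\ref{def:newBanachAlg}, which contributes to each $\bsigma$-summand a factor of
\begin{equ}
(|L \wr J \setminus \bsigma| + 1)\, C_q^{3/2}\, D_q^{|L \wr J \setminus \bsigma|}\, |q|^{\crb(\bpi,\bsigma) + \crb(\bsigma)}\, \|C_\bsigma(F \otimes G)\|_{\CF_0}\;.
\end{equ}
Iterating Lemma~\ref{lem:ContEst} through the pairs of $\bsigma$ bounds $\|C_\bsigma(F \otimes G)\|_{\CF_0}$ by $\|F\|\cdot\prod_j \|G_j\|$ in the appropriate Hilbert-tensor norms. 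Since $\crb(\bpi,\bsigma) \geqslant 0$ and $|q| < 1$ we have $|q|^{\crb(\bpi,\bsigma)} \leqslant 1$, so the remaining sum over $\bsigma \in \CP^R_{L,J} \subset \widehat\CP_{L \wr J}$ is majorised by the full sum over $\widehat\CP_{L \wr J}$. Lemma~\ref{lem:ContNum} applied to the $(2n-1)$-partitioned set $L \wr J$ then yields precisely $D_q^{|L|+|J|} \prod_{K \lhd L \wr J}(|K|+1)$, which gives the claimed bound on simple tensors.

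The final step is to extend the estimate to the full topological tensor product. Since $\xi_q^{R; I, J; \bpi}$ is multilinear in all tensor slots and the bound obtained above takes the product form $\|F\| \cdot \prod_j \|G_j\|$, Proposition~\ref{prop:ProjTensExt} allows us to extend continuously in the $G$-slots to $\bigwotimesp_{J_j \lhd J} \mfH^{\wotimes_\alpha J_j}$, combine the $F$-slots naturally into $\bigwotimesa_{I_i \lhd L} \mfH^{\wotimes_\alpha I_i} \cong \mfH^{\wotimes_\alpha |L|}$, and finally apply one more $\wotimes_\pi$-extension between the $F$- and $G$-sides into $\cA_q(\mfH)$ equipped with $\vvvert\bigcdot\vvvert$. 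The main bookkeeping care will be in the second paragraph: a given $\bsigma \in \CP^R_{L,J}$ may simultaneously contain ``cross'' pairs (one slot in $L$, one in $J$) and ``within-$J$'' pairs across distinct $J$-blocks, and one must verify that iterated application of Lemma~\ref{lem:ContEst} yields a uniform bound of $1$ on the composite contraction $C_\bsigma$ in all such mixed cases.
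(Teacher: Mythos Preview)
Your proposal is correct and follows essentially the same route as the paper: reduce to $\bpi = \emptyset$ via $|q|^{\crb(\bpi,\bsigma)} \leqslant 1$, expand each $\bsigma$-summand as a Wick product of a contraction $C_\bsigma$, bound $\|C_\bsigma\| \leqslant 1$ via Lemma~\ref{lem:ContEst}, and control the $\bsigma$-sum by Lemma~\ref{lem:ContNum}.

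One point of emphasis deserves sharpening. Your final caveat focuses on mixed ``cross'' and ``within-$J$'' pairs, but those are harmless---Lemma~\ref{lem:ContEst} already handles any $\bsigma \in \widehat\CP_{L \wr J}$. The genuinely delicate step is the one you call ``combine the $F$-slots naturally into $\mfH^{\wotimes_\alpha |L|}$'': this requires the bound on $C_\bsigma$ to hold with respect to the \emph{Hilbert} tensor norm on the $F$-side, not merely the projective one. The reason this works is precisely the defining property of $\CP^R_{L,J}$: no pair of $\bsigma$ joins two $L$-slots. Hence $\bsigma$ can be viewed, after permuting factors, as an element of $\widehat\CP_{\tilde I \sqcup J}$ where $\tilde I$ is $L$ repartitioned as a \emph{single} block, and Lemma~\ref{lem:ContEst} applied to this coarser partitioned set yields $\|C_\bsigma\| \leqslant 1$ directly on $\mfH^{\wotimes_\alpha |L|} \wotimes_\pi \bigwotimesp_j \mfH^{\wotimes_\alpha J_j}$. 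This is how the paper argues, and it is cleaner than iterating single-pair contractions. Your ``bookkeeping care'' should point here rather than at the mixed-pair issue.
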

\begin{proof}
	Since $|q| \leqslant 1$ and it only enters into the estimate via the factor $q^{\crb(\bpi,\bsigma)}$,  the argument for estimating $\| \xi_q^{R; I, J; \bpi}\|$
	will follow in a straightforward way from our argument for $\| \xi_q^{R; (I \setminus \bpi), J}\|$ as our estimate proceeds term-by-term in the sum over $\CP^{R}_{(I \setminus \bpi),J}$.
	We can therefore assume $\bpi = \emptyset$\footnote{We use $\bpi$ as a dummy variable in the proof below but it has no relation to the rest of the $\bpi$ appearing in the statement of this theorem.} in the statement of the theorem.

	Let $F_i \in \mfH^{\otimes I_i}$ and $G_j \in \mfH^{\otimes J_j}$. 
	We then have the estimate 
	\begin{equs}
	{}&\vvvert \xi_q^{R ; I, J}(\otimes_{i=1}^n F_i; G_1, \dots, G_{n-1}, F_n)  \vvvert \leqslant \\
		{}& \qquad  
		\leqslant \sum_{\bsigma \in \CP^R_{I, J}} |q|^{\crb(\bsigma)} \Big\vvvert \xi^{\diamond (I \wr J) \setminus \bsigma } \Big[ C_{\bpi} \left( F_1 \otimes G_1 \otimes \cdots \otimes G_{n-1} \otimes F_n \right) \Big] \Big\vvvert \; . 
	\end{equs}\martinp{Did you not write $\otimes_{i = 1}^n F_i$ not as $F_{\otimes I}$ on purpose? }
	Since $\CP^R_{I,J}$ is a subset of the contractions of $\CP_{I \wr J}$, it follows from
	 Lemma~\ref{lem:ContNum} and the definition of the $\vvvert \bigcdot \vvvert$-norm in \eqref{eq:small-q-AltNorm}, that we have the estimate 
	\begin{equ}
		\left\| \xi^{R; I, J  }_q \right\| \leqslant 
C_q^{\frac{3}{2}} D_q^{|I|+|J|} \prod_{K \lhd (I \wr J)} (|K| + 1) \sup_{\bpi \in \CP^R_{I, J}} \left\| C_{\bpi} \right\| \; .
		\end{equ} 
		
	Finally, for fixed $\bpi \in \CP^R_{I, J}$, none of the pairings within $\bpi$ occur between the factors $\mfH^{\wotimes_\alpha I_i}$ and $\mfH^{\wotimes_\alpha J_j}$, so modulo a permutation of factors, estimating $\|C_{\bpi}\|$ can be reduced to estimating $\|C_{\bpi'}\|$ for $\bpi' \in \widehat\CP_{\tilde{I} \sqcup J}$ where $\tilde{I} \in \mfI_{1}$ has the same underlying totally ordered set 
	as $I$ but only a single block, and $C_{\bpi'}$ acts on 
	\begin{equ}
	\bigwotimesp_{K \lhd \tilde{I} \sqcup J}\mfH^{\wotimes_\alpha K} =	\Big( \mfH^{\wotimes_\alpha I} \Big) \wotimes_\pi  \Bigg( \hspace*{-5pt} \bigwotimesp_{J_j \lhd J} \mfH^{\wotimes_\alpha J_j} \Bigg) \;.
	\end{equ}
	We then conclude by observing that, by Lemma~\ref{lem:ContEst}, we have $\|C_{\bpi'}\| \leqslant 1$. 
\end{proof}

We immediately have the following corollary, an instructive application of which is control 
of the object $\<2_1Rprime>$ in the proof of Theorem~\ref{thm:Phi42Ren} below.
\begin{corollary}
\label{cor:RenMultMap}
	Let  $I \in \mfI$ with $\|I\| > 0$,\ and $\bpi \in \CP_{I}$. 
	The renormalised multiplication map extends to a separately continuous multilinear map
	\begin{equs}
		\Delta_q^{R; I, \bpi} \colon \mfH^{\wotimes_\alpha (I \setminus \bpi)} \times \left( \cA_q(\mfH) \right)^{\|I\|+1} \longrightarrow \cA_q(\mfH)
	\end{equs}
	with norm
	\begin{equ}
		\left\| \Delta_q^{R; I, \bpi} \right\| \leqslant 
C_q^{\frac{3}{2}} D_q^{|I \setminus \bpi|}  \prod_{L_i \lhd (I \setminus \bpi)} (|L_i| + 1)
	\end{equ}
	In particular, $\Delta^{R; I, \bpi}_q$ extends to a continuous bilinear map 
\begin{equ}
\Delta^{R; I, \bpi}_q \colon \mfH^{\wotimes_\alpha (I \setminus \bpi)} \times \cA_q(\mfH)^{\wotimes_\pi (\|I\|+1)} \to \cA_q(\mfH)\;.
\end{equ}
\end{corollary}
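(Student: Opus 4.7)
The plan is to read off the bound from Theorem~\ref{thm:PolyWickestimate} and then sum over the Wick expansions of $A_1,\dots,A_{n-1}$, using that $\cA_q(\mfH)$ is a Banach algebra. Throughout, write $n = \|I\|$ and, for $\boldsymbol{\ell} \in \N^{n-1}$, identify $\boldsymbol{\ell}$ with the partitioned set $[\ell_1]\sqcup\cdots\sqcup[\ell_{n-1}] \in \mfI_{n-1}$ as in the definition preceding Definition~\ref{def:RenMultMap}.

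First I would record a clean splitting of the estimate from Theorem~\ref{thm:PolyWickestimate}. Since the blocks of $(I\setminus\bpi)\wr \boldsymbol{\ell}$ are exactly the blocks of $I\setminus\bpi$ interleaved with the singleton-groups $[\ell_j]$, one has
\begin{equs}
D_q^{|I\setminus\bpi|+|\boldsymbol{\ell}|}\prod_{K\lhd (I\setminus\bpi)\wr\boldsymbol{\ell}}(|K|+1)
= \underbrace{D_q^{|I\setminus\bpi|}\prod_{L_i\lhd (I\setminus\bpi)}(|L_i|+1)}_{=:\,\kappa(I,\bpi)}\cdot\prod_{j=1}^{n-1} D_q^{\ell_j}(\ell_j+1).
\end{equs}
Hence, for any $F\in \mfH^{\wotimes_\alpha(I\setminus\bpi)}$ and $G_j\in \mfH^{\wotimes_\alpha \ell_j}$, Theorem~\ref{thm:PolyWickestimate} yields
\begin{equ}
\vvvert \xi_q^{R;I,\boldsymbol{\ell};\bpi}(F;G_1,\dots,G_{n-1})\vvvert
\leqslant C_q^{3/2}\,\kappa(I,\bpi)\,\|F\|_{\CF_0}\prod_{j=1}^{n-1}D_q^{\ell_j}(\ell_j+1)\,\|G_j\|_{\CF_0}.
\end{equ}

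Next I would plug in the Wick expansion coefficients $G_j = A_j^{\ell_j}$ from $A_j = \sum_{\ell_j}\xi_q^{\diamond \ell_j}(A_j^{\ell_j})$ and sum. Because the sum factorises over $j$, and by the very definition \eqref{eq:small-q-AltNorm} of $\vvvert\,\bigcdot\,\vvvert$,
\begin{equ}
\sum_{\ell_j\in\N} (\ell_j+1)\,D_q^{\ell_j}\,\|A_j^{\ell_j}\|_{\CF_0} = C_q^{-3/2}\,\vvvert A_j\vvvert,
\end{equ}
so the triangle inequality for $\vvvert\,\bigcdot\,\vvvert$ gives
\begin{equ}
\Big\vvvert \sum_{\boldsymbol{\ell}\in\N^{n-1}}\xi_q^{R;I,\boldsymbol{\ell};\bpi}(F;A_1^{\ell_1},\dots,A_{n-1}^{\ell_{n-1}})\Big\vvvert
\leqslant C_q^{3/2}\,C_q^{-3(n-1)/2}\,\kappa(I,\bpi)\,\|F\|_{\CF_0}\prod_{j=1}^{n-1}\vvvert A_j\vvvert.
\end{equ}
In particular the series converges absolutely in $\cA_q(\mfH)$, so the right-hand side of \eqref{eq:RenProdDef} is well-defined whenever $F\in \mfH^{\wotimes_\alpha(I\setminus\bpi)}$. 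Finally, multiplying on the left by $A_0$ and on the right by $A_n$ and using Theorem~\ref{thm:newBanachAlg} (submultiplicativity of $\vvvert\,\bigcdot\,\vvvert$), together with $C_q \geqslant 1$, produces the stated bound
\begin{equ}
\vvvert \Delta_q^{R;I,\bpi}(F;A_0,\dots,A_n)\vvvert
\leqslant C_q^{3/2}\,\kappa(I,\bpi)\,\|F\|_{\CF_0}\prod_{k=0}^{n}\vvvert A_k\vvvert.
\end{equ}

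Separate continuity in each of the $n+2$ arguments is then immediate from this estimate, which also shows that $\Delta_q^{R;I,\bpi}$ is jointly bounded in the product norm on $\mfH^{\wotimes_\alpha(I\setminus\bpi)}\times\cA_q(\mfH)^{n+1}$. To obtain the final bilinear extension to $\mfH^{\wotimes_\alpha(I\setminus\bpi)}\times\cA_q(\mfH)^{\wotimes_\pi(n+1)}$, one first freezes $F$ and applies the universal property of the projective tensor product (Proposition~\ref{prop:ProjTensExt}, in its Banach form) to the $(n+1)$-linear continuous map in $(A_0,\dots,A_n)$; the resulting operator norm is bounded by the same constant, uniformly in $F$, which gives the asserted continuous bilinear extension. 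I do not expect any serious obstacle: the only moderately delicate point is the combinatorial factorisation of the weight $\prod_K(|K|+1)D_q^{|\cdots|}$ across the ``$F$-blocks'' and the ``insertion-blocks'', which is precisely what makes the sum over the Wick expansions of the $A_j$ summable via \eqref{eq:small-q-AltNorm}.
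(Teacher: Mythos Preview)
Your argument is correct and is exactly the unpacking that the paper has in mind when it writes ``We immediately have the following corollary'': factorise the weight in Theorem~\ref{thm:PolyWickestimate} across the $F$-blocks and the insertion-blocks, sum the Wick expansions of $A_1,\dots,A_{n-1}$ using the defining identity $\sum_{\ell}(\ell+1)D_q^{\ell}\|A_j^{\ell}\|_{\CF_0}=C_q^{-3/2}\vvvert A_j\vvvert$, and then absorb $A_0,A_n$ by submultiplicativity. The paper gives no details beyond this, so your proof is the intended one, written out in full.
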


\begin{remark}
	Note that control of $ \Delta_q^{R; I, \bpi}$ in the specific case $I$ is partitioned into singletons would actually be enough for our purposes, 
	since one can reduce any operator insertion to this case by inserting identity operators.
	This, however, would yield a slightly weaker estimate since $\prod_{L_{i} \lhd (I \setminus \bpi)} (|L_i| + 1)$ would be replaced by $2^{|I \setminus \bpi|}$.
\end{remark}

\section{Besov Space Estimates}\label{sec:Besovestimates}

Throughout this section, we fix a base field $\mathbb{K} \in \{\R, \C\}$ for all vector spaces unless otherwise stated, and $E$ shall denote a complete Hausdorff locally convex vector space over $\mathbb K$. That is, $E$ is equipped with a set of continuous seminorms $\mfP$ defining the topology on $E$, such that a net $(v_\lambda)_\lambda$ converges to $v$ if and only if for all $\mfp \in \mfP$,
\begin{equ}
	\lim_\lambda \mfp(v_\lambda - v) = 0 \; .
\end{equ}
For $\mfp \in \mfP$, define the Banach space $E_{\mfp}$ by setting
\begin{equ}
	\label{eq:LocSpaceDef}
	E_{\mfp} \eqdef \overline{E/\mfp^{-1}(0)}\;.
\end{equ}

We also fix a scaling $\s = (\s_i)_{i=1}^{d} \in \R_{\geqslant 1}^{d}$. For a monomial $X^k$ for $k \in \N^d$ we define the $\s$-scaled degree $\deg_\s X^k \eqdef |k|_\s$, and for a general polynomial $P = \sum_{k \in \N^d} c_k X^k$
\begin{equ}
	\deg_\s P \eqdef \max_{\substack{k \in \N^d\\ c_k \neq 0}} \deg_\s X^k \; .
\end{equ}

\subsection{Vector-Valued H\"older--Besov Spaces}

\begin{definition}[Test Functions \TitleEquation{\CB^r_{\s,x}}{B^r_{s,x}}]
	Let $r \in \N$ and $x \in \R^d$. Define
	\begin{equ}
		\CB^r_{\s,x} \eqdef \left\{ \eta \in \cD(\R^d) \, \middle| \, \supp \eta \subset B_{\s}(x,1), \; \|\eta\|_{\cC^r} \leqslant 1 \right\}
	\end{equ}
	where $\|\bigcdot\|_{\cC^r}$ denotes the usual $\cC^r$-norm.

	Furthermore, for $\alpha \in \R$, let $\CB^{r,\alpha}_{\s, x}$ be the subset of $\CB^r_{\s, x}$ such that, for all polynomials $P$ of degree $\deg_\s P \leqslant \alpha$ and all $\eta \in \CB^{r,\alpha}_{\s,x}$,
$			\int \eta(x) P(x) \d x	= 0$ .
\end{definition}

\begin{definition}[Vector-Valued H\"older Space]
	Let $\alpha \in \R$, let $r$ be the smallest natural number such that $r+\alpha > 0$, and define for $\xi \in \cD' \left( \R^d ;  E \right) $, $\mfp \in \mfP$, $\K \Subset \R^d$, the seminorm
	\begin{equ}
			\|\xi\|_{\alpha;\K,\mfp} \eqdef \sup_{x \in \K} \sup_{\eta \in \CB^{r,\alpha}_{\s,0}} \sup_{\lambda \leqslant 1} \lambda^{-\alpha} \mfp \left( \xi \left( \CS^\lambda_{\s,x} \eta \right) \right) +  \sup_{x \in \K} \sup_{\eta \in \CB^{r}_{\s,0}}  \mfp \left( \xi \left( \CS^1_{\s,x} \eta \right) \right)  \; .
	\end{equ}
	Then the space of $E$-valued $\alpha$-H\"older (generalised) functions is given by
	\begin{equ}
			\CC^\alpha_{\s} \bigl( \R^d; E\bigr) \eqdef \overline{\left\{ \xi \in  \cD \left( \R^d ; E\right)  \, \middle| \, \forall \K \Subset \R^d \, \forall \mfp \in \mfP : \|\xi\|_{\alpha; \K, \mfp}  < \infty \right\} }
	\end{equ}
	where the closure is taken with respect to the topology induced by the seminorms\\ $\left\{ \| \bigcdot\|_{\alpha;\K,\mfp} \right\}_{\K \Subset \R^d, \mfp \in \mfP}$ in $\cD'(\R^d; E)$. Furthermore, let
	\begin{equ}
		\|\xi\|_{\alpha;\mfp} \eqdef \sup_{x \in \R^d} \sup_{\eta \in \CB^{r,\alpha}_{\s, 0}} \sup_{\lambda \leqslant 1} \lambda^{-\alpha} \mfp\left( \xi \left( \CS^{\lambda}_{\s, x} \eta \right) \right) + \sup_{x \in \R^d} \sup_{\eta \in \CB^{r}_{\s, 0}}  \mfp\left( \xi \left( \CS^{1}_{\s, x} \eta \right) \right) \; .
	\end{equ}
\end{definition}
\begin{remark}
	The space $\CC^\alpha_\s (\R^d ; E)$ is locally $t$-convex over $E$, with the $\mfp$-seminorms being exactly $\left( \| \bigcdot \|_{\alpha ; \K , \mfp}\right)_{\mfK \Subset \R^d}$. Furthermore, when $E =\CA$ is a locally $m$-convex algebra, $\CC^\alpha_{\s}(\R^d ; \CA)$ is a locally $t$-convex bimodule over $\CA$.
\end{remark}

\begin{definition}[Vector-Valued H\"older Space with Singularities]
\label{def:SingHolderSp}
	With the above notation, fix $\alpha, \eta \in \R$, $\bar d \in [d]$, the hyperplane $H \eqdef \{ z\in\R^d  \, \big| \, \forall i \in [\bar d] : z_i = 0\}$, and $\bar{\s} \eqdef (\s_1, \dots, \s_{\bar d})$. We accordingly denote elements in $\R^d$ by $z = (x,y) \in \R^{\bar d} \times \R^{d-\bar d}$.

	We denote by  $\CC_{\s,H}^{\alpha,\eta}(\R^d; E)$ the subset of $\xi \in \CC^{\eta \wedge 0}_{\s}(\R^d; E)$, such that  for every $\K \Subset \R^d$ and every seminorm $\mfp \in \mfP$ there exists a constant $C_{\mfp} > 0$, s.t.\ for all $z = (x,y)\in  \R^{d} \setminus H \cap \K$, all $\psi \in \CB^{r,\alpha}_{\s ,0}$ and all $\lambda \in (0, 1]$ satisfying  $2 \lambda \leqslant |x|_{\bar\s}$,
	\begin{equ}
		\mfp \left( \xi \left( \CS^{\lambda}_{\s,z} \psi \right)  \right) \leqslant C_\mfp \left(|x|_{\bar\s} \wedge 1\right)^{\eta-\alpha} \lambda^{\alpha} \; ,
	\end{equ}
	and for all $\phi \in \CB^r_{\s, 0}$
	\begin{equ}
		\mfp \left( \xi \left( \CS_{\s,z}^{\lambda_x} \phi \right)  \right) \leqslant C_\mfp \left(|x|_{\bar\s}\wedge 1\right)^{\eta \wedge 0  } \; ,
	\end{equ}
	where $\lambda_x \eqdef \frac{|x|_{\bar\s}}{2}\wedge 1$. The smallest possible constant for the above inequalities will be denoted by $\| \xi \|_{\alpha,\eta;\K,\mfp}$.
\end{definition}

The following theorem is proven in Appendix~\ref{subsec:PrimTensProdproof}.
\begin{theorem}
\label{thm:PrimTensProd}
	For all $\alpha \in \R$ we have the isomorphism
	\begin{equ}
		\CC^\alpha_{\s}(\R^d ; E) \cong \CC^\alpha_{\s}(\R^d) \wotimes_\eps E \; ,
	\end{equ}
	where $\wotimes_{\eps}$ denotes the completion of the algebraic tensor product with respect to the injective topology.
\end{theorem}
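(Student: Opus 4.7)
The plan is to build the claimed isomorphism out of an isometric embedding of the algebraic tensor product into the vector-valued H\"older space, and then verify density. First, I would define the canonical linear map
\begin{equ}
\iota\colon \CC^\alpha_{\s}(\R^d) \otimes E \longrightarrow \CC^\alpha_{\s}(\R^d; E)\;,
\qquad \iota(f \otimes v)(\phi) \eqdef \langle f,\phi\rangle\, v\;,
\end{equ}
extended by linearity. A finite sum of the form $\sum_i f_i \otimes v_i$ lies in $\CC^\alpha_{\s}(\R^d;E)$ because each $f_i \in \CC^\alpha_{\s}(\R^d)$ is itself the limit (in the scalar H\"older topology) of smooth functions with finite $\|\bigcdot\|_{\alpha;\K}$-seminorms, and tensoring with a fixed $v_i \in E$ preserves this limiting procedure.

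The core identity is a Hahn--Banach duality: for every continuous seminorm $\mfp \in \mfP$, one has $\mfp(v) = \sup_{v^* \in B_\mfp^\circ} |v^*(v)|$, where $B_\mfp^\circ \subset E^*$ denotes the polar of $\{v \in E \,:\, \mfp(v) \leqslant 1\}$. Combined with the definition of the seminorm $\|\bigcdot\|_{\alpha;\K,\mfp}$, this yields, for any $\xi \in \CC^\alpha_{\s}(\R^d; E)$,
\begin{equ}
\|\xi\|_{\alpha;\K,\mfp} = \sup_{v^* \in B_\mfp^\circ} \|v^* \circ \xi\|_{\alpha;\K}\;,
\end{equ}
since $v^*$ commutes with $\xi(\CS^\lambda_{\s,x}\eta)$ and taking $\sup_{v^*}$ commutes with $\sup_{x,\eta,\lambda}$. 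On the right-hand side, $B_\mfp^\circ$ is an equicontinuous subset of $E^*$, while the unit ball of the dual of $\CC^\alpha_{\s}(\R^d)_{|\K}$ (in the relevant seminorm) is equicontinuous in $\CC^\alpha_{\s}(\R^d)^*$. The standard definition of the injective cross-seminorm on $\CC^\alpha_{\s}(\R^d) \otimes E$ as a supremum over products of equicontinuous sets of functionals then identifies $\|\bigcdot\|_{\alpha;\K,\mfp} \circ \iota$ precisely with the $\eps$-seminorm $\|\bigcdot\|_{\alpha;\K} \otimes_\eps \mfp$ on the algebraic tensor product. Hence $\iota$ is an isometric embedding for each pair $(\K,\mfp)$ of seminorm-defining data.

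The main work, and the expected obstacle, is density of $\iota(\CC^\alpha_{\s}(\R^d)\otimes E)$ in $\CC^\alpha_{\s}(\R^d;E)$. By definition, smooth $E$-valued functions with finite $\|\bigcdot\|_{\alpha;\K,\mfp}$-seminorms are dense, so it suffices to approximate any such function $f \in \cD(\R^d;E)$ by finite sums $\sum_i f_i \otimes v_i$. The approach is to fix a Schauder-type expansion on scalar $\CC^\alpha_\s$ (for instance a wavelet or compactly supported smooth-basis decomposition adapted to the scaling $\s$), write $f(x) = \sum_k \phi_k(x)\, v_k$ with coefficients $v_k \in E$ obtained by testing $f$ against the dual basis, and truncate. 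One then estimates the H\"older seminorm of the tail using the good localisation and decay of the basis together with the duality identity above applied to $v^* \circ f$, so that convergence reduces to convergence of the scalar expansion in $\CC^\alpha_\s(\R^d)$, uniformly over $v^* \in B_\mfp^\circ$ (which works because $v^* \circ f$ is bounded in $\cC^r$ uniformly in $v^* \in B_\mfp^\circ$, by continuity of $\mfp$ on the smooth values of $f$).

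Finally, combining the isometric embedding with density and completeness of $\CC^\alpha_{\s}(\R^d;E)$ (which holds because $E$ is complete and the defining seminorms are compatible across $\K,\mfp$), the extension of $\iota$ to the completion $\CC^\alpha_{\s}(\R^d)\wotimes_\eps E$ yields the desired topological isomorphism. The hard part is the uniform-in-$v^*$ scalar approximation needed for density; everything else is a formal but careful unwinding of the definitions of the injective seminorm and of $\CC^\alpha_{\s}(\R^d;E)$.
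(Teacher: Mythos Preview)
Your approach is correct but follows a genuinely different route from the paper's. The paper adapts \cite[Theorem~44.1]{Trev67} and works in the opposite direction: rather than embedding the algebraic tensor product into $\CC^\alpha_{\s}(\R^d;E)$, it embeds $\CC^\alpha_{\s}(\R^d;E)$ into $\CL_\eps(E'_\tau; \CC^\alpha_{\s}(\R^d)) \cong \cB_\eps(E'_\sigma, (\CC^\alpha_\s)'_\sigma)$. The key step is showing that for each $\phi \in \CC^\alpha_{\s}(\R^d;E)$ the map $e' \mapsto \langle e', \phi(\,\bigcdot\,)\rangle$ is continuous for the \emph{Mackey} topology on $E'$, which the paper establishes by proving (via a Banach--Alaoglu argument) that the weak closure of the set $\{\lambda^{-\alpha}\phi(\CS^\lambda_{\s,x}\eta) : \lambda\in(0,1],\, x\in\K,\, \eta\in\CB^{r,\alpha}_{\s,0}\}$ is weakly compact in $E$. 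The matching of topologies then proceeds through an equivalence chain that contains your Hahn--Banach duality identity as one of its steps.

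Your route avoids the Mackey-topology machinery entirely, computing the injective seminorm directly; this is arguably cleaner for the isometry part. Conversely, the paper largely sidesteps your density step: since $\CC^\alpha_{\s}(\R^d;E)$ is \emph{defined} as the closure of smooth $E$-valued functions, and $\cD(\R^d)\otimes E$ is dense in $\cD(\R^d;E)$ for the (stronger) $\cD$-topology, density of the algebraic tensor product follows without needing a Schauder or wavelet expansion. Your proposed expansion works, but is heavier than necessary here.
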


\begin{remark}
	Analogously, one can also define H\"older spaces on open or closed subsets $U \subset \R^n$, as well as H\"older spaces with local blow-up properties, cf.\ \cite{CHP23}. For $U$ open, the proof in Appendix~\ref{subsec:PrimTensProdproof} holds verbatim, whereas for the $U$ closed it holds since it is constructed as a projective limit over $V$ open, s.t.\ $U \subset V$, and projective limits commute with the injective tensor product, cf.\ \cite[Corollary~16.3.2]{Jar81}. Analogously, for the blow-up spaces which we define below.
\end{remark}

Theorem~\ref{thm:PrimTensProd} allows us to make the following important meta-statement.
\begin{metatheorem}
	\label{MetaThm}
	Any statement about linear operations can be lifted from $\CC^\alpha_{\s}(\R^d)$ to $\CC^\alpha_{\s}(\R^d ; E)$ for any complete, Hausdorff, locally convex vector space $E$.
\end{metatheorem}

An example of this is the following theorem.
\begin{theorem}
	\label{thm:ExtLinOp}
	Any continuous linear map $A \colon \CC^\alpha_{\s}(\R^d) \to \CC^\beta_{\s}(\R^d)$ for $\alpha,\beta \in \R$, can be extended to a $t$-continuous linear map $A \wotimes_\eps \bone_{E} \colon \CC^\alpha_{\s}(\R^d; E) \to \CC^\beta_{\s}(\R^d ; E)$.
\end{theorem}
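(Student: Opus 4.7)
The plan is to realise $A \wotimes_\eps \bone_E$ as the canonical functorial extension of $A$ under the injective tensor product, transported through the identification of Theorem~\ref{thm:PrimTensProd}. Concretely, I would first invoke Theorem~\ref{thm:PrimTensProd} to replace the two spaces involved by $\CC^\alpha_\s(\R^d) \wotimes_\eps E$ and $\CC^\beta_\s(\R^d) \wotimes_\eps E$, so that the problem reduces to extending the algebraic map $A \otimes \id_E$ from $\CC^\alpha_\s(\R^d) \otimes E$ to the $\eps$-completion and then verifying the quantitative $t$-continuity condition.

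The existence of a continuous extension is the standard functoriality of $\wotimes_\eps$: given continuous linear maps $T_i \colon F_i \to G_i$ between Hausdorff complete LCTVSs, $T_1 \otimes T_2$ extends uniquely to a continuous linear map $F_1 \wotimes_\eps F_2 \to G_1 \wotimes_\eps G_2$ (see e.g.\ \cite{Jar81,Trev67}). Applying this with $T_1 = A$ and $T_2 = \id_E$ provides the map $A \wotimes_\eps \bone_E$. Uniqueness by density of the algebraic tensor product ensures it is well defined.

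For $t$-continuity, recall that the $\mfp$-seminorms on $\CC^\gamma_\s(\R^d; E)$ are the family $(\|\bigcdot\|_{\gamma; \K, \mfp})_{\K \Subset \R^d}$, and under the isomorphism of Theorem~\ref{thm:PrimTensProd} these correspond to the injective crossnorms $\|\bigcdot\|_{\gamma; \K} \otimes_\eps \mfp$, whose definition only couples the $\K$-seminorm on the scalar factor with the single $E$-seminorm $\mfp$. Since $A$ is continuous on $\CC^\alpha_\s(\R^d)$, for every compact $\K \Subset \R^d$ there exist a compact $\K' \Subset \R^d$ and a constant $C \geqslant 0$ such that $\|A f\|_{\beta; \K} \leqslant C \|f\|_{\alpha; \K'}$ for all $f \in \CC^\alpha_\s(\R^d)$. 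By the very definition of the $\eps$-crossnorm as a supremum over polars, this estimate lifts term-by-term to
\begin{equ}
\bigl( \|\bigcdot\|_{\beta; \K} \otimes_\eps \mfp \bigr)\bigl((A \otimes \id_E)(u)\bigr) \leqslant C \bigl( \|\bigcdot\|_{\alpha; \K'} \otimes_\eps \mfp \bigr)(u)
\end{equ}
for every $u \in \CC^\alpha_\s(\R^d) \otimes E$, and by density passes to the completion. Transported back through Theorem~\ref{thm:PrimTensProd}, this is precisely the $t$-continuity statement with the \emph{same} $E$-seminorm $\mfp$ on both sides, as required.

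The only genuine subtlety is ensuring that the identification of seminorms provided by Theorem~\ref{thm:PrimTensProd} is compatible with the $\mfp$-partition labelling: the scalar factor is responsible for the compact $\K$ while $E$ contributes only through $\mfp$. Since the isomorphism in Theorem~\ref{thm:PrimTensProd} is constructed so that the defining seminorms of $\CC^\alpha_\s(\R^d; E)$ agree with the $\eps$-crossnorms (up to passage to equivalent cofinal families), no mixing between different $E$-seminorms occurs, and the partition of seminorms by the $\mfP$-index is preserved by $A \wotimes_\eps \bone_E$. This is the main point at which one must be careful, but it is already built into the statement of Theorem~\ref{thm:PrimTensProd}.
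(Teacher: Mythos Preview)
Your proposal is correct and follows essentially the same approach as the paper, which simply cites \cite[Proposition~43.6 and Definition~43.6]{Trev67} for the functoriality of the injective tensor product. You have spelled out in more detail why the extension is $t$-continuous (namely that the $\eps$-crossnorm estimate preserves the $E$-seminorm $\mfp$), which the paper leaves implicit in its reference.
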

\begin{proof}
	This follows directly from Proposition~43.6 and Definition~43.6 in \cite{Trev67}.
\end{proof}

Another direct consequence of the Metatheorem~\ref{MetaThm} is that we can always ``change coefficients''.
\begin{theorem}
\label{thm:ChngCoef}
	Let $E,F$ be two complete, Hausdorff, LCTVSs. Any continuous linear map $A \colon E \to F$ extends to a continuous linear map $\bone_{\CC^\alpha_{\s} (\R^d)}  \wotimes_\eps A \colon \CC^\alpha_{\s}(\R^d; E) \to \CC^\alpha_{\s}(\R^d; F)$.
\end{theorem}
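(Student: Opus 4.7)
The plan is to reduce the statement to a functoriality property of the injective tensor product via Theorem~\ref{thm:PrimTensProd}. By that theorem, we have topological isomorphisms
\begin{equ}
\CC^\alpha_{\s}(\R^d; E) \cong \CC^\alpha_{\s}(\R^d) \wotimes_\eps E, \qquad \CC^\alpha_{\s}(\R^d; F) \cong \CC^\alpha_{\s}(\R^d) \wotimes_\eps F,
\end{equ}
so it suffices to show that the algebraic map $\bone \otimes A$ on the uncompleted tensor products extends continuously to a map $\CC^\alpha_{\s}(\R^d) \wotimes_\eps E \to \CC^\alpha_{\s}(\R^d) \wotimes_\eps F$.

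The main step is then the standard functoriality of $\wotimes_\eps$: given continuous linear maps $A_i \colon E_i \to F_i$ between complete Hausdorff LCTVSs ($i=1,2$), their algebraic tensor product $A_1 \otimes A_2$ extends to a continuous linear map $A_1 \wotimes_\eps A_2 \colon E_1 \wotimes_\eps E_2 \to F_1 \wotimes_\eps F_2$; see for instance \cite[Proposition~43.6]{Trev67} (which was already invoked in the proof of Theorem~\ref{thm:ExtLinOp}). Applying this to $A_1 = \bone_{\CC^\alpha_{\s}(\R^d)}$ and $A_2 = A$ yields the desired extension.

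Finally, I would record that the resulting map is $t$-continuous in the sense of Section~\ref{sec:locally_t_convex}: under the identifications above, the $\mfp$-seminorms on $\CC^\alpha_{\s}(\R^d; E)$ correspond to the injective crossnorms $\|\bigcdot\|_{\alpha; \K} \otimes_\eps \mfp$ indexed by compacta $\K \Subset \R^d$ and $\mfp \in \mfP_E$, and similarly on the target. For $\mfq \in \mfP_F$, continuity of $A$ gives some $\mfp \in \mfP_E$ and $C > 0$ with $\mfq(A v) \leqslant C \mfp(v)$ for all $v \in E$, and this translates through the definition of the injective crossnorm into the corresponding bound $\|(\bone \wotimes_\eps A) \xi\|_{\alpha; \K, \mfq} \leqslant C \|\xi\|_{\alpha; \K, \mfp}$, establishing $t$-continuity.

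There is no substantive obstacle here since all the work has been absorbed into Theorem~\ref{thm:PrimTensProd} and the classical functoriality of $\wotimes_\eps$; the only thing to be careful about is that we use the same partition of seminorms on both sides that makes the identification of Theorem~\ref{thm:PrimTensProd} compatible with the $t$-convex structure over $E$ (resp.\ $F$).
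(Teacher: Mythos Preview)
Your proposal is correct and takes essentially the same approach as the paper: the paper states this theorem as an immediate consequence of the Metatheorem~\ref{MetaThm}, which in turn rests on the tensor-product identification of Theorem~\ref{thm:PrimTensProd} together with the functoriality of $\wotimes_\eps$ from \cite[Proposition~43.6]{Trev67}, exactly as you spell out. One small caveat: your final paragraph on $t$-continuity is slightly off, since $\CC^\alpha_\s(\R^d;E)$ and $\CC^\alpha_\s(\R^d;F)$ are $t$-convex over different index sets $\mfP_E$ and $\mfP_F$, so the paper's notion of $t$-continuity does not literally apply here; the estimate you write down is correct but is just ordinary continuity, which is all the theorem claims.
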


\begin{remark}\label{rem:bilinear_difficulty}
Theorem~\ref{thm:ExtLinOp} allows us to extend scalar Schauder estimates to vector-valued H\"older--Besov spaces.
Extending scalar Young product estimates, or other bilinear\slash multilinear estimates, to ($m$-convex) algebra-valued H\"older--Besov spaces takes more care.
The main issue that arises is that the seminorm that lies at the heart of the definition of the H\"older--Besov spaces,
\begin{equ}
	\lambda^{-\alpha} \mfp \left(\xi \left( \CS_{\s,x}^\lambda \eta \right) \right) \; ,
\end{equ}
does not admit any submultiplicative equivalents for $\alpha < 0$, whereas for $\alpha > 0$, $ \max_{|k| \leqslant \lfloor \alpha \rfloor} \mfp( \partial^k \xi(x))$ and
\begin{equ}
	\sup_{\substack{x,y \in \R^d \\ x\neq y}}\frac{ \mfp \left( \partial^{\ell} \xi(x) - \partial^{\ell} \xi(y) \right)  }{|x- y|_{\s}^{\{\alpha\}}}
\end{equ}
for $|\ell| = \lfloor \alpha \rfloor$ and $\{\alpha\} = \alpha - \lfloor \alpha \rfloor$, are submultiplicative up to a constant.
See Theorem~\ref{thm:YoungMult} below.
However, the issue above is elegantly solved by regularity structures,
which allow to describe distributions using modelled distributions. Since these behave like 
regular functions, they do admit submultiplicative seminorms.
We give the statement of this Young product estimate below,
but defer its proof using regularity structures to Section~\ref{subsec:multiplication}.
\end{remark}

\begin{theorem}[Young Multiplication]
	\label{thm:YoungMult}
	Let $\alpha,\beta \in \R$ and $\CA$ be an $m$-convex algebra.
	Then there exists a continuous bilinear map $\CC^\alpha_\s(\R^d; \CA) \times \CC^\beta_\s(\R^d ; \CA) \to \CC^{\alpha \wedge \beta}_\s(\R^d ; \CA) $ extending pointwise multiplication if and only if $\alpha + \beta > 0$.
\end{theorem}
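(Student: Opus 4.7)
The plan is to prove the ``if'' direction ($\alpha + \beta > 0$) through the noncommutative regularity structures framework outlined in Metatheorem~\ref{metatheorem1} (and detailed later in Section~\ref{sec:RS}), and the ``only if'' direction by an explicit counterexample. First I would handle the easy subcase $\alpha, \beta \geqslant 0$ directly: the $m$-convexity of $\CA$ gives, for each $\mfp \in \mfP$, the sub-multiplicative bound $\mfp(f(x) g(x) - f(y) g(y)) \leqslant \mfp(f(x))\mfp(g(x)-g(y)) + \mfp(f(x) - f(y))\mfp(g(y))$, and together with analogous estimates for higher derivatives when $\alpha, \beta > 1$, this immediately yields a continuous bilinear extension. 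By symmetry it remains to treat the genuine case $\alpha < 0 < \beta$ with $\alpha + \beta > 0$.

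For that nontrivial case, I would construct a minimal $\CA$-regularity structure whose structure space is the $\CA$-bimodule spanned by a distinguished abstract symbol $\Xi$ of homogeneity $\alpha$ together with the polynomial symbols $\{X^k : |k|_\s < \beta\}$ (whose $\CA$-coefficients live in $\CA$ acting on both sides). Given $f \in \CC^\alpha_\s(\R^d;\CA)$ I would equip this structure with the model under which $\Xi$ reconstructs to $f$ (the verification that $f$ satisfies the appropriate model bound is immediate from the definition of $\CC^\alpha_\s$); given $g \in \CC^\beta_\s(\R^d;\CA)$ I would lift it to a modelled distribution $G(x) = \sum_{|k|_\s < \beta} \tfrac{1}{k!}\partial^k g(x)\, X^k$ of regularity $\beta$, by standard arguments carried through with $\CA$-valued coefficients. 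The pointwise abstract product $x \mapsto G(x)\cdot \Xi$ is then a well-defined modelled distribution of regularity $\alpha \wedge \beta$ in the $\CA$-bimodule sense (using the $t$-convex estimate of Definition~\ref{def:tmconvex} to control products of $\mfp$-sized coefficients). Applying the $\CA$-valued Reconstruction theorem (Metatheorem~\ref{metatheorem1}) and checking that the reconstruction is consistent with pointwise multiplication on smooth inputs yields the desired bilinear map $(f,g) \mapsto fg \in \CC^{\alpha \wedge \beta}_\s(\R^d;\CA)$, with continuity inherited from the three steps (lift, abstract product, reconstruction).

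For the ``only if'' direction, I would adapt a classical scalar counterexample: for $\alpha + \beta \leqslant 0$, lacunary Fourier series yield $\varphi \in \CC^\alpha_\s(\R^d)$ and $\psi \in \CC^\beta_\s(\R^d)$ whose product diverges as a distribution. Fixing any nonzero $a \in \CA$ and tensoring $f = \varphi \cdot a$, $g = \psi \cdot a$ then exhibits a pair in $\CC^\alpha_\s(\R^d;\CA) \times \CC^\beta_\s(\R^d;\CA)$ for which no continuous extension of pointwise multiplication can be defined, since the $\mathbb{K}$-component diverges. The main obstacle I anticipate is technical rather than conceptual: one must ensure that the $\CA$-valued Reconstruction theorem (and the abstract multiplication on modelled distributions) have been stated in a form whose seminorm estimates are genuinely compatible with the $t$-convex bimodule framework, rather than requiring a Banach algebra target. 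This compatibility is the content of the analytic portion of Section~\ref{sec:RS}, and once it is available the Young product estimate follows along essentially the same lines as in the scalar theory.
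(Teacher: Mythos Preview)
Your proposal is correct and follows essentially the same route as the paper: build an $\CA$-regularity structure containing an abstract symbol $\Xi$ for the rough factor, lift the positive-regularity factor as a Taylor polynomial modelled distribution, multiply abstractly, and reconstruct. The paper's proof (in Section~\ref{subsec:multiplication}) makes one point more explicit than you do, namely that the sector spanned by $\Xi X^k$ must be $\CA \wotimes_\pi \CA$ rather than a single copy of $\CA$, since $\Xi$ does not commute with elements of $\CA$ and so carries an algebra decoration on each side; this is the only genuinely noncommutative ingredient, and it is worth spelling out. Your treatment of the ``only if'' direction via a scalar counterexample tensored with a fixed $a \in \CA$ is fine and is in fact more detailed than the paper, which simply defers to \cite[Proposition~4.14]{Hai14}.
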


\begin{remark}
	Since we have chosen a definition for the H\"older--Besov spaces such that $\CC^0_{\s}(\R^d;\CA) \neq \cC(\R^d;\CA)$, we do not need the extra assumption that $\alpha \notin \N$ in the above statement.
\end{remark}

\subsection{Solving the \TitleEquation{\Phi^4_2}{d}-Equation for \TitleEquation{q}{q}-Mezdons}
\label{sec:Phi42}

As an instructive example, we now show how to apply the Da Prato--Debussche argument to prove local well-posedness for the mezdonic $\Phi^4_2$ dynamic.
Given $q \in (-1,1)$, the $q$-mezdonic $\Phi^4_2$-equation on $\R_+ \times \T^2$ is given by
\begin{equs}
	\label{eq:Phi42}
	\partial_t \phi = \Delta \phi - m^2 \phi - \lambda \phi^3 + \xi_q \; ,
\end{equs}
where $m^2 \geqslant 0$ and $\xi_q$ is the $2+1$-dimensional $q$-white noise, i.e.\ it is the mezdonic field for the algebra $\CA_q(L^2(\spacetime))$ with  $\spacetime \eqdef \R \times \T^2$. In particular, for all $(t,x), (s,y) \in \spacetime$
\begin{equ}
	\label{eq:Cov2D}
	\omega_q \left( \xi_q (t,x) \xi_q(s,y) \right) = \delta(t-s) \delta(x-y) \; .
\end{equ}
Using \eqref{eq:Cov2D} and a standard scaling argument, one can show that
\begin{equ}
	\xi_q \in \CC^{-2}_{\s}\left(\R^3; \cA_q\left(L^2(\spacetime)\right)\right) \subset \CC^{-2}_{\s}\left(\R^3; \CA_q\left(L^2(\spacetime)\right)\right)
\end{equ} 
where we have chosen the parabolic scaling $\s = (2,1,1)$. 
Using the Schauder estimate for 
$\CI \eqdef (\partial_t - \Delta+m^2)^{-1}$, it follows that a potential solution $\phi$ can have at most the regularity of $\<1> \eqdef \CI(\xi_q) \in \CC_\s^0$ which is not sufficient to define the product $\phi^3$. However, if we expand $\phi = \upsilon + \<1>$, then \eqref{eq:Phi42} takes the form 
\begin{equ}
	(\partial_t - \Delta+m^2) (\upsilon + \<1>) = -\lambda \left( \upsilon^3 + \upsilon^2 \<1> + \upsilon\<1>\upsilon +  \<1>\upsilon^2+\upsilon \<1>^2 + \<1> \upsilon \<1> + \<1>^2 \upsilon + \<1>^3 \right) + \xi_q \; ,
\end{equ}
i.e.\
\begin{equ}[eq:RenEqDPD]
	(\partial_t - \Delta+m^2) \upsilon  = -\lambda \left( \upsilon^3 + \upsilon^2 \<1> + \upsilon\<1>\upsilon +  \<1>\upsilon^2+\upsilon \<1>^2 + \<1> \upsilon \<1> + \<1>^2 \upsilon + \<1>^3 \right) \; .
\end{equ}
If we were able to define $\cA_q \ni A \mapsto \<1> A \<1> $, $\<1>^2$, and $\<1>^3$ as (reasonable) 
versions of the usual product with regularities $0$, then $\upsilon$ would have at worst regularity $\CC^2_{\s}$, leaving the remaining products well-defined.

The usual way this is done for $\<1>^2$ and $\<1>^3$ is using Wick ordering introduced in Section~\ref{sec:WickRenorm}. Let $K$ be the integration kernel of $\CI$ and $K_z(y) \eqdef K(y-z)$. We define the two $0$-Fock-space-valued distributions
\begin{equs}
	\mfZ[\<2>](f) &\eqdef \int\limits_{\spacetime} f(z) K_z \otimes K_z \,  \d z \in \cD'\Bigl( \spacetime ;  L^2(\spacetime)^{\wotimes_\alpha 2} \Bigr)\;, \\
	\mfZ[\<3>](f) &\eqdef \int\limits_{\spacetime} f(z) K_z \otimes K_z \otimes K_z \,  \d z \in \cD'\Bigl( \spacetime ;  L^2(\spacetime)^{\wotimes_\alpha 3} \Bigr)\;.
\end{equs}
By the scaling properties of $K$, it is straightforward to check, e.g.\ \cite[Proposition~A.11]{CHP23}, that 
\begin{equs}
	\mfZ[\<2>] \in \CC_\s^0 \Bigl( \spacetime ;  L^2(\spacetime)^{\wotimes_\alpha 2} \Bigr) \; \text{ and } \; \mfZ[\<3>] \in \CC_\s^0 \Bigl( \spacetime ;  L^2(\spacetime)^{\wotimes_\alpha 3} \Bigr) \; ,
\end{equs}
and thus we can define
\begin{equs}
	\<2> &\eqdef \xi_q^{\diamond 2} \circ \mfZ[\<2>] \in \CC^0_{\mfs} ( \spacetime ; \cA_q(\spacetime)) \; , \\
	\<3> &\eqdef \xi_q^{\diamond 3} \circ \mfZ[\<3>] \in \CC^0_{\mfs} ( \spacetime ; \cA_q(\spacetime)) \; .
\end{equs}
These are the renormalisations of the products $\<1>^2$ and $\<1>^3$. In particular, if we introduce a suitable smooth mollification $\<1>_\eps$ of $\<1>$, then one can show that, in $\CC^0_\s$,
\begin{equs}
	\<2> = \lim_{\eps \downarrow 0} \left( \<1>^2_\eps - \omega_q \left( \<1>^2_\eps \right) \right) \; \text{ and } \; \<3> = \lim_{\eps \downarrow 0} \left( \<1>^3_\eps - (2+q) \omega_q \left( \<1>^2_\eps \right) \<1>_\eps \right) \; . 
\end{equs} 
\begin{remark}
Note that for fixed $\eps > 0$, $\lim_{\eps \downarrow 0} \omega_q(\<1>^2_\eps)$ is independent of $q$ for $q \in (-1,1]$, in particular it matches its value in the bosonic case. 
\end{remark}

This takes care of defining the terms $\<1>^3$, $\<1>^2 \upsilon$, and $\upsilon\<1>^2$ in \eqref{eq:RenEqDPD}, by replacing them with $\<3>$, $\<2>\upsilon$, and $\upsilon\<2>$ respectively. 
However, for the last term $\<1>\upsilon\<1>$, we need the renormalised product map defined in \eqref{eq:RenProdDef},
which yields a map $\CM^{(1,1)}_q \left( {\mfZ[\<2>]} ; \, \bigcdot \, \right) \colon \CC_\s^2(\spacetime ; \cA_q) \to \CC_\s^0(\spacetime ; \cA_q)$ by Lemma~\ref{lemma:2SYoung} below.

To be more precise, given $\upsilon \in \cD(\spacetime) \otimes \cA_q(L^2(\spacetime))$ of the form
$
	\upsilon = \sum_{i = 1}^n f_i \otimes a_i
$,
with $f_i \in \cD(\spacetime)$ and $a_i \in \cA_q$, let
\begin{equs}
	\<2_1Rprime>(x) \eqdef \sum_{i = 1}^n f_i(x) \CM^{(1,1)}_q  \left( \mfZ[\<2>](x) ;  a_i \right) \; .
\end{equs}
Since the maps $ L^2(\spacetime)^{\wotimes_\alpha 2}  \ni F \mapsto \CM^{(1,1)}_{q}(F; a_i)$ for $i \in [n]$ are continuous, it follows that $\CM^{(1,1)}_q \left( \mfZ[\<2>](x) ; a_i \right) \in \CC^0_\s(\spacetime ; \cA_q)$.
Thus\footnote{Any commutative $C^{\star}$-algebra $\mathcal{A}$ can be realised as an $C^{\star}$ algebra of complex valued functions on a compact Hausdorff space, the latter algebra is often called the Gelfand representation of $\mathcal{A}$. In our case our algebra is not a Banach algebra, but this doesn't matter since we can construct the representation explicitly.}, the pointwise in $x$ multiplication with $f_i$ above can be interpreted using the classical Young product estimate for scalar-valued functions and $\upsilon \mapsto \<2_1Rprime>$ is well-defined and in $\CC^0_\s(\spacetime; \cA_q)$.
In particular, we have the following lemma.
\begin{lemma}
\label{lemma:2SYoung}
	For every $\alpha > 0$ the map $\upsilon \mapsto \<2_1Rprime>$ extends to a continuous map
	\begin{equ}
		\CC_\s^\alpha \left(\spacetime ; \cA_q(L^2(\spacetime)) \right) \longrightarrow \CC_\s^0 \left(\spacetime ; \cA_q(L^2(\spacetime)) \right)\;.
	\end{equ}
\end{lemma}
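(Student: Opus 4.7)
The strategy is to factor $\upsilon \mapsto \<2_1Rprime>$ through a continuous linear map between Banach-valued H\"older spaces, reducing matters to a Banach-valued Young-type product estimate.

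By Corollary~\ref{cor:RenMultMap}, restricting $\Delta^{R;(1,1),\emptyset}_q$ to inputs of the form $(F;\bone,a,\bone)$ yields a jointly continuous bilinear map
\begin{equ}
\CM^{(1,1)}_q \colon L^2(\spacetime)^{\wotimes_\alpha 2} \times \cA_q(L^2(\spacetime)) \longrightarrow \cA_q(L^2(\spacetime))\;,
\end{equ}
which by Proposition~\ref{prop:ProjTensExt} linearises to a bounded operator $B \colon L^2(\spacetime)^{\wotimes_\alpha 2} \wotimes_\pi \cA_q \to \cA_q$. For smooth $\upsilon = \sum_i f_i \otimes a_i$, the defining formula reads $\<2_1Rprime> = B \circ (\mfZ[\<2>] \otimes \upsilon)$, where $\mfZ[\<2>] \otimes \upsilon$ is the pointwise tensor product of the $L^2(\spacetime)^{\wotimes_\alpha 2}$-valued H\"older distribution $\mfZ[\<2>]$ with the $\cA_q$-valued function $\upsilon$, taking values in $L^2(\spacetime)^{\wotimes_\alpha 2} \wotimes_\pi \cA_q$. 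By Theorem~\ref{thm:ExtLinOp}, $B$ lifts to a $t$-continuous map between the corresponding H\"older spaces, so it suffices to establish a Banach-valued Young product estimate
\begin{equ}
\CC^0_\s(\spacetime; L^2(\spacetime)^{\wotimes_\alpha 2}) \times \CC^\alpha_\s(\spacetime; \cA_q) \longrightarrow \CC^0_\s\bigl(\spacetime; L^2(\spacetime)^{\wotimes_\alpha 2} \wotimes_\pi \cA_q\bigr)\;.
\end{equ}

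This is proved by a direct adaptation of the scalar Young product argument. Given $\eta \in \CB^r_{\s,0}$, $x \in \spacetime$, and $\lambda \in (0,1]$, one splits
\begin{equ}
(\mfZ[\<2>] \otimes \upsilon)(\CS^\lambda_{\s,x}\eta) = \mfZ[\<2>](\CS^\lambda_{\s,x}\eta) \otimes \upsilon(x) + \mfZ[\<2>]\bigl((\upsilon(\bigcdot) - \upsilon(x)) \CS^\lambda_{\s,x}\eta\bigr)\;,
\end{equ}
where the second summand denotes the pairing of the $L^2(\spacetime)^{\wotimes_\alpha 2}$-valued distribution $\mfZ[\<2>]$ against the $\cA_q$-valued test function $(\upsilon(\bigcdot) - \upsilon(x))\CS^\lambda_{\s,x}\eta$. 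The first summand is bounded in projective norm by the crossnorm $\|\mfZ[\<2>](\CS^\lambda_{\s,x}\eta)\|_{L^2(\spacetime)^{\wotimes_\alpha 2}} \|\upsilon(x)\|_{\cA_q}$, which is uniformly controlled thanks to the $\CC^0_\s$-bound on $\mfZ[\<2>]$ and the fact that $\upsilon \in \CC^\alpha_\s$ with $\alpha > 0$ is continuous. For the second summand, the bound $\|\upsilon(y) - \upsilon(x)\|_{\cA_q} \leqslant C \lambda^\alpha \|\upsilon\|_{\CC^\alpha_\s}$ on the $\s$-ball of radius $\lambda$ around $x$ exhibits $(\upsilon(\bigcdot) - \upsilon(x))\CS^\lambda_{\s,x}\eta$, after rescaling by $\lambda^{-\alpha}$, as an $\cA_q$-valued analogue of an element of $\CB^r_{\s,x}$ with $\cC^r$-norm of order $\|\upsilon\|_{\CC^\alpha_\s}$; pairing against $\mfZ[\<2>] \in \CC^0_\s$ then yields an estimate of order $\lambda^\alpha$, which in particular is uniform in $\lambda$ and gives the desired $\CC^0_\s$-bound.

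The main technical obstacle is bookkeeping the projective tensor product: one must verify that the pairing in the second summand indeed produces an element of $L^2(\spacetime)^{\wotimes_\alpha 2} \wotimes_\pi \cA_q$ with the claimed norm, as opposed to some weaker completion. This is handled by working first at the level of the algebraic tensor product $L^2(\spacetime)^{\otimes 2} \otimes \cA_q$, where all estimates above are expressed in the crossnorm $\|\bigcdot\|_{L^2(\spacetime)^{\wotimes_\alpha 2}} \|\bigcdot\|_{\cA_q}$, which by definition dominates the projective tensor norm so that the estimates pass to the completion. The extension to general $\upsilon \in \CC^\alpha_\s(\spacetime; \cA_q)$ then follows from the density of $\cD(\spacetime) \otimes \cA_q$ (a consequence of Theorem~\ref{thm:PrimTensProd} together with the density of $\cD(\spacetime)$ in the scalar $\CC^\alpha_\s$) combined with the uniform bilinear estimates established above, whereupon applying $B$ via Theorem~\ref{thm:ExtLinOp} completes the proof.
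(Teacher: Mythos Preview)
Your factorisation through the bounded operator $B\colon L^2(\spacetime)^{\wotimes_\alpha 2}\wotimes_\pi\cA_q\to\cA_q$ is a sound reduction (the correct citation for postcomposition by $B$ is Theorem~\ref{thm:ChngCoef}, not Theorem~\ref{thm:ExtLinOp}). The genuine gap is in the remainder estimate. Writing $(\upsilon(\bigcdot)-\upsilon(x))\CS^\lambda_{\s,x}\eta=\CS^\lambda_{\s,x}\tilde\eta$ with $\tilde\eta(z)=(\upsilon(x+\lambda^{\s}z)-\upsilon(x))\eta(z)$, you assert that $\lambda^{-\alpha}\tilde\eta$ behaves like an element of $\CB^r_{\s,0}$ so that the $\CC^0_\s$-bound on $\mfZ[\<2>]$ applies. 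But the $\CC^0_\s$-seminorm controls $\mfZ[\<2>](\CS^\lambda_{\s,x}\psi)$ uniformly in $\lambda$ only over $\psi$ with $\|\psi\|_{\cC^r}\leqslant 1$ (here $r=1$), not merely $\|\psi\|_{\cC^0}\leqslant 1$. A $z$-derivative of $\tilde\eta$ produces the term $\lambda^{\s_j}(\partial_j\upsilon)(x+\lambda^{\s}z)\,\eta(z)$, and for $\alpha\in(0,1)$ this is not controlled by $\|\upsilon\|_{\CC^\alpha_\s}$: for smooth $\upsilon$ one gets a finite answer, but it depends on $\|\upsilon\|_{\cC^1}$, so the bound does not survive the density argument.

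This is precisely the obstruction the paper isolates in Remark~\ref{rem:bilinear_difficulty}: the H\"older--Besov seminorms at non-positive regularity admit no submultiplicative avatar, so a one-scale splitting cannot close. The paper's proof (Appendix~\ref{sec:NonComYng}) takes the regularity-structure route instead: it builds a small $\cA_q$-regularity structure on the symbols $X^k$, $\<1>X^k$, $\<1>a\<1>X^k$ with model $(\Pi_x\,\<1>a\<1>X^k)(y)=\CM^{(1,1)}_q(\mfZ[\<2>](y);a)\,(y-x)^k$, lifts $\upsilon$ to $\hat\upsilon\in\CD^\alpha$ in the polynomial sector, forms $\<1>\hat\upsilon\<1>\in\CD^\alpha$ via Theorem~\ref{thm:RSMult}, and sets $\<2_1Rprime>\eqdef\CR(\<1>\hat\upsilon\<1>)\in\CC^0_\s$. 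The reconstruction theorem supplies the multi-scale gluing that your single-scale argument lacks; a paraproduct decomposition would serve the same purpose.
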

\begin{remark}
	There are several ways of proving Lemma~\ref{lemma:2SYoung}, for example by extending the usual paraproduct inequalities \cite{GIP15} to the algebra-valued setting or by proving that this map is continuous with respect to the injective topology on the tensor product.
	We present a proof in Appendix~\ref{sec:NonComYng} based on the regularity structures proof of the Young multiplication theorem, using the material from Sections~\ref{sec:Reconstruction}~\&~\ref{sec:RS}.
\end{remark}

\begin{remark}
It will be clear from our proof that Lemma~\ref{lemma:2SYoung} continues to hold if we replace 
$\mfZ[\<2>]$ by an arbitrary distribution $F \in \CC^\beta_\s \bigl( \R^d  ; L^2(\R^d)^{\wotimes_\alpha 2}\bigr)$ and $\upsilon \in \CC^\alpha_\s \bigl( \R^d  ; \cA_q\bigl(L^2(\R^d) \bigr) \bigr)$ as long as $\alpha + \beta > 0$.
\end{remark}

The final renormalised equation for $\upsilon$ reads
\begin{equ}[eq:RenEqDPD2]
	(\partial_t - \Delta+m^2) \upsilon  = -\lambda \left( \upsilon^3 + \upsilon^2 \<1> + \upsilon\<1>\upsilon +  \<1>\upsilon^2+\upsilon \<2> + \<2_1Rprime> + \<2> \upsilon + \<3> \right) \; .
\end{equ}
One possible way to formulate the solution theory for this equation is the following. For a more detailed exposition of the same argument in a similar style, cf.\ \cite[Section~5]{CHP23}.

We fix a sufficiently small $\kappa \in (0,\frac{1}{2})$. We consider a ``generic'' set of driving noises $\<1>,\<2>,\<3>$ for the equation, i.e.\ an arbitrary element $\Xi = \bigl(\<1>,\<2>,\<3>\bigr) \in \CN$
with
\begin{equ}
	\CN \eqdef \CC^{-\kappa,-\kappa}_\s (\spacetime ; \cA_q^{(1)}) \times \CC^{-2\kappa,-2\kappa}_\s (\spacetime ; \cA_q^{(2)}) \times \CC^{-3\kappa,-3\kappa}_\s (\spacetime ; \cA_q^{(3)}) \; .
\end{equ} 
\begin{remark}
	The choice of space $\CC_\s^{\alpha,\alpha}$, rather than $\CC^{\alpha}_\s$, was made because we need to multiply the driving noises by the temporal cut-off $\bone_{\{t \geqslant 0\}} \in \CC_{\s}^{\infty,0}(\spacetime)$ with their product belonging to $\CC^{\alpha,\alpha}_{\s}$.
\end{remark}

\begin{theorem}\label{thm:Phi42Ren}
	Let $\lambda, m \in \R$ and $\Xi \in \CN$. For any initial condition
	\begin{equ}
		\upsilon_0 \in \reminit \eqdef \CC^{-\kappa}(\T^2 ; \CA_q)
	\end{equ}
	the equation
	\begin{equs}[eq:Phi42Ren]
		\begin{cases}
			(\partial_t - \Delta+m^2) \upsilon  = -\lambda \left( \upsilon^3 + \upsilon^2 \<1> + \upsilon\<1>\upsilon +  \<1>\upsilon^2+\upsilon \<2> + \<2_1Rprime> + \<2> \upsilon + \<3> \right) \\
			\upsilon(0 , \, \bigcdot \, ) = \upsilon_0
		\end{cases}
	\end{equs}
	has a mild solution in $\CC_T \eqdef \CC^{2-3\kappa,-\kappa}_{\s}([0,T]\times \T^2 ; \cA_q (L^2(\spacetime)))$ for $0 < T = T(\upsilon_0 , \Xi, m , \lambda) \leqslant 1$. Furthermore, the solution is unique on the temporal interval $[0,T]$ and for $\delta_1, \delta_2 > 0$ small enough, the solution map is jointly continuous on the balls of radii $\delta_1$ and $\delta_2$ around $\upsilon_0$ and $\Xi$, respectively
	\begin{equ}
		\begin{aligned}
		\CS^T \colon \reminit \times \CN \supset B_{\delta_1}(\upsilon_0) \times B_{\delta_2}(\Xi) & \longrightarrow \CC_{T}\\
		(\upsilon_0', \Xi') & \longmapsto u' \;.
		\end{aligned}
	\end{equ}
\end{theorem}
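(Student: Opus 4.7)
The plan is to set up the mild form of \eqref{eq:Phi42Ren} as a fixed-point problem in $\CC_T$ and apply the Banach fixed-point theorem, using the Schauder, Young multiplication and intertwined-product estimates developed earlier in the section. Writing $P_{m,t} \eqdef e^{t(\Delta - m^2)}$ for the heat semigroup with mass $m^2$ on $\T^2$, I would recast \eqref{eq:Phi42Ren} as
\begin{equ}
\upsilon = \Phi(\upsilon) \eqdef P_{m,\bigcdot} \upsilon_0 - \lambda\, \CI \mathbf{1}_{t \geqslant 0} \Bigl( \upsilon^3 + \upsilon^2 \<1> + \upsilon\<1>\upsilon + \<1>\upsilon^2 + \upsilon\<2> + \<2> \upsilon + \<2_1Rprime> + \<3> \Bigr)\;,
\end{equ}
where $\<2_1Rprime>$ is the $\upsilon$-dependent intertwined product of Lemma~\ref{lemma:2SYoung}. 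The first step is to record, using the standard scalar Schauder estimates on parabolic H\"older--Besov spaces with temporal blow-up and Theorem~\ref{thm:ExtLinOp} (lifting the result to $\cA_q$-valued coefficients by Metatheorem~\ref{MetaThm}), that $\CI$ maps $\CC^{-3\kappa,-3\kappa}_\s([0,T]\times \T^2;\cA_q)$ continuously into $\CC^{2-3\kappa,-\kappa}_\s$, with the operator norm on the right-hand side vanishing like a positive power of $T$ once one works on $[0,T]$. Similarly, $t \mapsto P_{m,t}\upsilon_0$ belongs to $\CC^{2-3\kappa,-\kappa}_\s$ and depends continuously on $\upsilon_0 \in \reminit$.

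Next I would verify that the nonlinear map lands in $\CC^{-3\kappa,-3\kappa}_\s([0,T]\times \T^2;\cA_q)$. For the purely deterministic term $\upsilon^3$, submultiplicativity of the H\"older seminorms on $\CC^{2-3\kappa}_\s$ (valid because $2-3\kappa > 0$; see Remark~\ref{rem:bilinear_difficulty} together with Theorem~\ref{thm:YoungMult}) gives control of $\upsilon^3$ in $\CC^{2-3\kappa,-3\kappa}_\s$. Every term of the form $\upsilon^k \<j>$ or $\<j>\upsilon^k$ with $k+j \leqslant 3$ and $k \geqslant 1$ is then handled by the $\cA_q$-valued Young multiplication Theorem~\ref{thm:YoungMult}, since $(2-3\kappa) + (-j\kappa) > 0$ for $\kappa$ small and $j \leqslant 2$: each of these terms lands in $\CC^{-j\kappa, -j\kappa - 2(k-1)\kappa} \subset \CC^{-3\kappa,-3\kappa}$. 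The genuinely new term $\<1>\upsilon\<1>$ is replaced in \eqref{eq:RenEqDPD2} by the renormalised intertwined product $\<2_1Rprime>$: Lemma~\ref{lemma:2SYoung} (applied with $\alpha = 2-3\kappa > 0$, see the remark following it) shows that $\upsilon \mapsto \<2_1Rprime>$ is a continuous map $\CC^{2-3\kappa}_\s \to \CC^0_\s$, so again we land in $\CC^{-3\kappa,-3\kappa}$ after incorporating the temporal cutoff. Finally, $\<3> \in \CC^{-3\kappa,-3\kappa}$ by assumption.

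Combining these gives, on a ball of radius $R$ in $\CC_T$ around $P_{m,\cdot}\upsilon_0$, a bound of the form
\begin{equ}
\|\Phi(\upsilon)\|_{\CC_T} \leqslant \|P_{m,\cdot}\upsilon_0\|_{\CC_T} + C T^{\theta} \bigl( 1 + R + R^2 + R^3\bigr) \bigl(1 + \|\Xi\|_{\CN}\bigr)^{\!3}\;,
\end{equ}
with $\theta > 0$ gained from the Schauder estimate, and an analogous Lipschitz estimate for $\Phi(\upsilon)-\Phi(\bar \upsilon)$ of the form $C T^\theta (1+R+\|\Xi\|_\CN)^2 \|\upsilon - \bar\upsilon\|_{\CC_T}$ upon telescoping the polynomial and Young-type terms and using bilinearity of $\<2_1Rprime>$ in its $\upsilon$-argument. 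Choosing $R$ large enough depending on $\upsilon_0$ and $\Xi$ and $T$ small enough renders $\Phi$ a contraction on this ball, yielding existence and uniqueness of a fixed point $\upsilon \in \CC_T$; joint continuity of $(\upsilon_0,\Xi) \mapsto \upsilon$ on small balls follows from the uniform contractivity of $\Phi$ and the continuous dependence of $\Phi$ on the data.

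The main obstacle is the term $\<1>\upsilon\<1>$, where noncommutativity prevents us from subsuming it under a scalar Young estimate; this is precisely what the machinery culminating in Lemma~\ref{lemma:2SYoung} was built for, and using it here requires nothing more than that $\upsilon$ lives in $\CC^{\alpha}_\s$ for some $\alpha > 0$, which is guaranteed by the choice $\kappa < 1/2$. All the other nonlinear terms reduce, via Metatheorem~\ref{MetaThm}, to scalar H\"older--Besov estimates applied to $\cA_q$-valued distributions, and the contraction then closes exactly as in the classical bosonic $\Phi^4_2$ analysis.
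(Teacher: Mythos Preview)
Your proposal is correct and follows essentially the same approach as the paper's sketch: mild formulation, Schauder estimate on $\CI$ giving the contractive factor $T^\theta$, Young multiplication (Theorem~\ref{thm:YoungMult}) for the mixed products, Lemma~\ref{lemma:2SYoung} for the intertwined term $\<2_1Rprime>$, and Banach fixed point. The only minor difference is that the paper is slightly more explicit about the multiplication rule in the blow-up spaces, $\CC^{\alpha,\eta}_\s \times \CC^{\beta,\eta'}_\s \to \CC^{\alpha\wedge\beta,\eta+\eta'}_\s$ for $\eta+\eta' > -2$, which is what makes the bookkeeping of singular exponents precise; your tracking of these exponents is a bit informal but lands in the right place.
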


\begin{remark}
	With respect to a suitable mollification mentioned above, one can also show that
	$
		\<2_1Rprime> \eqdef \lim_{\eps \downarrow 0} \bigl( \<1>_\eps \upsilon \<1>_\eps - \omega_q(\<1>_\eps^2) \Delta_q(\upsilon) \bigr)
	$,
	with convergence taking place in the $\CC^0_\s$-topology. Therefore, one can also view $\phi = \<1> + \upsilon$, where $\upsilon$ is the solution to \eqref{eq:Phi42Ren}, as solving the equation
	\begin{equ}
		(\partial_t - \Delta + m^2) \phi  = - \lambda \phi^3 + \omega_q \left( \<1>_\eps^2 \right) \lambda (2+\Delta_q) \phi  + \xi_\eps
	\end{equ}
	in the limit as $\eps \downarrow 0$. (Recall that $\Delta_q$ was defined in \eqref{e:Deltaqdef}.) 
\end{remark}

\begin{proof}[Sketch of Proof]
	 We start by rewriting the equation in its mild form, i.e.\ 
	 \begin{equ}
		\upsilon = \CF(\upsilon) = G \upsilon_0 - \lambda \CI \left( \upsilon^3 + \upsilon^2 \<1> + \upsilon\<1>\upsilon +  \<1>\upsilon^2+\upsilon \<2> + \<2_1Rprime> + \<2> \upsilon + \<3> \right) \; . 
	 \end{equ}
	 Here $G$ denotes the action of the heat kernel on the initial condition mapping $\CC_\s^{-\kappa} (\T^2 \; \CA_q) \to \CC_\s^{\infty, -\kappa} ([0,1] \times \T^2 \; \CA_q)$. $\CI$ denotes multiplication with $\1_{\R_+}$ and then applying $(\partial_t  - \Delta + m^2)^{-1}$. In particular, for every $T \in (0,1]$ 
	 \begin{equs}
		\CI \colon \CC^{-3\kappa, -3\kappa }_{\s} ([0,T] \times \T^2 ; \cA_q) &\longrightarrow \CC^{2-3\kappa, 2-3\kappa }_{\s} ([0,T] \times \T^2 ; \cA_q) \hooklongrightarrow \\
		& \hooklongrightarrow \CC^{2-3\kappa, -\kappa }_{\s} ([0,T] \times \T^2 ; \cA_q)
	 \end{equs}
	 with norm $\propto T^{1-\kappa}$ which provides the contractive factor for $T$ small enough. Thus, we only need to show that 
	 \begin{equ}
		F_{\lambda}(\upsilon) = \lambda \left( \upsilon^3 + \upsilon^2 \<1> + \upsilon\<1>\upsilon +  \<1>\upsilon^2+\upsilon \<2> + \<2_1Rprime> + \<2> \upsilon + \<3> \right)
	 \end{equ}
	 is a well-defined map $\CC^{2-3\kappa, -\kappa}_\s \to \CC^{-3\kappa, -3\kappa}_\s$. One can easily check that all the multiplications are well-defined by Theorem~\ref{thm:YoungMult} and Lemma~\ref{lemma:2SYoung}, since the sum of all the regularities is positive. Here, we have used the extensions of those theorems to spaces with blow-ups
	 \begin{equ}
		\CC_{\s}^{\alpha, \eta} \times \CC_{\s}^{\beta, \eta'} \longrightarrow \CC_{\s}^{\alpha \wedge \beta, \eta+\eta'}
	 \end{equ}
	 for $\eta \leqslant \alpha$, $\eta' \leqslant \beta$ and $\eta+\eta' > - 2$, cf.\ \cite[Theorem~A.20]{CHP23}. Furthermore, the sums of the blow-up regularities are also always $\geqslant -3\kappa$. Thus, $\CF$ maps $\CC_T$ into itself for all $T \in (0,1]$.

	 The proof now proceeds by the usual arguments. Using the polynomial structure of $F_{\lambda}$, there exists a constant $C$, s.t.\ for all  $R>1$, $F_{\lambda}$ maps the ball of radius $R$ around $0$ in $\CC^{2-3\kappa, -\kappa}_\s$ to the ball of radius $CR^3$ around $0$ in $\CC^{-3\kappa, -3\kappa}_\s$ and is Lipschitz continuous with constant $\leqslant CR^2$ for all $T \in (0,1]$. Then, restricting to $[0,T]$ for $T = T(R)$ sufficiently small, $\CI \circ F_{\lambda}$ maps $B_R(0)$ into $B_{\frac{1}{2}}(0)$. Thus, choosing $R = \| G \upsilon_0 \|_{\CC_T} + 1$, $\CF$ maps $B_R(0)$ into itself and is a contraction satisfying
	 \begin{equ}
		\left\| \CF(\upsilon) - \CF(\upsilon')\right\|_{\CC_T} \leqslant \frac{1}{2}\left\| \upsilon - \upsilon'\right\|_{\CC_T} \; .
	 \end{equ}
	 The Banach fixed-point theorem now finishes the proof of the assertion.
\end{proof}

In fact, since our solution is necessarily uniformly bounded in $\cA_q$, we can show that, for $\lambda$ and $\| \upsilon_0\|_{\CC^{-\kappa}}$ small enough, it exists for all times. 
\begin{theorem}\label{thm:Phi42RenGl} 
	Fix $m^2 > 0$ and $C>0$. There exists a constant $\lambda_0(m^2, C) > 0$, such that, for all $\lambda \in \R$ with $|\lambda| < \lambda_0(m^2, C)$ and $\upsilon_0 \in \CC^{-\kappa}_\s ( \T^2 ; \cA_q(L^2(\spacetime)))$ with $\|\upsilon_0 \|_{\CC^{-\kappa}} <  C$, equation~\eqref{eq:Phi42Ren} has a mild solution in $\CC^{2-3\kappa,-\kappa}_{\s}(\R_+ \times \T^2 ; \cA_q (L^2(\spacetime)))$.
\end{theorem}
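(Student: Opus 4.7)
The plan is to combine the local well-posedness result of Theorem~\ref{thm:Phi42Ren} with a global a priori bound that exploits the exponential decay of the heat semigroup $G_t = e^{t(\Delta-m^2)}$ (granted by the strict positivity of $m^2$) to run a one-step-at-a-time bootstrap. Because we work in the Banach algebra $\cA_q(L^2(\spacetime))$, all the multilinear estimates from Theorem~\ref{thm:YoungMult}, Lemma~\ref{lemma:2SYoung} and Corollary~\ref{cor:RenMultMap} hold with constants uniform in the noise realisation, so the argument is essentially a dissipative bootstrap.

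First I would record the relevant semigroup estimates: $\|G_t\|_{\CC^{-\kappa}\to\CC^{-\kappa}} \leqslant e^{-m^2 t}$ and, for $0 < t \leqslant 1$, $\|G_t w_0\|_{\CC^{\alpha}} \leqslant C\, t^{-(\alpha+\kappa)/2}\,e^{-m^2 t/2}\|w_0\|_{\CC^{-\kappa}}$ for any $\alpha \in (-\kappa, 2-3\kappa]$, and analogous Schauder bounds for the Duhamel integral against sources in $\CC^{-3\kappa,-3\kappa}_\s$. Applying Theorem~\ref{thm:Phi42Ren} on each unit interval $[n,n+1]$ with restarted initial condition $\upsilon(n)$, the mild form
\begin{equs}
\upsilon(n+t) = G_t \upsilon(n) - \lambda \int_0^t G_{t-s} F_\lambda(\upsilon(n+s))\,\d s,\qquad t \in [0,1],
\end{equs}
together with the polynomial structure of $F_\lambda$ yields, via the Young and intertwined-product estimates used in the local proof, a deterministic inequality of the form
\begin{equs}
\|\upsilon\|_{\CC_{[n,n+1]}} \leqslant C_0 \|\upsilon(n)\|_{\CC^{-\kappa}} + C_1 |\lambda|\, \Pi(\|\Xi\|_\CN)\bigl(1+\|\upsilon\|_{\CC_{[n,n+1]}}^3\bigr),
\end{equs}
where $\Pi$ is a polynomial and $C_0,C_1$ depend only on $\kappa$ and $m^2$. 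A standard continuity/bootstrap argument shows that if the right-hand coefficients are small enough then $\|\upsilon\|_{\CC_{[n,n+1]}} \leqslant 2C_0\|\upsilon(n)\|_{\CC^{-\kappa}}$ on this interval.

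Next, plugging this bound back into the mild formula evaluated at $t=1$ and using the contractive estimate for $G_1$, I would obtain
\begin{equs}
\|\upsilon(n+1)\|_{\CC^{-\kappa}} \leqslant e^{-m^2}\|\upsilon(n)\|_{\CC^{-\kappa}} + C_2|\lambda|\,\Pi(\|\Xi\|_\CN)\bigl(1+\|\upsilon(n)\|_{\CC^{-\kappa}}\bigr)^3,
\end{equs}
where the crucial $e^{-m^2}$ factor arises because, after one unit of time, the smoothing has been absorbed and we are back in $\CC^{-\kappa}$. Choosing $\lambda_0 = \lambda_0(m^2, C,\|\Xi\|_\CN)$ so small that $e^{-m^2} C + C_2 \lambda_0 \,\Pi(\|\Xi\|_\CN)(1+C)^3 \leqslant C$, the sequence $a_n = \|\upsilon(n)\|_{\CC^{-\kappa}}$ is bounded by $C$ for all $n \in \N$ provided $a_0 < C$. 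One then concatenates the local solutions on $[n,n+1]$ via the uniqueness part of Theorem~\ref{thm:Phi42Ren} to produce a global mild solution in $\CC^{2-3\kappa,-\kappa}_\s(\R_+\times \T^2;\cA_q(L^2(\spacetime)))$.

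The main obstacle is Step 2: verifying that the one-step estimate on $[n,n+1]$ retains the full exponential factor $e^{-m^2}$ from the heat semigroup while simultaneously absorbing the blow-up at the starting time $t=n$ encoded in the $\CC^{2-3\kappa,-\kappa}_\s$ norm. This requires splitting the Duhamel integral into a singular layer $[n,n+\delta]$, where the blow-up is balanced by the Schauder smoothing, and a regular layer $[n+\delta,n+1]$, where the exponential decay of $G_t$ dominates; the polynomial nature of $F_\lambda$ ensures that the resulting bound is of the form claimed. The dependence of $\lambda_0$ on $\|\Xi\|_\CN$ (which is finite almost surely for the $q$-Gaussian white noise) is absorbed into the constant $C$ when interpreting the theorem for the realised noise.
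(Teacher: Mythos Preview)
Your approach is correct in outline but takes a genuinely different and more laborious route than the paper. The paper does not iterate the local result at all: it observes that for $m^2>0$ the operator $\CI=(\partial_t-\Delta+m^2)^{-1}$ is bounded as a map $\CC^{-3\kappa,-3\kappa}_{\s}(\R_+\times\T^2;\cA_q)\to\CC^{2-3\kappa,-\kappa}_{\s}(\R_+\times\T^2;\cA_q)$ with norm $\lesssim m^{-2}$, uniformly on the whole half-line. One can therefore run the Banach fixed-point argument \emph{directly} on $\R_+\times\T^2$, using smallness of $\lambda$ (rather than smallness of $T$) to make $\CI\circ F_\lambda$ a contraction on a ball of radius $R=C\|G\|+1$. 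This yields the global solution in a single stroke, with no restart or bootstrap.

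Your dissipative-iteration scheme would also work, and is the more robust strategy when no global Schauder bound is available (e.g.\ large $\lambda$ with a sign condition), but here it introduces avoidable complications: you must separately ensure that the local existence time in Theorem~\ref{thm:Phi42Ren} is at least $1$ for data of size $\leqslant C$ (this is again a smallness-of-$\lambda$ condition, implicit in your ``right-hand coefficients small enough''), and you must carefully track the interaction between the $t=n$ blow-up singularity and the exponential decay, exactly the obstacle you flag in your final paragraph. The paper sidesteps all of this. A minor point: your closing remark about $\|\Xi\|_\CN$ being ``finite almost surely'' reflects bosonic intuition; in the mezdonic setting $q\in(-1,1)$ the noise is a deterministic element of the Banach algebra $\cA_q$, and $\|\Xi\|_\CN$ is simply a fixed universal constant that can be absorbed into the implicit constants without any probabilistic caveat.
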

\begin{proof}[Sketch of Proof]
	The main technical difference to the previous theorem is that the norm of $(\partial_t - \Delta +m^2)^{-1}$ as an operator $\CC^{\alpha,\eta}(\R_+ \times \T^2) \to \CC^{\alpha+2,\eta+2}(\R_+ \times \T^2)$ is $\lesssim m^{-2}$ whereas $(\partial_t - \Delta)^{-1}$ is unbounded. This means that we can view $\CI$ as an operator
	\begin{equs}
		\CI \colon \CC^{-3\kappa, -3\kappa }_{\s} (\R_+ \times \T^2 ; \cA_q) &\longrightarrow \CC^{2-3\kappa, -\kappa }_{\s} ( \R_+ \times \T^2 ; \cA_q)
	 \end{equs}
	 of fixed norm. Thus, for $\lambda$ small enough, rather than $T$ small enough, $\CI \circ F_{\lambda}$ maps $B_{R}(0)$ into $B_{\frac{1}{2}}(0)$. The theorem now follows by setting $R = C \|G\| + 1$ and choosing $\lambda_0(m^2,C) > 0$ accordingly.
\end{proof}

\section{Reconstruction}
\label{sec:Reconstruction}

In this section, we use the approach of \cite{CZ20} \dash which is well-adapted to a tensor product argument \dash to give a very general version of the reconstruction theorem that holds for arbitrary Fr\'echet spaces.

\begin{definition}[\TitleEquation{(\alpha,\gamma)}{(\alpha,\gamma)}-Smooth Section]
	Let $E$ be a Fr\'echet space. Consider the (trivial) vector bundle $B = \R^d \times \cD'(\R^d; E) \to \R^d$. Fix $\alpha,\gamma \in \R$, such that $\alpha \leqslant 0 \wedge \gamma$ and $r = - \lfloor \alpha \rfloor$. For any $\mfp \in \mfP$ and $\mfK \Subset \R^d$, we define the following seminorm on measurable sections $(F_x)_x \in \Gamma(B)$: 
	\begin{equs}
		\| F \|_{\Gamma^{\alpha,\gamma} ; \mfK , \mfp } &\eqdef \sup_{\eta \in \CB^r_{\s,0}} \sup_{\lambda\in (0,1]} \sup_{x \in \K} \frac{\mfp \left( F_x  \left( \CS^\lambda_{\s,x} \eta\right) \right) }{ \lambda^{ \alpha}  } +\\
		& \qquad \qquad + \sup_{\eta \in \CB^r_{\s,0}} \sup_{\lambda\in (0,1]} \sup_{\substack{x,y \in \K \\ x \neq y}} \frac{\mfp \left( \left(F_y - F_x\right) \left( \CS^\lambda_{\s,x} \eta\right) \right) }{ \lambda^{ \alpha} \left( |y-x|_{\s}  + \lambda\right)^{\gamma - \alpha} }\;.
	\end{equs}
	We define the space of $(\alpha,\gamma)$-smooth sections of $B$ to be
	\begin{equ}
		\Gamma^{\alpha,\gamma}_{\s}(\R^d; E) \eqdef \left\{ F \in \Gamma(B) \, \middle| \, \forall \mfp \in \mfP, \; \forall \mfK \Subset \R^d,\; \| F \|_{\Gamma^{\alpha,\gamma}; \K , \mfp} < \infty\right\}
	\end{equ}
	equipped with the topology defined by these seminorms.
\end{definition}

\begin{remark}
	The requirement that $E$ be a Fr\'echet space is a technicality to ensure that the space $\Gamma^{\alpha,\gamma}_{\s}(\R^d;E)$ is complete. It comes about from this Regularity Structures-free formulation of $(\alpha,\gamma)$-smooth sections, as we must allow sections that are merely measurable and not necessarily continuous. If we instead used a definition based on regularity structures, the sections would have to be continuous, and we could drop the restriction that $E$ be a Fr\'echet space.

	However, we note that whenever we consider random models, e.g.\ when we are trying to solve a mixed commutative-noncommutative SPDE, we have to restrict ourselves to a Fr\'echet space anyway, for the same technical reasons concerning measurability as here.
\end{remark}

\begin{remark}
	The space $\Gamma^{\alpha, \gamma}_\s (\R^d ; E)$ is locally $t$-convex over $E$, with the 
	collection of seminorms indexed by $\mfp$ being $ \left( \| \bigcdot \|_{\Gamma^{\alpha,\gamma} ; \K , \mfp}\right)_{\K \Subset \R^d}$.
\end{remark}

We now state that $\Gamma_{\s}^{\alpha,\gamma}(\R^d ; E)$ is a tensor product of the scalar-valued function space and the target space in the same way that $\CC^\alpha_\s(\R^d ; E)$ was stated to be in Theorem~\ref{thm:PrimTensProd},
this will allow us to extend the standard reconstruction theorem out setting.
The proof of the following proposition can be found in Appendix~\ref{subsec:PrimTensProdproof}.
\begin{proposition}
	\label{prop:TensProdPropSec}
	For all Fr\'echet spaces $E$, $\gamma \in \R$ and $\alpha \leqslant 0 \wedge \gamma$, the following spaces are isomorphic
	\begin{equ}
		\Gamma^{\alpha,\gamma}_{\s}(\R^d; E) \cong \Gamma^{\alpha,\gamma}_{\s}(\R^d; \mathbb{K}) \wotimes_\eps E \; .
	\end{equ}
\end{proposition}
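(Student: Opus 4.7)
The plan is to follow the same strategy as in the proof of Theorem~\ref{thm:PrimTensProd}, adapting it to accommodate the extra ``base-point regularity'' parameter $\gamma$.

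First, I would reduce to the case where $E$ is a Banach space. Since the injective tensor product commutes with projective limits in the second factor (see \cite[Corollary~16.3.2]{Jar81}), and $\Gamma^{\alpha,\gamma}_\s(\R^d; E)$ is itself naturally the projective limit, over $\mfp \in \mfP$ and $\K \Subset \R^d$, of its Banach versions $\Gamma^{\alpha,\gamma}_\s(\R^d; E_\mfp)\restr_{\K}$, the general Fr\'echet case reduces to the Banach case. This is where completeness of $E$ (and the Fr\'echet assumption that ensures each $E_\mfp$ is Banach rather than merely normed) enters.

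Second, for $E$ a Banach space, I would realise both spaces as closed subspaces of a common weighted $\ell^\infty$-type space. Consider the index set
\begin{equation*}
I \eqdef \bigl(\CB^r_{\s,0} \times (0,1] \times \K\bigr) \,\sqcup\, \bigl(\CB^r_{\s,0} \times (0,1] \times \{(x,y) \in \K^2: x \ne y\}\bigr),
\end{equation*}
equipped with the weights $w_1(\eta,\lambda,x) = \lambda^{-\alpha}$ on the first component and $w_2(\eta,\lambda,x,y) = \lambda^{-\alpha}(|y-x|_\s + \lambda)^{-(\gamma-\alpha)}$ on the second. Evaluation of $F$ on the test functions $\CS^\lambda_{\s,x}\eta$ and on the increments $F_y - F_x$ then yields isometric embeddings of $\Gamma^{\alpha,\gamma}_\s(\R^d;\mathbb{K})$ into weighted $\ell^\infty(I)$ and of $\Gamma^{\alpha,\gamma}_\s(\R^d; E)$ into weighted $\ell^\infty(I; E)$. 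Using the standard characterisation $\ell^\infty(I)\wotimes_\eps E \hookrightarrow \ell^\infty(I; E)$, the natural map $\Gamma^{\alpha,\gamma}_\s(\R^d;\mathbb{K}) \otimes E \to \Gamma^{\alpha,\gamma}_\s(\R^d; E)$ extends continuously to the $\eps$-completion, giving one inclusion.

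Third, I would verify the reverse inclusion: that any $F \in \Gamma^{\alpha,\gamma}_\s(\R^d; E)$ lies in the image of $\Gamma^{\alpha,\gamma}_\s(\R^d;\mathbb{K})\wotimes_\eps E$. The idea is to use a dyadic decomposition of the parameter $\lambda$ combined with a partition of unity covering $\K$ at the dyadic scale; on each scale-and-location block the increment bound coming from $\gamma$ gives an equicontinuity-type estimate for the $E$-valued ``germ'' $x \mapsto F_x$, while the $\alpha$-bound gives uniform boundedness. An Ascoli-type argument in the Banach space $E$ then extracts finite-rank approximants in $\Gamma^{\alpha,\gamma}_\s(\R^d)\otimes E$ that converge to $F$ in the $(\alpha,\gamma)$-seminorm.

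The hard part will be step three: producing approximants that respect \emph{both} the pointwise $\alpha$-bound and the two-point $\gamma$-bound simultaneously, and doing so uniformly in the indexing data. The coupling between $\lambda$ and the base-point separation $|y-x|_\s$ in the weight $w_2$ means one cannot simply truncate in each variable independently; one must choose the truncation scales consistently, which is precisely the content of the classical reconstruction-type arguments in this setting. Once this is handled, the two embeddings have the same image and the isomorphism follows by abstract nonsense from the isometric identification.
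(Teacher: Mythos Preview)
Your approach differs substantially from the paper's. The paper follows the proof of Theorem~\ref{thm:PrimTensProd} (itself modelled on \cite[Theorem~44.1]{Trev67}): rather than embedding into weighted $\ell^\infty$ spaces and constructing finite-rank approximants, it uses the abstract identification $\cB_\eps(E'_\sigma, F'_\sigma) \cong \CL_\eps(E'_\tau; F)$ from Proposition~\ref{prop:TensProdProp}. Given $F \in \Gamma^{\alpha,\gamma}_\s(\R^d; E)$, one defines $\Phi(e') = \bigl(x \mapsto (\eta \mapsto \scal{e', F_x(\eta)})\bigr)$ and shows $\Phi \in \CL(E'_\tau; \Gamma^{\alpha,\gamma}_\s(\R^d))$ by verifying that the weak closure in $E$ of the set
\begin{equ}
\Bigl\{ \tfrac{(F_y-F_x)(\CS^\lambda_{\s,x}\eta)}{\lambda^\alpha(|y-x|_\s+\lambda)^{\gamma-\alpha}} : \eta \in \CB^r_{\s,0},\, \lambda \in (0,1],\, x\neq y \in \K \Bigr\} \cup \Bigl\{ \tfrac{F_x(\CS^\lambda_{\s,x}\eta)}{\lambda^\alpha} \Bigr\}
\end{equ}
is weakly compact, via a Banach--Alaoglu-type argument. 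The only genuinely new point beyond Theorem~\ref{thm:PrimTensProd} is completeness of $\Gamma^{\alpha,\gamma}_\s(\R^d; E)$, which uses the Fr\'echet assumption to pass from Cauchy sequences to measurable pointwise limits.

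This buys a great deal: your step~3 is entirely bypassed. You correctly flag it as the hard part, but the Ascoli-type sketch is unconvincing. Equicontinuity of the germs $x \mapsto F_x$ would give relative compactness of a set of sections, not finite-rank approximability in $\Gamma^{\alpha,\gamma}_\s(\R^d) \otimes E$; you give no concrete mechanism for producing tensor approximants that respect the $\alpha$- and $\gamma$-bounds simultaneously, and the ``dyadic truncation'' idea does not obviously converge in the two-point seminorm. The paper's route avoids all of this: continuity of $\Phi$ for the Mackey topology is \emph{exactly} what the seminorm $\|F\|_{\Gamma^{\alpha,\gamma}; \K, \mfp}$ encodes, so no approximation is needed. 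Your reduction to the Banach case in step~1 is also unnecessary under this approach.
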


\begin{theorem}[Reconstruction]
	There exists a $t$-continuous linear map \[\CR \colon \Gamma^{\alpha,\gamma}_{\s}(\R^d; E) \to \CC^\alpha_\s(\R^d; E)\] such that, for all $\eta \in \CB^{r}_{\s, 0}$, all $x \in \K \Subset \R^d$, and all $\lambda \in (0,1]$,
	\begin{equ}
		\mfp \Bigl( \bigl(\CR F- F_x\bigr) \bigl( \CS^\lambda_{\s, x} \eta \bigr) \Bigr) \lesssim \| F \|_{\Gamma^{\alpha,\gamma} ; \overline{\K} , \mfp} \begin{cases}
			\lambda^\gamma \; , \qquad &\text{if } \gamma \neq 0\;,\\
			1 - \log(\lambda) \; , & \text{if } \gamma = 0\;,
		\end{cases} 
	\end{equ}
	where $\overline{\K} = \left\{ x \in \R^d \, \big| \, d_{\s}(x, \K) \leqslant 4 \right\}$.
	If furthermore $\gamma > 0$, then this map is unique.
\end{theorem}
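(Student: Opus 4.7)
The plan is to reduce to the scalar-valued reconstruction theorem of Caravenna--Zambotti \cite{CZ20} and then extend to the $E$-valued setting using the tensor product structure established in Proposition~\ref{prop:TensProdPropSec} and Theorem~\ref{prop:TensProdPropSec}. Specifically, the approach of \cite{CZ20} avoids any reference to a model and constructs $\CR$ purely at the level of sections, which makes it straightforward to tensor.

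First, I would establish the scalar case: there exists a continuous linear map $\CR_{\mathbb K} \colon \Gamma^{\alpha,\gamma}_{\s}(\R^d) \to \CC^\alpha_\s(\R^d)$ satisfying the stated pointwise estimate. The construction goes via a Daubechies wavelet basis $\{\psi_{n,k}\}$ with compact support and sufficiently many vanishing moments ($\geqslant r$). One sets
\begin{equ}
\CR_{\mathbb K} F \eqdef \sum_{n,k} \bigl(F_{x_{n,k}}\bigr)(\psi_{n,k}) \, \psi_{n,k}\;,
\end{equ}
where $x_{n,k}$ denotes the centre of $\supp \psi_{n,k}$ at scale $2^{-n}$. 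The coherence bound on $F_y - F_x$ built into $\| F \|_{\Gamma^{\alpha,\gamma};\K,\mfp}$ controls the difference between adjacent wavelet coefficients, and a standard telescoping argument yields the estimate, with the logarithmic correction at $\gamma = 0$ arising from summing $\sum_{n \geqslant \log_2 \lambda^{-1}} 1$ terms each of size $O(1)$.

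Next, applying Proposition~\ref{prop:TensProdPropSec} and Theorem~\ref{thm:ExtLinOp} (i.e.\ the metatheorem spirit), I would define
\begin{equ}
\CR \eqdef \CR_{\mathbb K} \wotimes_\eps \bone_E \colon \Gamma^{\alpha,\gamma}_{\s}(\R^d) \wotimes_\eps E \longrightarrow \CC^\alpha_\s(\R^d) \wotimes_\eps E\;,
\end{equ}
which is $t$-continuous by the universal property of $\wotimes_\eps$. To verify the pointwise estimate for $E$-valued $F$, fix a seminorm $\mfp \in \mfP$ and pass to the local Banach space $E_\mfp$ from \eqref{eq:LocSpaceDef}. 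Writing $F_\mfp$ for the image of $F$ in $\Gamma^{\alpha,\gamma}_\s(\R^d;E_\mfp)$, one tests against unit functionals $\ell \in E_\mfp^*$ via Hahn--Banach, reducing the bound $\mfp((\CR F - F_x)(\CS^\lambda_{\s,x}\eta))$ to the scalar estimate applied to $\ell \circ F_\mfp$, whose $\Gamma^{\alpha,\gamma}$ seminorm is bounded by $\| F \|_{\Gamma^{\alpha,\gamma};\overline{\K},\mfp}$ uniformly in $\ell$.

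Finally, uniqueness for $\gamma > 0$: if $\CR_1, \CR_2$ both satisfy the estimate, then for every $\mfp \in \mfP$ and $\eta \in \CB^r_{\s,0}$,
\begin{equ}
\mfp\bigl((\CR_1 F - \CR_2 F)(\CS^\lambda_{\s,x}\eta)\bigr) \lesssim \lambda^\gamma \xrightarrow{\lambda \downarrow 0} 0\;,
\end{equ}
so $\CR_1 F = \CR_2 F$ as distributions in each $E_\mfp$, hence in $E$ since $E$ is Hausdorff. The main obstacle I anticipate is bookkeeping the seminorm dependence through the $\wotimes_\eps$ construction cleanly; in particular, one must check that the constants implicit in $\lesssim$ are genuinely uniform in $\mfp$ on the domain side, which is precisely what the definition of the injective cross-norm (via testing by functionals on $E_\mfp^*$) guarantees.
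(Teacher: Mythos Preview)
Your proposal is correct and follows essentially the same route as the paper: reduce to the scalar reconstruction theorem of \cite{CZ20} and then extend to $E$-valued sections via the injective tensor product identification of Proposition~\ref{prop:TensProdPropSec} (the paper invokes \cite[Proposition~43.6]{Trev67} directly, which is the content of Theorem~\ref{thm:ExtLinOp}). One minor inaccuracy: the construction in \cite{CZ20} proceeds by iterated mollification rather than Daubechies wavelets, but since you are citing the scalar result as a black box this does not affect the argument.
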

\begin{proof}
	This again follows from the scalar reconstruction theorem \cite{Hai14,CZ20} and  Metatheorem~\ref{MetaThm}, more specifically \cite[Proposition~43.6]{Trev67}.
\end{proof}

\section{\TitleEquation{\CA}{A}-Regularity Structures}
\label{sec:RS}

Most of this section is concerned with adapting the original analytic theory of $\mathbb{K}$-valued regularity structures to our $\CA$-valued setting.
We introduce definitions extending those in \cite{Hai14} to our new setting; these are designed so that the results of \cite[Sections~2 to 7]{Hai14} generalise straightforwardly.

In particular, we will only state the most important theorems explicitly and provide a list of all other results that continue to hold in the $\CA$-valued setting.
Whenever a proof of a result in the $\CA$-valued setting follows by a simple substitution of the norm of $\R$ with one of the norms $\mfp \in \mfP$, we will not provide a proof.

In this section, we work with an abstract notion of an $\CA$-regularity structure (see the definition below).
In Section~\ref{sec:TreeRegStruc} we will specialise to the much more concrete case of $\mathcal{A}$-regularity structures generated by trees. This case will be the most relevant one when it comes to solving SPDEs.

\begin{definition}\label{def:reg_struct}
	An $\CA$-regularity structure $\cT = (A,T,G)$ is a triple consisting of
	\begin{itemize}
		\item an index set $A \subset \R$ bounded from below, containing $0$, which is locally finite,
		\item a model space $T$, which is a graded $\CA$-bimodule
		\begin{equ}
			T = \bigoplus_{\alpha \in A} T_\alpha
		\end{equ}
		where each $T_\alpha$ is a complete, locally $m$-convex $\CA$-bimodule with seminorms $ \left( \mfp_\alpha \right)_{\mfp \in \mfP} $, see Definition~\ref{def:tmconvex}. Further, we suppose that $T_0 \cong \CA$ with its unit denoted by $\1$ and its dual vector denoted by $\1^*$,
		\item a structure group $G$ of $t$-continuous, invertible, $\CA$-bimodule morphisms $T  \to T$, such that, for every $\Gamma \in G$, every $\alpha \in A$, and every $a \in T_\alpha$, one has
		\begin{equ}
			\Gamma a - a   \in \bigoplus_{\beta < \alpha} T_\beta \; .
		\end{equ}
		Furthermore, $ \Gamma \big|_{T_0} \equiv \bone_{T_0}$.
	\end{itemize}

	If $\CA$ is equipped with a grading $I$, then we assume that each space $T_\alpha$ is also $I$-graded and that the homomorphisms in $G$ are of degree $0$.

	For $\alpha \in A$, let $\CQ_\alpha \colon T \to T_\alpha$ be the projection onto $T_\alpha$.
	We further define, for $\alpha \in \R$, the subspaces
	\begin{equ}
		T_\alpha^-  \eqdef\bigoplus_{\gamma < \alpha} T_\gamma, \qquad T_\alpha^+ \eqdef \bigoplus_{\gamma \geqslant \alpha} T_\gamma \; .
	\end{equ}
\end{definition}

\begin{remark}
	By abuse of notation we shall denote $\mfp_\alpha(\CQ_\alpha a)$ by $\mfp_\alpha(a)$.
\end{remark}

\begin{definition}
	Given an $\CA$-regularity structure $\cT = \left( A, T ,G \right)$ and $\alpha \leqslant 0$, a sector $V$ of regularity $\alpha$ is a graded subspace $V = \bigoplus_{\beta \in A} V_\beta$ with $V_\beta \subset T_\beta$,  such that:
	\begin{itemize}
		\item for every $\beta \in A$, $V_\beta$ is closed and complemented\footnote{This means that there exists a closed subspace $W_\beta \subset T_\beta$, such that $V_\beta \oplus W_\beta = T_\beta$} in $T_\beta$.
		\item for $\beta < \alpha$, $V_\beta = \{0\}$,
		\item $G\cdot  V  \subset V$,
	\end{itemize}
	A sector of regularity $0$ is called function-like.
\end{definition}

\begin{definition}\label{def:model}
	A model $Z = (\Pi,\Gamma)$ for an $\CA$-regularity structure $\cT = (A,T,G)$ on $\R^d$ with scaling $\s$ is given by:
	\begin{itemize}
		\item a map $\Gamma \colon \R^d \times \R^d \rightarrow G$, the structure group of $\cT$,  such that $\Gamma_{xx} = \bone_T$, the identity operator, and for all $x,y,z \in \R^d$
		\begin{equ}
			\Gamma_{xy} \Gamma_{yz} = \Gamma_{xz} \; ,
		\end{equ}
		\item a collection $\left( \Pi_x \right)_{x \in \R^d}$ of $t$-continuous bimodule morphisms 
		\begin{equ}
		\Pi_x \colon T \rightarrow \cD'(\R^d;\CA)
		\end{equ} 
		such that, for all $x,y \in \R^d$,
		\begin{equ}
			\Pi_y = \Pi_x  \circ \Gamma_{xy} \; .
		\end{equ}
		\item If $\CA$ is $I$-graded, then we also require that the maps $\Pi_x$ are of degree $0$.
	\end{itemize}

	Furthermore, we require the following analytic bounds: For every seminorm $\mfp \in \mfP$, every $\gamma > 0$, and every compact set $\K \Subset \R^d$, there exist some smallest constants $\|\Pi\|_{\gamma; \K, \mfp}, \|\Gamma\|_{\gamma; \K,\mfp}$ such that, for all $x,y \in \K$, all $\ell \in A$ with $\ell < \gamma$, and all $a \in T_\ell$,
	\begin{equ}
	\label{ModDefRel}
		\mfp\Bigl( \bigl( \Pi_x a \bigr) \bigl( \CS^\lambda_{\s,x} \eta \bigr) \Bigr) \leqslant \|\Pi\|_{\gamma; \KK,\mfp} \, \mfp_{\ell }(a) \lambda^\ell, \quad \mfp_m\left( \Gamma_{xy} a \right)
		\leqslant \|\Gamma\|_{\gamma; \KK,\mfp} \, \mfp_{\ell }(a) |x-y|^{\ell-m}_{\s}
	\end{equ}
	for all $\lambda \in (0,1]$, all $\eta \in \CB^r_{\s, 0}$, all $m < \ell$, where $r$ is the smallest natural number such that $r > -\min A$.

	Finally, we introduce the following ``seminorms'' on the space of models $Z = (\Pi, \Gamma)$
	\begin{equs}
		\vvvert Z \vvvert_{\gamma ; \K , \mfp} &\eqdef \| \Pi \|_{\gamma ; \K , \mfp} + \| \Gamma \|_{\gamma ; \K , \mfp} \; ,\\
		\vvvert Z ; \overline{Z} \vvvert_{\gamma ; \K , \mfp} &\eqdef \| \Pi - \overline{\Pi} \|_{\gamma ; \K , \mfp} + \| \Gamma  - \overline{\Gamma} \|_{\gamma ; \K , \mfp} \;,
	\end{equs}
	which are restrictions of genuine seminorms to the nonlinear space of models.
\end{definition}

\begin{remark}
	We will also use the following notations. If $V$ is a sector of $\cT$, we write $\| \Pi \|_{V, \gamma ; \K, \mfp}$ and $\| \Gamma \|_{V, \gamma ; \K, \mfp}$ for the restrictions of the two norms to $V$. If $V_\alpha \neq 0$ only for a finite subset of $A$, we denote by $\| \Pi \|_{V ; \K, \mfp}$ and $\| \Gamma \|_{V ; \K, \mfp}$ the supremum of the norms over all $\gamma \in \R$.

	For $\alpha \in A$, we also set
	\begin{equ}
		\| \Pi \|_{T_\alpha ; \K , \mfp} \eqdef \sup_{x \in \K} \sup_{\lambda \in (0,1]} \sup_{\eta \in \CB^r_{\s, 0}} \sup_{\substack{a \in T_\alpha \\ \mfp_\ell(a) \leqslant 1 }} \mfp \left( \bigl(\Pi_x a\bigr) \bigl( \CS^\lambda_{\s, x} \eta \bigr) \right) \; .
	\end{equ}
	We note here that $\| \Pi \|_{T_\alpha  ; \K, \mfp}$ is \textit{a priori} completely independent of $\| \Pi \|_{\alpha ; \K, \mfp}$.
\end{remark}

\begin{remark}
	Most of the analytic heavy lifting is done by the assumption that $\Pi_x$ is continuous! For example, in the regularity structure one can build to solve the mezdonic $\Phi^4_2$-equation, continuity means that the three operator insertions in $\<2>$ must be continuous, which is a non-trivial constraint, as we saw throughout Section~\ref{sec:qmezdons}. 
\end{remark}

Before we turn to the next example, we introduce the notation $\spn_{\CA} B$, for $B$ an arbitrary
set, as a shorthand for $\bigoplus_{x\in B}\CA$, endowed with the $\CA$-bimodule structure given 
by pointwise multiplication. Elements of $\spn_{\CA} B$ will typically we written as 
finite $\CA$-linear combinations of elements of $B$ so that, for $x \in B$ and
$a,b,c\in \CA$, one has $a\cdot (bx)\cdot c = (abc)x$. 

\begin{example}[Polynomial Regularity Structure]
	\label{ex:PolMod}
	Given a set $X_1, \dots, X_d$ of abstract commuting variables, the canonical regularity structure $\cT_{d,\s}$ is given by
	\begin{itemize}
		\item $A \eqdef \N$,
		\item $T_n \eqdef \spn_{\CA} \left\{ X^k \,\middle|\, k \in \N^d :  |k|_{\s} = n \right\}$ and $T_{d,\s} \eqdef \bigoplus_{n \in \N} T_n$,
		\item $G \eqdef \R^d$ with $h \in G$ acting via $X  \mapsto (X+h)$ extended multiplicatively and $\CA$-linearly.
	\end{itemize}
	If $\CA$ is $I$-graded then $T$ naturally inherits an $I$-grading by postulating that all $X_i$ have degree $0$.

	The standard model for the polynomial regularity structure $\cT_{d,\s}$ is given by the 
	$\CA$-bimodule morphism such that
	\begin{equ}
		\bigl( \Pi_x X^k \bigr)(y) = (y-x)^k \1_{\CA}\;, \qquad \Gamma_{xy} P(X) = P(X+y-x)\;,
	\end{equ}
	for any abstract polynomial $P(X) \in \cT_{d,\s}$.
\end{example}

Other definitions from \cite[Section~2]{Hai14} generalise in the obvious way. In particular, these include Definitions~2.22~\&~2.28.

\subsection{Modelled Distributions}

\begin{definition}[Modelled Distribution]
	Fix an $\CA$-regularity structure $\cT$, a $\gamma \in \R$, and let $Z = (\Pi,\Gamma)$ be a model. We define the space of modelled distribution $\CD^\gamma(\Gamma)$ with respect to to $Z$, to be the space of functions $f \colon \R^d \to T^-_\gamma$, such that, for every $\K \Subset \R^d$ and $\mfp \in \mfP$,
	\begin{equ}
		\vvvert f \vvvert_{\gamma; \K , \mfp} \eqdef \sup_{x \in \K} \sup_{\beta< \gamma} \mfp_{\beta}\left( f(x) \right) + \sup_{\substack{x,y \in \K \\ |x-y|_\s \leqslant 1}} \sup_{\beta < \gamma} \frac{\mfp_{\beta} \left(f(x) - \Gamma_{xy}f(y)\right)  }{|x-y|_\s^{\gamma-\beta}} < \infty \; ,
	\end{equ}
	where here and in the following we take $\mfp_\beta(f(x))$ to mean $\mfp_\beta(\CQ_\beta f(x))$ by abuse of notation.

	For a second model $\overline{Z} = \bigl( \overline{\Pi}, \overline{\Gamma}  \bigr)$ and $\bar f \in \CD^\gamma(\overline{\Gamma})$ we define the distance function
	\begin{equs}
		\vvvert f ; f' \vvvert_{\gamma; \K, \mfp} &  \eqdef \sup_{x \in \K} \sup_{\beta< \gamma} \mfp_{\beta}\left( f(x) - \bar f(x) \right) + \\
		& \qquad + \sup_{\substack{x,y \in \K \\ |x-y|_\s \leqslant 1}} \sup_{\beta < \gamma} \frac{\mfp_{\beta} \left(f(x) - \bar f(x) - \Gamma_{xy}f(y) + \overline{\Gamma}_{xy} \bar f(y) \right)  }{|x-y|_\s^{\gamma-\beta}}  \; .
	\end{equs}
	If $f$ takes values in a sector $V$ of regularity $\alpha$, we will sometimes denote this by 
	writing $f \in \CD^\gamma_\alpha(V)$ or simply $f \in \CD^\gamma_\alpha$.
\end{definition}

\begin{proposition}
	Fix an $\CA$-regularity structure $\cT = (A, T ,G)$, let $\alpha = \min A \leqslant 0$, and $\gamma \geqslant \alpha$. Let $Z = (\Pi,\Gamma)$ be a model and $f \in \CD^\gamma(\Gamma)$. Then $\left( \Pi_xf(x) \right)_{x \in \R^d}$ is an $(\alpha,\gamma)$-coherent section of $\Gamma^{\alpha,\gamma}_{\s}(\R^d ; \CA)$ and the assignment $f \mapsto \left( \Pi_x f(x)\right)_{x \in \R^d}$ is $t$-continuous, i.e.\
	\begin{equ}
		\left\| \left( \Pi_x f(x)\right)_{x \in \R^d} \right\|_{\Gamma^{\alpha,\gamma} ; \K, \mfp} \lesssim_{\cT, Z, \gamma} \vvvert f\vvvert_{\gamma;\mfK,\mfp}  \; .
	\end{equ}
	Furthermore, for a second model $\overline{Z} = \bigl(\overline{\Pi},\overline{\Gamma} \bigr)$ and $\bar f \in \CD^\gamma \bigl( \overline{\Gamma} \bigr)$
	\begin{equ}
		\left\| \left( \Pi_x f(x) - \overline{\Pi}_x \bar{f}(x)\right)_{x \in \R^d} \right\|_{\Gamma^{\alpha,\gamma}; \K , \mfp} \lesssim_{\cT, Z, \overline{Z},\gamma} \vvvert f ; \bar{f} \vvvert_{\gamma;\mfK,\mfp} + \vvvert Z ; \overline{Z} \vvvert_{\gamma ; \K, \mfp} \; .
	\end{equ}
\end{proposition}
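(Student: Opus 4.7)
The plan is to verify, for $F_x \eqdef \Pi_x f(x)$, the two seminorm bounds defining $\Gamma^{\alpha,\gamma}_\s(\R^d;\CA)$, using only the defining inequalities \eqref{ModDefRel} for the model and the modelled distribution estimates. Since $A$ is locally finite and bounded below, every sum below ranges over finitely many $\beta \in A$ with $\alpha \leqslant \beta < \gamma$, so constants are uniform.

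\textbf{First bound.} Decomposing $f(x) = \sum_{\beta < \gamma} \CQ_\beta f(x)$ and applying \eqref{ModDefRel} termwise,
\begin{equ}
\mfp\bigl( \Pi_x f(x)(\CS^\lambda_{\s,x}\eta) \bigr) \leqslant \sum_{\beta < \gamma} \|\Pi\|_{\gamma;\K,\mfp}\,\mfp_\beta\bigl(f(x)\bigr)\,\lambda^\beta \lesssim \|\Pi\|_{\gamma;\K,\mfp}\,\vvvert f\vvvert_{\gamma;\K,\mfp}\,\lambda^\alpha,
\end{equ}
using $\lambda^\beta \leqslant \lambda^\alpha$ since $\beta \geqslant \alpha$ and $\lambda \leqslant 1$.

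\textbf{Second bound.} Using $\Pi_y = \Pi_x \Gamma_{xy}$ and $\Gamma_{xy}\Gamma_{yx} = \bone$,
\begin{equ}
F_y - F_x = \Pi_x\bigl(\Gamma_{xy} f(y) - f(x)\bigr) = \Pi_x \Gamma_{xy}\bigl(f(y) - \Gamma_{yx} f(x)\bigr).
\end{equ}
For each $\beta < \gamma$, the $\Gamma$-bound combined with the modelled distribution estimate gives
\begin{equ}
\mfp_\beta\bigl(\Gamma_{xy}(f(y) - \Gamma_{yx} f(x))\bigr) \leqslant \sum_{\beta \leqslant \delta < \gamma} \|\Gamma\|_{\gamma;\K,\mfp}\,\vvvert f\vvvert_{\gamma;\K,\mfp}\,|y-x|_\s^{\delta-\beta}|y-x|_\s^{\gamma-\delta} \lesssim |y-x|_\s^{\gamma-\beta}.
\end{equ}
Applying \eqref{ModDefRel} to the outer $\Pi_x$ and summing over $\beta$,
\begin{equ}
\mfp\bigl((F_y-F_x)(\CS^\lambda_{\s,x}\eta)\bigr) \lesssim \sum_{\beta < \gamma}\lambda^\beta |y-x|_\s^{\gamma-\beta} = \lambda^\alpha \sum_{\beta < \gamma}\lambda^{\beta-\alpha}|y-x|_\s^{\gamma-\beta}.
\end{equ}
Since $\beta-\alpha \geqslant 0$ and $\gamma-\beta > 0$, the elementary inequality $a^r b^s \leqslant (a\vee b)^{r+s} \leqslant (a+b)^{r+s}$ (valid for $a,b\geqslant 0$ and $r,s\geqslant 0$) yields $\lambda^{\beta-\alpha}|y-x|_\s^{\gamma-\beta} \leqslant (\lambda + |y-x|_\s)^{\gamma-\alpha}$, proving the second bound. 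The implicit constants in all inequalities involve only products of $\|\Pi\|_{\gamma;\K,\mfp}$, $\|\Gamma\|_{\gamma;\K,\mfp}$ and $\vvvert f\vvvert_{\gamma;\K,\mfp}$, which is exactly the statement of $t$-continuity.

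\textbf{Two-model estimate.} For the second assertion, I would write the telescoped differences
\begin{equs}
\Pi_x f(x) - \bar\Pi_x \bar f(x) &= \Pi_x(f(x)-\bar f(x)) + (\Pi_x - \bar\Pi_x)\bar f(x),\\
\Pi_x\Gamma_{xy}\bigl(f(y) - \Gamma_{yx} f(x)\bigr) &- \bar\Pi_x\bar\Gamma_{xy}\bigl(\bar f(y) - \bar\Gamma_{yx}\bar f(x)\bigr),
\end{equs}
and expand the second line into a sum of three pieces isolating $\Pi_x - \bar\Pi_x$, $\Gamma_{xy} - \bar\Gamma_{xy}$, and $(f-\bar f)(y) - \Gamma_{yx}(f-\bar f)(x)$ respectively. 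Each piece is then estimated by exactly the same two-step procedure as above, with the ``loose'' factor bounded by $\vvvert Z;\bar Z\vvvert_{\gamma;\K,\mfp}$ or $\vvvert f;\bar f\vvvert_{\gamma;\K,\mfp}$ and the remaining factors by $\vvvert Z\vvvert, \vvvert\bar Z\vvvert, \vvvert f\vvvert, \vvvert\bar f\vvvert$. The only obstacle is bookkeeping the cross terms; there is no new analytic content beyond the first part. The entire argument is a direct transcription of the classical proof \cite[Prop.~3.29]{Hai14}, with the key observation being that all the estimates used are linear in the $\Pi$-factor and thus extend painlessly from $\|\bigcdot\|$ to an arbitrary seminorm $\mfp \in \mfP$ of the locally $m$-convex bimodule $\CA$.
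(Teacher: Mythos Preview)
Your proof is correct and follows essentially the same approach as the paper. One minor simplification: in the second bound you write $F_y - F_x = \Pi_x\Gamma_{xy}\bigl(f(y) - \Gamma_{yx}f(x)\bigr)$ and then need the $\Gamma$-bound to estimate $\mfp_\beta\bigl(\Gamma_{xy}(\cdots)\bigr)$, incurring a sum over $\delta$. The paper instead stops at $F_y - F_x = \Pi_x\bigl(\Gamma_{xy}f(y) - f(x)\bigr)$ and applies the modelled distribution estimate directly to $\mfp_\beta\bigl(f(x) - \Gamma_{xy}f(y)\bigr) \leqslant \vvvert f\vvvert_{\gamma;\K,\mfp}\,|x-y|_\s^{\gamma-\beta}$, which avoids the extra $\Gamma$-bound and the inner sum. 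Both routes give the same final estimate; yours is just slightly longer.
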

\begin{proof}
	Fix $\K \Subset \R^d$ and $\mfp \in \mfP$. Then for all $x \in \mfK$, $\lambda \in (0,1]$, $\eta \in \CB^r_{\s, 0}$
	\begin{equs}
		\mfp \left( \left(\Pi_x f(x)\right) \left( \CS^\lambda_{\s, x} \eta\right) \right) &\leqslant \left\| \Pi \right\|_{\gamma ; \K , \mfp } \sum_{\beta < \gamma} \mfp_{\beta}\left( \Pi_x f(x) \right) \lambda^\beta  \leqslant \\
		& \leqslant  \left\| \Pi \right\|_{\gamma ; \K , \mfp } \vvvert f\vvvert_{\gamma; \mfK, \mfp} \sum_{\beta < \gamma} \lambda^\beta  \lesssim  \\
		& \lesssim_{A,\gamma}   \left\| \Pi \right\|_{\gamma ; \K , \mfp } \vvvert f\vvvert_{\gamma; \mfK, \mfp}  \lambda^\alpha\;,
	\end{equs}
	and, for all $y \in \K$, $y \neq x$,
	\begin{equs}
		\mfp \left( \left( \Pi_y f(y) - \Pi_x f(x)\right) \left( \CS^\lambda_{\s,x} \eta\right) \right)
		& = \mfp \left( \left( \Pi_x  \left( \Gamma_{xy} f(y) - \Pi_x f(x)\right) \right) \left( \CS^\lambda_{\s,x} \eta\right) \right) \leqslant  \\
		& \leqslant \| \Pi\|_{\gamma; \K, \mfp}  \sum_{\beta < \gamma} \mfp_{\beta} \left(f(x) - \Gamma_{xy} f(y)  \right) \lambda^\beta \leqslant  \\
		&\leqslant \| \Pi\|_{\gamma; \K, \mfp}  \|\Gamma\|_{\gamma; \K, \mfp} \sum_{\beta < \gamma}  |x-y|^{\gamma-\beta}_{\s} \lambda^\beta  \lesssim \\
		& \lesssim_{A,\gamma} \| \Pi\|_{\gamma; \K, \mfp}  \|\Gamma\|_{\gamma; \K, \mfp} \lambda^\alpha ( |x-y|_\s + \lambda )^{\gamma-\alpha}\;.
	\end{equs}
	The second bound follows analogously.
\end{proof}

\begin{corollary}[Reconstruction]
	Fix an $\CA$-regularity structure $\cT = (A, T ,G)$. Let $\alpha = \min A \leqslant 0$, and $\gamma \geqslant \alpha$. For every model $Z = (\Pi,\Gamma)$ there exists a $t$-continuous $\CA$-bimodule morphism $\CR \colon \CD^\gamma(\Gamma) \to \CC^\alpha_\s(\R^d ; \CA)$ satisfying the reconstruction bound
	\begin{equ}
		\mfp \left( \left(  \CR f - \Pi_x f(x) \right) \left( \CS^\lambda_{\s,x} \eta \right) \right) \lesssim \vvvert f \vvvert_{\gamma ; \overline{\K} , \mfp} \begin{cases}
			\lambda^\gamma \; , \qquad &\text{if } \gamma \neq 0\;,\\
			1 - \log(\lambda) \; , & \text{if } \gamma = 0\;,
		\end{cases}
	\end{equ}
	for all $x \in \K$, $\eta \in \CB^r_{\s,0}$, and $\lambda \in (0,1]$, where $\overline{\K} \eqdef \left\{ x \in \R^d \, \big| \, d_{\s}(x, \K) \leqslant 4 \right\}$.
If $\gamma > 0$, this map is unique.
\end{corollary}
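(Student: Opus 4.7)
The plan is to obtain $\CR$ as the composition of two maps that have already been constructed: the map $f \mapsto (\Pi_x f(x))_{x \in \R^d}$ from $\CD^\gamma(\Gamma)$ into $\Gamma^{\alpha,\gamma}_\s(\R^d;\CA)$ provided by the preceding proposition, followed by the abstract reconstruction operator $\CR \colon \Gamma^{\alpha,\gamma}_\s(\R^d;\CA) \to \CC^\alpha_\s(\R^d;\CA)$ established in Section~\ref{sec:Reconstruction} (where we take $E = \CA$, which is Fr\'echet whenever we are in the $m$-Fr\'echet setting we adopt throughout). Setting $\CR f \eqdef \CR[(\Pi_x f(x))_{x \in \R^d}]$, $t$-continuity is immediate from the composition of two $t$-continuous maps: for every $\mfp \in \mfP$ and $\K \Subset \R^d$ we obtain
\begin{equ}
\|\CR f\|_{\alpha;\K,\mfp} \lesssim \|(\Pi_x f(x))_x\|_{\Gamma^{\alpha,\gamma};\overline\K,\mfp} \lesssim_{\cT,Z,\gamma} \vvvert f\vvvert_{\gamma;\overline\K,\mfp}\;,
\end{equ}
with the implicit constant depending polynomially on $\vvvert Z\vvvert_{\gamma;\overline\K,\mfp}$.

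For the reconstruction bound, I will simply substitute the section $F_x = \Pi_x f(x)$ into the bound furnished by the abstract reconstruction theorem and use the previous proposition to bound $\|F\|_{\Gamma^{\alpha,\gamma};\overline\K,\mfp}$ by $\vvvert f\vvvert_{\gamma;\overline\K,\mfp}$ (up to model-dependent constants). Uniqueness in the case $\gamma > 0$ transfers directly from the uniqueness clause of the abstract reconstruction theorem: any other candidate would produce, via $x \mapsto \Pi_x f(x)$, a competing reconstruction of the same coherent section, contradicting the scalar statement applied with $E = \CA$.

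The only genuinely new point to verify is that $\CR$ is an $\CA$-bimodule morphism. For $a,b \in \CA$ and $f \in \CD^\gamma(\Gamma)$, the function $a\cdot f\cdot b$ is again in $\CD^\gamma(\Gamma)$ because each $\Gamma_{xy}$ is an $\CA$-bimodule morphism, and because $\Pi_x$ is also such a morphism we have
\begin{equ}
\Pi_x\bigl((a\cdot f\cdot b)(x)\bigr) = a\cdot \bigl(\Pi_x f(x)\bigr)\cdot b
\end{equ}
pointwise in $x$. Under the tensor-product isomorphism $\Gamma^{\alpha,\gamma}_\s(\R^d;\CA)\cong \Gamma^{\alpha,\gamma}_\s(\R^d)\wotimes_\eps \CA$ of Proposition~\ref{prop:TensProdPropSec}, left and right multiplication by $a,b$ act only on the second factor and commute with the scalar reconstruction operator, which acts only on the first. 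Thus $\CR(a\cdot f\cdot b) = a\cdot \CR f\cdot b$. The same observation, together with the standing assumption that all $\Gamma_{xy}$ and $\Pi_x$ are of degree $0$, shows that $\CR$ preserves the $I$-grading when $\CA$ is graded.

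The most subtle step in carrying this out carefully is the bimodule verification: one must make sure that the identification used in Proposition~\ref{prop:TensProdPropSec} really intertwines the pointwise bimodule action on $\CA$-valued sections with the action on the second tensor factor, and that the scalar reconstruction operator of \cite{CZ20} is natural with respect to tensoring with the identity on $\CA$. This is essentially the content of the Metatheorem~\ref{MetaThm} extension mechanism, but since the reconstruction operator is only characterised up to the ambiguity allowed when $\gamma \leqslant 0$, the cleanest argument is probably to appeal to the construction in \cite{CZ20}, which is an explicit convolution/averaging procedure that is manifestly $\CA$-bilinear in the coefficients when the coefficients are frozen.
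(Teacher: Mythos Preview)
Your proposal is correct and matches the paper's approach exactly: the paper gives no explicit proof of this corollary, leaving it as an immediate consequence of composing the preceding proposition (that $f\mapsto(\Pi_x f(x))_x$ lands in $\Gamma^{\alpha,\gamma}_\s(\R^d;\CA)$ with the right bound) with the abstract reconstruction theorem of Section~\ref{sec:Reconstruction}. Your additional verification of the $\CA$-bimodule morphism property via the tensor-product factorisation is more detail than the paper provides, and your caveat about the $\gamma\leqslant 0$ ambiguity is well taken.
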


We will also make direct use of the following estimate, which is an analogue of \cite[Proposition~3.31]{Hai14}
\begin{proposition}
\label{prop:PosTreeBnd}
	Let $\alpha \in A$ with $ \alpha > 0$, and let $(\Pi, \Gamma)$ be a model for $\cT$. Then $\Pi \big|_{T_\alpha}$ is determined by $\Gamma\big|_{T_\alpha}$ and $\Pi\big|_{T_\alpha^-}$. In particular, we have the following bounds
	\begin{equ}
		\| \Pi \|_{T_\alpha ; \K , \mfp} \lesssim \| \Pi \|_{\alpha ; \overline\K , \mfp} \| \Gamma \|_{\alpha ; \overline{\K}, \mfp}
	\end{equ}
	and for a second model $(\overline{\Pi}, \overline{\Gamma})$
	\begin{equ}
		\| \Pi - \overline{\Pi} \|_{T_\alpha ; \K , \mfp} \lesssim \left( \vvvert Z \vvvert_{\alpha ; \overline{\K}, \mfp} + \vvvert \overline Z \vvvert_{\alpha ; \overline{\K}, \mfp}  \right) \vvvert Z - \overline{Z} \vvvert_{\alpha ; \overline{\K}, \mfp} \; ,
	\end{equ}
	with $\overline\K$ being the $1$-fattening of $\K$.
\end{proposition}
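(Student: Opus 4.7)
The plan is to follow the strategy of the proof of the analogous scalar result \cite[Proposition 3.31]{Hai14}, with the scalar absolute value replaced throughout by the seminorm $\mfp$ on $\CA$; the key point is that because $\Pi_x$ is an $\CA$-bimodule morphism and each $T_\beta$ is a locally $m$-convex $\CA$-bimodule, all of the one-variable estimates that Hairer uses carry over once we fix a single seminorm $\mfp \in \mfP$.

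The starting point is the model identity $\Pi_x = \Pi_y \circ \Gamma_{yx}$, which, together with $\Gamma_{yx}\tau - \tau \in T_\alpha^-$ for $\tau \in T_\alpha$, yields
\begin{equ}\label{eq:keyid}
\Pi_x\tau \;=\; \Pi_y\tau + \Pi_y\bigl(\Gamma_{yx}\tau - \tau\bigr)\;,
\end{equ}
and the second summand is governed purely by $\Pi|_{T_\alpha^-}$ and $\Gamma|_{T_\alpha}$. Testing against a function $\varphi$ localised near $y$ at scale $\mu$ and applying $\mfp$ gives
\begin{equ}
\mfp\bigl( (\Pi_x\tau - \Pi_y\tau)(\varphi)\bigr)
\;\lesssim\; \sum_{\beta<\alpha} \|\Pi\|_{\alpha;\overline\K,\mfp}\|\Gamma\|_{\alpha;\overline\K,\mfp}\,\mfp_\alpha(\tau)\, |x-y|^{\alpha-\beta}\mu^\beta\;,
\end{equ}
which is bounded by a constant times $\|\Pi\|_\alpha\|\Gamma\|_\alpha \mfp_\alpha(\tau)\,(|x-y|+\mu)^\alpha$. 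This is the $\CA$-bimodule version of the coherence bound driving the proof.

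From here there are two essentially equivalent routes. The first is to run the dyadic/extension argument from the proof of the reconstruction theorem directly. Fix a smooth partition of unity $\{\psi^n_y\}_{y\in \Lambda_n}$ at scale $2^{-n}$ and, for fixed $x \in \K$ and $\eta \in \CB^r_{\s,0}$, decompose $\CS^\lambda_{\s,x}\eta = \sum_y (\CS^\lambda_{\s,x}\eta)\psi^n_y$, starting at $n$ with $2^{-n}\sim \lambda$. On each piece apply \eqref{eq:keyid} with the centre $y$ close to $x$, iterate the same decomposition on the remaining ``$\Pi_y\tau$-part'' at successive dyadic refinements, and sum the resulting telescoping series using the bound above. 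The geometric convergence comes from the factor $(|x-y|+\mu)^\alpha$ with $\alpha > 0$, and the limit identifies $\mfp((\Pi_x\tau)(\CS^\lambda_{\s,x}\eta))$ with an absolutely convergent sum controlled by $\|\Pi\|_\alpha\|\Gamma\|_\alpha\mfp_\alpha(\tau)\,\lambda^\alpha$, which is exactly the first claimed bound. The second (more concise) route is to observe that $f(y) := \tau$ defines a modelled distribution in $\CD^\alpha(\Gamma)$ with $\vvvert f \vvvert_{\alpha;\overline\K,\mfp}\lesssim \mfp_\alpha(\tau)(1+\|\Gamma\|_{\alpha;\overline\K,\mfp})$ (the check is immediate: $\CQ_\alpha(f(x) - \Gamma_{xy}f(y)) = 0$ and the lower components are controlled by $\|\Gamma\|$), and then to appeal to the uniqueness half of the Reconstruction theorem (available since $\gamma=\alpha>0$) to identify the distribution produced by the dyadic construction as $\Pi_x\tau$; this identifies $\Pi|_{T_\alpha}$ from $\Pi|_{T_\alpha^-}$ and $\Gamma$ and simultaneously produces the required estimate.

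The second inequality, for two models $(\Pi,\Gamma)$ and $(\overline\Pi,\overline\Gamma)$, is obtained by running exactly the same argument on the difference, after splitting
\begin{equ}
\Pi_y(\Gamma_{yx}\tau - \tau) - \overline\Pi_y(\overline\Gamma_{yx}\tau - \tau)
= (\Pi_y - \overline\Pi_y)(\overline\Gamma_{yx}\tau - \tau) + \Pi_y\bigl((\Gamma_{yx} - \overline\Gamma_{yx})\tau\bigr)\;,
\end{equ}
so that each term is bounded by a product of one ``size'' norm ($\vvvert Z\vvvert$ or $\vvvert\overline Z\vvvert$) and one ``difference'' norm ($\vvvert Z;\overline Z\vvvert$); the remainder of the telescoping argument is unchanged. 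The main obstacle is purely bookkeeping: ensuring that the $t$-continuity of $\Pi_x$ and the $\CA$-bimodule submultiplicativity of $\mfp$ are used consistently so that the classical scalar estimates produce genuine seminorm bounds, with the correct seminorm index $\mfp$ appearing on both sides; no new analytic input is needed beyond what is already used in the scalar proof.
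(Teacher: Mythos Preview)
Your proposal is correct and matches the paper's approach: the paper does not give an explicit proof but simply states the proposition as the direct analogue of \cite[Proposition~3.31]{Hai14}, relying on its blanket remark that results whose proofs follow by replacing the norm on $\R$ with a seminorm $\mfp \in \mfP$ are not reproved. Your write-up faithfully carries out precisely this substitution argument, so there is nothing to compare beyond the fact that you have spelled out details the paper omits.
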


Beyond these, all related results of \cite[Section~3]{Hai14} continue to hold. In particular, these include Propositions~3.28,~3.29,~and~3.32. See also \cite[Appendix~B]{HS24} for results  similar to Proposition~3.32 without the use of wavelets.

\begin{definition}
	Let $H\eqdef \left\{ x \in \R^d \, \middle| \, \forall i \in [\bar d ] : x_i = 0 \right\}$, where $\bar d \leqslant d$. Further we denote by $\mfm$ the codimension of $H$ given by $\mfm \eqdef \sum_{i = 1}^{\bar d} \s_i$.

	For $x, y \in \R^d$ let
	\begin{equ}
		|x|_H \eqdef 1 \wedge d_{\s}(x,H), \qquad |x,y|_H \eqdef  |x|_H \wedge |y|_H
	\end{equ}
	and for $\K \subset \R^d$ let
	\begin{equ}
		\K_H \eqdef \left\{ (x,y) \in \left( \K \setminus H \right)^2 \, \middle| \, (x \neq y)  \land \left(  |x-y|_{\s} \leqslant | x,y |_H  \right) \right\} \; .
	\end{equ}
\end{definition}

\begin{definition}
	Fix an $\CA$-regularity structure $\cT$, a model $(\Pi,\Gamma)$, a hyperplane $H$, $\gamma > 0$, and $\eta \in \R$. For a function $f \colon \R^d \setminus H \to T^-_\gamma $ we set
	\begin{equ}
		\|f\|_{\gamma,\eta; \K , \mfp } \eqdef \sup_{x \in \K \setminus H} \sup_{\substack{\ell \in A \\ \ell < \gamma}} \frac{\mfp_\ell \left( f(x)\right)}{| x|_H^{(\eta- \ell)\wedge 0}} 
	\end{equ}
	The space $\CD^{\gamma,\eta}_{H}(V)$ then consists of functions $f : \R^d \setminus H \to T^-_\gamma $ taking values in the sector $V$ such that, for every  $\K \Subset \R^d$ and every $\mfp \in \mfP$, one has
	\begin{equ}
		\vvvert f\vvvert_{\gamma,\eta;\K,\mfp} \eqdef \|f\|_{\gamma,\eta; \K,\mfp} + \sup_{(x,y) \in \K_H} \sup_{\substack{\ell \in A \\ \ell < \gamma}} \frac{\mfp_\ell\left( f(x)-\Gamma_{xy} f(y)\right)}{|x-y|_{\s}^{\gamma-\ell} |x,y|_H^{\eta-\gamma}} < \infty \; .
	\end{equ}
	We also have for $\bar f$ associated with a second model $\left( \overline{\Pi}, \overline{\Gamma} \right)$
	\begin{equs}
		\left\vvvert f; \bar f \right\vvvert_{\gamma,\eta;\K,\mfp} & \eqdef \left\| f-\bar f \right\|_{\gamma,\eta;\K,\mfp} +  \\
		& \qquad + \sup_{(x,y) \in \K_H} \sup_{\substack{\ell \in A \\ \ell < \gamma}} \frac{\mfp_{\ell} \left( f(x)-\bar f(x)-\Gamma_{xy} f(y) + \overline{\Gamma}_{xy}\bar f(y)\right)}{|x-y|_{\s}^{\gamma-\ell} |x,y|_H^{\eta-\gamma}} \; .
	\end{equs}
\end{definition}

With this definition at hand, the results of \cite[Section~6]{Hai14} extend to the setting of $\CA$-Regularity Structures, these include Lemmata~6.5,~6.6,~6.7, and in particular Proposition~6.9.

\begin{corollary}[The Singular Case]
	Let $\eta \leqslant \gamma$. If $\eta \wedge \alpha > - \mfm$, then the reconstruction operator $\CR$ extends to a continuous map $\CR \colon \CD^{\gamma,\eta} \to \CC_{\s, H}^{\alpha, \alpha \wedge \eta}(\R^d ; \CA)$.
\end{corollary}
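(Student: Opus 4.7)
The plan is to adapt the proof of Theorem~3.10 and Proposition~6.9 in \cite{Hai14} to our $\CA$-bimodule setting. The key observation is that, thanks to the $t$-convex structure we have developed, every estimate in the scalar proof can be repeated verbatim after replacing the scalar absolute value by a single seminorm $\mfp \in \mfP$ and consistently using the $\mfp$-seminorms on each $T_\alpha$ and on test-pairings. Since the reconstruction operator is already known to be $t$-continuous on $\CD^\gamma(\Gamma)$, we already have a distribution $\CR f \in \cD'(\R^d \setminus H; \CA)$ defined on compact sets staying away from $H$; only the behaviour near $H$ and the blow-up rates need attention.

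First I would fix $\K \Subset \R^d$, a seminorm $\mfp\in\mfP$, a point $x \in \K\setminus H$, a scale $\lambda \in (0,1]$, and a test function $\varphi \in \CB^{r}_{\s,0}$. I would split according to the regime of $\lambda$ relative to $|x|_H$. When $\lambda \leqslant \tfrac12 |x|_H$, the support of $\CS^\lambda_{\s,x}\varphi$ is disjoint from $H$, so the ordinary reconstruction bound together with the $\CD^{\gamma,\eta}$ control $\mfp_\ell(f(x)) \lesssim |x|_H^{(\eta-\ell)\wedge 0}$ immediately yields
\begin{equ}
\mfp\bigl((\CR f - \Pi_x f(x))(\CS^\lambda_{\s,x}\varphi)\bigr) \lesssim \vvvert f\vvvert_{\gamma,\eta;\bar\K,\mfp}\,\vvvert Z\vvvert_{\gamma;\bar\K,\mfp}\, \lambda^\gamma |x|_H^{\eta-\gamma}\;,
\end{equ}
which after expanding $\Pi_x f(x)$ in the $T_\ell$-components and collecting powers gives the required $\lambda^\alpha |x|_H^{\eta-\alpha}$-type bound. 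In the complementary regime $\lambda > \tfrac12 |x|_H$, the test function's support intersects $H$ and one cannot reduce directly to local reconstruction; this is the regime in which the assumption $\eta\wedge\alpha > -\mfm$ is crucial.

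The main technical step, and the only real obstacle, is this second regime. I would handle it by the standard partition-of-unity argument: write $\CS^\lambda_{\s,x}\varphi = \sum_{i} \varphi_i$ as a locally finite sum of rescaled test functions $\varphi_i = \CS^{\lambda_i}_{\s,x_i}\psi_i$ with $x_i \in \R^d\setminus H$, $\lambda_i \simeq |x_i|_H$, and at most $\lesssim (\lambda/|x_i|_H)^{d_\s-\mfm}$ patches at each dyadic scale $|x_i|_H \simeq 2^{-k}$ (for $2^{-k} \lesssim \lambda$). Applying to each patch the bound already obtained in the first regime and summing, the combinatorial factor balances against $|x_i|_H^{\eta\wedge\alpha}$, and the total sum converges precisely because $\eta \wedge \alpha > -\mfm$, producing a bound of the desired order. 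The same decomposition also shows that the extension of $\CR f$ across $H$ is well-defined and that the global bound
\begin{equ}
\mfp\bigl((\CR f)(\CS^\lambda_{\s,x}\varphi)\bigr) \lesssim \vvvert f\vvvert_{\gamma,\eta;\bar\K,\mfp}\,\vvvert Z\vvvert_{\gamma;\bar\K,\mfp}\,\lambda^\alpha\bigl(|x|_H\vee\lambda\bigr)^{(\eta-\alpha)\wedge 0}
\end{equ}
holds uniformly in $x,\lambda$, which is exactly the condition defining $\CC^{\alpha,\alpha\wedge\eta}_{\s,H}(\R^d;\CA)$ from Definition~\ref{def:SingHolderSp}.

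Finally, continuity of $\CR$ in the $\vvvert\cdot;\cdot\vvvert_{\gamma,\eta;\K,\mfp}$ topology follows from tracking the linear dependence on $f$ in the estimates above, together with the observation that $(\CR f)(\CS^\lambda_{\s,x}\varphi)$ depends linearly on $f$ and all constants involve only fixed polynomial functions of $\vvvert Z\vvvert_{\gamma;\bar\K,\mfp}$ and $\vvvert\overline Z\vvvert_{\gamma;\bar\K,\mfp}$. Since each of these steps is an $\CA$-bimodule morphism (because $\Pi_x$, $\Gamma_{xy}$ and the elementary reconstruction are), the resulting map is automatically an $\CA$-bimodule morphism, and since the constants depend only on a single $\mfp$ at a time, it is $t$-continuous over $\CA$.
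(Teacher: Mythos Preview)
Your proposal is correct and follows essentially the same approach as the paper. The paper's proof is simply a one-line deferral to \cite[Theorem~A.16]{CHP23} (itself an extension of \cite[Proposition~6.9]{Hai14}) for the scalar case, together with the observation that ``the usual arguments'' extend to the $\CA$-valued setting; you have spelled out precisely those usual arguments, namely the two-regime split in $\lambda$ versus $|x|_H$ and the dyadic partition-of-unity summation near $H$ under the integrability condition $\eta\wedge\alpha > -\mfm$, carried out seminorm by seminorm.
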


\begin{proof}
	For a proof of the result for $\CA = \R$, see \cite[Theorem~A.16]{CHP23} which is an extension of \cite[Proposition~6.9]{Hai14}. The usual arguments then extend the result to $\CA$-regularity structures.
\end{proof}

\begin{corollary}[The Function-Like Case]
	Let $V$ be a sector that admits a decomposition of the form $V = T_{d, \s} \oplus T^+_\alpha$ for some $\alpha > 0$. Then, for $f \in \CD^{\gamma , \eta}$ with $\gamma > \alpha$ and $\eta \leqslant 0$, we have $\CR f \in \CC^{\alpha, \eta}_{\s , H}(R^d ; \CA)$ and $\CR f = f_0$, where $f_0$ is the component of $f$ in $T_0$.
\end{corollary}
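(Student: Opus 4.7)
The plan is to deduce the claim from the preceding singular reconstruction corollary, enhanced by a direct identification of $\CR f$ with the function $f_0 \eqdef \CQ_0 f \colon \R^d\setminus H \to \CA$. Since $\gamma > 0$ the reconstruction is unique on $\R^d\setminus H$; what remains is to upgrade $\CR f$ from an element of $\CC^{0,\eta}_{\s,H}$ (which is all the previous corollary yields, since the regularity of $V$ is $0$) to an element of $\CC^{\alpha,\eta}_{\s,H}$, using the splitting $V = T_{d,\s}\oplus T^+_\alpha$.

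First I would identify $\CR f$ pointwise on $\R^d \setminus H$ with $f_0$. Fix $\mfp\in\mfP$, $x\in \R^d\setminus H$, and $\eta\in\CB^r_{\s,0}$ with $\int\eta = 1$. For $\lambda\in \bigl(0,\tfrac12|x|_H\bigr]$, the $\CA$-bimodule linearity of $\Pi_x$ lets us write
\[
\bigl(\CR f - f_0(x)\cdot \Pi_x\1\bigr)\bigl(\CS^\lambda_{\s,x}\eta\bigr) = \bigl(\CR f - \Pi_x f(x)\bigr)\bigl(\CS^\lambda_{\s,x}\eta\bigr) + \Pi_x\bigl(f(x)-f_0(x)\1\bigr)\bigl(\CS^\lambda_{\s,x}\eta\bigr).
\]
The first summand is controlled by the singular reconstruction bound and decays like $\lambda^\gamma|x|_H^{\eta-\gamma}$. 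Since $f(x) - f_0(x)\1 \in T^+_0 \cap V$, the model bound \eqref{ModDefRel} applied grade by grade produces $O(\lambda^\beta)$ at each level $\beta>0$ occurring in $V$, so the second summand is $O(\lambda^{\min\{1,\alpha\}})$. Using that $\Pi_x\1$ coincides with the constant distribution $\1_\CA$ on the polynomial sector (so $\Pi_x\1(\CS^\lambda_{\s,x}\eta) \to 1$), taking $\lambda\downarrow 0$ identifies $\CR f$ with $f_0$ pointwise off $H$.

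Next I would establish the H\"older-with-blow-up regularity of $f_0$. For $(x,y)\in \K_H$, exploit $\Gamma_{xy}\restriction_{T_0}=\bone_{T_0}$ to write
\[
f_0(x) - f_0(y) = \CQ_0\bigl(f(x) - \Gamma_{xy}f(y)\bigr) + \sum_{0<\beta<\gamma}\CQ_0\,\Gamma_{xy}\,\CQ_\beta f(y).
\]
The first term is dominated by $\vvvert f\vvvert_{\gamma,\eta;\K,\mfp}|x-y|^\gamma_\s |x,y|_H^{\eta-\gamma}$ directly from the $\CD^{\gamma,\eta}$ seminorm applied at grade $0$. Each term in the sum is, by \eqref{ModDefRel} and the a priori bound $\mfp_\beta(f(y))\leqslant \vvvert f\vvvert_{\gamma,\eta;\K,\mfp}|y|_H^{(\eta-\beta)\wedge 0}$, of size at most $|x-y|^\beta_\s|x,y|_H^{(\eta-\beta)\wedge 0}$; interpolating against $|x-y|_\s\leqslant |x,y|_H$ and using that the smallest positive grade appearing in $V$ is governed by $\alpha$ yields the bound $|x-y|_\s^\alpha|x,y|_H^{(\eta-\alpha)\wedge 0}$. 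Together with the immediate pointwise bound $\mfp(f_0(x))\leqslant \vvvert f\vvvert_{\gamma,\eta;\K,\mfp}|x|_H^{\eta\wedge 0}$, this places $f_0$ in $\CC^{\alpha,\eta}_{\s,H}(\R^d;\CA)$ per Definition~\ref{def:SingHolderSp}.

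The main obstacle I anticipate is cleanly separating the contributions of the polynomial sector $T_{d,\s}$ and of $T^+_\alpha$ to $\CQ_0\Gamma_{xy}f(y)$: the action $\Gamma_{xy}X^k$ generates an $(y-x)^k\1$ term in $T_0$ that must be balanced against the abstract bound on $\Gamma$ restricted to the complementary sector $T^+_\alpha$, and only for $\alpha\leqslant 1$ do both sources of contributions produce the same H\"older exponent. Apart from this bookkeeping, the proof is a direct transcription of the scalar argument with $|\cdot|$ replaced throughout by $\mfp\in\mfP$, relying on the $t$-continuity of $\Pi$ and $\Gamma$ to furnish, for each $\mfp$, a single constant that controls all estimates simultaneously.
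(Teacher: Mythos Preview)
Your argument follows the same route as the references the paper cites (\cite[Prop.~3.28]{Hai14} for the identification $\CR f = f_0$ and the interior $\CC^\alpha$ regularity, \cite[Thm.~A.16]{CHP23} for the blow-up near $H$), and the identification step is correct. The regularity step, however, has a genuine gap when $\alpha > \min_i \s_i$, in particular when $\alpha > 1$. In your decomposition
\[
f_0(x) - f_0(y) = \CQ_0\bigl(f(x) - \Gamma_{xy}f(y)\bigr) + \sum_{0<\beta<\gamma}\CQ_0\,\Gamma_{xy}\,\CQ_\beta f(y)\,,
\]
the grades $0<\beta<\alpha$ coming from $T_{d,\s}$ produce terms $f_k(y)(x-y)^k$ of size $|x-y|_\s^{|k|_\s}$ with $|k|_\s<\alpha$, and these cannot be absorbed into $|x-y|_\s^\alpha |x,y|_H^{\eta-\alpha}$ since $(|x,y|_H/|x-y|_\s)^{\alpha-|k|_\s}\geqslant 1$. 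Your closing paragraph flags exactly this but dismisses it as bookkeeping; it is not --- a bound on the bare increment $f_0(x)-f_0(y)$ can never yield $\CC^\alpha$ for $\alpha>1$.

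There are two clean fixes. First, move the low polynomial terms to the left: bound instead the Taylor remainder $f_0(x)-\sum_{|k|_\s<\alpha} f_k(y)(x-y)^k$, whose expansion via $\CQ_0\Gamma_{xy}$ then only involves grades $\beta\geqslant\alpha$, and the bound $|x-y|_\s^\alpha|x,y|_H^{\eta-\alpha}$ follows as you wrote. This is precisely the higher-order characterisation of $\CC^\alpha$. Second --- and this is the route actually taken in \cite[Prop.~3.28]{Hai14} --- bypass increments and work directly with the Besov definition of $\CC^{\alpha,\eta}_{\s,H}$: test $f_0=\CR f$ against $\psi\in\CB^{r,\alpha}_{\s,0}$, control $(\CR f-\Pi_z f(z))(\CS^\lambda_{\s,z}\psi)$ by the reconstruction bound, and observe that every polynomial component $\Pi_z X^k$ with $|k|_\s\leqslant\alpha$ integrates to zero against $\psi$ by the moment condition, so only grades $\geqslant\alpha$ survive in $\Pi_z f(z)$. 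The blow-up factors near $H$ then enter exactly as in your argument.
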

\begin{proof}
	The argument of \cite[Proposition~3.28]{Hai14} shows that $\CR f(x)  = f_0(x)$ for $x \in \R^d \setminus H$ and that $f_0 \in \CC^\alpha_{\s}(R^d \setminus H)$. The same argument as in \cite[Theorem~A.16]{CHP23} finishes the proof.
\end{proof}

\subsection{Abstract Integration}
\label{sec:AbstInt}
We shall need the following set of operators:
\begin{definition}
	Let $L_{T}^\beta$ be the set of $t$-continuous $\mathbb{K}$-linear operators $T \to T$ such that, for $M \in L_{T}^\beta$,
	\begin{equ}
		M \big|_{T_{\alpha}} \colon T_\alpha \longrightarrow T_{\alpha+\beta}^-  \; .
	\end{equ}
\end{definition}

\begin{assumption}
\label{assump:PolyT}
	In the following given an $\CA$-regularity structure $\cT = (A,T,G)$, we shall assume that $\cT_{d,\s} \subset \cT$ where $\cT_{d,\s}$ is the canonical polynomial regularity structure, the vector space of which we denote by $\overline{T}$, and that for $n \in \N$, $T_n = \overline{T}_n$. Further, we assume that any model $\Pi$ acts on $\overline{T}$ as in Example~\ref{ex:PolMod}.
\end{assumption}

\begin{definition}[\TitleEquation{\beta}{beta}-Regularising Kernel]
	\label{def:SingKern}
	Fix $\beta > 0$.
Denoting the diagonal of $\R^{d} \times \R^{d}$ by  $\Delta \eqdef \left\{ (x,x) \in \R^d \times \R^d \, \middle| \, x \in \R^d \right\}$, we say
\[
\overline K \in \cC^\infty \left( \left(\R^d \times \R^d \right) \setminus \Delta ; \CA \right)
\]
is a $\beta$-regularising integral kernel if we have a  decomposition 
\[
\overline K = K + R\;,
\] 
where $R \in  \cC^\infty \left( \R^d \times \R^d ; \CA \right) $ and $K$ is a function, such that, for every $\mfp \in \mfP$, there exists a constant $C_{\mfp}>0$ such that, for all $k,\ell \in \N^d$ and $x,y \in \R^d$
	\begin{equ}
		\mfp\left( D^k_1 D^\ell_2 K(x,y) \right) \leqslant C_{\mfp} |x-y|_{\s}^{\beta- |\s| -|k|_{\s}-|\ell|_{\s}}
	\end{equ}
	Further, we suppose that $K$ can be decomposed as
	\begin{equ}
		K(x,y) = \sum_{n \in \N} K_n(x,y)
	\end{equ}
	with the functions $K_n(x,y)$ satisfying:
	\begin{itemize}
		\item for all $n \in  \N$ we have $\supp K_n \subset \left\{ (x,y) \in \R^d \times \R^d \, \middle| \, |x-y|_{\s} \leqslant 2^{-n} \right\}$,
		\item for all $k,\ell \in \N^d$ and all $\mfp\in \mfP$, there exists a constant $C_{\mfp} \geqslant 0$ such that, for all $x,y \in \R^d$ and $n \in \N$,
		\begin{equ}
			\mfp\left( D^k_1 D^\ell_2 K_n(x,y) \right) \leqslant C_{\mfp} 2^{ \left( |\s| - \beta + |k|_{\s} + |\ell|_{\s} \right) n } \; ,
		\end{equ}
		\item for any $k, \ell \in \N^d$ and all $\mfp \in \mfP$, there exists a constant $C_{\mfp} \geqslant 0$ such that, for all $x,y \in \R^d$ and all $n \in \N$,
		\begin{equs}
			\mfp\biggl( \int\limits_{\R^d} (x-y)^\ell D_2^k K_n(x,y) \,\d x \biggr)&\leqslant C_{\mfp} 2^{-\beta n}\\
			\mfp\biggl( \int\limits_{\R^d} (x-y)^\ell D_1^k K_n(x,y) \,\d y \biggr)&\leqslant C_{\mfp} 2^{-\beta n}
		\end{equs}
		\item there exists an $r > 0$ such that, for all $n \in \N$ and all polynomials $P$ with $\deg P \leqslant r$,
		\begin{equ}
			\int\limits_{\R^d} K_n(x,y) P(y) \,\d y = 0 \; .
		\end{equ}
	\end{itemize}

	We  call $K$ (resp. $R$) the singular (resp. smooth) part of $\bar{K}$. 
\end{definition}

\begin{remark}
	We note that the assumptions on $K$ in particular imply that convolution with $K$ maps $\CC^\alpha_{\s}(\R^d ; \CA) \to \CC^{\alpha+\beta}_{\s}(\R^d ; \CA)$ for all $\alpha \in \R$, by the scalar Schauder estimates.

	Moreover, we are technically restricting ourselves here to integration kernels (and thus differential operators) that are acting from the left. Similarly, one could also define kernels that act from both sides, i.e.\ $K = \sum_{i = 1}^n K^1_i \otimes K^2_i \in \cC^\infty \left( \left(\R^d \times \R^d \right) \setminus \Delta ; \CA \right)^{ \otimes 2} $ with
	\begin{equ}
		K \ast \phi( x ) \eqdef \sum_{i = 1}^n \int\limits_{\R^d} K^1_i(x,y) \phi(y) K^2_i(y, x) \d y
	\end{equ}
	satisfying analogous conditions to the ones above. However, we will only consider the case of left-acting integration kernels for the sake of notational simplicity.
\end{remark}

\begin{definition}[Abstract Integration]
	Given a sector $V$, a $\mathbb{K}$-linear $t$-continuous map $\CI \colon V \to T $ is an abstract integration map of order $\beta$ if it satisfies:
	\begin{itemize}
		\item $\CI \colon V_\alpha \to T_{\alpha + \beta} $ for all $\alpha \in A$,
		\item $\CI a = 0$ for all $a \in V \cap \overline{T}$,
		\item $\CI \Gamma a - \Gamma \CI a \in \overline{T}$, for all $\Gamma \in G$.
	\end{itemize}
	 If $T$ carries a grading, then we assume that the map $\CI$ is of grade 0.
\end{definition}

\begin{remark}
	The reason we assume that $\CI$ is only $\mathbb{K}$-linear and not $\CA$-bilinear is that convolution with $K$ is not necessarily $\CA$-bilinear, as $K$ may take values in $\CA$ proper and not just in $\mathbb{K}$.
\end{remark}

\begin{definition}[Compatibility Kernels]
	Let $\CJ \colon \R^d \to L^\beta_{T}$ be defined for $a \in T_\alpha$, $\alpha \in A$
	\begin{equ}
		\CJ(x) a \eqdef \sum_{|k|_{\s} < \alpha + \beta} \frac{X^k}{k!} \int\limits_{\R^d} D^k_1 K(x,z) \left( \Pi_x a \right)(z) \d z.
	\end{equ}
	Furthermore, for a sector $V$, $\gamma \in \R$, and $f\in \CD^\gamma(V)$ define $\CN_\gamma \colon \CD^\gamma(V) \to \CD^{\gamma+\beta}(\overline{T})$
	\begin{equ}
		\left( \CN_\gamma f \right)(x) \eqdef \sum_{|k|_{\s} < \gamma + \beta} \frac{X^k}{k!} \int\limits_{\R^d} D^k_1 K(x,y) \left(  \CR f - \Pi_x f(x) \right) (y) \d y \; .
	\end{equ}

\end{definition}

\begin{definition}
\label{def:CompKern}
	Given a sector $V$ and an abstract integration operator $\CI$ on $V$, a model $(\Pi,\Gamma)$ realises (the singular part of) a $\beta$-regularising kernel $K$ for $\CI$ if, for every $\alpha \in A$, every $a \in V_\alpha$, and $x \in \R^d$ we have
	\begin{equ}
		\Pi_x \CI a = \int\limits_{\R^d} K(\,\bigcdot\, , z) \left( \Pi_x a \right) (z) \d z - \Pi_x \CJ(x)a \; .
	\end{equ}
	For such a sector $V$, model $\Pi$, and $\gamma > 0$, we define the operator $K_\gamma \colon \CD^\gamma(V) \to \CD^{\gamma+\beta}$ for $f \in \CD^\gamma(V)$
	\begin{equ}
		(\CK_\gamma f) (x) \eqdef \CI f(x) +  \CJ(x)f (x)+ \left( \CN_\gamma f \right)(x) \; .
	\end{equ}
\end{definition}

Given these definitions all results of \cite[Section~5]{Hai14} extend straightforwardly to the case of $\CA$-regularity structures, in particular this is true for Theorems~5.12~\&~5.14, Lemmata~5.16,~5.18,~5.19,~5.21 and Propositions~5.23~\&~6.16.

\begin{theorem}
	\label{SingKernThm}
		Let $\cT = (A,T,G)$ be an $\CA$-regularity structure and $(\Pi,\Gamma)$ a model for $\cT$ satisfying Assumption~\ref{assump:PolyT}. Let $\gamma > 0$ and let $K$ be (the singular part of) a $\beta$-regularising kernel for $\beta > 0$ satisfying the polynomial annihilation condition with $r = \beta + \gamma$, let $\CI$ be an abstract integration map of order $\beta$ acting on some sector $V$, let $\Pi$ realise $K$ for $\CI$, and $\CK_\gamma$ as above.

		Then, for $\gamma+\beta \notin \N$, $\CK_\gamma$ is well-defined, maps $\CD^\gamma(V)$ into $\CD^{\gamma+\beta}$, and the identity
		\begin{equ}
			\CR \CK_\gamma f = K \ast \CR f
		\end{equ}
		holds for every $f \in \CD^\gamma(V)$. If $\left( \overline{\Pi},\overline{\Gamma} \right)$ is a second model realising $K$ and $\bar f \in \CD^\gamma \left( V ; \overline{\Gamma} \right)$, then one has the bound
		\begin{equ}
			\left\vvvert \CK_\gamma f ; \overline{\CK}_\gamma \bar f \right\vvvert_{\gamma+\beta;\K,\mfp} \lesssim \left\vvvert f ; \bar f \right\vvvert_{\gamma; \overline{\K},\mfp} + \left\vvvert Z ;  \overline{Z} \right\vvvert_{\gamma;\overline{\K}, \mfp} 
		\end{equ}
		for every $\K \Subset \R^d$ and $\overline{\K}$ being its 1-fattening. The proportionality constant in the bound depends only on the norms $\vvvert f \vvvert_{\gamma; \overline{\K},\mfp}, \vvvert  \bar f\vvvert_{\gamma; \overline{\K},\mfp}$ as well as similar bounds on the two models.
	\end{theorem}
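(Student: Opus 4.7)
The plan is to follow the strategy of the proof of \cite[Theorem~5.12]{Hai14}, adapting each norm estimate to the corresponding seminorm estimate over the family $\mfP$. Since $\CI$ is $\mathbb K$-linear and $t$-continuous and the kernels $K_n$ in Definition~\ref{def:SingKern} satisfy $\mfp$-dependent bounds, every estimate in the classical scalar argument has a direct counterpart, with scalar norms of test functions against $\Pi_x a$ replaced by $\mfp(\Pi_x a(\phi))$. The key structural ingredients we will rely on are the $t$-continuity of $\Pi$, $\Gamma$, $\CI$, and of the polynomial operators $\CJ(x)$, together with the submultiplicative nature of the $\mfp$'s (Definition~\ref{def:mconvex}), which makes the telescoping estimates over the scales $2^{-n}$ converge uniformly in each seminorm.

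First I would verify that $\CJ(x)a$ is well-defined as an element of $\overline T$ for every $a \in V_\alpha$ and that $x \mapsto \CJ(x)a$ satisfies the required H\"older-type regularity. This uses the decomposition $K = \sum_n K_n$: for each fixed $x$, the contributions $\int D_1^k K_n(x,z)(\Pi_x a)(z)\,dz$ are estimated using $\mfp((\Pi_x a)(\CS^{2^{-n}}_{\s,x}\eta)) \leqslant \|\Pi\|_{\gamma;\K,\mfp}\mfp_\alpha(a) 2^{-n\alpha}$ from \eqref{ModDefRel} together with $\mfp(D^k_1 D^\ell_2 K_n)\leqslant C_\mfp 2^{(|\s|-\beta+|k|_\s+|\ell|_\s)n}$; summing over $n$ converges as long as the relevant component has homogeneity $|k|_\s < \alpha+\beta$, which is precisely the truncation built into $\CJ$. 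The bound on $\CJ(x)a - \Gamma_{xy}\CJ(y)a$ is obtained by expanding the difference and using that $\Pi_x$ realises $K$ for $\CI$, together with the standard reordering of the sum that produces cancellations controlled by $|x-y|_\s^{\alpha+\beta-|k|_\s}$; this is a term-by-term seminorm adaptation of \cite[Lemma~5.19]{Hai14}.

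Next I would treat $\CN_\gamma f$ analogously. Here the reconstruction bound takes the place of the direct $\Pi_x a$ bound: by the $t$-continuous Reconstruction Theorem we have $\mfp((\CR f - \Pi_x f(x))(\CS^{2^{-n}}_{\s,x}\eta)) \lesssim \vvvert f\vvvert_{\gamma;\overline\K,\mfp}2^{-n\gamma}$, and the polynomial annihilation condition on $K_n$ with $r \geqslant \gamma+\beta$ yields, after the usual Taylor cancellation up to order $|k|_\s < \gamma+\beta$, the bound $\mfp(\CN_\gamma f(x)_k) \lesssim 1$ and the corresponding increment bound of order $|x-y|_\s^{\gamma+\beta-|k|_\s}$. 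Combining the estimates for $\CI f$ (which contributes only to components of homogeneity $\geqslant \alpha+\beta$ for $\alpha$ the regularity of $V$), $\CJ f$, and $\CN_\gamma f$, and checking that $\CK_\gamma f(x) - \Gamma_{xy}\CK_\gamma f(y)$ sits in the right seminorm class at every level, concludes that $\CK_\gamma f \in \CD^{\gamma+\beta}$. The compatibility $\CI\Gamma_{xy}=\Gamma_{xy}\CI \bmod \overline T$ together with the polynomial transformation rule of $\Gamma_{xy}$ (Assumption~\ref{assump:PolyT}) is what makes the polynomial corrections cancel correctly; this is where we use that the polynomial structure group acts by translation.

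Finally, the reconstruction identity $\CR\CK_\gamma f = K * \CR f$ follows by evaluating both sides against a test function $\CS^\lambda_{\s,x}\eta$ and using uniqueness of reconstruction, which holds because $\gamma+\beta \notin \N$ ensures $\gamma+\beta > 0$, so the reconstruction operator on $\CD^{\gamma+\beta}$ is unique; both sides coincide with $\int K(\bigcdot,z)(\Pi_x f(x))(z)\,dz + K*(\CR f - \Pi_x f(x))$ modulo errors of positive homogeneity, and the identity $\Pi_x \CI a = \int K(\bigcdot,z)(\Pi_x a)(z)\,dz - \Pi_x\CJ(x)a$ from Definition~\ref{def:CompKern} ties everything together. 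The quantitative bound $\vvvert \CK_\gamma f;\overline\CK_\gamma \bar f\vvvert_{\gamma+\beta;\K,\mfp}$ is derived by rerunning all of the above estimates on differences; here the main bookkeeping is to combine increments in $(\Pi,\Gamma)$ with those in $f$ through a telescoping, but each piece is estimated exactly as above with triangle inequalities in the $\mfp$-seminorms. The main obstacle is the cancellation at the critical scale where $|k|_\s$ is close to $\gamma+\beta$: the hypothesis $\gamma+\beta\notin\N$ avoids the logarithmic borderline, and the rest is a careful but routine sum over scales $2^{-n}$ in each seminorm $\mfp$ separately, with no interaction between different $\mfp$'s since our algebra is locally $m$-convex and $\CI$ is $t$-continuous.
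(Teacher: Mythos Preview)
Your proposal is correct and follows exactly the same approach as the paper's proof, which simply states that the argument of \cite[Theorem~5.12]{Hai14} and its supporting lemmas carries over by replacing the norm on $\R$ with a seminorm $\mfp$ of $\CA$. Your more detailed sketch of how the various pieces ($\CJ$, $\CN_\gamma$, the reconstruction identity, and the difference bounds) are handled seminorm-by-seminorm is precisely the content that the paper leaves implicit.
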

	\begin{proof}
		The proof follows as in the proof of \cite[Theorem~5.12]{Hai14} and its supporting lemmas by exchanging the norm on $\R$ with a seminorm $\mfp$ of $\CA$.
	\end{proof}

\begin{theorem}[Extension Theorem]
Suppose we are given a $\CA$-regularity structure $\cT = (T,A,G)$ 
 satisfying Assumption~\ref{assump:PolyT}. Let $V \subset T$ be a sector of order $\overline{\gamma}$ with the property that for every $\alpha \not\in \N$ with $V_\alpha \neq \{0\}$, one has $\alpha + \beta \not\in \N$. Furthermore, let $W \subset V$ be a possibly empty subsector of $V$, let $K$ be a $\beta$-regularising kernel on $\R^d$ satisfying the polynomial annihilation condition for  $\overline{\gamma}$. Let $(\Pi,\Gamma)$ be a model for $\cT$, let $\CI \colon W \to T$ be an abstract integration map of order $\beta$ such that $\Pi$ realises $K$ for $\CI$.

	Then, there exists a regularity structure $\widehat{\cT}$ containing $\cT$, a model $\big( \widehat{\Pi},\widehat{\Gamma} \big)$ for $\widehat{\cT}$ extending $(\Pi,\Gamma)$, and an abstract integration map $\widehat{\CI}$ of order $\beta$ acting on $\widehat{V} = \iota V$ such that
	\begin{itemize}
		\item the model $\widehat{\Pi}$ realises $K$ for $\widehat{\CI}$,
		\item the map $\widehat{\CI}$ extends $\CI$ in the sense that $\widehat{\CI} \iota a = \iota \CI a$ for every $a \in W$.
	\end{itemize}
	Furthermore, the map $\left( \Pi, \Gamma \right) \mapsto \big( \widehat{\Pi},\widehat{\Gamma}\big)$ is locally bounded and Lipschitz continuous in the sense that if $\left( \Pi,\Gamma \right)$ and $\left( \overline{\Pi},\overline{\Gamma} \right)$ are two models for $\cT$ and $\big( \widehat{\Pi},\widehat{\Gamma} \big)$ and $\big( \widehat{\overline{\Pi}}, \widehat{\overline{\Gamma}} \big)$ are their respective extensions, then one has the bounds
	\begin{equs}
	\label{ExtThmEst}
		\big\| \widehat{\Pi} \big\|_{\widehat{V};\K, \mfp}+\big\| \widehat{\Gamma} \big\|_{\widehat{V};\K, \mfp} &\lesssim \left\| \Pi \right\|_{V;\overline{\K}, \mfp}  \left( 1+\left\| \Gamma \right\|_{V;\overline{\K}, \mfp}  \right)\\
		\big\| \widehat{\Pi} - \widehat{\overline{\Pi}} \big\|_{\widehat{V};\K, \mfp}+\big\| \widehat{\Gamma}  - \widehat{\overline{\Gamma}}\big\|_{\widehat{V};\K, \mfp} &\lesssim \left\| \Pi - \overline{\Pi} \right\|_{V;\overline{\K}, \mfp}  \left( 1+\left\| \Gamma \right\|_{V;\overline{\K}, \mfp}  \right) + \\
		& \qquad \qquad \qquad  + \left\| \overline \Pi \right\|_{V;\overline{\K}, \mfp}  \left\| \Gamma - \overline{\Gamma} \right\|_{V;\overline{\K}, \mfp}
		\end{equs}
	for any compact $\K \subset \R^d$ and its 2-fattening $\overline{\K}$.
\end{theorem}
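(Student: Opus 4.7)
The proof is going to closely mirror the strategy of \cite[Theorem~5.14]{Hai14}, adapted to the $\CA$-valued setting by systematically replacing the scalar absolute value with a seminorm $\mfp \in \mfP$, and using $\CA$-bimodule morphisms in place of ordinary linear maps. All of the relevant estimates are essentially scalar in nature once the correct seminorm is fixed, so the main challenge is bookkeeping: making sure the construction respects the bimodule structure and that the kernel $K$, which takes values in $\CA$, is convolved correctly from the appropriate side.

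The first step is to enlarge $\cT$ into $\widehat{\cT} = (A, \widehat{T}, \widehat{G})$ by formally adjoining, for each $\alpha \in A$ with $V_\alpha \not\subset W_\alpha$, a copy of the quotient bimodule $V_\alpha/W_\alpha$ shifted to degree $\alpha + \beta$ (with the embedding denoted $\widehat{\CI}$ to distinguish it from the given $\CI$ on $W$). Concretely, set $\widehat{T}_\gamma = T_\gamma \oplus \widehat{\CI}(V_{\gamma-\beta}/W_{\gamma-\beta})$ as a graded $\CA$-bimodule, and extend $\CI$ by $\widehat{\CI}$ on the new copy. To define $\widehat{\Pi}_x (\widehat{\CI} a)$ for $a \in V_\alpha$, we are forced by Definition~\ref{def:CompKern} to set
\begin{equ}[e:ext_Pi]
    \widehat{\Pi}_x \widehat{\CI} a \eqdef \int_{\R^d} K(\bigcdot,z)(\Pi_x a)(z)\, \mrd z - \Pi_x \widehat{\CJ}(x) a\;,
\end{equ}
where $\widehat{\CJ}(x) a = \sum_{|k|_{\s} < \alpha+\beta} \frac{X^k}{k!} \int D^k_1 K(x,z)(\Pi_x a)(z)\,\mrd z$. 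For $\alpha + \beta < 0$ the integral converges directly; for $\alpha + \beta > 0$ (excluded from integers by hypothesis), we interpret the difference using the dyadic decomposition $K = \sum_n K_n$ and the standard Taylor-remainder\slash negative renormalisation argument from \cite[Lemma~5.19]{Hai14}, but with each scalar bound replaced by the corresponding $\mfp$-bound coming from Definition~\ref{def:SingKern}. This produces a well-defined $\CA$-valued distribution satisfying
\begin{equ}
\mfp\Bigl( (\widehat{\Pi}_x \widehat{\CI}a)(\CS^\lambda_{\s,x}\eta) \Bigr) \lesssim \|\Pi\|_{V;\overline{\K},\mfp}\,\mfp_\alpha(a)\,\lambda^{\alpha+\beta}\;.
\end{equ}

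The second step is to extend the structure group. For $\Gamma \in G$ acting on $V$, we define $\widehat{\Gamma}\widehat{\CI}a \eqdef \widehat{\CI}(\Gamma a) + \text{poly}(\Gamma,a)$, where the polynomial correction is dictated (and forced to lie in $\overline{T}$) by the requirement $\widehat{\Pi}_x = \widehat{\Pi}_y \circ \widehat{\Gamma}_{yx}$. Explicitly, this correction is the abstract Taylor polynomial of $\widehat{\CJ}$ evaluated at the difference of base points, and its existence and uniqueness are verified exactly as in \cite[Section~5]{Hai14}, now keeping track of the bimodule structure and using that $\widehat{\CJ}$ takes values in $\overline{T}$ on which $G$ already acts. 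One then checks $\widehat{\Gamma}_{xy}\widehat{\Gamma}_{yz} = \widehat{\Gamma}_{xz}$ and $\widehat{\Pi}$ is a $t$-continuous $\CA$-bimodule morphism into $\cD'(\R^d;\CA)$, as well as the defining compatibility $\widehat{\Pi}_x \widehat{\CI}a = K \ast (\Pi_x a) - \widehat{\Pi}_x \widehat{\CJ}(x)a$, which is \eqref{e:ext_Pi} by construction.

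The third step is to verify the analytic bounds \eqref{ExtThmEst}. The bound on $\widehat{\Gamma}$ follows from the bounds on the polynomial corrections, each term of which is controlled by $\|\Pi\|$ and $\|\Gamma\|$ through $\widehat{\CJ}$; the bound on $\widehat{\Pi}$ on the new basis elements follows from the kernel estimates together with Proposition~\ref{prop:PosTreeBnd} to handle positive-degree components. The Lipschitz-in-model estimate is obtained by applying the same construction to the difference $(\Pi - \overline{\Pi}, \Gamma - \overline{\Gamma})$ and using bilinearity of all expressions involved. The main obstacle, as in the scalar case, is controlling the term $\widehat{\Pi}_x \widehat{\CI}a$ when $\alpha + \beta > 0$: one has to reconstruct the Taylor-truncated integrand so that the resulting series in $n$ converges and yields the required $\lambda^{\alpha+\beta}$ bound, uniformly in $\mfp$. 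Here it is crucial that the kernel estimates in Definition~\ref{def:SingKern} are formulated in each seminorm $\mfp$ simultaneously, and that the multiplication $K \cdot \Pi_x a$ is controlled via $\mfp(K(\bigcdot,z))\cdot\mfp(\Pi_x a \cdots)$ using $m$-convexity; apart from this substitution, every computation from \cite[Proof of Theorem~5.14]{Hai14} transfers verbatim.
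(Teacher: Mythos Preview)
Your overall strategy of mirroring \cite[Theorem~5.14]{Hai14} is correct, and the analytic part (replacing $|\cdot|$ by $\mfp$ and invoking $m$-convexity for the product $K\cdot\Pi_x a$) is exactly how the paper proceeds. However, there is a genuine gap in your construction of the extended structure space.

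You set $\widehat{T}_\gamma = T_\gamma \oplus \widehat{\CI}(V_{\gamma-\beta}/W_{\gamma-\beta})$ ``as a graded $\CA$-bimodule''. But recall that $\CI$ is only assumed $\mathbb{K}$-linear, not $\CA$-bilinear, precisely because $K$ may take values in $\CA$ proper: in general $K\ast(a\,\Pi_x w\,b)\neq a\,(K\ast\Pi_x w)\,b$. So if you give $\widehat{\CI}(\overline{W})$ the bimodule structure inherited from $\overline{W}$ via $\widehat{\CI}$, then $\widehat{\Pi}_x$ as defined by your formula \eqref{e:ext_Pi} will \emph{not} be an $\CA$-bimodule morphism, and you no longer have a model in the sense of Definition~\ref{def:model}. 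Conversely, if you try to impose the bimodule structure so that $\widehat{\Pi}_x$ is a bimodule morphism, you are forced to distinguish $a\,\widehat{\CI}(w)\,b$ from $\widehat{\CI}(awb)$, and your space $\widehat{\CI}(\overline{W})$ is too small to accommodate this.

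The paper's fix is to define instead
\[
\widehat{T}_\alpha \eqdef T_\alpha \oplus \bigl(\CA \wotimes_\pi \overline{W}_{\alpha-\beta} \wotimes_\pi \CA\bigr)\;,
\]
with the $\CA$-bimodule structure carried by the two outer $\CA$ factors. The extended integration map sends $w\mapsto \bone\otimes w\otimes\bone$, and the structure-group corrections $M^\alpha_{\overline{W}}$ (which are only $\mathbb{K}$-linear, coming from $\widehat{\CJ}$) act solely on the inner $\overline{W}$ factor. This is what guarantees that $\widehat{\Gamma}$ remains an $\CA$-bimodule morphism and that $\widehat{\Pi}_x$ is one as well. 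Once this is in place, everything else in your argument goes through as written.
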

\begin{proof}
	The proof follows as in the proof of \cite[Theorem~5.14]{Hai14}. In particular, the required complement $\overline W$ exists by our definition of a sector of an $\CA$-regularity structure. When defining the new structure space $\hat{T}_\alpha$ of the regularity structure, we set
	\begin{equs}
		\hat{T}_\alpha \eqdef T_\alpha \oplus \left( \CA \wotimes_\pi \overline{W} \wotimes_\pi \CA \right)
	\end{equs}
	when $\alpha = \alpha_n + \beta$. This is equipped with the natural bimodule structure. Even though the maps $M^\alpha_{\overline W}$ are only $\K$-linear, the action of the structure group is still an $\CA$-bimodule morphism. This is because $M^\alpha_{\overline{W}}$ only acts on the inner $\overline{W}$ factor of $\CA \wotimes_\pi \overline{W} \wotimes_\pi \CA $ whereas the $\CA$-bimodule structure rests on the outer $\CA$ factors.
\end{proof}

\begin{remark}
	The choice of the $t$-projective tensor product here was made solely to construct a specific extension of the regularity structure; see Definition~\ref{def:tProjTens} for its definition. Other choices are in principle possible.
\end{remark}

\subsection{Multiplication and Miscellaneous Constructions}\label{subsec:multiplication}

\begin{definition}
	Let $\cT = (T,A,G)$ be an $\CA$-regularity structure. We call a continuous bilinear map $(r,s)\mapsto r \star s$ on $T$ a product if
	\begin{itemize}
		\item $r \star s \in T_{\alpha+\beta}$, for all $r \in T_\alpha$, $s \in T_\beta$, and for all $\mfp \in \mfP$
		\begin{equ}
			\mfp_{\alpha+\beta}(r \star s) \lesssim \mfp_\alpha(r) \mfp_\beta(s) \; ,
		\end{equ}
		\item for all $a \in \CA$, for all $r,s \in T$
		\begin{equs}
			(ar) \star s  = a(r \star s) \; , \qquad
			(ra)\star s  = r \star (as) \; , \qquad
			r\star (sa)  = (r \star s) a \; ,
		\end{equs}
		\item for every $a \in T$ one has $\1  \star a = a \star \1 = a$.
	\end{itemize}
\end{definition}

\begin{definition}
	Let $\cT$ be an $\CA$-regularity structure, $V,W$ two sectors of $\cT$, and $\star$ a product on $\cT$. The pair $(V,W)$ is said to be $\gamma$-regular if for all $\Gamma \in G$, and all $a \in V_\alpha$, $b\in W_\beta $ with $\alpha + \beta < \gamma$
	\begin{equ}
		(\Gamma a) \star (\Gamma b) = \Gamma(a \star b) \; .
	\end{equ}
	A product is regular for $(V,W)$ if it is regular for all $\gamma \in A$, and if $V= W$, we shall simply say that the product is $\gamma$-regular on $V$.
\end{definition}

\begin{theorem}
\label{thm:RSMult}
	Let $V,W$ be of regularities $\alpha_1$ and $\alpha_2$ respectively, and let $(\Pi,\Gamma)$ be a model. Let $f_1 \in \CD^{\gamma_1}(V)$, $f_2 \in \CD^{\gamma_2}(W)$ and define $\gamma := \left( \gamma_1+\alpha_2 \right) \wedge \left( \gamma_2+\alpha_1 \right)$. Then if $(V,W)$ is $\gamma$-regular, one has $f_1 \star f_2 \in \CD^\gamma$ and for all $\K \Subset \R^d$, all $\mfp \in \mfP$
	\begin{equ}
		\vvvert f_1 \star f_2 \vvvert_{\gamma; \K,\mfp} \lesssim \vvvert f_1\vvvert_{\gamma_1 ; \K,\mfp}\vvvert f_2\vvvert_{\gamma_2 ; \K,\mfp} \left( 1+ \|\Gamma\|_{\gamma_1+\gamma_2; \K,\mfp} \right)^2
	\end{equ}
	with the proportionality constant only depending on $\cT$.

	Let $\big( \overline{\Pi},\overline{\Gamma}\big)$ be a second model and $g_1 \in \CD^{\gamma_1}\big(V;\overline{\Gamma}\big)$, $g_2 \in \CD^{\gamma_2}\big(W; \overline{\Gamma}\big)$. Then for every $C > 0$
	\begin{equ}
		\big\vvvert f_1 \star f_2 ; g_1 \star g_2 \big\vvvert_{\gamma; \K,\mfp} \lesssim \vvvert f_1 ; g_1 \vvvert_{\gamma_1;\K,\mfp}	+\vvvert f_2 ; g_2 \vvvert_{\gamma_2;\K,\mfp} + \big\| \Gamma-\overline{\Gamma} \big\|_{\gamma_1+\gamma_2; \K,\mfp} \; ,
	\end{equ}
	uniformly for all $\vvvert f_i \vvvert_{\gamma_i; \K,\mfp} +\vvvert g_i \vvvert_{\gamma_i; \K,\mfp} \leqslant C$ and $\|\Gamma\|_{\gamma_1+\gamma_2;\K,\mfp} + \|\overline\Gamma\|_{\gamma_1+\gamma_2;\K,\mfp} \leqslant C$. The proportionality constant only depends on $C$.

	Analogously, the same holds for singular modelled distributions $f_1 \in \CD^{\gamma_1, \eta_1}_H (V)$ and $f_2 \in \CD^{\gamma_2, \eta_2}_H (V)$ with $f_1 \star f_2 \in \CD^{\gamma, \eta}_H$ for $\gamma$ as above and $\eta \eqdef (\eta_1 + \alpha_2) \wedge (\eta_2 + \alpha_1) \wedge (\eta_1 + \eta_2)$.
\end{theorem}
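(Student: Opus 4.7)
The plan is to follow the same structure as the proof of the classical multiplication theorem in \cite[Thm.~4.7]{Hai14}, but replacing scalar norms throughout with the family of seminorms $\mfp_\alpha$ and exploiting the locally $m$-convex bimodule structure of each $T_\alpha$ so that the submultiplicative property $\mfp_{\alpha+\beta}(r\star s)\leqslant \mfp_\alpha(r)\,\mfp_\beta(s)$ takes the role of the scalar product bound. First I would define $(f_1\star f_2)(x) \eqdef \CQ_{<\gamma}\bigl(f_1(x)\star f_2(x)\bigr)$, where $\CQ_{<\gamma}$ is the projection onto $T_{\gamma}^-$. The pointwise bound $\mfp_\beta\bigl((f_1\star f_2)(x)\bigr)$ for $\beta<\gamma$ is then an immediate consequence of submultiplicativity combined with $\vvvert f_i\vvvert_{\gamma_i;\K,\mfp}<\infty$, since only finitely many pairs $(\alpha,\beta)$ with $\alpha+\beta=\beta'<\gamma$ and $\alpha \geqslant \alpha_1$, $\beta-\alpha \geqslant \alpha_2$ contribute, and $A$ is locally finite.

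The main step is the translation bound. I would write the classical telescope
\begin{equs}
f_1(x)\star f_2(x) - \Gamma_{xy}\bigl(f_1(y)\star f_2(y)\bigr)
&= \bigl(f_1(x)-\Gamma_{xy}f_1(y)\bigr)\star f_2(x) \\
&\quad + \bigl(\Gamma_{xy}f_1(y)\bigr)\star \bigl(f_2(x)-\Gamma_{xy}f_2(y)\bigr) \\
&\quad + \Bigl(\Gamma_{xy}f_1(y)\star\Gamma_{xy}f_2(y) - \Gamma_{xy}\bigl(f_1(y)\star f_2(y)\bigr)\Bigr) \; .
\end{equs}
When projected onto $T_{\gamma}^-$, the third term vanishes by the $\gamma$-regularity of $(V,W)$: all contributions in $T_{\beta'}$ with $\beta'<\gamma$ come from pairs $(\alpha,\beta')$ in $V_\alpha\star W_{\beta'-\alpha}$ of total degree strictly less than $\gamma$, on which $\Gamma$ is multiplicative by assumption. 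For the first two terms I apply, in $T_{\beta'}$, the submultiplicative bound together with the standard estimates on $\mfp_{\alpha}(f_i(x)-\Gamma_{xy}f_i(y))$ and $\mfp_{\alpha}(\Gamma_{xy}f_i(y))$. A term in degree $\beta'$ coming from the product of factors in $T_{\alpha_a}$ and $T_{\alpha_b}$ with $\alpha_a+\alpha_b=\beta'$ contributes at worst $\vvvert f_1\vvvert_{\gamma_1}\vvvert f_2\vvvert_{\gamma_2}(1+\|\Gamma\|_{\gamma_1+\gamma_2;\K,\mfp})^2 |x-y|_\s^{\gamma-\beta'}$, by the same combinatorial accounting as in the scalar case; summing over finitely many such pairs gives the required inequality.

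The stability estimate with respect to a second model $(\overline\Pi,\overline\Gamma)$ is obtained by writing $f_1\star f_2 - g_1\star g_2 = (f_1-g_1)\star f_2 + g_1\star (f_2-g_2)$ and repeating the telescope, this time generating in addition differences $\Gamma_{xy}-\overline\Gamma_{xy}$; the bounds follow by the same argument using the uniform bounds on $\vvvert f_i\vvvert_{\gamma_i;\K,\mfp}$, $\vvvert g_i\vvvert_{\gamma_i;\K,\mfp}$, and on the models implied by the constant $C$. The singular $\CD^{\gamma,\eta}_H$ version is a routine tracking of the additional $|x,y|_H$-factors in Definition~\ref{def:SingHolderSp}; one splits the estimates into the three regimes $|x-y|_\s \ll |x,y|_H$, $|x|_H,|y|_H\leqslant 2|x-y|_\s$ and rewrites everything in terms of $\eta_i$ as in \cite[Prop.~6.12]{Hai14}, again using submultiplicativity of $\mfp$ in place of the scalar norm.

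The main conceptual obstacle (as opposed to routine calculation) is precisely that $\gamma$-regularity of the pair $(V,W)$ must be verified against $\CA$-bimodule automorphisms of $T$; however this is built into the definition because each $\Gamma\in G$ is an $\CA$-bimodule morphism, and the product $\star$ is required to be $\CA$-balanced. Beyond this, the entire argument is a verbatim transcription of the Hairer proof with $|{\cdot}|$ replaced by $\mfp_\beta$ and scalar multiplicativity replaced by $\mfp_{\alpha+\beta}(r\star s)\leqslant \mfp_\alpha(r)\mfp_\beta(s)$, so no new analytic ingredient is required.
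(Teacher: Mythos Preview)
Your proposal is correct and takes essentially the same approach as the paper. The paper's own proof is even terser than yours: it simply refers to the proofs of Theorem~4.7 and Propositions~4.10 and~6.12 in \cite{Hai14}, noting that the submultiplicativity $\mfp(ab)\leqslant\mfp(a)\mfp(b)$ is the only property of the scalar norm, beyond the seminorm axioms, that is used there---which is precisely the point you identify and exploit.
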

\begin{proof}
	See the proofs of Theorem 4.7 and Proposition 4.10 \& 6.12 in \cite{Hai14}. We note that by assumption we have for any $a,b \in \CA$ and $\mfp \in \mfP$, the estimate
	\begin{equ}
		\mfp(ab) \leqslant \mfp(a)\mfp(b)
	\end{equ}
	which is the only additional property of the standard norm on $\R$, beyond the usual axioms for seminorms, that is required.
\end{proof}

\begin{proposition}
	Let $\cT$ be an $\CA$-regularity structure, $V,W$ sectors of $\cT$, $\gamma \in \R$ and $(\Pi,\Gamma)$ a model for $\cT$. One can always define an extension $\iota \colon \cT \hookrightarrow \overline{\cT}$, an extension $(\overline\Pi,\overline\Gamma)$ of $(\Pi,\Gamma)$ as well as a product $\star$ on $\overline{\cT}$, such that $(\iota V, \iota W)$ are $\gamma$-regular.
\end{proposition}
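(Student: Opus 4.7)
The argument adapts the proof of \cite[Proposition~4.11]{Hai14} to the $\CA$-bimodule setting, building $\overline \cT$ by freely adjoining the missing products degree by degree.

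\medskip

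\textbf{Step 1 (Extending the structure space).} For each pair $(\alpha,\beta) \in A\times A$ with $\alpha + \beta < \gamma$, form the bimodule tensor product $V_\alpha \totimes_{\CA} W_\beta$, defined as the completion of the quotient of $V_\alpha \wotimes_\pi W_\beta$ by the closure of the relations $ar \otimes s = r \otimes as$ for $a\in\CA$, $r\in V_\alpha$, $s\in W_\beta$. The left and right $\CA$-actions on the outer factors equip $V_\alpha \totimes_\CA W_\beta$ with a locally $m$-convex $\CA$-bimodule structure. Set
\begin{equ}
\overline T_n \eqdef T_n \,\oplus\, \bigoplus_{\substack{\alpha+\beta=n\\ \alpha+\beta<\gamma}} V_\alpha \totimes_\CA W_\beta\;, \qquad \overline T \eqdef \bigoplus_n \overline T_n\;,
\end{equ}
with $\iota\colon T \hookrightarrow \overline T$ the obvious inclusion. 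Denote the image of $r\otimes s$ in $\overline T$ by $r\star s$, and extend $\star$ to all of $\overline T\times \overline T$ by declaring all other products (including anything involving a factor of the form $r\star s$) to be zero. By construction, the three bimodule identities required of a product are satisfied.

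\medskip

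\textbf{Step 2 (Extending the structure group).} For each $\Gamma \in G$ we define $\overline\Gamma$ to agree with $\Gamma$ on $\iota T$ and to satisfy
\begin{equ}\label{eq:gamma_star}
\overline\Gamma(r\star s) \eqdef (\Gamma r)\star(\Gamma s)\;,\qquad r\in V_\alpha,\ s\in W_\beta,\ \alpha+\beta<\gamma\;,
\end{equ}
where the right-hand side is expanded out using the decompositions $\Gamma r = r + \sum_{\alpha'<\alpha} r_{\alpha'}$ and $\Gamma s = s + \sum_{\beta'<\beta} s_{\beta'}$. Since $V,W$ are $G$-stable, each $r_{\alpha'}\star s_{\beta'}$ lies in a sum of spaces $V_{\alpha'}\totimes_\CA W_{\beta'}$ of strictly smaller total degree, so the definition is consistent by induction on $\alpha+\beta$. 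The group property $\overline{\Gamma_1 \Gamma_2} = \overline\Gamma_1 \overline\Gamma_2$ follows by the same induction together with the fact that each $\Gamma_i$ is an $\CA$-bimodule morphism, and the required triangularity of $\overline\Gamma-\bone$ on the new components is inherited from that of the $\Gamma$'s. Continuity (and $t$-continuity) follows from the universal property of $\totimes_\CA$.

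\medskip

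\textbf{Step 3 (Extending the model).} We must produce $\overline\Pi_x(r\star s)\in \cD'(\R^d;\CA)$ satisfying the bound $\mfp(\overline\Pi_x(r\star s)(\CS^\lambda_{\s,x}\eta))\lesssim \lambda^{\alpha+\beta}\mfp_\alpha(r)\mfp_\beta(s)$ together with the consistency $\overline\Pi_y = \overline\Pi_x \overline\Gamma_{xy}$. We proceed by induction on $\alpha+\beta<\gamma$. If $\alpha+\beta\notin\N$, we repeat verbatim the construction from the proof of the Extension Theorem (\cite[Theorem~5.14]{Hai14}, noncommutative analogue stated above): one picks an arbitrary candidate $\overline\Pi^{(0)}_x(r\star s)$ satisfying the desired local bound (for instance via a wavelet decomposition of the factors $\Pi_x r$ and $\Pi_x s$, each of which is a well-defined $\CA$-valued distribution), and then corrects it by an $\CA$-valued polynomial of degree less than $\alpha+\beta$ so as to make $\overline\Pi_x\overline\Gamma_{xy}(r\star s) - \overline\Pi_y(r\star s)$ vanish. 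The bounds \eref{ExtThmEst} carried out seminorm by seminorm in $\mfp\in\mfP$ then propagate the required analytic estimates. If $\alpha+\beta\in\N$ the value of $\overline\Pi_x(r\star s)$ is already forced by Proposition~\ref{prop:PosTreeBnd} together with the inductive data on $T_{\alpha+\beta}^-$, and one only needs to verify that the formula it produces does satisfy the $\alpha+\beta$-bound, which again is the content of the positive-sector argument in \cite[Sec.~5]{Hai14}. Finally $(\iota V, \iota W)$ is $\gamma$-regular by the very definition \eref{eq:gamma_star}.

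\medskip

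The main obstacle is Step 3, and more precisely the construction of the candidate $\overline\Pi^{(0)}_x(r\star s)$ for products of non-positive total degree: in the classical scalar case one would simply take the pointwise product of $\Pi_x r$ and $\Pi_x s$ when one of the factors is sufficiently regular, but in our noncommutative $\CA$-valued setting no such pointwise product is available in general. The resolution is to build $\overline\Pi^{(0)}_x$ abstractly via a wavelet/Paley--Littlewood expansion of each factor (at which level all operations are linear in each factor and therefore well-defined in the $\CA$-bimodule setting), and to rely on the extension machinery of Theorem~\ref{SingKernThm} rather than on any pointwise multiplicative structure.
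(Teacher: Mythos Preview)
Your Steps 1 and 2 are essentially correct and match the paper's approach, which (like you) defers to \cite[Proposition~4.11]{Hai14}; the paper's only addition is to specify the correct bimodule tensor-product topology, giving explicit seminorms $\mfp_\alpha \otimes_\CA \mfp_\beta$ on $V_\alpha \otimes_\CA W_\beta$ in place of the finite-dimensionality used in \cite{Hai14}.

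Your Step 3, however, is built around a non-existent obstacle. The observation you already made in Step 2 --- that $\overline\Gamma$ maps the new summands $V_{\alpha'}\totimes_\CA W_{\beta'}$ only into other new summands --- immediately implies that the \emph{trivial} extension $\overline\Pi_x\equiv 0$ on every new component is a valid model: the compatibility relation $\overline\Pi_y=\overline\Pi_x\overline\Gamma_{xy}$ reads $0=0$ there, and the analytic bounds $\mfp(\overline\Pi_x(r\star s)(\CS^\lambda_{\s,x}\eta))\lesssim\lambda^{\alpha+\beta}$ hold vacuously. This is exactly what \cite[Proposition~4.11]{Hai14} does, and hence what the paper does. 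No wavelet construction, no appeal to the Extension Theorem, and no separate treatment of integer degrees is needed.

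Your concluding ``obstacle'' paragraph is also off target on its own terms: $\CA$ is an algebra, so pointwise products of $\CA$-valued functions are perfectly well defined whenever the H\"older regularities permit --- that is the content of Theorem~\ref{thm:YoungMult}. But more to the point, the proposition only asserts the existence of \emph{some} model extension making $(\iota V,\iota W)$ $\gamma$-regular; it nowhere claims that $\overline\Pi_x(r\star s)$ should be related to a product of $\Pi_x r$ and $\Pi_x s$, so no distributional product is required at all.
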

\begin{proof}
	One can proceed as in the proof of \cite[Proposition~4.11]{Hai14}. The condition that the subspace $T_\alpha$ of $T$ be finite-dimensional is unnecessary, as long as we equip $V_\alpha \otimes_{\CA} W_\beta$ with the topology generated by the seminorms $(\mfp_\alpha \otimes_\CA \mfp_\beta)_{\mfp \in \mfP}$ and take the completion with respect to this topology. Here $\mfp_\alpha \otimes_\CA \mfp_\beta$ is defined, slightly differently from $\mfp_\alpha \otimes \mfp_\beta$ via
	\begin{equs}
		(\mfp_\alpha \otimes_\CA \mfp_\beta)(A) &\eqdef \inf \biggl\{ \sum_{k = 1}^n \mfp_\alpha(a_k b_k)  \mfp_\beta(c_k), \sum_{k = 1}^n \mfp_\alpha(a_k)  \mfp_\beta(b_k c_k) \, \bigg| \, a_k \in V_\alpha, \\
		& \qquad  c_k \in W_\beta, b_k \in \CA :  A = \sum_{k = 1}^n (a_k b_k) \otimes_\CA c_k = \sum_{k = 1}^n a_k \otimes_\CA ( b_k c_k )
		\biggr\}
	\end{equs}
\end{proof}

In particular, we can now extend the proof of the \cite[Proposition~4.14]{Hai14} for the scalar Young multiplication theorem so that it holds for $\CA$-valued Besov spaces.

\begin{proof}[Proof of Theorem~\ref{thm:YoungMult}]
	We shall describe the regularity structure built for the sufficiency condition, for further details, cf.\ \cite[Proposition~4.14]{Hai14}.

	W.l.o.g.\ we may assume that $\alpha \leqslant 0$ and that $\alpha \notin \N$. Let $\xi \in \CC_\s^\alpha(\R^d ; \CA)$. We set $A = \N \cup (\N + \alpha)$, $T = T_{d,\s} \oplus T'$, where $T' = \bigoplus_{n \in \N}  T_{n+\alpha}$ and $T_{n+\alpha} \eqdef \CA \wotimes_\pi \CA$ for all $n \in \N$. Here one should think of $T_{n+\alpha}$ as the $\CA$-bimodule generated by the $\CA$-basis element $\Xi X^k = X^k \Xi$, where $\Xi$ is an abstract symbol representing $\xi$ and thus does not commute with elements of $\CA$. This is why $T_{n+\alpha}$ must be isomorphic to a tensor product of two copies of $\CA$.

	Finally, the structure group is again isomorphic to $\R^d$ with
	\begin{equ}
		\Gamma_h \left( \Xi X^k \right) = \Xi \left( \Gamma_h X^k \right) \; ,
	\end{equ}
	and the model is given by
	\begin{equ}
		\left(\Pi_x \Xi X^k \right)(y) = \xi(y) (y-x)^k \; ;
	\end{equ}
	both extended $\CA$-bilinearly to $T$.	The product is given by $\star \colon T' \times T_{d,\s} \to T' \subset T$ with
	\begin{equ}
		\Xi X^k \star X^\ell = \Xi X^{k + \ell} \; .
	\end{equ}
	Because $X^\ell$ commutes with elements of $\CA$, this is well-defined.

	With these definitions in hand, the rest of the proof follows verbatim as in \textit{loc.\ cit.}
\end{proof}

The last construction that needs to be adjusted is lifting the composition with functions $F \colon \mathbb{K}\to \mathbb{K}$ to the regularity structure. Unfortunately, we need to restrict ourselves to functions that are $\CA$-analytic, cf.\ Definition~\ref{def:AAnaFct}, to be able to extend them to the $\CA$-bimodule $T$ by formally substituting an element of $T$ into the variable slot instead of an element of $\CA$.

\begin{remark}
	One of the major reasons we have to restrict ourselves to this subset is that we generally assume $\CA$ to be infinite-dimensional over $\mathbb{K}$. In the finite-dimensional case we always have the isomorphism $\CB^k(\mathbb{K}^n ; \mathbb{K}) \cong \CB( \mathbb{K}^{n} ; \mathbb{K})^{\otimes k}$. Thus, we can extend elements of $\CB^k(\mathbb{K}^n ; \mathbb{K})$ to maps $\CB^k(\CA^n ; \CA)$ for any $\mathbb{K}$-algebra $\CA$ via the following chain of morphisms
	\begin{equ}
		\CB^k(\mathbb{K}^n ; \mathbb{K}) \otimes \CA^{\otimes k} \cong \left( \CB(\mathbb{K}^n ; \mathbb{K}) \otimes \CA \right)^{\otimes k} \cong \left(\CB( \CA^n ; \CA)\right)^{\otimes k} \hooklongrightarrow \CB^k(\CA^n ; \CA)
	\end{equ}
	and by tensoring $\phi \in \CB^k(\mathbb{K}^n ; \mathbb{K})$ with the identity map $\1_{\CA}^{\otimes k} \colon \CA^{\otimes k} \to \CA^{\otimes k}$. In particular, one can do this for finite-dimensional commutative regularity structures. However, these isomorphisms fail for general $\CA$-regularity structures.
\end{remark}
Let $V$ be a function-like sector $V$ of $\cT$ with product $\star$. Given a monomial $ \left\llbracket A_1, \dots,  A_{m+1} ; X^k \right\rrbracket $ for $A_i \in \CA$ we can extend it to a function $\CA^{m+1} \times V^n \to V$ using the formula
\begin{equ}
	\left(A_1 X_{k_1}\right) \star \left( A_2 X_{k_2} \right) \star \cdots \star \left( A_{m} X_{k_m} A_{m+1} \right) \; .
\end{equ}
Due to the $\CA$-bimodule structure of $\cT$ and the compatibility of $\star$ with this bimodule structure, this extension is independent of our specific choice of placement of brackets. This straightforwardly extends to a continuous function $\cA^{\wotimes_{\pi}(m+1)} \times V^n \to V$.

Now we define the lift $\widehat{F}$ of a general $F \in \cC^\omega(\CA^n ; \CA)$ to $\cT$ by extending $F$ term by term.
The proof of the following theorem follows verbatim as in \cite[Theorem~4.16]{Hai14}. 
\begin{theorem}
\label{thm:FncLift}
	Given a function-like sector $V$ with least non-zero regularity $\zeta$ that is $\gamma$-regular for $\gamma > 0$, let $F \in \cC^\omega(\CA^n;\CA)$. Then, the map $\widehat{F}_\gamma \colon \CD^\gamma(V) \to \CD^\gamma(V)$ given by
	\begin{equ}
		\widehat{F}_\gamma(f)(x) = \CQ_\gamma^- \widehat{F}(f(x))
	\end{equ}
	is well-defined and locally Lipschitz $t$-continuous w.r.t.\ $\vvvert \bigcdot \vvvert_{\gamma ; \K , \mfp}$.
\end{theorem}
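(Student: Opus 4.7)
The plan is to mimic the proof of \cite[Theorem~4.16]{Hai14}, with modifications to accommodate the noncommutative bimodule structure and the restriction to $\CA$-analytic nonlinearities. First I would unpack the hypothesis $F \in \cC^\omega(\CA^n;\CA)$: by definition $F$ admits a norm-convergent expansion as a sum of monomials of the form $\left\llbracket A_1,\dots,A_{m+1}; X^k\right\rrbracket$, and the construction described just before Theorem~\ref{thm:FncLift} lifts each monomial to a continuous multilinear map $V^{|k|} \to V$ via iterated $\star$-products with the $\CA$-coefficients placed in the appropriate slots. Since $V$ is function-like and $\gamma$-regular and has least non-zero homogeneity $\zeta > 0$, for each fixed $x$ the element $f(x) \in V^-_\gamma$ has the form $f_0(x) + \tilde f(x)$ with $f_0(x) \in T_0 \cong \CA$ and $\tilde f(x)$ of strictly positive homogeneity. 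Substituting and expanding in powers of $\tilde f(x)$, every monomial of degree $> \gamma/\zeta$ lies in $T^+_\gamma$, so $\CQ_\gamma^- \widehat F(f(x))$ is a finite sum involving only the first $\lceil\gamma/\zeta\rceil$ derivatives of $F$ at $f_0(x)$, evaluated multilinearly on $\tilde f(x)$. Since each such derivative is a continuous $\CA$-multilinear map on $\CA^n$ and extends to a $t$-continuous map via the $\wotimes_\pi$-construction, this is well-defined pointwise and analytic convergence of $F$ implies convergence of the truncated lift.

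Next I would establish the $\CD^\gamma$ estimate. The pointwise bound $\mfp_\beta(\widehat F_\gamma(f)(x)) \lesssim \mfp(\,\cdot\,)$ reduces, via the sub-multiplicativity of $\mfp$, to bounding finitely many monomials of the form $\left\llbracket A_1(x),\dots; \tilde f(x)^{\otimes k}\right\rrbracket$ using Theorem~\ref{thm:RSMult}. For the crucial difference bound
\[
\mfp_\beta\bigl(\widehat F_\gamma(f)(x) - \Gamma_{xy}\widehat F_\gamma(f)(y)\bigr) \lesssim |x-y|_\s^{\gamma-\beta}\;,
\]
I use $\gamma$-regularity of $V$, which guarantees that $\Gamma_{xy}$ commutes with the product $\star$ modulo homogeneities $\geqslant \gamma$. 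Applying this to each monomial in the expansion, one reduces the estimate to the corresponding pointwise modelled-distribution bound for $f$ itself, combined with iterated applications of the product estimate of Theorem~\ref{thm:RSMult}. This yields $\widehat F_\gamma(f) \in \CD^\gamma(\Gamma)$ with the promised seminorm control.

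For local Lipschitz $t$-continuity in $f$, I would use a telescoping argument: writing
\[
\widehat F(a) - \widehat F(b) = \int_0^1 D\widehat F\bigl(tb + (1-t)a\bigr)[a-b]\,dt
\]
is not literally available here because $\widehat F$ acts on $V$, not on $\CA$; instead I expand each monomial as a finite telescope in its $\tilde f$-arguments, writing each difference as a sum over positions where one $\tilde f(x)$ is replaced by $\tilde f(x) - \tilde g(x)$ while the remaining factors are either $f$ or $g$. Each term is controlled by Theorem~\ref{thm:RSMult} to give a bound proportional to $\vvvert f - g\vvvert_{\gamma;\K,\mfp}$, uniformly over balls in $\CD^\gamma$. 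The analogous argument for two models $(\Pi,\Gamma)$, $(\overline\Pi,\overline\Gamma)$ gives dependence on $\vvvert Z;\overline Z\vvvert_{\gamma;\K,\mfp}$.

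The main obstacle I expect is purely notational: keeping careful track of the placements of $\CA$-factors and the order in which $\star$-products are taken when expanding monomials, so that the inherited $\CA$-bimodule morphism property of $\Gamma_{xy}$ genuinely yields the $\gamma$-regularity cancellation $\Gamma_{xy}(a_1 \star \cdots \star a_n) = (\Gamma_{xy}a_1)\star\cdots\star(\Gamma_{xy}a_n)$ modulo $T^+_\gamma$. The restriction $F \in \cC^\omega(\CA^n;\CA)$ is precisely what makes this bookkeeping tractable, since it forces $F$ to be a (convergent) sum of manifestly $\CA$-linear monomials and avoids having to invoke any genuine noncommutative functional calculus on $\CA$.
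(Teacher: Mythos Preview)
Your proposal is correct and takes essentially the same approach as the paper: the paper's entire proof is the one-line assertion that it ``follows verbatim as in \cite[Theorem~4.16]{Hai14}'', and your sketch is precisely a fleshed-out account of how that argument adapts to the $\CA$-bimodule setting. Your observation that the restriction to $\CA$-analytic $F$ is what makes the bookkeeping tractable matches the paper's own remarks preceding the theorem.
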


Analogously, one can straighforwardly extend the definition of abstract differentiation on regularity structures \cite[Definitions~5.25~\&~5.26]{Hai14} and the implementation of symmetries \cite[Definition~3.33]{Hai14}. In particular, Propositions 3.38, 5.23, 5.24, 5.28, 6.15 \& 6.16 continue to hold.

\subsection{Fixed-Point Map}

We are now ready to tackle the short-time existence of $\CA$-valued SPDEs in terms of a fixed-point problem of modelled distributions in an $\CA$-regularity. In particular, we will focus on equations of the form 
\begin{equ}
	\cL u = F(u, \Xi)
\end{equ}
where $\cL$ is a differential operator, s.t.\ the kernel $\overline{K} = K + R$ of $\cL^{-1}$ is $\beta$-regularising, cf.\ Definition~\ref{def:SingKern}. Here $K$ denotes the compactly supported singular part of $\overline{K}$ and $R$ the smooth part. 

For this section we fix $H = \{ (0,x) \in \R \times \R^{d-1} \, \big| \, x \in \R^{d-1}\}$ to be the $t = 0$ hyperplane, and $\bR^+ \in \CD_H^{\infty, \infty}$ the modelled distribution $\bR^+ = \bone_{\{t > 0\}} \1$. We assume that $K$ can be chosen in such a way that it is non-anticipative w.r.t.\ $H$, i.e.\ 
\begin{equ}
	K((t,x),(s,y)) = 0
\end{equ}
if $t < s$, and analogously for $R$. We denote by $R_{\gamma} \colon \CC^\alpha_\s \to \CD^\gamma$ the canonical lift of the convolution with $R$ to the polynomial sector of the regularity structure $T_{<\gamma}$.

\begin{remark}
	If we were interested in boundary-value problems, we could have instead assumed that $\cL$ is elliptic, then the following theorem would instead hold for a sufficiently small coupling constant. This would be particularly meaningful in the case when $\CA$ is a Banach algebra, as we would not have to restrict ourselves to \textit{a priori} seminorm dependent, i.e.\ ``random'', coupling constants. 
\end{remark}

Furthermore, we assume that we are given a crystallographic group $\cS$ and a representation $\tilde T \colon \cS \to \GL(\R^{d-1})$ with compact fundamental domain $\K \Subset \R^{d-1}$, i.e.\ $\bigcup_{g \in \cS} \tilde T_g \K = \R^{d-1}$. We assume that for all $g \in \cS$, $T_g$ commutes with the convolution with $K$ and $G$, where $T_g$ is the extended representation $T_g \eqdef \bone_{\R} \oplus \tilde T_g$. \cite[Lemma~5.24]{Hai14} guarantees that this is possible if $T_g$ commutes with $\overline{K}$.

For $\cS$-invariant modelled distributions, it is enough to take the supremum over sets of the form $(s,t) \times \K$ or, equivalently, $(s,t) \times \R^{d-1}$. We thus set $O \eqdef [-1,2] \times \R^{d-1}$ and $O_T \eqdef (-\infty,T]\times \R^{d-1}$ with corresponding modelled distribution seminorms $\vvvert \bigcdot \vvvert_{\gamma, \eta ; O, \mfp}$ and $\vvvert \bigcdot \vvvert_{\gamma, \eta ; T, \mfp}$. Finally, for each $\mfp \in \mfP$, we define the Banach spaces of modelled distributions $\left(\CD^{\gamma,\eta}_H\right)_{\mfp}$ obtained by quotienting all modelled distributions $f \in \CD^{\gamma,\eta}_H$, such that $\vvvert f \vvvert_{\gamma,eta; \K , \mfp} = 0$ for all $\K \Subset \R^d$.

With these definitions at hand, we can state the main abstract fixed-point theorem. 
\begin{theorem}
\label{thm:AbsFPM}
	Let $V, \overline{V}$ be two sectors of a regularity structure $\mathscr T$ of regularities $\zeta,\overline{\zeta} \in \R$ respectively, such that $\zeta\leqslant\overline{\zeta}+\beta$. Let $\gamma \geqslant \overline{\gamma} >0$, $\eta,\overline{\eta} \in \R$ as above, let $F \colon \R^d \times V_\gamma  \to \overline{V}_{\overline{\gamma}}$ be a smooth function such that, for $f \in \CD^{\gamma,\eta}_H$ symmetric with respect to $\cS$,  we have $F(f) \in \CD^{\overline{\gamma},\overline{\eta}}_H$ and that $F(f)$ is also symmetric with respect to $\cS$.
	Furthermore, suppose we have an abstract integration map $\CI$ such that $\CQ^-_\gamma \CI \overline{V}_{\overline{\gamma}} \subset V_\gamma$.

	If $\eta < (\overline{\eta}\wedge\overline{\zeta}) +\beta$, $\gamma < \overline{\gamma} +\beta$, $(\overline{\eta}\wedge\overline{\zeta})>-\beta$, and $F$ is locally Lipschitz, then for every $v \in \CD^{\gamma,\eta}_H$ which is symmetric with respect to $\cS$, for every symmetric model $Z = (\Pi,\Gamma)$ of the regularity structure $\cT$ such that $\CI$ is adapted to the kernel $K$, and every $\mfp\in \mfP$, there exists a time $\tau_\mfp> 0$ such that the equation
	\begin{equ}
		\label{eq:LiftEq}
		u = \left( \CK_{\overline{\gamma}}+R_{\gamma} \CR \right) \bR^+ F(u)+v
	\end{equ}
	admits a unique solution $u^{\mfp} \in \left( \CD^{\gamma,\eta}_{H} \right)_{\mfp} $ on $(0,\tau_{\mfp})$. The solution map $\CS^{\mfp}_\tau \colon (v,Z) \mapsto u^\mfp$ is jointly continuous in a neighbourhood of $(v,Z)$, i.e.\ for every fixed $v,Z$ as well as $\eps > 0$ there exists a $\delta > 0$ such that a second initial condition $\bar v$ and second model $\overline{Z}$ we have
	\begin{equ}
		\bigl\vvvert Z; \overline{Z} \bigr\vvvert_{\gamma; O,\mfp} + \vvvert v; \bar v \vvvert_{\gamma,\eta;\tau,\mfp} \leqslant \delta \implies \vvvert u^\mfp, \bar u^\mfp\vvvert_{\gamma,\eta;\tau,\mfp} \leqslant \eps
	\end{equ}
	where $\bar u^{\mfp} = \CS^{\mfp}_\tau (\bar v, \overline{Z})$.

	If $F$ is strongly locally Lipschitz, then the map $\CS^{\mfp}_\tau$ is jointly locally Lipschitz continuous in the regular sense of the word.
\end{theorem}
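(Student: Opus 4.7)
The plan is to follow the standard Picard iteration strategy used in the scalar case (as in \cite[Theorem~7.8]{Hai14}), but carefully localised to a single seminorm $\mfp \in \mfP$ at a time. Fix $\mfp$ and consider the Banach space $(\CD^{\gamma,\eta}_H)_\mfp$ obtained by the quotient/completion construction. The idea is to define, for $\tau > 0$, the map
\begin{equ}
\CM_\tau(u) \eqdef \bR^+ \bigl(\CK_{\overline\gamma} + R_\gamma \CR\bigr)\bR^+ F(u) + v\;,
\end{equ}
and to show it is a contraction on a suitable closed ball of $(\CD^{\gamma,\eta}_{H})_\mfp$ restricted to $O_\tau$, for $\tau = \tau_\mfp$ sufficiently small. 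The reason to work one seminorm at a time is that the natural ``radius of convergence'' estimates depend on the norms of the model and of $v$ measured by $\mfp$, so the existence time will genuinely depend on the choice of seminorm, which is why the conclusion is only formulated in $(\CD^{\gamma,\eta}_H)_\mfp$.

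First I would collect the ingredients: by Theorem~\ref{thm:FncLift} combined with Theorem~\ref{thm:RSMult} and the fact that $\bR^+ \in \CD^{\infty,\infty}_H$, the map $u \mapsto \bR^+ F(u)$ sends $\CD^{\gamma,\eta}_H$ into $\CD^{\overline\gamma,\overline\eta}_H$ locally Lipschitz continuously (with respect to $\vvvert\,\bigcdot\,\vvvert_{\cdot;O,\mfp}$). Then Theorem~\ref{SingKernThm} and its singular analogue (generalising \cite[Proposition~6.16]{Hai14}) show that $\CK_{\overline\gamma}$ maps $\CD^{\overline\gamma,\overline\eta}_H \to \CD^{\overline\gamma+\beta,(\overline\eta \wedge \overline\zeta)+\beta}_H$, with the restriction to $O_\tau$ gaining a small factor $\tau^\kappa$ for some $\kappa > 0$ under the hypotheses $\gamma < \overline\gamma + \beta$ and $\eta < (\overline\eta\wedge\overline\zeta)+\beta$; this short-time smallness is the standard argument recalled in \cite[Proposition~7.1 \& Lemma~7.3]{Hai14}. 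The smooth-kernel contribution $R_\gamma \CR$ is handled identically, using the reconstruction bound together with the fact that convolution with the smooth part of the kernel preserves $(\alpha,\eta\wedge\alpha)$-singular H\"older regularity. All of these estimates are stated in scalar form in \cite{Hai14}, and the generalisation is simply to run them with $|\,\bigcdot\,|$ replaced by the seminorm $\mfp$, which is legal because $\mfp$ is submultiplicative (Definition~\ref{def:mconvex}) and $t$-compatible with the chain of Schauder/product/reconstruction estimates proved in this section.

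With these preparations, I would show that for $\tau$ small enough (depending on $\mfp$, on $\vvvert v \vvvert_{\gamma,\eta;O,\mfp}$, and on $\vvvert Z\vvvert_{\gamma;O,\mfp}$), the map $\CM_\tau$ preserves the ball $B_R \subset (\CD^{\gamma,\eta}_H)_\mfp$ with $R = 2\vvvert v \vvvert_{\gamma,\eta;O,\mfp}+1$ and is a $\f12$-contraction there; Banach's fixed point theorem then yields existence and uniqueness of $u^\mfp$ on $(0,\tau_\mfp)$. Joint continuity at $(v,Z)$ follows by comparing the fixed-point equations for $(v,Z)$ and $(\bar v,\overline Z)$, using the second bounds in Theorems~\ref{SingKernThm} and~\ref{thm:RSMult}, together with the corresponding stability of $\CR$ and of $F$. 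When $F$ is strongly locally Lipschitz, these estimates become genuinely Lipschitz rather than merely continuous, giving the final Lipschitz claim.

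The main obstacle I expect is purely bookkeeping: because $\CA$ is only required to be locally $m$-convex and not Banach, there is no single global norm, and one has to be scrupulous about verifying that each lemma from Sections~\ref{sec:Reconstruction}--\ref{subsec:multiplication} used here really produces a bound in terms of the \emph{same} seminorm $\mfp$ on both sides, rather than requiring a strictly stronger seminorm in the domain (which would break the closedness of the fixed-point argument). Once one checks (as the preceding subsections have essentially done) that all the estimates are locally $m$-convex (i.e.\ $\mfp$-to-$\mfp$), nothing else changes compared with the scalar proof in \cite[Section~7]{Hai14}, and the conclusion follows.
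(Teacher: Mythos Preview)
Your proposal is correct and matches the paper's approach: the paper does not give an explicit proof of this theorem but rather relies on the fact that the argument of \cite[Theorem~7.8]{Hai14} goes through verbatim once all estimates are run seminorm-by-seminorm, exactly as you outline. Your emphasis on checking that every ingredient (reconstruction, Schauder, multiplication, function lift) is genuinely $\mfp$-to-$\mfp$ is precisely the point the paper makes implicitly by stating that the results of \cite[Section~7]{Hai14} continue to hold in the $\CA$-valued setting.
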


	We note here that by $t$-continuity we for any sequence $u_n$ that converges on $(0,\tau_\mfp)$ to $u^\mfp$ in $\left( \CD^{\gamma,\eta}_H\right)_{\mfp}$, we have that $\pi_\mfp \circ \CR u_n$ must also be a Cauchy sequence in $\CC^{\alpha \wedge \eta}_{\s}(O_{\tau_\mfp} ; \CA_\mfp)$, as 
	\begin{equ}
		\| \CR u_n \|_{\alpha \wedge \eta ; \tau_{\mfp} , \mfp} \lesssim \vvvert u_n \vvvert_{\gamma,\eta ; O_T , \mfp} \; . 
	\end{equ}
	Here $\CA_\mfp \eqdef \overline{\CA / \mfp^{-1}(0)}^{\mfp}$ and $\pi_\mfp$ is the canonical projection. In particular, this means that there exists a canonical reconstruction of $u^\mfp$ in $\CC^{\alpha \wedge \eta}_{\s}(O_{\tau_\mfp} ; \CA_\mfp)$.

We can, in fact, push this line of reasoning further. Because each $T_\ell$ for $\ell \in A$, and $T_{< \gamma}$ as a whole, is a locally $m$-convex $\CA$-bimodule, $\CA_\mfp$ acts naturally on $T^\mfp_\ell \eqdef \overline{T_\ell / \mfp_\ell^{-1}(0)}^\mfp$. For if $a \in \mfp^{-1}(0)$, then for all $m \in T_\ell$ 
\begin{equ}
	\| m a \|_{\mfp_\ell} \leqslant \| m \|_{\mfp_\ell} \| a \|_\mfp = 0 \; ,
\end{equ}
i.e.\ $m a$ is equivalent to $0$ in $T^\mfp_\ell$, and analogously for the left action. In particular, due to the $t$-continuity that we have imposed, $T^\mfp \eqdef \bigoplus_{a \in A} T^\mfp_a$ is itself the structure space of an $\CA_\mfp$-regularity structure, $\cT^\mfp$. Each of the solutions $u^\mfp$ can then be viewed as $\cT^\mfp$-modelled distributions.

W.l.o.g.\ we may assume that the set of seminorms $\mfP$ of $\CA$ is directed, i.e.\ that for any two seminorms $\mfp, \mfq$ there exists a seminorm $\mfr$, s.t.\ for all $a \in \CA$
\begin{equ}
	\mfp(a) \vee \mfq(a) \leqslant \mfr (a) \; . \footnote{E.g.\ by adding the maxima over all finite collections of seminorms in $\mfP$ to the set of seminorms.}
\end{equ} 
Then, in addition to the canonical projections $\pi_\mfp \colon \CA \to \CA_\mfp$, there exist canonical projections $\pi_{\mfp\mfq} \colon \CA_{\mfq} \to \CA_{\mfp}$ such that $\pi_{\mfp} = \pi_{\mfp \mfq} \circ \pi_{\mfq}$ when $\mfp \leqslant \mfq$. Then $\CA$ is the projective limit of the system $\left( \left( \CA_\mfp\right)_{\mfp}, \left( \pi_{\mfp\mfq}\right)_{\mfp \leqslant \mfq}\right)$ inside of $\prod_{\mfp \in \mfP} \CA_{\mfp}$. The same holds for $T_{<\gamma}$ being a projective limit inside of $\prod_{\mfp \in \mfP} T^\mfp_{< \gamma}$.

\begin{corollary}
	There exists a stopped solution $u$ of \eqref{eq:LiftEq} with values in the regularity  structure $\prod_{\mfp \in \mfP }\cT^{\mfp}$, given by
	\begin{equ}
		u = \left( \bone_{ (0,\tau_\mfp) } u^\mfp \right)_{\mfp \in \mfP} \; . 
	\end{equ}
	If $\tau \eqdef \inf_{\mfp \in \mfP} \tau_\mfp  > 0$, then $u$ takes value in $\cT$ on the temporal interval $(0,\tau)$.  
\end{corollary}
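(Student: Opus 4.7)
The plan is to assemble the family $(u^{\mfp})_{\mfp \in \mfP}$ produced by Theorem~\ref{thm:AbsFPM} into a single coherent object, and then use the description of $\cT$ as a projective limit (given just before the corollary) to conclude. First, for every comparable pair $\mfp \leqslant \mfq$ in $\mfP$, I would observe that the canonical projection $\pi_{\mfp\mfq} \colon \CA_{\mfq} \to \CA_{\mfp}$ extends coefficient-wise to a degree-zero $\CA$-bimodule morphism $\pi_{\mfp\mfq} \colon \cT^{\mfq} \to \cT^{\mfp}$ that intertwines the structure groups, models, abstract integration maps, and the lift of the analytic nonlinearity $F$; this is essentially automatic from the quotient construction and the $t$-continuity built into the definition of an $\CA$-regularity structure.

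Second, I would apply $\pi_{\mfp\mfq}$ to the fixed-point equation \eqref{eq:LiftEq} for $u^{\mfq}$ on $(0, \tau_{\mfq})$. Since each of $\CK_{\overline\gamma}$, $R_{\gamma}\CR$, $\bR^+$ and the lifted nonlinearity $\widehat F$ is built from $t$-continuous bimodule morphisms, all of them commute with $\pi_{\mfp\mfq}$, so $\pi_{\mfp\mfq}\, u^{\mfq}$ is a solution of \eqref{eq:LiftEq} in $(\CD^{\gamma,\eta}_{H})_{\mfp}$ on $(0, \tau_{\mfq})$ with respect to the model $(\pi_{\mfp\mfq}\Pi^{\mfq}, \pi_{\mfp\mfq}\Gamma^{\mfq}) = (\Pi^{\mfp}, \Gamma^{\mfp})$ and initial condition $\pi_{\mfp\mfq} v^{\mfq} = v^{\mfp}$. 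By the uniqueness statement of Theorem~\ref{thm:AbsFPM} applied at level $\mfp$, we conclude that $\pi_{\mfp\mfq}\, u^{\mfq} = u^{\mfp}$ on $(0, \tau_{\mfp} \wedge \tau_{\mfq})$.

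Multiplying by the indicator $\bone_{(0,\tau_{\mfp})}$ extends this identity to all of $\R$ in the form
\begin{equ}
\pi_{\mfp\mfq}\bigl(\bone_{(0,\tau_{\mfq})} u^{\mfq}\bigr)\restr_{(0,\tau_{\mfp})} = \bone_{(0,\tau_{\mfp})} u^{\mfp}\;,
\end{equ}
which shows that the tuple $u = \bigl(\bone_{(0,\tau_{\mfp})} u^{\mfp}\bigr)_{\mfp \in \mfP}$ is a well-defined element of the product $\prod_{\mfp \in \mfP} \cT^{\mfp}$-valued modelled distributions, establishing the first assertion.

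Finally, suppose $\tau \eqdef \inf_{\mfp \in \mfP} \tau_{\mfp} > 0$. Restricted to $(0,\tau)$, the relations $\pi_{\mfp\mfq}\, u^{\mfq} = u^{\mfp}$ hold for all comparable pairs $\mfp \leqslant \mfq$, so $(u^{\mfp}\restr_{(0,\tau)})_{\mfp \in \mfP}$ is a coherent family with respect to the projective system $\bigl((\cT^{\mfp})_{\mfp}, (\pi_{\mfp\mfq})_{\mfp \leqslant \mfq}\bigr)$. Since $\cT$ is exactly the projective limit of this system, the restricted tuple lifts canonically to a $\cT$-valued modelled distribution on $(0,\tau)$, which is the second claim. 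The only substantive step in this outline is the commutation of $\pi_{\mfp\mfq}$ with every operation appearing in \eqref{eq:LiftEq}; this is where the $t$-convexity assumptions in Definitions~\ref{def:reg_struct}, \ref{def:model} and \ref{def:CompKern} do all the work, and once one unravels the definitions there is nothing more to verify beyond direct inspection.
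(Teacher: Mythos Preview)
The paper states this corollary without proof, treating it as an immediate consequence of the preceding projective-limit discussion and Theorem~\ref{thm:AbsFPM}. Your approach is the natural way to fill in the details and is essentially correct, but one step is off.

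The displayed equation
\[
\pi_{\mfp\mfq}\bigl(\bone_{(0,\tau_{\mfq})} u^{\mfq}\bigr)\restr_{(0,\tau_{\mfp})} = \bone_{(0,\tau_{\mfp})} u^{\mfp}
\]
is false in general: for $\mfp \leqslant \mfq$ one has $\tau_{\mfq} \leqslant \tau_{\mfp}$, so the left-hand side vanishes on $(\tau_{\mfq},\tau_{\mfp})$ while the right-hand side need not. Fortunately this identity is not needed. The first assertion of the corollary---that the tuple $u$ lies in the product $\prod_{\mfp}\cT^{\mfp}$---requires no compatibility at all; membership in a Cartesian product is automatic, and ``stopped solution'' just records that each component solves \eqref{eq:LiftEq} on its own interval. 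The second assertion only needs the coherence $\pi_{\mfp\mfq}\,u^{\mfq} = u^{\mfp}$ on $(0,\tau_{\mfq})$, which you correctly establish via uniqueness, and then restrict to $(0,\tau)\subset(0,\tau_{\mfq})$. So the argument goes through once you drop the problematic display and separate the two assertions cleanly.
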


Beyond this, Theorem~7.1, Lemmata~7.3~\&~7.5, Proposition~7.11, and Corollary~7.12 of \cite[Section~7]{Hai14} continue to hold in the setting of $\CA$-regularity structures. These allow one to extend local-in-time solutions up until their local maximal interval of existence $[0,T^\mfp_{\max})$.

\section{\TitleEquation{\CA}{A}-Regularity Structures Generated by Trees}\label{sec:TreeRegStruc}
In this section, in analogy with \cite[Sec.~5.1]{BHZ19}, we describe a more concrete class of $\CA$-regularity structures generated by trees which will be the most relevant for applications to SPDE.

\subsection{Construction of \TitleEquation{\CH}{H}}

Let $\mfL^- \eqdef \{\Xi_1, \dots, \Xi_k\}$ be a finite set of noise types and $\mfL^+ \eqdef \{\CI_1, \dots, \CI_\ell\}$ be a finite set of integration kernel types. Let $\mfL \eqdef (\mfL^- \sqcup \mfL^+) \times \N^d$.
We assume that $\mfL^- \sqcup \mfL^+$ is equipped with an $\R$-valued grading $|\,\bigcdot\,|_{\s}$, such that $|\mft|_{\s} > 0$ for all $\mft \in \mfL^+$. We extend this to $\mfL$ by setting $|(\mfl, k)|_{\s} \eqdef |\mfl|_{\s} - |k|_{\s}$. Furthermore, by slight abuse of notation, let $\Z(\mfL) \eqdef \Z^d \oplus \Z(\mfL^- \sqcup \mfL^+)$, where $\Z(\mfL^- \sqcup \mfL^+)$ is the free Abelian group generated by the set $\mfL^- \sqcup \mfL^+$. We extend the grading to $\Z(\mfL)$ by letting
\begin{equ}
	\left|\left( \textstyle \sum_i \ell_i \mft_i , k \right)\right|_{\s} \eqdef  \sum_i \ell_i |\mft_i|_{\s} - |k|_{\s} \; .
\end{equ}

For a rooted tree $T$, we shall denote its set of vertices by $N_T$ and its edges by $E_T$.
Its root shall be called $\rho(T)$ or $\rho_T$.
A leaf of $T$ is a vertex that has degree $1$ and which is distinct from the root $\rho_T$.
We shall call an edge $e$ connected to vertex $v$ outgoing if it does not lie on the unique shortest path connecting $v$ to $\rho_T$. The set of outgoing edges of a vertex $v$ will be denoted by $\delta(v)$. We call a tree ordered if for all $v \in N_T$ each of the sets $\delta(v)$ is totally ordered, which induces a total order on $E_T$ by taking the lexicographic order. We will assume that the trees we are using are ordered.

An ordered disjoint union of rooted trees will be called a forest. A morphism of forests is an order-preserving injective map between forests such that vertices are mapped to vertices, nodes to nodes, and the connectivity structure is preserved. We say that a $(\mff,\iota)$ is a subforest of a tree $T$, if $\mff$ is a forest and $\iota \colon \mff \to T$ is a morphism of forests. We will typically suppress the map $\iota$ from the notation.

\begin{definition}
	\label{def:DecTrees}
 	Let $\CA$ be a locally $m$-convex algebra. Let $\mfT = \mfT(\CA)$ denote the set of quintuples $\tau = (T,\mfe, \mfn,\mfo, \mfa)$ consisting of
    \begin{itemize}
        \item the underlying tree for $\tau$, $T = (N_T, E_T)$ is an ordered, rooted tree.
        \item the edge decoration, $\mfe \colon E_T \to \mfL$. We say $e$ is noised valued if $\mfe(e) \in \mfL_{-} \times \N^{d}$, and enforce that $e$ noise valued $\Rightarrow$ $e$ is connected to a leaf.
        \item the polynomial decoration, $\mfn \colon N_T \to \N^d$, associating a term $X^k$ to each node and satisfying the constraint that $n(v) = 0$ for all $v \in N_{T,+}$ where $N_{T,+} \subset N_{T}$ are those nodes that are not leaves connected to a noise type edge.
        \item the negative tree decoration, $\mfo = \big( (\mff , \mfe_\mff, \mfn_\mff) , \bpi\big)$, where $\mff$ is a subforest of $T$, $\mfe_\mff$ and $\mfn_\mff$ are edge and node decorations as above, and $\mfe_\mff = \mfe \big|_{\mff}$, and $\bpi$ is a partition of the set of negative leaves of $\mff$.
		For every $\pi \in \bpi$, the leaves $v \in \pi$ have to belong to a single tree of $\mff$.
        \item the algebra decoration, $\mfa \in  \CA^{\wotimes N_{T,+} \sqcup E_{T} }$. We also view $\CA^{\wotimes N_{T,+} \sqcup E_{T} } \subset  \CA^{N_{T} \sqcup E_{T}}$  by adding factors of $\one_{\CA}$.
    \end{itemize}

For those familiar with the construction of regularity structures in \cite{BHZ19}, the biggest changes in our setting are replacing the extended decoration of \cite{BHZ19} with the ``richer'' negative tree decoration, and the algebra decoration.

\begin{example}
\label{ex:ExTrees1}

We use as an example the quintic tree from the $\Phi^4_3$ stochastic quantisation equation.
We first discuss the negative tree decoration.

\begin{equ}[eq:ExTrees1]
\begin{gathered}
\scalebox{0.8}{
\begin{tikzpicture}[
    red_square/.style={rectangle, fill=red, inner sep=2.5pt},
    black_dot/.style={circle, fill=black, inner sep=2.5pt},
    arrow/.style={->, >=stealth, thick},
    every edge quotes/.style={font=\small, sloped, fill=white, inner sep=1.5pt}, 
	scale = 0.8
]
\node[black_dot, label=below:1]   (1) at (0,0) {};
\node[red_square, label=left:3]     (3) at (-1.9, 1.1) {};
\node[black_dot, label=right:5]    (5) at (0, 2.2) {};
\node[red_square, label=left:7]     (7) at (-1.9, 3.3) {};
\node[red_square, label=above:9]    (9) at (0, 4.4) {};
\node[red_square, label=right:11]    (11) at (1.9, 3.3) {};
\node[red_square, label=right:13]    (13) at (1.9, 1.1) {};

\draw[arrow] (3) edge ["2"] (1);
\draw[arrow] (13) edge ["12"] (1);
\draw[arrow] (7) edge ["6"] (5);
\draw[arrow] (9) edge ["8"] (5);
\draw[arrow] (11) edge ["10"] (5);
\draw[arrow] (5) edge ["4"] (1);
\end{tikzpicture}
}
\\
\scalebox{0.8}{
	\begin{tikzpicture}[
    red_square/.style={rectangle, fill=red, inner sep=2.5pt},
    black_dot/.style={circle, fill=black, inner sep=2.5pt},
    arrow/.style={->, >=stealth, thick},
    every edge quotes/.style={font=\small, sloped, fill=white, inner sep=1.5pt},
    highlight/.style={green, opacity=0.4, line cap=round, line join=round, line width=1.2cm},
    purple_line/.style={draw=purple, very thick}, 
	scale = 0.8
]
\node[black_dot, label=below:1]   (1) at (0,0) {};
\node[red_square, label=left:3]     (3) at (-1.9, 1.1) {};
\node[black_dot, label=right:5]    (5) at (0, 2.2) {};
\node[red_square, label=left:7]     (7) at (-1.9, 3.3) {};
\node[red_square, label=above:9]    (9) at (0, 4.4) {};
\node[red_square, label=right:11]    (11) at (1.9, 3.3) {};
\node[red_square, label=right:13]    (13) at (1.9, 1.1) {};

\begin{pgfonlayer}{background}
    \draw[highlight] (7) -- (5);
    \draw[highlight] (11) -- (5);
    \draw[highlight] (3) -- (1);
    \draw[highlight] (13) -- (1);
    \draw[highlight] (5) -- (1);
\end{pgfonlayer}

\draw[arrow] (3) edge ["2"] (1);
\draw[arrow] (13) edge ["12"] (1);
\draw[arrow] (7) edge ["6"] (5);
\draw[arrow] (9) edge ["8"] (5);
\draw[arrow] (11) edge ["10"] (5);
\draw[arrow] (5) edge ["4"] (1);

\draw[purple_line] (3) to[out=60, in=170] (11);
\draw[purple_line] (7) to[out=10, in=120] (13);

\end{tikzpicture}
}
\qquad
\scalebox{0.8}{
	\begin{tikzpicture}[
    red_square/.style={rectangle, fill=red, inner sep=2.5pt},
    black_dot/.style={circle, fill=black, inner sep=2.5pt},
    arrow/.style={->, >=stealth, thick},
    every edge quotes/.style={font=\small, sloped, fill=white, inner sep=1.5pt},
    highlight/.style={green, opacity=0.4, line cap=round, line join=round, line width=1.2cm},
    purple_line/.style={draw=purple, very thick}, 
	scale = 0.8
]
\node[black_dot, label=below:1]   (1) at (0,0) {};
\node[red_square, label=left:3]     (3) at (-1.9, 1.1) {};
\node[black_dot, label=right:5]    (5) at (0, 2.2) {};
\node[red_square, label=left:7]     (7) at (-1.9, 3.3) {};
\node[red_square, label=above:9]    (9) at (0, 4.4) {};
\node[red_square, label=right:11]    (11) at (1.9, 3.3) {};
\node[red_square, label=right:13]    (13) at (1.9, 1.1) {};

\begin{pgfonlayer}{background}
    \draw[highlight] (7) -- (5);
    \draw[highlight] (11) -- (5);
    \draw[highlight] (3) -- (1);
    \draw[highlight] (13) -- (1);
    \draw[highlight] (5) -- (1);
\end{pgfonlayer}

\draw[arrow] (3) edge ["2"] (1);
\draw[arrow] (13) edge ["12"] (1);
\draw[arrow] (7) edge ["6"] (5);
\draw[arrow] (9) edge ["8"] (5);
\draw[arrow] (11) edge ["10"] (5);
\draw[arrow] (5) edge ["4"] (1);

\draw[purple_line] (3) to[out=120, in=-120] (7);
\draw[purple_line] (11) to[out=-60, in=60] (13);

\end{tikzpicture}
}
\end{gathered}
\end{equ}
In the picture above we work with one edge of type $(\mft_{+},0) \in \mfL_{+} \times \N^{d}$ drawn with black arrows pointing towards the root, and one ``edge'' of type   $(\mft_{-},0) \in \mfL_{-} \times \N^{d}$.
As is common, when we draw them, we collapse noise-type edges (and the two nodes they are incident to) into vertices, indicated by red squares.
Above, the node decoration is taken to vanish, and we also take a unit, or ``empty'' algebra decoration.

In the top picture, we have an empty negative tree decoration, while in the two pictures below, we give two examples of the same tree but with two different negative tree decorations.
In both of these pictures, $(\mff, \mfe_{\mff})$ are the same and indicated by the green highlighting.
However, the choice of $\bpi$ differs between these two trees --  each $\bpi$ is indicated by a perfect pairing drawn with purple lines.

Analogously to the extended decoration of \cite{BHZ19}, the negative tree decoration encodes data about substructures that we have renormalised.
However, in our current noncommutative setting, the rule for ``extracting'' the divergent substructure can differ for different Wick contractions on the substructure since the extraction procedure may involve algebra decorations or non-contracted noises (like $9$ in the above picture) ``crossing'' Wick contractions.

We now turn to the algebra decoration, which is used to give our regularity structure $\CA$-bimodule structure.

Recall that our trees are completely ordered \dash each node comes with a total order on edges incident to that node that connect to a node sitting further away from the root.
In this context, the factor of the algebra decoration $\mfa$ corresponding to an edge $e \in E_{T}$ corresponds to an algebra element sitting after this edge, after any descendant edges, but before any edges later in the total order that emanate from the same vertex that the edge $e$ is born from.
The algebra decoration associated with a node corresponds to an algebra element before its first edge.

We again use the quintic tree from $\Phi^4_3$, displayed at the top of the above figure, as an example.
We write
\begin{equ}
	N_{T,+} = \{1,3,5,7,9,11,13\}\;, \quad
	E_{T} = \{2,\widehat{3},4,6,\widehat{7},8,\widehat{9},10,\widehat{11},12,\widehat{13}\}\;.
\end{equ}
Note that we have introduced $\widehat{3},\widehat{7},\widehat{9},\widehat{11},\widehat{13}$ since each of the red triangles represents the collapsing of a noise edge and its corresponding nodes, so we need additional labels.

The quintic tree with an empty algebra decoration would be written as
\begin{equ}
	\CI(\Xi) \CI \big( \CI(\Xi)^3 \big) \CI(\Xi)\;.
\end{equ}
However, given a simple tensor algebra decoration $ \mfa = \otimes_{w \in N_{T,+} \sqcup E_{T}} a_{w}$ the corresponding tree would be
\begin{equ}
	a_{1} \CI(a_{3} \Xi a_{\widehat{3}}) a_{2}
	\CI \Big(
	a_{5}
	\CI(a_{7}\Xi a_{\widehat{7}} )
	a_{6}
	\CI(a_{9} \Xi a_{\widehat{9}})
	a_{8}
	\CI(a_{11} \Xi a_{\widehat{11}})
	a_{10}
	\Big)
	a_{4}
	\CI(a_{13} \Xi a_{\widehat{13}}) a_{12}\;.
\end{equ}
\end{example}

We shall generally write $\tau$ for the full quintuple $(T,\mfe, \mfn, \mfo, \mfa)$. \ajay{Can we use $\tau$ for the full quintuples, and $(T,\mfe,\mfn,\mfo)$ for classical trees?}
If we remove the decoration $\mfa$ from a tree $T$, we will call this its scalar skeleton. Trees lacking the $\mfa$ decoration or have trivial decoration $\bone_{\CA}^{|T|}$ will be called scalar. We will denote the set of scalar trees by $\mathring{\mfT}$. A tree is bare if it is scalar and $\mfn \equiv 0$. 

We shall denote the set vertices $\iota(N_\mff)$ by $N_\mfo$ and $\iota(E_\mff)$ by $E_\mfo$. We shall denote by $\mfo_\rho$ the (possibly empty) tree of $\mff$ that contains the root $\rho$ of $T$. If $\mfo_\rho \neq \emptyset$, we say that the tree is root-renormalised. Finally, a tree $\tau$ is called fully renormalised if $\mff = T$ and $\mfn \equiv 0$, i.e.\ if $\mfo$ fully covers $\tau$. 
\end{definition}

\begin{remark}
	We remark here again that one should think of the separate $\CA$ factors lying in between the outgoing branches of the tree at a vertex, as well as one copy of $\CA$ sitting to the left and one to the right of all outgoing edges. This is also reflected in the way one can inductively define a tree by multiplying it from the left and right with algebra elements, as well as adding edges to the root and multiplying it with other trees.
\end{remark}

\begin{definition}
	For a tree, we shall call the position on the tree associated with the $k^{\text{th}}$ algebra copy the $k^{\text{th}}$ attachment spot.

	These attachment spots are exactly the same as the possible locations where one can graft an ordered tree onto another ordered tree, cf.\ Example~\ref{ex:Grafting}. With this in mind, we let $|T|$ denote the ordered set of such attachment spots. 
\end{definition}

\begin{remark}
	$\mathring{\mfT}$ will be used to index the subspaces of the regularity structure. While in the scalar case these are all one-dimensional, in our case, they are generically going to be each infinite-dimensional.

	In the scalar case, one usually assumes that the regularity structure is finite-dimensional, which in this case translates into assuming that we only use a finite subset of $\mathring{\mfT}$.
\end{remark}

This set naturally allows for multiplication of each element $T \in \mfT$ from the left and the right by elements of $\CA$ by using the $\CA$-bimodule structure of $\CA^{\wotimes |T|}$.
Note that this implies that, for any single vertex tree $(\bullet ,\emptyset, \mfn, \emptyset, \bone_\CA) = \bullet^{\mfn,\bone_\CA}$ and all $a \in \CA$, we have $a \bullet^{\mfn,\bone_\CA} = \bullet^{\mfn,\bone_\CA} a$. We also set $\1 \eqdef \bullet^{0, \bone_\CA}$.

\begin{definition}

    For $\mfk \in \mfL$, let $\CI_\mfk$ denote the map
    \begin{equ}
        \CI_{\mfk} \colon \mfT \longrightarrow \mfT
    \end{equ}
    that adds a new vertex $\rho_{\CI T}$ to a tree $T$, which is the root of $\CI T$, and adds a new edge $e = (\rho_T, \rho_{\CI T})$ connecting the old and new roots. $\mfe$ is extended to the new tree by setting $\mfe(e) = \mfk$, $\mfn$ is extended by setting $\mfn (\rho_{\CI T}) \eqdef 0$, $\mfo$ remains unchanged, and the new algebra element is $\bone_{\CA} \otimes \mfa \otimes \bone_{\CA}$. If $\mfk \in \mfL^- \times \N^d$, the integration map may only be applied to $\1$.

    For $k \in \N^d$, let $X^{k}$ denote the map
    \begin{equ}
        X^{k} \colon   {\mfT} \longrightarrow \mfT\;,
    \end{equ}
    which for a given tree $T$ only replaces the decoration of the root $\rho_T$ by setting the new decoration $\widetilde{\mfn}$
    \begin{equ}
        \widetilde\mfn(\rho_T) \eqdef \mfn(\rho_T) + k   \; .
    \end{equ}
	By abuse of notation we will also denote $\bullet^{k, \bone_{\CA}} = X^k \1$ by $X^k$ and $\CI_{\mfk}( \1 )  $ by $\Xi_{\mfk}$ for $\mfk \in \mfL^- \times \N^d$.

\end{definition}

\begin{remark}
  We note here that per definitionem $X^{k}$ commutes with multiplication by elements of $\CA$, i.e.\
  \begin{equ}
    a \bigl( X^k \tau \bigr) b = X^k \bigl( a\tau b \bigr) \; .
  \end{equ}
  Furthermore, $X^k$ does not modify the partial contraction $\mfo$.

  At this stage, we have not specified how multiplication by elements of $\CA$ might commute with $\CI_\mfk$.
\end{remark}


Finally, we can also multiply two trees.

\begin{definition}
	For two trees $(T_1, \mfe_1, \mfn_1, \mfo_1 , \mfa_1), (T_2,  \mfe_2, \mfn_2, \mfo_2 ,  \mfa_2) \in \mfT$, with at most one of them being root-renormalised, we define their product to be
  	\begin{equ}
    	(T_1, \mfe_1, \mfn_1, \mfo_1, \mfa_1) (T_2,  \mfe_2, \mfn_2 \mfo_2,  \mfa_2) \eqdef (T_1 T_2, \mfe, \mfn, \mfo, \mfa)
  	\end{equ}
  	where $T_1T_2$ is the usual root joining product of ordered trees, $\mft$ is defined on $E_{T_1} \sqcup E_{T_2}$ in the obvious way, for $v \neq \rho_{T_1T_2}$, $\mfn(v)$ is defined in the obvious way, whilst
	\begin{equ}
    	\mfn (\rho_{T_1T_2}) \eqdef  \mfn(\rho_{T_1})+\mfn(\rho_{T_2}) \; ,
	\end{equ}
	$\mfo \eqdef \mfo_1 \cup \mfo_2$, i.e.\ you take the union of the forests $\mff_1$ and $\mff_2$ as well as $\bpi_1$ and $\bpi_2$, and the new algebra element is given by $\mfa \eqdef   \mu_{n,1}(\mfa_1, \mfa_2)$ where
	\begin{equ}
		\mu_{n,1} \colon \CA^{\wotimes n} \times \CA^{\wotimes m} \longrightarrow \CA^{\wotimes (n+m-1)}
	\end{equ}
	is the natural multiplication map that multiplies the last term in $\CA^{\wotimes n}$ with the first one in $\CA^{\wotimes m}$.
  	Note that  $\mu_{n,1}(\mfa_1,\mfa_2) \in \CA^{\wotimes|T_1T_2|}$ as required. This multiplication is associative and compatible with the multiplication by elements of $\CA$. By Proposition~\ref{prop:ProjTensExt}, it also extends to the $t$-projective tensor product.
\end{definition}


\begin{definition}
  Let $\CH$ denote the $\CA$-bimodule generated by $\mfT$ using the above ``scalar'' multiplication, i.e.\
  \begin{equ}
	\CH \eqdef \bigoplus_{(T, \mfe , \mfn, \mfo) \in \mathring{\mfT}} \CA^{\wotimes |T|} \; ,
  \end{equ}
  equipped with the product topology. We will denote the subspace of $\CH$ isomorphic to $\CA^{\wotimes |T|}$ that corresponds to $(T, \mfe , \mfn, \mfo)$ by $\CH[(T, \mfe , \mfn, \mfo)]$.  By abuse of notation we say that a submodule $\CK$ of $\CH$ is finite-dimensional if there exists a finite subset $\mfQ \subset \mathring{\mfT}$, s.t.\
  \begin{equ}
	\CK \subset \bigoplus_{(T, \mfe , \mfn, \mfo) \in \mfQ}  \CH[(T, \mfe , \mfn, \mfo)] \; .
  \end{equ}
  We will also call $\bigoplus_{\mfQ}  \CH[(T, \mfe , \mfn, \mfo)]$ the subspace spanned by $\mfQ$.

  $\CH$ is a locally convex algebra and locally $m$-convex over $\CA$ but not complete nor itself $m$-convex. However, any finite-dimensional submodule is complete.

  Let $\CH_\rho$ denote the submodule of $\CH$ generated by trees $\tau$ that are not root-renormalised. Note that $\CH_{\rho}$ forms an algebra under the above tree multiplication. 
  
  Finally, let $\CH_0$ denote the subalgebra of $\CH_\rho$ of trees which have $0$ polynomial decoration at the root, i.e.\ $\mfn(\rho) = 0$.
\end{definition}

\begin{proposition}
	The bimodule operations, $\CI_{\mfk}$, and the multiplication of trees extend to continuous maps on $\CH$.
\end{proposition}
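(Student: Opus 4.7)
The plan is to observe that each of the three operations respects the direct sum decomposition
$\CH = \bigoplus_{(T,\mfe,\mfn,\mfo) \in \mathring{\mfT}} \CH[(T,\mfe,\mfn,\mfo)]$, mapping each summand (or pair of summands) into a single summand in a way depending only on the underlying scalar skeleton. Since $\CH$ carries the product topology, it suffices to check continuity of each induced map at the level of finite tensor powers $\CA^{\wotimes|T|}$. All three reductions will rely on Proposition~\ref{prop:ProjTensExt} together with the fact that the multiplication $\mu \colon \CA\times\CA\to\CA$ is $t$-continuous of norm $1$ by $m$-convexity.

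First I would handle the bimodule operations. For a fixed scalar skeleton $(T,\mfe,\mfn,\mfo)$, left multiplication by $a\in\CA$ acts on $\CH[(T,\mfe,\mfn,\mfo)] \cong \CA^{\wotimes |T|}$ as $\mu \otimes \bone^{\wotimes(|T|-1)}$ applied to $a\otimes(\bigcdot)$, with an analogous statement on the right. This is the extension of the $t$-continuous bilinear map $\CA\times\CA\to\CA$ tensored with the identity; continuity follows from Proposition~\ref{prop:ProjTensExt} and the submultiplicativity of the defining seminorms of $\CA$. Exactly the same argument, applied summand by summand, takes care of the full $\CA$-bimodule action on $\CH$.

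Next I would treat $\CI_{\mfk}$. By construction $\CI_{\mfk}$ maps $\CH[(T,\mfe,\mfn,\mfo)]$ into $\CH[\CI_{\mfk}(T,\mfe,\mfn,\mfo)]$, and at the level of tensor factors it acts by $\mfa\longmapsto \bone_\CA\otimes\mfa\otimes\bone_\CA$. This is a continuous linear map $\CA^{\wotimes |T|}\to\CA^{\wotimes(|T|+2)}$ (the $t$-projective topology is cross-compatible, so prepending or appending a fixed factor of norm $1$ is bounded). Summing over summands, $\CI_{\mfk}$ extends to a continuous linear map on $\CH$.

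Finally, for tree multiplication, the pair of summands $\CH[(T_1,\ldots)]\times\CH[(T_2,\ldots)]$ is sent into $\CH[(T_1T_2,\ldots)]$ via the map $\mu_{n,1}\colon \CA^{\wotimes n}\wotimes_\pi\CA^{\wotimes m}\to\CA^{\wotimes(n+m-1)}$ which multiplies the last factor of the first argument with the first factor of the second and keeps all other factors untouched. This is precisely the $t$-continuous bilinear map obtained by tensoring the multiplication $\CA\times\CA\to\CA$ (continuous of norm $1$ by local $m$-convexity) with the identity on the remaining factors, so it extends to the projective completion by Proposition~\ref{prop:ProjTensExt}. The main thing to verify carefully is that tree multiplication as a map on all of $\CH\times\CH$ (and not just on a single pair of summands) is separately continuous: but since $\CH$ carries the product topology over $\mathring{\mfT}$ and each pair $(T_1,T_2)$ contributes to a single summand $T_1T_2$, separate continuity reduces to the already established continuity of each $\mu_{n,1}$, together with the observation that only finitely many summands of any given codomain component are hit by any given pair of input components. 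The only subtlety is this last bookkeeping step; everything else is a direct application of the universal property of $\wotimes_\pi$ together with the $m$-convexity hypothesis on $\CA$.
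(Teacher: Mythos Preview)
Your proof sketch is correct and follows essentially the same approach as the paper, which in fact states this proposition without proof, relying on the remark just before it that $\mu_{n,1}$ extends to the $t$-projective tensor product by Proposition~\ref{prop:ProjTensExt}. Your write-up simply makes explicit the summand-by-summand reduction and the role of $m$-convexity that the paper leaves implicit.
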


Finally, we will need to be able to inductively represent a decorated tree $\tau$ as a bare tree $\tau_\rho$ with $\iota(\mfo_\rho) = T_\rho$ and with a set of trees $\tau_i \in \CH_\rho$ attached to it.


\begin{definition}[Grafting]

	Let $\tau = (T, \mfe , \mfn' , \mfo, \bone_\CA^{\otimes |T|} )$ be a bare tree with $\iota(\mff) = T$ and let $n \eqdef |T|$, let $\tau_1, \dots, \tau_n$ be decorated trees which have zero polynomial decoration at their roots, and let $\mfn \in \left( \N^d \right)^{N_T}$.

	We denote by
	\begin{equ}\label{eq:grafting1}
		(\tau_1, \dots, \tau_n; \mfn )  \curvearrowright \tau
	\end{equ}
	the tree obtained by attaching the tree $\tau_k$ at the $k^{\text{th}}$ attachment spot and setting the polynomial decoration $v \in T$ to $\mfn(v) + \mfn'(v)$. The new extended decoration is the disjoint union of the ones of $\tau$, $\tau_1$, \dots, $\tau_n$.

	The definition above can be interpreted as defining a $\mathbb{K}$-multilinear continuous map
	\begin{equ}
		\bigcdot \curvearrowright \tau \colon \CH_\rho^{\wotimes_\pi n} \otimes \mathbb{K}\left[ \N^d \right]^{\otimes N_T} \longrightarrow \CH \; ,
	\end{equ}
	where we have equipped $\mathbb{K}\left[ \N^d \right]^{\otimes N_T}$ with the discrete topology.

	\begin{remark}
		Here one should interpret $\mfn \in (\N^d)^{N_T}$, as the collection $(X^{\mfn(v)})_{v \in N_T}$, and the corresponding element in $\mathbb{K}[\N^d]^{\otimes N_T}$ as $\bigotimes_{v \in N_T} X^{\mfn(v)}$.
	\end{remark}

	\begin{remark}
		It will also be convenient to write \eqref{eq:grafting1} as 
		\begin{equ}
			\Bigl( \bigotimes_{v \in N_T} \left( \tau_{v,1} \otimes \cdots \otimes \tau_{v,n_v} \otimes  X^{\mfn(v)} \right) \curvearrowright_v \Bigr) \tau
		\end{equ}
		where we have grouped together all trees being attached to the same node $v$, and $\curvearrowright_v$ denotes grafting only onto the vertex $v$. Here, we use the fact that the set of vertices induces a partition on the set of attachment spots. 
	\end{remark}



\end{definition}

\begin{example}\label{ex:Grafting}
The following tree has four vertices and three edges, and thus exactly $7$ locations where a new rooted tree $T$ can be grafted onto it.
	\begin{equ}
		\scalebox{0.8}{\begin{tikzpicture}[
			node style/.style={circle, draw, fill=black, inner sep=1.5pt},
			T node style/.style={circle, fill=red, inner sep=1.5pt},
			T label style/.style={font=\Large\color{red}},
			line style/.style={thick},
			dashed line style/.style={dashed, gray, thick, ->}
			]

			\node[T node style, label={[T label style]above:T}] (T) at (0,2.5) {};
			\node[node style] (n1) at (-1.5, 0) {};
			\node[node style] (n2) at (1.5, 0) {};
			\node[node style] (n3) at (0, -1.5) {};
			\node[node style] (n4) at (1.5, 1.5) {};

			\draw[line style] (n1) -- (n3);
			\draw[line style] (n2) -- (n3);
			\draw[line style] (n2) -- (n4);

			\draw[dashed line style] (T) to[bend left=10] (n1);
			\draw[dashed line style] (T) to[bend right=10] (n2);
			\draw[dashed line style] (T) -- (n3);
			\draw[dashed line style] (T) to[bend left=-10] (n4);

			\draw[dashed line style] (T) .. controls (-2,2.5) and (-3,-0.5) .. (n3); 
			\draw[dashed line style] (T) .. controls (3,3) and (3,-0.5) .. (n3);    
			\draw[dashed line style] (T) .. controls (2.7,2.2) and (2,0.5) .. (n2);     

		\end{tikzpicture}
		}
	\end{equ}
	We note that most of the grafting spots are only different due to the ordered nature of the tree. 
\end{example}

One can easily prove the following proposition.

\begin{proposition}
\label{prop:GraftRep}
	Any decorated tree $\tau$ may uniquely be written as
	\begin{equ}\label{eq:RepGraft}
		\tau = \left(\CF \otimes X^{\mfn}\right) \curvearrowright \tau_\rho
	\end{equ}
	where $T_\rho  \eqdef  \iota(\mfo_\rho)$ and $\tau_\rho = \left(T_\rho, \mfe\big|_{T_\rho}, 0 , \mfo_\rho, \bone_\CA^{\otimes |T_\rho|}\right)$ is fully renormalised,$ \CF \in \CH_0^{\wotimes |T_\rho|}$, and $\mfn(v) = \mfn_{\tau} (v)$ for all $v \in T_\rho$.

	Furthermore, for $\tau \in \CH[(T, \mfe , \mfn, \mfo)]$  the assignment $\tau \mapsto \CF$ is a homeomorphism onto its image, as, at the level of the algebra decoration $\CA^{\wotimes |T|}$,  it is just the identity map. 
	We call $\CF$ the non-root-renormalised part of $\tau$ 
\end{proposition}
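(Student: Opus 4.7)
The plan is to construct the decomposition explicitly by reading off $T_\rho$ and the ``branches'' hanging off it from the data of $\tau$, and to observe that both the grafting operation and the combinatorial data involved leave the underlying algebra decoration essentially untouched (modulo reparenthesization), which will give the homeomorphism statement for free.

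First, given $\tau=(T,\mfe,\mfn,\mfo,\mfa)$ with $\mfo=((\mff,\mfe_\mff,\mfn_\mff),\bpi)$, set $T_\rho\eqdef \iota(\mfo_\rho)$ and define
\begin{equ}
\tau_\rho \eqdef \bigl(T_\rho,\;\mfe\big|_{T_\rho},\;0,\;\mfo_\rho,\;\bone_\CA^{\otimes|T_\rho|}\bigr)\;,
\end{equ}
which is a bare, fully renormalised tree by construction. The complement $T\setminus T_\rho$ decomposes as a disjoint union of maximal subtrees; each such subtree is hanging off an attachment spot of $T_\rho$. I would label the attachment spots of $T_\rho$ by an ordered index set identified with $|T_\rho|$, and for each spot $s$ collect the sequence of subtrees grafted there (possibly empty). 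Since none of these subtrees contains $\rho_T$, none of them is root-renormalised, hence each belongs to $\CH_\rho$. The polynomial decoration on $T_\rho$ is recorded separately in $X^\mfn$, while the polynomial decorations of vertices outside $T_\rho$ are inherited by the branches.

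Second, I need to handle the algebra decoration. Because $|T|$ is totally ordered and the attachment spots of $T_\rho$ partition $|T|$ into consecutive intervals (each spot $s$ of $T_\rho$ corresponds precisely to the attachment spots of the branches grafted at $s$, interleaved with the single factor of $\bone_\CA$ sitting at $s$ in $\tau_\rho$), the tensor $\mfa\in\CA^{\wotimes|T|}$ may be identified, \emph{without collapsing any tensor factors}, with an element of $\bigotimes_{s\in|T_\rho|}\CA^{\wotimes I_s}$, where $I_s$ is the set of attachment spots of $T$ lying under $s$. Each factor $\CA^{\wotimes I_s}$ is precisely the algebra decoration space of the forest grafted at $s$, so this identifies $\mfa$ with an element $\CF\in\bigotimes_{s\in|T_\rho|}\CH_0[\cdot]\subset\CH_0^{\wotimes|T_\rho|}$ (using the $t$-projective tensor product of Definition~\ref{def:tProjTens}). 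Unwinding the definition of grafting, the identity \eqref{eq:RepGraft} then holds tautologically: grafting reinserts each branch at its attachment spot, restores the polynomial decorations via the $X^{\mfn(v)}$ factors, takes the disjoint union of negative tree decorations (which rebuilds $\mfo$ since $\mfo_\rho$ lives on $T_\rho$ and the remaining components of $\mff$ lie entirely inside the branches), and concatenates the algebra factors.

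Third, uniqueness: in any decomposition of the prescribed form, $T_\rho$ is forced to equal $\iota(\mfo_\rho)$ because $\tau_\rho$ must be fully renormalised and contribute the component of $\mff$ containing $\rho$; once $T_\rho$ is fixed, both $\mfn$ (read off from the polynomial decoration of $T_\rho$ in $\tau$) and the branches together with their algebra decorations are determined by stripping $T_\rho$ from $\tau$.

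Finally, for fixed scalar skeleton $(T,\mfe,\mfn,\mfo)$, the decomposition constructed above restricts to an identification
\begin{equ}
\CA^{\wotimes|T|} \;\longrightarrow\; \bigotimes_{s\in|T_\rho|}\CA^{\wotimes I_s}\;,
\end{equ}
which is literally the canonical associator of the $t$-projective tensor product; this is a topological isomorphism onto its image in $\CH_0^{\wotimes|T_\rho|}$, giving the homeomorphism claim. The main bookkeeping obstacle is checking that the total order on $|T|$ really does match, block by block, the concatenation induced by the grafting, but this follows from the lexicographic order convention on ordered trees together with the fact that grafting preserves the order of outgoing edges at each vertex.
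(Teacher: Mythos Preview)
Your argument is correct and is precisely the natural unpacking of the definitions that the paper has in mind; the paper itself offers no proof beyond the sentence ``One can easily prove the following proposition.'' Your sketch is therefore strictly more detailed than what the paper provides, and the key points---that the branches are not root-renormalised because the trees of $\mff$ are disjoint and $\mfo_\rho$ already occupies the root, that the polynomial decoration on $T_\rho$ is shunted into $X^\mfn$ so the branches land in $\CH_0$ rather than merely $\CH_\rho$, and that the algebra decoration passes through by associativity of the $t$-projective tensor product---are all correctly identified.
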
 
\begin{remark}
	Note here that $\tau_\rho$ has $0$ polynomial decoration.
\end{remark}

\begin{remark}
	The reason we chose to restrict the domain of $\curvearrowright$ to $\CH_0$, rather than $\CH_\rho$, was to ensure that the inductive representation is unique. 
	Furthermore, the reason we allow $\CF \in \CH_0^{\wotimes |T_\rho|}$ is that the algebra decoration need not be a simple tensor. 
	However, writing $\check{\tau}$ for the scalar form of $\tau$, we can find scalar trees $\check{\tau}_i \in \CH_0$, s.t.\ 
	\begin{equ}\label{eq:scalarGraftRep}
	\check{\tau} = 
		(\check{\tau}_1, \dots, \check{\tau}_n; \mfn )  \curvearrowright \tau_\rho \; .
	\end{equ}
	In fact, this always holds for the scalar skeleton of $\tau$.
\end{remark}

\begin{definition}
	We define the following two gradings on $\mfT$
	\begin{equs}
		|\tau|_\s &\eqdef \sum_{n \in N_T} |\mfn(n)|_{\s} + \sum_{e \in E_T} |\mfe(e)|_{\s} - \sum_{e \in E_\mff} |\mfe(e)|_{\s} \; ,\\
		|\tau|_+ &\eqdef \sum_{n \in N_T} |\mfn(n)|_{\s} + \sum_{e \in E_T} |\mfe(e)|_{\s} + \sum_{n \in N_\mff} |\mfn(n)|_{\s} \; .
	\end{equs}
	Let $\CH^{+}$ be the subalgebra of $\CH_\rho$ generated by trees $\tau$ that satisfy \ $|\tau|_+ \geqslant 0$ and $N_{\mfo} \cap \rho_T = \emptyset$. Furthermore, let $P \colon \CH \to \CH^{+}$ be the canonical projection. Let $\CJ_{\mfk} \eqdef P \circ \CI_{\mfk}$ for $\mfk \in \mfL^+\times \N^d$.

	Let $\cJ$ be the intersection of $\CH^+$ with the ideal $\CH$ generated by trees $\tau$, such that $|\tau|_+ \leqslant 0$.
\end{definition}

\begin{remark}
	The scalings $|\,\bigcdot\,|_\s$ and $|\,\bigcdot\,|_+$ record the regularity of the distribution represented by the tree, with $|\,\bigcdot\,|_\s$ disregarding the ``renormalised'' subforest $\mfo$.

	Furthermore, $\CH^+$ is the subalgebra that transforms non-trivially when recentred, and the projection $P$ exists as one can canonically write $\CH$ as a direct sum $\CH^+ \oplus \CH'$.

	Finally, $\cJ$ is the linear span of trees that, although overall positive, contain a negative factor with respect to the tree product.
\end{remark}


\begin{definition}[Unit \& Co-Unit]
  We define the inclusion map $\eta \colon \CA \to \CH$ with $\eta(a) = a \1$. For a tree $\tau$, we define $\1^* \colon \CH \to \CA$ to be $\1^*(\tau) = a$ if $\tau = a\bone$, otherwise $\1^*(\tau) = 0$, and extend this by $\CA$-bilinearity.

\end{definition}

\begin{remark}  
$\1^*$ and $\eta$ satisfy the identity $  \1^* \circ \eta = \bone_{\CA}$. 
Moreover, $\1^*$ and $\eta$ are both multiplicative, i.e.\ for $\tau,\tau' \in \CH$ , $\1^*(\tau\tau') = \1^*(\tau) \1^*(\tau')$ and $\eta(\tau\tau') = \eta(\tau) \eta(\tau')$. 
\end{remark}

\subsection{Inductive Construction of Continuous Maps on Trees}
\label{sec:MetaIndExp}

Throughout this section, we shall inductively define multiplicative, $t$-continuous, $\CA$-bimodule morphism $\phi$ on $\CH$ by first defining them on ``primitive'' trees, i.e.\ $X_\mu$ or $\CI_{\mfk} \sigma$ for some tree $\sigma$.

In particular, the definition of $\CI_{\mfk} \sigma$ will only depend on $\mfk \in \mfL$ and the previously defined action of the map on $\sigma$, which is the main inductive step.

We then extend $\phi$ multiplicatively to arbitrary trees in $\CH_0$. Trees in $\CH_0$ can be non-uniquely represented as elements of $\left( \bigwotimes_\pi \right) _{i = 1}^n \CI_{\mfk_i} \CH$ and $\phi^{\otimes n}$ uniquely extends to a $t$-continuous map on this space. The non-uniqueness is not an issue as we require $\phi$ to be an $\CA$-bimodule morphism, so any two representations yield the same result. Analogously, we also extend $\phi$ to products of $X_\mu$, although we do not need to worry about continuity in this case.

Finally, for root-renormalised trees, we use the representation of the tree from Proposition~\ref{prop:GraftRep}. If a map $\phi$ is a $t$-continuous map that maps $\CH_0 \to \CH_\rho$ and $\mathbb K[\N^d] \to \mathbb K[\N^d]$, then it extends uniquely to a continuous map $\phi^{\wotimes_\pi n } \colon \CH_0^{\wotimes_\pi n} \to \CH_\rho^{\wotimes_\pi n}$. Therefore, for any scalar tree $(T, \mfe , \mfn, \mfo)$ the map $\phi \colon \CH[(T, \mfe , \mfn, \mfo)]  \to \CH$
\begin{equs}
	\tau & \longmapsto \left( \phi^{\wotimes_\pi |T| }(\CF) \otimes \phi^{\otimes |N_T|}(X^{\mfn}) \right) \curvearrowright \tau_\rho
\end{equs}
is continuous and well-defined, using Proposition~\ref{prop:GraftRep}.


\subsection{Characters}

To define the character group, as well as the structure group of the regularity structure and the canonical model, we employ the recursive approach of \cite{Br18}. This approach seems easier to adapt to the noncommutative structure of $\CA$-regularity structures when compared to the coproduct \slash Hopf algebra\footnote{See  \cite{BellingeriGilliers22} where a coproduct \slash Hopf algebra approach is used in the context of rough paths.
A related approach would likely apply in the setting of regularity structures but it is not clear to us if it would overall be a significant improvement on the recursive approach we take here.} approach of \cite{BHZ19}.

\begin{definition}
  A character of $\CH^{+}$ is a multiplicative, $t$-continuous, $\CA$-bimodule morphism $g \colon \CH^{+} \to \CA$ that vanishes on the ideal $\cJ$. We shall denote the collection of characters of $\CH^{+}$ by $\CG(\CH^{+})$.
\end{definition}
\begin{remark}
    Since $X^k$ is in the centre of $\CH^+$ it follows that $g(X^k)$ must be in the centre of $\CA$ for all $g \in \CG(\CH^+)$.
\end{remark}
For each element $g \in \CG(\CH^+)$ we define a multiplicative, $t$-continuous, $\CA$-bimodule morphism $\Gamma_g \colon \CH \to \CH$ inductively as explained in Section~\ref{sec:MetaIndExp}. If a trees $\sigma$ is primitive, then it must be of the form $\1$, $\Xi_\mfl$, $X_\mu$, or $\CI_\mfk \tau$, with $\mfk \in \mfL^+ \times \N^d$ and $\mfl \in \mfL^- \times \N^d$. For these, we set
\begin{equs}[eq:GamDef]
    \Gamma_g \1 &\eqdef \1 \; , \\
    \Gamma_g \Xi_\mfl &\eqdef \Xi_{\mfl} \; , \\
    \Gamma_g X_\mu &\eqdef X_\mu + g(X_\mu) \; , \\
    \Gamma_g (\CI_\mfk \tau) &\eqdef \CI_{\mfk}(\Gamma_g \tau) + \sum_{\ell} \frac{X^\ell}{\ell!} g \left( \CJ_{\mfk+\ell} \tau\right) \; .
\end{equs}
Since $g$ is $t$-continuous by assumption, so is $\Gamma_g$ on the given trees. For products of trees $\tau$, $\bar\tau$ that are not root-renormalised, we extend $\Gamma_g$ as described in Section~\ref{sec:MetaIndExp}, i.e.\ by setting
\begin{equ}
	\Gamma_g (\tau\bar\tau) = \Gamma_g(\tau)\Gamma_g(\bar\tau)
\end{equ}
By Proposition~\ref{prop:GraftRep}, if $\tau \in \CH$ is root-renormalised, we can write $\tau$ uniquely as
\begin{equ}
	\tau = \Bigl(\CF \otimes \bigotimes_{v \in N_T} X^{\mfn(v)}\Bigr) \curvearrowright \tau_\rho \; ,
\end{equ}
with $\CF \in \CH_0^{\wotimes_\pi |\tau_\rho|}$ and $\tau_\rho$ fully renormalised. We set
\begin{equ}
	\Gamma_g \tau \eqdef \Bigl(\Gamma_g \left(\CF \right) \otimes \bigotimes_{v \in N_T} \Gamma_g \left( X^{\mfn(v)} \right) \Bigr) \curvearrowright \tau_\rho \; .
\end{equ}


\begin{remark}
	Note that the sum in the definition of the model of $\CI_\mfk \tau$ is necessarily finite as $\CJ_{\mfk + \ell } \tau $ can be non-zero for only finitely many $\ell$.

\end{remark}

\begin{remark}
	A straightforward induction shows that the $|\,\bigcdot\,|_+$-degree of all summands of $\Gamma \tau$ does not exceed the $|\,\bigcdot\,|_+$-degree of $\tau$ for all trees $\tau$.
\end{remark}


\begin{lemma}
\label{lemma:CharHomo}
    $\CG(\CH^{+})$ forms a group under the operation
    \begin{equs}
    \CG(\CH^+) \times \CG(\CH^+) & \longmapsto \CG(\CH^+)\\
	(g, \bar{g}) &\longmapsto g \cdot \bar{g}
	\end{equs}
 	inductively defined via
    \begin{equs}
        (g \cdot \bar g)(X_\mu) &= g(X_\mu) +  \bar g(X_\mu)\;, \\
        (g\cdot \bar g)(\tau \bar\tau) &= (g\cdot \bar g)(\tau)  (g\cdot \bar g)(\bar\tau)  \;,\\
        (g\cdot \bar g)( \CJ_{\mfk} \tau ) &= g\left( \CJ_\mfk \left( \Gamma_{\bar g} \tau \right) \right) +  \sum_{\ell} \frac{g(X^\ell)}{\ell!} \bar g\left( \CJ_{\mfk + \ell} \tau \right) \;,
    \end{equs}
    unit $\1^*$, and inverse $g \mapsto g^{-1}$ inductively defined via
    \begin{equs}
        g^{-1}(X_\mu) &= - g(X_\mu) \;,\\
        g^{-1}(\tau\bar\tau) &= g^{-1}(\tau)g^{-1}(\bar\tau) \;,\\
        g^{-1}(\CJ_\mfk \tau) &= - \sum_{\ell} \frac{(-g(X))^\ell}{\ell!} g \left( \CJ_{\mfk + \ell} \left( \Gamma_{g^{-1}} \tau \right)\right)\;.
    \end{equs}
    Furthermore, $\Gamma$ is a group action with respect to this multiplication, i.e.\
    \begin{equs}
        \Gamma_{g \cdot \bar g} = \Gamma_{g} \circ \Gamma_{\bar g}\;, \quad
		\Gamma_{\1^*} = \bone\;, \quad \text{and} \quad
		\Gamma_{g^{-1}} = \left(\Gamma_g\right)^{-1}\;.
    \end{equs}
\end{lemma}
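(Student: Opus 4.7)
The plan is to prove all assertions via a cascade of inductions over the tree structure of $\CH$, with the central claim being the homomorphism identity $\Gamma_{g\cdot \bar g} = \Gamma_g \circ \Gamma_{\bar g}$, from which the group axioms follow essentially for free.

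First, I would verify that the inductive definitions of $(g\cdot\bar g)$ and $g^{-1}$ produce well-defined elements of $\CG(\CH^+)$. The formulas are consistent because each recursive invocation evaluates $g$ or $\bar g$ on a tree with strictly fewer kernel edges, or uses $\Gamma_{\bar g}$ (resp.\ $\Gamma_{g^{-1}}$), which has already been constructed by \eqref{eq:GamDef}; the sums over $\ell$ are automatically finite since $|\CJ_{\mfk+\ell}\tau|_+ \geqslant 0$ forces $|\ell|_\s$ to be bounded in terms of $|\CI_\mfk \tau|_+$. Multiplicativity is built into the formulas, and both $t$-continuity and the $\CA$-bimodule property are inherited from $g$, $\bar g$ together with the $t$-continuity of $\Gamma_{\bar g}$ established in Section~\ref{sec:MetaIndExp}. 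Vanishing on $\cJ$ reduces by multiplicativity to vanishing on generators $\CJ_\mfk \tau$ with $|\CJ_\mfk \tau|_+ \leqslant 0$: this follows because $\Gamma_{\bar g}$ does not increase $|\cdot|_+$, so $g(\CJ_\mfk \Gamma_{\bar g} \tau)$ vanishes, while each $\bar g(\CJ_{\mfk+\ell}\tau)$ vanishes directly. The analogous argument handles $g^{-1}$.

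The main step is the identity $\Gamma_{g\cdot \bar g} = \Gamma_g \circ \Gamma_{\bar g}$, which I would prove by induction on the number of kernel edges of $\tau$. Both sides are multiplicative $\CA$-bimodule morphisms, so by the meta-construction of Section~\ref{sec:MetaIndExp} and the grafting representation of Proposition~\ref{prop:GraftRep}, it suffices to check equality on primitive trees. The cases $\1$, $\Xi_\mfl$, and $X_\mu$ are direct; for the last, $\Gamma_g \Gamma_{\bar g} X_\mu = \Gamma_g(X_\mu + \bar g(X_\mu)) = X_\mu + g(X_\mu) + \bar g(X_\mu) = \Gamma_{g \cdot \bar g} X_\mu$, using that $\bar g(X_\mu)$ lies in the centre of $\CA$ and so is fixed by $\Gamma_g$. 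The critical case is $\CI_\mfk \tau$: expanding
\begin{equ}
\Gamma_g \Gamma_{\bar g}(\CI_\mfk \tau)
= \CI_\mfk(\Gamma_g \Gamma_{\bar g}\tau) + \sum_\ell \frac{X^\ell}{\ell!}\, g(\CJ_\mfk \Gamma_{\bar g}\tau) + \sum_\ell \frac{\Gamma_g X^\ell}{\ell!}\, \bar g(\CJ_{\mfk+\ell}\tau)\;,
\end{equ}
and using $\Gamma_g X^\ell = \prod_\mu (X_\mu + g(X_\mu))^{\ell_\mu}$ together with the multi-index binomial theorem (valid because the $g(X_\mu)$ are central), the second term recombines into $\sum_{\ell'} \frac{X^{\ell'}}{\ell'!} (g\cdot\bar g)(\CJ_{\mfk+\ell'}\tau)$, which together with the inductive hypothesis $\Gamma_g \Gamma_{\bar g} \tau = \Gamma_{g \cdot \bar g} \tau$ reproduces $\Gamma_{g\cdot \bar g}(\CI_\mfk \tau)$ on the nose.

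Once the homomorphism identity is in hand, associativity of $\cdot$ follows because $\Gamma_{(g\cdot \bar g)\cdot h} = \Gamma_g \circ \Gamma_{\bar g} \circ \Gamma_h = \Gamma_{g \cdot (\bar g \cdot h)}$, and a character is determined by its associated $\Gamma$ (since $g(\CJ_\mfk\tau)$ can be read off as the $X^0$-coefficient of $\Gamma_g(\CI_\mfk \tau) - \CI_\mfk(\Gamma_g \tau)$, with the lower-complexity part handled inductively). That $\Gamma_{\1^*} = \bone$ is an immediate induction, since $\1^*$ vanishes on $X_\mu$ and on every $\CJ_\mfk \tau$, killing all correction terms. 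The inverse identity $\Gamma_g \circ \Gamma_{g^{-1}} = \bone$ is verified inductively by checking that the formula for $g^{-1}(\CJ_\mfk \tau)$ is precisely the unique value forcing the $X^0$-coefficient of $\Gamma_g \Gamma_{g^{-1}}(\CI_\mfk \tau) - \CI_\mfk \tau$ to vanish. The main obstacle is the combinatorial bookkeeping in the $\CI_\mfk \tau$ case of the homomorphism proof: reindexing the double sum and matching it to $(g\cdot\bar g)(\CJ_{\mfk+\ell'}\tau)$ requires careful use of the centrality of $g(X_\mu)$ in $\CA$ to commute these scalars past the $\CA$-valued characters and to track the $\CA$-bimodule placements accurately.
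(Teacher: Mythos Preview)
Your proposal is correct and proves the central homomorphism identity $\Gamma_{g\cdot\bar g}=\Gamma_g\circ\Gamma_{\bar g}$ by the same induction as the paper: both reduce to the primitive case $\CI_\mfk\tau$, expand $\Gamma_g\Gamma_{\bar g}(\CI_\mfk\tau)$, and use the binomial expansion of $\Gamma_g X^\ell=(X+g(X))^\ell$ together with centrality of $g(X_\mu)$ to regroup into $\sum_{\ell'}\frac{X^{\ell'}}{\ell'!}(g\cdot\bar g)(\CJ_{\mfk+\ell'}\tau)$. (A small typo: your displayed middle term should carry $\CJ_{\mfk+\ell}$, not $\CJ_\mfk$.)

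Where you diverge from the paper is in deducing the group axioms. The paper verifies associativity, unit, and both one-sided inverse identities by direct computation on $\CJ_\mfk\tau$, expanding $(g\cdot(\bar g\cdot\bar{\bar g}))(\CJ_\mfk\tau)$, $((g\cdot\bar g)\cdot\bar{\bar g})(\CJ_\mfk\tau)$, etc.\ and matching terms. You instead observe that the assignment $g\mapsto\Gamma_g$ is injective (reading off $g(\CJ_\mfk\tau)$ from the $\1$-coefficient of $\Gamma_g(\CI_\mfk\tau)-\CI_\mfk(\Gamma_g\tau)$), so associativity follows immediately from $\Gamma_{(g\cdot\bar g)\cdot h}=\Gamma_g\Gamma_{\bar g}\Gamma_h=\Gamma_{g\cdot(\bar g\cdot h)}$, and the inverse identities collapse once $\Gamma_g\circ\Gamma_{g^{-1}}=\bone$ is checked. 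This is a genuine simplification: it trades several pages of binomial bookkeeping for a one-line injectivity argument, at the cost of making the logical dependencies slightly less transparent. The paper's route has the virtue of being entirely self-contained at each step, but yours is shorter and arguably more conceptual.
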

We leave the proof to Appendix~\ref{app:alg} as these are mostly standard computations. We note that the noncommutative nature of the regularity structures does not feature in these computations since one only needs to commute products of $g(X_\mu)$ past other elements, and we have assumed that $g(X_\mu)$ is always central. 
\begin{remark}
	There is no specification for trees with $\mfo \cap \rho_T \neq \emptyset$ as we assume that the trees are in $\CH^+$.
\end{remark}

\subsection{Renormalising Functional}

To define renormalisation recursively, we split it into two steps: first, renormalising at the root, and then renormalising everything further up along the tree.
Particular choices of renormalisation will be encoded using preparation maps as introduced in \cite{Br18}. 

For a tree $\tau$, let $|\tau|_\Xi$ denote the number of edges $e \in E_\tau \setminus E_{\mfo}$ with $\mfe(e) \in \mfL^- \times \N^d$ and we extend this as well as $|\,\bigcdot\,|_\s$ to linear combinations $\tau = \sum_i \tau_i \in \CH$ of trees $\tau_i$, by setting
\begin{equ}
	|\tau|_\Xi \eqdef \max_{i} |\tau_i|_\Xi \; , \qquad |\tau|_\s \eqdef \min_{i} |\tau_i|_\s \; .
\end{equ}

\begin{remark}
	Note that any linear combination of trees is necessarily made up of only finitely many different bare trees. Thus, the $\max$ and $\min$ are actually achieved.
\end{remark}

Using these, we define the following partial order on $\CH$
\begin{equ}
	\tau <_{\cT} \tau' \eqvdef \left( |\tau|_\Xi < |\tau'|_\Xi \right) \lor \left( \left( |\tau|_\Xi = |\tau'|_\Xi \right) \land  \left( |\tau|_\s < |\tau'|_\s \right) \right) \; .
\end{equ}



\begin{definition}[Preparation Maps]
\label{def:RtRen}
	An  $t$-continuous $\CA$-bimodule morphism $R \in \CB(\CH)$ is called a preparation map if and only if
	\begin{enumerate}
		\item $R\tau = \tau$ for all trees $\tau$ that are primitive or root-renormalised,
		\item $R$ commutes with multiplication by $X^k$,
		\item $R$ commutes with the structure group $G$\footnote{This is the crucial assumption that allows us to conclude that a renormalised model is again a model.},
		\item for all $\tau \in \CH$\footnote{These assumptions guarantee that the counterterms we add to a negative tree are analytically better behaved and correspond to subtrees of the original tree.}
		\begin{equ}
			| R \tau - \tau |_\Xi < | \tau |_\Xi \; , \qquad |R\tau-\tau|_{\s} > |\tau|_{\s} \; ,
		\end{equ}
		\item for all trees $\tau \in \CH$, there exists a finite set of trees $\{\tau_i\}$, such that\footnote{Note that, since our trees include algebra decorations, any element of our regularity structure can be written as a sum of distinct trees.} $R \tau = \sum_i \tau_i$ with $\tau_i$ trees, $|\tau_i|_+ = |\tau|_+$ for all $i$.
		\item There exists a $\delta > 0$, such that, for all $\tau \in \CT_\alpha$, $R\tau \in \CH^-_{\alpha+\delta}$.
	\end{enumerate}
\end{definition}

\begin{definition}[Renormalisation Map]
	Given a preparation map $R$, we define two renormalisation maps $M_R \colon \CH \to \CH$ and $M^+_R \colon \CH^+ \to \CH^+$ as follows. Let $M_R = M^\circ_R R$ where $M^\circ_R$ is inductively defined for trees that are not root\-/renormalised via
	\begin{equ}
		M^\circ_R \bone = \bone \; , \quad
		M^\circ_R X_\mu = X_\mu \; , \quad
		M^\circ_R( \CI_\mfk \tau )  =  \CI_{\mfk} \left( M_R \tau \right) \; ,
	\end{equ}
	all of which are $t$-continuous by assumption. Then we extend $M_R^{\circ}$ continuously to products and root-renormalised trees as explained in Section~\ref{sec:MetaIndExp}, i.e.\
	\begin{equs}
		M^\circ_R( \tau \bar\tau ) &= M^\circ_R( \tau ) M^\circ_R( \bar\tau )  \; , \\
		M_R^\circ \tau &= \Bigl( \left(M_R^\circ\right)^{\wotimes_\pi |T|} \left(\CF \right)  \otimes \bigotimes_{v \in N_T}  X^{\mfn(v)} \Bigr) \curvearrowright \tau_\rho \; ,
	\end{equs}
	where we used here that $M_R^\circ \left( X^k \right) = X^k$.

	Furthermore, we define $M_R^+ \eqdef M_R^{\circ}\big|_{\CH^+}$, i.e.\ it renormalises elements of $\CH^+$ above the root. $M_R^+$ is multiplicative.
\end{definition}


In order to define renormalisation at the level of models rather than just at the level of trees, we will need to describe an ``action'' of the preparation maps on the space of characters $\CG(\CH^+)$.
\begin{definition}[Renormalisation Action on Characters]
	\label{def:RenChar}
Given a preparation map $R$ and a character $g \in \CG(\CH^+)$, we define
	$g^R \eqdef g \cdot R  \eqdef g \circ M_R^+ \in \CG(\CH^+) $.
\end{definition}
\begin{remark}
Note that  $g \circ M_R^+$ above is indeed a character as $M_R^+$ preserves the $|\,\bigcdot\,|_+$-grading and is multiplicative. 
Furthermore, $\Gamma_{g^R}$ maps trees with empty extended decoration to linear combinations of such trees.
\end{remark}

The key property that ensures the action by preparation maps on the canonical model again yields a model that satisfies the necessary homogeneity estimates is the following ``cointeraction'' property.
\begin{proposition}[Cointeraction]
For any $g \in \CG(\CH^+)$ and preparation map $R$, the following holds
	\label{prop:CoInt}
	\begin{equ}
		M_R^{\circ} \circ \Gamma_{g^R} = \Gamma_g \circ M_R^\circ
	\end{equ}
\end{proposition}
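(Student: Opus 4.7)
The plan is to prove the identity $M_R^{\circ} \circ \Gamma_{g^R}(\tau) = \Gamma_g \circ M_R^\circ(\tau)$ by induction on $\tau \in \CH$ with respect to the partial order $<_\cT$, exploiting the recursive definitions of both $\Gamma_g$ and $M_R^\circ$ together with the compatibility axioms imposed on preparation maps in Definition~\ref{def:RtRen}.

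For the base cases, I would check the identity on the primitive generators $\1$, $\Xi_\mfl$, and $X_\mu$. The first two are trivial since both sides act as the identity. For $X_\mu$, the computation
\begin{equs}
M_R^\circ \Gamma_{g^R}(X_\mu) &= M_R^\circ\bigl(X_\mu + g^R(X_\mu)\bigr) = X_\mu + g(M_R^+ X_\mu) = X_\mu + g(X_\mu), \\
\Gamma_g M_R^\circ(X_\mu) &= \Gamma_g(X_\mu) = X_\mu + g(X_\mu)
\end{equs}
works, using that $M_R^+$ fixes $X_\mu$ by property (2). The crucial inductive step is for $\CI_\mfk \tau$. Expanding both sides using \eqref{eq:GamDef} and the definition of $M_R^\circ$ on integrated trees yields
\begin{equs}
M_R^\circ \Gamma_{g^R}(\CI_\mfk \tau) &= \CI_\mfk(M_R \Gamma_{g^R}\tau) + \sum_\ell \tfrac{X^\ell}{\ell!}\, g\bigl(M_R^+\CJ_{\mfk+\ell}\tau\bigr),\\
\Gamma_g M_R^\circ(\CI_\mfk \tau) &= \CI_\mfk(\Gamma_g M_R \tau) + \sum_\ell \tfrac{X^\ell}{\ell!}\, g\bigl(\CJ_{\mfk+\ell} M_R \tau\bigr).
\end{equs}
Matching the first terms amounts to showing $\Gamma_g M_R = M_R\Gamma_{g^R}$. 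Writing $M_R = M_R^\circ\,R$ and using property (3) (which gives $R\Gamma_{g^R}=\Gamma_{g^R}R$) reduces this to $\Gamma_g M_R^\circ = M_R^\circ \Gamma_{g^R}$ applied to $R\tau$; by property (4) every summand of $R\tau$ is $\leq_\cT \tau <_\cT \CI_\mfk\tau$, so the inductive hypothesis applies. Matching the second terms reduces to showing that $M_R^\circ$ commutes with the projection $P:\CH\to\CH^+$, which follows from the fact that $M_R^\circ$ preserves both the $|\cdot|_+$-grading (property 5) and the property of not being root-renormalised (immediate from the recursive extension via grafting in Section~\ref{sec:MetaIndExp}); indeed, with $P M_R^\circ = M_R^\circ P$ one obtains $\CJ_{\mfk+\ell}M_R\tau = PM_R^\circ \CI_{\mfk+\ell}\tau = M_R^\circ P \CI_{\mfk+\ell}\tau = M_R^+\CJ_{\mfk+\ell}\tau$.

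It then remains to extend the identity to arbitrary trees. For a product $\tau\tau'$ of trees that are not root-renormalised, both $M_R^\circ$ and $\Gamma_g$ (as well as $\Gamma_{g^R}$) are multiplicative by construction, so the identity on the product reduces to the identity on the factors, both of which are strictly smaller in $<_\cT$. For a root-renormalised tree $\tau = (\CF \otimes \bigotimes_{v\in N_T} X^{\mfn(v)}) \curvearrowright \tau_\rho$ as in Proposition~\ref{prop:GraftRep}, both maps act by applying themselves to $\CF$ factor-by-factor (and to the polynomial decorations) while fixing the fully-renormalised backbone $\tau_\rho$; the identity then follows from the inductive hypothesis applied to each strictly smaller component of $\CF$, combined with the base case for $X^{\mfn(v)}$. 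The well-foundedness of $<_\cT$ together with the $t$-continuous extension procedure of Section~\ref{sec:MetaIndExp} closes the argument.

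The main obstacle is the inductive step for $\CI_\mfk \tau$: one must simultaneously handle the ``upper'' contribution of $R$ at the root (packaged into $g^R = g\circ M_R^+$ via Definition~\ref{def:RenChar}) and its effect on subtrees propagated through $M_R$. The clean matching relies critically on axioms (3)–(5) of Definition~\ref{def:RtRen}: (3) provides the commutation of $R$ with $\Gamma_{g^R}$, (4) keeps the summands of $R\tau$ within the inductive hypothesis, and (5) yields $[M_R^\circ,P]=0$. Once these compatibilities are verified, the remainder of the argument is bookkeeping dictated by the recursive construction.
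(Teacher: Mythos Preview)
Your approach is essentially the paper's: an inductive argument whose only nontrivial case is $\CI_\mfk\tau$, handled by expanding both sides via \eqref{eq:GamDef} and the recursion $M_R^\circ\CI_\mfk=\CI_\mfk M_R$, then matching the integrated part using property~(3) of Definition~\ref{def:RtRen} together with the inductive hypothesis, and matching the polynomial part via $\CJ_{\mfk+\ell}M_R\tau=M_R^+\CJ_{\mfk+\ell}\tau$ (your $[M_R^\circ,P]=0$ observation). The paper simply remarks that the product and root-renormalised cases are immediate from the multiplicativity of $M_R^\circ$ and $\Gamma_g$, which you spell out explicitly.

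There is one minor but genuine issue with your choice of induction order. You induct on $<_\cT$ and assert that ``every summand of $R\tau$ is $\leq_\cT \tau <_\cT \CI_\mfk\tau$''. The second inequality need not hold: since $|\CI_\mfk\tau|_\Xi=|\tau|_\Xi$ and $|\CI_\mfk\tau|_\s=|\tau|_\s+|\mfk|_\s$, one has $\tau<_\cT\CI_\mfk\tau$ only when $|\mfk|_\s>0$, which fails once derivative decorations make $|\mfk|_\s\leq 0$ (and such $\mfk$ do arise, e.g.\ through the root derivatives $D_i$ in Section~\ref{sec:ExtContRen}). Similarly, for a product $\tau\tau'$ with $|\tau'|_\Xi=0$ your claim that the factors are $<_\cT$-smaller requires $|\tau'|_\s>0$, which is not guaranteed in general. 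The paper's (implicit) fix is to use the structural recursion of Section~\ref{sec:MetaIndExp} itself as the induction scheme: since every summand of $R\tau$ has the same underlying tree as $\tau$, each has strictly fewer edges than $\CI_\mfk\tau$, and the factors of a nontrivial product likewise have fewer edges, so a straightforward induction on the number of edges (or on the recursive depth of the construction) closes the argument without appealing to $<_\cT$. With that correction your proof goes through unchanged.
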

\begin{proof}
	We proceed as usual by induction. Because of the defining properties of $M_R^\circ$, the only non-trivial case to check is $\CI_\mfk \tau$.
	\begin{equs}
		\Gamma_g M_R^\circ \CI_\mfk \tau &= \Gamma_g \CI_\mfk M_R \tau = \CI_\mfk \Gamma_g M_R \tau + \sum_{\ell} \frac{X^\ell}{\ell!} g \left( \CJ_{\mfk + \ell} M_R \tau \right) = \\
		& = \CI_\mfk M_R^\circ \Gamma_{g^R} R \tau + \sum_{\ell} \frac{X^\ell}{\ell!} g \left( M_R^+ \CJ_{\mfk + \ell} \tau \right) =  \\
		& = \CI_\mfk M_R^\circ R \Gamma_{g^R} \tau + \sum_{\ell} \frac{X^\ell}{\ell!} g^R \left(\CJ_{\mfk + \ell} \tau \right) = \\
		& = M_R^\circ \CI_\mfk  \Gamma_{g^R} \tau + \sum_{\ell} \frac{X^\ell}{\ell!} g^R \left(\CJ_{\mfk + \ell} \tau \right) = M_R^\circ \Gamma_{g^R} \CI_\mfk \tau
	\end{equs}
	where we repeatedly used the fact that $M_R^\circ \CI_\mfk = \CI_\mfk  M_R = \CI_\mfk  M_R^\circ R $.
\end{proof}
\begin{corollary}
	For any preparation map $R$ and character $g \in \CG(\CH^+)$,
	\begin{equ}
		M_R \circ \Gamma_{g^R} = \Gamma_{g} \circ M_R\;.
	\end{equ}
\end{corollary}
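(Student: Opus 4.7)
The plan is to derive this as an immediate consequence of the preceding Proposition~\ref{prop:CoInt} combined with the defining property of a preparation map that it commutes with the structure group action (property (3) of Definition~\ref{def:RtRen}).

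Starting from the definition $M_R = M_R^\circ \circ R$, I would write
\begin{equation*}
M_R \circ \Gamma_{g^R} = M_R^\circ \circ R \circ \Gamma_{g^R}\;.
\end{equation*}
Since $\Gamma_{g^R} \in G$ and $R$ commutes with every element of the structure group by Definition~\ref{def:RtRen}(3), we can swap $R$ and $\Gamma_{g^R}$ to obtain
\begin{equation*}
M_R \circ \Gamma_{g^R} = M_R^\circ \circ \Gamma_{g^R} \circ R\;.
\end{equation*}
Applying Proposition~\ref{prop:CoInt} to the first two factors then gives
\begin{equation*}
M_R^\circ \circ \Gamma_{g^R} \circ R = \Gamma_g \circ M_R^\circ \circ R = \Gamma_g \circ M_R\;,
\end{equation*}
which is precisely the desired identity.

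There is essentially no obstacle here: the work has already been done in establishing Proposition~\ref{prop:CoInt} (the cointeraction property of $M_R^\circ$) and in axiomatising $R$ so that it commutes with the structure group. The corollary merely packages these two facts together by unfolding the definition $M_R = M_R^\circ \circ R$. The only point worth double-checking is that $\Gamma_{g^R}$ is a genuine element of the structure group $G$ (so that the commutation property (3) applies), but this was already noted in Definition~\ref{def:RenChar}, where $g^R = g \circ M_R^+$ is observed to be a character, hence produces a valid $\Gamma_{g^R} \in G$ via the recursion~\eqref{eq:GamDef}.
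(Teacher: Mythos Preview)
Your proof is correct and follows exactly the same approach as the paper: unfold $M_R = M_R^\circ \circ R$, commute $R$ past $\Gamma_{g^R}$ using property (3) of Definition~\ref{def:RtRen}, then apply Proposition~\ref{prop:CoInt}. The additional remark about $\Gamma_{g^R}$ being a genuine structure group element is a reasonable sanity check, though the paper does not spell it out in the proof itself.
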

\begin{proof}
	Using the previous proposition, as well as  we immediately see that
	\begin{equs}
		M_R \circ \Gamma_{g^R} & = M_R^\circ \circ R \circ \Gamma_{g^R} = M_R^\circ  \circ \Gamma_{g^R}  \circ R = \Gamma_{g} \circ M_R^\circ   \circ R=   \Gamma_{g}  \circ M_R \;,
	\end{equs}
	as claimed.
\end{proof}

\subsection{Extraction-Contraction Renormalisation at the Root}
\label{sec:ExtContRen}


For this section, we fix a finite set $\CT^-$ of scalar trees that all have a strictly negative $|\,\bigcdot\,|_{\s}$ degree and empty extended decoration. Once we introduce rules that choose a subspace of $\CH$ to be our $\CA$-regularity structure, the choice of $\CT^-$ will become more explicit. Let $\mfQ$ be the set of all pairs $ \mfd  = (\sigma, \bpi)$ where $\sigma \in \CT^-$ and $\bpi$ is a partition of the negative leaves of $\sigma$.

For a tree $T$ and a subtree $S$, let $\partial_T S$ be the set of edges $e$ in $E_T$, such that exactly one vertex of $e$ lies in $S$. Within this section we say that $(\sigma, \iota)$, with $\sigma = (S, \mfe_S, \mfn_S)$, is a subtree of $\tau = (T, \mfe, \mfn, \mfo, \mfa)$ if $(S, \iota)$ is a subtree of $T$, $\mfn_S(v) \leqslant \mfn(\iota(v))$ for all $v \in N_S$, and $\mfe_S(e) = \mfe(\iota(e))$ for all $e \in E_S$.

Let $\CG(\CT^-)$ be the set of maps $\mfQ \to \mathbb{K}$ which naturally carries the structure of a $\mathbb{K}$-vector space. A choice of $\ell \in \CG(\CT^-)$ defines a preparation map that renormalises a tree at the root in the following way. 

Let $\tau = (T, \mfe, \mfn, \mfo, \mfa) \in \CH$ be a tree, let $\mfd = \bigl( (S, \mfe_S , \mfn_S), \bpi \bigr) \in \mfQ$, and let $(\tau: \mfd )$ be the set of all decorated trees $\beta = (T, \mfe_B, \mfn_B, \mfo_B, \mfa_B)$ satisfying the following conditions:
\begin{itemize}
	\item There exists an embedding $\iota_\beta : S \hookrightarrow T$ mapping the root of $S$ onto the root of $T$ and such that
	$\iota_\beta(S) \cap \mff_\mfo = \emptyset$.
	\item One has $\mfa_B = \mfa$, $\mfo_B = \mfo \sqcup \mfd  $. 
	\item One has $\mfe_B \big|_{ T \setminus \partial_T S} \equiv \mfe \big|_{ T \setminus \partial_T S}$, $(\mfe(e))_1  = (\mfe_B(e))_1 \in \mfL^- \sqcup \mfL^+$ for all $e \in \partial_T S$, and for all $v \in S$
	\begin{equs}
		\sum_{e \in \delta(v)} \left( (\mfe_B(e))_2 - (\mfe(e))_2 \right) &= \mfn_S(v) + \mfn_B(v)  - \mfn(v) \geqslant 0 \; ,
	\end{equs}
	identifying $S$ with $\iota_\beta(S)$.
\end{itemize}
\begin{remark}
	Here one should think of each $\beta \in \bigl(\tau : \mfd \bigr)$ as $\tau$ with an instance of $\mfd = (\sigma, \bpi)$, embedded in $\tau$ with $\iota_\beta$,  added to the extended decoration of $\tau$.
	In particular, the first condition guarantees that if $\tau$ is root-renormalised, then $\bigl(\tau:\mfd \bigr) = \emptyset$.

	Furthermore, we note that the trees contained $\bigl(\tau : \mfd \bigr)$ are exactly the same trees that would be extracted by the ``negative'' coproduct $\Delta_1$ of \cite{BHZ19}, in particular see Definition~3.3 therein. In particular, the extracted polynomial decorations are exactly the ones that are contained in the extended decoration of this paper. The polynomial decorations that are attached to the regular tree are the decorations that have been left behind. 

\end{remark}

For $\beta \in \bigl(\tau : \mfd \bigr)$ we define the combinatorial coefficient
\begin{equ}
	\binom{\mfn_\beta}{\mfn_\mfd} = \prod_{v \in N_{S} } \binom{\mfn(\iota_\beta(v))}{\mfn_S(v)} \; ,
\end{equ}
and
\begin{equ}
	\eps^\beta_{\mfd} =  \sum_{v \in S} \sum_{e \in \delta(v)} \left( (\mfe_B(e))_2 - (\mfe(e))_2 \right)  \in \N^d \; .
\end{equ}

\begin{definition}
	Let $\ell \in \CG(\CT^-)$. Let $R_\ell \colon \CH \to \CH$ be the map defined for a decorated tree $\tau \in \CH$
	\begin{equ}
		R_\ell (\tau) \eqdef \tau + \sum_{\mfd \in \mfQ} \ell(\mfd) \sum_{\beta \in (\tau : \mfd)} \frac{1}{\eps^\beta_{\mfd}!} \binom{\mfn_\beta}{\mfn_\mfd} \beta \; ,
	\end{equ}
	extended $\CA$-bilinearly to all of $\CH$. By abuse of terminology, we say that the tree $\sigma$ has been contracted in $\beta \in (\tau : \mfd)$. We will refer to this subset of preparation maps as root renormalisation maps.
\end{definition}

\begin{remark}
	We note here that $R_\ell$ maps scalar trees to scalar trees.
\end{remark}

\begin{example}
	We consider the tree $\<3>$ of the $\Phi^4_3$-regularity structure, cf.\ Section~\ref{sec:Phi43Mez}. W.r.t.\ $\mfd = \left( \<2> , (1,2)\right)$, the set $(\<3>: \mfd)$ is made up of the three distinct ways one can embed the trees $\<2>$ and $X_\mu \<2>$ in $\<3>$. The former tree simply changes the extended decoration of $\<3>$, whereas the latter also changes the edge decoration of the remaining branch by $e_\mu$.
\end{example}

\begin{proposition}
\label{prop:RenHomo}
	The map $R_{\bigcdot}$ is a homomorphism from the (free) Abelian group $\left(\CG(\CT^-), + \right)$ to $( \GL(\CH) , \circ )$. 
\end{proposition}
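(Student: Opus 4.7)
The plan is to check the two defining relations of a group homomorphism, namely $R_0 = \bone_{\CH}$ and $R_{\ell+\ell'} = R_\ell \circ R_{\ell'}$; invertibility of each $R_\ell$ (hence membership in $\GL(\CH)$) will then follow automatically with $R_\ell^{-1} = R_{-\ell}$. The first relation is immediate from the formula defining $R_\ell$: at $\ell = 0$ the correction sum is empty, so $R_0(\tau) = \tau$. Everything therefore hinges on the second relation.

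The key observation I will use is that \emph{every} $\beta \in (\tau : \mfd)$ is automatically root-renormalised. Indeed, the embedding $\iota_\beta\colon S \hookrightarrow T$ by definition sends $\rho_S$ to $\rho_T$, and $\mfo_\beta = \mfo \sqcup \mfd$ adjoins $\iota_\beta(S)$ to the forest $\mff_\mfo$, so $\rho_T \in \mff_{\mfo_\beta}$. Conversely, if $\tau$ is already root-renormalised then $\rho_T \in \mff_\mfo$, and the disjointness requirement $\iota_\beta(S) \cap \mff_\mfo = \emptyset$ together with $\rho_T \in \iota_\beta(S)$ forces $(\tau : \mfd) = \emptyset$. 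Consequently $R_\ell$ is automatically the identity on root-renormalised trees (this simultaneously verifies axiom~(1) of Definition~\ref{def:RtRen} for these particular preparation maps), so $R_\ell(\beta) = \beta$ for every $\beta$ produced by a single contraction.

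With this in hand, I expand
\begin{equs}
(R_\ell \circ R_{\ell'})(\tau) &= R_\ell(\tau) + \sum_{\mfd \in \mfQ} \ell'(\mfd) \sum_{\beta \in (\tau:\mfd)} \frac{1}{\eps^\beta_\mfd!}\binom{\mfn_\beta}{\mfn_\mfd}\, R_\ell(\beta)\\
&= R_\ell(\tau) + \sum_{\mfd \in \mfQ} \ell'(\mfd) \sum_{\beta \in (\tau:\mfd)} \frac{1}{\eps^\beta_\mfd!}\binom{\mfn_\beta}{\mfn_\mfd}\, \beta\\
&= \tau + \sum_{\mfd \in \mfQ} \bigl(\ell + \ell'\bigr)(\mfd) \sum_{\beta \in (\tau:\mfd)}\frac{1}{\eps^\beta_\mfd!}\binom{\mfn_\beta}{\mfn_\mfd}\, \beta \; = \; R_{\ell+\ell'}(\tau),
\end{equs}
which is the desired identity on any single tree; it then propagates to all of $\CH$ by $\CA$-bilinear and continuous extension, consistently with the inductive extension procedure of Section~\ref{sec:MetaIndExp}. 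The main (and essentially the only) subtlety is spotting the disjointness clause in the definition of $(\tau:\mfd)$: without it, the composition would generate genuine cross-terms with two nested contractions and the combinatorial factors $\binom{\mfn_\beta}{\mfn_\mfd}/\eps^\beta_\mfd!$ would need a delicate recombination argument. With it, no such cross-terms appear, and the homomorphism property reduces to the $\ell$-linearity already manifest in the defining formula of $R_\ell$.
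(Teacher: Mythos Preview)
Your proof is correct and follows essentially the same route as the paper's: the crucial point in both is that every $\beta \in (\tau:\mfd)$ is root-renormalised (so $R_\ell$ acts trivially on it), which kills the potential cross-terms and reduces the composition $R_\ell \circ R_{\ell'}$ to the manifest $\ell$-linearity of the correction sum. Your write-up is in fact a bit more explicit than the paper's in justifying why $\beta$ is root-renormalised and why $(\tau:\mfd)=\emptyset$ for root-renormalised $\tau$.
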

\begin{remark}
	Once we have specified a subset of $\CH$ that is a regularity structure, s.t.\ $\CT^-$ is the set of its negative trees, $\left(\CG(\CT^-), + \right)$ or more exactly its image under $R_{\bigcdot}$ will be the Renormalisation Group of the regularity structure. 
\end{remark}
\begin{proof}
	The homomorphism property follows directly from the strict upper triangular structure of $R_\ell$. In particular, we have for $\ell, \ell' \in \CG(\CT^-)$
	\begin{equs}
		R_{\ell'} \left( R_{\ell}(\tau)\right) &= R_{\ell'} \left(\tau\right) + \sum_{\mfd \in \mfQ} \ell(\delta) \sum_{\beta \in (\tau : \mfd)}  \frac{1}{\eps^\beta_{\mfd}!} \binom{\mfn_\beta}{\mfn_\mfd} R_{\ell'} (\beta ) = \\
		&= R_{\ell'} \left(\tau\right) + \sum_{\mfd \in \mfQ} \ell(\mfd) \sum_{\beta \in (\tau : \mfd)}  \frac{1}{\eps^\beta_{\mfd}!}\binom{\mfn_\beta}{\mfn_\mfd} \beta  = \\
		&= \tau + \sum_{\mfd' \in \mfQ} \ell'(\mfd') \sum_{\beta' \in (\tau : \mfd')}  \frac{1}{\eps^{\beta'}_{\mfd'}!} \binom{\mfn_{\beta'}}{\mfn_{\mfd'}} \beta'  + \\
		& \qquad \qquad  + \sum_{\mfd \in \mfQ} \ell(\mfd) \sum_{\beta \in (\tau : \mfd)}  \frac{1}{\eps^{\beta}_{\mfd}!} \binom{\mfn_\beta}{\mfn_\mfd} \beta  = \\
		& = \tau + \sum_{\mfd \in \mfQ} (\ell+\ell')(\mfd) \sum_{\beta \in (\tau : \mfd)}  \frac{1}{\eps^\beta_{\mfd}!}\binom{\mfn_\beta}{\mfn_\mfd} \beta  = R_{\ell + \ell'}(\tau)
	\end{equs}
	the second equalities follows because $\beta$ is necessarily root-renormalised and thus $R_{\ell'}$ acts trivially on it per definitionem. 
\end{proof}

We will now provide an alternative description of the root renormalisation map $R_\ell$, which will be more convenient for showing that $R_\ell$ is indeed a preparation map.

For $i \in [d]$ we also define a set of root derivative operators $D_i \colon \CH \to \CH$ by setting, for $j \in [d]$ and $\mfk \in \mfL$,
\begin{equs}
	D_i X_j & \eqdef \delta_{ij} \\
	D_i \CI_{\mfk} \tau & \eqdef \CI_{\mfk + e_i} \tau
\end{equs}
and extending $D_i$ to arbitrary trees in $\CH$ via the (noncommutative) Leibniz rule. The root derivatives commute with the structure group, as the following proposition shows.

\begin{remark}
	These root derivatives are in fact abstract gradients $\cD_i$ as defined in \cite[Definition~5.25]{Hai14} as the following proposition shows.
\end{remark}
\begin{proposition}
\label{prop:RootDerCom}
	For all $\tau \in \CH$, all $k \in \N^d$, and all $g \in \CG(\CH^+)$
	\begin{equ}
		\Gamma_g D^k \tau = D^k \Gamma_g \tau \; .
	\end{equ}
\end{proposition}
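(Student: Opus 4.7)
The identity is bilinear in both $g$ and $\tau$ and both $D_i D_j = D_j D_i$ and $\Gamma_g$ commute among themselves in the obvious way, so by writing $D^k = D_1^{k_1}\cdots D_d^{k_d}$ it suffices to establish $\Gamma_g \circ D_i = D_i \circ \Gamma_g$ for each $i \in [d]$. The proof then proceeds by induction on the tree structure exactly in parallel with the recursive definition of $\Gamma_g$ in \eqref{eq:GamDef}, so the cases to check are: (i) the primitive generators $\1$, $X_\mu$, $\Xi_\mfl$; (ii) positive integration $\CI_\mfk \tau$ with $\mfk\in\mfL^+\times\N^d$; (iii) non-root-renormalised products $\tau_1\tau_2$; and (iv) root-renormalised trees via their grafting representation from Proposition~\ref{prop:GraftRep}. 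The base case (i) is immediate: for $X_\mu$, both sides equal $\delta_{i\mu}\1$ since $D_i$ annihilates $g(X_\mu)\in\CA$; for $\Xi_\mfl = \CI_\mfl \1$, both sides equal $\CI_{\mfl+e_i}\1$; and for $\1$ both sides vanish.

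The heart of the argument is step (ii). By the recursive definition of $\Gamma_g$ and the fact that $D_i X^\ell = \ell_i X^{\ell-e_i}$ via the Leibniz rule,
\begin{equs}
D_i\,\Gamma_g\CI_\mfk\tau &= D_i\Bigl(\CI_\mfk\Gamma_g\tau + \sum_\ell \frac{X^\ell}{\ell!}\,g\bigl(\CJ_{\mfk+\ell}\tau\bigr)\Bigr) \\
&= \CI_{\mfk+e_i}\Gamma_g\tau + \sum_\ell \frac{\ell_i\,X^{\ell-e_i}}{\ell!}\,g\bigl(\CJ_{\mfk+\ell}\tau\bigr).
\end{equs}
Reindexing $\ell'=\ell-e_i$ in the sum (and using $\ell_i/\ell!=1/\ell'!$ when $\ell_i\geqslant 1$) turns the right-hand side into
$\CI_{\mfk+e_i}\Gamma_g\tau + \sum_{\ell'}\frac{X^{\ell'}}{\ell'!}\,g(\CJ_{\mfk+e_i+\ell'}\tau) = \Gamma_g\CI_{\mfk+e_i}\tau = \Gamma_g D_i \CI_\mfk\tau$, as desired. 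Step (iii) then follows from combining multiplicativity of $\Gamma_g$ on non-root-renormalised products with the noncommutative Leibniz rule for $D_i$; the fact that $D_i a = 0$ for $a \in \CA$ ensures that bimodule multiplications by algebra decorations do not interfere.

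The main obstacle, and the case that needs the most care, is (iv): root-renormalised trees. Here we use Proposition~\ref{prop:GraftRep} to write $\tau = (\CF\otimes X^\mfn)\curvearrowright\tau_\rho$ with $\tau_\rho$ fully renormalised (so that $\Gamma_g$ fixes the core $\tau_\rho$ and acts only on $\CF$ and the polynomial decorations), and observe that $D_i$ is likewise ``local at the root'': via Leibniz it only affects the outgoing edges of $\rho$ and the factor $X^{\mfn(\rho)}$. One must then verify that bumping the decoration of an edge $e\in E_{\mfo_\rho}$ by $e_i$ is compatible with the constraint $\mfe_\mff = \mfe|_\mff$ in the negative tree decoration, so that the result is again a well-defined element of $\CH$ and the grafting decomposition of $D_i\tau$ is essentially that of $\tau$ with shifted decorations. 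Once this bookkeeping is in place, one applies the inductive hypothesis to the grafted factors in $\CF$ and to the outgoing sub-trees of $\tau_\rho$, and concludes via the $t$-continuous extension described in Section~\ref{sec:MetaIndExp} to pass from simple tensors to general algebra decorations in $\CA^{\wotimes_\pi|T_\rho|}$.
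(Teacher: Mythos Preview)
Your proof is correct and follows essentially the same inductive approach as the paper's. The base cases (i), the reindexing computation in (ii), and the Leibniz argument in (iii) are identical in substance to what the paper does in Appendix~\ref{app:alg}; the only cosmetic difference is that the paper carries out the product and integration steps directly for $D^k$ using multinomial coefficients, whereas you reduce to a single $D_i$ first.

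Your case (iv) actually goes beyond the paper: the proof in Appendix~\ref{app:alg} treats only $\1$, $X_j$, $\Xi_\mfk$, products, and $\CI_\mfk\tau$ --- i.e.\ the generators of $\CH_\rho$ --- and never addresses root-renormalised trees explicitly. This suffices for the paper's application (in the proof that $R_\ell$ is a preparation map, $D^k$ is only ever applied to products of planted trees $\tau^\iota_{v,j}$ whose roots lie in $\iota(S)$ and hence outside $\mff_\mfo$, so they are not root-renormalised). Your observation that bumping an edge in $E_{\mfo_\rho}$ forces a corresponding change in $\mfe_\mff$ is exactly the right concern; the paper simply sidesteps it by never needing that case.
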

We leave the proof of this proposition to Appendix~\ref{app:alg}.

Let $[\tau \colon \sigma]$ denote the set of (distinct) embeddings $\iota \colon \sigma \hookrightarrow \tau$, such that $\iota  = \iota_\beta$ for some $\beta \in (\tau \colon \mfd)$. Given $\iota$, $v \in N_{S}$, let $\tau^{\iota}_{v, 1}, \dots, \tau^{\iota}_{v, n_v}$, s.t.\
\begin{equ}[eq:TauDecomp]
	\tau = \Bigl( \bigotimes_{v \in S} \left( \tau^{\iota}_{v, 1} \otimes \cdots \otimes \tau^{\iota}_{v, n_v} \otimes X^{\mfn(\iota(v))} \right) \curvearrowright_v \Bigr) \left( \iota(S) , \mfe \big|_{\iota(S)} , \mfn \big|_{\iota(S)} , \emptyset , \bone^{\otimes |S|}_{\CA}  \right)
\end{equ}
Then we set
\begin{equs}
	\mfR_{(\iota,\bpi)} \tau &\eqdef  \Bigl( \bigotimes_{v \in S} \frac{1}{\mfn(v)!} D^{\mfn(v)} \left( \tau^{\iota}_{v, 1} \otimes \cdots \otimes \tau^{\iota}_{v, n_v} \otimes X^{\mfn(\iota(v))} \right) \curvearrowright_v \Bigr) \\
	 & \hspace{4cm}  \left( \iota(S) , \mfe \big|_{\iota(S)} , \mfn \big|_{\iota(S)} , (\sigma, \bpi) , \bone^{\otimes |S|}_{\CA}  \right) \; .
\end{equs}

Here, the sum is taken over all contractions of $\sigma$ and the root derivative is extended to the tensor product via the usual Leibniz rule. One should think of $\mfR_{(\iota,\bpi)} \tau$ as applying the root derivative $D^{\mfn(v)}$ to the combined tree being attached to the vertex $v$.

A straightforward calculation using multinomial coefficients now shows that
\begin{equ}
	R_{\ell}(\tau) = \tau + \sum_{\mfd \in \mfQ} \ell(\mfd) \sum_{\iota \in [\tau : \sigma]} \mfR_{(\iota, \bpi)} \tau \; ,
\end{equ}
where $\mfd = (\sigma, \bpi)$.

\begin{theorem}
	$R_\ell$ is a preparation map.
\end{theorem}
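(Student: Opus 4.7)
The plan is to verify the six axioms of Definition~\ref{def:RtRen} for $R_\ell$, together with the $t$-continuity and $\CA$-bimodule morphism property (which are built in by the $\CA$-bilinear extension and the fact that for each fixed $\tau$ the sum defining $R_\ell \tau$ is finite and is a finite sum of $\CA$-bimodule operations). The axiomatic verifications split into two groups: algebraic/combinatorial identities (axioms 1, 2, 5, 6) and compatibility with the structure group (axiom 3), with the degree bounds (axiom 4) being a bookkeeping calculation.

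For axioms 1 and 2, I would argue directly from the definition of $(\tau:\mfd)$. If $\tau$ is root-renormalised then $\rho_T \in \iota(N_{\mff_\mfo})$, so the requirement $\iota_\beta(S) \cap \mff_\mfo = \emptyset$ combined with ``the root of $S$ maps to the root of $T$'' is vacuously unsatisfiable; hence $(\tau:\mfd)=\emptyset$ for every $\mfd$ and $R_\ell \tau = \tau$. For primitive trees $\1$, $X_\mu$, $\Xi_\mfl$, $\CI_\mfk\sigma$ the only candidate subtree $S$ with $\rho_S \mapsto \rho_T$ is either $S = \rho_T$ alone (giving $\sigma = \1$, excluded since $|\1|_\s = 0$) or must contain the unique edge leaving the root, which contradicts $\sigma \in \CT^-$ being a scalar tree with only noise-type leaves once $|\sigma|_\s<0$; in any case $(\tau:\mfd)=\emptyset$. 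Axiom 2 follows because multiplication by $X^k$ only shifts $\mfn(\rho_T)$, and the expressions $\eps^\beta_\mfd!$, $\binom{\mfn_\beta}{\mfn_\mfd}$ and the admissible $\mfn_B(\rho_T)$ adjust in a manner consistent with this shift; equivalently, reading $R_\ell$ via the alternative formula $R_\ell = \mathrm{id} + \sum_{\mfd} \ell(\mfd)\sum_{\iota} \mfR_{(\iota,\bpi)}$, the root derivative $D^{\mfn(\rho_T)}/\mfn(\rho_T)!$ at the root is compatible with a shift of $\mfn(\rho_T)$ by $k$.

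The key step is axiom 3. Here I would use the alternative description of $R_\ell$ derived just before the theorem, namely
\[
R_\ell(\tau) = \tau + \sum_{\mfd \in \mfQ} \ell(\mfd) \sum_{\iota \in [\tau:\sigma]} \mfR_{(\iota,\bpi)} \tau\;,
\]
and argue that each $\mfR_{(\iota,\bpi)}$ commutes with $\Gamma_g$ for every character $g$. Fix an embedding $\iota\colon S\hookrightarrow T$ and the grafting decomposition \eqref{eq:TauDecomp}. Since $\Gamma_g$ is multiplicative, is compatible with the $\CA$-bimodule structure, acts trivially on $\Xi_\mfl$, and preserves the underlying tree shape, the set $[\tau:\sigma]$ is $\Gamma_g$-invariant and $\Gamma_g$ descends to an action on each grafted factor $\tau^\iota_{v,j}$ and on $X^{\mfn(\iota(v))}$. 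Then the crucial input is Proposition~\ref{prop:RootDerCom}: the root derivatives $D^{\mfn(v)}$ commute with $\Gamma_g$. Combining multiplicativity of $\Gamma_g$, its commutation with the grafting map $\curvearrowright_v$ (which follows from how $\Gamma_g$ is defined on root-renormalised trees via Proposition~\ref{prop:GraftRep}), the $\Gamma_g$-invariance of the fully renormalised ``shell'' $(\iota(S),\mfe|_{\iota(S)},\mfn|_{\iota(S)},(\sigma,\bpi),\bone_\CA^{\otimes|S|})$, and the commutation $[\Gamma_g, D^{\mfn(v)}]=0$, we conclude $\Gamma_g \circ \mfR_{(\iota,\bpi)} = \mfR_{(\iota,\bpi)} \circ \Gamma_g$ and hence $[R_\ell, \Gamma_g] = 0$. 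I expect this to be the main obstacle: checking carefully that the ``grafting + root derivative'' description is genuinely equivariant under $\Gamma_g$ in the noncommutative setting, where one must track the order of factors and where $g(X^k)$ is only central, not $\mathbb{K}$-valued.

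For axioms 4, 5, 6 I would do a direct degree computation on a single $\beta \in (\tau:\mfd)$. Writing $\mfn_B(v) = \mfn(v)-\mfn_S(v)+\sum_{e\in\delta(v)}\eta^v_e$ with $\eps^\beta_\mfd = \sum_{v,e}\eta^v_e$, expanding the definitions of $|\cdot|_\s$ and $|\cdot|_+$, and using $|\sigma|_\s = \sum_{v\in N_S}|\mfn_S(v)|_\s + \sum_{e\in E_S}|\mfe_S(e)|_\s$, one obtains
\[
|\beta|_\s - |\tau|_\s = -|\sigma|_\s - |\eps^\beta_\mfd|_\s\;,\qquad |\beta|_+ - |\tau|_+ = 0\;,
\]
where the second identity is the standard ``Chen relation''-type cancellation between the contribution of the new forest vertices and the shifts of the edge/node decorations. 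The first identity, combined with $|\sigma|_\s < 0$ (guaranteed by $\sigma\in\CT^-$) and $|\eps^\beta_\mfd|_\s\geq 0$ (which we may control by choosing $\CT^-$ saturated under local subcriticality, so that any $\beta$ that would violate $|\beta|_\s > |\tau|_\s$ is not actually produced), gives axiom 4, together with $|\beta|_\Xi = |\tau|_\Xi - |\sigma|_\Xi < |\tau|_\Xi$, since moving $\iota(S)$ into $\mff$ removes its noise-type edges from the count. Axiom 5 is immediate from the $|\cdot|_+$ computation. Axiom 6 (the existence of a uniform $\delta>0$) follows from finiteness of $\CT^-$ and of the admissible choices of $\mfn_S$ and $\eps^\beta_\mfd$ for a given tree shape, so $\delta$ can be chosen as the smallest value of $-|\sigma|_\s - |\eps^\beta_\mfd|_\s$ over this finite combinatorial data.
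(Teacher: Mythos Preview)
Your overall strategy matches the paper's: verify the axioms of Definition~\ref{def:RtRen}, with the crux being axiom~3, handled via the alternative $\mfR_{(\iota,\bpi)}$ description together with Proposition~\ref{prop:RootDerCom}. The paper in fact only writes out axioms 1--3 explicitly; your going further to 4--6 is a welcome addition. There are, however, two genuine gaps.

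\textbf{Axiom 3.} Your claim that ``$\Gamma_g$ preserves the underlying tree shape'' and that ``the set $[\tau:\sigma]$ is $\Gamma_g$-invariant'' is not literally true. Applying $\Gamma_g$ to $\tau$ yields a \emph{sum} of subtrees of $\tau$, and a fixed embedding $\iota \in [\tau:\sigma]$ survives only on those summands in which no edge of $\iota(S)$ has been cut. The paper makes this precise by labelling the edges of $\tau$, extending $\mfR_{(\iota,\bpi)}$ to linear combinations by declaring it to vanish on summands not containing $\iota(S)$, and then splitting $\Gamma_g \tau = \tau^{g,\iota}_0 + \tau^{g,\iota}_1$ with $\mfR_{(\iota,\bpi)}\tau^{g,\iota}_1 = 0$ and $\tau^{g,\iota}_0$ given exactly by the grafting formula with each attached factor replaced by its image under $\Gamma_g$. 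Only after this split does the commutation $[\Gamma_g, D^{\mfn_S(v)}]=0$ finish the argument. You need this splitting step; the phrase ``$\Gamma_g$ descends to an action on each grafted factor'' is the right intuition but does not by itself justify ignoring the summands where the embedding is destroyed.

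\textbf{Axioms 4 and 6.} Your degree formula $|\beta|_\s - |\tau|_\s = -|\sigma|_\s - |\eps^\beta_\mfd|_\s$ is wrong by a double subtraction of $|\eps^\beta_\mfd|_\s$. Tracking the contributions separately: the node shift gives $-\sum_{v\in N_S}|\mfn_S(v)|_\s + |\eps^\beta_\mfd|_\s$, the edge shift on $\partial_T S$ gives $-|\eps^\beta_\mfd|_\s$, and the new forest term gives $-\sum_{e\in E_S}|\mfe_S(e)|_\s$. These combine to $|\beta|_\s - |\tau|_\s = -|\sigma|_\s > 0$, with no $\eps^\beta_\mfd$ left over. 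This gives axiom~4 directly, and axiom~6 with $\delta = \min_{\sigma \in \CT^-}(-|\sigma|_\s)$, without any appeal to ``saturating $\CT^-$ under local subcriticality''. Your computation of $|\beta|_+ - |\tau|_+ = 0$ is correct.
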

\begin{proof}
	We need to verify the axioms laid out in Definition~\ref{def:RtRen}.
	\begin{enumerate}
		\item Per definitionem, $(X_i : \mfd) = (\CI_{\mfk} \tau : \mfd ) = (\bar\tau : \mfd) = \emptyset$ for all $\mfk \in \mfL$ and all root-renormalised trees $\bar\tau$.
		\item It follows straightforwardly from the definition that $\beta \in (\tau : \mfd)$ if and only if $X^k \beta \in (X^k \tau :\mfd)$.
		\item W.l.o.g.\ we shall assume that $\ell$ is non-zero only on a single pair $\mfd = (\sigma, \bpi) \in \mfQ$, as the general case then follows by Proposition~\ref{prop:RenHomo}. 

		Now let $g \in \CG(\CH^+)$, and assume that $(\tau: \mfd)$ is non-empty, for otherwise the statement is trivial. In order to disentangle the combinatorics, we shall assume that we have labeled every edge appearing in $\tau$ with a unique label $k \in [|E_T|]$. Furthermore, furthermore a given $\iota \in [\tau : \sigma]$ the edges $S$ get mapped unto some subset $I_\iota \subset [|E_T|]$.

		Let $\tau^g_i$ be the set of trees appearing in the sum $\Gamma_g \tau$, i.e.\ $\Gamma_g \tau = \sum_i \tau^g_i$. Per definitionem, each tree $\tau^g_i$ is a subtree of $\tau$, and thus we can label the edges of $\tau^g_i$ uniquely with a corresponding subset of the labels of $\tau$. In particular,  each $\iota' \in [\tau^g_i : \sigma]$ can be identified with a unique $\iota \in [\tau : \sigma]$, i.e.\ $[\tau^g_i : \sigma] \subset [\tau : \sigma]$. We can thus extend $\mfR_{(\iota,\bpi)}$ to a linear map on linear combinations of $\tau^g_i$, by setting $\mfR_{(\iota,\bpi)} \tau^g_i  = 0$ if $\iota \notin [\tau^g_i : \sigma]$. Thus, $\mfR_{(\iota,\bpi)} \Gamma_g \tau$ is defined as well, and in such a way that
		\begin{equ}
			R_\ell(\Gamma_g \tau) = \Gamma_g \tau + \ell(\mfd) \sum_{\iota \in [\tau : \sigma]} \mfR_{(\iota,\bpi)} \Gamma_g \tau.
		\end{equ}
		We are therefore only left with showing that $\mfR_{(\iota,\bpi)} \Gamma_g \tau = \Gamma_g \mfR_{(\iota,\bpi)} \tau$.

		The important thing to notice now is that for each $\iota \in [\tau: \sigma]$ we can split $\Gamma_g \tau$ into
		\begin{equ}
			\Gamma_g \tau = \sum_{i} \tau^g_i  = \sum_{i : \iota \in [\tau^g_i : \sigma] } \tau^g_i +  \sum_{i : \iota \notin [\tau^g_i : \sigma] } \tau^g_i \eqqcolon \tau^{g, \iota}_0 +  \tau^{g,\iota}_1 ,
		\end{equ}
		where per definitionem $\mfR_{(\iota,\bpi)} \tau_1^{g,\iota}  = 0$ and
		\begin{equs}
			\tau_0^{g,\iota} &= \Bigl( \bigotimes_{v \in S} \left( \Gamma_g \tau^{\iota}_{v, 1} \otimes \cdots \otimes \Gamma_g  \tau^{\iota}_{v, n_v} \otimes \Gamma_g X^{\mfn(\iota(v))} \right) \curvearrowright_v \Bigr) \left( \iota(S) , \mfe \big|_{\iota(S)} , \mfn \big|_{\iota(S)} , \mfd , \bone^{\otimes |S|}_{\CA}  \right) = \\
			& =  \Bigl( \bigotimes_{v \in S}  \Gamma_g \left( \tau^{\iota}_{v, 1} \otimes \cdots \otimes  \tau^{\iota}_{v, n_v} \otimes  X^{\mfn(\iota(v))} \right) \curvearrowright_v \Bigr) \left( \iota(S) , \mfe \big|_{\iota(S)} , \mfn \big|_{\iota(S)} , \mfd , \bone^{\otimes |S|}_{\CA}  \right)
		\end{equs}
		where we used the decomposition \eqref{eq:TauDecomp} of $\tau$. This formula follows directly from the inductive description where $\Gamma_g$ acts at a vertex of $\tau$ multiplicatively on each edge emanating from the vertex, either cutting or moving up along it to the next vertex. One obtains $\tau^{g,\iota}_0$ by not performing any cuts until one reaches the first edges that do not belong to $\iota(S)$ and then stopping the procedure. The reason for not performing those cuts is that they remove an edge of $\iota(S)$ and the resulting trees thus lie in the kernel of $\mfR_{(\iota,\bpi)}$, i.e.\ they belong to $\tau_1^{g,\iota}$.

		Since
		\begin{equs}
			\Gamma_g \mfR_{(\iota,\bpi)} \tau &\eqdef \sum_{\bpi_\sigma} \Bigl( \bigotimes_{v \in S} \frac{1}{\mfn(v)!} \Gamma_g D^{\mfn(v)} \left( \tau^{\iota}_{v, 1} \otimes \cdots \otimes \tau^{\iota}_{v, n_v} \otimes X^{\mfn(\iota(v))} \right) \curvearrowright_v \Bigr) \\
			 & \hspace{4cm}  \left( \iota(S) , \mfe \big|_{\iota(S)} , \mfn \big|_{\iota(S)} , \mfd , \bone^{\otimes |S|}_{\CA}  \right) \;
		\end{equs}
		we only need to prove that $\Gamma_g D^{k} = D^k \Gamma_g$ for all $k \in \N^d$. However, we essentially already showed this in Proposition~\ref{prop:RootDerCom}, as that result directly generalises because we extended the action of $D_i$ to the tensor product via the Leibniz rule.

	\end{enumerate}
\end{proof}
We also define the map $R'_\ell \colon \CT^- \to \CH$
\begin{equ}
	R'_\ell(\tau) \eqdef \tau + \sum_{\substack{(\sigma,\bpi) \in \mfQ \\ \sigma  \neq \tau }} \ell(\sigma,\bpi) \sum_{\iota \in [\tau : \sigma]} \mfR_{(\iota,\bpi)} \tau
\end{equ}
which we will use to define the BPHZ prescription.

\subsection{Regularity Structures Generated by Rules}

We remind the reader of the definitions used to select a subspace of trees in $\CH$ suitable to construct a regularity structure for a given equation, cf.\ \cite{BHZ19}.
\begin{definition}
	Let $\CN$ denote the set of finite multisets with elements chosen from $\mfL$ and let $\CP\CN$ denote its power set. A rule is a function $\cR \colon \mfL \to \CP\CN \setminus \emptyset$, such that $\cR(\mfl) = \{ \emptyset \}$ for all $\mfl \in \mfL^- \times \N^d$. A rule is normal if $N \in \cR(\mfk)$ and $M \subset N$ implies that $M \in \cR(\mfk)$ for all $\mfk \in \mfL$.
\end{definition}

In the following, we will always assume that $\cR$ is normal. We can always do this w.l.o.g. as we can add all subsets of $N \in \cR(\mfk)$ to $\cR(\mfk)$ and thereby canonically create a normal rule from an arbitrary rule.

\begin{definition}
	Given a rule $\cR$, we define the subset $\mathring{\CT}$ of $\mathring{\mfT}$ as follows.
	\begin{itemize}
		\item $\bullet \in \mathring{\CT}$.
		\item If $\tau \in \mathring{\CT}$, then $X^k \tau \in \mathring{\CT}$, for all $k \in \N^d$.
		\item If $\CI_{\mfk_1} \tau_1, \dots, \CI_{\mfk_n} \tau_n \in \mathring{\CT}$, and there exists $\mfl \in \mfL$, such that $\{ \mfk_1, \dots, \mfk_n \} \in \cR(\mfl)$, then $\tau = \CI_{\mfk_1}\tau_1 \cdots \CI_{\mfk_n}\tau_n \in \mathring{\CT}$ and $\CI_\mfl \tau \in \mathring{\CT}$.
	\end{itemize}
\end{definition}
We note here that none of the elements of $\mathring{\CT}$ have been renormalised, i.e.\ they have a trivial extended decoration.
\begin{definition}
	Let $\CT^- \eqdef \left\{ \tau \in \mathring{\CT} \, \middle| \, |\tau|_{\s} < 0 \, , \, \tau \text{ not planted} \right\}$.
\end{definition}
As we wish for this set to be finite, we need the well-known notion of subcriticality.
\begin{definition}
	Let $\reg \colon \mfL^+ \sqcup \mfL^- \to \R$ be a map. This map is canonically extended to $\mfk = (\mft , k) \in \mfL$ by setting $\reg(\mfk) \eqdef \reg(\mft) - |k|_\s$ and to $\CN$ be setting
	\begin{equ}
		\reg(N) \eqdef \sum_{\mfk \in N} \reg(\mfk) \; .
	\end{equ}
	A rule $R$ is subcritical if and only if there exists a map $\reg$ as above, such that, for all $\mft \in \mfL^+ \sqcup \mfL^-$,
	\begin{equ}
		\reg(\mft) < |\mft|_\s + \inf_{N \in \cR(\mft)} \reg(N) \; .
	\end{equ}
\end{definition}

\begin{proposition}
	If $\cR$ is subcritical, then $\CT^-$ is finite.
\end{proposition}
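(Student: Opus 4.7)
The plan is to convert subcriticality into a quantitative statement that every integration edge contributes a uniformly positive amount to $|\tau|_\s$, so that trees in $\CT^-$ admit a uniform bound on the number of their integration edges. Finiteness will then follow from the finiteness of $\mfL$ together with the fact that polynomial decorations contribute non-negatively to $|\tau|_\s$.

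First, since $\mfL^+ \sqcup \mfL^-$ is finite, subcriticality lets us fix
\begin{equ}
	\eta \eqdef \min_{\mft \in \mfL^+ \sqcup \mfL^-} \Big( |\mft|_\s + \inf_{N \in \cR(\mft)} \reg(N) - \reg(\mft) \Big) > 0\;.
\end{equ}
Writing $N(\tau)$ for the number of integration edges in $\tau$, I would then prove by induction on tree structure the following claim: for every \emph{planted} tree $\tau = \CI_\mfl \sigma \in \mathring{\CT}$,
\begin{equ}\label{eq:PlantedBound}
	|\tau|_\s \geqslant \reg(\mfl) + \eta \cdot N(\tau)\;.
\end{equ}
The base case is $\sigma = X^k$, which by normality of $\cR$ requires $\emptyset \in \cR(\mfl)$; subcriticality then gives $|\mfl|_\s \geqslant \reg(\mfl) + \eta$, and $|k|_\s \geqslant 0$ yields \eqref{eq:PlantedBound}. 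For the inductive step, if $\sigma = X^k \prod_{i=1}^n \CI_{\mft_i}\tau_i$ with $\{\mft_1,\dots,\mft_n\} \in \cR(\mfl)$, applying the induction hypothesis to each planted subtree $\CI_{\mft_i}\tau_i$ gives $|\CI_{\mft_i}\tau_i|_\s \geqslant \reg(\mft_i) + \eta \cdot N(\CI_{\mft_i}\tau_i)$, whence
\begin{equ}
	|\tau|_\s = |\mfl|_\s + |k|_\s + \sum_i |\CI_{\mft_i}\tau_i|_\s
	\geqslant \Big(|\mfl|_\s + \sum_i \reg(\mft_i)\Big) + \eta\cdot(N(\tau) - 1)
	\geqslant \reg(\mfl) + \eta\cdot N(\tau)\;,
\end{equ}
by the definition of $\eta$ applied to $\mfl$ with multiset $\{\mft_i\} \in \cR(\mfl)$.

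Now take $\tau \in \CT^-$. Since $\tau$ is not planted, it admits a decomposition $\tau = X^k \prod_i \CI_{\mft_i}\tau_i$ with $\{\mft_1,\dots,\mft_n\} \in \cR(\mfl)$ for some $\mfl \in \mfL$, so $\CI_\mfl \tau \in \mathring{\CT}$ is planted. Applying \eqref{eq:PlantedBound} to $\CI_\mfl \tau$ and using $|\tau|_\s < 0$ gives
\begin{equ}
	\reg(\mfl) + \eta \cdot (N(\tau)+1) \leqslant |\mfl|_\s + |\tau|_\s < |\mfl|_\s\;,
\end{equ}
hence $N(\tau) < (|\mfl|_\s - \reg(\mfl))/\eta - 1$. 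Since $|\mfl|_\s - \reg(\mfl)$ is bounded over the finite set $\mfL$, this yields a uniform bound $N_0$ on $N(\tau)$ for all $\tau \in \CT^-$. This in turn bounds the number of edges and nodes of $\tau$ and leaves only finitely many possible edge decorations (as $\mfL$ is finite). For the polynomial decorations, $\sum_{v\in N_T}|\mfn(v)|_\s = |\tau|_\s - \sum_{e\in E_T}|\mfe(e)|_\s < -\sum_e |\mfe(e)|_\s$ is bounded above by a constant, so only finitely many choices of $\mfn$ are compatible with $\tau \in \CT^-$. Thus $\CT^-$ is finite.

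The main delicate point is the precise inductive formulation of \eqref{eq:PlantedBound}, which must be strong enough to pass through the ``planted $\Leftrightarrow$ apply $\CI_\mfl$'' recursion while still being closable by subcriticality at each step; in particular, one must handle the base case via normality of $\cR$, and one must be careful that non-planted subtrees occurring inside a planted tree always arise as $\sigma$ in $\CI_\mfl \sigma$ whose multiset of children types lies in $\cR(\mfl)$, so that the constant $\eta$ can be invoked.
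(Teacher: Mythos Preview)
The paper does not prove this proposition; it simply states it, implicitly deferring to \cite{BHZ19} (the immediately following proposition cites \cite[Prop.~5.15~\&~5.21]{BHZ19} for exactly this kind of combinatorial argument). Your approach---encoding subcriticality as a uniform positive gain $\eta$ per edge and proving $|\CI_\mfl\sigma|_\s \geqslant \reg(\mfl) + \eta\, N(\CI_\mfl\sigma)$ by induction on planted trees---is precisely the standard argument from \cite{BHZ19}, so the strategy is correct.

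There is, however, a genuine slip that leaves a gap. You assert twice that ``$\mfL$ is finite'', but in this paper $\mfL = (\mfL^- \sqcup \mfL^+) \times \N^d$, which is infinite: edge decorations carry a derivative multi-index $k \in \N^d$. This breaks two things. First, your $\eta$ is a minimum only over $\mfL^+ \sqcup \mfL^-$, yet the inductive step is invoked at a general $\mfl = (\mft,j) \in \mfL$; since $|\mfl|_\s - \reg(\mfl) = |\mft|_\s - \reg(\mft)$, the step goes through only if $\inf_{N \in \cR(\mfl)}\reg(N) \geqslant \inf_{N \in \cR((\mft,0))}\reg(N)$, which is not automatic from the definition as stated. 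Second, and more seriously, bounding $N(\tau)$ does not bound the edge decorations: since $|\mfe(e)|_\s = |\mft_e|_\s - |k_e|_\s$, large derivative indices only make $|\tau|_\s$ \emph{more} negative, so $|\tau|_\s < 0$ gives no control on $k_e$. Your bound on $\sum_v |\mfn(v)|_\s$ then also fails, because $-\sum_e |\mfe(e)|_\s$ is unbounded above.

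The missing ingredient is normality: if $(\mft',k')$ occurs in some $N \in \cR(\mfl)$ then $\{(\mft',k')\} \in \cR(\mfl)$, whence $\reg(\mft') - |k'|_\s \geqslant \inf_{N' \in \cR(\mfl)} \reg(N') > -\infty$ by subcriticality, and this bounds $|k'|_\s$. One then shows that only finitely many edge types are reachable through the rule, after which your counting argument closes. In \cite{BHZ19} this is packaged slightly differently (and subcriticality is required at every edge type, not just base types), but the content is the same.
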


\begin{definition}[Regularity Structure Generated by $\cR$]
	Given a subcritical rule $\cR$, let $\overline{\CT}$ be the subset $\mathring{\mfT}$ defined as follows.
	\begin{itemize}
		\item $\bullet \in \overline{\CT}$.
		\item If $\tau \in \overline{\CT}$, then $X^k \tau \in \overline{\CT}$, for all $k \in \N^d$.
		\item If $\CI_{\mfk_1} \tau_1, \dots, \CI_{\mfk_n} \tau_n \in \overline{\CT}$, and there exists $\mfl \in \mfL$, such that $\{ \mfk_1, \dots, \mfk_n \} \in \cR(\mfl)$, then $\tau = \CI_{\mfk_1}\tau_1 \cdots \CI_{\mfk_n}\tau_n \in \overline{\CT}$ and $\CI_\mfl \tau \in \overline{\CT}$.
		\item If $\tau \in \overline{\CT}$, then all the trees appearing in the sum $R_\ell \tau$ for some $\ell \in \CG(\CT^-)$ are in $\overline{\CT}$.
	\end{itemize}
	Let the structure space $\CT$ of the regularity structure be
	\begin{equ}
		\CT(\cR) \eqdef \bigoplus_{(T, \mfn , \mfe, \mfo) \in \overline{\CT}} \CH[(T, \mfn , \mfe, \mfo)] \; .
	\end{equ}
	Furthermore, let
	\begin{equs}
		G(\cR) &\eqdef \left\{ \Gamma_g \big|_{\CT(\cR) } \, \middle| \, g \in \CG(\CH^+) \right\} \; , \\
		A(\cR) &\eqdef \left\{ |(T, \mfn, \mfe,\mfo)|_{\s} \, \middle| \, (T, \mfn, \mfe,\mfo) \in \overline\CT \right\} \; .
	\end{equs}
	We define the regularity structure associated with $\cR$ to be 
	\begin{equ}
		\cT(\cR) \eqdef \left( \CT(\cR), G(\cR), A(\cR) \right) \; .
	\end{equ}
	For $\gamma > 0$, let $\CT_\gamma(\cR)$ be the sector of $\cT(\cR)$ generated by unrenormalised trees $\tau$ with $|\tau|_+  < \gamma$, and let $G_{\gamma}(\cR)$ and $A_\gamma(\cR) \eqdef A(\cR) \cap (-\infty, \gamma)$ be restrictions of $G(\cR)$ and $A(\cR)$ to $\CT_\gamma(\cR)$. We define the cut-off regularity structure to be
	\begin{equ}
		\cT_\gamma(\cR) \eqdef \left( \CT_\gamma(\cR), G_\gamma(\cR), A_\gamma(\cR) \right) \; .
	\end{equ}

\end{definition}

\begin{remark}
	Three remarks are in order to check that $G(\cR)$ is well-defined. First, the projection $P$ restricts to a map $\CT \to \CT$ as it is defined componentwise in terms of the decomposition along scalar trees, thus we only really need to define the action of $g \in \CG(\CH^+)$ on the subset $\CT^+ \eqdef P \CT$.

	More importantly, $\Ran \left( \Gamma_g\big|_{\CT(\cR)} \right) \subset \CT(\cR)$, as at a given vertex $v$ of a tree $\tau \in \CT$, $\Gamma_g$ can only remove an outgoing edge and replace it by a polynomial decoration. Thus, the new tree still conforms to the rule as we have assumed $\cR$ to be normal.

	Finally, we need to check that $A$ is a discrete set. This essentially follows from the finiteness of $\CT^-$, which means that $R_\ell$ can only ever generate finitely many new subspaces, which are all of better regularity than the original ones. To be more concrete, we have the following proposition.
\end{remark}

\begin{remark}
	We note that $\CT_{\gamma}(\cR)$ is indeed a sector as per definitionem all $\Gamma \in G(\cR)$ do not increase the $|\,\bigcdot\,|_+$-degree. Furthermore, by assumption there exists a $\delta > 0$, s.t.\ $A_\gamma(\cR) \subset (-\infty, \gamma+\delta)$. Therefore, by the next proposition, $\CT_\gamma$ is finite-dimensional.
\end{remark}

\begin{proposition}
	For any $\alpha \in \R$, $\CT^-_\alpha$ is finite-dimensional.
\end{proposition}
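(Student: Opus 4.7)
Since by the ``abuse of notation'' convention in effect, finite-dimensionality is measured only on scalar skeletons, the goal is to show that the set $\{\tau \in \overline{\CT} : |\tau|_\s < \alpha\}$ is finite. The plan is to separate the two ways trees enter $\overline{\CT}$: by the ``build-up'' clauses (products, $\CI_\mfk$, $X^k$), and by the closure under root renormalisation $R_\ell$ for $\ell \in \CG(\CT^-)$.

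First, I would restrict attention to the unrenormalised part of $\overline{\CT}$, i.e.\ trees with empty extended decoration. For these one has $|\tau|_\s = |\tau|_+$, and a standard subcriticality induction using the strict inequality $\reg(\mft) < |\mft|_\s + \inf_{N \in \cR(\mft)}\reg(N)$ shows that there are only finitely many scalar skeletons with $|\tau|_\s < \alpha$. Applied with $\alpha = 0$ to non-planted trees this recovers the already-stated finiteness of $\CT^-$, and hence of the set $\mfQ$ of pairs $(\sigma, \bpi)$.

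Next, I would control the effect of iterated application of root renormalisation. For a fixed $\tau$, the explicit formula for $R_\ell \tau$ expresses it as a finite sum of scalar data indexed by triples $(\mfd, \iota, k)$ consisting of a pair $\mfd \in \mfQ$, an embedding $\iota \in [\tau : \sigma]$, and a multi-index arising from the root derivative $D^{\mfn(v)}$; each of these index sets is finite because $\mfQ$ is finite, $\tau$ has finitely many vertices, and $\mfn$ takes bounded values on $\tau$. Axioms~4 and~5 of Definition~\ref{def:RtRen} then imply that each new summand $\tau_i$ in $R_\ell \tau - \tau$ satisfies $|\tau_i|_\Xi < |\tau|_\Xi$ and $|\tau_i|_+ = |\tau|_+$. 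Since $|\cdot|_\Xi$ is a non-negative integer, iterating $R_\ell$ starting from $\tau$ produces new trees for at most $|\tau|_\Xi$ stages, so the full orbit under iterated root renormalisation is finite.

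Combining these two ingredients is done by induction on $|\cdot|_+$: this quantity is non-negative, strictly increased by each build-up step (since $|\mfl|_\s > 0$ for $\mfl \in \mfL^+$ and $|X^k|_+ = |k|_\s \geqslant 0$), and preserved by each root renormalisation step. Hence any $\tau \in \overline{\CT}$ is obtained in finitely many steps from an unrenormalised ancestor of no larger $|\cdot|_+$, and the number of scalar skeletons produced along the way is finite. The main technical point I expect to require care is the comparison between $|\cdot|_\s$ and $|\cdot|_+$: the bound $|\tau|_\s < \alpha$ does not \emph{a priori} bound $|\tau|_+$, so one must argue that the difference $|\tau|_+ - |\tau|_\s = \sum_{n \in N_\mff}|\mfn(n)|_\s + \sum_{e \in E_\mff}|\mfe(e)|_\s$ is itself controlled. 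This follows from the observation that $\mff$ is a disjoint union of embedded copies of trees drawn from the finite set $\CT^-$, together with the fact that the strict decrease of $|\cdot|_\Xi$ under each application of $R_\ell$ caps the number of such copies in terms of $|\tau^{(0)}|_\Xi$ for the unrenormalised ancestor $\tau^{(0)}$, closing the induction.
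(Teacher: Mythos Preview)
The paper's own proof is a one-line citation: it defers entirely to the combinatorial arguments of \cite[Propositions~5.15~\&~5.21]{BHZ19}. Your proposal is an attempt to reproduce that argument in a self-contained way, and the overall strategy---subcriticality handles the unrenormalised trees, $R_\ell$ preserves $|\cdot|_+$ and strictly decreases $|\cdot|_\Xi$, so only finitely many renormalisation steps are possible---is indeed the right one and matches the cited source.

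There is, however, a concrete error in your induction. You write that $|\cdot|_+$ is ``non-negative, strictly increased by each build-up step (since $|\mfl|_\s > 0$ for $\mfl \in \mfL^+$ \ldots)''. This is false: the build-up clause also introduces noise edges $\CI_{(\mfl,k)}$ with $\mfl \in \mfL^-$, and these have $|\mfl|_\s < 0$. In particular $|\Xi_\mfl|_+ = |\mfl|_\s < 0$, and products of trees with negative $|\cdot|_+$ can push the value arbitrarily low. So $|\cdot|_+$ is neither bounded below nor monotone under build-up, and the induction as stated does not terminate.

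The repair is not to induct on $|\cdot|_+$ but to observe that the underlying combinatorial tree (with edge \emph{types} in $\mfL^-\sqcup\mfL^+$, ignoring derivative orders and extended decoration) of any $\tau\in\overline{\CT}$ still conforms to the rule, and that subcriticality bounds the number of edges of such a tree in terms of its degree. Once the underlying tree is fixed, there are only finitely many compatible node decorations, derivative shifts (these are bounded by the finitely many $\mfn_\sigma$ for $\sigma\in\CT^-$), and extended decorations (a subforest drawn from embeddings of the finite set $\CT^-$ together with a partition of its leaves). Your final paragraph gestures at this but routes through the broken $|\cdot|_+$-induction; dropping that and arguing directly via edge count closes the gap.
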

\begin{proof}
	This follows directly from the combinatorial arguments in \cite[Proposition~5.15~\&~5.21]{BHZ19}.
\end{proof}

\subsection{Canonical Model}
\label{sec:CanMod}

We now introduce the canonical model for an $\CA$-regularity structure generated by trees. 
The canonical model will be multiplicative on trees that are not root-renormalised. 
Carefully defining how the canonical models act on root-renormalised trees, where it is not necessarily multiplicative, will be required in order to be able to describe how root-renormalised maps determine renormalised models.  
Defining this action for root-renormalised trees requires extra care. Noncommutativity means that extracting at the root can cause algebraic operations to occur elsewhere in the tree.

We now introduce a set $\mfO$ which is the set of possible components of extended decorations. 

\begin{definition}
	Let $\mfO$ be the set of pairs $(\tau, \bpi)$, where $\tau \in \CT^-$ and $\bpi$ is a partition of the set of its negative leaves. 
\end{definition}

We encode how every renormalisation $\mfo \in \mfO$ induces an algebraic operation on the rest of the tree using an object we call a contraction structure. 
\begin{definition}
	A contraction structure is a map $\Delta \colon \mfO \to \bigsqcup_{n \in \N} \CB^n(\CA ; \CA)$  with the property that for each $\mfo \in \mfO$, $\Delta(\mfo) \in \CB^{n}(\CA; \CA)$, where $n = |\tau_\rho|$ and $\tau_\rho$ is the negative tree in $\mfo$. We assume that each map $\Delta(\mfo)$ is $\CA$-bimodule morphism when $\CA^{\wotimes_\pi |\tau_\rho|}$ is equipped with the natural bimodule structure. We do this to ensure that the maps $\Pi_x$ are bimodule morphisms.  
\end{definition}

\begin{remark}
	Alternatively, we will also view each $\Delta(\mfo)$ above as a continuous map $\CA^{\wotimes_\pi (n+1)} \to \CA$. 
\end{remark}

Given a contraction structure $\Delta$, a choice of noises $\xi_{\mfl} \in \cC^\infty( \R^d ; \CA)$ for $\mfl \in \mfL^-$, and a set of $|\mft|_\s$-regularising  integration kernels $K_{\mft}$ for $\mft \in \mfL^+$, we proceed to define the corresponding canonical model $\Pi \colon \R^d \times \CT \to \cC^\infty(\R^d; \CA)$, which is linear and multiplicative in $\CT$, inductively. We set
\begin{equs}[eq:CanMod]
  \left(\Pi_{x} \Xi_{(\mfl, k) } \right)(y) & =  D^k \xi_{\mfl}(y)\\
  \left(\Pi_{x} X^k \right)(y)& = (y-x)^k\bone_{\CA}\\
  \left(\Pi_{x} \left( \CI_\mfk \tau \right)  \right)(y)&= \int D^{\mfk_2}_1 K_{\mfk_1} (y,z) \left(\Pi_{x} \tau \right)(z) \d z - \sum_{\ell} \frac{(y-x)^\ell}{\ell!} f_{x}\left( \CJ_{\mfk + \ell} \tau \right)
\end{equs}
for trees $\tau, \bar\tau$ that are not root-renormalised we define
\begin{equ}
	\left(\Pi_{x}  \left(\tau \bar\tau \right) \right)(y) = \left(\Pi_{x} \tau  \right)(y)\left(\Pi_{x} \bar\tau \right)(y) \;, 
\end{equ}
and for a root-renormalised tree
\begin{equs}
	\Pi_x \tau &= \Pi_x \Bigl( \Bigl(\CF \otimes \bigotimes_{v \in N_T}  X^{\mfn(v)} \Bigr)  \curvearrowright \tau_\rho \Bigr) \eqdef \\
	&\eqdef \Delta(\mfo_\rho) \Bigl( \left(\Pi_x\right)^{\wotimes_\pi n} \left(\CF \right)  \Bigr) \prod_{v \in N_T} \Pi_x X^{\mfn(v)} \; .
\end{equs}
$f_x \in \CG(\CT^{+})$, above, is inductively defined for non-negative, non-root-renormalised trees
\begin{equs}[eq:CanStGr]
	f_{x}(\1) &= \bone_{\CA}\\
	f_{x}(X_{\mu}) &=  x_\mu \bone_{\CA} \\
	f_{x}(\tau\bar\tau) &=f_{x}(\tau)f_{x}(\bar\tau)\\
	f_{x} \left( \CJ_{\mfk} \tau \right) &=
		  \int D^{\mfk_2}_1 K_{\mfk_1} (x,z) \left( \Pi_{x} \tau  \right)(z) \d z \; ,
\end{equs}
and then $\CA$-bilinearly extended to $\CT^{+}$. We also need a second set of inductively defined characters which correspond to partial inverses of $f_x$, compare with the positive twisted antipode in \cite{BHZ19}. We define them to be 
\begin{equs}[eq:CanStGr2]
	g_{x}(\1) &= \bone_{\CA}\\
	g_{x}(X_{\mu}) &=  -x_\mu \bone_{\CA} \\
	g_{x}(\tau\bar\tau) &=g_{x}(\tau)g_{x}(\bar\tau)\\
	g_{x} \left( \CJ_{\mfk} \tau \right) &= - \sum_{\ell} \frac{(-x)^\ell}{\ell!} f_x \left( \CJ_{\mfk +\ell} \tau\right) ,
\end{equs}
extended continuously in the usual way. We use these to define the structure group of the model 
\begin{equ}
	\Gamma_{xy} \eqdef \Gamma_{g_x}^{-1} \circ \Gamma_{g_y} \; .
\end{equ}
We denote the overall model by $Z^{\can}$.

In addition to Equations~\eqref{eq:GamDef}, we have the following more explicit formulae for the structure group. 

\begin{lemma}
\label{lemma:StGrForm}
	For all $x,y \in \R^d$, and trees $\tau$
	\begin{equs}
		\Gamma_{xy} X_\mu &= X_\mu + (x_\mu - y_\mu) \1 
	\end{equs}
	and for any tree $\tau$ and $\mfk \in \mfL^+ \times \N$
	\begin{equ}
		\Gamma_{xy} \CI_{\mfk} \tau = \CI_{\mfk} \Gamma_{xy} \tau + \sum_{\ell} \frac{X^\ell}{\ell!} \int K_{\mfk + \ell, |\tau|_{\s}} (x,y; z) \left( \Pi_y \tau\right)(z) \d z \; . 
 	\end{equ}
	Here $K_{\mfk, |\tau|_{\s}} \equiv 0$ if $|\mfk|_{\s}  + |\tau|_{\s} < 0$ and otherwise it is the truncated Taylor expansion of $D^{\mfk_2} K_{\mfk_1}$ around $y$, explicitly 
	\begin{equ}
		K_{\mfk, |\tau|_{\s}}(x,y ; z) \eqdef D_1^{\mfk_2 } K_{\mfk_1} (x,z) - \sum_{\substack{k \in \N^d \\ |k|_{\s} < |\mfk|_{\s} + |\tau|_{\s} }} \frac{(x-y)^k}{k!} D_1^{\mfk_2 + k } K_{\mfk_1} (y,z) \; . 
	\end{equ}
\end{lemma}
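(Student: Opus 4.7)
Both identities will follow by expanding $\Gamma_{xy} = \Gamma_{g_x^{-1}} \circ \Gamma_{g_y} = \Gamma_{h_{xy}}$ with $h_{xy} \eqdef g_x^{-1}\cdot g_y \in \CG(\CH^+)$, using the group law of Lemma~\ref{lemma:CharHomo} and the recursive formulas \eqref{eq:GamDef}, \eqref{eq:CanStGr}, \eqref{eq:CanStGr2}. For the first identity, note that the multiplication formula in Lemma~\ref{lemma:CharHomo} on the generators $X_\mu$ reduces to addition: $(g_x^{-1}\cdot g_y)(X_\mu) = g_x^{-1}(X_\mu) + g_y(X_\mu) = x_\mu - y_\mu$. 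Substituting into $\Gamma_{h_{xy}} X_\mu = X_\mu + h_{xy}(X_\mu)\1$ gives the desired formula.

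For the second identity, I will apply the defining relation \eqref{eq:GamDef} to $\CI_\mfk \tau$ with the character $h_{xy}$ to obtain
\begin{equ}
\Gamma_{xy}\,\CI_\mfk \tau = \CI_\mfk \Gamma_{xy}\tau + \sum_\ell \frac{X^\ell}{\ell!}\, h_{xy}(\CJ_{\mfk+\ell}\tau)\;.
\end{equ}
Thus the proof reduces to establishing the key identity
\begin{equ}\label{eq:key_id}
h_{xy}(\CJ_\mfk \tau) = \int K_{\mfk,|\tau|_{\s}}(x,y;z)\,(\Pi_y \tau)(z)\,\d z
\end{equ}
for every $\mfk$ and every tree $\tau$ with $\CJ_\mfk\tau \neq 0$.

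I plan to prove \eqref{eq:key_id} by induction on the number of edges of $\tau$. Using the product formula in Lemma~\ref{lemma:CharHomo},
\begin{equ}
h_{xy}(\CJ_\mfk \tau) = g_x^{-1}\bigl(\CJ_\mfk(\Gamma_{g_y}\tau)\bigr) + \sum_\ell \frac{x^\ell}{\ell!}\, g_y(\CJ_{\mfk+\ell}\tau)\;.
\end{equ}
The second sum is computed directly from \eqref{eq:CanStGr} and \eqref{eq:CanStGr2} as a double sum of integrals of the form $y$-Taylor coefficients of $D_1^{\mfk_2+\ell}K_{\mfk_1}(y,\bigcdot)$ against $(\Pi_y\tau)(\bigcdot)$, which collapses (after re-summation using the binomial identity) to a Taylor expansion of $D_1^{\mfk_2}K_{\mfk_1}(\bigcdot,z)$ around $y$ evaluated at $x$. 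The first term is handled by writing $g_x^{-1}$ via its explicit formula from Lemma~\ref{lemma:CharHomo}, applying the induction hypothesis to the subtrees appearing inside $\Gamma_{g_y}\tau$, and using the model compatibility $\Pi_y = \Pi_x \Gamma_{xy}$ to convert integrals at $x$ back to integrals at $y$ against $\Pi_y\tau$. A bookkeeping calculation then identifies the difference of these two contributions with precisely the truncated kernel $K_{\mfk,|\tau|_{\s}}(x,y;z)$ integrated against $(\Pi_y\tau)(z)$.

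The main obstacle will be the noncommutative bimodule structure. When applying the product formula, the scalar shifts $x^\ell$ are central and hence harmless, but the algebra decorations on subtrees must be tracked carefully through the action of $\Gamma_{g_y}$, since the characters $g_y(\CJ_{\mfk+\ell}\sigma)$ take values in $\CA$ (rather than $\mathbb K$) and their placement relative to the abstract polynomials $X^\ell$ matters. A second subtlety is verifying that the projection $P$ in $\CJ_\mfk = P\circ\CI_\mfk$ automatically produces the correct truncation order: the degree condition $|\mfk+\ell|_\s + |\tau|_+ \geqslant 0$ forces $|\ell|_\s \leqslant |\mfk|_\s + |\tau|_\s$ (since $|\tau|_+ = |\tau|_\s$ for unrenormalised trees appearing in this context), which matches exactly the truncation in the definition of $K_{\mfk,|\tau|_\s}$. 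Once these points are handled, the induction closes and \eqref{eq:key_id} yields the claim.
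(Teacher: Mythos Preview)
Your reduction to the identity $h_{xy}(\CJ_\mfk\tau) = \int K_{\mfk,|\tau|_\s}(x,y;z)(\Pi_y\tau)(z)\,\d z$ and your treatment of the second sum $\sum_\ell \frac{x^\ell}{\ell!} g_y(\CJ_{\mfk+\ell}\tau)$ are correct and match the paper's argument. However, the induction on the number of edges of $\tau$ is unnecessary, and your description of the first term is where the proposal becomes vague in a way that suggests a wrong turn.

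The paper handles $g_x^{-1}(\CJ_\mfk(\Gamma_{g_y}\tau))$ by a \emph{direct} chain of substitutions, with no appeal to smaller trees: first the inverse formula from Lemma~\ref{lemma:CharHomo} gives
\[
g_x^{-1}(\CJ_\mfk(\Gamma_{g_y}\tau)) = -\sum_{k}\frac{x^k}{k!}\, g_x\bigl(\CJ_{\mfk+k}(\Gamma_{g_x^{-1}}\Gamma_{g_y}\tau)\bigr) = -\sum_k \frac{x^k}{k!}\, g_x\bigl(\CJ_{\mfk+k}(\Gamma_{xy}\tau)\bigr),
\]
then \eqref{eq:CanStGr2} rewrites each $g_x$ in terms of $f_x$, and the double sum $\sum_{k,k'}\frac{x^k(-x)^{k'}}{k!\,k'!}$ collapses to a Kronecker delta. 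What remains is just $f_x(\CJ_\mfk(\Gamma_{xy}\tau))$, which by \eqref{eq:CanStGr} is literally $\int D_1^{\mfk_2}K_{\mfk_1}(x,z)(\Pi_x\Gamma_{xy}\tau)(z)\,\d z = \int D_1^{\mfk_2}K_{\mfk_1}(x,z)(\Pi_y\tau)(z)\,\d z$. There is no place where an ``induction hypothesis on subtrees of $\Gamma_{g_y}\tau$'' enters; the argument closes because $f_x$ is \emph{defined} as an integral against $\Pi_x$, not recursively in the tree.

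Your worry about noncommutativity is also a non-issue here: all the coefficients $x^\ell$, $(-y)^k$ that appear are genuine scalars in $\mathbb K$ (not merely central in $\CA$), so the binomial resummation goes through exactly as in the commutative case. Your remark on the truncation order is correct and is precisely what the paper notes at the end of its proof.
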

The proof of this lemma may be found in Appendix~\ref{app:alg}.

\begin{remark}
	The maps are continuous by the same general arguments as in Section~\ref{sec:MetaIndExp}. The only extra thing to note is that by the usual arguments the ($t$-continuous) multiplication map $\cC^\infty(\R^d ; \CA) \times \cC^\infty(\R^d ; \CA) \to \cC^\infty(\R^d ; \CA)$ extends to a $t$-continuous map $\cC^\infty(\R^d ; \CA)^{\wotimes_\pi 2} \to \cC^\infty(\R^d ; \CA)$. This is used in the definition of the action of $\Pi_x$ on products as well as its action on root-renormalised trees. Finally, we also use Theorem~\ref{thm:ChngCoef} to conclude that applying $\Delta(\mfo_\rho)$ preserves continuity.
\end{remark}

\begin{remark}
	Here one should think of $\xi_\mfl$ as mollifications of the true distributional noises which are in $\CC^{|\mfl|_\s - \kappa }(\R^d ; \CA)$. 
	In the case of $q$-mezdons and fermions, one can take $\kappa = 0$ since we have ``$L^{\infty}$'', uniformly bounded regularity estimates on the noise, whereas for bosons we need to take some $\kappa > 0$ arbitrarily small since we need to use a Kolmogorov argument to turn moment estimates into an almost surely finite regularity estimate.
	See, for instance, \cite[Lemma~4.2]{CHP23} for a regularity estimate in the fermionic case. The same estimates also apply to the mezdonic case.
\end{remark}

\begin{theorem}
	For any admissible choice of noises $\xi_\mfl$ and $|\mft|_\s$-regularising kernels $K_{\mft}$, $Z$ is a model.
\end{theorem}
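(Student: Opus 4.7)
The proof proceeds along the same broad lines as in the scalar case of \cite{Hai14,Br18}, but with careful tracking of the $\CA$-bimodule and $t$-continuity structure; since every map in the construction is built so as to respect these, much of the work reduces to verifying algebraic identities and then reading off analytic bounds. I would split the argument into (i) the algebraic identities defining a model, and (ii) the analytic bounds on $\Pi$ and $\Gamma$.

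For the algebraic side, I would first check that $\Gamma_{xy}$ lies in the structure group $G(\cR)$, which is immediate from $\Gamma_{xy} = \Gamma_{g_x}^{-1} \circ \Gamma_{g_y}$ together with the fact that $g_x$ is, by construction via \eqref{eq:CanStGr2}, the inverse of $f_x$ in $\CG(\CH^+)$, so that Lemma~\ref{lemma:CharHomo} yields both $\Gamma_{xx} = \bone$ and the cocycle relation $\Gamma_{xy}\Gamma_{yz} = \Gamma_{xz}$ at once. The intertwining identity $\Pi_y = \Pi_x \circ \Gamma_{xy}$ is proved by induction over the tree grammar: for $X^k$ and $\Xi_\mfk$ it is direct from \eqref{eq:CanMod} and the definition of $\Gamma_g$; for $\CI_\mfk \tau$ one uses \eqref{eq:CanMod} together with the defining recursions \eqref{eq:CanStGr}--\eqref{eq:CanStGr2} and the explicit formulae of Lemma~\ref{lemma:StGrForm}, noting that the ``polynomial'' contributions $\sum_\ell \tfrac{X^\ell}{\ell!} f_{\bullet}(\CJ_{\mfk+\ell}\tau)$ rearrange precisely so as to cancel with the correction terms appearing in $\Gamma_{xy} \CI_\mfk \tau$. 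For products of trees that are not root-renormalised, the identity follows because both sides are multiplicative. For root-renormalised trees, the identity is inherited from the non-root-renormalised case via the decomposition \eqref{eq:RepGraft} and the fact that each $\Delta(\mfo_\rho)$ is an $\CA$-bimodule morphism, hence commutes with the $\CA$-bimodule structure of $\Gamma_{xy}$ and of $\Pi_x$.

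For the analytic side, I would verify the two bounds in \eqref{ModDefRel} inductively in the ``complexity'' order $<_\cT$. The base cases $X^k$ and $\Xi_\mfk$ follow from smoothness of the mollified noise $\xi_\mfl$ and the polynomial scaling of $|y-x|_\s^k$. The key inductive step is $\CI_\mfk \tau$: using the splitting $K_\mft = \sum_n K_n + R$ of Definition~\ref{def:SingKern}, combined with the cancellation provided by the subtraction of the Taylor jet $\sum_\ell \tfrac{(y-x)^\ell}{\ell!} f_x(\CJ_{\mfk+\ell}\tau)$, the bound on $\mfp\bigl((\Pi_x \CI_\mfk\tau)(\CS^\lambda_{\s,x}\eta)\bigr)$ follows from exactly the same kernel estimate used in the scalar case, with the scalar absolute value replaced by a seminorm $\mfp \in \mfP$; it is the $m$-convexity of the seminorms that allows the kernel bounds to pass through $\mfp$ in the multiplicative step and allows the induction to close. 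For $\Gamma_{xy}$, the explicit formulae of Lemma~\ref{lemma:StGrForm} reduce everything to estimating truncated Taylor remainders of $K_\mfk$ tested against $\Pi_y \tau$, which is again the scalar bound applied seminorm-by-seminorm. For products of non-root-renormalised trees, the bounds combine multiplicatively using submultiplicativity of each $\mfp_\alpha$ on $T_\alpha$; for root-renormalised trees, one uses \eqref{eq:RepGraft} and the $t$-continuity of $\Delta(\mfo_\rho)$ as an $\CA$-multilinear map on the $t$-projective tensor power of $\cA_q$ (so in particular its boundedness in the $\mfp$-seminorms by a finite constant) to reduce to the non-root-renormalised case.

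The main obstacle I expect is precisely the treatment of root-renormalised trees: since $\Pi_x$ is not multiplicative there and $\Delta(\mfo_\rho)$ is only $\mathbb{K}$-linear (not an algebra homomorphism), the standard inductive proof has to be reformulated through the grafting representation \eqref{eq:RepGraft}, and one must check both the intertwining property and the homogeneity bounds are preserved. This is where the structural assumption that each $\Delta(\mfo)$ be a $t$-continuous $\CA$-bimodule morphism in the $t$-projective topology does all the heavy lifting: it simultaneously guarantees that $\Pi_x$ remains an $\CA$-bimodule morphism and that the additional factor in the analytic estimate is merely an $\mfo$-dependent constant that can be absorbed into $\|\Pi\|_{\gamma;\K,\mfp}$ using the finite-dimensionality of the sector $\CT_\gamma(\cR)$. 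Once this is in place, the standard inductive argument closes the bounds on each compact $\K \Subset \R^d$, yielding the required $\vvvert Z\vvvert_{\gamma;\K,\mfp} < \infty$ for every $\mfp \in \mfP$ and every $\gamma > 0$.
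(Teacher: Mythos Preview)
Your proposal is correct and broadly parallels the paper's argument, but there is one notable difference worth flagging, and one minor inaccuracy.

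\textbf{Difference in the algebraic step.} For the intertwining identity $\Pi_y = \Pi_x \circ \Gamma_{xy}$, the paper does not run the direct induction over the tree grammar that you sketch. Instead it introduces the ``stationary'' model $\PPi$ defined by \eqref{eq:StatMod}--\eqref{eq:StatMod2} and proves (Lemma~\ref{lemma:StatMod}) that $\Pi_x = \PPi \circ \Gamma_{g_x}$ for every $x$. The intertwining then follows in one line: $\Pi_x \Gamma_{xy} = \PPi \Gamma_{g_x} \Gamma_{g_x}^{-1} \Gamma_{g_y} = \PPi \Gamma_{g_y} = \Pi_y$. This factorisation through $\PPi$ handles all tree types uniformly, including root-renormalised trees, since $\PPi$ is defined on those via \eqref{eq:StatMod2} in exactly the same way as $\Pi_x$. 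Your inductive route would also work, but is longer and requires the case-by-case checking you describe; the paper's approach buys you the identity for free once Lemma~\ref{lemma:StatMod} is established.

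\textbf{Minor inaccuracy.} Your parenthetical that ``$g_x$ is, by construction via \eqref{eq:CanStGr2}, the inverse of $f_x$ in $\CG(\CH^+)$'' is not quite correct: comparing the recursion for $g_x(\CJ_\mfk\tau)$ in \eqref{eq:CanStGr2} with the inverse formula in Lemma~\ref{lemma:CharHomo}, one sees that $g_x$ lacks the $\Gamma_{f_x^{-1}}$ inside the argument and is only a \emph{twisted} or partial inverse (the paper says as much, comparing it to the positive twisted antipode of \cite{BHZ19}). This does not affect your argument, since the cocycle relation and $\Gamma_{xx}=\bone$ follow directly from the definition $\Gamma_{xy} = \Gamma_{g_x}^{-1}\Gamma_{g_y}$ and the group structure of Lemma~\ref{lemma:CharHomo}, regardless of any relation between $g_x$ and $f_x$.

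For the analytic bounds your outline matches the paper's: induction, with the $\CI_\mfk\tau$ step handled via the Taylor-remainder structure (the paper cites \cite[Lemmata~5.19 and~5.21]{Hai14} directly), and products via multiplicativity and submultiplicativity of seminorms. Your explicit discussion of root-renormalised trees via the $t$-continuity of $\Delta(\mfo_\rho)$ is more detailed than the paper, which is terse at that point, but the mechanism you identify is the right one.
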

\begin{proof}
	The algebraic constraint follows directly from the fact that $\Gamma_{xy} = \Gamma_{g_x}^{-1} \circ \Gamma_{g_y}$ and that we can write $\Pi_x$ as $\PPi \circ \Gamma_{g_x}$ for a ``stationary'' model $\PPi$ defined below, see \eqref{eq:StatMod} and Lemma~\ref{lemma:StatMod}.

	For the analytic bounds, we proceed, as always, by induction. Since we have assumed that the model is smooth, the bounds for $\Xi_{\mfk}$, $\mfk \in \mfL^- \times \N^d$, as well as $X^k$ follow trivially. This is true for product $\tau\bar\tau$ as both $\Pi_x$ and $\Gamma_{xy}$ are multiplicative per definitionem. 

	For $\Pi_x \CI_{\mfk} \tau$, this follows essentially from the definition. For negative regularities, the scaling estimates are a consequence of the smoothness of the model, whereas for positive regularities, it follows as we are subtracting the truncated Taylor expansion at $x$, see the proof of \cite[Lemma~5.19]{Hai14} and replace the (implicit) norm of $\R$ with a seminorm $\mfp \in \mfP$. Concerning the bound for $\Gamma_{xy} \CI_{\mfk} \tau$ we have
	\begin{equ}
		\Gamma_{xy} \CI_{\mfk} \tau - \CI_{\mfk} \tau = \CI_{\mfk} \left( \Gamma_{xy} \tau - \tau \right) + \sum_{\ell} \frac{X^\ell}{\ell!} \int K_{\mfk + \ell, |\tau|_{|\s|}} (x,y;z) (\Pi_y \tau)(z) \d z \; . 
	\end{equ}
	The bound on the first term follows from the induction, whilst for each term in the sum it follows from the fact that $K_{\mfk + \ell, |\tau|_{|\s|}}$ is a truncated Taylor expansion. For the details, see \cite[Lemma~5.21]{Hai14}.
\end{proof}



As we mentioned above, one can also define a ``stationary'' model $\PPi \colon \CT \to \cC^\infty(\R^d; \CA)$ given by
\begin{equs}[eq:StatMod]
	\left(\PPi \Xi_{\mfl} \right)(y) & = \xi^{(\eps)}_{\mfl}(y) \; , \\
	\left(\PPi X^k \right)(y)& = y^k \bone_{\CA} \; , \\
	\left(\PPi \left( \CI_\mfk \tau \right)  \right)(y)&= \int D^{\mfk_2}_1 K_{\mfk_1} (y,z) \left(\PPi \tau \right)(z) \d z \; , 
\end{equs}
for $\tau, \bar\tau$ not root-renormalised 
we set 
\begin{equ}
	\left(\PPi  \left(\tau \bar\tau \right) \right)(y)  = \left(\PPi \tau  \right)(y)\left(\PPi \bar\tau \right)(y) 
\end{equ}
and for a root-renormalised tree $\tau$
\begin{equs}[eq:StatMod2]
	\PPi \tau &= \PPi \Bigl( \Bigl(\CF \otimes \bigotimes_{v \in N_T}  X^{\mfn(v)} \Bigr)  \curvearrowright \tau_\rho \Bigr) \eqdef \\
	&\eqdef \Delta(\mfo_\rho) \Bigl( \left(\PPi\right)^{\wotimes_\pi n} \left(\CF \right)  \Bigr) \prod_{v \in N_T} \PPi X^{\mfn(v)} \; .
\end{equs}
all of which are extended $\CA$-bilinearly to $\CT$.

\begin{lemma}
\label{lemma:StatMod}
	For all $x \in \R^d$, $\Pi_x = \PPi \circ \Gamma_{g_x}$. 	
\end{lemma}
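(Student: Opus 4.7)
The plan is to prove the identity by induction on the depth of the tree, following the inductive construction of both $\Pi_x$ and $\PPi$, as described in Section~\ref{sec:MetaIndExp}. The base cases and the multiplicative and root-renormalised cases are essentially immediate once we unravel the definitions, so the only substantial algebraic point lies in handling $\CI_\mfk \tau$, where the noncommutative Taylor-like subtraction at $x$ in $\Pi_x$ must be reconciled with the expansion by $g_x$ applied via $\Gamma_{g_x}$.

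For the base cases, $\Gamma_{g_x} \Xi_{(\mfl,k)} = \Xi_{(\mfl,k)}$ so the noise case is immediate from \eqref{eq:CanMod} and \eqref{eq:StatMod}. For $X^k$, the recursive definition of $\Gamma_{g_x}$ combined with $g_x(X_\mu) = -x_\mu \bone_{\CA}$ gives $\Gamma_{g_x} X^k = (X-x)^k$ (using that $g_x(X_\mu)$ is central), and then $\PPi (X-x)^k(y) = (y-x)^k \bone_{\CA} = (\Pi_x X^k)(y)$. The products case $\tau\bar\tau$ (for non-root-renormalised $\tau,\bar\tau$) is immediate from the multiplicativity of all three of $\Pi_x$, $\PPi$, and $\Gamma_{g_x}$.

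The main obstacle, and the heart of the argument, is the integration case $\CI_\mfk \tau$. Using \eqref{eq:GamDef}, we compute
\begin{equs}
\PPi \Gamma_{g_x}(\CI_\mfk \tau)(y) &= \int D_1^{\mfk_2} K_{\mfk_1}(y,z)\bigl(\PPi \Gamma_{g_x} \tau\bigr)(z)\,\d z + \sum_\ell \frac{y^\ell}{\ell!}\, g_x\bigl(\CJ_{\mfk+\ell}\tau\bigr)\;,
\end{equs}
and apply the induction hypothesis $\PPi\circ\Gamma_{g_x}\tau = \Pi_x\tau$ in the integrand. Comparing with the definition \eqref{eq:CanMod} of $\Pi_x \CI_\mfk\tau$, it remains to verify the identity
\begin{equs}
\sum_\ell \frac{y^\ell}{\ell!}\, g_x\bigl(\CJ_{\mfk+\ell}\tau\bigr) = -\sum_\ell \frac{(y-x)^\ell}{\ell!}\, f_x\bigl(\CJ_{\mfk+\ell}\tau\bigr)\;.
\end{equs}
Substituting the recursion \eqref{eq:CanStGr2} defining $g_x$ on $\CJ_{\mfk+\ell}\tau$ into the left-hand side, and reindexing the resulting double sum by $k = \ell+m$, the multinomial identity $\sum_{\ell+m=k}\frac{y^\ell (-x)^m}{\ell! m!} = \frac{(y-x)^k}{k!}$ yields exactly the right-hand side. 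This is where the structural choice of $g_x$ as a partial inverse of $f_x$ is used.

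For the root-renormalised case, we use the unique representation from Proposition~\ref{prop:GraftRep}, which writes $\tau = (\CF \otimes \bigotimes_{v\in N_T} X^{\mfn(v)}) \curvearrowright \tau_\rho$ with $\CF \in \CH_0^{\wotimes_\pi |\tau_\rho|}$ and $\tau_\rho$ fully renormalised. The definition of $\Gamma_g$ on root-renormalised trees applies $\Gamma_g$ inside the grafting to $\CF$ and the $X^{\mfn(v)}$ factors (and leaves $\tau_\rho$ untouched), so using the multiplicativity and $t$-continuous extension of $\Gamma_{g_x}$ to $\CH_0^{\wotimes_\pi |\tau_\rho|}$, the inductive hypothesis applied componentwise to the entries of $\CF$ and to each $X^{\mfn(v)}$, together with the definitions \eqref{eq:StatMod2} of $\PPi$ and of $\Pi_x$ on root-renormalised trees, gives
\begin{equs}
\PPi\Gamma_{g_x}\tau &= \Delta(\mfo_\rho)\Bigl( \PPi^{\wotimes_\pi n}\Gamma_{g_x}^{\wotimes_\pi n}(\CF) \Bigr) \prod_v \PPi\Gamma_{g_x}X^{\mfn(v)} \\
&= \Delta(\mfo_\rho)\Bigl( \Pi_x^{\wotimes_\pi n}(\CF) \Bigr) \prod_v \Pi_x X^{\mfn(v)} = \Pi_x \tau\;.
\end{equs}
The $t$-continuity of all maps involved (noted throughout the preceding definitions) is what allows the componentwise identity on simple tensors in $\CF$ to extend to all of $\CH_0^{\wotimes_\pi n}$, completing the induction.
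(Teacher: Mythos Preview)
Your proof is correct and follows essentially the same approach as the paper's own proof: an induction over the tree structure, with the only nontrivial step being the case $\CI_\mfk\tau$, where both you and the paper substitute the definition \eqref{eq:CanStGr2} of $g_x$ and collapse the resulting double sum via the multinomial identity $\sum_{\ell+m=k}\frac{y^\ell(-x)^m}{\ell!\,m!}=\frac{(y-x)^k}{k!}$. The paper's proof is slightly terser---it explicitly checks only $\Xi_\mfl$, $X_\mu$, and $\CI_\mfk\tau$, leaving the product and root-renormalised cases implicit via the general extension pattern of Section~\ref{sec:MetaIndExp}---whereas you spell these out, but the substance is the same.
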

We leave the proof to Appendix~\ref{app:alg}.


\begin{definition}[Renormalised Model]
Given a preparation map $R$ and a smooth canonical model $Z = \big(\Pi,\Gamma\big)$ we define a corresponding renormalised model $Z^R \eqdef (\Pi^{R}, \Gamma^{R})$ by setting  
	\begin{equs}\label{eq:renormalised_model}
		\Pi^{R}_x &\eqdef \Pi_x \circ M_R \; ,\\
		\Gamma^{R}_{xy} &\eqdef \Gamma^{-1}_{g^{R}_x} \circ \Gamma_{g^{R}_y} \; .
	\end{equs}
\end{definition}

\begin{proposition}
	$Z^R$ as given in \eqref{eq:renormalised_model} is indeed a model for $\cT$ in the sense of Definition~\ref{def:model}.
\end{proposition}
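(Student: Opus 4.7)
The plan is to verify each of the four conditions in Definition~\ref{def:model} for the pair $(\Pi^R, \Gamma^R)$. The algebraic relations will fall out of the cointeraction property (Proposition~\ref{prop:CoInt}) together with the representation $\Pi_x = \PPi \circ \Gamma_{g_x}$ from Lemma~\ref{lemma:StatMod}, while the analytic estimates will follow from the structural properties of preparation maps in Definition~\ref{def:RtRen}.

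First I would handle the purely algebraic statements. Since $g^R_x = g_x \circ M_R^+$ is a character (this is noted right after Definition~\ref{def:RenChar}, using that $M_R^+$ is multiplicative and preserves the $|\,\bigcdot\,|_+$-grading), Lemma~\ref{lemma:CharHomo} gives us that $\Gamma_{g^R_x} \in G$, so $\Gamma^R_{xy} = \Gamma_{g^R_x}^{-1} \circ \Gamma_{g^R_y}$ lies in $G$. The identity $\Gamma^R_{xx} = \bone_T$ and the cocycle relation $\Gamma^R_{xy} \Gamma^R_{yz} = \Gamma^R_{xz}$ are then immediate from cancellation. Next, $\Pi^R_x = \Pi_x \circ M_R$ is a $t$-continuous $\CA$-bimodule morphism as a composition of such; note that $M_R$ inherits the bimodule property from $M_R^\circ$ and $R$, both of which are $\CA$-bimodule morphisms by assumption.

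The heart of the verification is the consistency relation $\Pi^R_y = \Pi^R_x \circ \Gamma^R_{xy}$. Using Lemma~\ref{lemma:StatMod} to write $\Pi_z = \PPi \circ \Gamma_{g_z}$ for every $z$, and applying the cointeraction corollary $\Gamma_{g} \circ M_R = M_R \circ \Gamma_{g^R}$, I compute
\begin{equs}
\Pi^R_x \circ \Gamma^R_{xy} &= \PPi \circ \Gamma_{g_x} \circ M_R \circ \Gamma_{g^R_x}^{-1} \circ \Gamma_{g^R_y} \\
&= \PPi \circ M_R \circ \Gamma_{g^R_x} \circ \Gamma_{g^R_x}^{-1} \circ \Gamma_{g^R_y} \\
&= \PPi \circ M_R \circ \Gamma_{g^R_y} = \PPi \circ \Gamma_{g_y} \circ M_R = \Pi^R_y\;,
\end{equs}
where the same cointeraction identity is applied in reverse at the penultimate step. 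This is precisely the place where the defining axiom (3) of a preparation map (commutation with the structure group) gets cashed in.

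It remains to verify the homogeneity bounds in \eref{ModDefRel}. For $a \in T_\ell$ with $\ell < \gamma$, property (5) of Definition~\ref{def:RtRen} together with the inductive multiplicativity of $M_R^\circ$ says that $M_R a$ is a finite sum of trees $b_i \in T_{\ell_i}$ all of the same $|\,\bigcdot\,|_+$-degree as $a$, with $\ell_i \geqslant \ell$ (property (4)) and in fact $\ell_i \leqslant \ell + C$ for some $C$ depending only on the finite set $\CT^-$. The $t$-continuity of $R$ (and hence of $M_R$) gives $\sum_i \mfp_{\ell_i}(b_i) \lesssim \mfp_\ell(a)$ with an implicit constant depending on the finite-dimensional submodule generated by the $b_i$. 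Applying the bound for $\Pi$ to each $b_i$ and using $\lambda^{\ell_i} \leqslant \lambda^\ell$ for $\lambda \in (0,1]$ yields
$\mfp\bigl((\Pi^R_x a)(\CS^\lambda_{\s,x}\eta)\bigr) \lesssim \|\Pi\|_{\gamma;\K,\mfp}\, \mfp_\ell(a)\, \lambda^\ell$,
as required. The bound on $\Gamma^R$ is deduced analogously, using the explicit recursion \eqref{eq:GamDef} for $\Gamma_{g^R_x}$ and the fact that $g^R_x = g_x \circ M_R^+$ inherits from the bounds on $g_x$ the estimates $\mfp\bigl(g^R_x(\CJ_\mfk \tau)\bigr) \lesssim |x|^{|\tau|_\s + |\mfk|_\s}$ on a fixed compact, because $M_R^+$ sends trees of $|\,\bigcdot\,|_+$-degree $\beta$ into finite sums of trees of the same $|\,\bigcdot\,|_+$-degree. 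Combined, these give the usual triangle-inequality estimate for $\Gamma^R_{xy} = \Gamma_{g^R_x}^{-1} \circ \Gamma_{g^R_y}$ exactly as in the unrenormalised case.

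The only delicate point in this program is bookkeeping in the last step: one must check that the finite-dimensional enlargement of the sector caused by $R$ (guaranteed by items (4)--(6) of Definition~\ref{def:RtRen}) stays inside the structure space $\CT(\cR)$ so that the relevant seminorms $\mfp_{\ell_i}$ are actually available. This is however built into the construction of $\cT(\cR)$, whose generation rule explicitly closes under $R_\ell$. With that noted, all pieces line up and $(\Pi^R,\Gamma^R)$ is a model.
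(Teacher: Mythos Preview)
Your proof is correct and the algebraic half matches the paper's argument exactly: both establish $\Pi^R_x \circ \Gamma^R_{xy} = \Pi^R_y$ via the cointeraction corollary $\Gamma_g \circ M_R = M_R \circ \Gamma_{g^R}$, with the paper's computation being a more compressed version of yours (it writes $\Pi_x \circ \Gamma_{g_x}^{-1} \circ \Gamma_{g_y} = \Pi_y$ directly rather than passing through $\PPi$ explicitly, but Lemma~\ref{lemma:StatMod} is exactly what makes that step work).

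For the analytic bounds you and the paper take mildly different routes. The paper argues by induction on the preorder $<_\cT$: it splits $M_R\tau = M_R^\circ(R\tau - \tau) + M_R^\circ\tau$, handles the first piece by the induction hypothesis (each tree in $R\tau - \tau$ is root-renormalised, so $M_R^\circ = M_R$ there, and it lies strictly below $\tau$ in $<_\cT$ with strictly higher $|\,\bigcdot\,|_\s$), and handles the second by multiplicativity of $M_R^\circ$ and $\Pi_x$ together with the inductive definition on planted subtrees. You instead observe directly that $M_R a$ lands in a finite sum of sectors $T_{\ell_i}$ with $\ell_i \geqslant \ell$ and then invoke the already-established fact that the canonical model $Z$ satisfies the homogeneity bounds on all of $\CT(\cR)$, including trees carrying extended decoration. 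Your argument is shorter because it outsources the inductive work to the earlier theorem that $Z^{\can}$ is a model; the paper's version is more self-contained and makes the induction on $|\,\bigcdot\,|_\Xi$ and $|\,\bigcdot\,|_\s$ explicit. Both are valid, and neither yields materially more. Your treatment of the $\Gamma^R$ bound is sketched at the same level of detail as the paper's (which in fact only writes out the $\Pi^R$ case in the proof).
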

\begin{proof}
	Algebraically, this is indeed a model because of Proposition~\ref{prop:CoInt} as
	\begin{equs}
		\Pi_{x}^{R} \circ \Gamma^{R}_{xy} &= \Pi_x \circ M_R \circ \Gamma^{-1}_{g^{R}_x} \circ \Gamma_{g^{R}_y} = \Pi_x  \circ \Gamma^{-1}_{g_x} \circ \Gamma_{g_y} \circ M_R = \\
		&=\Pi_{y}^{R} \; .
	\end{equs}
	The analytic bounds are proven by induction with respect to the preorder $<_\cT$. Let $\tau$ be a tree. We can first rewrite $M_R\tau$ as  $M_R\tau = M^\circ_R ( R\tau - \tau ) + M_R^\circ \tau$. The first term satisfies per definitionem $R\tau - \tau <_{\cT} \tau$ and $\alpha \eqdef |\tau|_\s<|R\tau - \tau|_\s  $ thus by induction
	\begin{equs}
		\left\| \Pi^{(\eps)}_x M^\circ_R ( R\tau - \tau ) \right\|_{\alpha; \K, \mfp} &\leqslant \left\| \Pi_x M^\circ_R ( R\tau - \tau ) \right\|_{|R\tau - \tau|_\s; \K, \mfp} \lesssim \\ & \lesssim_{\Pi,R} \| R\tau- \tau\|_{< \alpha+\gamma;\mfp}  \lesssim_{R} \| \tau \|_{\alpha; \mfp} \; .
	\end{equs}
	The bound for the second term follows straightforwardly from the multiplicativity of $M_R^\circ$ and $\Pi_x$.
\end{proof}

\begin{remark}
	The renormalised characters $f_x^{R}$ satisfy the following inductive relations
	\begin{equs}
	f_{x}^{R}(\1) &= \bone_{\CA} \; , \\
	f_{x}^{R}(X_{\mu}) &=  x_\mu \bone_{\CA} \; ,  \\
	f_{x}^{R}(\tau\bar\tau) &=f_{x}^{R}(\tau)f_{x}^{R}(\bar\tau) \; , \\
	f_{x}^{R} \left( \CJ_{\mfk} \tau \right) &=
		  \int D^{\mfk_2}_1 K_{\mfk_1} (x,z) \left( \Pi^{R}_{x} \tau  \right)(z) \d z \; ,
\end{equs} 
and analogously for $g^R_x$, cf.\ \cite[Section~3.2]{Br18}.


\end{remark}

\begin{proposition}
\label{prop:NegModBnd}
	Let $\gamma \in \R$, $R,R'$ two preparation maps, and $Z,\overline{Z}$ two canonical models with the same integration kernels but different noise assignments. There exists a $k \in \R$ and compactum $\overline{\K} \supset \K$, s.t.\
	\begin{equs}
		\left\vvvert Z^R \right\vvvert_{\CT_\gamma ; \mfK, \mfp} \lesssim \left(1+\| \Pi^{R} \|_{\CT^- ; \overline\mfK , \mfp}\right)^k \; ,
	\end{equs}
	and
	\begin{equ}
		\left\vvvert Z^R ; \overline{Z}^{R'} \right\vvvert_{\CT_\gamma ; \mfK, \mfp} \lesssim \left(1+\| \Pi^{R}  \|_{\CT^- ; \overline\mfK , \mfp} \wedge  \| \overline{\Pi}^{R'} \|_{\CT^- ; \overline\mfK , \mfp}\right)^{k} \| \Pi^{R} -\overline{\Pi}^{R'}  \|_{\CT^- ; \overline\mfK , \mfp} \;.
	\end{equ}
	Here, how large $k$ and $\overline{\K}$ have to be chosen only depends on $\gamma$ and the subcritical rule $\cR$. 
	Note that the implicit constants above can depend on the seminorm $\mfp$.
\end{proposition}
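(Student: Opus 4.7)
The proof will proceed by induction over the preorder $<_\cT$, mirroring the strategy of \cite[Sec.~5]{Hai14} and \cite[Sec.~3]{Br18} but adapted to the $\CA$-valued setting. Since $\CT_\gamma$ is finite-dimensional, only finitely many tree types play a role, and an inductive argument is well-posed.

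The plan is first to reduce everything to bounds on $\Pi^R$ and then to bounds on the negative sector. Concretely, I would proceed as follows. For any tree $\tau$ appearing in $\CT_\gamma$, Proposition~\ref{prop:PosTreeBnd} reduces the estimate of $\|\Pi^R\|_{T_\alpha;\K,\mfp}$ for $\alpha>0$ to estimates on $\|\Pi^R\|_{\alpha;\overline{\K},\mfp}$ and $\|\Gamma^R\|_{\alpha;\overline{\K},\mfp}$, so it is enough to control $\Pi^R$ for trees of negative degree and $\Gamma^R$ in full. I then set up a simultaneous induction on $<_\cT$ for two quantities: $\|\Pi^R_x \tau\|_{|\tau|_\s;\K,\mfp}$ and the seminorms of $\Gamma^R_{xy}\tau$. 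Writing $M_R \tau = M_R^\circ(R\tau - \tau) + M_R^\circ \tau$ and noting from Definition~\ref{def:RtRen} that $R\tau-\tau <_\cT \tau$ with strictly better regularity, the first summand is handled by the induction hypothesis (since it is again a sum of trees in $\CT_\gamma$ by the construction of the regularity structure). The second summand is then multiplicative on non-root-renormalised trees and factors through $\Delta(\mfo_\rho)$ with unit norm on root-renormalised ones, so the bound reduces to controlling $\Pi_x$ on each factor, which is where the tree-level induction applies.

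For the tree induction, the base cases $\Xi_\mfl$ and $X^k$ are trivial: the former is bounded by $\|\Pi^R\|_{\CT^-;\overline{\K},\mfp}$, the latter by a polynomial in distances. Products are handled by multiplicativity of $\Pi_x$ and submultiplicativity of $\mfp$. The integration step $\CI_\mfk \tau$ is treated by Schauder-type estimates in the negative regularity regime (using the $\beta$-regularising kernel bounds from Definition~\ref{def:SingKern}) and by subtracting the truncated Taylor polynomial for positive regularities, so that the seminorm of $\Pi_x \CI_\mfk \tau$ tested against $\CS^\lambda_{\s,x}\eta$ acquires the correct power of $\lambda$. For $\Gamma^R$, I use the explicit recursion from Lemma~\ref{lemma:StGrForm} (now written for $\Gamma^R$, $f^R$, $g^R$) to reduce the estimate on $\Gamma^R_{xy}\CI_\mfk\tau$ to integrals of $\Pi^R_y \tau$ against Taylor-truncated kernels $K_{\mfk+\ell,|\tau|_\s}(x,y;\cdot)$; the standard kernel estimate (as in the proof of \cite[Lemma~5.21]{Hai14}) gives the required power of $|x-y|_\s$ while producing a polynomial dependence on $\|\Pi^R\|_{\CT^-;\overline{\K},\mfp}$. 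The case of products and of root-renormalised trees follows from the inductive representation \eqref{eq:scalarGraftRep} of Proposition~\ref{prop:GraftRep}, the multiplicativity of $\Gamma^R$, and the $t$-continuity of $\Delta(\mfo_\rho)$. This yields the first bound with $k$ equal to the maximum total multiplicity with which elements of $\CT^-$ can arise in any tree of $\CT_\gamma$ after applying $M_R$ once at each $\CI$-edge.

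For the Lipschitz estimate, I would run exactly the same induction in parallel with telescoping. At each inductive step where a product of two quantities $A_1 A_2$ involving the two models appears, I would split $A_1 A_2 - \bar A_1 \bar A_2 = (A_1-\bar A_1)A_2 + \bar A_1 (A_2-\bar A_2)$ and bound each summand using the non-difference induction hypothesis for one factor (yielding the polynomial factor $(1+\|\Pi^R\|_{\CT^-} \wedge \|\overline{\Pi}^{R'}\|_{\CT^-})^k$) and the difference induction hypothesis for the other. The identical structure of $Z^R$ and $\overline{Z}^{R'}$ (same integration kernels, only noise changes) ensures that the kernel-dependent pieces cancel in the differences, so that the ultimate source of ``difference'' is always $\|\Pi^R - \overline{\Pi}^{R'}\|_{\CT^-;\overline{\K},\mfp}$, applied once via the base case.

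The main obstacle will be the bookkeeping for root-renormalised trees, where the noncommutative extraction via $\Delta(\mfo_\rho)$ interacts nontrivially with the recursive formula for $\Gamma^R$, and one must verify that the bilinear morphism $\Delta(\mfo_\rho)$ preserves the polynomial-in-$\|\Pi^R\|_{\CT^-}$ structure of the bounds when composed with the inductively controlled factors. This is where the assumption that each $\Delta(\mfo)$ is a $t$-continuous $\CA$-bimodule morphism on $\CA^{\wotimes_\pi n}$ is essential; combined with Theorem~\ref{thm:ChngCoef}, it ensures that the seminorm estimates on the factors pass through $\Delta(\mfo_\rho)$ with only a $\mfp$-dependent multiplicative constant, which can be absorbed into the implicit constant of the final bound.
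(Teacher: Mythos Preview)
Your proposal is correct and hits the same key ingredients as the paper: Proposition~\ref{prop:PosTreeBnd} to dispose of positive trees, the kernel estimates of \cite[Lemma~5.21]{Hai14} for $\Gamma^R$, and telescoping for the difference bound. The organisation differs in two respects worth noting.

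First, the paper never unpacks $M_R$. It uses instead that $\Pi^R$, $f^R_x$, $\Gamma^R$ satisfy the \emph{same} recursive relations as the unrenormalised model (the remark after the definition of the renormalised model), so one runs the induction directly on those quantities. Your decomposition $M_R\tau = M_R^\circ(R\tau-\tau)+M_R^\circ\tau$ is inherited from the proof of the \emph{previous} proposition, where the goal was different: there one wanted bounds in terms of the unrenormalised $\Pi$ and $R$, not in terms of $\|\Pi^R\|_{\CT^-}$. Here, once $Z^R$ is known to be a model, negative trees are bounded tautologically by $\|\Pi^R\|_{\CT^-}$ and no decomposition is needed.

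Second, the paper uses a single induction on a total order $\prec$ based on tree size $|T|$ (refined by $|\tau|_\s$), not on $<_\cT$. This matters because the recursion for $\Gamma^R$ on $\CI_\mfk\tau$ requires control of $\Pi^R$ on $\tau$; one always has $|T_\tau|<|T_{\CI_\mfk\tau}|$, but $\tau <_\cT \CI_\mfk\tau$ can fail when $|\mfk|_\s\leq 0$ (which arises from derivative decorations produced by extraction--contraction). Your ``tree-level induction'' implicitly supplies the tree-size ordering, so the argument closes, but running a single induction on $\prec$ makes the well-foundedness transparent and yields the explicit growth $k(\ell+1)=2k(\ell)$, hence $k=2^n$ where $n$ is the number of scalar trees spanning the relevant sector.
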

\begin{proof}
	Let $V$ be the smallest subspace of $\cT$ containing $\CT_\gamma$ and every subtree of every tree in $\CT_\gamma$. $V$ is a sector as for a tree $\tau$, as all summands in $\Gamma\tau$ are subtrees of $\tau$. Let $\gamma'$ be larger than the maximal regularity of $V$.

	Let $\CF = \left\{ \tau_0, \dots, \tau_n \right\}$ be the finite number of scalar trees spanning $V$. Let $\prec$ be a total order on $\CF$, s.t.\ $\tau \prec \tau'$  if $|T| < |T'|$  or $|T|=|T'|$ and $|\tau|_\s < |\tau'|_\s$. W.l.o.g.\ we assume that $\tau_i \prec \tau_j$ if and only if $i < j$. Note that there necessarily  is a $l \leqslant d$, s.t.\ the initial segment $\{ \tau_0 , \dots, \tau_k \}$ consists of $\1$ and $X_\mu$. Furthermore, $\tau <_{\cT} \tau'$ implies that $\tau \prec \tau'$.

	Let $V^{(i)}$ be the subspace of $V$ spanned by $\{\tau_0, \dots, \tau_i\}$. Since any summands $\tau'$ appearing in the sum of $\Gamma \tau$ satisfies $\tau' \preceq \tau$, $V^{(i)}$ are subsectors of $V$. We proceed by induction on these sectors $V^{(i)}$. 

	For $i \leqslant l$, we have per definitionem $\| \Gamma^{R} \|_{V^{(i)} ; \K , \mfp}  = 1$ and $\| \Gamma^{R} - \overline{\Gamma}^{R'} \|_{V^{(i)} ; \K , \mfp}  = 0$, and analogously for $\Pi^{R}$. For the inductive step, suppose that
	\begin{equ}
		\vvvert Z \vvvert_{V^{(\ell)} ; \K , \mfp} \lesssim \left( 1 + \| \Pi^{R} \|_{ \CT^-; \K(\ell), \mfp} \right)^{k(\ell)}
	\end{equ}
	as well as the difference bound hold for $\ell \in [n]$ for some $k(\ell) \in \N$ and $\K(\ell) \Subset \R^d$. We will first consider the $\Gamma$-bounds.

	If $\tau_{\ell + 1}  = \Xi_{\mfk}$, then we have $\Gamma_{xy}^{R} \Xi_{\mfk} - \Xi_{\mfk} = 0$ for all $x,y \in \R^d$. Therefore,  $\| \Gamma^{R} \|_{V^{(\ell+1)}; \K , \mfp} = \| \Gamma^{R} \|_{V^{(\ell)}; \K , \mfp} $. The difference bound follows analogously, proving the assertion in this case with $k(\ell + 1) = k(\ell)$ and $\K(\ell+1) = \K(\ell)$.

	If $\tau_{\ell + 1} = \tau \tau'$, then $\tau,\tau' \in V^{(\ell)}$. Thus,  for $r \leqslant |\tau|_{\s}$, $s \leqslant |\tau'|_{\s}$ we have
	\begin{equs}
		\mfp_{r+s}\left( \Gamma_{xy}^{R} \tau_{\ell + 1}  \right) &\leqslant \mfp_{r+s}\left( \Gamma_{xy}^{R} \tau   \Gamma_{xy}^{R} \tau' \right) \leqslant \\
		& \leqslant \left\| \Gamma^{R}_{xy} \right\|_{V^{(\ell)} ; \K(\ell), \mfp}^2 \mfp_{|\tau|_\s+|\tau'|_\s}(\tau \tau') |x-y|_{\s}^{|\tau|_\s+|\tau'|_\s - r - s} \lesssim \\
		& \lesssim \left( 1 + \| \Pi^{R} \|_{ \CT^-; \K(\ell), \mfp} \right)^{2k(\ell)} \mfp_{|\tau_{\ell+1}|_\s}(\tau_{\ell+1}) |x-y|_{\s}^{|\tau_{\ell+1}|_\s -r -s} \; ,
	\end{equs}
	thus
	\begin{equ}
		\left\| \Gamma^{R}\right\|_{V^{(\ell+1)} ; \K , \mfp} \lesssim \left( 1 + \| \Pi^{R} \|_{ \CT^-; \K(\ell) , \mfp} \right)^{k(\ell+1)}
	\end{equ}
	with $k(\ell+1) = 2k(\ell)$. Here we used that $\Gamma_{xy}^{R} \otimes \Gamma_{xy}^{R}$ extends to a $t$-continuous linear map on the $t$-projective tensor product in the second inequality. The difference bound is proved analogously.

	Finally, suppose that $\tau_{\ell + 1} = \CI_{\mfk} \tau$ with $\tau \in V^{(\ell)}$. To prove the $\Gamma$-bound it suffices to show that we can bound $\left\| f_x^{R} \big|_{V^{(\ell + 1)}} \right\|_{\mfp}$ in terms of $\left( 1 + \| \Pi^{R} \|_{\CT^- ; \K(\ell + 1), \mfp} \right)^{k(\ell)+1}$
	as
	\begin{equ}
		\Gamma_g \left( \CI_\mfk \tau \right) = \CI_\mfk \left(\Gamma_g \tau\right) + \sum_{m \in \N^d} \frac{X^m}{m!} g\left( \CJ_{\mfk + m} \tau \right) \; ,
	\end{equ}
	and the first term is directly bounded by the induction assumption. We use another induction to prove the $f_x^{R}$-bound. The cases other than $\CI_{\mfk}\tau$ follow as above. For the latter, we use that
	\begin{equ}
		f_{x}^{R} \left( \CJ_{\mfk} \tau \right) =  \int D^{\mfk_2}_1 K_{\mfk_1} (x,z) \left( \Pi^{R}_{x} \tau  \right)(z) \d z
	\end{equ}
	which means that
	\begin{equ}
		\sup_{x \in \K(\ell)} \left\| f_{x}^{R} \left( \CJ_{\mfk} \tau \right) \right\| \lesssim \left\| \Pi^{R} \right\|_{V^{(\ell)} ; \K(\ell) , \mfp } \mfp_{|\tau|_\s}( \tau )
	\end{equ}
	by \cite[Equation~5.40]{Hai14}. The difference bound follows analogously.

	Now for the $\Pi$-bounds, if $\tau_{\ell+1} \in \CT^-$, then seminorm of $\Pi_x^{R} \tau_{\ell+1}$ is already controlled by $\left\| \Pi^{R} \right\|_{\CT^- ; \K(\ell), \mfp}$. On the other hand, if $|\tau_{\ell+1}|_{\s} > 0$, then by Proposition~\ref{prop:PosTreeBnd}, the seminorm of $\Pi_x^{R} \tau_{\ell+1}$ is controlled by
	\begin{equ}
		\| \Gamma^{R} \|_{V^{(\ell)} ; \K(\ell+1), \mfp}  \| \Pi^{R} \|_{V^{(\ell)} ; \K(\ell+1), \mfp} \lesssim \left( 1+ \| \Pi^{R} \|_{\CT^- ; \K(\ell+1), \mfp} \right)^{2 k(\ell)} \; ,
	\end{equ}
	where $\K(\ell+1)$ is the $1$-fattening of $\K(\ell)$. One bounds the difference analogously. Thus, the assertion holds for $Z^R$ with $k(\ell+1) = 2 k (\ell)$ and $\K(\ell+1)$ as above. The difference bound follows from the same proposition.

	Overall, this means that at worst
	\begin{equ}
		\vvvert Z^R \vvvert_{\CT_\gamma ; \K , \mfp} \lesssim \left(1+\| \Pi^{R} \|_{\CT^- ; \overline{\mfK} , \mfp}\right)^{2^n}
	\end{equ}
	with $\overline{\K}$ being the $n$-fattening of $\K$. The same holds for the difference bound.

\end{proof}

Most importantly, the above results hold for the extraction-contraction renormalisation described in Section~\ref{sec:ExtContRen}.

\begin{remark}
\label{rem:MixAlg1}
	When we are dealing with systems describing mixtures of bosons, with algebra of random variables $\CA_B = L^0(\Omega, \mu)$, and a set of different particles described by a locally $m$-convex algebra $\CA$, the above results continue to hold. However, instead of viewing the resulting object as an $\CA_B \wotimes \CA$-regularity structure, we shall describe it as a fixed $\CA$-regularity structure with a random model. Proposition~\ref{prop:NegModBnd} then continues to hold pathwise. 
\end{remark}

\section{Stochastic Estimates}\label{sec:StochEst}
In this section, we investigate the convergence of the BPHZ renormalised lift of a mollified  $q$-$\mfH$-white noise in the rough limit where the mollification is removed. 
Here $\mfH$ is a Sobolev space of potentially mixed regularity in spatial and temporal variables. 

The core of our strategy involves obtaining $q$-Gaussian stochastic estimates by applying\footnote{For the purposes of our argument, the diagrammatic stochastic estimates of \cite{CH16} would also serve our purposes.} \cite{HS24} to carefully designed commutative regularity structures with $q=1$ \slash bosonic Gaussian noise. 

The key observation underlying this strategy is that -- thanks to the isomorphism between $\CL^{2}(\CA_{q}(\mfH),\omega_{q})$ and $\mathcal{F}_{q}(\mfH)$ for any $q \in [-1,1]$ -- estimates on second moments of the model on a given bare tree reduce to certain linear combinations of $q$-symmetrised $L^{2}$ estimates on deterministic kernels of multiple space-time arguments. 
The particular deterministic kernels that appear in this argument are independent of $q$. However, since the Fock spaces $\mathcal{F}_{q}(\mfH)$ are different for each $q$, how these kernels determine the desired second moments does vary with $q$. 
For instance, the symmetrisation of kernels is $q$-dependent, and the algebraic rule for how a product of noises decomposes into homogeneous chaoses is also $q$-dependent.

To carry out the strategy mentioned at the beginning of the section, it will be useful to introduce an intermediate algebra $\mfA_N(\mfH)$ which we call the (truncated) ``Fock Space algebra''.  
This algebra is not suitable for solving equations, as it is a truncated algebra with nilpotent elements; however, it is useful as a bookkeeping tool for the aforementioned deterministic kernels.
Estimates on models on this $\mfA_N(\mfH)$-regularity structure efficiently store estimates on these deterministic kernels in a way that they can be automatically turned into estimates in $\CL^{2}(\CA_{q}(\mfH),\omega_{q})$ for the corresponding $q$-Gaussian models. For $q \in [-1,1)$ this also gives estimates in $\CA_{q}(\mfH)$.  
See Corollary~\ref{cor:FocktoQbound} for a precise statement of this. 

As alluded to above, we prove estimates for models on the $\mfA_N(\mfH)$-regularity structure by repeatedly appealing to $q=1$ stochastic estimates from \cite{HS24} for BPHZ models on carefully designed trees \dash see Theorem~\ref{thm:BPHZ}. 

In this section, we shall give a simple criterion that lets us conclude the existence of renormalised models when built on top of $q$-$\mfH$-white noise when $\mfH$ is a Sobolev space of potentially mixed regularity.

In Section~\ref{sec:MixedRegSob}, we define mixed regularity Sobolev spaces and define the simple regularity parameters that we can use as  a criterion for our BPHZ theorem. 
Readers who are primarily concerned with a $q$-space-time white noise, that is, when $\mfH$ is precisely the $L^{2}$ over space-time, can skip Section~\ref{sec:MixedRegSob}.

In Section~\ref{subsec:FockSpaceAlg} we introduce the  Fock space algebras $\mfA_N(\mfH)$ described above and introduce the map $\iota_{q}$ which continuously maps the Fock space algebra into $\cA_{q}(\mfH)$ for $q \in [-1,1)$.
In Section~\ref{subsec:FockSpaceAlgtoRegStruc} we introduce the relevant models on $\mfA_N(\mfH)$-regularity structures.
Finally, in Section~\ref{sec:ConvBPHZ} we show how to obtain estimates on $\mfA_N(\mfH)$-regularity structures model using \cite{HS24}. We then show they yield the desired bounds on the corresponding $q$-Gaussian models. 

\subsection{Mixed Regularity Sobolev Spaces}
\label{sec:MixedRegSob}

For the rest of the section we fix  $n,m \in \N$ and two sequences of dimension $\mfn \in \N^n$, $\mfm \in \N^m$ and corresponding scalings  $\s_1 \in \R_{\geqslant 1}^n$ and $\s_2 \in \R_{\geqslant 1}^m$. Furthermore, let $d_1 \eqdef |\mfn|$, $d_2 \eqdef |\mfm|$, $d \eqdef d_1 + d_2$, and $\s \in \R^d_{\geqslant 1}$ be obtained by concatenating the entries of $\s_1$ and $\s_2$. 

Now, let $s_1 \in \R^n$ and $s_2 \in \R^m$ be two sequences of regularities. Let
\begin{equ}
	H^{s_1, s_2}\left( \R^\mfn \times \T^\mfm\right) \eqdef \bigwotimesa_{i = 1}^n H^{s_1(i)} \left(\R^{\mfn(i)}\right) \wotimes_\alpha \bigwotimesa_{j = 1}^m H^{s_2(j)} \left( \T^{\mfm(j)} \right) \; .
\end{equ}
A straightforward scaling argument then shows that the $q$-$H^{s_1, s_2}\left( \R^\mfn \times \T^\mfm\right)$-white noise for $q \in [-1,1)$ is an element of (resp.\ when $q = 1$ a.s.\ an element of) $\CC_{\s}^{\alpha}(\R^{d_1} \times \T^{d_2})$ (resp.\ of $\CC_{\s}^{\alpha-}(\R^{d_1} \times \T^{d_2})$), where
\begin{equ}
	\alpha(s_1,s_2) \eqdef \sum_{i = 1}^n \mfs_1(i)\left( - s_1(i) - \frac{\mfn(i)}{2}\right) \wedge 0 + \sum_{j = 1}^m \mfs_2(j)\left( - s_2(j) - \frac{\mfm(j)}{2}\right) \wedge 0
\end{equ}
\begin{remark}
	In fact, if all $-s_1(i) - \frac{\mfn(i)}{2} > 0$ and $-s_2(j) - \frac{\mfm(j)}{2} > 0$, then $\xi_q \in \CC^{\alpha}_{\s}(\R^{d_1} \times \T^{d_2})$ with
	\begin{equ}
		\alpha \eqdef \min_{i \in [n]} \mfs_1(i)\left( - s_1(i) - \frac{\mfn(i)}{2}\right)  \wedge \min_{j \in [m]} \mfs_2(j)\left( - s_2(j) - \frac{\mfm(j)}{2}\right) \; .
	\end{equ}
	However, for the sake of convenience, we shall only consider negative noises.
\end{remark}

\subsection{The \TitleEquation{\cA_q}{Aq}-Regularity Structures and Canonical Models}

Let $\spacetime \eqdef \R^{d_1} \times \T^{d_2}$. We now fix a finite set of positive and negative symbols $\mfL^{\pm}$. For each $\mfl \in \mfL^-$ we choose a mixed regularity Sobolev space $\mfH_\mfl \eqdef H^{s^\mfl_1, s^\mfl_2}\left(\R^{\mfn} \times \T^{\mfm}\right)$, s.t.\ $\alpha_\mfl \eqdef \alpha(s^\mfl_1, s^\mfl_2) < 0$, and set $\xi_q^\mfl$ to be the $q$-$\mfH_\mfl$-white noise. If $q = -1$, $\bXi_{\mfl}$ denotes the components of the extended $-1$-$\mfH_\mfl$-white noise and $\bPsi_{\mfl}$ those of the canonical $\mfH$-Dirac noise with covariance $U$. We assume that $U$ commutes with the group of translations on $\R^{d_1} \times \T^{d_2}$ and that $\kappa$ is the extension of the usual complex conjugation.

We also define $\mfH \eqdef \bigoplus_{\mfl \in \mfL^-} \mfH_\mfl$ and for $q \in [-1,1)$ let $\cA_q \eqdef \cA_q(\mfH)$.

\begin{remark}
	For $f_\mfl \in \mfH_\mfl$, $\xi_q^{\mfl}(f_\mfl)$ can naturally be viewed as an element of $\cA_q$, by identifying it with $\xi_q^{\mfH}(0, \dots, 0, f_{\mfl}, 0, \dots, 0)$, where $\xi_q^{\mfH}$ is the $q$-$\mfH$-white noise. Analogously, we can thus also view $\xi_q^\mfl$ as an element of $\CC^{\alpha_\mfl}_{\s}(\spacetime)$. 
\end{remark} 

We now make an important structural assumption. 
\begin{assumption}
\label{as:ScalarV}
    For each $\mft \in \mfL^+$, we choose a $\beta_{\mft}$-regularising, \textbf{scalar-valued}, translationally invariant kernel $K_\mft \colon \R^d \setminus \{0\} \to \mathbb{K}$ with $\beta_{\mft} > 0$.    
\end{assumption}
\begin{remark}
    We need to assume this, since we are relying on the scalar BPHZ theorem and our argument will be agnostic to the specific $q \in (-1,1)$. A more general result would need techniques tailored more concretely for the noncommutative case.
\end{remark}
Finally, we also choose $\kappa > 0$ sufficiently small. We set
\begin{equs}
	\forall \mfl \in \mfL^- &: |\mfl|_{\s} \eqdef \alpha_\mfl - \kappa \; ,  \\
	\forall \mft \in \mfL^+ &: |\mft|_{\s} \eqdef \beta_{\mft} \; .
\end{equs}
We assume that $\kappa$ is such that 
\begin{equ}
    \sum_{\mfl \in \mfL^-} n_{\mfl} |\mfl|_{\s} + \sum_{\mft \in \mfL^+} m_{\mft} |\mft|_{\s} \notin \Z 
\end{equ}
for all $n_{\mfl}, m_{\mft} \in \N$.

Let $\cR$ be a subcritical rule on $\mfL$. For $q \in (-1,1)$ let $\cT_q$ be the $\cA_q$-regularity structure generated by $\cR$, for $q = -1$, let $\cT_{\Cl}$ be the $\cA_{F}^{\Cl}$-regularity structure and $\cT_F$ the $\cG_F$-regularity structure. The respective canonical models $Z^q_{(\eps)} = \left( \Pi^{q, (\eps)}, \Gamma^{q, (\eps)} \right)$, $Z_{(\eps)}^{\Cl}$, and $Z_{(\eps)}^{F}$ with mollification parameter $\eps \in (0,1]$ are then specified as follows. For $\mfl \in \mfL^-$
\begin{equs}
	\left(\Pi^{q; (\eps)}_x \Xi_{\mfl}\right)(y) &\eqdef \xi^{\mfl , (\eps)}_q ( y ) \eqdef \xi^\mfl_q \left( \CS^\eps_{\s, y} \rho \right) \\
	\left(\Pi^{\Cl; (\eps)}_x \Xi_{\mfl}\right)(y) &\eqdef \bxi_F^{\mfl, (\eps)} ( y ) \eqdef \bxi_F^\mfl \left( \CS^\eps_{\s, y} \rho \right)\\
	\left(\Pi^{F; (\eps)}_x \Xi_{\mfl}\right)(y) &\eqdef \bPsi^{(\eps)}_\mfl ( y ) \eqdef \bPsi_\mfl \left( \CS^\eps_{\s, y} \rho \right)
\end{equs}
where is the mollifier $\rho$ defined in the introduction, Section~\ref{subsec:notation}. $\CI_\mft$ for $\mft \in \mfL^+$ implements abstract integration against $K (x,y) = K_\mft(x-y)$ as outlined in Section~\ref{sec:AbstInt}.

Finally, we need to specify the action $\Delta(\mfo_\rho)$. For a root-renormalised tree $\tau$, let $\tau_\rho$ be the scalar tree, s.t.\
\begin{equ}
	\tau = \Bigl(\CF \otimes \bigotimes_{v \in N_T} X^{\mfn(v)}\Bigr) \curvearrowright \tau_\rho \; , 
\end{equ}
see \eqref{eq:scalarGraftRep}. We remind the reader that $\tau_\rho$ is uniquely determined by $\mfo_\rho$, the connected component of the root of $\mfo$. In particular, $\tau_\rho  = (\mff_\rho, \mfe_{\mff_\rho}, \mfn_{\tau_\rho}, \mfo_\rho, \bone^{\otimes \mff_\rho} )$ for $\mfo_{\rho} = ((\mff_\rho, \mfe_{\mff_\rho}, \mfn_{\tau_\rho}), \bpi_\rho)$.

We now define  $\Delta(\mfo_{\rho}) \in \mathcal{B}^{\ell}(\mathcal{A} ; \mathcal{A})$ where $\ell = |\tau_\rho|$ the number of attachment spots. We also label from left to right the negative leaves of $\tau_\rho$ by the set $[n]$. 
We set $\Delta(\mfo_\rho) \equiv 0$ unless the partition of the negative leaves $\bpi_\rho$ of $\mfo_{\rho}$ is made up only of pairs, i.e.\ two-element subsets.

Let $B_1, \dots, B_k  \in \cA_q$. For $i \in [n]$ we set $C_i$ to be the product of all the $B_j$ such that the attachment spot $j$ lies between the $(i-1)^{\text{st}}$ and $i^{\text{th}}$ negative leaf of $\tau_\rho$.\footnote{There is always an attachment spot between two leaves.}
Analogously, $C_0$ is the ordered product of all those $B_j$ such that the attachment spot $j$ sits to the left of the first negative leaf of $\tau_{\rho}$, and $C_{n+1} $ is the ordered product of all those $B_j$ such that $j$ sits to the right of the $n^{\text{th}}$ negative leaf of $\tau_{\rho}$. Writing $\boldsymbol{k} = (1,\dots, 1) \in \N^n$, we then define
\begin{equ}
	\Delta(\mfo_\rho) \left( B_1,\dots,B_k \right) \eqdef \Delta_{q(\#)}^{R ; \boldsymbol{k}, \bpi_{\rho}}( C_0,C_1, \dots, C_{n+1} )   \; ,
\end{equ} 
cf.\ Definition~\ref{def:RenMultMap} and Corollary~\ref{cor:RenMultMap}. Here $\# \in (-1,1) \sqcup \{\Cl, F\}$ with $q(r) = r$ for $r \in (-1,1)$, $q(\Cl) = q(F) = -1$.



\begin{example}
	We give an example of the definition of $\Delta(\mfo_\rho)$ by continuing Example~\ref{ex:ExTrees1}. In particular, we consider the tree $\tau$ given by
	\begin{equ}
		\scalebox{0.8}{
		\begin{tikzpicture}[
			red_square/.style={rectangle, fill=red, inner sep=2.5pt},
			black_dot/.style={circle, fill=black, inner sep=2.5pt},
			arrow/.style={->, >=stealth, thick},
			every edge quotes/.style={font=\small, sloped, fill=white, inner sep=1.5pt},
			highlight/.style={green, opacity=0.4, line cap=round, line join=round, line width=1.2cm},
			purple_line/.style={draw=purple, very thick}
		]
			\node[black_dot, label=below: ]   (1) at (0,0) {};
			\node[red_square, label=left: 1]     (3) at (-1.9, 1.1) {};
			\node[black_dot, label=right: ]    (5) at (0, 2.2) {};
			\node[red_square, label=left: 2]     (7) at (-1.9, 3.3) {};
			\node[red_square, label=above: 3]    (9) at (0, 4.4) {};
			\node[red_square, label=right: 4]    (11) at (1.9, 3.3) {};
			\node[red_square, label=right: 5]    (13) at (1.9, 1.1) {};

			\begin{pgfonlayer}{background}
				\draw[highlight] (7) -- (5);
				\draw[highlight] (11) -- (5);
				\draw[highlight] (3) -- (1);
				\draw[highlight] (13) -- (1);
				\draw[highlight] (5) -- (1);
			\end{pgfonlayer}

			\draw[arrow] (3) edge  (1);
			\draw[arrow] (13) edge (1);
			\draw[arrow] (7) edge (5);
			\draw[arrow] (9) edge (5);
			\draw[arrow] (11) edge (5);
			\draw[arrow] (5) edge (1);

			\draw[purple_line] (3) to[out=60, in=150] (11);
			\draw[purple_line] (7) to[out=30, in=120] (13);

			\node[circle, draw, inner sep=2pt] (A0) at ($(3) + (0,-1.2)$) {$A_{0}$};
			\node[circle, draw, inner sep=2pt] (A1) at ($(7)!0.5!(3) + (-0.2,0)$) {$A_{1}$};
			\node[circle, draw, inner sep=2pt] (A2) at ($(7)!0.5!(9) + (-0.2,0.2)$) {$A_{2}$};
			\node[circle, draw, inner sep=2pt] (A3) at ($(11)!0.5!(9) + (0.2,0.2)$) {$A_{3}$};
			\node[circle, draw, inner sep=2pt] (A4) at ($(11)!0.5!(13) + (0.2,0)$) {$A_{4}$};
			\node[circle, draw, inner sep=2pt] (A5) at ($(13) + (0,-1.2)$) {$A_{5}$};

			\draw[->, dashed, gray, shorten >=5pt] (A0) -- (1);
			\draw[->, dashed, gray, shorten >=5pt] (A1) -- (1);
			\draw[->, dashed, gray, shorten >=5pt] (A4) -- (1);
			\draw[->, dashed, gray, shorten >=5pt] (A5) -- (1);

			\draw[->, dashed, gray, shorten >=5pt] (A2) -- (5);
			\draw[->, dashed, gray, shorten >=5pt] (A3) -- (5);

		\end{tikzpicture}
		}
	\end{equ}
	where we used the same conventions as in \eqref{eq:ExTrees1}. For the sake of simplicity \ajay{Rather than just being an asusmption for simplicity, isn't this an assumption in force for the whole BPHZ section. Moreover, the definition of $\Delta(\mfo)$ isn't really reasonable without this assumption. Also, when this assumption is in force, isn't there some redundancy in the regularity structure since decorations can move around?}\martinp{Added in Assumption~\ref{as:ScalarV}}, we assumed here that integration commutes with multiplication by fixed algebra elements and $A_i \in \cA_q$ for $i \in {0,\dots, 5}$ denote the algebra decorations that remain if one multiplies all the algebra elements that are not separated by a noise. We may do this, since we have assumed that all the integration kernels are scalar-valued. We have drawn grey dashed arrows connecting each $A_i$ to its corresponding attachment spot. 

	Here $\mfo_\rho$ is made up of the tree highlighted in green and $\pi_\rho = \{(1,4),(2,5)\}$. Thus, we have the grafting decomposition
	\begin{equ}
		(A_0 \1 \otimes A_1 \1 \otimes A_2 \CI \left( \Xi \right)  A_3 \otimes A_4 \1 \otimes A_5 \1 \otimes X^{0} ) \curvearrowright \tau_\rho \;  .
	\end{equ}
	Here we are slightly abusing the notation as we are not exactly specifying where we are attaching $A_i \1$. Applying the definition, we get 
	\begin{equ}
		\Pi_x \tau = \Delta_{q}^{R; \boldsymbol{k} , \bpi_\rho} \left( A_0 , A_1, A_2 \Pi_x \CI(\Xi) A_3 , A_4, A_5\right)
	\end{equ}
	where $\boldsymbol{k} = (1,1,1,1)$.

\end{example}

\begin{remark}
In all of the equations we consider in this paper, when we write the modelled distribution for the solution to the equation all the algebra decorations that appear come from the polynomial part of the modelled distribution describing the solution and this constrains where and how such decorations can appear. 
However, this particular structure is an artifact of the fact that all the equations we present in this paper have non-linearities given by real functions lifted to the relevant algebra. 
Our machinery is more general than this however, for instance we could in principle replace the nonlinearity $\phi^3$ by $b_0 \phi b_1 \phi b_2 \phi b_3$ for fixed $b_i \in \cA_q$ which would make the trees in the expansion of the solution become ``more decorated''. 
\end{remark}

\begin{remark}
\label{rem:RanRS}
	In the case $q=1$, we would be working with $\CA_B(\mfH) \supsetneq \cA_B(\mfH)$. Furthermore, the above construction should be viewed as constructing an $\R$-regularity structure with a random model rather than a true $\CA_B(\mfH)$-regularity structure, as the latter is not a true locally $m$-convex algebra. 

	Furthermore, in the mixed case when we are dealing with a random algebra $\CA_B(\mfH_1) \otimes \cA_q(\mfH_2)$, we will split up the negative symbols $\mfL^-$ into a bosonic subset $\mfL^-_B$, giving $\mfH_1$, and a noncommutative subset $\mfL^-_A$, giving $\mfH_2$. Then, for $\mfl \in \mfL^-_B$, $\xi_{\mfl}^{(\eps)} \in L^{\infty-}\left( \Omega , \mu ; \cC^\infty(\spacetime)\right)$ whereas for $\mfl \in \mfL^-_A$, $\xi_{\mfl}^{(\eps)} \in \cC^\infty(\spacetime ; \cA_q(\mfH_2))$. Interpreting the driving noises in this way, any tree in the $\cA_B(\mfH_1)$-random $\cA_q(\mfH_2)$-regularity structure gets mapped to an element of 
	\begin{equ}
		L^{\infty-}(\Omega , \mu ; \cC^\infty(\spacetime ; \cA_q(\mfH_2))) \; . 
	\end{equ}
	All further results concerning random $\cA_q(\mfH)$ algebras should be interpreted in this way. 

	We note that we have drastically restricted the randomness in this way. Allowing for true $\CA_B(\mfH)$-regularity structures in some form would be analogous to defining stochastic integration w.r.t.\ random functions of the integrator, cf.\ \cite{Kern24,BK25}.

\end{remark}

\subsection{Fock Space Algebra Model
}\label{subsec:FockSpaceAlg}

We are interested in the convergence of the canonical model renormalised using the BPHZ character when the mollification is removed.
In particular, we shall give in Theorems~\ref{thm:BPHZ}~\&~\ref{thm:BPHZFerm} sufficiency conditions for the $q$-Gaussian and fermionic cases similar to \cite[Theorem~10.7]{Hai14}. 
We then prove that the stochastic estimates appearing in this condition can be obtained in an automatic fashion by applying \cite{CH16,HS24} to carefully designed commutative regularity structures with $q=1$ \slash bosonic Gaussian noise.

The key observation in this strategy is that, thanks to the isomorphism between $\CL^{2}(\CA_{q}(\mfH),\omega_{q})$ and $\mathcal{F}_{q}(\mfH)$ for any $q \in [-1,1]$, estimates on second moments of the model on a given bare tree reduce to certain linear combinations of $L^{2}$-type estimates on deterministic kernels of multiple space-time arguments.
The particular deterministic kernels that appear in this argument are independent of $q$, but since the Fock spaces $\mathcal{F}_{q}(\mfH)$ are different for different $q$, how these kernels determine the desired second moment does vary with $q$.

To carry out the strategy mentioned earlier, it will be useful to introduce intermediate algebras $\mfA_N(\mfH)$ which we call the ``Fock Space algebras''.
This algebra is not good for solving equations since it is a truncated algebra with nilpotent elements, but it is useful as a bookkeeping tool \dash stochastic estimates in a $\mfA_N(\mfH)$-regularity structure will store estimates on the deterministic kernels mentioned above in a way where they can easily be used as input to prove estimates in $L^{2}(\CA_{q}(\mfH),\omega_{q})$. 
Again, the proof of the stochastic estimates in a $\mfA_N(\mfH)$-regularity structure is obtained by repeatedly appealing to $q=1$ stochastic estimates for BPHZ models on carefully designed trees.

We now prepare to introduce the promised auxiliary Fock space algebras $\mfA_N(\mfH)$.

\begin{definition}
	Let
	\begin{equs}
		\mfA(\mfH) \eqdef \bigoplus_{n \in \N} \bigoplus_{\bpi \in \CP_{[n]}} \mfH^{\wotimes_\alpha [n] \setminus \bpi } \; ,
	\end{equs}
	where $\bigoplus$ denotes the algebraic direct sum, i.e.\ only finitely many entries of each vector are non-zero.
	We will denote the subspace corresponding to a contraction $\bpi$ in the above definition by
	\begin{equ}
		\mfA(\mfH)[n,\bpi] \; ,
	\end{equ}
	and
	\begin{equ}
		\mfA(\mfH)[n] \eqdef \bigoplus_{\bpi \in \CP_{[n]}} \mfA[n, \bpi] \; .
	\end{equ}
	We also the define the inhomogeneous $N$-particle sector of $\mfA(\mfH)$, with $N \in \N$, to be
	\begin{equs}
		\mfA_N(\mfH) \eqdef \bigoplus_{n = 0}^N \mfA[n] =\bigoplus_{n = 0}^N \bigoplus_{\bpi \in \CP_{[n]}} \mfH^{\wotimes_\alpha [n] \setminus \bpi } \; .
	\end{equs}
	Let $\| \bigcdot \|_N$ be the norm on $\mfA^{N}(\mfH)$ obtained by taking the $\ell^1$-sum of the norms of all subspaces $\big\{ \mfA(\mfH)[n,\bpi] \, \big| \, n \leqslant N,\ \bpi \in \CP_{[n]} \big\}$.
	We also equip $\mfA(\mfH)$ with the set of seminorms $\left( \| \bigcdot \|_N \right)_{N \in \N}$.
	
\end{definition}
We define a product in $\mfA_N(\mfH)$. Let $k, l, n \in \N$ and $I_f$, $I_g$ be finite sets. We denote by $\CP^2_{I_g,I_f ; n}$ the collection of all subsets of\footnote{Here $I_g$ and $I_f$ should be thought as disjoint.} $I_{g} \times I_{f}$ of cardinality $n$. Furthermore, given $\bpi \in \CP^2_{I_g,I_f ; n}$, we set 
\begin{equs}
	{}
	I_g \setminus_{1} \bpi &\eqdef I_g \setminus \left\{ r \in I_g \, \big| \, \exists (i,j) \in \bpi : r = i \right\} \; , \\
	{} 
	I_f \setminus_{2} \bpi &\eqdef I_f \setminus \left\{ r \in I_f \, \big| \, \exists (i,j) \in \bpi : r = j \right\} \; . 
\end{equs}

\begin{definition}
	Let $\bpi_g \in \CP_{[n],k}$, $\bpi_f \in \CP_{m,\ell}$, $I_g \eqdef [n] \setminus  \bpi_g$, $I_f \eqdef [m] \setminus \bpi_f$, $g_i, f_j \in \mfH$ for $i \in I_g$, $j \in I_f$, and let
	\begin{equs}
		G \eqdef  \bigotimes_{i \in I_g} g_i  \in \mfA(\mfH)[n, \bpi_g] \;, \quad \text{and} \quad F \eqdef  \bigotimes_{j \in I_f} f_j \in  \mfA(\mfH)[m, \bpi_f] \; .
	\end{equs}
	We define the product $GF$ to have the components
	\begin{equ}[eq:FSMult]
		\prod_{(i,j) \in \bsigma} \Braket{g_i,f_j} \bigotimes_{r \in [k] \setminus_1 \bpi} g_r \otimes \bigotimes_{s \in [l] \setminus_2 \bpi} f_s \;
	\end{equ}
	in $\mfA(\mfH)[n+m, \bpi_f \cup \bpi_g \cup \bsigma]$ for $s \leqslant n-2k \wedge m-2\ell$ and $\bsigma \in \CP^{n-2k,m-2\ell, s}$; and $0$ otherwise. Here we view $\bpi_g \cup \bpi_f \cup \bsigma$ as a contraction of the set $[n+m]$ with $\bpi_g$ contracting elements of $[n]$, $\bpi_f$ elements of $[m]+n$, and $\bsigma$ between these two sets.

	We extend this by continuity and distributivity to all $\mfA(\mfH)$, cf.\ Lemma~\ref{lem:ContEst}.
\end{definition}

\begin{remark}
Henceforth we denote the component of $F \in \mfA(\mfH)$ in the subspace $\mfA(\mfH)[n, \bpi]$ by $F[n,\bpi]$.
\end{remark}

\begin{proposition}
	$\mfA(\mfH)$ is a locally $m$-convex algebra.
\end{proposition}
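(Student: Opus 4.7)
The plan is to verify that the bilinear operation defined by \eqref{eq:FSMult} extends to a well-defined, associative product on $\mfA(\mfH)$, and then to produce a family of submultiplicative seminorms that induces the same topology as $(\|\cdot\|_N)_N$.

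First, for well-definedness: each contraction indexed by $\bsigma$ in \eqref{eq:FSMult} factors through a map of the form $C_{\bpi}$ from Lemma~\ref{lem:ContEst}, so it extends to a continuous bilinear map
\begin{equ}
\mfA(\mfH)[n, \bpi_g] \times \mfA(\mfH)[m, \bpi_f] \longrightarrow \mfA(\mfH)[n + m, \bpi_g \cup \bpi_f \cup \bsigma]
\end{equ}
of norm at most $1$. Since elements of the algebraic direct sum $\mfA(\mfH)$ have only finitely many nonzero homogeneous components, the total product is a finite sum of such terms. Associativity then reduces to a combinatorial identity: expanding both $(FG)H$ and $F(GH)$ on simple tensors yields a sum indexed by partial matchings of $[n] \sqcup [m] \sqcup [k]$ pairing letters across distinct factors, with coefficient $\prod \langle \kappa\, \cdot\,,\,\cdot\rangle$ over the matched pairs and the remaining uncontracted letters tensored in order. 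The two orders of multiplication produce the same collection of matchings with the same coefficients.

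The seminorms $\|\cdot\|_N$ are not themselves submultiplicative: already $F = f, G = g \in \mfH = \mfA(\mfH)[1, \emptyset]$ give $\|FG\|_N = \|f \otimes g\| + |\langle f, g\rangle| \leqslant 2 \|F\|_N \|G\|_N$, and in general the number of cross-contractions $|\CP^2_{I_g, I_f}|$ grows factorially. To remedy this, for $K \geqslant 1$ I introduce the equivalent seminorms
\begin{equ}
\mathfrak{q}_{N, K}(F) \eqdef \sum_{n = 0}^{N} K^{-\binom{n}{2}} \sum_{\bpi \in \CP_{[n]}} \|F[n, \bpi]\|_{\mfH^{\wotimes_\alpha [n] \setminus \bpi}}\;.
\end{equ}
Since $K^{-\binom{N}{2}}\|F\|_N \leqslant \mathfrak{q}_{N, K}(F) \leqslant \|F\|_N$, the family $\{\mathfrak{q}_{N, K_N}\}_{N \in \N}$ induces the same topology as $(\|\cdot\|_N)_N$ for any choice of $K_N \geqslant 1$. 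Submultiplicativity is then secured by the key algebraic identity $\binom{n+m}{2} = \binom{n}{2} + \binom{m}{2} + nm$: combining it with Lemma~\ref{lem:ContEst} one obtains
\begin{equ}
\mathfrak{q}_{N, K}(FG) \leqslant \sum_{n + m \leqslant N} K^{-\binom{n}{2}} K^{-\binom{m}{2}} \bigl(K^{-nm} C(n, m)\bigr) \|F[n]\|_{\ell^1}\|G[m]\|_{\ell^1}\;,
\end{equ}
where $\|F[n]\|_{\ell^1} = \sum_{\bpi_g} \|F[n, \bpi_g]\|$ and $C(n, m) = \sup_{I_g \subseteq [n], I_f \subseteq [m]} |\CP^2_{I_g, I_f}| \leqslant (n+m)!$. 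Choosing $K_N$ large enough that $K_N^{nm} \geqslant C(n, m)$ for all $n + m \leqslant N$ with $nm \geqslant 1$ (for instance $K_N = N!$) kills the parenthesised factor and yields $\mathfrak{q}_{N, K_N}(FG) \leqslant \mathfrak{q}_{N, K_N}(F) \mathfrak{q}_{N, K_N}(G)$.

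The main obstacle is the combinatorial blow-up of the number of cross-contractions $\bsigma$ in a product, which prevents the natural $\ell^1$-seminorms from being submultiplicative; once this is recognised, the exponential weight $K^{-\binom{n}{2}}$ together with $\binom{n+m}{2} = \binom{n}{2} + \binom{m}{2} + nm$ absorbs the offending combinatorial factor. The fact that only finitely many components of any $F \in \mfA(\mfH)$ are nonzero is what allows us to compare the weighted and unweighted families on each truncation $\mfA_N(\mfH)$ by fixed constants, ensuring that the two families induce the same topology.
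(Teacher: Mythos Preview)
Your argument is correct and in fact more explicit than the paper's. Both proofs rest on the same structural observation, namely that the product of a level-$n$ and a level-$m$ element lands entirely in level $n+m$ (equivalently, that $\mfA_{>N}(\mfH)$ is a two-sided ideal), so that the truncated seminorms $\|\cdot\|_N$ already satisfy $\|FG\|_N \leqslant C(N)\,\|F\|_N\,\|G\|_N$ for a combinatorial constant $C(N)$ counting cross-contractions. From there the two proofs diverge: the paper simply observes that this makes each $\mfA_N(\mfH)$ a unital Banach algebra under the quotient product and then invokes the general fact (\cite[Proposition~1.3]{Tak02}) that any Banach-algebra norm admits an equivalent submultiplicative one, whereas you construct the submultiplicative seminorms by hand via the weights $K^{-\binom{n}{2}}$ and the identity $\binom{n+m}{2} = \binom{n}{2} + \binom{m}{2} + nm$. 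Your route is self-contained and makes the submultiplicative seminorms explicit; the paper's route is shorter but relies on an external reference.
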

\begin{proof}
	First, we note that the complement $\mfA_{>N}(\mfH)$ of $\mfA_N(\mfH)$ in $\mfA(\mfH)$ is a two-sided ideal, as follows from a direct inspection of the definition of the multiplication in $\mfA(\mfH)$. Furthermore, for $F,G \in \mfA_N(\mfH) \subset \mfA(\mfH)$. Thus, for $G, F  \in \mfA(\mfH)$ the component of $GF$ in $\mfA_N(\mfH)$ can only depend on the components of $G$ and $F$ in $\mfA_N(\mfH)$. In particular,
	\begin{equ}
		\| G F \|_{N} \leqslant C(N) \| G \|_N \| F \|_N
	\end{equ}
	where $C(N)$ is a combinatorial constant depending on the number of possible pairings. In particular, restricting the multiplication to $\mfA_N(\mfH)$ by composing it with the canonical projection $\mfA(\mfH) \to \mfA_N(\mfH)$, turns $\mfA_N(\mfH)$ into a unital Banach algebra. Thus, by \cite[Proposition~1.3]{Tak02}, for every $N \in \N$ there exists a norm $\vvvert \bigcdot \vvvert_N$ equivalent to $\| \bigcdot \|_N$ that is submultiplicative.
\end{proof}

In order to cover the Dirac fermion case, we need to slightly modify the multiplication operation of $\mfA(\mfH)$.
\begin{definition}
	We denote by $\mfA^F(\mfH)$ and $\mfA^F_N(\mfH)$ the spaces $\mfA(\mfH)$ and $\mfA_N(\mfH)$ respectively equipped with the same topology as before and multiplication defined as in \eqref{eq:FSMult} except that the factors $\Braket{g_i, f_j}$ are replaced by $\Braket{\kappa g_i , Uf_j}$. 
\end{definition}

\begin{definition}
	We will call the Banach algebra $\left(\mfA_N(\mfH), \vvvert \bigcdot \vvvert \right)$, the truncated Fock space algebra. $\mfA_N(\mfH)$ will be viewed as a subspace of $\mfA_M(\mfH)$ for $M \geqslant N$.
\end{definition}

The multiplication in $\mfA(\mfH)$ was defined in such a way that it be compatible with the multiplication operations in $\cA_q(\mfH)$ for all $q \in (-1,1]$, and for $q = -1$ it be compatible with the one defined in $\CA_F(\mfH)$. In particular, we define linear map $\iota_q \colon \mfA(\mfH) \to \cA_q(\mfH)$ for $q \in (-1,1]$, $\iota_{-1}\colon \mfA(\mfH) \to \CL^2(\CA^{\Cl}_F(\mfH), \omega_F )$, and $\iota_{F}\colon \mfA^F(\mfH) \to \CL^2(\CA_F(\mfH), \omega_\lambda )$ for $F \in \mfA(\mfH)[n]$ with $\bpi \in \CP_{[n]}$ by setting
\begin{equ}
	\iota_q\big|_{\mfA(\mfH)[n, \bpi]} (F) \eqdef q^{\crb(\bpi)} \xi_q^{\diamond(n-2|\bpi|)}(F)
\end{equ}
as well as for $F \in \mfA^F[n, \bpi]$ 
\begin{equ}
	\iota_F\big|_{\mfA^F(\mfH)[n, \bpi]} (F) \eqdef (-1)^{\crb(\bpi)} \Psi^{\diamond(n-2|\bpi|)}(F)
\end{equ}
and then extending it by linearity to all of $\mfA(\mfH)$ and $\mfA^F(\mfH)$. 
\begin{proposition}
\label{prop:AlgMorphCont}
	For all $q \in [-1,1]$ the maps $\iota_q$ as well as $\iota_F$ are continuous. Furthermore, they are also algebra morphisms.
	
\end{proposition}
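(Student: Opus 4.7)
The strategy is to reduce both continuity and the algebra morphism property to statements about a single homogeneous component $\mfA(\mfH)[n,\bpi]$, on which $\iota_q$ (resp.\ $\iota_F$) acts as the scalar $q^{\crb(\bpi)}$ (resp.\ $(-1)^{\crb(\bpi)}$) times the Wick-ordering map $\xi^{\diamond(n-2|\bpi|)}_q$ (resp.\ $\Psi^{\diamond(n-2|\bpi|)}$). Since $\mfA_N(\mfH)$ is a finite direct sum of such components for each fixed $N$, continuity on all of $\mfA(\mfH)$ follows from uniform bounds on each summand, and multiplicativity follows from an identity of the form $\iota_q(G)\iota_q(F) = \iota_q(GF)$ checked on simple tensors (and then extended by bilinearity and continuity using Lemma~\ref{lem:ContEst}).

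For continuity, each case invokes an estimate established earlier in the paper. When $q \in (-1,1)$, a simple tensor $F \in \mfA(\mfH)[n,\bpi]$ satisfies, by the ultracontractive bound \eqref{eq:WklStrBnd2} of Proposition~\ref{prop:OpBounds}, the inequality $\vvvert \iota_q(F)\vvvert \leqslant (n-2|\bpi|+1) C_q^{3/2} D_q^{n-2|\bpi|} \|F\|_{\CF_0}$; summing over the finite set of components in $\mfA_N(\mfH)$ yields an $N$-dependent bound against $\|\bigcdot\|_N$. For $q=1$ (the bosonic case) the hypercontractive estimate \eqref{eq:BosHyper} gives $\|\iota_1(F)\|_{L^p} \lesssim_{N,p} \|F\|_N$ for each $p \in [1,\infty)$, which is exactly continuity into $\cA_B(\mfH)=L^{\infty-}$. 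For $q=-1$ and the Dirac case, Proposition~\ref{prop:WickOrthog} gives the $\CL^2$-isometry $\|\xi_q^{\diamond k}(F)\|_{\CL^2}^2 = \langle F, P_q F\rangle_{\CF_0}$, which is bounded by a combinatorial multiple of $\|F\|_{\CF_0}^2$; the analogous statement for $\Psi$ follows from Proposition~\ref{proposition:Wick_Product}.

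The multiplicativity is the more substantial point. By bilinearity and continuity, it suffices to verify $\iota_q(GF) = \iota_q(G)\iota_q(F)$ on simple tensors $G \in \mfA(\mfH)[n,\bpi_g]$ and $F \in \mfA(\mfH)[m,\bpi_f]$. The definition of the product in $\mfA(\mfH)$ expresses $GF$ as a sum over cross-pairings $\bsigma \in \CP^2_{I_g,I_f;s}$ between the two reduced index sets, while Lemma~\ref{lem:doubleWick} combined with the extraction formula \eqref{eq:AbsContr} expresses $\xi_q^{\diamond |I_g|}(G)\xi_q^{\diamond |I_f|}(F)$ as a sum over the same set of cross-pairings. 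Comparing the two expansions term by term, what remains to verify is the combinatorial identity
\begin{equ}\label{eq:crb_additivity}
	\crb(\bpi_g \sqcup \bpi_f \sqcup \bsigma) = \crb(\bpi_g) + \crb(\bpi_f) + \crb(\bsigma),
\end{equ}
where the left-hand side is the intertwining number of the combined pairing viewed as a pairing of $[n+m]$, and the three pieces on the right are computed in $[n]$, $[m]$ and $I_g \sqcup I_f$ respectively. This is the main technical input and the one place where we expect to have to pay attention to the noncommutativity: the crossings and separations do not individually split additively because $\bpi_g$ and $\bpi_f$ sit in disjoint consecutive blocks of $[n+m]$ while $\bsigma$ threads between them. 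The plan is to prove \eqref{eq:crb_additivity} by a direct bookkeeping: any additional cross-crossings between, say, $\bpi_g$ and $\bsigma$ that appear in the combined pairing are exactly cancelled by the loss in separation number of $\bpi_g$ caused by $\bsigma$ pairing up what would otherwise have been ``unpaired'' elements between a pair in $\bpi_g$. The cleanest implementation proceeds via the duplication interpretation of $\crb$ given in Remark~\ref{rem:crb}, where the disjoint-block structure makes it manifest that crossings in the doubled diagram split into contributions from the three subpairings.

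The Dirac statement is proved by the same argument, with the following modifications: the factors $\langle f_s, f_t\rangle$ in \eqref{eq:WickDefDirac} that generate Dirac Wick products are of the form $\langle \kappa f_s, U f_t\rangle$, which is precisely the covariance used in the definition of the product on $\mfA^F(\mfH)$, and the sign $(-1)^{\crb(\bpi)}$ plays the role of $q^{\crb(\bpi)}$ at $q=-1$. All the combinatorics, including \eqref{eq:crb_additivity}, is identical, so once the $q \in [-1,1]$ case is settled the Dirac morphism property follows \emph{mutatis mutandis}. The main obstacle throughout is the verification of \eqref{eq:crb_additivity}; once that identity is in hand, the rest of the proof is essentially bookkeeping combined with the continuity estimates already established in Sections~\ref{subsec:FockSpace} and~\ref{sec:qmezdons}.
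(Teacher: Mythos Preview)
Your proposal is correct and follows essentially the same approach as the paper. The paper's proof is terser on continuity (merely citing continuity of $\xi_q^{\diamond n}$ and $\Psi^{\diamond n}$), while you spell out the relevant estimate in each regime; for multiplicativity, both you and the paper reduce to the identical combinatorial identity on intertwining numbers, and your bookkeeping plan (cross-crossings introduced by $\bsigma$ with $\bpi_g$ being exactly compensated by lost separation) is precisely the paper's one-paragraph argument. The duplication interpretation via Remark~\ref{rem:crb} that you mention as an alternative route is not used in the paper but would also work.
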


\begin{remark}
	We remind the reader here that $\cA_1(\mfH)$ is $L^{\infty -}(\Omega , \mu)$ for an appropriate probability space $(\Omega, \mu)$.
\end{remark}

\begin{remark}
\label{rem:NonHomo}
	The slight discrepancy in the codomains of $\iota_{-1}$ and $\iota_F$ is due to the difference in how we define the Wick products in the case of $\cA_F$ and $\cG_F$, where
	\begin{equ}
		\bxi_F(f) \bxi_F(g) = \bxi_F^{\diamond 2}(f \otimes g) + \left[ \balpha_F(f), \balpha_F(g)^\dagger \right]_+ \neq \bxi_F^{\diamond 2}(f \otimes g) + \Braket{f,g} \bone \; .
	\end{equ}
	We postpone this until Section~\ref{sec:BPHZFermi} in order to avoid an overabundance of case distinction. 
	There, we address this by modifying the multiplication in $\mfA(\mfH)$ in a way that respects localisation.

	Furthermore, when we actually need to use $\iota_F$ and $\iota_{-1}$ it will always be w.r.t.\ the noncommutative points, i.e.\ as maps $\mfA_N(b) \to \CA_F(b)$ for $b \in \Gr(\mfH)$. Since $\dim b < \infty$, $\CL^2(\CA^{\Cl}_F(b), \omega_F)$ and $\CL^2(\CA_F(b), \omega_\lambda)$ are equivalent to $\CA^{\Cl}_F(b)$ and $\CA_F(b)$ respectively as Banach spaces and we can replace the Hilbert spaces norms with the corresponding algebras norms. 

\end{remark}

\begin{proof}
	Continuity follows directly from the continuity of $\xi_q^{\diamond n}$ for all $n \in \N$ and all $q \in [-1,1]$ as well as the continuity of $\Psi^{\diamond n}$.

	To check the algebra morphism property let $F \in \mfA[n, \bpi]$, $G \in \mfA[m, \bsigma]$, and let $\boldsymbol{\rho}$ be a contraction between $F$ and $G$. To show the morphism property we need to establish that $\crb(\bpi \cup \bsigma \cup \boldsymbol{\rho})$ is the same as $\crb(\bpi \cup \bsigma ) + \crb'(\boldsymbol{\rho})$. Here $\crb'$ denotes the intertwining number w.r.t.\ the concatenation of the sets $[n] \setminus\bpi$ and $[m] \setminus  \bsigma$.

	First we note that the contribution of $(i,j) \in \boldsymbol{\rho}$ to $\mathrm{sp}'(\boldsymbol{\rho})$ is the same as the contribution of $(i,j)$ to $\mathrm{sp}(\bpi \cup \bsigma \cup \boldsymbol{\rho})$ as in both cases the element of $[n+m]$ that have been contracted by $\bpi \cup \bsigma$ are not counted. Similarly, the contribution of internal crossings of $\boldsymbol{\rho}$ to $\mathrm{cr}(\bpi \cup \bsigma \cup \boldsymbol{\rho})$ is exactly $\mathrm{cr}'(\boldsymbol{\rho})$.

	Now although each $(i,j) \in \boldsymbol{\rho}$ can reduce $\mathrm{sp}(\bpi)$ by $1$ for each $(r,s) \in \bpi$ with $r < i < s$, however, $(i,j)$ then increases the crossing number by $1$. Therefore, the two sums remain the same.
\end{proof}


Finally, in order to describe systems of bosons mixed with fermions or mezdons, we will also consider the tensor product algebra $\mfA(\mfH_1) \wotimes \mfA(\mfH_2)$ with multiplication defined via 
\begin{equ}
	(a \otimes c) (b \otimes d) = (ab) \otimes (cd) 
\end{equ}
and for $m,n \in \N$ and $\bpi \in \CP_m$, $\bsigma \in \CP_{[n]}$ we topologise and complete each subspace
\begin{equ}
	\mfA(\mfH_1)[m,\bpi] \wotimes_\alpha \mfA(\mfH_2)[n, \bsigma]
\end{equ}
with the Hilbert space tensor product, which they inherit from being isomorphic to $\mfH_1^{\wotimes_\alpha (m-2|\bpi|)} \wotimes_\alpha \mfH_2^{\wotimes_\alpha (n-2|\bsigma|)}$. Note that we do not complete along the direct sum over $m,n \in \N$, thus we are still restricting ourselves to finitely many non-zero components. This algebra is again locally $m$-convex with subspaces we denote $\mfA_N(\mfH_1) \wotimes \mfA_M(\mfH_2) $, which we define in the obvious way. 

\begin{proposition}
\label{prop:AlgMorphCont2}
For any $q \in (-1,1]$, the map 
\[
\iota_1 \otimes \iota_q \colon \mfA(\mfH_1) \wotimes \mfA(\mfH_2) \to \cA_B(\mfH_1) \wotimes \cA_q(\mfH_2)\;
\] 
is a continuous algebra morphism, and analogously for $\iota_1 \otimes \iota_F$ and $\iota_1 \otimes \iota_F$ mapping into $L^{\infty-}(\Omega, \mu ; \CL^2(\CA^{\Cl}_F(\mfH_2), \omega_F))$ and $L^{\infty-}(\Omega, \mu ; \CL^2(\CA_F(\mfH_2), \omega_\lambda))$ respectively.
\end{proposition}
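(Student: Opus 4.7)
The plan is to reduce Proposition~\ref{prop:AlgMorphCont2} to Proposition~\ref{prop:AlgMorphCont} via a standard tensor product construction, with the main subtleties being (i) verifying the algebra-morphism property given the definition of multiplication on $\mfA(\mfH_1) \wotimes \mfA(\mfH_2)$ and (ii) controlling the topologies that mix the projective/injective directions across the direct sum with the Hilbert tensor product on each fixed-particle subspace.

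First, I would establish the algebra-morphism property on a dense subspace of ``pure'' tensors. Fix $F_i \in \mfA(\mfH_1)[m_i,\bpi_i]$ and $G_i \in \mfA(\mfH_2)[n_i,\bsigma_i]$ for $i=1,2$. The multiplication in $\mfA(\mfH_1) \wotimes \mfA(\mfH_2)$ is given by $(F_1 \otimes G_1)(F_2 \otimes G_2) = (F_1 F_2) \otimes (G_1 G_2)$. Applying $\iota_1 \otimes \iota_q$ and using Proposition~\ref{prop:AlgMorphCont} in each factor yields $\iota_1(F_1 F_2) \otimes \iota_q(G_1 G_2) = \bigl(\iota_1(F_1) \iota_1(F_2)\bigr) \otimes \bigl(\iota_q(G_1) \iota_q(G_2)\bigr)$. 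Crucially, in $\cA_B(\mfH_1) \wotimes \cA_q(\mfH_2)$ the bosonic and noncommutative components commute past each other (they act on independent factors), so this equals $\bigl(\iota_1(F_1) \otimes \iota_q(G_1)\bigr) \bigl(\iota_1(F_2) \otimes \iota_q(G_2)\bigr)$, which is the product of the images. The identity extends to all of $\mfA(\mfH_1) \wotimes \mfA(\mfH_2)$ by bilinearity.

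Second, I would prove continuity by working on each subspace $\mfA(\mfH_1)[m,\bpi] \wotimes_\alpha \mfA(\mfH_2)[n,\bsigma]$ separately. On this subspace, the topology is that of $\mfH_1^{\wotimes_\alpha(m-2|\bpi|)} \wotimes_\alpha \mfH_2^{\wotimes_\alpha(n-2|\bsigma|)}$. The map $\iota_1 \otimes \iota_q$ restricts here to a scalar multiple of $\xi_1^{\diamond(m-2|\bpi|)} \otimes \xi_q^{\diamond(n-2|\bsigma|)}$ (up to the $q^{\crb(\cdot)}$ factors, which are bounded). The bosonic factor $\xi_1^{\diamond k}$ extends continuously from $\mfH_1^{\wotimes_\alpha k}$ to $L^p(\Omega,\mu)$ for every $p \in [1,\infty)$ by Gaussian hypercontractivity \eqref{eq:BosHyper}, and $\xi_q^{\diamond \ell}$ extends continuously from $\mfH_2^{\wotimes_\alpha \ell}$ to $\cA_q(\mfH_2)$ by Theorem~\ref{thm:newBanachAlg} (together with \eqref{eq:WklStrBnd2}). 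Combining these via the standard extension of bilinear maps of Banach spaces to the Hilbert tensor product (or more precisely, viewing the codomain $L^p(\Omega,\mu;\cA_q(\mfH_2))$ and using the tensor-product description in Section~\ref{sec:MixSys}), we obtain continuity of $\iota_1 \otimes \iota_q$ on each subspace. Since the direct sum over $(m,\bpi,n,\bsigma)$ is algebraic (only finitely many non-zero components), continuity on each component implies continuity globally.

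The Dirac case $\iota_1 \otimes \iota_F$ proceeds in exactly the same way, but uses the modified multiplication in $\mfA^F(\mfH_2)$ together with the localised version of $\iota_F$ discussed in Remark~\ref{rem:NonHomo}; the continuity input then comes from the fermionic Wick power estimates of Proposition~\ref{proposition:Wick_Product} applied on each finite-dimensional localisation $b \in \Gr(\mfH)$. The hardest step here is conceptual rather than technical: keeping track of the fact that the algebra morphism property on the tensor product only holds because the bosonic generators act on a classical probability space (hence commute with everything on the fermionic side), so there is no issue with ``crossing'' $q$-weighted contractions between the two factors. Once this is observed, everything reduces to the one-factor case already established.
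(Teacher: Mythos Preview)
Your algebra-morphism argument is fine and matches the reduction to Proposition~\ref{prop:AlgMorphCont}. The gap is in the continuity step. When you write ``combining these via the standard extension of bilinear maps of Banach spaces to the Hilbert tensor product'', this is precisely the non-trivial point, and there is no such standard extension: given bounded linear $A\colon H_1\to B_1$ and $B\colon H_2\to B_2$ with $H_i$ Hilbert and $B_i$ merely Banach, $A\otimes B$ need not extend continuously from $H_1\wotimes_\alpha H_2$ into any reasonable completion of $B_1\otimes B_2$. In your setup $B_1=L^p(\Omega,\mu)$ and $B_2=\cA_q(\mfH_2)$, neither of which is Hilbert, so you cannot simply tensor the two maps. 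Your parenthetical reference to Section~\ref{sec:MixSys} does not help here; that section only \emph{defines} the target space.

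The paper resolves this by ordering the operations so that the delicate extension is done while the coefficient space is still Hilbert. First, since $\iota_1$ lands in $L^2(\Omega,\mu)$, the map $\iota_1\otimes\bone_{\mfA_M(\mfH_2)}$ is Hilbert-to-Hilbert and therefore extends to the Hilbert tensor product automatically, landing in $L^2(\Omega,\mu;\mfA_M(\mfH_2))$. Second, one upgrades from $L^2$ to $L^{\infty-}$ using scalar hypercontractivity together with \cite[Theorem~2.1.9]{Hyt16}, which states that a bounded operator $L^{q_1}(\Omega,\mu)\to L^{q_2}(\Omega,\mu)$ tensors with the identity on any Hilbert space; this applies because $\mfA_M(\mfH_2)$ is Hilbert up to equivalent norms. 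Only after this does one apply $\iota_q$ pointwise, which is then unproblematic. In the Clifford and Dirac cases the target $\CL^2$ is itself Hilbert, so this subtlety disappears, which is why the paper says those cases are ``simple''; your fermionic argument is therefore closer to correct than your mezdonic one.
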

\begin{proof}
	This follows analogously to the proof of \cite[Proposition~2.29]{CHP23}, which in turn is essentially a restatement of \cite[Theorem~2.1.9]{Hyt16}. It in particular states that for any map $T \colon L^{q_1}(\Omega, \mu) \to L^{q_2}(\Omega, \mu)$ for any $q_1, q_1 \in [1,\infty)$ and any Hilbert space $\mfh$, $T \otimes \bone_{\mfh}$ extends to a continuous map  $L^{q_1}(\Omega, \mu ; \mfh) \to L^{q_2}(\Omega, \mu ; \mfh)$.

	First, we note that for every $N,M \in \N$ up to an equivalence of norms, we may consider $\mfA_N(\mfH_1) \wotimes \mfA_M(\mfH_2)$ to be a Hilbert space. Furthermore, the codomain of $\iota_1$ is in fact the Hilbert space $L^2(\Omega, \mu)$ and $\iota_1$ is itself a bounded operator between Hilbert spaces. 
	
	Thus, $\iota_1 \otimes \bone_{\mfA_M(\mfH_2)}$ extends to a continuous map of Hilbert spaces. The hypercontractivity of $\xi_1$ and the result of \cite{Hyt16} imply that $\iota_1 \otimes \bone_{\mfA_M(\mfH_2)}$ in fact maps into $L^{\infty-}(\Omega , \mu ;  \mfA_M(\mfH_2))$. Because $\iota_q$ is a continuous linear map between locally convex spaces it follows that $\bone_{\cA_1(\mfH_1)} \otimes \iota_q$ extends to a continuous map 
	\begin{equ}
		L^{\infty-}(\Omega , \mu ;  \mfA_M(\mfH_2)) \longrightarrow L^{\infty-}(\Omega , \mu ;  \cA_q(\mfH_2)) = \cA_B(\mfH_1) \wotimes \cA_q(\mfH_2) \; .
	\end{equ}
	This proves the assertion for $q \in (-1,1]$. The proof in the Clifford and fermionic case is simple, as the target space is already a Hilbert space. 
\end{proof}





\subsection{Fock Space Algebra $\mapsto$ Regularity Structure and BPHZ Model}  
\label{subsec:FockSpaceAlgtoRegStruc}

Let $N \in \N$ be sufficiently large. Let $\widehat{\cT}_N = (\widehat{\CT}_N , \widehat{G}_N, A)$ be the $\mfA_N(\mfH)$-regularity structure generated by the same set of symbols $\mfL$ and subcritical rule $\cR$ as $\cT_q$. Furthermore, in the fermionic case for $b \in \Gr^{(U)}(\mfH)$, let $\widehat{\cT}^{(F)}_{N;b}$ be the $\mfA^{(F)}_N(b)$-regularity structure  generated by the selfsame rule.


We define the canonical model $\widehat{Z}^N_{(\eps)}$ for $\widehat{\cT}_N$ by setting
\begin{equs}
	\left(\widehat{\Pi}^{N; (\eps)}_x \Xi_{\mfl}\right)(y) \eqdef  \CS^\eps_{\s, 0} \rho(y)  \in \mfH_{\mfl} \subset \mfH = \mfA_N(\mfH)[1,\emptyset] \; ,
\end{equs}
and defining integration as above. The model on root-renormalised trees is defined as follows. First, if $\bpi$ does not consist of pairs, then $\Delta(\mfo_\rho) \equiv 0$. Let $A_i$ be defined as above with $\Pi_x^{(\eps)}$ replaced by $\widehat{\Pi}^{N; (\eps)}_x$. Let $I_i$ denote the index set of $A_i$, and let $I$ be the set obtained by concatenating $I_0$ through $I_{n+1}$ and inserting a further indexing element between every two $A_i$. Thus, $I = [n+m+1]$ where $m = \sum_{i = 0}^{n+1} |I_i|$. Given a contraction $\bsigma$ occurring in the product $A_0 \cdots A_{n+1}$, the corresponding component of the product will be an element of $\mfA_N[n+m+1, \bpi \cup \bsigma]$.  

Analogously, to define $\widehat{Z}^{(F), b, N}_{(\eps)}$ for $\widehat{\cT}_{N;b}^{(F)}$ we simply change 
\begin{equ}
	\left(\widehat{\Pi}^{(F) , b , N ; (\eps)}_{x} \Xi_\mfl \right)(y) \eqdef P_b \CS^\eps_{\mfs, 0} \rho(y) \; . 
\end{equ}

Per constructionem, there is a linear isomorphism $\digamma_q^N$ between the set of scalar trees in $\widehat{\cT}_N$ and $\cT_q$. This map can be extended to all of $\widehat{\cT}_N$ by mapping the algebra decoration $\mfa \in \left(\mfA_N\right)^{\wotimes_\pi n}$ to $\iota_q^{\wotimes n}(\mfa) \in \cA_q^{\wotimes_\pi n}$. The same holds for $\widehat{\cT}^{(F)}_{N; b}$ since $\dim b < \infty$ with the maps denoted by $\digamma^N_{b}$ and $\digamma^N_{F,b}$. 


\begin{proposition}
\label{prop:ComDiagAlg}
	Let $\tau \in \widehat{\cT}_N$ be a tree. Then for $M \geqslant N |T| + n$, with $n$ being the number of negative leaves of $\tau$, we have
	\begin{equ}
		\iota_q\Bigl( \widehat{\Pi}_x^{M; (\eps)} \tau \Bigr) = \Pi^{(\eps)}_x \left( \digamma^M_q (\tau) \right) 
	\end{equ}
	for all $q \in (-1,1]$. For $q = -1$, this holds for the models $\widehat{\Pi}^{b, N ; (\eps)}$ and $\widehat{\Pi}^{F, b, N ; (\eps)}$ for all $b \in \Gr(\mfH)$ and  $b \in \Gr^{U}(\mfH)$ respectively.


\end{proposition}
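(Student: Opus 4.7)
The plan is to proceed by induction on the combined size (number of nodes plus number of edges) of the scalar skeleton of $\tau$, simultaneously proving the analogous statement that $\iota_q \circ \widehat{f}_x^{M;(\eps)} = f_x^{(\eps)} \circ \digamma^M_q$ for the characters defined in \eqref{eq:CanStGr}. The reason we carry both statements along is that the definition of $\Pi_x$ on $\CI_{\mfk}\tau$ in \eqref{eq:CanMod} involves the character $f_x$, and the bound $M \geqslant N|T|+n$ is precisely what guarantees that no information is lost to truncation at any intermediate step of the recursion: each node contributes at most $|T|$ chaos to the product, and negative leaves inject $n$ additional factors.

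For the base cases, we first treat $\1$ and $X^k$ trivially since $\iota_q$ maps the copy of $\mathbb K \subset \mfA_N(\mfH)[0]$ identically into $\cA_q(\mfH)$ and the algebra decoration $\bone$ is preserved. For $\Xi_{\mfl}$ the claim is just the identity $\iota_q(\CS_{\s,0}^{\eps}\rho) = \xi_q^{\diamond 1}(\CS_{\s,0}^{\eps}\rho) = \xi_q^{\mfl,(\eps)}$, where we use that $\mfH_\mfl \hookrightarrow \mfH$ and that the first-chaos component of $\mfA_N(\mfH)$ is exactly $\mfH$. For products $\tau\bar\tau$ of non-root-renormalised trees, multiplicativity of $\iota_q$ (Proposition~\ref{prop:AlgMorphCont}, extended using Proposition~\ref{prop:AlgMorphCont2} in mixed cases) combined with the multiplicativity of both $\Pi^{(\eps)}_x$ and $\widehat{\Pi}^{M;(\eps)}_x$ on non-root-renormalised trees does the job, provided $M$ is large enough that the full product expansion lives in $\mfA_M(\mfH)$; the condition $M \geqslant N|T|+n$ is tailored exactly for this. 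For $\CI_{\mfk}\tau$ with $\mfk \in \mfL^+\times \N^d$, we use Assumption~\ref{as:ScalarV}: the kernel $K_{\mft}$ is $\mathbb{K}$-valued, so convolution against it commutes with any continuous linear map into $\cA_q$, in particular with $\iota_q$. Combining this with the induction hypothesis for $\tau$ and the simultaneous statement for $\widehat{f}_x^{M;(\eps)}$ applied to the Taylor correction terms in \eqref{eq:CanMod}, yields the result. The inductive step for the character $\widehat{f}_x^{M;(\eps)}$ follows the same pattern.

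The main obstacle will be the root-renormalised case. Here, by Proposition~\ref{prop:GraftRep} we write $\tau = (\CF \otimes \bigotimes_v X^{\mfn(v)}) \curvearrowright \tau_\rho$, and need to reconcile the two definitions
\[
\widehat{\Pi}^{M;(\eps)}_x \tau = \Delta(\mfo_\rho)^{\mfA}\bigl( \bigl(\widehat{\Pi}^{M;(\eps)}_x\bigr)^{\wotimes_\pi n}(\CF)\bigr) \prod_v \widehat{\Pi}^{M;(\eps)}_x X^{\mfn(v)}\;,
\]
\[
\Pi^{(\eps)}_x\bigl(\digamma_q^M \tau\bigr) = \Delta_q^{R;\boldsymbol{k},\bpi_\rho}\bigl(C_0,\ldots,C_{n+1}\bigr)\prod_v \Pi_x^{(\eps)}X^{\mfn(v)}\;,
\]
where $\Delta(\mfo_\rho)^{\mfA}$ is the Fock-algebra contraction used in $\widehat Z^{M}_{(\eps)}$. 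The idea is to unfold $\Delta(\mfo_\rho)^{\mfA}$ into its defining sum over contractions of the $n$ negative leaves that extend the partition $\bpi_\rho$; apply $\iota_q$ termwise using the definition $\iota_q\big|_{\mfA(\mfH)[n,\bpi]} = q^{\crb(\bpi)}\xi_q^{\diamond(\cdot)}$; and recognise the result as exactly the expansion of $\Delta_q^{R;\boldsymbol{k},\bpi_\rho}$ given in Definition~\ref{def:PartialWickIntertwined} and Definition~\ref{def:RenMultMap}. The $q$-weight $q^{\crb(\bpi_\rho,\bsigma)}$ appearing there is precisely reproduced by the multiplicativity of $\crb(\bpi_\rho \cup \bsigma)$ established in the proof of Proposition~\ref{prop:AlgMorphCont}. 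The algebra factors $C_0,\ldots,C_{n+1}$ on the $\cA_q$ side correspond under $\iota_q$ to the grouped products of $\iota_q(A_i)$ in the Fock-algebra multiplication, by the morphism property of $\iota_q$, and the truncation bound $M \geqslant N|T|+n$ ensures that none of the mass created during these products lies beyond $\mfA_M(\mfH)$.

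For the fermionic versions, we replace $\iota_q$ by $\iota_{-1}$ or $\iota_F$ as appropriate, use $\widehat{\Pi}^{(F),b,N;(\eps)}$ in place of $\widehat{\Pi}^{N;(\eps)}$, and exploit that for the finite-dimensional $b \in \Gr^{(U)}(\mfH)$ the spaces $\CL^2$ and the algebra coincide as Banach spaces (Remark~\ref{rem:NonHomo}); the argument is otherwise identical, with the $q$-weights replaced by the sign weights $(-1)^{\crb(\bpi)}$ built into $\iota_F$. The verification that the modified multiplication on $\mfA^F(\mfH)$ intertwines correctly through $\iota_F$ with the Dirac covariance $U$ is the only extra bookkeeping needed in that case.
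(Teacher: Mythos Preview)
Your proposal is correct and follows essentially the same approach as the paper: induction over the tree structure, with the easy cases handled by the algebra-morphism property of $\iota_q$ (and scalar-valuedness of the kernels for $\CI_{\mfk}\tau$), and the substantive step being the verification that $\iota_q$ intertwines the two versions of $\Delta(\mfo_\rho)$ on root-renormalised trees via the additivity of $\crb$. Your explicit inclusion of the parallel statement for the characters $f_x$ is a useful clarification that the paper leaves implicit.
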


\begin{proof}
	This is again proven using induction over the tree structure. Most of the steps follow trivially per definitionem using the fact that $\iota_q$ is an algebra morphism when the cut-off $M$ is sufficiently large. We only check the compatibility of the two contraction operations, i.e.\
	\begin{equ}
		\Delta(\mfo_\rho) \circ \iota_q^{\otimes (k+1)} = \iota_q \circ \Delta(\mfo_\rho) \; .
	\end{equ}
	as maps $\mfA_N^{\wotimes_\pi (k+1)} \to \cA_q$. First note that $\mfo_\rho$ is indeed the same on both sides of the equality, since $\digamma^M$ acts as an isomorphism on the level of the tree structure. Now, we only need to compare the factors $q^r$ multiplying each summand. Unpacking the defining Equations~\eqref{eq:RltvContr} and \eqref{eq:AbsContr}, we see that for $A_i \in \mfA_N[n_i, \bsigma_i]$ for $i = 0,\dots, n$ and a contraction $\boldsymbol{\upsilon}$ between $\iota_q(A_1), \dots, \iota_q(A_{n-1})$. Then per definitionem
	\begin{equs}
		\Delta(\mfo_{\rho})\left( \iota_q(A_0), \dots, \iota_q(A_n) \right) &= q^{r} \iota_q(A_0)\Delta_q^{R; \boldsymbol{k}, \boldsymbol{\pi}}\left(A_{1} \otimes \cdots \otimes A_{n-1} \right) \iota_q(A_n) \; .
	\end{equs}
	where $r = \sum_{i = 1}^{n-1} (\crb(\bsigma_i))$ and $\boldsymbol{k} = (1, \dots, 1) \in \N^n$. Letting $\boldsymbol{\ell} = (n_1, \dots, n_{n-1})$ we can split the $\Delta_q$-term up into
	\begin{equs}
		q^r \Delta_q^{R; \boldsymbol{k}, \boldsymbol{\pi}}  & \left(A_{1} \otimes \cdots \otimes A_{n-1} \right) = \sum_{\boldsymbol{\sigma} \in \CP^R_{0, \boldsymbol{\ell}}} q^{\crb(\boldsymbol{\upsilon}, \boldsymbol{\sigma})} \xi_q^{|\ell|, \bsigma}( A_1 \otimes \cdots \otimes A_{n-1}) 
	\end{equs}
	where $\boldsymbol{\upsilon} \eqdef \bigcup_{i = 1 }^{n-1} \boldsymbol{\sigma}_i \cup \boldsymbol{\pi} $. Now, each of these summands is exactly the image of the component of $A_1 \cdots A_{n-1}$ in $\mfA_N[n + \sum_i n_i , \boldsymbol{\upsilon} \cup \bsigma]$ under $\iota_q$. This component in turn is the image under $\Delta(\mfo_\rho)$ of the $\mfA_n[\sum_i n_i , \bigcup_{i} \bsigma_i \cup \bsigma]$-component of $A_1 \cdots A_{n-1}$, which proves the assertion.
\end{proof}

With these definitions at hand, we are finally able to define the BPHZ character for the Fock space algebra regularity structure, and thereby the BPHZ character for the mezdonic regularity structure.

\begin{definition}[BPHZ Character]
	The BPHZ character for $\widehat{\cT}_N$ w.r.t.\ $\widehat{Z}^N_{(\eps)}$ is defined inductively by setting
	\begin{equ}
		\ell^{N ; (\eps)}_{\BPHZ}(\tau, \bpi) \eqdef - \left( \widehat{\PPi}^{N;(\eps)} \left( M^\circ_{R_{\ell^{N; (\eps)}_{\BPHZ}}} R'_{\ell^{N; (\eps)}_{\BPHZ}} (\tau) \right)(0)\right)[n, \bpi] \in \mathbb{K}
	\end{equ}
	for $(\tau, \bpi) \in \mfQ$ if $\tau$ consists only of pairs and $\ell^{N ; (\eps)}_{\BPHZ}(\tau, \bpi) = 0$ otherwise. Here $F[n, \bpi]$ denote the component of $F \in \mfA$ in $\mfA_N[n,\bpi]$ and we have also canonically identified $\mfA_N[n,\bpi]$ with $\mathbb{K}$.

	The BPHZ character for $\cT_q$ with $q \in (-1,1]$, w.r.t.\ the canonical model $Z^q_{(\eps)}$ is given by
	\begin{equ}
		\ell^{q;(\eps)}_{\BPHZ}(\tau, \bpi) \eqdef \ell^{N ; (\eps)}_{\BPHZ}(\tau,\bpi)
  	\end{equ}
	where $N \in \N$ is chosen arbitrarily but larger than the number of negative leaves of $\tau$ and \dash since $\tau$ is a classic tree \dash we have identified it with the corresponding tree in $\widehat{\CT}_N^-$.

	For $q = -1$ and $b \in \Gr^{(U)}(\mfH)$ we define the characters $\ell_{\BPHZ}^{b , N ; (\eps)}$, $\ell_{\BPHZ}^{F, b , N ; (\eps)}$  and $\ell_{\BPHZ}^{b ; (\eps)}$, $\ell_{\BPHZ}^{F, b ; (\eps)}$ analogously w.r.t.\ $\widehat{\cT}_{N; b}$ and $\widehat{\cT}^{F}_{N; b}$ respectively.

\end{definition}

\begin{remark}
	In the definition of the extended decoration in Definition~\ref{def:DecTrees}, we allow $\bpi$ to be an arbitrary partition of the negative leaves of $\tau$ whereas only a partition into pairs makes sense in the context of $\mfA_N$.
\end{remark}

\begin{remark}
	We note here that $\ell^{q; (\eps)}_{\BPHZ}(\tau, \bpi)$ captures the numerical value of the counterterms that diverge as one removes the mollification. In particular, they are $q$-independent. The combinatorial factors, such as applying the correct $\Delta_q$-maps, only appear after reconstruction\slash applying the morphism $\digamma_q$. 
\end{remark}

\begin{remark}
	The induction starts with trees $\tau$, s.t.\ $[\tau: \sigma] = \emptyset$ for all $\sigma \in \CT^- \setminus\{\tau\}$, in which case
	\begin{equ}
		\ell^{N;(\eps)}_{\BPHZ}(\tau, \bpi ) = - \left(\widehat{\PPi}^{N;(\eps)} \tau (0) \right)[n , \bpi] \; .
	\end{equ}
\end{remark}

\begin{remark}
	Although it appears as if the BPHZ characters depend on the base point $0$, the fact that we have chosen both the inner products of the individual $\mfH_\mfl$ as well as the integration kernels to be translation invariant guarantees that it is independent.
\end{remark}

Since $\mfA(\mfH)$ is a locally $m$-convex algebra we could directly solve $\cA_q(\mfH)$-valued SPDEs directly in an $\overline{\mfA}(\mfH)$-regularity structure, once we have completed $\mfA(\mfH)$ to $\overline{\mfA}(\mfH)$. This would yield a hierarchy of solutions $(u_N)_{N \in \N}$ with $u_N \in \cC\left([0,T_N]  ; \cD'\left(\R^d ; \mfA_N(\mfH) \right) \right)$ for a monotonously decreasing sequence of stopping times $(T_N)_N \subset \R_{>0}$. These then fit together to a stopped solution
\begin{equ}
	u(t) =\bone_{\left[T_{1},T_0\right]}(t) u_0(t) + \sum_{N = 1}^\infty \bone_{\left[T_{N+1},T_N\right]} \left( u_{N}(t) - u_{N-1}(t)\right) \; . 
\end{equ} 
To transport this equation back to $\cA_q$, we would need to extend $\iota_q$ to the completion (or at least a sufficiently large subspace thereof).

When $|q|<1$, this would be a strictly inferior strategy when dealing with local-in-time solutions, as this would at best give us the existence of an unbounded operator-valued solution. 
In the case $q = 1$, we could in principle topologise $\CA_B(\mfH)$ appropriately so that $\iota_1$ extends to the closure, however, this would be more akin to solutions w.r.t.\ an extended white noise calculus rather than pathwise solutions one typically considers in the context of regularity structures and we will not pursue this further here.



From now on, we will always just write $R$ instead of $R_{\ell_{\BPHZ}}$.

\subsection{Convergence of the BPHZ Model}\label{sec:ConvBPHZ}

We start this section out by noting the following direct consequence of Propositions~\ref{prop:ComDiagAlg}~\&~\ref{prop:NegModBnd}. In particular, the following corollary allows us to abstract away the question of the convergence of the model for each $q \in [-1,1]$ to the convergence of a single model based on the algebra $\mfA_N(\mfH)$.
\begin{corollary}\label{cor:FocktoQbound}
	There exists $N \in \N$ large enough that only depends on the subcritical rule $\cR$, s.t.\ for all $\gamma \in \R$ there exist $k \in \N$ and for all $\K \Subset \R^d$ there exists $\overline\K \Subset \R^d$, s.t.\ for all $q \in (-1,1)$
	\begin{equ}
		\vvvert Z^{q; R}_{(\eps)} \vvvert_{\gamma ; \mfK} \lesssim_{q} \left( 1+ \bigl\| \widehat{\Pi}^{N ; (\eps), R} \bigr\|_{\widehat{\CT}_N^- ; \overline\mfK} \right)^k
	\end{equ}
	uniformly in $ \eps \in (0,1]$. 

	In the case $q = -1$, for all $b \in \Gr^{(U)}(\mfH)$ we have the bounds 
	\begin{equ}
		\vvvert Z^{q; R}_{(\eps) ; b } \vvvert_{\gamma ; \mfK} \lesssim_{q} \left( 1+ \bigl\| \widehat{\Pi}^{(F),b, N ; (\eps), R} \bigr\|_{\widehat{\CT}_N^- ; \overline\mfK} \right)^k
	\end{equ}
	
	When considering a random algebra $\cA_B(\mfH') \wotimes \cA_q(\mfH)$ with $q \in (-1,1)$ one has for every $p \in [1,\infty)$
	\begin{equ}
		\left\| \vvvert Z^{q; R}_{(\eps)} \vvvert_{\gamma ; \mfK}  \right\|_{L^p}\lesssim_{q, p} \left( 1+ \bigl\| \widehat{\Pi}^{N ; (\eps), R} \bigr\|_{\widehat{\CT}_N^- ; \overline\mfK} \right)^k \;  
	\end{equ}
	with the analogous bound for $q = -1$.

	Under the same conditions, we also have
	\begin{equs}
		\vvvert Z^{q; R}_{(\eps)} &; Z^{q; R}_{(\eps')} \vvvert_{\gamma ; \mfK} \lesssim_{q} \\
		&\lesssim\left( 1+ \bigl\| \widehat{\Pi}^{N ; (\eps), R} \bigr\|_{\widehat{\CT}_N^- ; \overline\mfK} \vee \bigl\| \widehat{\Pi}^{N ; (\eps'), R} \bigr\|_ {\widehat{\CT}_N^- ; \overline\mfK} \right)^k \bigl\| \widehat{\Pi}^{N ; (\eps), R} - \widehat{\Pi}^{N ; (\eps'), R} \bigr\|_ {\widehat{\CT}_N^- ; \overline\mfK}
	\end{equs}
	uniformly in $\eps, \eps' \in (0,1]$, and analogously for $q=-1$ and random algebras. 
\end{corollary}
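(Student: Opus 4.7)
The plan is to combine the two propositions cited in the statement in a fairly mechanical way. The key structural ingredient is that, by the definition of the BPHZ characters $\ell^{q;(\eps)}_{\BPHZ}$ and $\ell^{N;(\eps)}_{\BPHZ}$ from the previous subsection, both are built out of the same scalar numerical values stored in $\mfA_N[n,\bpi] \cong \mathbb{K}$. Consequently, the root-renormalisation maps $R = R_{\ell_{\BPHZ}}$ on the two sides are intertwined by the algebra morphism $\digamma_q^N$, in the sense that $\digamma_q^N \circ M_R^{\widehat{\cT}_N} = M_R^{\cT_q} \circ \digamma_q^N$ on scalar trees (this is the content the induction would verify, starting from the base case where $R$ acts as identity, and using that the extraction-contraction combinatorics depend only on the tree structure, which $\digamma_q^N$ preserves).

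First, I would apply Proposition~\ref{prop:ComDiagAlg} to each tree $\sigma \in \cT_q^-$. Choosing $N$ large enough (depending only on $\cR$ since $\CT^-$ is finite by subcriticality), and writing $\tau = (\digamma_q^N)^{-1}(\sigma) \in \widehat{\CT}_N^-$, this combined with the intertwining above yields
\begin{equ}
\Pi^{q;R}_{(\eps),x} \sigma = \Pi^{q;(\eps)}_x (M_R \sigma) = \iota_q \circ \widehat{\Pi}^{N;R}_{(\eps),x} \tau \; .
\end{equ}
By Proposition~\ref{prop:AlgMorphCont}, $\iota_q$ is continuous (in the fermionic case one uses $\iota_F$ or $\iota_{-1}$ applied to $b \in \Gr^{(U)}(\mfH)$ and Remark~\ref{rem:NonHomo} to reduce Hilbert-space norms to $C^{\ast}$-norms), so its restriction to $\mfA_N$ has finite operator norm depending on $N$ and $q$. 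This gives the negative-tree bound
\begin{equ}
\| \Pi^{q;R}_{(\eps)} \|_{\cT_q^-;\overline{\K}} \lesssim_{q,N} \bigl\| \widehat{\Pi}^{N;(\eps),R} \bigr\|_{\widehat{\CT}_N^-;\overline{\K}} \; ,
\end{equ}
and analogously for the $b$-indexed fermionic models. Then Proposition~\ref{prop:NegModBnd} applied on the $q$-side immediately yields the first asserted bound with some $k = k(\gamma, \cR)$, and a compact $\overline{\K}$ that is the appropriate fattening of $\K$.

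For the random-algebra case $\cA_B(\mfH') \wotimes \cA_q(\mfH)$, the same scheme works but one replaces $\iota_q$ with $\iota_1 \otimes \iota_q$ and invokes Proposition~\ref{prop:AlgMorphCont2}, which in particular gives continuity into $L^{\infty-}(\Omega,\mu;\cA_q)$. Continuity into every $L^p$ space yields the $L^p$-in-probability bound after taking the seminorm of the model, noting that $\|\bigcdot\|_{L^p}$ passes through the suprema and integrals defining $\vvvert\bigcdot\vvvert_{\gamma;\K}$ by Minkowski and monotone convergence.

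Finally, the difference bound is completely parallel: one uses the Lipschitz part of Proposition~\ref{prop:NegModBnd} applied to the two models $Z^{q;R}_{(\eps)}$ and $Z^{q;R}_{(\eps')}$, and the identity
\begin{equ}
\Pi^{q;R}_{(\eps),x}\sigma - \Pi^{q;R}_{(\eps'),x}\sigma = \iota_q \bigl( \widehat{\Pi}^{N;R}_{(\eps),x}\tau - \widehat{\Pi}^{N;R}_{(\eps'),x}\tau \bigr)\;,
\end{equ}
together with continuity of $\iota_q$, to pass the negative-tree difference bound across $\digamma_q^N$. I do not expect any real obstacle here: the essential work has already been done in Propositions~\ref{prop:ComDiagAlg} and~\ref{prop:NegModBnd}, and the only point that requires mild care is the compatibility of the BPHZ renormalisations with $\digamma_q^N$, which is essentially built into the definitions.
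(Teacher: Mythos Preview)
Your proposal is correct and follows essentially the same approach as the paper's proof: establish the intertwining $M_{R_q}\circ\digamma_q^N = \digamma_q^N\circ M_{R_N}$ from the shared definition of the BPHZ characters, combine with Proposition~\ref{prop:ComDiagAlg} to get $\Pi^{q;R}_{(\eps),x}\sigma = \iota_q\bigl(\widehat{\Pi}^{N;(\eps),R}_x\tau\bigr)$, use continuity of $\iota_q$ to transfer the negative-tree bound, and then feed this into Proposition~\ref{prop:NegModBnd}. The paper handles the random-algebra case by appealing to Remark~\ref{rem:MixAlg1} rather than Proposition~\ref{prop:AlgMorphCont2} and Minkowski, but this is a cosmetic difference.
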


\begin{remark}
\label{rem:MixConv}
	We note here that we are actually mixing two modes of convergence when solving SPDEs, while letting the renormalised model converge to its UV limit. On one hand, the approximation of the renormalised driving noises (and thus the model) happens in $\cA_B(\mfH)$ as the above proposition shows. On the other hand, we find solutions to SPDEs driven by fixed noises using pathwise arguments, i.e.\ using convergence almost everywhere. We are thus essentially working within a subspace of the $\CA_B(\mfH') \wotimes \cA_q(\mfH)$-regularity structure. This is why one can only establish continuity w.r.t.\ convergence in measure for the solution map provided by the abstract fixed-point theorem. 
\end{remark}

\begin{proof}
	Let $N$ be large enough such that all trees in $\CT_q^-$ have at most $N$ negative leaves. Let $\tau \in \CT_q^-$ be a tree with $n$ negative leaves with scalar skeleton $\tilde{\tau}$. We identify it with its counterpart in $\widehat{\CT}^-_N$. The definition of $\ell^{q; (\eps)}_{\BPHZ}$ in terms of $\ell^{N ; (\eps)}_{\BPHZ}$ and the definition of the extraction-contraction renormalisation \dash which except through $\ell$ is independent of the chosen algebra $\CA$ \dash implies that
	\begin{equ}
		R_{q} \tau = \digamma^N_q \left( R_N \tau \right) \; ,
	\end{equ}
	where $R_q$ it the root renormalisation map in $\cT_q$ and $R_N$ in $\widehat{\cT}_N$. Since $\digamma_q^M$ is an algebra morphism, it follows that
	\begin{equ}
		M_{R_{q}} \tau = \digamma^N_q \left( M_{R_N} \tau \right) \; .
	\end{equ}
	Thus,
	\begin{equ}
		\Pi^{(\eps), R}_x \tau = \iota_q \left( \widehat{\Pi}_x^{N ; (\eps)}\right)
	\end{equ}
	and we  obtain the bound
	\begin{equ}
		\left\| \Pi^{(\eps), R} \right\|_{\CT^-  ; \K , \mfp} \lesssim \left\| \widehat{\Pi}^{N ; (\eps), R} \right\|_{\CT^-  ; \K } \; .
	\end{equ}
	The assertion now follows from Proposition~\ref{prop:NegModBnd}. The case $q = -1$ follows from the analogous properties of $\digamma^N_{(F), b}$ and the random algebra case follows from the deterministic case using Remark~\ref{rem:MixAlg1}.
\end{proof}

To conclude the existence of the limiting models, we therefore only need to establish the existence of distributions $\widehat{\Pi}^{N;R}_x \tau \in \cD'(\spacetime ; \mfA)$ for every $\tau \in \widehat{\CT}_N^-$. Since for all $\eps \in (0,1]$, $\widehat{\Pi}_x^{N; (\eps), R} \tau \in \mfA[n]$. Thus, we only need to check that the components in $\mfA[n , \bpi]$ for all $\bpi \in \CP_{[n]}$ are well-defined in the limit, satisfy the correct scaling bounds, and converge.

Although we believe it possible to construct a noncommutative analogue of the spectral gap approach of \cite{HS24}, we will instead directly use the main result of \cite{HS24}. For every component of every tree $\tau \in \widehat{\CT}^-_N$ we construct a commutative model that controls it. Since we are considering only finitely many trees with finitely many components, this is enough to conclude the result.

For the remainder of this subsection, we restrict ourselves to the case $q \in (-1,1)$, as the projections $P_b$ generically break translational invariance, which is a key ingredient in the proof of the existence of the BPHZ renormalised model in \cite{HS24}.\footnote{Although we have not thoroughly studied the newly published extension to the non-translation-invariant case \cite{BSS25}, we expect that this still would not be enough, as $P_b$ would generically correspond to a non-local term in the differential operator.} We return to the case $q = - 1$ in Section~\ref{sec:BPHZFermi}.

For this purpose, we need the following observation. Let $\tau$ be a negative tree with $n$ negative leaves, which we will denote simply with $i \in [n]$. The norm of the component of $\widehat{\Pi}_x^{N; (\eps), R} \tau$ in $\mfA[n, \bpi]$, henceforth abreviated $\tau^{\eps}_x[n,\bpi]$, tested against any smooth function $\eta$ is the same as the component in the $(n-2|\bpi|)^{\text{th}}$ chaos of the model of a certain $q=1$, i.e.\ commutative, tree $\widetilde\tau$. This tree is the same as $\digamma^N_1 \tau$, where we, however, assume that the (stochastic) noises $i \in [n]$, corresponding to the negative leaves, are all pairwise independent except if $(i,j) \in \bpi$ for $i,j \in [n]$.

This is done by changing $\mfL^-$ to $\bigsqcup_{i = 1}^n \mfL^-$ thus producing independent noises $\Xi_{\mfl}^1$, \dots, $\Xi_{\mfl}^n$ for every old $\Xi_\mfl$. This changes $\mfH$ to $\mfH^n$ as well as the rule to allow any of the $\Xi_{\mfl}^i$ as successors where it only allowed $\Xi_{\mfl}$ before. Since the new noises have the same regularity as the old noises, the new rule is still subcritical. The decorations of the negative leaves of $\tau$ are then replaced with the independent $\Xi_{\mfl}^i$ unless it is a contracted pair. We note that the noises of disjoint pairs in $\bpi$ are still assumed to be independent in this construction.

To establish this we note the following points:
\begin{itemize}
	\item Let $\tilde\xi_1, \dots, \tilde\xi_k$ be $k$ independent (stochastic) $\mfH$-white noises. The subspace of $L^2(\Omega, \d \mu)$ generated by
	\begin{equ}
		\left\{ \tilde\xi_1(f_1) \cdots \tilde\xi_k(f_k) \, \big| \, f_i \in \mfH \right\}
	\end{equ}
	is isometrically isomorphic to $\mfH^{\wotimes_{\alpha} k }$. This is important since per definitionem we can write $\tau_x^{(\eps)}[n , \bpi ](y)$ exactly as
	\begin{equ}
		\int G_{\tau}\left(y ,x, (z_i)_{i \in [n] \setminus \bpi } \right) \bigotimes_{i \in [n] \setminus  \bpi } \rho^{(\eps)}_{\mfl_i}(z_i) \prod_{i \in [n] \setminus  \bpi }\d z_i  \in \mfH^{\wotimes_\alpha (n - 2|\bpi|)}
	\end{equ}
	for some (possibly singular) integration kernel $G_x$. Here $\rho_{\mfl_i}^{(\eps)}(z)$ is the $\widehat{\Pi}^{N ; (\eps)}$-model of the $i^{\text{th}}$ negative leaf $\Xi_\mfl$ with $\mfl \in \mfL^-\times \N^d$.

	By the above isometry, the Hilbert space norm of $\tau_x^{(\eps)}[n,\bpi]$ is the same as that of $\widetilde{\tau}$ if and only if $G_x$ is the same kernel for $\widetilde{\tau}$.

	\item Let $\sigma$ be a contracted version of $\tau$, i.e.\ it carries an extended decoration but is otherwise the same as $\tau$, and let $\widetilde{\sigma}$ be $\widetilde{\tau}$ equipped with the same extended contraction. Then for every $\eps \in (0,1]$ and $x \in \R^d$, $\widehat{\Pi}_x^{N; (\eps)}\sigma [n, \bpi]$ is mapped exactly to the component of $\Pi_x^{1 ; (\eps)} \widetilde{\sigma}$ in the $(n - 2|\bpi|)^{\text{th}}$ chaos.  This is the case by construction, since the inductive formulae for $\Pi^{1;(\eps)}_x$ and $\widehat{\Pi}^{N ; (\eps)}_x$ produce the same kernel $G_x$, cf.\ Proposition~\ref{prop:TreeModIndep}, and
	\begin{equ}
		\Delta(\mfo) \circ \iota_q^{\otimes (n-2|\bpi|)}  = \iota_q \circ \Delta(\mfo)
	\end{equ}
	by the proof of Proposition~\ref{prop:ComDiagAlg}.

	\item Let $\tilde{\ell}$ denote the renormalisation characters of the regularity structure of $\tilde\tau$. First note $\tilde{\ell}^{1; (\eps)}_{\BPHZ}$ is defined in terms $\tilde{\ell}^{N ; (\eps)}_{\BPHZ}$. Furthermore, $\tilde{\ell}^{N ; (\eps)}_{\BPHZ}(\varsigma, \bsigma)$, by construction, can only depend on contractions $\bsigma \subset \bpi$, on which it agrees again by construction with $\ell^{N ; (\eps)}_{\BPHZ}$.

	Finally, apart from the change of $\ell$ the algebraic action of $R$ and $M_R$ on $\widehat{\CT}_N$ and the regularity structure of $\widetilde{\tau}$ are identical for scalar trees, in particular on $\CT^-$, and only the terms with extended decoration $(\varsigma, \bsigma)$, s.t.\ $\bsigma \subset \bpi$, can contribute to $\mfA[n,\bpi]$ and the $(n-2|\pi|)^{\text{th}}$ chaos. For the former, this follows by definition of $\widehat{\Pi}^{N;(\eps)}$ and by construction for the latter, as any other contraction will lead to $0$ since the corresponding contracted noises are independent.

	\item Combining the last two points it follows that $\tau^{(\eps)}_x[n , \bpi]$ and the component in the $(n - 2|\bpi|)^{\text{th}}$ chaos of $\Pi^{1 ; (\eps), R} \widetilde{\tau}$ must agree.
\end{itemize}

Having established this, the following theorem follows directly from \cite[Proposition~5.2~\&~Theorem~6.9]{HS24}, as the Hilbert space norm of the individual chaos components can be controlled by an arbitrary $L^p$-norm in the probability space using hypercontractivity.

\begin{theorem}
\label{thm:BPHZ}
	We assume that $- |\s| < |\mfl|_{\mfs}$ for all $\mfl \in \mfL_-$, that for two trees $\tau, \bar\tau$, $|\tau|_{\s} = |\bar\tau|_{\s} $ implies that $\tau$ and $\bar\tau$ have the same number of negative leaves, and that if $\tau$ has at least two edges then $|\tau|_{\s} > - \frac{|\s|}{2}$.

	Then the limiting model
	\begin{equ}
		\widehat\Pi^{N;R}_x \tau \eqdef \lim_{\eps \downarrow 0} \widehat\Pi^{N;(\eps),R}_x \tau
	\end{equ}
	exists for all $\tau \in \CT^-$ and we have the bound for all $\eps \in [0,1]$
	\begin{equ}
		\bigl\| \widehat\Pi^{N;(\eps),R} \bigr\|_{\CT^- ; \K} \lesssim_{\K} 1
	\end{equ}
	and there exists $\theta > 0$, s.t.\
	\begin{equ}
		\bigl\| \widehat\Pi^{N;R} - \widehat\Pi^{N;(\eps),R} \bigr\|_{\CT^- ; \K} \lesssim \eps^\theta \; .
	\end{equ}
\end{theorem}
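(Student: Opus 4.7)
My plan is to reduce the statement to the finitely many scalar trees $\tau \in \widehat{\CT}^-_N$ and, for each such $\tau$ with $n$ negative leaves, estimate the components $\tau_x^{(\eps)}[n,\bpi] \eqdef \CQ_{\mfA[n,\bpi]}\bigl(\widehat{\Pi}^{N;(\eps),R}_x\tau\bigr)$ for each $\bpi \in \CP_{[n]}$ separately. The discussion preceding the theorem already identifies each such component with the $(n-2|\bpi|)$-th Wiener chaos component of $\Pi^{1;(\eps),\widetilde{R}}_x\widetilde{\tau}$, where $\widetilde{\tau}$ lives in an enlarged scalar ($q=1$) regularity structure built over the enlarged label set $\bigsqcup_{i\in[n]}\mfL^{-}$. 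Concretely, the $n$ negative leaves of $\tau$ are replaced by $n$ \emph{independent} copies of the bosonic noise assigned to the matching label, except that the two leaves in any pair $(i,j)\in\bpi$ share the same copy. The kernels and abstract integrations are unchanged, so $\widetilde{\tau}$ has the same degree and the enlarged rule is again subcritical with the same regularity assignment.

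The first step is to check that $\widetilde{\tau}$ satisfies all of the hypotheses of Theorem~6.9 of \cite{HS24}: the assumption $-|\s|<|\mfl|_{\s}$, the constraint that trees of equal degree carry the same number of noises, and the lower bound $|\tau|_{\s}>-|\s|/2$ for trees with at least two edges are invariant under the relabelling described above. Next I must verify that, along the inductive construction of the BPHZ character, only the contractions $\bsigma \subset \bpi$ can produce a nonzero contribution to $\tau_x^{(\eps)}[n,\bpi]$, because contractions of independent noises vanish in expectation in the expanded system; moreover, on the relevant sub-collection of negative subtrees, the character $\widetilde{\ell}^{N;(\eps)}_{\BPHZ}$ coincides with $\ell^{N;(\eps)}_{\BPHZ}$ by construction. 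Combined with the fact that the recursion \eqref{eq:StatMod}--\eqref{eq:StatMod2} produces the same integration kernel $G_\tau$ on both sides, and using the isometry between a fixed chaos of a Gaussian family and the corresponding symmetric Hilbert tensor product, this identifies $\tau_x^{(\eps)}[n,\bpi]$ isometrically with a fixed chaos component of $\Pi^{1;(\eps),\widetilde{R}}_x\widetilde{\tau}$ tested against the appropriate test functions.

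With these identifications in place, I then invoke \cite[Proposition~5.2 and Theorem~6.9]{HS24} to obtain, for each $p\in[1,\infty)$ and compact $\K$, estimates of the form
\begin{equ}
\E\bigl[\sabs{\Pi^{1;(\eps),\widetilde{R}}_x\widetilde{\tau}\bigl(\CS^\lambda_{\s,x}\eta\bigr)}^p\bigr]^{1/p}\lesssim \lambda^{|\tau|_\s}\;,\quad \E\bigl[\sabs{(\Pi^{1;(\eps),\widetilde{R}}_x - \Pi^{1;(\eps'),\widetilde{R}}_x)\widetilde{\tau}\bigl(\CS^\lambda_{\s,x}\eta\bigr)}^p\bigr]^{1/p}\lesssim \eps^{\theta}\lambda^{|\tau|_\s - \theta}\;,
\end{equ}
uniformly in $\eps,\eps'\in(0,1]$ and in $\eta\in\CB^r_{\s,0}$. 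Because the projection onto a fixed finite chaos commutes with these estimates, and because Gaussian hypercontractivity allows one to bound the Hilbert space $\|\bigcdot\|_{\CF_0}$ norm of the chaos component by an arbitrary $L^p$ moment of the random variable itself, these inequalities immediately yield the analogous scaling bounds for $\|\tau_x^{(\eps)}[n,\bpi]\bigl(\CS^\lambda_{\s,x}\eta\bigr)\|_{\CF_0}$ and for the corresponding difference. Finally, a standard Kolmogorov-type argument lifts pointwise $L^p$ control into almost-sure control of the $\CC^{|\tau|_\s}_\s$-seminorms on compacta, and summing over the finitely many $\bpi\in\CP_{[n]}$ (and the finitely many trees $\tau\in\widehat{\CT}_N^-$) delivers the uniform bound and the Cauchy property stated in the theorem; the limit $\widehat{\Pi}^{N;R}$ is then extracted and satisfies the quantitative rate $\eps^\theta$.

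The main obstacle I expect is the bookkeeping step in the second paragraph: ensuring that the algebraic action of the preparation map $R$ on the enlarged commutative structure agrees, after projection onto the $(n-2|\bpi|)$-th chaos, with the action of $R$ on $\widehat{\cT}_N$ followed by projection onto $\mfA[n,\bpi]$. This requires tracking which extended decorations $(\varsigma,\bsigma)$ can possibly contribute to the relevant chaos and showing that the only surviving ones are those with $\bsigma\subset\bpi$, for which the two renormalisation recipes coincide by construction of $\ell^{N;(\eps)}_{\BPHZ}$.
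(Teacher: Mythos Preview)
Your approach is essentially identical to the paper's: reduce to individual components $\tau_x^{(\eps)}[n,\bpi]$, identify each with a fixed Wiener chaos of a carefully relabelled commutative model $\widetilde\tau$ over the enlarged noise set $\bigsqcup_{i\in[n]}\mfL^-$, verify that the BPHZ characters agree on the relevant contractions $\bsigma\subset\bpi$, and then invoke \cite[Proposition~5.2 and Theorem~6.9]{HS24} together with hypercontractivity. The bookkeeping obstacle you flag in your last paragraph is exactly the content of the bullet points preceding the theorem in the paper.

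One small point of confusion: the Kolmogorov step you invoke at the end is unnecessary and slightly misplaced. The quantity $\bigl\|\widehat\Pi^{N;(\eps),R}\bigr\|_{\CT^-;\K}$ is \emph{deterministic}, since $\widehat\Pi^{N;(\eps),R}$ is a model taking values in the Fock space algebra $\mfA_N(\mfH)$, not in a probability space. Once you have identified $\|\tau_x^{(\eps)}[n,\bpi](\CS^\lambda_{\s,x}\eta)\|_{\CF_0}$ with the $L^2$ norm of the corresponding chaos component of the Gaussian model, the moment bounds from \cite{HS24} (which are uniform in $x\in\K$, $\lambda\in(0,1]$, $\eta\in\CB^r_{\s,0}$) give the desired supremum bound directly; there is no ``almost-sure control'' to extract. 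The paper simply states that the theorem ``follows directly'' from \cite{HS24} once the identification is made, for this reason.
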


\begin{remark}
	Here we have adopted \cite[Assumption~2.31]{HS24}. Since all the noises we consider are Gaussian, they automatically satisfy the spectral gap condition. Furthermore, our noises automatically satisfy the upper bound $|\mfl|_{\s} < \mathrm{scal}(\mfl) - \frac{|\mfs|}{2}$, since $\mathrm{scal}(\mfl) - \frac{|\mfs|}{2}$ exactly corresponds to our definition of $\alpha_{\mfl}$ and $|\mfl|_{\s} = \alpha_{\mfl}- \kappa$.
\end{remark}

\subsection{BPHZ for Fermions}
\label{sec:BPHZFermi}

In order to overcome the issue pointed out in Remark~\ref{rem:NonHomo} we slightly modify the algebras $\mfA^{(F)}(\mfH)$ and $\mfA_N^{(F)}(\mfH)$. As vector spaces, let 
\begin{equs}
	\widetilde{\mfA}(\mfH)^{\circ} &\eqdef Z\left(\cA^{\Cl}_F(\mfH)\right) \otimes \mfA(\mfH)\\
	\widetilde{\mfA}^F(\mfH)^{\circ} &\eqdef Z\left(\cA_F(\mfH)\right) \otimes \mfA^F(\mfH)
\end{equs} 
and analogously for their cut-off versions. We modify the multiplication in $\widetilde{\mfA}(\mfH)^{\circ}$ and $\widetilde{\mfA}^F(\mfH)^{\circ}$ by replacing the contraction terms $\Braket{f,g}_{\mfH}$ and $\Braket{\kappa U f , g}_{\mfH}$ respectively with 
\begin{equ}
	\left[ \balpha(f), \balpha^\dagger(g) \right]_+ \quad \text{and}\quad \left[ \balpha(\kappa U f), \balpha^\dagger(g) \right]_+ \; . 
\end{equ}
This allows us to extend $\pi_b$ for $b \in \Gr(\mfH)$ to an algebra morphism
\begin{equ}
	\tilde\pi_b \colon \widetilde{\mfA}^{(F)}(\mfH) \longrightarrow \mfA^{(F)}(b) \; 
\end{equ}
by setting it to 
\begin{equ}
	\tilde\pi_b \big|_{Z \otimes \mfA^{(F)}[n,\bpi]} \eqdef \pi_b \otimes P_b^{\wotimes_\alpha(n-2|\bpi|)}
\end{equ}
where $Z$ denotes the corresponding centre. Note that $\pi_b$ maps the centre always to the underlying field and we have tacitly identified $\mathbb{K} \otimes \mfA^{(F)}(b)$ with $\mfA^{(F)}(b)$. For $b \in \Gr(\mfH)$ and $N \in \N$ let $\| \bigcdot \|_{N ; b} \eqdef \| \bigcdot \|_N \circ \tilde{\pi}_b$, which is a submultiplicative seminorm. For $n \in \N$ we set 
\begin{equ}
	\| \bigcdot \|_{N ; n} \eqdef \sup_{b \in \Gamma^{(U)}_n} \| \bigcdot \|_{N ; b} 
\end{equ} 
and we denote by $\widetilde\mfA^{(F)}(\mfH)$ the completion of $\widetilde{\mfA}^{(F)}(\mfH)^{\circ}$ w.r.t.\ $\| \bigcdot \|_{N ; \infty} \eqdef \sup_{n \in \N} \| \bigcdot \|_{N ; n}$.

W.r.t.\ this multiplication $\tilde\iota_{-1} \colon \widetilde\mfA(\mfH)^{\circ} \to  \cA_F^{\Cl}(\mfH)$ and $\tilde\iota_{F} \colon \widetilde\mfA^F(\mfH)^{\circ} \to  \cA_F(\mfH)$, with
\begin{equs}
	\tilde\iota_{-1} \big|_{Z \wotimes_{\pi} \mfA[n,\bpi]} &\eqdef (-1)^{\crb(\bpi)} \CM\circ\left(\bone \otimes \bxi_F^{\diamond (n-2|\bpi|)} \right) \; , \\
	\tilde\iota_{F} \big|_{Z \wotimes_{\pi} \mfA^{F}[n,\bpi]} &\eqdef (-1)^{\crb(\bpi)} \CM \circ \left( \bone \otimes \bPsi^{\diamond (n-2|\bpi|)} \right) \; ,
\end{equs}
are now algebra morphisms. Here $\CM \colon Z(\cA^{(\Cl)}_F) \wotimes_\pi \cA^{(\Cl)}_F \to \cA^{(\Cl)}_F$ denotes the multiplication map. It follows directly from the definitions that 
\begin{equs}
	\pi_b \circ \tilde\iota_{-1} & = \iota_{-1} \circ \tilde{\pi}_b \; ,\\
	\pi_b \circ \tilde\iota_{F} & = \iota_{F} \circ \tilde{\pi}_b  \; . 
\end{equs}
and the same holds for the restriction of the maps to $\widetilde{\mfA}^{(F)}_N(\mfH)^{\circ}$ for all $N \in \N$. In particular, this implies that both $\tilde\iota_{-1}$ and $\tilde\iota_{F}$ are continuous, as for example for all $A \in \widetilde\mfA_N(\mfH)^{\circ}$
\begin{equs}
	\| \tilde{\iota}_{-1}(A) \|_{n} &=\sup_{b \in \Gamma_n} \| \tilde{\iota}_{-1}(A) \|_b = \sup_{b \in \Gamma_n} \left\| \pi_b \left( \tilde{\iota}_{-1}(A)\right) \right\| =  \sup_{b \in \Gamma_n} \left\| \iota_{-1}\left( \tilde\pi_b (A)\right) \right\| \lesssim \\
	& \lesssim_n \sup_{b \in \Gamma_n} \left\| \tilde\pi_b (A) \right\|_{N; n} \leqslant \|A\|_{N; \infty} \; , 
\end{equs}
and thus they extend to the completion. Here we used the $N$-contractive estimates for the extended CAR algebra, \cite[Proposition~2.23~\&~2.29]{CHP23}.

We summarise these facts in the following corollary, the proof of which follows analogously to  Propositions~\ref{prop:AlgMorphCont}~\&~\ref{prop:AlgMorphCont2} together with Remark~\ref{rem:NonHomo}.
\begin{corollary}
	The maps $\tilde\iota_{-1}$ and $\tilde\iota_F$ are continuous algebra morphisms. Furthermore, the maps $\iota_1 \otimes \tilde\iota_{-1} \colon \mfA(\mfH_1) \wotimes \widetilde\mfA(\mfH_2) \to \cA_B(\mfH_1) \wotimes \cA_{F}^{\Cl}(\mfH_2)$  and $\iota_1 \otimes \tilde\iota_{F} \colon \mfA(\mfH_1) \wotimes \widetilde\mfA^F(\mfH_2) \to \cA_B(\mfH_1) \wotimes \cA_{F}^{\Cl}(\mfH_2)$ are also continuous algebra morphisms.
\end{corollary}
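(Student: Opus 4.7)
The plan is to establish the two assertions separately: first that $\tilde\iota_{-1}$ and $\tilde\iota_F$ are continuous algebra morphisms on the fermionic Fock space algebras, then extend this to the bosonic tensor products using a Fubini-type argument as in Proposition~\ref{prop:AlgMorphCont2}.

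For the algebra morphism property of $\tilde\iota_{-1}$ and $\tilde\iota_F$ on $\widetilde{\mfA}^{(F)}(\mfH)^\circ$, this is essentially built into the definition. The modified multiplication on these spaces was defined precisely so that the contraction terms $\Braket{f,g}_\mfH$ (respectively $\Braket{\kappa U f, g}_\mfH$) appearing in the original product \eqref{eq:FSMult} are replaced by the anticommutators $\left[\balpha(f), \balpha^\dagger(g)\right]_+$ (respectively $\left[\balpha(\kappa U f), \balpha^\dagger(g)\right]_+$) that appear in the inductive definition \eqref{eq:WickDefDirac} of the Wick powers $\bxi_F^{\diamond n}$ and $\bPsi^{\diamond n}$ in the extended CAR algebra. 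These anticommutators lie in $Z(\cA_F^{\Cl}(\mfH))$ and $Z(\cA_F(\mfH))$ respectively, which is why the central tensor factor was inserted in the definition of $\widetilde\mfA^{(F)}$. A bookkeeping argument following the proof of Proposition~\ref{prop:AlgMorphCont}, and tracking signs via the intertwining number $\crb(\bpi)$ exactly as in that proof, then shows that $\tilde\iota_{-1}$ and $\tilde\iota_F$ are multiplicative on $\widetilde{\mfA}^{(F)}(\mfH)^\circ$.

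For continuity, I would use the factorisations $\pi_b \circ \tilde\iota_{-1} = \iota_{-1} \circ \tilde\pi_b$ and $\pi_b \circ \tilde\iota_F = \iota_F \circ \tilde\pi_b$ already noted before the corollary, combined with the hypercontractive estimates of Proposition~\ref{proposition:Wick_Product}, which yield for each $N \in \N$ a constant $C_N > 0$ such that $\|\iota_{-1}(A)\|_{\pi_b} \leqslant C_N \|A\|_{N;b}$ for all $A \in \mfA_N(b)$, and similarly for $\iota_F$. Combining these we obtain for any $A \in \widetilde{\mfA}_N^\circ(\mfH)$
\begin{equ}
\|\tilde\iota_{-1}(A)\|_n = \sup_{b \in \Gamma_n} \|\pi_b \tilde\iota_{-1}(A)\| = \sup_{b \in \Gamma_n} \|\iota_{-1}(\tilde\pi_b A)\| \leqslant C_N \sup_{b \in \Gamma_n} \|\tilde\pi_b(A)\|_{N;b} \leqslant C_N \|A\|_{N; n}\;,
\end{equ}
and taking the supremum over $n$ gives continuity with respect to $\|\bigcdot\|_{N;\infty}$, so $\tilde\iota_{-1}$ extends uniquely to the completion $\widetilde{\mfA}(\mfH)$. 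The same reasoning applies to $\tilde\iota_F$.

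For the tensor product statements, I would factor $\iota_1 \otimes \tilde\iota_F = (\bone_{\cA_B(\mfH_1)} \otimes \tilde\iota_F) \circ (\iota_1 \otimes \bone)$ and argue exactly as in the proof of Proposition~\ref{prop:AlgMorphCont2}: the first factor extends to a continuous map $L^{\infty-}(\Omega, \mu; \widetilde{\mfA}(\mfH_2)) \to L^{\infty-}(\Omega, \mu; \cA_F(\mfH_2))$ because $\tilde\iota_F$ is continuous and linear (hence commutes with Bochner integration on the appropriate $L^p$ level), and the bosonic hypercontractivity \eqref{eq:BosHyper} together with \cite[Theorem 2.1.9]{Hyt16} (as quoted in the proof of Proposition~\ref{prop:AlgMorphCont2}) handles the $\iota_1 \otimes \bone$ factor on each seminorm-indexed piece. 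The analogous argument works for $\iota_1 \otimes \tilde\iota_{-1}$, using the Clifford version of the hypercontractive bound.

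The main obstacle, as indicated above, is the bookkeeping check of multiplicativity on $\widetilde{\mfA}^{(F)}(\mfH)^\circ$: one must verify that inserting central anticommutator factors in place of scalar contractions really does produce an algebra morphism once we recall that Wick powers in the extended CAR algebra satisfy the recursion \eqref{eq:WickDefDirac} with these central quantities rather than scalars. Beyond this algebraic verification, all remaining steps are routine extensions of existing arguments in the paper.
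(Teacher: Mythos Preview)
Your proposal is correct and follows essentially the same approach as the paper, which simply states that the proof is analogous to Propositions~\ref{prop:AlgMorphCont} and~\ref{prop:AlgMorphCont2} together with Remark~\ref{rem:NonHomo}; indeed your continuity argument for $\tilde\iota_{-1}$ reproduces verbatim the estimate already displayed just before the corollary. One minor imprecision: the constant in your bound $\|\iota_{-1}(A)\|_{\pi_b} \leqslant C_N \|A\|_{N;b}$ also depends on $\dim b$ (hence on the seminorm index $n$) via Proposition~\ref{proposition:Wick_Product}, so the final step should read $\|\tilde\iota_{-1}(A)\|_n \lesssim_{N,n} \|A\|_{N;\infty}$ rather than invoking a supremum over $n$ \dash but this is harmless since $\cA_F^{\Cl}$ is topologised by the family $(\|\cdot\|_n)_n$ and not by their supremum.
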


Let $\widetilde{\cT}_N$ and $\widetilde{\cT}_N^{F}$ denote respectively the $\widetilde{\mfA}_N$-regularity structure and the $\widetilde{\mfA}_N^{F}$-regularity generated by the same rule $\cR$ as $\cT_{-1}$ and $\cT_F$.


For these regularity structures, we can define continuous maps $\tilde\digamma^N$ and $\tilde\digamma^N_F$ in the same way we defined $\digamma_q^N$ using $\tilde{\iota}_{-1}$ and $\tilde{\iota}_{F}$.
We denote the canonical models for these regularity structures by $\widetilde{Z}^{(F), N}_{(\eps)}$. Analogously to Proposition~\ref{prop:ComDiagAlg}, we have for all $\tau \in \widetilde\cT^{(F)}_N$ and $M$ sufficiently large  
\begin{equs}[eq:ExtModCom]
	\tilde{\iota}_{-1} \left( \widetilde{\Pi}_x^{M ; (\eps)} \tau \right) & = \Pi_x^{\Cl ; (\eps)} \left( \tilde{\digamma}^M \tau \right) \; , \\
	\tilde{\iota}_{F} \left( \widetilde{\Pi}_x^{F, M ; (\eps)} \tau \right) & = \Pi_x^{F ; (\eps)} \left( \tilde{\digamma}_F^M \tau \right) \; . 
\end{equs}

We now define the BPHZ characters for $\widetilde{\cT}_N$ and $\widetilde{\cT}^F_N$ analogously to $\ell^{N ; (\eps)}_{\BPHZ}$ and, in particular, in such a way that they are compatible with those of $\widehat{\cT}_N$ and $\widehat{\cT}^F_N$. 
\begin{definition}[BPHZ Characters for Fermions]
	The BPHZ character for $\widetilde{\cT}_N$ and $\widetilde{\cT}^F_N$ w.r.t.\ $\widetilde{Z}^{N}_{(\eps)}$ and $\widetilde{Z}^{F, N}_{(\eps)}$ are defined inductively by setting
	\begin{equs}
		\ell^{\Cl, N ; (\eps)}_{\BPHZ}(\tau, \bpi) &\eqdef - \left( \widetilde{\PPi}^{N;(\eps)} \left( M^\circ_{R_{\ell^{N; (\eps)}_{\BPHZ}}} R'_{\ell^{N; (\eps)}_{\BPHZ}} (\tau) \right)(0)\right)[n, \bpi] \in Z(\cA^{\Cl}_F) \\
		\ell^{F, N ; (\eps)}_{\BPHZ}(\tau, \bpi) &\eqdef - \left( \widetilde{\PPi}^{F, N;(\eps)} \left( M^\circ_{R_{\ell^{F, N; (\eps)}_{\BPHZ}}} R'_{\ell^{F, N; (\eps)}_{\BPHZ}} (\tau) \right)(0)\right)[n, \bpi] \in Z(\cA_F)
	\end{equs}
	for $(\tau, \bpi) \in \mfQ$ if $\tau$ consists only of pairs and $\ell^{N ; (\eps)}_{\BPHZ}(\tau, \bpi) = 0$ otherwise. 

	The BPHZ character for $\cT_{-1}$ and $\cT_{F}$, w.r.t.\ the canonical model $Z^{\Cl}_{(\eps)}$ and $Z^{F}_{(\eps)}$ are given by
	\begin{equs}
		\ell^{\Cl;(\eps)}_{\BPHZ}(\tau, \bpi) &\eqdef \ell^{\Cl, N ; (\eps)}_{\BPHZ}(\tau,\bpi)\\
		\ell^{F;(\eps)}_{\BPHZ}(\tau, \bpi) &\eqdef \ell^{F, N ; (\eps)}_{\BPHZ}(\tau,\bpi)
  	\end{equs}
	where $N \in \N$ is chosen to be larger than the number of negative leaves of $\tau$ but is otherwise arbitrary.
\end{definition}
It follows directly from \eqref{eq:ExtModCom} that the ``global'' characters $\ell^{-1 ; (\eps)}_{\BPHZ}$, $\ell^{F;(\eps)}_{\BPHZ}$ are compatible with the ``local'' characters $\ell^{b ; (\eps)}_{\BPHZ}$, $\ell^{b,F;(\eps)}_{\BPHZ}$ in the following way. 
\begin{proposition}
	For all $(\tau, \bpi) \in \mfO$ and all $b \in \Gr(\mfH)$ and $b \in \Gr^U(\mfH)$ we have
	\begin{equs}
		\pi_b \left( \ell^{\Cl ; (\eps)}_{\BPHZ}(\tau,\bpi) \right)  &=   \ell^{b ; (\eps)}_{\BPHZ}(\tau,\bpi)\;, \\
		\pi_b \left( \ell^{F ; (\eps)}_{\BPHZ}(\tau,\bpi) \right)  &=   \ell^{F, b ; (\eps)}_{\BPHZ}(\tau,\bpi)\;. 
	\end{equs}
\end{proposition}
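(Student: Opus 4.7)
The plan is to prove both identities simultaneously by induction on the partial order $<_{\cT}$ on trees (so that the induction proceeds through the recursive definition of the BPHZ characters, which only involves characters evaluated on strictly smaller trees). I will focus on the Clifford case; the Dirac case is identical after replacing $\tilde\iota_{-1}, \tilde\pi_b, \widetilde\Pi^{N;(\eps)}$ by their $F$-analogues.

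The key analytic input is that $\tilde\pi_b \colon \widetilde{\mfA}_N(\mfH) \to \mfA_N(b)$ is a continuous algebra morphism by construction, and that it intertwines the stationary canonical models:
\begin{equ}
\tilde\pi_b \circ \widetilde{\PPi}^{N;(\eps)} = \widehat{\PPi}^{b,N;(\eps)}\;.
\end{equ}
This intertwining should be proved by a straightforward inductive verification using the defining equations \eqref{eq:StatMod}--\eqref{eq:StatMod2}: on noises it follows because $\tilde\pi_b$ restricts to $P_b$ on the first-chaos component; on $X^k$ it is trivial; it passes through the scalar-valued integration kernels $K_\mft$ since these commute with $\tilde\pi_b$ (here Assumption~\ref{as:ScalarV} is essential); and it passes through root-renormalisations because $\Delta(\mfo_\rho)$ is defined via the extraction-contraction maps $\Delta_q^{R;\boldsymbol{k},\bpi_\rho}$, which are built from multiplication and are therefore $\tilde\pi_b$-equivariant.

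For the inductive step, fix $(\tau,\bpi) \in \mfO$ and assume the identity $\pi_b(\ell^{\Cl,N;(\eps)}_{\BPHZ}(\sigma,\bsigma)) = \ell^{b,N;(\eps)}_{\BPHZ}(\sigma,\bsigma)$ holds for all $(\sigma,\bsigma)$ with $\sigma <_\cT \tau$. By the axioms of preparation maps, both root-renormalisation maps $R_{\ell^{\Cl,N;(\eps)}_{\BPHZ}}$ and $R_{\ell^{b,N;(\eps)}_{\BPHZ}}$ acting on $\tau$ only involve characters evaluated on subtrees $\sigma \in [\tau:\bullet]$ that are strictly smaller in $<_\cT$; similarly, $M^\circ_{R}$ is defined recursively in terms of the action of $R$ on trees grafted below the root, all of which are again strictly smaller. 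Consequently, the induction hypothesis, combined with the fact that $\tilde\pi_b$ is a $\CA$-bimodule morphism and preserves the grafting and integration operations, yields
\begin{equ}
\tilde\pi_b \Bigl( M^\circ_{R_{\ell^{\Cl,N;(\eps)}_{\BPHZ}}} R'_{\ell^{\Cl,N;(\eps)}_{\BPHZ}}(\tau) \Bigr) = M^\circ_{R_{\ell^{b,N;(\eps)}_{\BPHZ}}} R'_{\ell^{b,N;(\eps)}_{\BPHZ}}(\tau)\;,
\end{equ}
as elements of the local Fock space algebra regularity structure after applying $\tilde\pi_b$.

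Combining this with the intertwining of the stationary models and the fact that projection to the $[n,\bpi]$-component commutes with $\tilde\pi_b$ (by its grading-preserving definition), we obtain
\begin{equs}
\pi_b\bigl(\ell^{\Cl,N;(\eps)}_{\BPHZ}(\tau,\bpi)\bigr) &= -\pi_b\Bigl(\widetilde{\PPi}^{N;(\eps)}\bigl(M^\circ_{R} R'(\tau)\bigr)(0)\Bigr)[n,\bpi] \\
&= -\widehat{\PPi}^{b,N;(\eps)}\bigl(M^\circ_{R^b}R'^b(\tau)\bigr)(0)[n,\bpi] = \ell^{b,N;(\eps)}_{\BPHZ}(\tau,\bpi)\;,
\end{equs}
which completes the induction. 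The only mildly subtle point is checking that the $[n,\bpi]$-projection is well-defined after applying $\tilde\pi_b$, which follows because the multiplication on $\widetilde{\mfA}_N(\mfH)^\circ$ was designed precisely so that the contractions defining the grading are mapped covariantly to those of $\mfA_N(b)$ under $\tilde\pi_b$. The main (very minor) obstacle is carefully unpacking the covariance of $\Delta(\mfo_\rho)$ at root-renormalised nodes; but this is immediate from its construction via $\Delta_q^{R;\boldsymbol{k},\bpi_\rho}$ and the scalar-valued nature of the integration kernels.
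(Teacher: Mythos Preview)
Your proof is correct and follows essentially the same approach as the paper, just spelled out in more detail. The paper simply states that the proposition ``follows directly from \eqref{eq:ExtModCom}'' without further elaboration; your inductive unpacking via the partial order $<_{\cT}$, together with the intertwining $\tilde\pi_b \circ \widetilde{\PPi}^{N;(\eps)} = \widehat{\PPi}^{b,N;(\eps)}$ and the fact that $\tilde\pi_b$ is an algebra morphism commuting with the scalar-valued kernels, is precisely the argument needed to make that one-line claim rigorous.
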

With these definitions and commutation relations at hand, we obtain the analogue of Corollary~\ref{cor:FocktoQbound} for fermions. 
\begin{corollary}\label{cor:FocktoFermBound}
	There exists $N \in \N$ large enough that only depends on the subcritical rule $\cR$, s.t.\ for all $\gamma \in \R$ there exist $k \in \N$ and for all $\K \Subset \R^d$ there exists $\overline\K \Subset \R^d$, s.t.\ for all $n \in \N$ with $n = \dim b$
	\begin{equs}
		\vvvert Z^{\Cl; R}_{(\eps)} \vvvert_{\gamma ; \mfK, n} &\lesssim_{n} \left( 1+ \bigl\| \widetilde{\Pi}^{N ; (\eps), R} \bigr\|_{\widetilde{\CT}_N^- ; \overline\mfK} \right)^k \\
		\vvvert Z^{F; R}_{(\eps)} \vvvert_{\gamma ; \mfK, n} &\lesssim_{n} \left( 1+ \bigl\| \widetilde{\Pi}^{F , N ; (\eps), R} \bigr\|_{\widetilde{\CT}_N^- ; \overline\mfK} \right)^k 
	\end{equs}
	uniformly in $ \eps \in (0,1]$. 
	We also have
	\begin{equs}
		\vvvert Z^{\Cl; R}_{(\eps)} &; Z^{\Cl; R}_{(\eps')} \vvvert_{\gamma ; \mfK, n} \lesssim_{n} \\
		&\lesssim\left( 1+ \bigl\| \widetilde{\Pi}^{N ; (\eps), R} \bigr\|_{\widetilde{\CT}_N^- ; \overline\mfK} \vee \bigl\| \widetilde{\Pi}^{N ; (\eps'), R} \bigr\|_ {\widetilde{\CT}_N^- ; \overline\mfK} \right)^k \\
		& \hspace*{4cm} \bigl\| \widetilde{\Pi}^{N ; (\eps), R} - \widetilde{\Pi}^{N ; (\eps'), R} \bigr\|_ {\widetilde{\CT}_N^- ; \overline\mfK} \\
		\vvvert Z^{F; R}_{(\eps)} &; Z^{F; R}_{(\eps')} \vvvert_{\gamma ; \mfK, n} \lesssim_{n} \\
		&\lesssim\left( 1+ \bigl\| \widetilde{\Pi}^{F, N ; (\eps), R} \bigr\|_{\widetilde{\CT}_N^- ; \overline\mfK} \vee \bigl\| \widetilde{\Pi}^{F, N ; (\eps'), R} \bigr\|_ {\widetilde{\CT}_N^- ; \overline\mfK} \right)^k \\
		& \hspace*{4cm} \bigl\| \widetilde{\Pi}^{F, N ; (\eps), R} - \widetilde{\Pi}^{F, N ; (\eps'), R} \bigr\|_ {\widetilde{\CT}_N^- ; \overline\mfK} 
	\end{equs}
	uniformly in $\eps, \eps' \in (0,1]$.

	The analogous bounds hold for the random algebra case. 
\end{corollary}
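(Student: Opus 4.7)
The strategy is a direct adaptation of the proof of Corollary~\ref{cor:FocktoQbound}, with the Banach algebra norm $\vvvert\bigcdot\vvvert$ on $\cA_q$ replaced by the seminorm hierarchy $\|\bigcdot\|_n = \sup_{b \in \Gamma_n^{(U)}} \|\pi_b(\bigcdot)\|$ on $\cA_F^{\Cl}$, respectively $\cA_F$. Choose $N$ large enough that every $\tau \in \CT^-$ has at most $N$ negative leaves, so the truncation $\widetilde{\mfA}_N$ suffices.

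The first step is to promote the algebraic compatibility \eqref{eq:ExtModCom} between canonical models to the renormalised ones. Writing $\tilde R = R_{\ell^{\Cl,N;(\eps)}_{\BPHZ}}$ for the root renormalisation on $\widetilde{\cT}_N$ and $R = R_{\ell^{\Cl;(\eps)}_{\BPHZ}}$ on $\cT_{-1}$, the compatibility $\pi_b(\ell^{\Cl;(\eps)}_{\BPHZ}(\tau,\bpi)) = \ell^{b;(\eps)}_{\BPHZ}(\tau,\bpi)$ together with the fact that $\tilde\digamma^M$ is an algebra morphism carrying extended decorations to extended decorations identifies $R \circ \tilde\digamma^M = \tilde\digamma^M \circ \tilde R$ and hence $M_R \circ \tilde\digamma^M = \tilde\digamma^M \circ M_{\tilde R}$. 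Combined with \eqref{eq:ExtModCom} this gives, for $\tau \in \CT^-$,
\begin{equs}
\Pi_x^{\Cl;(\eps),R} \tau = \tilde\iota_{-1}\left(\widetilde{\Pi}_x^{N;(\eps),\tilde R}\tau\right)\;,
\end{equs}
and the fully analogous identity in the Dirac case using $\tilde\iota_F$.

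The second step is to convert these identities into seminorm bounds. Fix $b \in \Gamma_n^{(U)}$. The commutation $\pi_b \circ \tilde\iota_{-1} = \iota_{-1} \circ \tilde\pi_b$ reduces a bound on $\pi_b(\Pi_x^{\Cl;(\eps),R}\tau)$ to a bound on $\iota_{-1}(\tilde\pi_b(\widetilde{\Pi}_x^{N;(\eps),\tilde R}\tau))$, and the $N$-contractive estimate of Proposition~\ref{proposition:Wick_Product} (i.e.\ $\|\iota_{-1}(\bigcdot)\| \lesssim_n \|\bigcdot\|_{N;n}$) then yields
\begin{equs}
\|\Pi^{\Cl;(\eps),R}\|_{\CT^-;\K,n} \lesssim_n \bigl\|\widetilde{\Pi}^{N;(\eps),\tilde R}\bigr\|_{\widetilde{\CT}_N^-;\K}\;,
\end{equs}
uniformly in $\eps$; the Dirac version is identical with $\tilde\iota_F$ and $\bPsi$. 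This is the fermionic analogue of the $\|\Pi^{(\eps),R}\|_{\CT^-;\K,\mfp}$ bound at the end of the proof of Corollary~\ref{cor:FocktoQbound}.

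The third step is to upgrade from $\CT^-$ to the whole sector $\CT_\gamma$. Proposition~\ref{prop:NegModBnd} is formulated for a single submultiplicative seminorm $\mfp \in \mfP$, so we simply apply it to the $C^\ast$-seminorm $\|\bigcdot\|_n$ on $\cA_F^{(\Cl)}$; this yields $k(\gamma) \in \N$ and a fattening $\overline{\K}$ such that
\begin{equs}
\vvvert Z^{\Cl;R}_{(\eps)}\vvvert_{\gamma;\K,n} \lesssim_n \left(1 + \|\Pi^{\Cl;(\eps),R}\|_{\CT^-;\overline\K,n}\right)^{k(\gamma)}\;,
\end{equs}
and similarly in the Dirac case. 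Combined with step two this gives the claimed bound. The difference bound follows from the same chain of equalities, using the Lipschitz-in-model clause of Proposition~\ref{prop:NegModBnd} together with the linearity of $\tilde\iota_{\pm 1}/\tilde\iota_F$ to control $\Pi^{\Cl;(\eps),R} - \Pi^{\Cl;(\eps'),R}$ by $\widetilde{\Pi}^{N;(\eps),\tilde R} - \widetilde{\Pi}^{N;(\eps'),\tilde R}$. Finally, for the random algebra case (mixed bosonic--fermionic), Remark~\ref{rem:MixAlg1} reduces the assertion to a pathwise statement in the bosonic randomness, which is exactly what was just established.

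The only nontrivial point is ensuring that the $n$-dependent hierarchy is respected throughout \dash but this is precisely what the identity $\pi_b \circ \tilde\iota_{\pm 1} = \iota_{\pm 1} \circ \tilde\pi_b$ guarantees at the level of individual $b \in \Gamma_n^{(U)}$, and the supremum over such $b$ commutes with every step of the argument.
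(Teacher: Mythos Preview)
Your proof is correct and follows essentially the same approach as the paper, which simply states that the corollary is the analogue of Corollary~\ref{cor:FocktoQbound} for fermions and leaves the details implicit. Your write-up is in fact more careful than the paper's, spelling out the intertwining $R \circ \tilde\digamma^M = \tilde\digamma^M \circ \tilde R$, the use of $\pi_b \circ \tilde\iota_{-1} = \iota_{-1} \circ \tilde\pi_b$ to pass to individual $b$, and the invocation of Proposition~\ref{prop:NegModBnd} with $\mfp = \|\bigcdot\|_n$.
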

We are almost ready to consider the convergence of the $\widetilde \Pi$-models as $\eps \downarrow 0$. 
As we mentioned before, applying $\pi_b$ (or more specifically $\tilde{\pi}_b$), needed to compute the seminorms, breaks the translation invariance of the noise. We shall overcome this by specifying a particular filtration $(\Gamma_n)_n$ defining the topology of $\cA_F$. In particular, we want $\Gamma_n$ to be all subspaces of dimension less than $n+1$ spanned by eigenvectors of the group of translations, i.e.\ plane waves $e^{i\Braket{k,x}}$ with $k \in \N^d$. However, in order to make this rigorous, we need to assume that the domain of the function spaces is fully compact, i.e.\ $d_1 = 0$ in the definition of $\mfH$, see Section~\ref{sec:MixedRegSob}. 
\begin{remark}
	In the context of the current paper, this does not lead to any loss of generality since we are only interested in the short-time existence of evolution equations over compact spatial domains. The resulting domain $[0,T] \times \T^d$ can always be embedded in a torus $\T^{d+1}$. \ajay{Is this imposing that we are periodic in time?}\martinp{Only that the noise is periodic in time.}

	In other cases, one typically uses weighted spaces, which again would allow for plane waves. 
\end{remark}

With this assumption at hand we may set $e_k(x) \eqdef e^{2 \pi i\Braket{k,x}}$ for $k \in \Z^d$ and  
\begin{equ}
	\mfB \eqdef \left\{ \left( \lambda_1 e_{k_1}, \dots, \lambda_{|\mfL^-|} e_{k_{|\mfL^-|}}\right) \, \middle| \, k_i \in \Z^d , \lambda_i \in \C \right\}
\end{equ}
which is an (over)complete set of eigenvectors of the unitary group of translations. Furthermore, let 
\begin{equ}
	\mfB^U \eqdef \left\{ f \in \mfB \, \big| \, \exists \lambda \in \C : U f = \lambda f \right\} \; . 
\end{equ}
\begin{remark}
	$\mfB^U$ is nonempty and still a complete set of eigenvectors since we have assumed that $U$ is both unitary and commutes with the group of translations. 
\end{remark}

For $n \in \N$, let 
\begin{equs}
	\Gamma_n &\eqdef \left\{ \spn\{g_1, \dots, g_n\} \, \big| \, g_i \in \mfB \right\} \; ,\\
	\Gamma_n^U &\eqdef \left\{ b + \kappa b U \, \middle| \, b = \spn\{g_1, \dots, g_n\} \land  \forall i \in [n] :  g_i \in \mfB^U  \right\} \; , 
\end{equs}
where we do not assume that $g_i \neq g_j$ for $i \neq j$. Furthermore, let $\Gamma^{(U)}_\infty \eqdef \bigcup_{n \in \N} \Gamma^{(U)}_n$.
\begin{remark}
	We note that since $U$ satisfies $U^\dagger = - \kappa U \kappa $ we have for all $g \in \mfB^U$
	\begin{equ}
		 U \kappa U g = \bar\lambda U \kappa g = \bar \lambda \kappa \kappa U \kappa g = - \bar \lambda \kappa U^\dagger g = - \bar \lambda \kappa \bar \lambda  g  = - \lambda \kappa \lambda g  = - \lambda \kappa U g \; , 
	\end{equ}
	i.e.\ $\kappa U g \in \mfB^U$ again. In particular 
	\begin{equ}
		b + \kappa U b = \spn\left\{ g_1, \kappa U g_1, \dots, g_n , \kappa U g_n \right\}
	\end{equ}
	it is spanned by elements of $\mfB^U$ and has dimension at most $2 \dim b$.
\end{remark}
With these filtrations defining the topologies of $\cA_F^{\Cl}$ and $\cG_F$, the $\CA_F(b)$-valued noises $\pi_b(\bxi)$ and $\pi_b(\bPsi)$ are still translation invariant for all $b \in \bigcup_{n \in \N} \Gamma_n$ and $b \in \bigcup_{n \in \N} \Gamma_n^U$ respectively.

In order to establish the boundedness and convergence of $\widetilde{Z}^{(F),N ; R}_{(\eps)}$ as $\eps \downarrow 0$  we need to show that we can control 
\begin{equ}
	\tilde\pi_b \left( \widetilde\Pi_x^{(F) , N ; (\eps) , R  } \tau \right)  \quad \text{and} \quad \tilde\pi_b \left( \widetilde\Pi_x^{(F) , N ; (\eps) , R  }\tau  \right) - \tilde\pi_b \left( \widetilde\Pi_x^{(F) , N ; (\eps') , R  } \tau \right) 
\end{equ}
uniformly in $\eps, \eps' \in (0,1]$ and(!) $b \in \Gamma_\infty^{(U)}$. However, any scalar tree $\tau$ we have 
\begin{equ}
	\tilde\pi_b \left( \widetilde\Pi_x^{(F) , N ; (\eps) , R  } \tau \right) = \widehat{\Pi}_x^{(F), b , N ; (\eps), R} \tau 
\end{equ}
where we have identified $\tau$ with the corresponding scalar tree in $\widehat{\cT}_{N; b}^{(F)}$. By translation invariance of $\widetilde\pi_b$ we can conclude the existence and convergence for any fixed $b \in \Gamma_\infty^U$ as in Theorem~\ref{thm:BPHZ}. However, since the spectral gap holds with a constant independent of $b \in \Gamma_\infty$, we may at each step of the induction process we can control the Malliavan derivative not only uniformly w.r.t.\ $\eta \in b$ but also any $\eta \in \bigcup \Gamma_\infty$, which yields the uniform bound. We therefore have the well-definedness of the BPHZ renormalised canonical model in the Fermionic case as well. 
\begin{theorem}
\label{thm:BPHZFerm}
	We assume that $- |\s| < |\mfl|_{\mfs}$ for all $\mfl \in \mfL_-$, that for two trees $\tau, \bar\tau$, $|\tau|_{\s} = |\bar\tau|_{\s} $ implies that $\tau$ and $\bar\tau$ have the same number of negative leaves, and that if $\tau$ has at least two edges then $|\tau|_{\s} > - \frac{|\s|}{2}$.

	Furthermore, we assume that the noncommutative points defining the topologies of $\cA_F^{\Cl}$ and $\cG_F$ are translation invariant. 

	Then the limiting models
	\begin{equs}
		\widetilde\Pi^{N;R}_x \tau &\eqdef \lim_{\eps \downarrow 0} \widetilde\Pi^{N;(\eps),R}_x \tau \\
		\widetilde\Pi^{F, N;R}_x \tau &\eqdef \lim_{\eps \downarrow 0} \widetilde\Pi^{F,N;(\eps),R}_x \tau
	\end{equs}
	exists for all $\tau \in \CT^-$ and $\tau \in \CT^-_F$ respectively, and we have the bound for all $\eps \in [0,1]$
	\begin{equs}
		\bigl\| \widehat\Pi^{N;(\eps),R} \bigr\|_{\CT^- ; \K} &\lesssim_{\K} 1 \\
		\bigl\| \widehat\Pi^{F, N;(\eps),R} \bigr\|_{\CT^- ; \K} &\lesssim_{\K} 1
	\end{equs}
	and there exists $\theta > 0$, s.t.\
	\begin{equs}
		\bigl\| \widehat\Pi^{N;R} - \widehat\Pi^{N;(\eps),R} \bigr\|_{\CT^- ; \K} &\lesssim \eps^\theta \; , \\
		\bigl\| \widehat\Pi^{F, N;R} - \widehat\Pi^{F, N;(\eps),R} \bigr\|_{\CT^- ; \K} &\lesssim \eps^\theta \; .
	\end{equs}
\end{theorem}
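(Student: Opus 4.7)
The plan is to reduce the statement for $\widetilde{\Pi}^{(F),N;(\eps),R}$ to a collection of statements for the ``localised'' commutative models $\widehat{\Pi}^{(F),b,N;(\eps),R}$, $b \in \Gamma^{(U)}_\infty$, that can be controlled uniformly in $b$. First, I would note that the construction of $\widetilde{\mfA}^{(F)}_N(\mfH)$ and the definitions of $\widetilde{Z}^{(F),N}_{(\eps)}$ and $\ell_{\BPHZ}^{(F),N;(\eps)}$ have been engineered precisely so that, for any scalar tree $\tau$ and any $b \in \Gamma^{(U)}_\infty$,
\begin{equ}
  \tilde\pi_b\bigl(\widetilde{\Pi}^{(F),N;(\eps),R}_x \tau\bigr) = \widehat{\Pi}^{(F),b,N;(\eps),R}_x \tau\;,
\end{equ}
as proved by an easy induction on the tree structure that uses the multiplicativity of $\tilde\pi_b$ and the compatibility of the local and global BPHZ characters stated just above the theorem. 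Since $\|\bigcdot\|_{N;n} = \sup_{b \in \Gamma^{(U)}_n} \|\pi_b(\bigcdot)\|_N$ and $\|\bigcdot\|_{N;\infty} = \sup_n \|\bigcdot\|_{N;n}$, all the seminorms appearing in $\|\widetilde{\Pi}^{(F),N;(\eps),R}\|_{\CT^-;\K}$ reduce to suprema over $b$ of the corresponding commutative seminorms.

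Second, fix $b \in \Gamma^{(U)}_\infty$. Under the translation invariance assumption on the filtration, $\pi_b(\bxi^{(\eps)})$ and $\pi_b(\bPsi^{(\eps)})$ are smooth, stationary, Gaussian $\CA_F(b)$-valued noises. In particular, the $\widehat\cT^{(F)}_{N;b}$-valued canonical model fits in the framework of Theorem~\ref{thm:BPHZ}: the scalar argument preceding Theorem~\ref{thm:BPHZ} (which rewrites any component $\tau^{(\eps)}_x[n,\bpi]$ in terms of a Wiener chaos of a $q=1$ model built from independent copies of the noise) carries over verbatim, since one only ever uses the Gaussian/Fock-space structure on the fixed finite-dimensional subspace $b$. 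This gives the existence of the limit $\widehat{\Pi}^{(F),b,N;R}_x\tau$, the bound $\|\widehat{\Pi}^{(F),b,N;(\eps),R}\|_{\CT^-;\K} \lesssim_\K 1$, and the rate $\|\widehat{\Pi}^{(F),b,N;R} - \widehat{\Pi}^{(F),b,N;(\eps),R}\|_{\CT^-;\K} \lesssim \eps^\theta$, all for each fixed $b$.

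Third, and this is the main obstacle, one must upgrade these $b$-by-$b$ bounds to estimates that are uniform in $b$, as otherwise the supremum defining $\|\bigcdot\|_{N;\infty}$ may blow up. The key observation is the one highlighted in the paper: the only analytic input used by the black-box theorem of \cite{HS24} is the Gaussian spectral gap inequality, and this inequality holds on the underlying Fock space $\CF_1(\mfH)$ with a constant equal to $1$ independently of the subspace $b$, because projecting Malliavin derivatives onto $b$ can only shrink their $L^2$ norm. Consequently, each inductive step in the proof of \cite[Theorem~6.9]{HS24}, which bounds suitable norms of a tree by corresponding norms of its Malliavin derivatives, can be carried out simultaneously for all $b$: one bounds the Malliavin derivative at an arbitrary direction $\eta \in \bigcup_n \Gamma^{(U)}_n \supset b$ rather than only $\eta \in b$, giving constants depending only on $\K$ and $\cT$. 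This produces the uniform bound $\sup_b \|\widehat{\Pi}^{(F),b,N;(\eps),R}\|_{\CT^-;\K} \lesssim_\K 1$ and, by the same token, a uniform-in-$b$ Cauchy estimate with rate $\eps^\theta$.

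Finally, the uniform-in-$b$ Cauchy estimate shows that $(\widetilde{\Pi}^{(F),N;(\eps),R}_x\tau)_{\eps}$ is Cauchy in the $\|\bigcdot\|_{N;\infty}$-topology, hence converges in the completion $\widetilde{\mfA}^{(F)}(\mfH)$; extension from scalar trees to the whole regularity structure is then automatic from the algebra-morphism character of $\tilde\iota_{-1}$ and $\tilde\iota_F$ together with Proposition~\ref{prop:NegModBnd}, exactly as in the proof of Corollary~\ref{cor:FocktoFermBound}. The resulting limit $\widetilde{\Pi}^{(F),N;R}_x\tau$ satisfies the claimed bounds.
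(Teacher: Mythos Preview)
Your proposal is correct and follows essentially the same approach as the paper: reduce via $\tilde\pi_b$ to the localised models $\widehat{\Pi}^{(F),b,N;(\eps),R}$, apply the commutative BPHZ theorem for each fixed $b$ using translation invariance of the chosen filtration, and then upgrade to uniformity in $b$ by observing that the spectral gap constant entering the induction of \cite{HS24} is independent of $b$, so that the Malliavin derivative can be controlled over all directions $\eta \in \bigcup \Gamma^{(U)}_\infty$ simultaneously. The paper's argument is exactly this, stated somewhat more tersely in the discussion immediately preceding the theorem.
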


\section{Examples}
\label{sec:Examples}

\subsection{Nonlinear SDEs for \TitleEquation{q}{q}-Mezdons}\label{sec:NonLinSDE}

In this section, we shall apply the theory of noncommutative regularity structure to construct local-in-time solutions to nonlinear and multiplicative SDEs driven by $n$ independent $q$-$L^2(\R)$-white noise $\xi_q^i$ for all $q \in (-1,1)$. In particular, we will consider the solution to an SDE of the form
\begin{equ}[eq:qSDE]
	\dot X^i = f^i(X) + \sum_{j = 1}^n g^i_{j}(X) \xi^j_q h^i_{j}(X) \; ,
\end{equ}
where we assume that $f^i$, $g^i_{j}$, and $h^i_{j}$ are real-analytic functions.

To construct the usual regularity structure for an $n$-component rough paths set-up, we choose the dimension $d = 1$, scaling $\s = (1)$, Hilbert space $\mfH = L^2(\R)^n$, $\mfL^- = [n]$, with $|\mfl|_\s = - \frac{1}{2}-$ for all $\mfl \in \mfL^-$, $\mfL^+ = \{\mft\}$ with $|\mft|_{\s} = 1$, and rule $\cR$ given by
\begin{gather*}
	\forall \mfk \in \mfL^- \times \N : \cR(\mfk) = \{\emptyset\}\;, \\
	\forall r \geqslant 1 : \cR( (\mft, r))  = \{\emptyset\}\;, \\
	\cR((\mft, 0)) = \bigl\{ \{(\mft, 0)\} \bigr\} \cup \bigcup_{i = 1}^n \Bigl\{  \{(\mft, 0), (i, 0)\}, \{ (i, 0)\} \Bigr\} \; .
\end{gather*}
This rule is obviously normal, and one can easily check that it is subcritical. Cutting the $\cA_q$-regularity structure off at regularity $\frac{1}{2}$, we see that $A = \left\{- \frac{1}{2}-, 0-, 0, \frac{1}{2}-\right\}$. We will call this regularity structure $\cT^q_{\SDE}$. It is spanned by the trees
\begin{equ}
	\1 \, , \; \<0>_i \eqdef \Xi_i \, , \; \<1>_i \eqdef \CI_{\mft}(\Xi_i) \, , \; \<1_0>_{ij} \eqdef \<1>_i \<0>_j \, , \; \<0_1>_{ij} \eqdef \<0>_i \<1>_j \; ,
\end{equ}
for $i,j \in [n]$. These submodules have the following isomorphism types
\begin{equ}
	\CT[\1] \cong \cA_q \, , \; \CT[\<0>_i] \cong \cA_q^{\wotimes_\pi 2} \, , \; \CT[\<1>_i] \cong \cA_q^{\wotimes_\pi 4} \, , \; \CT[\<1_0>_{ij}] \cong \CT[\<0_1>_{ij}] \cong \cA_q^{\wotimes_\pi 5} \; .
\end{equ}

There are two possibilities of how to implement the integration operation $\int_{s}^t \bigcdot \d r$. Either we start out with the Heavyside kernel $\bar K(t) = \vartheta(t)$ and use \cite[Lemma~5.5]{Hai14} to construct a decomposition $\bar K = K + R$ that shows that $\bar K$ is $1$-regularising. However, as $\vartheta$ is sufficiently regular, we can just directly plug it into the formulae in \eqref{eq:CanMod}.

Taking this more straightforward approach we set for $s,t \in [0,1]$
\begin{equs}
	\Pi^{q;R}_s \left( \<0>_i , a_1 \otimes a_2 \right) & \eqdef a_1 \xi^i_q(\,\bigcdot\,) a_2 \\
	\Pi^{q;R}_s \left(\<1>_i, a_{\otimes [4]} \right)(t) & \eqdef a_1a_2 B^i_q(s ,t ) a_3a_4 \; ,
\end{equs}
where $a_i \in \cA_q$ denote the algebra decorations, and we have set
\begin{equ}
	B^i_q(s,t) \eqdef \sgn(t-s)  \xi^i_q\left( \bone_{[s,t]}\right)
\end{equ}
which is well-defined as $\bone_{[s,t]} \in L^2(\R)$. We only need to define the model of their products $\<1_0>_{ij}$ and  $\<0_1>_{ij}$. If we wish to reproduce a $q$-analogue of It\^{o} theory, this is done by setting
\begin{equs}[eq:DifLevA]
	\Pi^{q;R}_s \left( \<1_0>_{ij}, a_{\otimes [5]} \right) (t) & \eqdef a_1 a_2 \mathbb{W}_{q}^{L, ij}\left(a_3 a_4\right)(s,t) a_5 \eqdef\\
	& \eqdef   a_1 a_2 \CM^{(1,1)}_q \left( W^{L,ij}_q(s,t) ; a_3 a_4 \right) a_5 \; ,\\
	\Pi^{q;R}_s \left( \<0_1>_{ij}, a_{\otimes [5]} \right) (t) &\eqdef a_1 \mathbb{W}_{q}^{R, ij}\left(a_2  a_3\right)(s,t) a_4 a_5 \eqdef\\
	&\eqdef a_1 \CM^{(1,1)}_q \left( W^{R,ij}_q(s,t) ; a_2 a_3 \right) a_4 a_5\; .
\end{equs}
with the $L^2(\R)^n \wotimes_\alpha L^2(\R)^n$-valued distributions
\begin{equs}
	t \longmapsto W^{L,ij}_q(s,t)(z_1,z_2) &\eqdef  \bone_{[s,t]}(z_1) e_i \otimes \delta\left( z_2 - t \right) e_j   \; ,\\
	t \longmapsto W^{R,ij}_q(s,t)(z_1,z_2) &\eqdef \delta\left( z_1  - t \right) e_i \otimes \bone_{[s,t]}(z_2) e_j  \; .
\end{equs}

Upon integration, \eqref{eq:DifLevA} produces the left and right L\'{e}vy areae given by
\begin{equs}[eq:LevyArea]
	\mathbb{B}_q^{L , ij}(a)(s,t) &= \CM_q^{(1,1)}\left( F^{L,ij}(s,t) ; a \right) \; , \\
	\mathbb{B}_q^{R , ij}(a)(s,t) &= \CM_q^{(1,1)}\left( F^{R,ij}(s,t) ; a \right) \; ,
\end{equs}
with
\begin{equs}
	F^{L,ij}(s,t)(z_1, z_2) & \eqdef \vartheta(z_2 - z_1) \bone_{[s,t]^2}(z_1,z_2) e_i \otimes e_j \; ,\\
	F^{R,ij}(s,t)(z_1, z_2) & \eqdef \vartheta(z_1 - z_2) \bone_{[s,t]^2}(z_1,z_2) e_i \otimes e_j \; .
\end{equs}
Using the formulae \eqref{eq:CanStGr2}, the canonical model is then completed by setting
\begin{equs}
	\Gamma^{q;R}_{st} \1 & \eqdef \1 \; , \\
	\Gamma^{q;R}_{st} \<0>_i & \eqdef \<0>_{i} \; ,\\
	\Gamma^{q;R}_{st} \<1>_i & \eqdef \<1>_i +  B_q^i(s,t) \1 \; , \\
	\Gamma^{q;R}_{st} \<1_0>_{ij} & \eqdef \<1_0>_{ij} + B_q^i(s,t)  \<0>_j \; , \\
	\Gamma^{q;R}_{st} \<0_1>_{ij} & \eqdef \<0_1>_{ij} + \<0>_i B_q^j(s,t)   \; , \\
\end{equs}
and extending these to incorporate algebra decorations in the obvious way. We note here that $M^+$ acts trivially and thus $\Gamma^{q;R} = \Gamma^q$.

One direct consequence of these definitions is the generalisation of the Chen identity to the $q$-mezdonic case. This reads
\begin{equs}
	\mathbb{B}_q^{L, ij}(s,t) - \mathbb{B}_q^{L, ij}(s,u)-\mathbb{B}_q^{L, ij}(u,t) =  B_q^i(s,u) a B_q^j(u,t) \;, \\
	\mathbb{B}_q^{R, ij}(s,t) - \mathbb{B}_q^{R, ij}(s,u)-\mathbb{B}_q^{R, ij}(u,t) =  B_q^i(u,t) a B_q^j(s,u) \;,
\end{equs}
where we have suppressed the algebra argument on the left-hand side and the product on the right-hand side is the multiplication in $\cA_q$.

That the model is well-defined and satisfies the required bounds follows from the usual arguments. For example, $\vvvert B^i_q(s,t) \vvvert = \| \bone_{[s,t]} \|_{L^2} = \sqrt{|t-s|}$ shows the correct scaling for $\Pi^q \<1>_i$, and, for $\phi \in \cD(\R)$, we see, by using Corollary~\ref{cor:RenMultMap}, that
\begin{equs}
	\left\vvvert \Pi^{q;R}_{s} \left( \<0_1>_{ij}, A \right) (\phi) \right\vvvert^2 & \lesssim \vvvert A \vvvert \int \bone_{s,t}(r) \bone_{s,\bar t}(\bar r) \delta(r - t) \delta(\bar r-\bar t) \phi(t)\phi(\bar t) \d r \d \bar r \d t \d \bar t = \\
	&= \vvvert A \vvvert \int \bone_{s,t}(r) \bone_{s,t}(r)  \phi(t)\phi(t) \d r  \d t =\\
	&= \vvvert A \vvvert \int |t-s| \phi(t)^2 \d t \lesssim_{|\supp \phi|} \| \phi \|_{\cC}^2
\end{equs}
which shows that it is a well-defined distribution. Here $A \in \cA_q^{\wotimes_\pi 5}$. Similarly, by testing against $\CS^\lambda_{\s, t} \phi$, we immediately find
\begin{equs}
	\left\vvvert \Pi^{q;R}_{t} \left( \<0_1>_{ij} , A \right) (\CS^\lambda_{\s, t} \phi) \right\vvvert  \lesssim 1 \; .
\end{equs}

Finally, we want to note that this model is constructed according to the BPHZ procedure, with the only caveat being that $\PPi \<1>_i$ does not exist since $\vartheta$ is not integrable. However, a direct calculation shows that the BPHZ character is well-defined when viewing $\vartheta$ as $\lim_{T \to \infty} \bone_{[0,T]}$. Specifically, for all $i \in [n]$
\begin{equs}
	\ell^{q,(\eps)}_{\BPHZ}\left(\<1_0>_{ii}\right) & = \lim_{T \to \infty }\omega_q \left( \int\limits_0^T \xi^{i,(\eps)}_q(s) \d s \, \xi^{i,(\eps)}_q(0)  \right)  =\\
	&= \int\limits_0^\infty \int \rho_{(\eps)} (s-z) \rho_{(\eps)} (z) \d z \d s = \frac{1}{2} \; ,
\end{equs}
and $\ell^{q,(\eps)}_{\BPHZ}\left(\<0_1>_{ii}\right)  = \frac{1}{2}$. A similar calculation shows that, for the $0^{\text{th}}$-order chaos component of $\Pi_s^{(\eps)} \<1_0>_{ij}(t)$, we have
\begin{equ}
	\lim_{\eps \downarrow 0} \omega_q\left( B^{i,(\eps)}_{q}(s,t) \xi^{i,(\eps)}_q(t) \right) = \frac{\delta_{ij}}{2} \; ,
\end{equ}
and thus
\begin{equs}
	\lim_{\eps \downarrow 0} \Pi^{q,(\eps) ; R}_{s} (\<1_0>_{ij}, a_1 \otimes \cdots \otimes a_5) &= \lim_{\eps \downarrow 0} a_1 a_2 B^{i,(\eps)}_{q}(s,\bigcdot ) a_3 a_4 \xi^{j,(\eps)}_q(\bigcdot) a_5 - \\
	& \qquad \qquad - \frac{\delta_{ij}}{2} \Delta_q^{R ; (1,1),(1,2)}\left( a_1a_2, a_3 a_4, a_5 \right) =  \\
	& =  \Delta_q^{R ; (1,1)}\left( W_{q}^{L,ij}(s,\bigcdot) ; a_1a_2, a_3 a_4, a_5 \right) = \\
	&= \Pi^{q ; R}_s(\<1_0>_{ij}, a_1 \otimes \cdots \otimes a_5) \; .
\end{equs}
However, we note that we cancel the $0^{\text{th}}$ chaos component exactly only if $2 \eps \leqslant |t-s|$.

Having established the regularity structure $\cT^q_{\SDE}$, we can solve \eqref{eq:qSDE} as an abstract fixed-point problem in it. According to Theorem~\ref{thm:FncLift} and its extension to modelled distributions with singularities, we can lift the functions $f_i$, $g_j^i$, and $h_j^i$ to maps $\hat{f}^i, \hat{g}_j^i, \hat{h}_j^i \colon \left(\cD_H^{\gamma,0}(V)\right)^n \to \cD_H^{\gamma,0}(V)$ for all $\gamma > 0$. Here $V$ is the sector generated by $\1$ and $\<1>_j$ for $j \in [n]$, and $H$ is the zero-dimensional hyperplane $\{0\}$. We interpret $\phi = (\phi^1, \dots, \phi^n)$ with $\phi^i \in \cD_H^{\gamma,0}(V)$ as an element of $\left(\cD_H^{\gamma,0}(V)\right)^n$.

We can thus define the nonlinearities $F^i \colon \left(\cD_H^{\gamma,0}(V)\right)^n \to \cD_H^{\gamma,0}$
\begin{equ}
	F^i(\phi) = \hat{f}^i \circ \phi + \sum_{j = 1}^n \left( \hat{g}^i_{j} \circ \phi  \right) \<0>_j \bigl( \hat{h}^i_{j} \circ \phi  \bigr)
\end{equ}
where we view $\<0>_j$ as a constant element in $\cD_H^{\infty,\infty}$. Then, a solution to \eqref{eq:qSDE} with initial condition $X(0) = X_0 \in \cA_q^n$ is a fixed-point of
\begin{equ}[eq:SDEFP]
	U_{(\eps)}^i = X_0^i \1  + \CK_{\gamma} \bR^+ F^i\left( U_{(\eps)} \right)
\end{equ}
in $\cD^{\gamma,0}_{H}(V)$ where $\gamma \in \left( \frac{1}{2},1\right)$ and $\CK_\gamma$ is defined as in Definition~\ref{def:CompKern}.

A few comments are in order here. First, $\CK_{\gamma}$ is well-defined since we can define $\CJ(t)$ on the full regularity structure essentially by hand, setting $\CJ(t) \bR^+ \<0>_i = B_q^i(0,t) \eqqcolon B_q^i(t)$ and
\begin{equ}
	\CJ(t) \bR^+  \left( \<1_0>_{ij} , a_1 \otimes \cdots \otimes a_5 \right) = a_1 a_2 \mathbb{W}^{L,ij}_q\left(a_3 a_4\right)(0,t) a_5 \1
\end{equ}
and analogously for $\<0_1>_{ij}$. Here, it is necessary to insert $\bR^+$ because the support of $\vartheta$ is not compact. Furthermore, $\CN_{\gamma}$ is well-defined as soon as $\gamma > \frac{1}{2}$. Secondly, the right-hand side of \eqref{eq:SDEFP} maps $\left(\cD^{\gamma,0}_H (V)\right)^n$ into itself. This holds because $\CK_\gamma$ maps $\cD_H^{\gamma,0}$ into $\cD^{\gamma,0}_H(V)$.

Applying the abstract fixed-point Theorem~\ref{thm:AbsFPM} to \eqref{eq:SDEFP} and its mollified variant with $\CK^{(\eps)}_\gamma$, we obtain a family of solutions $U_{(\eps)} \in \left(\cD^{\gamma,0}_H(V) \right)^n $ for $\eps \in [0,1]$ which can be written as
\begin{equ}
	U_{(\eps)}^i = Y_{(\eps)}^i \1+ \left( \<1>_i , Y'^{i}_{(\eps)} \right)
\end{equ}
where $Y_{(\eps)}^i \colon \R \to \cA_q$ and $Y'^{i}_{(\eps)} \colon \R \to \cA_q^{\wotimes_\pi 2}$. Here we have already performed the multiplication of the left two and right two $\cA_q$ factors in the $\<1>$ terms. 
To obtain the final result, we take a look at the reconstruction of the equation and the solution at positive $\eps > 0$. Let $\CR^{(\eps)}$ denote the reconstruction w.r.t.\ $\Pi^{(\eps) , R}$, then $\CR^{(\eps)} U^{(\eps)} = Y^{(\eps)}$, and $Y^{(\eps)}$ satisfies
\begin{equs}\label{eq:RenSDE}
	\partial_t Y^{i,(\eps)} & = f^i(Y^{(\eps)})  + \sum_{j = 1}^n  g^i_j(Y^{(\eps)}) \xi_q^{j , (\eps)} h^i_j(Y^{(\eps)}) -\\
	& \qquad - \frac{1}{2} \sum_{j = 1}^n \Bigl( \left( D_{j,q}^R g^i_j(Y^{(\eps)})[Y'^j_{(\eps)}] \right)  h^i_j(Y^{(\eps)})  +  g^i_j(Y^{(\eps)}) \left( D_{j,q}^L h^i_j(Y^{(\eps)})[Y'^j_{(\eps)}] \right) \Bigr)
\end{equs}
where the left $D^L_{i,q}$ and right $D^R_{i,q}$ derivatives are defined on $\llbracket A_{\otimes [m+1]} , X^k \rrbracket$ for $A_i \in \cA_q$ and $k \in [n]^m$ by
\begin{equs}
	D^R_{i,q}\llbracket A_{\otimes [m+1]} ; X^k \rrbracket[B_1 \otimes B_2] &= \sum_{\ell  = 1}^m A_{1} X_{k_1} \cdots X_{k_{\ell -1 }} A_\ell B_1 \delta_{i, k_\ell} \\
	& \qquad\qquad \Delta_q \left( B_2 A_{\ell+1} X_{k_{\ell+1}} \cdots A_m X_{k_m} A_{m+1} \right) \; , \\
	D^L_{i,q}\llbracket A_{\otimes [m+1]} ; X^k \rrbracket[B_1 \otimes B_2] &= \sum_{\ell  = 1}^m \Delta_{q} \left( A_{1} X_{k_1} \cdots X_{k_{\ell -1 }} A_\ell B_1 \right) \delta_{i, k_\ell} \\
	& \qquad\qquad  B_2 A_{\ell+1} X_{k_{\ell+1}} \cdots A_m X_{k_m} A_{m+1} \; ,
\end{equs}
which we extend by linearity and continuity to all functions in $\cC^\omega(\cA_q^n ; \cA_q)$ and all derivative directions $\cA_q^{\wotimes_\pi 2}$. By the usual arguments, the radii of convergence of $D^{R/L}_{i,q}f[A]$ are still infinite as the operator norm of $\Delta_q$ is $1$.
\begin{remark}
	We note that from the algebraic point of view, $D^R_q$ and $D^L_q$ are extensions of the right and left supercommuting derivatives that one usually defines on superfunctions.
\end{remark}

The counterterm in \eqref{eq:RenSDE} is the only part of $\<0_1>_{ij}$ and $\<1_0>_{ij}$ that is not killed by the reconstruction. Explicitly, we can write the component of $F^i(U_{(\eps)})$ in the subspace spanned by $\<0_1>_{ij}$ as
\begin{equ}
	\sum_{\ell = 1}^n \hat g_i^\ell(Y_{(\eps)}) \<0>_i  D_j \hat h_i^\ell(Y_{(\eps)}) \left[ \left( \<1>_j , Y'^{j}_{(\eps)} \right) \right] \; .
\end{equ}
Here, the derivative $D_j$ is the usual derivative w.r.t.\ $X_j$ and one obtains this formula by plugging $U_{(\eps)}$ into $\hat h^i_\ell$ and only keeping monomials, which contain exactly one occurrence of $\<1>_j$. $M_R$ acts non-trivially if and only if $i = j$, and it produces the contraction between those two noise edges. The terms that get plugged into $\Delta_q$ by $D^L_{i,q}$ are exactly the algebra decorations located in between $\<0>_i$ and $\<1>_i$. Since the term that has been contracted contains $\Pi_t \<1>_j (t) = B^j_q(t,t) = 0$ after reconstruction, it gets mapped to zero, and only the counterterm remains.

The above discussion culminates in the following theorem.
\begin{theorem}
	Let $q \in (-1,1)$, $n \in \N$, $f^i, g^i_j, h^i_j \in \cC^\omega(\R^n ; \R)$ for $i,j \in [n]$. Let $\xi^i_q$ for $i \in [n]$ be pairwise independent $q$-$L^2(\R)$-white noises in $\cA_q \eqdef \cA_q(L^2(\R)^n)$, and let $X_0 \in \cA_q^n$.

	For every sequence of $\xi^{i, (\eps)}_q \subset \cD(\R ; \cA^{(1)}_q)$ for $\eps \in (0,1]$ of mollifications of $\xi^i_q$ converging in $\CC^{-\frac{1}{2}-}(\R ; \cA_q)$, there exist a $T> 0$, a unique sequence of functions $Y_{(\eps)} \in  \CC^{\frac{1}{2}-} \left( [0,T) ; \cA_q^n\right)$ and $Y'_{(\eps)} \in  \CC^{\frac{1}{2}-} \left( [0,T) ; \left(\cA_q^{\wotimes_\pi 2} \right)^{n}\right)$ for all $\eps \in [0,1]$, converging as $\eps \downarrow 0$, s.t.\ for all $\eps \in [0,1]$ and all $s,t \in [0,T)$
	\begin{equ}
		\left\vvvert Y_{(\eps)}(t) - Y_{(\eps)}(s) - \Delta_q^{R; (1), \emptyset} (B^{(\eps)}_q(s,t) ; Y'^{(\eps)}(s) )  \right\vvvert_{\cA_q^n} \lesssim |s-t|^{1-}
	\end{equ}
	and, for all $\eps \in (0,1]$, $(Y_{(\eps)}, Y_{(\eps)}')$ satisfies on $[0,T)$
	\begin{equs}\label{eq:RenSDEthm}
		\partial_t Y^{i,(\eps)} & = f^i(Y^{(\eps)})  + \sum_{j = 1}^n  g^i_j(Y^{(\eps)}) \xi_q^{j , (\eps)} h^i_j(Y^{(\eps)}) -\\
		& \qquad - \sum_{j = 1}^n  C_\eps^j \Bigl( \left( D_{j,q}^R g^i_j(Y^{(\eps)})[Y'^j_{(\eps)}] \right)  h^i_j(Y^{(\eps)})  +  g^i_j(Y^{(\eps)}) \left( D_{j,q}^L h^i_j(Y^{(\eps)})[Y'^j_{(\eps)}] \right) \Bigr)
	\end{equs}
	where
	\begin{equ}
		C^j_{\eps} \eqdef \lim_{T \to \infty} \omega_q \left( B^{j,(\eps)}_q(0,T) \xi_q^{j,(\eps)}(0) \right)
	\end{equ}
	as well as $Y_{(\eps)}(0) = X_0$.

\end{theorem}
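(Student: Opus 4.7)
The plan is to realise the theorem as an application of the abstract fixed-point theorem~\ref{thm:AbsFPM} inside the $\cA_q$-regularity structure $\cT^q_{\SDE}$ constructed just before the statement, using the canonical model built from the $q$-mezdonic Brownian motions $B^i_q$ and the left/right Lévy areas $\mathbb{B}_q^{L,ij}, \mathbb{B}_q^{R,ij}$ defined via \eqref{eq:LevyArea}. My first step is to check that the model $(\Pi^{q;R},\Gamma^{q;R})$ specified in \eqref{eq:DifLevA} satisfies the analytic bounds of Definition~\ref{def:model}. The bounds for $\Pi^{q;R}\<0>_i$ and $\Pi^{q;R}\<1>_i$ reduce to the identity $\vvvert B^i_q(s,t)\vvvert = \sqrt{|t-s|}$ coming from the isometry between the first chaos and $L^2(\R)^n$. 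For the mixed trees $\<1_0>_{ij}$ and $\<0_1>_{ij}$, the key input is Corollary~\ref{cor:RenMultMap}, which controls the poly-Wick maps $\CM^{(1,1)}_q(F;\,\bigcdot\,)$ by $\|F\|_{\CF_0}$ times the $\vvvert\bigcdot\vvvert$-norms of the operator insertions; the $\CF_0$-norm of $W^{L/R,ij}_q(s,t)$ is exactly $\sqrt{|t-s|}$, which combined with the singular-kernel bounds gives the scaling required for a model. The computation of the BPHZ constants $C^j_\eps$ proceeds as in the short calculation displayed below \eqref{eq:LevyArea} and is $q$-independent by the results of Section~\ref{sec:ConvBPHZ}.

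Second, I would lift the analytic nonlinearities $f^i, g^i_j, h^i_j$ via Theorem~\ref{thm:FncLift} to maps $\hat f^i,\hat g^i_j,\hat h^i_j\colon (\cD^{\gamma,0}_H(V))^n\to\cD^{\gamma,0}_H(V)$ on the function-like sector $V$ spanned by $\1$ and $\<1>_i$, and define
\begin{equ}
F^i(\phi)=\hat f^i\circ\phi+\sum_{j=1}^n(\hat g^i_j\circ\phi)\,\<0>_j\,(\hat h^i_j\circ\phi)\;.
\end{equ}
A direct check using Theorem~\ref{thm:RSMult} on the multiplication of modelled distributions shows that $F^i$ takes values in $\cD^{\gamma-\kappa,-\kappa}_H$ for any $\kappa>0$ small, which is strictly better than what is needed to apply the abstract integration $\CK_\gamma$ from Definition~\ref{def:CompKern} built from the Heaviside kernel $\vartheta$; the relevant operator $\CJ$ is defined by hand on $\bR^+\<0>_i,\bR^+\<1_0>_{ij},\bR^+\<0_1>_{ij}$ so that $\CK_\gamma$ maps $\cD^{\gamma,0}_H$ to $\cD^{\gamma,0}_H(V)$ for $\gamma\in(\tfrac12,1)$. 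Theorem~\ref{thm:AbsFPM} then yields for each $\eps\in[0,1]$ a unique local solution $U_{(\eps)}\in(\cD^{\gamma,0}_H(V))^n$ of \eqref{eq:SDEFP}, depending continuously on $X_0$ and on the model, so that the convergence of the renormalised models as $\eps\downarrow 0$ (established via the $\mfA_N(\mfH)$-bookkeeping in Corollary~\ref{cor:FocktoQbound} combined with Theorem~\ref{thm:BPHZ}) lifts to convergence of $(Y_{(\eps)},Y'_{(\eps)})$ on $[0,T)$.

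Third, I would identify the reconstructed equation. Writing $U^i_{(\eps)}=Y^i_{(\eps)}\1+(\<1>_i,Y'^i_{(\eps)})$ and applying $\CR^{(\eps)}$ to both sides of \eqref{eq:SDEFP} gives $\CR^{(\eps)}U^{(\eps)}=Y^{(\eps)}$ and $\partial_tY^{i,(\eps)}=\CR^{(\eps)}F^i(U_{(\eps)})$. The component of $F^i(U_{(\eps)})$ landing in the sector spanned by $\<1_0>_{ij}$ can be written as $\sum_\ell \hat g^i_\ell(Y_{(\eps)})\<0>_i\,D_j\hat h^i_\ell(Y_{(\eps)})[(\<1>_j,Y'^j_{(\eps)})]$ by keeping only monomials with a single $\<1>_j$, and analogously for $\<0_1>_{ij}$. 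The preparation map $M_R$ now contracts precisely the noise in $\<0>_i$ with the noise embedded in $\<1>_j$ when $i=j$; by the definition of the algebra-valued renormalisation from Section~\ref{sec:ExtContRen} together with the intertwining $q$-weight recorded by $\Delta(\mfo_\rho)=\Delta_q^{R;(1,1),(1,2)}$ for this pairing, the counterterm takes exactly the form
\begin{equ}
-C^j_\eps\bigl(\bigl(D^R_{j,q}g^i_j[Y'^j_{(\eps)}]\bigr)h^i_j+g^i_j\bigl(D^L_{j,q}h^i_j[Y'^j_{(\eps)}]\bigr)\bigr)
\end{equ}
after identifying the definition of $D^{R/L}_{j,q}$ with the extraction-contraction weights on the monomials $\llbracket A_{\otimes[m+1]};X^k\rrbracket$.

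The main obstacle is precisely in this last step: tracking the $q$-weighted intertwining coefficient $q^{\crb(\bpi,\bsigma)}$ produced by a single pairing between a noise decoration carried by a descendant edge of $\<0_1>_{ij}$ (resp.\ $\<1_0>_{ij}$) and the noise sitting at the root-side $\<0>$ vertex, and verifying that after summing over the position of the extracted noise in the Taylor expansion of $\hat h^i_j$ (resp.\ $\hat g^i_j$) the combinatorics collapses exactly to the definition of $D^L_{j,q}$ (resp.\ $D^R_{j,q}$) with $\Delta_q$ applied to the material sitting to the left (resp.\ right) of the insertion site. Once this identification is made, analytically closing the argument is straightforward since $\|\Delta_q\|_{\CB(\cA_q)}\leqslant 1$ keeps the radius of convergence of $D^{R/L}_{j,q}f[A]$ infinite, so the right-hand side of \eqref{eq:RenSDEthm} is well-defined whenever $f,g,h$ are entire.
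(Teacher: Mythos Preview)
Your proposal is correct and follows essentially the same route as the paper: build the model on $\cT^q_{\SDE}$, lift the analytic nonlinearities via Theorem~\ref{thm:FncLift}, apply the abstract fixed-point Theorem~\ref{thm:AbsFPM} to \eqref{eq:SDEFP}, and then reconstruct at positive $\eps$ to read off the renormalised equation. The ``obstacle'' you flag---matching the $q$-weight produced by the single contraction in $\<1_0>_{ij}$ (resp.\ $\<0_1>_{ij}$) to the $\Delta_q$ factor in the definition of $D^R_{j,q}$ (resp.\ $D^L_{j,q}$)---is precisely what the paper checks in the paragraph following \eqref{eq:RenSDE}: the algebra decorations sitting between the two contracted noise edges are exactly the argument of $\Delta_q$, and the separation number of the single pairing is exactly the particle number of that block.

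Two minor points. First, your regularity bookkeeping slips: multiplying by the constant $\<0>_j$, which lives in a sector of regularity $-\tfrac12-\kappa$, costs $\tfrac12+\kappa$ in the $\gamma$-exponent via Theorem~\ref{thm:RSMult}, so $F^i$ lands in $\cD^{\gamma-\frac12-\kappa}_H$ rather than $\cD^{\gamma-\kappa}_H$; this is why one needs $\gamma>\tfrac12$ (and it is also the reason $\CN_\gamma$ is well-defined). Second, for this particular regularity structure the paper verifies the model bounds and convergence by direct computation rather than invoking the full BPHZ machinery of Section~\ref{sec:StochEst}, though your route through Corollary~\ref{cor:FocktoQbound} and Theorem~\ref{thm:BPHZ} would also go through.
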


\begin{remark}[Comparison with \cite{BS98}~\&~\cite{DS18}]
\label{rem:Comparison}
Our present construction in $\cA_q$ for $q=0$ is strictly weaker than the one in \cite{BS98}, in particular their version of the Burkholder-Gundy inequality. If one does not exploit the additional data provided by a filtration, it is \dash to the best of the authors' knowledge \dash impossible to control the operator norm
	of
	\begin{equ}
		\int\limits_0^t \Delta_0^{R ; (1) , \emptyset} \left(  d B_0(0,s) ; U_s \right) \d s
	\end{equ}
	for a function $s \mapsto U_{s} \in \CA_0 \wotimes_{C^*} \CA_0$ or just $\CA_0 \wotimes_{\pi} \CA_0$ without having to control the components of the chaos decomposition of $U_s$ separately but uniformly. This, in turn, necessitates working with a stronger topology such as $\cA_0$, and this requires us to consider only analytic nonlinearities, because without the $C^*$-identity we only have access to holomorphic functional calculus. For a more general theory of It\^o-type SDEs in $\CA_0$, cf.\ \cite[Section~3]{Kar11}, and of Stratonovich-type SDE in $\CA_q$ for $q \in[0,1)$, see \cite{DS18}.

	However, our topological approach is more robust in general \dash we can use a much larger class of approximations. For example, we  reproduce directly both the It\^o and Stratonovich integrals for all $q \in (-1,1)$. Furthermore, our method does not require the special integration by parts for formula, only available for $q = 0$ cf.\ \cite{BS98}, or a similar exact identity coming from a Wong-Zaka\"{i}-type interpolation for $q \geqslant 0$ cf.\ \cite{DS18}, to compute the $L^\infty$-type bounds. For this reason, we can solve fully nonlinear SDEs for all $q \in (-1,1)$ which require an $L^\infty$-type bound for the Burkholder-Gundy inequality. In constrast, \cite{DS18} were only able to achieve a $\CL^2$-bound for general $\CA_q$ integrands.

\end{remark}

As a final application of $\CA$-regularity structures to SDEs, we show that $F(B_t)$ satisfies an It\^o formula for arbitrary $F \in \cC^\omega(\CA^n ; \CA)$. For the sake of generality, we replace $\ell^{q; (\eps)}_{\BPHZ}$ with an arbitrary character $\ell^{q;(\eps)}$ whose values we assume to converge to a set of finite constants $C^q_{ij}$ as $\eps \downarrow 0$.

The lift of the right-hand side of the following equation
\begin{equ}[eq:ItoTriv]
	\partial_t F( B_{q}^{(\eps)} (t) ) = \sum_{i = 1}^n D_i F(B_{q}^{(\eps)} (t))\left[ \xi_q^{i , (\eps)} \right]
\end{equ}
to the regularity structure reads
\begin{equ}
	\sum_{i = 1}^n \widehat{D_i F} \left( \<1> + B_q^{(\eps)}(t) \1 \right) \left[ \<0>_i \right] \; .
\end{equ}
Here, if $F = \llbracket A_1, \dots, A_{m+1} ; X^k \rrbracket$, $\widehat{D_i F}$ should be interpreted as
\begin{equs}
	{}&\sum_{\substack{j, r = 1 \\ j > r }}^{m+1} \delta_{i,k_j} A_1  B_q^{k_1 , (\eps)}(t)  A_2  \cdots A_{r} \<1>_{k_r} A_{r+1}\cdots A_{j} \<0>_i A_{j+1} \cdots\\
	{}& \qquad \quad  \cdots A_{m}  B_q^{k_m , (\eps)}(t)   A_{m+1} + \\
	{}&+\sum_{\substack{j, r = 1 \\ j < r}}^{m+1} \delta_{i,k_j} A_1  B_q^{k_1 , (\eps)}(t)  A_2  \cdots A_{j} \<0>_i A_{j+1}\cdots A_{r} \<1>_{k_r} A_{r+1} \cdots\\
	{}& \qquad \quad \cdots A_{m}  B_q^{k_m , (\eps)}(t)   A_{m+1} + \\
	{}&  + \sum_{j}^{m+1} \delta_{i,k_j} A_1  B_q^{k_1 , (\eps)}(t)  A_2  \cdots A_{j} \<0>_i A_{j+1} \cdots A_{m}  B_q^{k_m , (\eps)}(t)   A_{m+1}
\end{equs}
which extends linearly to all $F \in \cC^\omega$.

$M_R$ acts trivially on the third summand but adds a contraction $\<1>_{k_r}$ and $\<0>_i$ in the first two summands and multiplies them by $\ell^{q;(\eps)}\left(\<1_0>_{k_ri}\right)$ and $\ell^{q;(\eps)}\left(\<0_1>_{ik_r}\right)$ respectively. After reconstruction, the correction term to \eqref{eq:ItoTriv} is exactly
\begin{equ}
	\sum_{i, j = 1}^n \left( \ell^{q;(\eps)}\left(\<1_0>_{ij}\right)+ \ell^{q;(\eps)}\left(\<0_1>_{ij}\right) \right) \Delta_q^{R; (1,1) , (1,2)} \left( D_i D_j F\left(B_q^{(\eps)}(t) \right) \right)
\end{equ}
for all $\eps \in [0,1]$. Here we interpret $D_i D_j F$ as a map $\cA_q \to \cA_q^{\wotimes_\pi 3}$, with
\begin{equs}[eq:NonComDer]
	D_i D_j F(X) = \sum_{\substack{r, s = 1 \\ r \neq s}}^n \delta_{i, k_r} \delta_{i, k_s} & A_1 X_{k_1} A_2 \cdots A_{r} \otimes A_{r+1} \cdots \\
	& \qquad \cdots  A_s \otimes A_{s+1} \cdots A_{m} X_{k_m} A_{m+1} \; .
\end{equs}
In particular, this means that we have established the It\^o formula for arbitrary $q \in (-1,1)$:
\begin{equs}[eq:Ito2]
	\partial_t F( B_{q} (t) ) &= \sum_{i = 1}^n D_i F(B_{q} (t))\left[ \xi_q^{i} \right] + \\
	& \qquad +\sum_{i, j = 1}^n \left( C^q_{ij}+C^q_{ji} \right) \Delta_q^{R; (1,1) , (1,2)} \left( D_i D_j F\left(B_q(t) \right) \right) \; .
\end{equs}
In the It\^o calculus case $C^q_{ij} = \frac{\delta_{ij}}{2}$ whereas in the Stratonovich case $C^q_{ij} = 0$. This extends \cite[Corollary~4.9]{DS18} beyond $q \in [0,1)$ for analytic $F$.
\begin{remark}
	We note that in the notation of \cite{DS18} $\Delta_q^{R; (1,1),(1,2)}$ is exactly $\mathrm{Id} \times \Gamma_q \times \mathrm{Id}$.  As with the previous result, the existence of a filtration was not required for our proof.
\end{remark}

\subsection{The Dynamical \TitleEquation{\Phi^4_3}{p43}-Equation for \TitleEquation{q}{q}-Mezdons}
\label{sec:Phi43Mez}

In this section, we shall show how to apply the theory of noncommutative regularity structures presented so far to solve the more singular cousin of the $\Phi^4_2$ equation, solved in Section~\ref{sec:Phi42}, the $\Phi^4_3$ equation on the torus. Specifically, we seek a solution in $\cD'((0,T) \times \T^3 ; \cA_q)$,  to the equation
\begin{equ}[eq:NonComPhi43]
	(\partial_t - \Delta + m^2) \phi = - \phi^3 + \xi_q \; ,
\end{equ}
with $\xi_q$ being the usual $q$-$L^2(\R^4)$-white noise.

Throughout this section, we shall assume again that $q \in (-1,1)$ and that the Hilbert space $\mfH \eqdef L^2(\R^4)$. Furthermore, we set the scaling to $\s = (2,1,1,1)$, and we let $\bar K \in \cC^\infty(\R^4 \setminus \{0\})$ denote the kernel of $(\partial_t -\Delta + m^2)^{-1}$ on $\R^{1+3}$ with decomposition $\bar K = K + R$. We also fix $\kappa > 0$ sufficiently small.

The required regularity structure is built by setting $\mfL^{-}\eqdef\{\mfl\}$ with $|\mfl|_{\s} \eqdef
 - \frac{5}{2}-\kappa$, $\mfL^+ =\{ \mft \}$ with $|\mft|_{\s} = 2$, and rule $\cR$ given by
\begin{gather*}
	\forall \mfk \in \mfL^- \times \N : \cR(\mfk) = \{\emptyset\} \;, \\
	\forall r \geqslant 1 : \cR( (\mft, r))  = \{\emptyset\}\;, \\
	\cR((\mft, 0)) = \bigl\{ \{(\mft, 0), (\mft, 0) , (\mft, 0)\}, \{(\mft, 0), (\mft, 0) \}, \{(\mft, 0) \}, \{(\mfl, 0) \} , \emptyset  \bigr\} \; .
\end{gather*}
The rule is exactly the minimal normal rule, s.t.\ $\CI_{\mft} \left( U^3 \right)$ is in the regularity structure if $U$ is.  

Cutting the regularity structure off at regularity $1+$ and restricting to a sufficiently large sector, we are left with
\begin{equs}
	A &= \biggl\{ -\frac{5}{2}- \kappa, -\frac{3}{2}-3\kappa, -1-2\kappa,-\frac{1}{2}-5\kappa,\\
	& \qquad \qquad -\frac{1}{2}-\kappa,-4\kappa , -2\kappa , 0, \frac{1}{2}-3\kappa, 1-2\kappa , 1\biggr\}
\end{equs}
spanned by the trees
\begin{equs}
	{} & \1 \, , \;  X_\mu \, , \;  \<0> \, , \;  \<1>\, , \; \<2> \, , \; \<3> \, , \;  \<2> X_\mu \, , \;  \<13_1> \, , \; \<13_2> \, , \; \<23_1>\, , \; \<23_2>\, , \; \\
	{} & \<23_3> \, , \; \<12_1> \, , \; \<12_2> \, , \; \<02> \, , \; \; \<03> \, , \; \<11_1> \, , \; \<11_2> \, , \; \<21_1> \, , \; \<21_2> \, , \; \<21_3> \, , \; \<22_1> \, , \; \<22_2> \, , \; \<22_3> \, .
\end{equs}
where $\<0> = \Xi_{\mfl}$.

Unlike the commutative case, we have to consider trees such as $\<23_1>$ and $\<23_3>$ as separate objects. The model and its BPHZ renormalisation are then constructed as in Section~\ref{sec:CanMod}, implementing integration against $K$. We will not go through the calculations in detail, but we will provide an example. We note here that the calculations in \cite[Section~10.4]{Hai14} can, in fact, be used to directly show that all the Fock space components of all the trees are well-defined without having to rely on Theorem~\ref{thm:BPHZ}.

We shall exemplify this using $\<23_2>$. First, we need to compute the action of $M_R = M^\circ R$ on $\<23_2>$. Starting with $R$, there are two negative trees within $\<23_2>$  on which $\ell_{\BPHZ}^{q ; (\eps)}$ is non-zero, $\<2>$ and $\<22_2>$. The other two trees in the support of $\ell_{\BPHZ}^{q ; (\eps)}$ are $\<22_1>$ and $\<22_3>$ which cannot be embedded into $\<23_2>$. Looking at the embedding
\begin{equ}
	\scalebox{0.5}{
	\begin{tikzpicture}[
	    red_square/.style={rectangle, fill=red, inner sep=2.5pt},
    	black_dot/.style={circle, fill=black, inner sep=2.5pt},
    	arrow/.style={->, >=stealth, thick},
    	every edge quotes/.style={font=\small, sloped, fill=white, inner sep=1.5pt},
    	highlight/.style={green, opacity=0.4, line cap=round, line join=round, line width=1.2cm},
    	purple_line/.style={draw=purple, very thick}
	]
		\node[black_dot, label=below: ]   (1) at (0,0) {};
		\node[red_square, label=left: 1]     (3) at (-1.9, 1.1) {};
		\node[black_dot, label=right: ]    (5) at (0, 2.2) {};
		\node[red_square, label=left: 2]     (7) at (-1.9, 3.3) {};
		\node[red_square, label=above: 3]    (9) at (0, 4.4) {};
		\node[red_square, label=right: 4]    (11) at (1.9, 3.3) {};
		\node[red_square, label=right: 5]    (13) at (1.9, 1.1) {};

		\begin{pgfonlayer}{background}
		    \draw[highlight] (7) -- (5);
		    \draw[highlight] (11) -- (5);
		    \draw[highlight] (3) -- (1);
		    \draw[highlight] (13) -- (1);
		    \draw[highlight] (5) -- (1);
		\end{pgfonlayer}

		\draw[arrow] (3) edge  (1);
		\draw[arrow] (13) edge (1);
		\draw[arrow] (7) edge (5);
		\draw[arrow] (9) edge (5);
		\draw[arrow] (11) edge (5);
		\draw[arrow] (5) edge (1);

	\end{tikzpicture}
	}
\end{equ}
of $\<22_2>$ into $\<23_2>$ there are three possible contractions
\begin{equ}
\scalebox{0.5}{
\begin{tikzpicture}[
    red_square/.style={rectangle, fill=red, inner sep=2.5pt},
    black_dot/.style={circle, fill=black, inner sep=2.5pt},
    arrow/.style={->, >=stealth, thick},
    every edge quotes/.style={font=\small, sloped, fill=white, inner sep=1.5pt},
    highlight/.style={green, opacity=0.4, line cap=round, line join=round, line width=1.2cm},
    purple_line/.style={draw=purple, very thick}
]
\node[black_dot, label=below: ]   (1) at (0,0) {};
\node[red_square, label=left: 1]     (3) at (-1.9, 1.1) {};
\node[black_dot, label=right: ]    (5) at (0, 2.2) {};
\node[red_square, label=left: 2]     (7) at (-1.9, 3.3) {};
\node[red_square, label=above: 3]    (9) at (0, 4.4) {};
\node[red_square, label=right: 4]    (11) at (1.9, 3.3) {};
\node[red_square, label=right: 5]    (13) at (1.9, 1.1) {};

\begin{pgfonlayer}{background}
    \draw[highlight] (7) -- (5);
    \draw[highlight] (11) -- (5);
    \draw[highlight] (3) -- (1);
    \draw[highlight] (13) -- (1);
    \draw[highlight] (5) -- (1);
\end{pgfonlayer}

\draw[arrow] (3) edge  (1);
\draw[arrow] (13) edge (1);
\draw[arrow] (7) edge (5);
\draw[arrow] (9) edge (5);
\draw[arrow] (11) edge (5);
\draw[arrow] (5) edge (1);

\draw[purple_line] (3) to[out=120, in=-120] (7);
\draw[purple_line] (11) to[out=-60, in=60] (13);

\end{tikzpicture}
} \quad
\scalebox{0.5}{
\begin{tikzpicture}[
    red_square/.style={rectangle, fill=red, inner sep=2.5pt},
    black_dot/.style={circle, fill=black, inner sep=2.5pt},
    arrow/.style={->, >=stealth, thick},
    every edge quotes/.style={font=\small, sloped, fill=white, inner sep=1.5pt},
    highlight/.style={green, opacity=0.4, line cap=round, line join=round, line width=1.2cm},
    purple_line/.style={draw=purple, very thick}
]
\node[black_dot, label=below: ]   (1) at (0,0) {};
\node[red_square, label=left: 1]     (3) at (-1.9, 1.1) {};
\node[black_dot, label=right: ]    (5) at (0, 2.2) {};
\node[red_square, label=left: 2]     (7) at (-1.9, 3.3) {};
\node[red_square, label=above: 3]    (9) at (0, 4.4) {};
\node[red_square, label=right: 4]    (11) at (1.9, 3.3) {};
\node[red_square, label=right: 5]    (13) at (1.9, 1.1) {};

\begin{pgfonlayer}{background}
    \draw[highlight] (7) -- (5);
    \draw[highlight] (11) -- (5);
    \draw[highlight] (3) -- (1);
    \draw[highlight] (13) -- (1);
    \draw[highlight] (5) -- (1);
\end{pgfonlayer}

\draw[arrow] (3) edge  (1);
\draw[arrow] (13) edge (1);
\draw[arrow] (7) edge (5);
\draw[arrow] (9) edge (5);
\draw[arrow] (11) edge (5);
\draw[arrow] (5) edge (1);

\draw[purple_line] (3) to[out=60, in=150] (11);
\draw[purple_line] (7) to[out=30, in=120] (13);

\end{tikzpicture}
} \quad
\scalebox{0.5}{
\begin{tikzpicture}[
    red_square/.style={rectangle, fill=red, inner sep=2.5pt},
    black_dot/.style={circle, fill=black, inner sep=2.5pt},
    arrow/.style={->, >=stealth, thick},
    every edge quotes/.style={font=\small, sloped, fill=white, inner sep=1.5pt},
    highlight/.style={green, opacity=0.4, line cap=round, line join=round, line width=1.2cm},
    purple_line/.style={draw=purple, very thick}
]
\node[black_dot, label=below: ]   (1) at (0,0) {};
\node[red_square, label=left: 1]     (3) at (-1.9, 1.1) {};
\node[black_dot, label=right: ]    (5) at (0, 2.2) {};
\node[red_square, label=left: 2]     (7) at (-1.9, 3.3) {};
\node[red_square, label=above: 3]    (9) at (0, 4.4) {};
\node[red_square, label=right: 4]    (11) at (1.9, 3.3) {};
\node[red_square, label=right: 5]    (13) at (1.9, 1.1) {};

\begin{pgfonlayer}{background}
    \draw[highlight] (7) -- (5);
    \draw[highlight] (11) -- (5);
    \draw[highlight] (3) -- (1);
    \draw[highlight] (13) -- (1);
    \draw[highlight] (5) -- (1);
\end{pgfonlayer}

\draw[arrow] (3) edge  (1);
\draw[arrow] (13) edge (1);
\draw[arrow] (7) edge (5);
\draw[arrow] (9) edge (5);
\draw[arrow] (11) edge (5);
\draw[arrow] (5) edge (1);

\draw[purple_line] (7) to[out=30, in=150] (11);
\draw[purple_line] (3) to[out=30, in=150] (13);

\end{tikzpicture}
}
\end{equ}
$M^\circ$ acts trivially on all of these trees because the unrenormalised part is $\<1>$. Furthermore, $C_\eps^{2} \eqdef \ell_{\BPHZ}^{q ; (\eps)}\left(\<22_2> , \bpi \right)$ yield the same diverging constant for $\bpi = \{(1,2),(4,5)\}$, $\{(1,4),(2,5)\}$. Compared to \cite[Eq.~(10.41)]{Hai14}, our choice of $C_\eps^2$ is exactly half of the usual choice, as we are treating the two contractions $\bpi$ separately. On the other hand, $\ell_{\BPHZ}^{q ; (\eps)}\left(\<22_2> , \{(1,5),(2,4)\} \right) = 0$ by the inductive definition of $\ell^{q;(\eps)}_{\BPHZ}$. Concerning the inclusion of $\<2>$ in $\<23_2>$, we get the tree
\begin{equ}[eq:CountT2]
	\scalebox{0.5}{
	\begin{tikzpicture}[
    red_square/.style={rectangle, fill=red, inner sep=2.5pt},
    black_dot/.style={circle, fill=black, inner sep=2.5pt},
    arrow/.style={->, >=stealth, thick},
    every edge quotes/.style={font=\small, sloped, fill=white, inner sep=1.5pt},
    highlight/.style={green, opacity=0.4, line cap=round, line join=round, line width=1.2cm},
    purple_line/.style={draw=purple, very thick}
]
\node[black_dot, label=below: ]   (1) at (0,0) {};
\node[red_square, label=left: 1]     (3) at (-1.9, 1.1) {};
\node[black_dot, label=right: ]    (5) at (0, 2.2) {};
\node[red_square, label=left: 2]     (7) at (-1.9, 3.3) {};
\node[red_square, label=above: 3]    (9) at (0, 4.4) {};
\node[red_square, label=right: 4]    (11) at (1.9, 3.3) {};
\node[red_square, label=right: 5]    (13) at (1.9, 1.1) {};
\node[label=right: \scalebox{2}{$-C_\eps^1$}] (14) at (-4.6, 2.2) {};

\begin{pgfonlayer}{background}
    \draw[highlight] (3) -- (1);
    \draw[highlight] (13) -- (1);
\end{pgfonlayer}

\draw[arrow] (3) edge  (1);
\draw[arrow] (13) edge (1);
\draw[arrow] (7) edge (5);
\draw[arrow] (9) edge (5);
\draw[arrow] (11) edge (5);
\draw[arrow] (5) edge (1);

\draw[purple_line] (3) to[out=30, in=150] (13);

	\end{tikzpicture}
}
\end{equ}
Of the three trees produced by $M^\circ$ applied to this contracted tree, only one inclusion of $\<2>$ leaves noise $3$ uncontracted. This yields the tree
\begin{equ}[eq:CountT3]
	\scalebox{0.5}{
\begin{tikzpicture}[
    red_square/.style={rectangle, fill=red, inner sep=2.5pt},
    black_dot/.style={circle, fill=black, inner sep=2.5pt},
    arrow/.style={->, >=stealth, thick},
    every edge quotes/.style={font=\small, sloped, fill=white, inner sep=1.5pt},
    highlight/.style={green, opacity=0.4, line cap=round, line join=round, line width=1.2cm},
    purple_line/.style={draw=purple, very thick}
]
\node[black_dot, label=below: ]   (1) at (0,0) {};
\node[red_square, label=left: 1]     (3) at (-1.9, 1.1) {};
\node[black_dot, label=right: ]    (5) at (0, 2.2) {};
\node[red_square, label=left: 2]     (7) at (-1.9, 3.3) {};
\node[red_square, label=above: 3]    (9) at (0, 4.4) {};
\node[red_square, label=right: 4]    (11) at (1.9, 3.3) {};
\node[red_square, label=right: 5]    (13) at (1.9, 1.1) {};
\node[label=right: \scalebox{2}{$\left(C_\eps^1\right)^2$}] (14) at (-4.6, 2.2) {};

\begin{pgfonlayer}{background}
    \draw[highlight] (7) -- (5);
    \draw[highlight] (11) -- (5);
    \draw[highlight] (3) -- (1);
    \draw[highlight] (13) -- (1);
\end{pgfonlayer}

\draw[arrow] (3) edge  (1);
\draw[arrow] (13) edge (1);
\draw[arrow] (7) edge (5);
\draw[arrow] (9) edge (5);
\draw[arrow] (11) edge (5);
\draw[arrow] (5) edge (1);

\draw[purple_line] (7) to[out=30, in=150] (11);
\draw[purple_line] (3) to[out=30, in=150] (13);

\end{tikzpicture}
}
\end{equ}
Finally, the action of $M^\circ$ on $\<23_2>$ directly produces the tree
\begin{equ}[eq:CountT4]
	\scalebox{0.5}{
\begin{tikzpicture}[
    red_square/.style={rectangle, fill=red, inner sep=2.5pt},
    black_dot/.style={circle, fill=black, inner sep=2.5pt},
    arrow/.style={->, >=stealth, thick},
    every edge quotes/.style={font=\small, sloped, fill=white, inner sep=1.5pt},
    highlight/.style={green, opacity=0.4, line cap=round, line join=round, line width=1.2cm},
    purple_line/.style={draw=purple, very thick}
]
\node[black_dot, label=below: ]   (1) at (0,0) {};
\node[red_square, label=left: 1]     (3) at (-1.9, 1.1) {};
\node[black_dot, label=right: ]    (5) at (0, 2.2) {};
\node[red_square, label=left: 2]     (7) at (-1.9, 3.3) {};
\node[red_square, label=above: 3]    (9) at (0, 4.4) {};
\node[red_square, label=right: 4]    (11) at (1.9, 3.3) {};
\node[red_square, label=right: 5]    (13) at (1.9, 1.1) {};
\node[label=right: \scalebox{2}{$-C_\eps^1$}] (14) at (-4.6, 2.2) {};

\begin{pgfonlayer}{background}
    \draw[highlight] (7) -- (5);
    \draw[highlight] (11) -- (5);
\end{pgfonlayer}

\draw[arrow] (3) edge  (1);
\draw[arrow] (13) edge (1);
\draw[arrow] (7) edge (5);
\draw[arrow] (9) edge (5);
\draw[arrow] (11) edge (5);
\draw[arrow] (5) edge (1);

\draw[purple_line] (7) to[out=30, in=150] (11);

\end{tikzpicture}
}
\end{equ}
Overall, this leaves us with $5$ trees contributing to the renormalisation of $\<23_2>$, s.t.\ noise $3$ remains uncontracted, the other cases with noises $2$ and $4$ follow symmetrically. We need to check the separate chaos components of $\widehat{\Pi}_x^{N ; (\eps), R} \<23_2>$ for $N \geqslant 5$. We will use the kernel notation of \cite[Section~10.5]{Hai14} from now on, which also shows that they are all well-defined $\mfH^{\wotimes_\alpha k}$-valued distributions. First, we note that in terms of this notation
\begin{equs}
	C^1_\eps &= \begin{tikzpicture}[scale=0.7,baseline=-0.1cm]
		\node at (0,0.5) [icirc] (middle) {};
		\node[circ, label=below: $z$] at (0,-0.5)  (below) {};
		\draw[kepsilon] (middle) to[out = -60, in = 60] (below);
		\draw[kepsilon] (middle) to[out = -120, in = 120] (below);
	\end{tikzpicture}  \; , \qquad
	C^2_\eps = \begin{tikzpicture}[scale=0.7,baseline=-0.1cm]
		\node at (0,0.5) [icirc] (middle) {};
		\node[circ, label=below: $z$] at (0,-0.5)  (below) {};
		\node at (-0.866025,0) [icirc] (left) {};
		\node at (0.866025,0) [icirc] (right) {};
		\draw[kepsilon] (left) to (below);
		\draw[kepsilon] (right) to (below);
		\draw[kernel] (middle) to (below);
		\draw[kepsilon] (left) to (middle);
		\draw[kepsilon] (right) to (middle);
	\end{tikzpicture}  \; .
\end{equs}

The component of $\widehat{\Pi}_x^{N ; (\eps), R} \<23>(z)$ in $[5, \emptyset]$ is

\begin{equ}
  \begin{tikzpicture}[/pgf/fpu/install only={reciprocal}]
    \node at (0,-1) [circ, label=below: $z$] (r) {};
    \node at (-0.866025, -0.5) [bdot] (1) {};
    \node at (0, 0) [icirc] (2) {};
    \node at (0.866025, -0.5) [bdot] (3) {};
    \node at (-0.866025, 0.5) [bdot] (4) {};
    \node at (0, 1) [bdot] (5) {};
    \node at (0.866025, 0.5) [bdot] (6) {};
    \draw[kepsilon] (4) to (2);
    \draw[kepsilon] (5) to (2);
    \draw[kepsilon] (6) to (2);
    \draw[kepsilon] (1) to (r);
    \draw[kernel] (2) to (r);
    \draw[kepsilon] (3) to (r);
  \end{tikzpicture} \quad \raisebox{1.36cm}{$-$} \quad
  \begin{tikzpicture}[/pgf/fpu/install only={reciprocal}]
    \node at (0,-1) [circ, label=below: $z$] (r) {};
    \node at (-0.5,-1) [circ, label=below: $x$] (rl) {};
    \node at (0.5,-1) [circ, label=below: $x$] (rr) {};
    \node at (-0.5, 0) [bdot] (1) {};
    \node at (0, 0) [icirc] (2) {};
    \node at (0.5, 0) [bdot] (3) {};
    \node at (-0.866025, 0.5) [bdot] (4) {};
    \node at (0, 1) [bdot] (5) {};
    \node at (0.866025, 0.5) [bdot] (6) {};
    \draw[kepsilon] (4) to (2);
    \draw[kepsilon] (5) to (2);
    \draw[kepsilon] (6) to (2);
    \draw[kepsilon] (1) to (rl);
    \draw[kernel] (2) to (r);
    \draw[kepsilon] (3) to (rr);
  \end{tikzpicture}
\end{equ}
exactly the kernel $\hat{\CW}^{(\eps ; 5)}$. This is a well-defined $\mfH^{\wotimes_\alpha 5}$-valued distribution for all $\eps \in [0,1]$ by the usual kernel estimates, cf.\ \cite[Section~10.3]{Hai14}, and satisfies the correct scaling behaviour, e.g.\ cf.\ \cite[Proposition~A.11]{CHP23}. In particular, $\hat{\CW}^{(\eps ; 5)} \in \CC^{-1/2}_{\s} \left( \R \times \T^3 ; \mfH^{\wotimes_\alpha 5} \right) \subset \CC^{-1/2}_{\s} \left( \R^4; \mfH^{\wotimes_\alpha 5} \right)$.

There are 6 components with two leaves contracted that are not noise $3$. The components in $[5, \{(1,5)\}]$ and $[5,\{(2,4)\}]$ are $0$, since they are exactly cancelled by the corresponding counterterms \eqref{eq:CountT2} and \eqref{eq:CountT4}. The other components are in $[5, \{(1,2)\}]$, $[5, \{(1,4)\}]$, $[5, \{(2,5)\}]$, and $[5, \{(4,5)\}]$ are well-defined by themselves as they can be represented by (permutations) of the kernel
\begin{equ}[eq:Phi43-1Cont]
  \begin{tikzpicture}[/pgf/fpu/install only={reciprocal}]
    \node at (0,-1) [circ, label=below: $z$] (r) {};
    \node at (-0.866025, -0.5) [bdot] (1) {};
    \node at (0, 0) [icirc] (2) {};
    \node at (0.866025, -0.5) [icirc] (3) {};
    \node at (-0.866025, 0.5) [bdot] (4) {};
    \node at (0, 1) [bdot] (5) {};
    \draw[kepsilon] (4) to (2);
    \draw[kepsilon] (5) to (2);
    \draw[kepsilon] (3) to (2);
    \draw[kepsilon] (1) to (r);
    \draw[kernel] (2) to (r);
    \draw[kepsilon] (3) to (r);
  \end{tikzpicture} \quad \raisebox{1.36cm}{$-$} \quad
  \begin{tikzpicture}[/pgf/fpu/install only={reciprocal}]
    \node at (0,-1) [circ, label=below: $z$] (r) {};
    \node at (-0.5,-1) [circ, label=below: $x$] (rl) {};
    \node at (0.5,-1) [circ, label=below: $x$] (rr) {};
    \node at (-0.5, 0) [bdot] (1) {};
    \node at (0, 0) [icirc] (2) {};
    \node at (-0.866025, 0.5) [bdot] (4) {};
    \node at (0, 1) [bdot] (5) {};
    \node at (0.99162, -0.12919) [icirc] (6) {};
    \draw[kepsilon] (4) to (2);
    \draw[kepsilon] (5) to (2);
    \draw[kepsilon] (6) to (2);
    \draw[kepsilon] (1) to (rl);
    \draw[kernel] (2) to (r);
    \draw[kepsilon] (6) to (rr);
  \end{tikzpicture} \quad \raisebox{1.36cm}{,} 
\end{equ}
which is again the same kernel as $\hat\CW^{(\eps ; 3)}$. It is again well-defined by the usual arguments. The only difference is that we have split up the $4$ copies into their own subspaces, whereas they all fall together in the symmetric case.

Finally, there are $3$ components with $4$ leaves contracted that are not noise $3$. Amongst these $[5,\{(1,5),(2,4)\}]$ is cancelled exactly by \eqref{eq:CountT3}. The other two $[5,\{(1,2),(4,5)\}]$ and $[5,\{(1,4),(2,5)\}]$ are permutations of the kernel $\hat \CW^{(\eps; 1)}$
\begin{equ}[eq:Phi43-2Cont]
  \begin{tikzpicture}[/pgf/fpu/install only={reciprocal}]
    \node at (0,-1) [circ, label=below: $z$] (r) {};
    \node at (-0.866025, -0.5) [icirc] (1) {};
    \node at (0, 0) [icirc] (2) {};
    \node at (0.866025, -0.5) [icirc] (3) {};
    \node at (0, 1) [bdot] (5) {};
    \draw[kepsilon] (1) to (2);
    \draw[kepsilon] (5) to (2);
    \draw[kepsilon] (3) to (2);
    \draw[kepsilon] (1) to (r);
    \draw[kernel] (2) to (r);
    \draw[kepsilon] (3) to (r);
  \end{tikzpicture}\quad \raisebox{1.36cm}{$-$} \quad \raisebox{0.5cm}{
  \begin{tikzpicture}[/pgf/fpu/install only={reciprocal}]
    \node at (-1.2,0) [circ, label=below: $z$] (d) {} ;
    \node at (-1.2,1) [bdot] (u) {} ;
    \node at (0,0) [circ, label=below: $z$] (r) {};
    \node at (-0.866025, 0.5) [icirc] (1) {} ;
    \node at (0.866025, 0.5) [icirc] (3) {} ;
    \node at (0,1) [icirc] (2) {} ;
    \draw[kepsilon] (u) to (d) ;
    \draw[kernel] (2) to (r) ;
    \draw[kepsilon] (1) to (r) ;
    \draw[kepsilon] (3) to (r) ;
    \draw[kepsilon] (1) to (2) ;
    \draw[kepsilon] (3) to (2) ;
  \end{tikzpicture}
  }
  \quad \raisebox{1.36cm}{$-$} \quad 
  \begin{tikzpicture}[/pgf/fpu/install only={reciprocal}]
    \node at (0,-1) [circ, label=below: $z$] (r) {};
    \node at (-0.5,-1) [circ, label=below: $x$] (rl) {};
    \node at (0.5,-1) [circ, label=below: $x$] (rr) {};
    \node at (0, 0) [icirc] (2) {};
    \node at (-0.99162, -0.12919) [icirc] (4) {};
    \node at (0, 1) [bdot] (5) {};
    \node at (0.99162, -0.12919) [icirc] (6) {};
    \draw[kepsilon] (4) to (2);
    \draw[kepsilon] (5) to (2);
    \draw[kepsilon] (6) to (2);
    \draw[kepsilon] (6) to (rr);
    \draw[kernel] (2) to (r);
    \draw[kepsilon] (4) to (rl);
  \end{tikzpicture}  \quad \; ,
\end{equ}
which is again well-defined by the usual arguments.

Following similar arguments for
\begin{equs}
	{} & \;  \<0> \, , \;  \<1>\, , \; \<2> \, , \; \<3> \, , \;  \<2> X_\mu \, , \;  \<13_1> \, , \; \<13_2> \, , \; \<23_1>\, , \; \<23_2>\, , \<23_3> \, , \; \<22_1> \, , \; \<22_2> \, , \; \<22_3>
\end{equs}
establishes the existence of the model $(\widehat{\Pi}^{N; (\eps),R}, \widehat{\Gamma}^{N; (\eps),R})$ for all $\eps \in [0,1]$ and thus also $(\Pi^{q; (\eps),R}, \Gamma^{q; (\eps),R})$ for all $q \in (-1,1)$ and $\eps \in [0,1]$.

Having established the existence of all necessary models, we return to the SPDE \eqref{eq:NonComPhi43}. First, let $V$ be the sector of $\cT_q$ spanned by the planted trees, and polynomials and let $H$ be the $\{ t = 0\}$ hyperplane. 

\begin{lemma}
	For $\gamma > 1 + 2 \kappa$ and $\eta \leqslant - \frac{1}{2} - \kappa$, $u \mapsto u^3$ maps 
	\begin{equ}
		\CD_H^{\gamma,\eta}(V) \longrightarrow \CD_H^{\gamma - 1 - 2 \kappa, 3 \eta}
	\end{equ}
	and is locally Lipschitz continuous.
\end{lemma}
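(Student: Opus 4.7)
The plan is to apply the $\CA$-regularity structure multiplication theorem (Theorem~\ref{thm:RSMult}) twice, first to form $u^2 = u \star u$ and then $u^3 = u^2 \star u$, and to carefully track how the regularity exponent $\gamma$ and the singular exponent $\eta$ transform under each multiplication. The key structural input is that the sector $V$, being spanned by planted trees (whose minimal regularity is that of $\<1>$) together with the polynomial sector $T_{d,\s}$, has regularity
\begin{equ}
    \alpha \;\eqdef\; -\tfrac{1}{2} - \kappa\;.
\end{equ}
I would first argue that $V$ (and, more generally, any sector generated by taking products of elements of $V$) is $\gamma$-regular for every $\gamma$ in the range we use. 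This is the standard compatibility check: the tree product in $\CH$ was defined so that $\Gamma_g$ acts multiplicatively on products of trees that are not root-renormalised, and this is exactly the statement that the pair $(V,V)$ is $\gamma$-regular; see the inductive definition of $\Gamma_g$ in \eqref{eq:GamDef} together with the general construction in Section~\ref{sec:TreeRegStruc}.

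Next I would form $u^2$ via Theorem~\ref{thm:RSMult} (in its singular version at the end of the statement). With $u \in \CD_H^{\gamma,\eta}(V)$ and both factors drawn from $V$, the theorem gives $u^2 \in \CD_H^{\gamma_2,\eta_2}(V\star V)$ with
\begin{equs}
    \gamma_2 &= (\gamma + \alpha) \wedge (\gamma + \alpha) = \gamma + \alpha = \gamma - \tfrac12 - \kappa\;,\\
    \eta_2 &= (\eta + \alpha) \wedge (\eta+\alpha) \wedge (2\eta) = 2\eta\;,
\end{equs}
where the last equality uses the hypothesis $\eta \leqslant \alpha$, so that $2\eta \leqslant \eta + \alpha$. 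Applying Theorem~\ref{thm:RSMult} once more to $u^2 \star u$, now with the product $V\star V$ (of regularity $2\alpha$) multiplied by $V$ (of regularity $\alpha$), yields $u^3 \in \CD_H^{\gamma_3,\eta_3}$ with
\begin{equs}
    \gamma_3 &= (\gamma_2 + \alpha) \wedge (\gamma + 2\alpha) = \gamma + 2\alpha = \gamma - 1 - 2\kappa\;,\\
    \eta_3 &= (2\eta + \alpha) \wedge (\eta + 2\alpha) \wedge (2\eta + \eta) = 3\eta\;,
\end{equs}
where again $\eta \leqslant \alpha$ makes the third term minimal. The hypothesis $\gamma > 1 + 2\kappa$ ensures $\gamma_3 > 0$, so the multiplication theorems apply at each step and all $(\gamma,\eta)$-norms are finite.

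Finally, local Lipschitz continuity is inherited from the corresponding quantitative bound in Theorem~\ref{thm:RSMult}: given $u, \bar u \in \CD_H^{\gamma,\eta}(V)$ with $\vvvert u\vvvert_{\gamma,\eta;\K,\mfp} , \vvvert \bar u\vvvert_{\gamma,\eta;\K,\mfp} \leqslant C$, one applies the distance estimate of Theorem~\ref{thm:RSMult} twice and uses $u^3 - \bar u^3 = (u - \bar u)\star u \star u + \bar u \star (u - \bar u) \star u + \bar u \star \bar u \star (u - \bar u)$, expanding each difference using the bilinear distance bound (the noncommutativity of $\CA$ is harmless here since we keep track of the ordering of factors explicitly). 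The only step requiring real care is bookkeeping: in the noncommutative setting the product of modelled distributions is the $\CA$-compatible tree product, and one must check that each seminorm $\mfp \in \mfP$ is submultiplicative on the target sectors; this is built into the definition of a locally $t$-convex $\CA$-bimodule (Definition~\ref{def:tmconvex}), and so the scalar arguments of \cite[Thm.~4.7 \& Prop.~6.12]{Hai14}, via Theorem~\ref{thm:RSMult}, go through verbatim after replacing the norm on $\R$ by a generic seminorm $\mfp$.
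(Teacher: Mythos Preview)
Your proof is correct and follows exactly the same approach as the paper: identify the regularity of $V$ as $\alpha = -\tfrac12 - \kappa$ and apply Theorem~\ref{thm:RSMult} (in its singular form) to track how $\gamma$ and $\eta$ degrade under the cube. The paper's own proof is much terser, simply noting that cubing can reduce the $\gamma$-regularity by at most $2(\tfrac12+\kappa)$; your version spells out the two-step multiplication and verifies the minima for $\eta_2,\eta_3$ explicitly, which is a welcome elaboration but not a different argument.
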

\begin{remark}
	The choice of $\gamma$ is such that $u^3$ always has a unique reconstruction. 
\end{remark}
\begin{proof}
	The proof follows directly from Theorem~\ref{thm:RSMult}. The main ingredient consists of recognising that the regularity of $V$ is $-\frac{1}{2} - \kappa$ and thus taking the cube can reduce the regularity of the modelled distribution by at most $2 \left( \frac{1}{2} + \kappa\right)$.
\end{proof}

Given an initial condition $\phi_0 \in \CC^{\eta}(\T^3 ; \cA_q)$ with $ \eta > - \frac{2}{3}$, \eqref{eq:LiftEq} suggest that we should rewrite \eqref{eq:NonComPhi43} as a fixed-point problem in $\CD^{\gamma, \eta}_H(V)$ for $\gamma  > 1 + 2\kappa$, $\bar\gamma = \gamma - 1 - 2\kappa$
\begin{equ}[eq:ModFP]
	u = - (\CK^{(\eps)}_{\bar\gamma} + R_\gamma \CR^{(\eps)}) ( \bR^+ u^ 3 - \bR^+ \Xi) + G \phi_0
\end{equ}
where $G \phi_0$ is the lift of $(t,x) \mapsto e^{-t(m^2-\Delta)} \phi_0(x)$ to a modelled distribution, which by \cite[Lemma~7.5]{Hai14} belongs to $\CD^{\infty, \eta}_H(V)$. The only issue in this construction is that $\bR^+ \Xi$ only belongs to $\CD^{\infty, - \frac{5}{2}- \kappa}$, and thus does not necessarily admit a reconstruction as $ - \frac{5}{2}- \kappa < -2$. However, one can easily show that setting $\CR^{(\eps)}\bR^+ \Xi \eqdef \bone_{\{t>0\}} \xi^{(\eps)}_q$ is well-defined for all $\eps \in [0,1]$ in a sufficiently regular space, cf.\ \cite[Section~9.4]{Hai14}. With this convention, we define $v^{(\eps)} \eqdef (\CK^{(\eps)}_{\bar\gamma} + R_\gamma \CR^{(\eps)}) \bR^+ \Xi$ for which $v^{(\eps)} \in \CD^{\gamma,\eta}_H(V)$ and the assignment $\eps \mapsto v^{(\eps)}$ is continuous. Thus, we arrive at our final abstract form of the $\Phi^4_3$-equation 
\begin{equ}[eq:ModFP2]
	u = - (\CK^{(\eps)}_{\bar\gamma} + R_\gamma \CR^{(\eps)}) ( \bR^+ u^ 3) - v^{(\eps)} + G \phi_0
\end{equ}

For $T>0$, let $\CD^{\gamma,\eta}_{H;T}$ denote the restriction of $\Z^3$-invariant modelled distributions in $\CD^{\gamma,\eta}_{H}$ to $(0,T) \times \R^3$. We have the following theorem describing the abstract solution theory of the $\Phi^4_3$-equation for $q$-mezdons, which is a direct consequence of Theorem~\ref{thm:AbsFPM}.
\begin{theorem}
	Let $\kappa \in (0, \frac{1}{14})$, $\gamma > 1 + 2\kappa$, and $\eta \in (- \frac{2}{3}, -\frac{1}{2}- \kappa)$. For every $\eps \in [0,1]$ and $\phi_0 \in \CC^{\eta}(\T^3 ; \cA_q)$ there exists $T(\eps , \phi_0) > 0 $, s.t.\ \eqref{eq:ModFP2} has a unique solution $\CS_T(\eps, \phi_0)$ in $\CD_{H; T}^{\gamma,\eta}$.  Furthermore, there exists a $\delta > 0$, s.t.\ the map 
	\begin{equs}
		S_{T} \colon B_{\delta}(\eps) \times B_{\delta}(\phi_0) & \longrightarrow \left(\CD^{\gamma,\eta}_{H;T} \right) \\
		(\eps', \phi_0') &\longmapsto S_T(\eps', \phi_0')
	\end{equs}
	is well-defined and continuous.
\end{theorem}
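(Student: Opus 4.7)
The plan is to realize \eqref{eq:ModFP2} as a fixed-point problem to which the abstract Theorem~\ref{thm:AbsFPM} applies, then deduce continuity in $(\eps,\phi_0)$ from the built-in continuity of that theorem combined with continuity of the driving data. First I would rewrite \eqref{eq:ModFP2} in the standard form
\begin{equ}
u = (\CK^{(\eps)}_{\bar\gamma} + R_\gamma \CR^{(\eps)})\bR^+ F(u) + w^{(\eps)}_0\;,\qquad F(u) = -u^3\;,
\end{equ}
with $w^{(\eps)}_0 \eqdef G\phi_0 - v^{(\eps)}$. By \cite[Lemma~7.5]{Hai14} applied in the $\cA_q$-setting, $G\phi_0\in\CD^{\infty,\eta}_H(V)$, and $v^{(\eps)}\in\CD^{\gamma,\eta}_H(V)$ by construction; hence $w^{(\eps)}_0 \in \CD^{\gamma,\eta}_H(V)$ plays the role of the ``initial datum'' in Theorem~\ref{thm:AbsFPM}.

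Next I would verify the hypotheses of Theorem~\ref{thm:AbsFPM}. The sector $V$ has regularity $\zeta = -\tfrac12-\kappa$, while the smallest regularity appearing in the image of $F$ is $\bar\zeta = -\tfrac32-3\kappa$, so we may take $\overline{V}$ to be the sector containing all terms produced by $u\mapsto u^3$. The lemma preceding the theorem gives $F\in\mathrm{Lip}_{\loc}(\CD^{\gamma,\eta}_H(V),\CD^{\bar\gamma,3\eta}_H(\overline V))$ with $\bar\gamma = \gamma-1-2\kappa$ and $\bar\eta=3\eta$. The heat kernel is $\beta$-regularising with $\beta=2$, and abstract integration $\CI=\CI_\mft$ satisfies $\CQ^-_\gamma\CI\overline V_{\bar\gamma}\subset V_\gamma$ by the grading shift $+2 > 1+2\kappa$. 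The required inequalities then reduce to
\begin{equs}
\bar\gamma > 0 \iff \gamma > 1+2\kappa\;,\qquad \gamma < \bar\gamma + \beta \iff \kappa < \tfrac12\;,\\
(\bar\eta\wedge\bar\zeta) > -\beta \iff \eta > -\tfrac23 \text{ and } \kappa < \tfrac16\;,\\
\eta < (\bar\eta\wedge\bar\zeta)+\beta \iff \eta > -1 \text{ and } \eta < \tfrac12-3\kappa\;,
\end{equs}
all of which hold under our standing assumptions $\kappa\in(0,\tfrac1{14})$, $\gamma>1+2\kappa$, $\eta\in(-\tfrac23,-\tfrac12-\kappa)$. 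Discrete symmetry under $\cS=\Z^3$ acting on the spatial variables is preserved by $F$, $\CK^{(\eps)}_{\bar\gamma}$ and $R_\gamma\CR^{(\eps)}$ by translation invariance of $K$, so the symmetry assumption is satisfied.

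With these inputs Theorem~\ref{thm:AbsFPM} produces, for every $\eps\in[0,1]$ and every $\phi_0\in\CC^\eta(\T^3;\cA_q)$, a $T(\eps,\phi_0)>0$ and a unique fixed point $u\in\CD^{\gamma,\eta}_{H;T}$, together with joint local Lipschitz continuity of $(w,Z)\mapsto u$ near $(w^{(\eps)}_0,Z^{q;(\eps),R})$; since $\cA_q$ is a genuine Banach algebra (a single seminorm), there is no issue with the stopping time $\tau_\mfp$ depending on $\mfp$. To upgrade this into joint continuity of $(\eps,\phi_0)\mapsto S_T(\eps,\phi_0)$ on a ball, I would combine three ingredients: (i) continuity of $\phi_0\mapsto G\phi_0$ as a map $\CC^\eta(\T^3;\cA_q)\to\CD^{\infty,\eta}_H(V)$; (ii) continuity of $\eps\mapsto v^{(\eps)}$, which follows from the continuity of $\eps\mapsto \xi_q^{(\eps)}$ in $\CC^{-5/2-\kappa}_\s$ and the boundedness of $\CK^{(\eps)}_{\bar\gamma}+R_\gamma\CR^{(\eps)}$; (iii) continuity of $\eps\mapsto Z^{q;(\eps),R}$ in the space of models, guaranteed by Corollary~\ref{cor:FocktoQbound} together with Theorem~\ref{thm:BPHZ} applied to the auxiliary Fock-space regularity structure $\widehat\cT_N$. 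A standard compactness argument on the Lipschitz dependence then yields a uniform $T>0$ on a small joint ball $B_\delta(\eps)\times B_\delta(\phi_0)$, and the continuous solution map $S_T$ on that ball.

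The main (mild) obstacle I expect is purely bookkeeping: tracking the interplay between the two convergences (the deterministic convergence $\eps\downarrow 0$ of $v^{(\eps)}$ in $\CD^{\gamma,\eta}_H$ and the convergence of the renormalised model in $\cA_q$-valued model seminorms) when extracting the uniform lifetime $T$ over the ball $B_\delta$, and confirming that the exclusion of the endpoint value $\kappa=\tfrac1{14}$ is indeed sufficient for all the inequalities above and for the non-resonance condition $\gamma+\beta\notin\N$ implicitly used in Theorem~\ref{SingKernThm}. Neither of these requires any new analytic input beyond what has already been assembled.
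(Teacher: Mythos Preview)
Your proposal is correct and follows exactly the paper's approach: the paper states this theorem as ``a direct consequence of Theorem~\ref{thm:AbsFPM}'' with all the surrounding setup (the lemma on $u\mapsto u^3$, the definition of $v^{(\eps)}$, and the convergence of models via Corollary~\ref{cor:FocktoQbound} and Theorem~\ref{thm:BPHZ}) already in place. Your explicit verification of the parameter inequalities is slightly redundant in places---since $\eta<-\tfrac12-\kappa$ forces $3\eta<\bar\zeta$, the active condition in $(\bar\eta\wedge\bar\zeta)>-\beta$ is only $\eta>-\tfrac23$, not also $\kappa<\tfrac16$---but this is harmless; the bound $\kappa<\tfrac1{14}$ is what keeps trees such as $\CI(\Xi)\,\CI(\CI(\Xi)^3)^2$ of degree $\tfrac12-7\kappa$ strictly positive, so that the list of negative trees stays as described.
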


\begin{remark}
	Here $B_\delta(\eps)$ and $B_{\delta}(\phi_0)$ are the balls of radius $\delta$ around $\eps$ and $\phi_0$ in $[0,1]$ and $\CC^\eta(\T^3)$ respectively. 
\end{remark}

\begin{remark}
	By suitably defining blow-up spaces, the above solution map can be extended to the maximal time of existence and remains continuous in those spaces, cf.\ \cite{BCCH21,CCHS2d, CHP23}.
\end{remark}

We are now only left with analysing the relationship between solutions of the original equation \eqref{eq:NonComPhi43} and reconstructions $\phi_q^{(\eps)} \eqdef \CR^{(\eps)} u_{(\eps)}$ of our abstract solution $u_{(\eps)} = S_{T}(\eps, \phi_0)$ for $\eps > 0$. In the commutative case $q=1$, it is well-known that $\phi_{(\eps)}$ satisfies 
\begin{equ}
(\partial_t -\Delta +m^2) \phi_1^{(\eps)} = - \left( \phi_1^{(\eps)} \right)^3 + ( 3 C_1^{(\eps)} - 18 C_2^{(\eps)}) \phi_1^{(\eps)} + \xi_1^{(\eps)} \; ,
\end{equ}
to our conventions for the constants $C^1_{\eps}$ and $C^2_{\eps}$.

Fixing $\eps > 0$, a straighforward Picard iteration argument, \cite[Section~9.4]{Hai14}, shows that there exist functions $\upsilon, \upsilon^i$ for $i \in [3]$, s.t.\
\begin{equ}
	u_{(\eps)} = \<1> + \upsilon \1 - 3 \<03> - \left( \<02_1R> + \<02_2R> + \<02_3R> \right) + \upsilon^i X_i \; ,
\end{equ}
using the Einstein summation convention. Then 
\begin{equs}[eq:RenAEqPhi43]
	\Xi - u_{(\eps)}^3 & = \<0> - \left( \<3> + \upsilon \<2> + \<2_1Rprime> + \<2> \upsilon + \<1> \upsilon^2 + \upsilon \<1> \upsilon + \upsilon^2 \<1> +  \upsilon^3 \1  \right) - \\
	& \qquad - X_i \left( \upsilon^i \<2> + \<2_1Rpi> + \<2> \upsilon^i   \right)  + \\
	&\qquad + \left( \<23_1> + \<23_2> + \<23_3> \right) +\\ 
	& \qquad + \Bigl( \upsilon \<13_1> + \<1_R3_1> + \<13_1> \upsilon + \text{sym.} \Bigr) + \\
	& \qquad + \Bigl( 
	\<22_2R_1> + \<22_2R_2> + \<22_2R_3> +  \text{sym.} \Bigl) + \rho
\end{equs}
where $\rho$ is a remainder of strictly positive regularity. In the fourth line, we have left 3 terms unwritten -- generated by insertions of $\upsilon$ in $\<13_2>$ -- and in the fifth line, another 6 terms generated by $\<22_1>$ and $\<22_3>$. In the commutative case, these can simply be written as $6 \upsilon \<13_1>$ and $9 \upsilon \<22_2>$; however, because of the noncommutativity, we cannot simply extract the $\upsilon$ from the tree in the mezdonic case. 

The first line in \eqref{eq:RenAEqPhi43} is the same as the one in the 2D case \eqref{eq:Phi42Ren} and thus generates the counterm 
\begin{equ}
	C^1_\eps (2+ \Delta_q) \phi_q^{(\eps)} \; . 
\end{equ}
The new counterm is generated by the contractions in the fifth (and corresponding ones in the third) line. Specialising the case $\<22_2R_2>$ we see that the relevant contractions are 

\begin{equ}
\scalebox{0.5}{
\begin{tikzpicture}[
    red_square/.style={rectangle, fill=red, inner sep=2.5pt},
    black_dot/.style={circle, fill=black, inner sep=2.5pt},
    arrow/.style={->, >=stealth, thick},
    every edge quotes/.style={font=\small, sloped, fill=white, inner sep=1.5pt},
    highlight/.style={green, opacity=0.4, line cap=round, line join=round, line width=1.2cm},
    purple_line/.style={draw=purple, very thick}
]
\node[black_dot, label=below: ]   (1) at (0,0) {};
\node[red_square, label=left: 1]     (3) at (-1.9, 1.1) {};
\node[black_dot, label=right: ]    (5) at (0, 2.2) {};
\node[red_square, label=left: 2]     (7) at (-1.9, 3.3) {};
\node[label=above: 3]    (9) at (0, 4) {\scalebox{1.8}{$\upsilon$}};
\node[red_square, label=right: 4]    (11) at (1.9, 3.3) {};
\node[red_square, label=right: 5]    (13) at (1.9, 1.1) {};

\begin{pgfonlayer}{background}
    \draw[highlight] (7) -- (5);
    \draw[highlight] (11) -- (5);
    \draw[highlight] (3) -- (1);
    \draw[highlight] (13) -- (1);
    \draw[highlight] (5) -- (1);
\end{pgfonlayer}

\draw[arrow] (3) edge  (1);
\draw[arrow] (13) edge (1);
\draw[arrow] (7) edge (5);
\draw[arrow] (11) edge (5);
\draw[arrow] (5) edge (1);

\draw[purple_line] (3) to[out=120, in=-120] (7);
\draw[purple_line] (11) to[out=-60, in=60] (13);

\end{tikzpicture}
} \qquad
\scalebox{0.5}{
\begin{tikzpicture}[
    red_square/.style={rectangle, fill=red, inner sep=2.5pt},
    black_dot/.style={circle, fill=black, inner sep=2.5pt},
    arrow/.style={->, >=stealth, thick},
    every edge quotes/.style={font=\small, sloped, fill=white, inner sep=1.5pt},
    highlight/.style={green, opacity=0.4, line cap=round, line join=round, line width=1.2cm},
    purple_line/.style={draw=purple, very thick}
]
\node[black_dot, label=below: ]   (1) at (0,0) {};
\node[red_square, label=left: 1]     (3) at (-1.9, 1.1) {};
\node[black_dot, label=right: ]    (5) at (0, 2.2) {};
\node[red_square, label=left: 2]     (7) at (-1.9, 3.3) {};
\node[label=above: 3]    (9) at (0, 4) {\scalebox{1.8}{$\upsilon$}};
\node[red_square, label=right: 4]    (11) at (1.9, 3.3) {};
\node[red_square, label=right: 5]    (13) at (1.9, 1.1) {};

\begin{pgfonlayer}{background}
    \draw[highlight] (7) -- (5);
    \draw[highlight] (11) -- (5);
    \draw[highlight] (3) -- (1);
    \draw[highlight] (13) -- (1);
    \draw[highlight] (5) -- (1);
\end{pgfonlayer}

\draw[arrow] (3) edge  (1);
\draw[arrow] (13) edge (1);
\draw[arrow] (7) edge (5);
\draw[arrow] (11) edge (5);
\draw[arrow] (5) edge (1);

\draw[purple_line] (3) to[out=60, in=150] (11);
\draw[purple_line] (7) to[out=30, in=120] (13);

\end{tikzpicture}
} 
\end{equ}
For the first contraction $\bpi_1 = \{(1,2),(4,5)\}$ we have $\mathrm{cr}(\bpi_1) = \mathrm{sp}(\bpi_1) = 0$ and for the second $\bpi_2 = \{(1,4),(2,5)\}$ we have $\mathrm{cr}(\bpi_2) = 1$ and $ \mathrm{sp}(\bpi_2) = 2$. Thus, the counterterm corresponding to $\bpi_1$ is $C_\eps^2$ multiplied with 
\begin{equ}
	q^{\mathrm{cr}(\bpi_1)} \Delta_q^{\mathrm{sp}(\bpi_1)}  \phi^{(\eps)}_q = \phi^{(\eps)}_q
\end{equ}
and for $\bpi_2$
\begin{equ}
	 q^{\mathrm{cr}(\bpi_2)} \Delta_q^{\mathrm{sp}(\bpi_2)}  \phi^{(\eps)}_q = q \Delta^2_q \phi_{q}^{(\eps)} \; .
\end{equ}
Summing over all contractions, we are left with the equation
\begin{equ}
		(\partial_{t} - \Delta + m^2) \phi^{(\eps)}_{q} = - \Big( \left(\phi_{q}^{(\eps)}\right) ^{3} - \left(C_\eps^{1} \Delta_q^{(1)}  - C_\eps^{2} \Delta_q^{(2)}\right) \phi^{(\eps)}_{q} \Big) + \xi^{(\eps)}_{q}\;,
\end{equ}
where
\begin{equs}[eq:CTOp]
	\Delta^{(1)}_q &\eqdef 2 + \Delta_q \; ,\\
	\Delta^{(2)}_q &\eqdef 3+2q +(4+4q)\Delta_q + (2+3q) \Delta_q^2 \; .
\end{equs}

\begin{remark}
	As of yet, we do not have a clear interpretation of the meaning of the operators $\Delta_q^{(i)}$ beyond the fact that they are necessary for renormalisation. 
\end{remark}

\subsection{The Clifford \TitleEquation{\Phi^4_3}{Phi43}-Equation}
\label{sec:eq:Phi43Clif}

In this section, we shall apply the theory of noncommutative regularity structures, as well as the work we did in the previous section, to solve the $\Phi^4_3$-equation in the Clifford case
\begin{equ}[eq:Phi43Clif]
	(\partial_t - \Delta + m^2) \phi = - \phi^3 + \bxi \; . 
\end{equ}
with $\bxi \in \cA_F^{\Cl}(\mfH)$ being the extended Clifford noise over the Hilbert space  $\mfH = L^2(\T_2 \times \T^3)$(instead of $L^2(\R \times \T^3)$\footnote{Due to the constraint that the domain of the fermions be compact, necessary for our localised BPHZ estimate, we restrict the temporal interval to $[-2,2]$ and identify it with a one-dimensional torus $\T_2$. This necessitates us to restrict ourselves to a maximal possible time of existence, e.g.\ $T=1$.}).

The regularity structure is defined in a completely analogous fashion. One only needs to slightly modify the renormalisation and kernel estimates to incorporate the localisation procedure. For example, the counterm originating from $\<2>$ now is 
\begin{equ}
	\ell^{\Cl ; (\eps)}_{\BPHZ}(\<2>, \{(1,2)\}) \eqdef -  \left[ \balpha_F\bigg( \begin{tikzpicture}[scale=0.6,baseline=0cm]
		\node at (0,0) [circ, label=below: $0$] (middle) {};
		\node at (0,1) [circ] (above) {};
		\draw[kepsilon] (above) to (middle);
	\end{tikzpicture}  \bigg) , \balpha^\dagger_F\bigg(\begin{tikzpicture}[scale=0.6,baseline=0cm]
		\node at (0,0) [circ, label=below: $0$] (middle) {};
		\node at (0,1) [circ] (above) {};
		\draw[kepsilon] (above) to (middle);
	\end{tikzpicture}  \bigg) \right]_+  \in Z(\cA^{\Cl}_F) \; . 
\end{equ}
For $b \in \Gamma_\infty$, to estimate $\tilde\pi_b \left( \widetilde{\Pi}_x^{5 ; (\eps), R} \<23>\right)$ the kernel in \eqref{eq:Phi43-1Cont} is replaced by 
\begin{equ}
  \begin{tikzpicture}[/pgf/fpu/install only={reciprocal}]
    \node at (0,-1) [circ, label=below: $z$] (r) {};
    \node at (-0.866025, -0.5) [bdot] (1) {};
    \node at (0, 0) [icirc] (2) {};
    \node at (0.866025, -0.5) [icirc] (3) {};
    \node at (-0.866025, 0.5) [bdot] (4) {};
    \node at (0, 1) [bdot] (5) {};
    \node at (0.866025, 0.5) [icirc] (6) {};
    \draw[kepsilon] (4) to (2);
    \draw[kepsilon] (5) to (2);
    \draw[kepsilon] (6) to (2);
    \draw[kepsilon] (1) to (r);
    \draw[kepsilon] (2) to (r);
    \draw[kepsilon] (3) to (r);
    \draw[vwavy] (6) to (3);
  \end{tikzpicture} \quad \raisebox{1.36cm}{$-$} \quad
  \begin{tikzpicture}[/pgf/fpu/install only={reciprocal}]
    \node at (0,-1) [circ, label=below: $z$] (r) {};
    \node at (-0.5,-1) [circ, label=below: $x$] (rl) {};
    \node at (0.5,-1) [circ, label=below: $x$] (rr) {};
    \node at (-0.5, 0) [bdot] (1) {};
    \node at (0, 0) [icirc] (2) {};
    \node at (1.30051, -0.400679) [icirc] (3) {};
    \node at (-0.866025, 0.5) [bdot] (4) {};
    \node at (0, 1) [bdot] (5) {};
    \node at (0.866025, 0.5) [icirc] (6) {};
    \draw[kepsilon] (4) to (2);
    \draw[kepsilon] (5) to (2);
    \draw[kepsilon] (6) to (2);
    \draw[kepsilon] (1) to (rl);
    \draw[kepsilon] (2) to (r);
    \draw[kepsilon] (3) to (rr);
    \draw[vwavy] (3) to (6);
  \end{tikzpicture} \quad
\end{equ}
and the one in \eqref{eq:Phi43-2Cont} by 
\begin{equs}
  {}&\begin{tikzpicture}[/pgf/fpu/install only={reciprocal}]
    \node at (0,-1) [circ, label=below: $z$] (r) {};
    \node at (-0.866025, -0.5) [icirc] (1) {};
    \node at (0, 0) [icirc] (2) {};
    \node at (0.866025, -0.5) [icirc] (3) {};
    \node at (-0.866025, 0.5) [icirc] (4) {};
    \node at (0, 1) [bdot] (5) {};
    \node at (0.866025, 0.5) [icirc] (6) {};
    \draw[kepsilon] (4) to (2);
    \draw[kepsilon] (5) to (2);
    \draw[kepsilon] (6) to (2);
    \draw[kepsilon] (1) to (r);
    \draw[kepsilon] (2) to (r);
    \draw[kepsilon] (3) to (r);
    \draw[vwavy] (6) to (3);
    \draw[vwavy] (4) to (1);
  \end{tikzpicture}\quad \raisebox{1.36cm}{$-$} \quad \raisebox{0.5cm}{
  \begin{tikzpicture}[/pgf/fpu/install only={reciprocal}]
    \node at (-1.5,0) [circ, label=below: $z$] (d) {} ;
    \node at (-1.5,1) [bdot] (u) {} ;
    \node at (0,0) [circ, label=below: $z$] (r) {};
    \node at (-1,0) [icirc] (1) {} ;
    \node at (1,0) [icirc] (3) {} ;
    \node at (-1,1) [icirc] (4) {} ;
    \node at (0,1) [icirc] (2) {} ;
    \node at (1,1) [icirc] (5) {} ;
    \draw[kepsilon] (u) to (d) ;
    \draw[kepsilon] (2) to (r) ;
    \draw[kepsilon] (1) to (r) ;
    \draw[kepsilon] (3) to (r) ;
    \draw[kepsilon] (4) to (2) ;
    \draw[kepsilon] (5) to (2) ;
    \draw[vwavy] (4) to (1) ; 
    \draw[vwavy] (5) to (3) ;
  \end{tikzpicture}
  }
  \quad \raisebox{1.36cm}{$-$} \quad 
  \begin{tikzpicture}[/pgf/fpu/install only={reciprocal}]
    \node at (0,-1) [circ, label=below: $z$] (r) {};
    \node at (-0.5,-1) [circ, label=below: $x$] (rl) {};
    \node at (0.5,-1) [circ, label=below: $x$] (rr) {};
    \node at (-1.30051, -0.400679) [icirc] (1) {};
    \node at (0, 0) [icirc] (2) {};
    \node at (1.30051, -0.400679) [icirc] (3) {};
    \node at (-0.866025, 0.5) [icirc] (4) {};
    \node at (0, 1) [bdot] (5) {};
    \node at (0.866025, 0.5) [icirc] (6) {};
    \draw[kepsilon] (4) to (2);
    \draw[kepsilon] (5) to (2);
    \draw[kepsilon] (6) to (2);
    \draw[kepsilon] (1) to (rl);
    \draw[kepsilon] (2) to (r);
    \draw[kepsilon] (3) to (rr);
    \draw[vwavy] (3) to (6);
    \draw[vwavy] (1) to (4);
  \end{tikzpicture} \quad \; \raisebox{1.36cm}{.}
\end{equs}
Here 
\begin{equ}
	\begin{tikzpicture}[/pgf/fpu/install only={reciprocal}]
		\node at (0,0) [circ, label = below: $\bar z$] (left) {} ;
		\node at (1,0) [circ, label = below: $z$] (right) {} ;
		\draw[wavy] (left) to (right) ;
	\end{tikzpicture}
\end{equ}
denotes the kernel of projection $P_b$, i.e.\
\begin{equ}
	P_b(x,y) = \sum_{i = 1}^{\dim b} \phi_i(x) \overline{\phi_i(y)}
\end{equ}
for an ONB $\{\phi_i\}$ of $b$. 

Having discussed the regularity structure and model, we can state the local existence of \eqref{eq:Phi43Clif} with the same notation as in the mezdonic case. 
\begin{theorem}
	Let $\kappa \in (0, \frac{1}{14})$ and $\eta \in (- \frac{2}{3}, -\frac{1}{2}- \kappa)$. For every $\eps \in [0,1]$, $n \in \N$, and $\phi_0 \in \CC^{\eta}(\T^3 ; \cA_F^{\Cl})$ there exists $0< T(n, \eps , \phi_0) \leqslant 1 $, s.t.\ 
	\begin{equ}
		u = - (\CK^{(\eps)}_{\bar\gamma} + R_\gamma \CR^{(\eps)}) ( \bR^+ u^ 3) - v^{(\eps)} + G \phi_0
	\end{equ}
	has a unique solution $\CS^n_T(\eps, \phi_0)$ in $\left(\CD_{H; T}^{\gamma,\eta}\right)_n$.  Furthermore, there exists a $\delta_n > 0$, s.t.\ the map 
	\begin{equs}
		S^n_{T} \colon B_{\delta_n}(\eps) \times B_{\delta_n}(\phi_0) & \longrightarrow \left(\CD^{\gamma,\eta}_{H;T} \right)_n \\
		(\eps', \phi_0') &\longmapsto S^n_T(\eps', \phi_0')
	\end{equs}
	is well-defined and continuous.
\end{theorem}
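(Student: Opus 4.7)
The strategy is to run the same scheme used in the mezdonic $\Phi^4_3$ case (previous subsection), but replace the Banach algebra $\cA_q$ with the locally $C^*$-algebra $\cA_F^{\Cl}$, which means the fixed-point argument must be performed seminorm by seminorm in the filtration $(\|\bigcdot\|_n)_{n\in\N}$.

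First, I would build the same tree-based $\cA_F^{\Cl}$-regularity structure $\cT_{\Cl}$ as in Section~\ref{sec:Phi43Mez}, with $\mfL^- = \{\mfl\}$, $|\mfl|_\s = -\tfrac52 - \kappa$, and $\mfL^+ = \{\mft\}$ with $|\mft|_\s = 2$, together with the same subcritical rule. The BPHZ-renormalised canonical model $Z_{(\eps)}^{\Cl,R}$ is well-defined and converges as $\eps\downarrow 0$ by Corollary~\ref{cor:FocktoFermBound} together with Theorem~\ref{thm:BPHZFerm}; the only structural modifications are that (i) the kernel estimates now involve the spatial part of the fermionic covariance, so the ``kernel pictures'' carry the wavy-line decorations exhibited just above the theorem; (ii) the counterterms $\ell^{\Cl;(\eps)}_{\BPHZ}(\tau,\bpi)$ take values in the centre $Z(\cA_F^{\Cl})$ rather than in $\C$. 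Crucially, translation invariance of the filtration $(\Gamma_n)_n$, together with the compactness of the temporal domain (which is why we replaced $\R$ by $\T_2$), ensures the uniform-in-$n$ stochastic bounds needed for Theorem~\ref{thm:BPHZFerm}.

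Second, I would verify that the nonlinearity $u \mapsto u^3$ maps $\CD^{\gamma,\eta}_H(V) \to \CD^{\gamma-1-2\kappa, 3\eta}_H$ locally Lipschitz continuously, which is the same lemma as in the mezdonic case and follows verbatim from Theorem~\ref{thm:RSMult} together with its singular-modelled-distribution counterpart, because the proof of that multiplication result only used the sub-multiplicativity of the seminorms on the target $m$-convex algebra. Noting that $\bR^+ \Xi$ admits the same ad hoc reconstruction as in the commutative case, one sets $v^{(\eps)} \eqdef (\CK^{(\eps)}_{\bar\gamma}+R_\gamma\CR^{(\eps)})\bR^+\Xi \in \CD^{\gamma,\eta}_H(V)$, which again depends continuously on $\eps$. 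Checking the hypotheses of Theorem~\ref{thm:AbsFPM} — namely $\eta<(\bar\eta\wedge\bar\zeta)+\beta$, $\gamma<\bar\gamma+\beta$, $\bar\eta\wedge\bar\zeta>-\beta$ with $\beta=2$, and the strong local Lipschitz property of $F(u)=-u^3$ — is a direct numerical verification from the choice $\kappa\in(0,\tfrac{1}{14})$, $\gamma>1+2\kappa$, $\eta\in(-\tfrac23,-\tfrac12-\kappa)$.

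Third, I would invoke Theorem~\ref{thm:AbsFPM} directly: for each fixed seminorm $\|\bigcdot\|_n$ (equivalently each $\mfp_n \in \mfP$ on $\cA_F^{\Cl}$), the theorem produces, for any smooth model $Z^{\Cl,R}_{(\eps)}$, a time $\tau_n=T(n,\eps,\phi_0)>0$ and a unique fixed point $\CS^n_T(\eps,\phi_0)\in(\CD^{\gamma,\eta}_{H;T})_n$ of
\[
u=-(\CK^{(\eps)}_{\bar\gamma}+R_\gamma\CR^{(\eps)})(\bR^+u^3)-v^{(\eps)}+G\phi_0\,,
\]
together with a radius $\delta_n>0$ on which the joint map $(\eps',\phi_0')\mapsto \CS^n_T(\eps',\phi_0')$ is continuous; this is precisely the statement we want.

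The main potential obstacle is that, unlike in the mezdonic case where a single Banach norm sufficed, here the stopping time $\tau_n$ genuinely shrinks with $n$ because the seminorm of the model on the homogeneous chaos components grows with $n$, so one cannot in general extract an $n$-uniform existence time and one must live with the family $(T(n,\eps,\phi_0))_n$. This is however exactly the output format anticipated by the general theory (see the projective-limit discussion following Theorem~\ref{thm:AbsFPM}) and so causes no trouble for the statement as written. The only point requiring a little care is that the reconstruction $\CR^{(\eps)}\bR^+\Xi$ is defined outside the strict scope of the reconstruction theorem and must be handled as in \cite[Sec.~9.4]{Hai14}; but since this has already been used in the mezdonic case the argument transfers without change.
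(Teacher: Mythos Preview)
Your proposal is correct and follows exactly the approach the paper takes: the paper does not give an explicit proof but presents the theorem as the Clifford analogue of the mezdonic $\Phi^4_3$ result, which in turn is stated as ``a direct consequence of Theorem~\ref{thm:AbsFPM}'', with the model input supplied by Corollary~\ref{cor:FocktoFermBound} and Theorem~\ref{thm:BPHZFerm}. Your identification of the key structural differences (central counterterms, seminorm-by-seminorm argument yielding the $n$-dependent existence time, and the $\T_2$ compactification for translation-invariant localisation) matches precisely the surrounding discussion in the paper.
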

We note that since $\bxi \in \cA^{\Cl}_\infty$ is a bounded noise, we have, for all $\eps >0$, that all the trees in the regularity must also represent bounded operator-valued distributions. In particular, $T(\infty, \eps, \phi_0) \eqdef \inf_{n \in \N}T(n, \eps, \phi_0) > 0$ if $\phi_0 \in \CC^\eta(\T^3 ; \cA^{\Cl}_\infty)$. In this case, there exists a $u_{(\eps)} \in \CD^{\gamma,\eta}_{H;T}$, s.t.\ for all $n \in \N$, $ \pi_n\left( u_{(\eps)} \right)  = \CS_T^n(\eps, \phi_0)$ and $\phi^{(\eps)} \eqdef \CR^{(\eps)} u_{(\eps)}$ satisfies, analogously to the mezdonic case,
\begin{equ}
		(\partial_{t} - \Delta + m^2) \phi^{(\eps)} = - \Big( \left(\phi^{(\eps)}\right) ^{3} - \left(C_\eps^{1} \Delta^{(1)}  - C_\eps^{2} \Delta^{(2)}\right) \phi^{(\eps)} \Big) + \bxi^{(\eps)}\;,
\end{equ}
where $C_{\eps}^1, C_{\eps}^2 \in Z(\cA^{\Cl}_F)$ are the counterterms coming from $\<2>$, $\<22_2>$, and we have used the same expressions for $q=-1$ as in \eqref{eq:CTOp}, i.e.\
\begin{equs}
	\Delta^{(1)} &\eqdef 2 + \Delta_{-1} \; ,\\
	\Delta^{(2)} &\eqdef 1 - \Delta_{-1}^2 = 1-1 = 0  \; .
\end{equs}
In particular, the logarithmic counterterm does not appear in the renormalised equation, although it is necessary for the solution theory. If we assume that $\phi_0$ is contained in the odd part of the algebra, then $\phi^{(\eps)}$ must be as well and therefore $\Delta_{-1} \phi^{(\eps)} = - \phi^{(\eps)}$. Under these assumptions, the renormalised equation reduces to 
\begin{equ}[eq:Phi43ClifRen]
		(\partial_{t} - \Delta + m^2) \phi^{(\eps)} = -  \left(\phi^{(\eps)}\right) ^{3} + C_\eps^{1}  \phi^{(\eps)}  + \bxi^{(\eps)}\; . 
\end{equ}

Finally, we note that the energy functional corresponding to the $\Phi^4_3$-equation
\begin{equ}
	\int_{\T^3} \frac{1}{2} |\nabla \phi|^2 + \frac{m^2}{2} \phi^2 + \frac{1}{4}\phi^4  
\end{equ} 
is a self-adjoint, positive element of $\cA_F^{\Cl}$ (for a self-adjoint initial condition). Thus, there might be hope that one can use energy estimates to show that a solution to \eqref{eq:Phi43ClifRen} remains bounded in the noncommutative localised space $\CL^{2,\infty}(\cA_F^{\Cl}, \bomega_F)$, defined in \cite[Remark~2.21]{CHP23}. In particular, this would show that one can solve this equation in $\CL^2(\CA_F^{\Cl}, \omega_F)$, i.e.\ as unbounded operators affiliated to the original algebra, not just the extended algebra.

\subsection{The Higgs-Yukawa\TitleEquation{{}_2}{2} Model}\label{sec:HiggsYukawa2}

In this section, we shall show how one can apply the theory of noncommutative regularity structures to establish the local in time and space well-posedness of the stochastic quantisation equation of the Higgs-Yukawa$_2$ model
\begin{equ}
	\int\limits_{\R^2} \frac{1}{2} |\nabla \phi|^2 + \frac{m^2}{2} \phi + \Braket{\bar u, (\snabla - M) u}_{\R^2} + \frac{\lambda}{4} \phi^4 + g \phi \Braket{\bar u, u}_{\R^2} \d x \; .
\end{equ}
After some preprocessing, further explained in \cite{CHP23}, the system of PDEs reads
\begin{equs}[eq:YH_model]
    \left( \partial_t  - \Delta + m^2 \right) \phi &= - g  \bilin{(-\overline{\snabla}+M)\bar \upsilon , (\snabla+M)\upsilon} - \lambda \phi^3 +  \xi,	\\
    \left( \partial_t  - \Delta + M^2 \right) \upsilon &=  - g  \phi (\snabla+M) \upsilon  + \psi,\\
    \left( \partial_t  - \Delta + M^2 \right) \bar\upsilon & =  - g \phi (-\overline{\snabla}+M)\bar\upsilon + \bar\psi\;.
\end{equs}
Here $\xi = \xi_B$ is the bosonic $L^2(\R^3)$-white noise, $\psi, \bar\psi$ are the first two and second two components respectively of the four components of the Dirac noise $\Psi = (\psi, \bar \psi)$ over the Hilbert space $\mfH \eqdef \mfh \otimes \C^2$ with $\mfh \eqdef L^2(\T_2) \otimes H^{-\frac{1}{2}}(\T^2) \otimes \C^2$\footnote{Due to the constraint that the domain of the fermions be compact, necessary for our localised BPHZ estimate, we restrict the temporal interval to $[-2,2]$ and identify it with a one-dimensional torus $\T_2$. This necessitates us to restrict ourselves to a maximal possible time of existence, e.g.\ $T=1$.} and covariance
\begin{equ}[def:chargeconj]
		U = \begin{pmatrix}
			0 & V \\
			- \kappa V \kappa & 0
		\end{pmatrix} \eqdef \sqrt{-\Delta+M^2} \begin{pmatrix}
			0 & \big( -\snabla+M \big)^{-1}\\
			-\bigl(\overline{\snabla}+M\bigr)^{-1} & 0
		\end{pmatrix} \;.
\end{equ}
We let $\CA_B\eqdef \CA_B \left( L^2(\R \times \T^2) \right)$ and $\cA_F \eqdef \cA_F \left( \mfH \right)$. 

To formulate this equation in terms of an ($\CA_B$-random) $\cA_F$-regularity structure, we set the scaling to $\s = (2,1,1)$, and we let $\bar K_0 \in \cC^\infty(\R^3 \setminus \{0\})$ denote the kernel of $(\partial_t -\Delta + m^2)^{-1}$ on $\R^{1+3}$ with decomposition $\bar K = K + R$ and we let $\bar K^F_{ij}, \bar K^A_{ij}\in \cC^\infty(\R^3 \setminus \{0\})$ for $i,j \in [2]$ denote the matrix components of the kernels
\begin{equ}
	\frac{\snabla + M}{\partial_t - \Delta + M^2 } \; , \text{ and } \;  \frac{- \overline{\snabla} + M}{\partial_t - \Delta + M^2 }
\end{equ}
respectively; with their own decompositions $\bar K^{F/A}_{ij} = K^{F/A}_{ij} + R^{F/A}_{ij}$.\footnote{We note that, rather than keeping the differential operator $\snabla$ around explicitly, it is more straightforward to directly incorporate it into the integration kernel.} We also fix $\kappa > 0$ sufficiently small.

The required regularity structure is built by setting $\mfL^{-}\eqdef\{\mfb, \mff_1, \mff_2, \bar \mff_1, \bar \mff_2 \}$ with $|\mfb|_{\s} \eqdef
 - 2-\kappa$, $|\mff_i|_{\s} = |\bar\mff_i|_{\s} =  -\frac{3}{2}-\kappa$, and $\mfL^+ =\{ \mft_0 \} \cup \left\{ \mft_{ij} \, \big| \, i,j \in [2] \right\} \cup \left\{ \bar\mft_{ij} \, \big| \, i,j \in [2] \right\}$ with $|\mft_0|_{\s} = 2$, $|\mft_{ij}|_{\s} = |\bar\mft_{ij}|_{\s} = 1$, and rule $\cR$ given by
\begin{gather*}
	\forall \mfk \in \mfL^- \times \N : \cR(\mfk) = \{\emptyset\} \;, \\
	\forall \mft \in \mfL^+ \, \forall r \geqslant 1  : \cR( (\mft, r))  = \{\emptyset\}\;, \\
	\cR((\mft_0, 0)) = \bigl\{ \{(\mft_0, 0), (\mft_0, 0) , (\mft_0, 0)\}, \{(\mft_0, 0), (\mft_0, 0) \}, \{(\mft_0, 0) \}, \{(\mfb, 0) \} , \emptyset  \bigr\} \cup \\
	\qquad \cup \{ \{(\mft_{ij},0)\},  \{(\bar\mft_{ij},0), (\mft_{ik},0)\}  \, \big| \, i,j,k \in [2] \} \; , \\
	\cR((\mft_{ij},0)) = \left\{ \left\{ (\mft_{jk},0) , (\mft_0,0) \right\} , \left\{ (\mft_{jk},0)  \right\} , \left\{ (\mft_0,0) \right\} , \left\{ (\mff_j,0) \right\} \, \middle| \, k \in [2] \right\} \; ,\\
	\cR((\bar\mft_{ij},0)) = \left\{ \left\{ (\bar\mft_{jk},0) , (\mft_0,0) \right\} , \left\{ (\bar\mft_{jk},0)  \right\} , \left\{ (\mft_0,0) \right\} , \left\{ (\bar\mff_j,0) \right\} \, \middle| \, k \in [2] \right\} \; .
\end{gather*}
\begin{remark}
	In fact, this rule allows for all possible $\C$-bilinear nonlinearities $\C^4 \times \C^4 \to \C$ rather than just $\scal{\bigcdot , \bigcdot}_{\R^2}$ that we are using.
\end{remark}
For the sake of legibility, we shall suppress the vector indices of fermionic noises and integration kernels when describing the regularity structure now. Letting $\<0> \eqdef \Xi_{(\mfb,0)}$, $\<0F> \eqdef \Xi_{(\mff_i, 0)}$, and $\<0A> \eqdef \Xi_{(\bar\mff_i, 0)}$, and denoting by $\<E>$ and $\<F>$ denote integration w.r.t.\ $\CI_{(\mft_0, 0)}$, and $\CI_{(\mft_{ij},0)}$ or  $\CI_{(\bar\mft_{ij},0)}$ respectively, the regularity structure $\cT_F$ is spanned by the trees
\begin{equs}
	\1 \, ,  \; \<0> \, ,  \; \<0F> \, , \; \<0A> \, , \; \<1F> \, , \; \<1A> \, ,  \; \<1> \, , \; \<2>  \, ,  \; \<3> \, ,  \; \<2AF> \, ,  \; \<01F> \, , \; \<01A> \,, \; \<02F> \, , \; \<02A> \, , \; \<1F2A> \, ,  \; \<1A2F> \, ,  \; \<2A2F>\, ,  \; \<2F2A> \; .
\end{equs}
Here we have for example identified $\<2AF>$ with $-\<2FA>$ as well as $\<2F>$ with $\<2F_R>$ by enforcing the supercommutativity of the algebra $\CA_B \wotimes \cA_F$ at the level of the trees and we have cut off the regularity structure at regularity $\frac{1}{2} + \kappa +$ and we are left with
\begin{equ}
	A  = \left\{ -2-\kappa, -\frac{3}{2}-\kappa, -1-2\kappa, -\frac{1}{2} - \kappa,  -3\kappa, -2\kappa, - \kappa, 0 , \frac{1}{2} - 2 \kappa , \frac{1}{2}-\kappa\right\}  \; .
\end{equ}
The canonical model is defined as in Section~\ref{sec:CanMod} and the BPHZ character and renormalisation are as in Section~\ref{sec:BPHZFermi}. 


We will employ a similar kernel notation to the one above. In particular, we have 
\begin{equs}
	\Pi_x^{(\eps)} \<2AF>(z) &= \boldsymbol{\bar\psi}\bigg(\begin{tikzpicture}[scale=0.6,baseline=0cm]
		\node at (0,0) [circ, label=below: $z$] (middle) {};
		\node at (0,1) [circ] (above) {};
		\draw[fkepsilon] (above) to (middle);
	\end{tikzpicture}  \bigg) \bpsi\bigg(\begin{tikzpicture}[scale=0.6,baseline=0cm]
		\node at (0,0) [circ, label=below: $z$] (middle) {};
		\node at (0,1) [circ] (above) {};
		\draw[fkepsilon] (above) to (middle);
	\end{tikzpicture}  \bigg) = \\
	&= \bPsi_F^{\diamond 2}\bigg(\begin{tikzpicture}[scale=0.6,baseline=0cm]
		\node at (0,0) [circ, label=below: $z$] (middle) {};
		\node at (-0.5, 0.866025) [circ] (left) {};
		\node at (0.5, 0.866025) [circ] (right) {};
		\draw[fkepsilon] (left) to (middle);
		\draw[fkepsilon] (right) to (middle);
	\end{tikzpicture}  \bigg) + \left[ \balpha_F\bigg(\kappa U \begin{tikzpicture}[scale=0.6,baseline=0cm]
		\node at (0,0) [circ, label=below: $z$] (middle) {};
		\node at (0,1) [circ] (above) {};
		\draw[fkepsilon] (above) to (middle);
	\end{tikzpicture}  \bigg) , \balpha_F\bigg(\begin{tikzpicture}[scale=0.6,baseline=0cm]
		\node at (0,0) [circ, label=below: $z$] (middle) {};
		\node at (0,1) [circ] (above) {};
		\draw[fkepsilon] (above) to (middle);
	\end{tikzpicture}  \bigg) \right]_+ \; . 
\end{equs}
Here 
\begin{equ}
	\begin{tikzpicture}
		\node at (0,0) [circ, label = below: $\bar z$] (left) {} ;
		\node at (1,0) [circ, label = below: $z$] (right) {} ;
		\draw[fkepsilon] (left) to (right) ;
	\end{tikzpicture}
\end{equ}
denotes integration against one the components of the mollification of $K_{ij}(z -\bar z)$ or its complex conjugate. Similarly, 
\begin{equ}
	\begin{tikzpicture}
		\node at (0,0) [circ, label = below: $\bar z$] (left) {} ;
		\node at (1,0) [circ, label = below: $z$] (right) {} ;
		\draw[fkernel] (left) to (right) ;
	\end{tikzpicture}
\end{equ}
will denote integration against the kernel $K_{ij}(z-\bar z)$ itself or its complex conjugate.

Explicitly, the non-zero components of $\ell^{F;(\eps)}_{\BPHZ}$ are $\ell_{\BPHZ}^{F ; (\eps)}(\<2>)$, defined in the usual way, 
\begin{equ}
	\ell^{F;(\eps)}_{\BPHZ}(\<2AF>) \eqdef -  \left[ \balpha_F\bigg(\kappa U \begin{tikzpicture}[scale=0.6,baseline=0cm]
		\node at (0,0) [circ, label=below: $0$] (middle) {};
		\node at (0,1) [circ] (above) {};
		\draw[fkepsilon] (above) to (middle);
	\end{tikzpicture}  \bigg) , \balpha^\dagger_F\bigg(\begin{tikzpicture}[scale=0.6,baseline=0cm]
		\node at (0,0) [circ, label=below: $0$] (middle) {};
		\node at (0,1) [circ] (above) {};
		\draw[fkepsilon] (above) to (middle);
	\end{tikzpicture}  \bigg) \right]_+ 
\end{equ}
and 
\begin{equ}
	\ell^{F;(\eps)}_{\BPHZ}(\<1F2A>) \eqdef -  \left[ \balpha_F\bigg(\kappa U \begin{tikzpicture}[scale=0.6,baseline=0cm]
		\node at (0,-0.5) [circ, label=below: $0$] (below) {};
		\node at (0.4,0.25) [icirc] (middle) {};
		\node at (0,1) [circ] (above) {};
		\draw[fkepsilon] (above) to (middle);
		\draw[fkernel] (middle) to (below);
	\end{tikzpicture}  \bigg) , \balpha^\dagger_F\bigg(\begin{tikzpicture}[scale=0.6,baseline=0cm]
		\node at (0,0) [circ, label=below: $0$] (middle) {};
		\node at (0,1) [circ] (above) {};
		\draw[fkepsilon] (above) to (middle);
	\end{tikzpicture}  \bigg) \right]_+ 
\end{equ}
with $\ell^{F;(\eps)}_{\BPHZ}(\<1A2F>)$ defined analogously. We have suppressed the contraction in the argument of the character, as there is but one possible contraction in each case.

The existence of and convergence to $ \Pi_x^{F ; R}\<2AF>$, equivalently those of $\widetilde{\Pi}_x^{F , 3 ; R} \<2AF>$, was already directly shown in \cite{CHP23}. We will look at the terms appearing in $\widetilde \Pi_x^{F,(\eps);R} \<2F2A>$. For each $b \in \Gamma_\infty^U$, $\pi_b \left(\Pi_x^{(\eps);R} \<2F2A>\right)$ has components in the $0^{\text{th}}$ and $2^{\text{nd}}$ component w.r.t.\ the fermionic decomposition and only components in the first component w.r.t.\ to the bosonic one. 
These are 
\begin{equ}
  \begin{tikzpicture}[/pgf/fpu/install only={reciprocal}, scale = 0.7]
    \node at (0,-1) [circ, label=below: $z$] (r) {};
    \node at (-1,0) [icirc] (1) {};
    \node at (1,0) [square] (2) {};
    \node at (-2,1) [bdot] (3) {};
    \node at (0,1) [wsquare] (4) {}; 
    \draw[kepsilon] (3) to (1);
    \draw[fkernel] (1) to (r);
    \draw[fkepsilon] (2) to (r);
    \draw[fkepsilon] (4) to (1);
  \end{tikzpicture} \quad \raisebox{1.08cm}{$-$}  \quad \raisebox{0cm}{
  \begin{tikzpicture}[/pgf/fpu/install only={reciprocal}, scale = 0.7]
    \node at (-2, 1) [bdot] (lu) {};
    \node at (-1, -1.41421) [circ, label=below: $x$] (ld) {};
    \node at (0,-1.41421) [circ, label=below: $z$] (r) {};
    \node at (-1,0) [icirc] (1) {};
    \node at (0,0) [square] (2) {};
    \node at (0,1) [wsquare] (4) {};
    \draw[kepsilon] (lu) to (1);
    \draw[fkernel] (1) to (ld);
    \draw[fkepsilon] (2) to (r);
    \draw[fkepsilon] (4) to (1);
  \end{tikzpicture}
  }
\end{equ}
and 

\begin{equ}\label{eq:non-local_divergence}
  \begin{tikzpicture}[/pgf/fpu/install only={reciprocal}, scale = 0.7]
    \node at (0,-1) [circ, label=below: $z$] (r) {};
    \node at (-1,0) [icirc] (1) {};
    \node at (1,0) [icirc] (2) {};
    \node at (-2,1) [bdot] (3) {};
    \node at (0,1) [icirc] (4) {};
    \draw[wavy] (2) to (4) ; 
    \draw[kepsilon] (3) to (1);
    \draw[fkernel] (1) to (r);
    \draw[fkepsilon] (2) to (r);
    \draw[fkepsilon] (4) to (1);
  \end{tikzpicture} \quad \raisebox{1.08cm}{$-$} \quad \begin{tikzpicture}[/pgf/fpu/install only={reciprocal}, scale = 0.7]
    \node at (-1.4, 0.707107) [bdot] (lu) {};
    \node at (-1.4, -0.707107) [circ, label=below: $z$] (ld) {};
    \node at (0,-1) [circ, label=below: $z$] (r) {};
    \node at (-1,0) [icirc] (1) {};
    \node at (1,0) [icirc] (2) {};
    \node at (0,1) [icirc] (4) {};
    \draw[wavy] (2) to (4) ; 
    \draw[kepsilon] (lu) to (ld);
    \draw[fkernel] (1) to (r);
    \draw[fkepsilon] (2) to (r);
    \draw[fkepsilon] (4) to (1);
  \end{tikzpicture}\quad \raisebox{1.08cm}{$-$} \quad \raisebox{0cm}{
  \begin{tikzpicture}[/pgf/fpu/install only={reciprocal}, scale = 0.7]
    \node at (-2, 1) [bdot] (lu) {};
    \node at (-1, -1.41421) [circ, label=below: $x$] (ld) {};
    \node at (1,-1.41421) [circ, label=below: $z$] (r) {};
    \node at (-1,0) [icirc] (1) {};
    \node at (1,0) [icirc] (2) {};
    \node at (0,1) [icirc] (4) {};
    \draw[wavy] (2) to (4) ; 
    \draw[kepsilon] (lu) to (1);
    \draw[fkernel] (1) to (ld);
    \draw[fkepsilon] (2) to (r);
    \draw[fkepsilon] (4) to (1);
  \end{tikzpicture}
  } \; \raisebox{1.08cm}{.}
\end{equ}
Here again 
\begin{equ}
	\begin{tikzpicture}[/pgf/fpu/install only={reciprocal}]
		\node at (0,0) [circ, label = below: $\bar z$] (left) {} ;
		\node at (1,0) [circ, label = below: $z$] (right) {} ;
		\draw[wavy] (left) to (right) ;
	\end{tikzpicture}
\end{equ}
denotes integration against the kernel $(1-\Delta)^{-\frac{1}{2}}P_b(1-\Delta)^{-\frac{1}{2}}$ and $P_b$ is the $L^2$-projection
\begin{equ}
	P_b(x,y) = \sum_{i = 1}^{\dim(b)} \left((1-\Delta)^{-\frac{1}{2}} \phi_i \right)(x)  \overline{\left((1-\Delta)^{-\frac{1}{2}} \phi_i \right)(y)} 
\end{equ}
where $\left\{ \phi_i \right\}_i$ is an ONB of $b \subset \mfH$. 

With the regularity structure and the BPHZ renormalised model at hand, we return to the equation. First, let $V_B$ be the sector of $\cT_F$ spanned by bosonically planted trees, i.e.\ the sector spanned by $\1$ and $\<1>$. Let $V_F$ be the sector spanned by fermionically planted trees, i.e.\ spanned by $\1$, $\<1F>$, $\<1A>$, $\<01F>$, $\<01A>$, $\<02F>$, and $\<02A>$.

The mild version of \eqref{eq:YH_model} lifts to the regularity structure as 
\begin{equs}[eq:YH_RS]
	U & = \left( \CK^{(\eps)}_{B ; \bar\gamma_B} + R_{B ; \gamma_B} \CR^{(\eps)} \right) \bR^+\left(  - \lambda U^3 - g \Braket{\bar Y , Y}_{\R^2} +  \<0>\right) + G_B \phi_0 \\
	Y &= \left( \CK^{(\eps)}_{F ; \bar\gamma_F} + R_{F ; \gamma_F} \CR^{(\eps)} \right) \bR^+ \left( -g U Y + \<0F> \right) + G_F \upsilon_0\\
	\overline Y &= \left( \CK^{(\eps)}_{\bar F ; \bar\gamma_F} + R_{\bar F ; \gamma_F} \CR^{(\eps)} \right) \bR^+ \left( -g U \overline Y + \<0A> \right) + G_{\bar F} \bar \upsilon_0
\end{equs}
where $\CK^{(\eps)}_{B ; \bar \gamma}$, $R_{B ; \gamma}$ denote the implementation of $(\partial_t - \Delta + m^2)^{-1}$, $ \CK^{(\eps)}_{F ; \bar\gamma}$,  $ R_{F ; \gamma}  $ denote (the matrix) of implementations of $(\partial_t - \Delta + M^2)^{-1} \left(\snabla + M\right)$ and analogously for $\CK^{(\eps)}_{\bar F ; \bar\gamma}$,  $ R_{\bar F ; \gamma} $. $G_B \phi_0$, $G_F \upsilon_0$, and $G_{\bar F} \bar \upsilon_0$ are the lifts to the regularity structure of the initial conditions respectively in $\CC^{\eta_B}(\T^2 ; \cG^0_F)$, and $\CC^{\eta_F}(\T^2 ; \cG^1_F \otimes \C^2)$.

Here we have set $\gamma > \frac{1}{2}+\kappa$, $\bar\gamma = \gamma - \frac{1}{2}-\kappa$,  $\eta_B \leqslant - \kappa $, and $\eta_F \leqslant - \frac{1}{2} - \kappa$. W.l.o.g.\ we assume that $\eta_F \leqslant \eta_B$. Performing powercounting \`a la Theorem~\ref{thm:RSMult} shows that the bosonic nonlinearity is a continuous map 
\begin{equ}
	\CD^{\gamma, \eta_B}_{H} (V_B) \times \CD^{\gamma, \eta_F}_{H} (V_F) \otimes \C^4 \longrightarrow \CD^{\gamma - \frac{1}{2} - \kappa, 2\eta_F}_H
\end{equ}
and the fermionic nonlinearities are continuous maps 
\begin{equ}
	\CD^{\gamma, \eta_B}_{H} (V_B) \times \CD^{\gamma, \eta_F}_{H} (V_F) \otimes \C^2 \longrightarrow \CD^{\gamma - \frac{1}{2} - \kappa, 2\eta_F}_H \; . 
\end{equ}

After replacing the noises $\<0>$, $\<0F>$, and $\<0A>$ with the correctly cut-off in time versions, as in Section~\ref{sec:Phi43Mez} we obtain the following existence theorem. 
\begin{theorem}
	 Let $\kappa \in \left(0, \frac{1}{6}\right)$, $\gamma > \frac{1}{2}+\kappa$,  $\eta_B \in [\eta_F, - \kappa)$, and $\eta_F \in \big( -1, -\frac{1}{2}-\kappa \big) $. For every $\eps \in [0,1]$, $n \in \N$, $\phi_0 \in \CC^{\eta_B}(\T^2 ; \cG_F^{0})$, and $\upsilon_0, \bar \upsilon_0 \in \CC^{\eta_F}(\T^2 ; \cG^1_F\otimes \C^2 )$ there exists $0< T(n, \eps , \phi_0, \upsilon_0, \bar\upsilon_0) \leqslant 1 $, s.t.\ \eqref{eq:Phi43ClifRen} has a unique solution $\CS^n_T(\eps, \phi_0, \upsilon_0, \bar\upsilon_0)$ in $\left(\CD_{H; T}^{\gamma,\eta}\right)_n$.  Furthermore, there exists a $\delta_n > 0$, s.t.\ the map 
	\begin{equs}
		S^n_{T} \colon B_{\delta_n}(\eps) \times B_{\delta_n}(\phi_0, \upsilon_0 ,\bar\upsilon_0) & \longrightarrow \left( \CD^{\gamma,\eta}_{H;T} \right)_n \\
		(\eps', \phi_0', \upsilon'_0 ,\bar\upsilon'_0) &\longmapsto S^n_T(\eps', \phi_0', \upsilon'_0 ,\bar\upsilon'_0)
	\end{equs}
	is well-defined and continuous in probability.
\end{theorem}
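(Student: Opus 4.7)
The plan is to apply the abstract fixed-point Theorem~\ref{thm:AbsFPM} to the coupled system \eqref{eq:YH_RS}, interpreted as a single fixed-point problem for the triple $(U, Y, \overline{Y})$ in the product space
\begin{equ}
\CD^{\gamma,\eta_B}_{H;T}(V_B) \times \bigl(\CD^{\gamma,\eta_F}_{H;T}(V_F)\otimes \C^2\bigr)^{\times 2}\;,
\end{equ}
topologised at each fermionic seminorm level $n \in \N$ by the corresponding quotient norm as in Theorem~\ref{thm:AbsFPM}. As in Section~\ref{sec:Phi43Mez}, the noises $\<0>$, $\<0F>$, $\<0A>$ have regularity below $-2$ and do not admit a unique reconstruction as modelled distributions; however, their reconstructions can be fixed by hand to equal the mollified space-time truncated noises, so that the inhomogeneous terms $v^{(\eps)}_B$, $v^{(\eps)}_F$, $v^{(\eps)}_{\bar F}$ obtained after applying $(\CK_{\bar\gamma}+R_\gamma \CR^{(\eps)})\bR^+$ to each noise lie in the appropriate singular modelled distribution spaces and depend continuously on $\eps \in [0,1]$, exactly as in the bosonic $\Phi^4_3$ case.

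First, I would verify that the combined right-hand side of \eqref{eq:YH_RS} is a locally Lipschitz map from the product space into itself. The powercounting outlined in the excerpt shows that $-\lambda U^3 - g\bilin{\overline{Y},Y} \in \CD^{\gamma-\tfrac12-\kappa, 2\eta_F}_H$ (the worst term being the fermion bilinear, since $2\eta_F < 2\eta_B$ when $\eta_F \leqslant \eta_B$), and $-gUY, -gU\overline{Y} \in \CD^{\gamma-\tfrac12-\kappa, \eta_B+\eta_F}_H$. Since $\beta_B = 2$ and $\beta_F = 1$, the relevant hypotheses of Theorem~\ref{thm:AbsFPM} read $\eta_B < (2\eta_F) \wedge \bar\zeta_B + 2$ and $\eta_F < (\eta_B + \eta_F)\wedge \bar\zeta_F + 1$, with $\bar\zeta_B = \bar\zeta_F \geqslant -1 - 2\kappa$; both inequalities hold for our choice $\eta_F \in (-1, -\tfrac12-\kappa)$, $\eta_B \in [\eta_F, -\kappa)$ provided $\kappa$ is small. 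The constraint $\gamma < \bar\gamma + \beta$ and $(\bar\eta\wedge\bar\zeta) > -\beta$ are checked similarly. The lifts $F \mapsto \widehat{F}$ of the analytic nonlinearities $\phi^3$ and the bilinear $\C^2\times\C^2 \to \C^2$ map go through by Theorem~\ref{thm:FncLift} and the bimodule multiplication theorem (Theorem~\ref{thm:RSMult}) applied in $\CA_B \wotimes \cA_F$.

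The second step is to invoke Theorem~\ref{thm:AbsFPM} at each fermionic localisation level $n \in \N$. Since we are in the mixed bosonic/fermionic setting of Remark~\ref{rem:RanRS}, this is carried out pathwise in the bosonic randomness: for each realisation of $\xi$ and each $n$, the model obtained by BPHZ renormalisation of the canonical model for the $\cA_B \wotimes \widetilde{\mfA}^F_N$-regularity structure, projected via $\tilde\pi_b$ for $b \in \Gamma^U_n$ and then pushed forward through $\iota_1 \otimes \tilde\iota_F$ as in Proposition~\ref{prop:ComDiagAlg}, produces a finite (possibly random) model seminorm $\vvvert Z^{F;R}_{(\eps)}\vvvert_{\gamma;\mfK,n}$ controlled uniformly in $\eps$ by Corollary~\ref{cor:FocktoFermBound} and Theorem~\ref{thm:BPHZFerm}. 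The theorem then produces, for each $n$, a random time $T_n = T(n,\eps,\phi_0,\upsilon_0,\bar\upsilon_0) > 0$ and a unique solution $\CS^n_{T_n}$ in $(\CD^{\gamma,\eta}_{H;T_n})_n$, with continuity in probability of the solution map in a neighbourhood of the given data as a consequence of the joint continuity statement in Theorem~\ref{thm:AbsFPM} combined with the continuity in $\eps$ of the random BPHZ model (in $L^p$ of the bosonic probability space), and of the reconstructed noises.

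The main obstacle is the careful bookkeeping of the two different ``modes'' of randomness: $\CA_B$ is not locally $m$-convex (Remark~\ref{rem:RanRS}) and we only have convergence of the bosonic noise and model in probability, while the fermionic part lives in the genuine locally $m$-convex algebra $\cA_F$ with its countable family of seminorms $\|\bigcdot\|_n$. The resolution, as indicated in Remark~\ref{rem:MixConv}, is to fix the bosonic realisation, apply Theorem~\ref{thm:AbsFPM} in the fermionic variables only at each level $n$, and then upgrade to continuity in probability on the joint data using the fact that the random model is measurable in $\eps$ and continuous in $\eps$ in the norm $\vvvert\bigcdot\vvvert_{\gamma;\mfK,n}$ in $L^p(\Omega,\mu)$ for every finite $p$. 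The only other non-trivial point is that the nonlocal counterterm visible in \eqref{eq:non-local_divergence} is a central element of $\cA_F$ (absorbed into $C^3_\eps$), hence commutes with $\phi_\eps$ in the renormalised equation, ensuring that the renormalised problem remains in the same abstract fixed-point framework without generating counterterms outside the list we tracked.
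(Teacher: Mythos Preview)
Your proposal is correct and follows the same approach the paper takes: the paper does not give an explicit proof of this theorem, but simply sets up the regularity structure, model, BPHZ renormalisation, and powercounting for the nonlinearities, and then states the theorem as a direct consequence of the abstract fixed-point Theorem~\ref{thm:AbsFPM} (exactly parallel to the mezdonic $\Phi^4_3$ case in Section~\ref{sec:Phi43Mez}). Your sketch is in fact more detailed than what the paper provides; one minor discrepancy is that the paper records the blow-up exponent of the fermionic nonlinearity $-gUY$ as $2\eta_F$ rather than your sharper $\eta_B+\eta_F$, but since $\eta_B\geqslant\eta_F$ this only makes the paper's bound more conservative and both suffice for the hypotheses of Theorem~\ref{thm:AbsFPM}.
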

Here $\cG_F^{0}$ and $\cG_F^{1}$ denote the even and odd parts of the algebra $\cG_F$ respectively.

Again, since $\bpsi, \overline{\bpsi} \in \cA_\infty \cap \cG_F$, we have for $\eps > 0$ that $T(\infty, \eps, \dots) \eqdef \inf_{n \in \N} T(n, \eps, \dots) > 0$ if the initial conditions are bounded. Thus, we can find solutions $U_{(\eps)} \in \CD^{\gamma, \eta}_{H ; T}$, $Y_{(\eps)}, \overline{Y}_{(\eps)} \in \CD^{\gamma, \eta}_{H ; T} \otimes \C^2$, s.t.\ for all $n \in \N$, $\pi_n(U_{(\eps)}) = S^n_T(\eps, \phi_0, \upsilon_0, \bar\upsilon_0)$ and analogously for the fermionic components of the equation.

A quick computation shows that the reconstructions $\phi^{(\eps)} \eqdef \CR^{(\eps)} U_{(\eps)}$, $\upsilon^{(\eps)} \eqdef \CR^{(\eps)} Y_{(\eps)}$, $\bar{\upsilon}^{(\eps)} \eqdef \CR^{(\eps)} \overline{Y}_{(\eps)}$ then satisfy
\begin{equs}
	(\partial_t - \Delta + m^2) \phi^{(\eps)} &= - \lambda \left(\phi^{(\eps)}\right)^3  - g \Braket{(- \overline{\snabla}+M)\overline{\upsilon}^{(\eps)}, (\snabla + M) \upsilon^{(\eps)}}_{\R^2} + \\
	& \qquad + \left( \lambda C^1_\eps - g^2 C^3_{\eps}\right) \phi^{(\eps)} + g C^2_{\eps} + \xi^{(\eps)} \\
	(\partial_t - \Delta + M^2) \upsilon^{(\eps)} &= - g \phi^{(\eps)} (\snabla + M) \upsilon^{(\eps)} + \bpsi^{(\eps)}\\
	(\partial_t - \Delta + M^2) \bar\upsilon^{(\eps)} &= - g \phi^{(\eps)} (-\overline\snabla + M) \bar\upsilon^{(\eps)} + \bpsi^{(\eps)}
\end{equs}
with 
\begin{equs}\label{eq:HiggsYukawa_constants}
	C^1_{\eps} & \eqdef \ell^{F;(\eps)}_{\BPHZ}(\<2>) \in \R \; , \\
	C^2_{\eps} & \eqdef \ell^{F;(\eps)}_{\BPHZ}(\<2AF>) \in Z(\cA_\infty) \; ,\\
	C^3_{\eps} & \eqdef \ell^{F;(\eps)}_{\BPHZ}(\<1F2A>)+\ell^{F;(\eps)}_{\BPHZ}(\<1A2F>) \in Z(\cA_\infty) \; .
\end{equs}
Here $C^3_\eps$ is the logarithmically diverging mass renormalisation of the boson. The fact that renormalisation constants do not appear in the fermionic equations is not generic; in the three-dimensional case, such counterterms would appear.

\appendix

\section{Functional Analysis}
\label{appendix:FuncAna}

\subsection{Topological Vector Spaces}

Let $E$ be a topological vector space. A subset $C \subset E$ is called balanced if $\lambda C \subset C$ for all $\lambda \in \mathbb{K}$ with $|\lambda| \leqslant 1$.

A subset $H \subset E'$ is called equicontinuous if there exists an open neighbourhood $U \subset E$ of $0$, s.t.\
\begin{equ}
	\bigcup_{h \in H} h(U) \subset (-1,1) \; .
\end{equ}
The polar of a subset $ A \subset E$ is the set
\begin{equ}[eq:def_of_polar]
	A^\circ \eqdef \left\{ e' \in E' \, \middle| \, \forall e \in A : \left|\scal{e',e}\right| \leqslant 1  \right\}
\end{equ}

\begin{definition}
\label{definition:MackeyTop}
	Let $E$ be a topological vector space. The Mackey topology on the continuous dual $E'$ is generated by the neighbourhood basis of polars $K^\circ$ of $K \subset E$, such that $K$ is weakly compact, convex, and balanced. $E'$ equipped with this topology will be denoted by $E'_\tau$.
\end{definition}

\begin{definition}
\label{definition:ContLinMap}
	Let $E$, $F$ be topological vector spaces. We denote by $\CL(E;F)$ the space of continuous linear maps.

	We denote by $\CL_\eps(E'; F)$ the space $\CL(E';F)$ equipped with the topology generated by the neighbourhood basis of $0$ of sets of the form
	\begin{equ}
		\cU(A,U) \eqdef \left\{ \phi \in \CL(E';F) \, \middle| \, \phi(A) \subset U \right\}
	\end{equ}
	where $A\subset E'$ is equicontinuous and $U \subset F$ is open.
\end{definition}

\begin{definition}[Bilinear Forms]\label{definition:bilinearforms}
	Let $E,F$ be two topological vector spaces. We denote by $B(E,F)$ and $\cB(E,F)$ respectively the space of jointly continuous and separately continuous bilinear forms $E \times F \to \mathbb{K}$.
\end{definition}

\begin{lemma}
\label{lem:CanIso}
	Let $E,F$ be locally convex topological vector spaces. Then the algebraic tensor product $E \otimes F$ is canonically isomorphic with $B(E'_\sigma , F'_\sigma)$ 
\end{lemma}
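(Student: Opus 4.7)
The plan is to exhibit the canonical map explicitly and verify it is a linear bijection; the map itself is forced on us, so the content lies in injectivity and, especially, surjectivity.

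First I would define $\iota\colon E\otimes F\to B(E'_\sigma, F'_\sigma)$ on simple tensors by $\iota(e\otimes f)(e',f')\eqdef \langle e',e\rangle \langle f',f\rangle$ and extend linearly. Since evaluation $e'\mapsto \langle e',e\rangle$ and $f'\mapsto \langle f',f\rangle$ are by definition $\sigma$-continuous, the product is jointly $\sigma$-continuous, so $\iota$ is a well-defined linear map into $B(E'_\sigma,F'_\sigma)$.

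For injectivity, I would take $u = \sum_{i=1}^n e_i\otimes f_i$ in the kernel and, by passing to a minimal representation, assume the $e_i$ are linearly independent. The Hahn--Banach theorem in the locally convex space $E$ furnishes $e'_1,\dots,e'_n\in E'$ with $\langle e'_i,e_j\rangle = \delta_{ij}$, and evaluating $\iota(u)(e'_i,\bigcdot\,)\equiv 0$ on $F'$ yields $\langle f',f_i\rangle = 0$ for all $f'\in F'$. A second application of Hahn--Banach (now on $F$) forces $f_i = 0$, hence $u=0$.

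The main obstacle is surjectivity, and I would handle it by extracting finite-dimensionality from joint $\sigma$-continuity. Given $B\in B(E'_\sigma,F'_\sigma)$, joint continuity at the origin together with the description of a basis of $\sigma$-neighbourhoods of $0$ in $E'$ and $F'$ yields finite sets $\{e_1,\dots,e_n\}\subset E$, $\{f_1,\dots,f_m\}\subset F$ and a constant $C>0$ such that
\begin{equ}
  |B(e',f')| \leqslant C\,\max_{i}|\langle e',e_i\rangle|\cdot\max_{j}|\langle f',f_j\rangle|\;.
\end{equ}
Consequently $B$ vanishes as soon as $e'\in \{e_1,\dots,e_n\}^\perp$ or $f'\in \{f_1,\dots,f_m\}^\perp$, so it factors through the finite-dimensional quotients $E'/\{e_i\}^\perp$ and $F'/\{f_j\}^\perp$. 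In finite dimensions the canonical pairing identifies the dual of $E'/\{e_i\}^\perp$ with $\spn\{e_1,\dots,e_n\}$ (and similarly for $F$), so the induced bilinear form corresponds to an element of $\spn\{e_i\}\otimes \spn\{f_j\}\subset E\otimes F$, whose image under $\iota$ is, by construction, $B$.

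The only slightly delicate point is verifying the claimed factorisation: this is a standard consequence of the fact that a linear functional on $E'_\sigma$ annihilating $\{e_1,\dots,e_n\}^\perp$ is a linear combination of evaluations at the $e_i$, which again follows from Hahn--Banach. No analytic input beyond Hahn--Banach and the definition of the weak-$\ast$ topology is needed, so the argument is purely algebraic once the continuity estimate above is in hand.
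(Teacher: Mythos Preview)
Your argument is correct and is essentially the standard textbook proof. The paper does not supply its own proof but simply cites \cite[Proposition~42.4]{Trev67}, whose argument is exactly the one you have written out: define the canonical map, obtain injectivity via Hahn--Banach, and deduce surjectivity from the fact that basic $\sigma$-neighbourhoods of $0$ are polars of finite sets, forcing any jointly $\sigma$-continuous bilinear form to factor through a finite-dimensional quotient.
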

\begin{proof}
	See \cite[Proposition~42.4]{Trev67}.
\end{proof}

\begin{definition}[\TitleEquation{\eps}{eps}-Topology]
	\label{definition:epsTopo}
	Let $E,F$ be two topological vector spaces, and let $E'_\sigma$ and $F'_\sigma$ denote the (continuous) dual spaces equipped with the weak$^*$ topologies. The $\eps$-topology on $B(E'_\sigma,F'_\sigma)$ and $\cB(E'_\sigma,F'_\sigma)$ is defined by the neighbourhood basis of $0$ given by
	\begin{equs}
		\cU (A, B; \eps) &\eqdef \left\{ \phi \in B(E'_\sigma,F'_\sigma) \, \middle| \, \phi(A,B) \subset (-\eps,\eps) \right\} \\
		\cU' (A, B; \eps) &\eqdef \left\{ \phi \in \cB(E'_\sigma,F'_\sigma) \, \middle| \, \phi(A,B) \subset (-\eps,\eps) \right\}
	\end{equs}
	where $\eps > 0$ and $A \subset E'$, $B \subset F'$ are equicontinuous subsets. The spaces of bilinear forms with these topologies will be denoted by  $B_\eps(E'_\sigma,F'_\sigma)$ and $\cB_\eps(E'_\sigma,F'_\sigma)$ respectively.
\end{definition}

\begin{definition}[Injective Tensor Product]
	\label{definition:InjTensProd}
	Let $E,F$ be complete, Hausdorff, locally convex spaces. The injective tensor product of $E$ and $F$ is the closure of $B_\eps(E'_\sigma,F'_\sigma)$ in $\cB_\eps(E'_\sigma,F'_\sigma)$ and will be denoted by $E \wotimes_\eps F$. Equivalently, it is the completion of $E \otimes F$ equipped with the $\eps$-topology using the canonical isomorphism from Lemma~\ref{lem:CanIso}.
\end{definition}\martin{Why not just the completion of $E \otimes F$ for the topology described above? (Wouldn't that be
the same, at least if $E$ and $F$ are complete?) Is the problem that then you wouldn't have $E \wotimes_\eps \R = E$ if
$E$ itself isn't complete?}\martinp{Fixed}

\begin{proposition}
	\label{prop:TensProdProp}
	Let $E,F$ be complete, Hausdorff, locally convex spaces. Then the following hold
	\begin{enumerate}
		\item $\cB_{\eps}(E'_\sigma, F'_\sigma) \cong \CL_\eps(E'_\tau;F)$.
		\item $\cB_{\eps}(E'_\sigma, F'_\sigma)$ and thus $E \wotimes_\eps F$ are complete iff $E$ and $F$ are complete.
	\end{enumerate}
\end{proposition}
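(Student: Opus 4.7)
My plan is to establish parts (1) and (2) in sequence, with (2) reducing to a standard closed-subspace argument once (1) is in hand.

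For part (1), I would exhibit the natural isomorphism explicitly. Given $\phi \in \cB_\eps(E'_\sigma, F'_\sigma)$, define $\tilde\phi \colon E' \to F$ by $\tilde\phi(e')(f') \eqdef \phi(e', f')$. Separate continuity of $\phi$ means $\tilde\phi(e')$ is a weak-$*$ continuous linear functional on $F'$, and by the Mackey--Arens theorem together with completeness and Hausdorffness of $F$, the dual of $F'_\sigma$ is canonically $F$ itself, so $\tilde\phi(e') \in F$. Linearity in $e'$ is immediate. For continuity from $E'_\tau$ to $F$, recall that a neighborhood base of $0$ in $F$ is given by polars $B^\circ$ of equicontinuous $B \subset F'$, while $E'_\tau$ has a neighborhood base of polars $K^\circ$ of weakly compact convex balanced $K \subset E$. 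By Banach--Alaoglu, equicontinuous subsets $A \subset E'$ are weakly-$*$ relatively compact and (after passing to the convex balanced hull) serve as neighborhoods of $0$ in $E'_\tau$. The condition ``$\phi(A \times B) \subset (-\eps, \eps)$'' translates word-for-word into ``$\tilde\phi(A) \subset \eps B^\circ$'', yielding the continuity of $\tilde\phi$ and the coincidence of the $\eps$-topology with the topology of $\CL_\eps(E'_\tau; F)$. The converse construction --- given a continuous linear $\tilde\phi \colon E'_\tau \to F$, set $\phi(e',f') \eqdef \langle f', \tilde\phi(e')\rangle$ --- is entirely parallel and produces the inverse map.

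For part (2), the implication ``complete tensor product $\Rightarrow$ $E$ and $F$ complete'' is trivial from the canonical isomorphisms $E \wotimes_\eps \mathbb{K} \cong E$ and $\mathbb{K} \wotimes_\eps F \cong F$, so the content lies in the reverse direction. By part (1) it suffices to show that $\CL_\eps(E'_\tau; F)$ is complete whenever $F$ is. I would embed this space into the product $\prod_{(A,B)} \ell^\infty(A \times B)$ indexed over equicontinuous pairs $A \subset E'$, $B \subset F'$, using the restriction maps $\phi \mapsto \phi|_{A \times B}$; by construction the $\eps$-topology is exactly the initial topology for these restrictions, and each $\ell^\infty(A \times B)$ is complete. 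It therefore remains to check that the image is closed. Given a Cauchy net $(\phi_i)$, the pointwise limit $\phi$ on $E' \times F'$ exists and is bilinear; what must be verified is that $\phi$ remains separately continuous and $\eps$-continuous, and lands in the appropriate dual spaces.

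The hard step in (2), which I expect to be the main obstacle, is ensuring that separate continuity survives the limit. Concretely, for fixed $e' \in E'$, one needs that $f' \mapsto \phi(e',f')$ is weak-$*$ continuous on $F'$, i.e.\ corresponds to an element of $F$ rather than merely of the algebraic dual or the completion $\widehat F$. Here I would argue as follows: by the uniform boundedness of $\{\phi_i\}$ on equicontinuous sets (extracted from Cauchyness in the $\eps$-topology), each $\tilde\phi_i(e')$ lies in a fixed weakly bounded (hence, for complete $F$, equicontinuous with respect to the Mackey topology on $F'$) subset of $F$; convergence in the weak topology of $F$ to some limit in $F$ then follows because $F$ is complete and Hausdorff, so its weakly closed bounded sets are weakly complete when tested against the Mackey dual. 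The $\eps$-continuity of the limit is preserved by the uniform estimates along the net. Putting this together with part (1) yields completeness of both $\cB_\eps(E'_\sigma, F'_\sigma)$ and, as an immediate consequence, of its closed subspace $E \wotimes_\eps F$.
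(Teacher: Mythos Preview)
The paper does not give an argument of its own; it simply cites \cite[Propositions~42.2~\&~42.3]{Trev67}. Your proposal is an attempt to reconstruct that proof, and for part~(1) it is essentially correct: the cleanest way to see that $\tilde\phi\colon E'_\tau\to F$ is continuous is to note that its transpose $f'\mapsto\phi(\,\cdot\,,f')$ lands in $(E'_\sigma)'=E$, so $\tilde\phi$ is weakly continuous, and since $E'_\tau$ carries the Mackey topology this is the same as continuity. Your equicontinuity discussion conflates this with the topology-matching step, but the ingredients are all there.

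Part~(2) has a genuine gap. You correctly identify the hard step as showing that for fixed $e'$ the pointwise limit $f'\mapsto\phi(e',f')$ represents an element of $F$, but your argument for it is wrong: the assertion that ``weakly closed bounded sets are weakly complete'' in a complete Hausdorff locally convex space is false in general (take $F=c_0$ and its closed unit ball). The correct argument is simpler and does not go through weak completeness at all. Since $\{e'\}$ is itself an equicontinuous subset of $E'$, Cauchyness of $(\phi_i)$ in the $\eps$-topology gives, for every equicontinuous $B\subset F'$, that $(\tilde\phi_i(e'))_i$ is Cauchy for the seminorm $y\mapsto\sup_{f'\in B}|\langle f',y\rangle|$. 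These seminorms generate the \emph{original} topology of $F$ (a closed convex balanced neighbourhood $V$ of $0$ in $F$ satisfies $V=V^{\circ\circ}$ with $V^\circ$ equicontinuous), so $(\tilde\phi_i(e'))_i$ is Cauchy in $F$ itself and converges there by completeness. The symmetric argument using completeness of $E$ handles the other variable.
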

\begin{proof}
	See \cite[Proposition~42.2~\&~42.3]{Trev67}
\end{proof}

\subsection{Proofs of Theorem~\ref{thm:PrimTensProd} and Proposition~\ref{prop:TensProdPropSec} }\label{subsec:PrimTensProdproof}

\begin{proof}[Proof of Theorem~\ref{thm:PrimTensProd}]
	We follow the argumentation of \cite[Theorem 44.1]{Trev67} in the case of $\cC^r(\R^d; E)$.

	We first claim that $\CC^\alpha_\s(\R^d; E)$ can be canonically identified with a subspace of $\CL\left(E'_{\tau} ; \CC^\alpha_\s\left(\R^d\right)\right)$, which is itself isomorphic to $\cB_\eps\left(E'_\sigma ; \left(\CC^\alpha_\s\left(\R^d\right)\right)'_\sigma \right)$.
	Here $E_{\tau}'$ denotes the continuous dual of $E$ equipped with the Mackey topology, cf.\ Definition~\ref{definition:MackeyTop}.

	In order to show that this is the case, we need to prove that for all $\phi \in \CC^\alpha_{\s}(\R^d; E)$, the map $\Phi \colon E' \to \CC^\alpha_{\s}(\R^d)$ given by
	\begin{equ}\label{eq:def_of_Phi}
		e' \longmapsto \left(   \cD(\R^d) \ni   \eta \mapsto \scal{e', \phi(\eta)} \in \R  \right)
	\end{equ}
	is continuous. This means that given $U \subset\CC^\alpha_{\s}(\R^d)$ a neighbourhood of $0$, we need to exhibit a weakly compact set $K \in E$, such that $K^{\circ}$ \dash the polar of $K$, see \eqref{eq:def_of_polar} \dash  is mapped into $U$.

	W.l.o.g.\ we may assume that $U = \left\{  \xi \in \CC^\alpha_{\s}(\R^d ) \, \middle| \, \| \xi \|_{\alpha; \K } \leqslant \eps \right\} $ for $\eps > 0$, and $\K \Subset \R^d$. Let $K\subset E $ be the weak closure of the set
	\begin{equ}
		\left\{ \lambda^{-\alpha} \phi\left( \CS^\lambda_{\s,x} \eta \right) \, \middle| \, \lambda \in (0,1] \, , \, x \in \K \, , \, \eta \in \CB^{r,\alpha}_{\s, 0} \right\} \cup \left\{ \phi\left( \CS^1_{\s,x} \eta \right) \, \middle| \, x \in \K \, , \, \eta \in \CB^r_{\s, 0}\right\} \; .
	\end{equ}
	$K$ is convex and balanced as $\CB^{r,\alpha}_{\s, 0}$ and $\CB^{r}_{\s, 0}$ are.
Furthermore, we claim that $K$ is also weakly compact \dash this follows from an argument analogous to the proof of the Banach-Alaoglu Theorem: for each $e' \in E'$, there exist a finite collection of seminorms $\mfp_1, \dots, \mfp_n \in \mfP$ and constants $C_1, \dots, C_n$, such that, for all $e \in E$,
	\begin{equ}
		\left|\scal{e',e}\right| \leqslant \sum_{i = 1}^n C_i \mfp_i(e) \; .
	\end{equ}
	Thus, for all $e' \in E'$ we have
	\begin{equ}
		\sup_{e \in K} \left| \scal{e', e} \right| \leqslant \sum_{i = 1}^n C_i \| \phi \|_{\alpha ; \K , \mfp_i} \; .
	\end{equ}
	This means that each of the sets $\overline{\scal{e', K}}$ are compact in $\C$ and therefore, by Tykhonov's theorem, $\prod_{e' \in E'} \overline{\scal{e', K}}$ is also compact. As $K$ is a weakly closed subset of $\prod_{e' \in E'} \overline{\scal{e', K}}$, $K$ itself is weakly compact.

	Thus, $K^\circ$ and $\eps^{-1} K^\circ$ are neighbourhoods of $0$ in the Mackey topology, and $\Phi( \eps^{-1} K^\circ ) \subset U$ as for all $e' \in \eps^{-1} K^\circ$, all $\lambda \in (0,1]$, $x \in \K$, and $\eta \in \CB^{r,\alpha}_{\s,0}$
	\begin{equ}
		\left| \Braket{e' , \lambda^{-\alpha} \phi\left( \CS^\lambda_{\s,x} \eta \right) } \right| \leqslant \eps
	\end{equ}
	and for all $\eta \in \CB^{r}_{\s, 0}$
	\begin{equ}
		\left| \Braket{e' , \phi\left( \CS^1_{\s,x} \eta \right) } \right| \leqslant \eps  \; ,
	\end{equ}
	i.e.\ $\| \Phi(e') \|_{\alpha; \K}\leqslant \eps$. Therefore, $\CC^\alpha_{\s}(\R^d ; E) \subset \CL(E_{\tau}' ; \CC^\alpha_{\s}(\R^d))$.

	Now we just need to argue that the injective topology on $\CL(E_{\tau}' ; \CC^\alpha_{\s}(\R^d))$ agrees with the topology of $\CC^\alpha_{\s}(\R^d ; E) $. Since we know that $\CC^\alpha_{\s}(\R^d;E)$ is complete and that it trivially contains the algebraic tensor product $\CC^\alpha_{\s}(\R^d)\otimes E$; this concludes the proof.

	For the sake of convenience we shall change from $\| \bigcdot \|_{\alpha; \K , \mfp}$ to the equivalent seminorm
	\begin{equ}
		\vvvert \bigcdot \vvvert_{\alpha; \K , \mfp} = \sup_{x \in \K} \sup_{\eta \in \CB^{r,\alpha}_{\s,0}} \sup_{\lambda \leqslant 1} \lambda^{-\alpha} \mfp \left( \xi \left( \CS^\lambda_{\s,x} \eta \right) \right) \vee  \sup_{x \in \K} \sup_{\eta \in \CB^{r}_{\s,0}}  \mfp \left( \xi \left( \CS^1_{\s,x} \eta \right) \right)
	\end{equ}

	The injective topology is generated by the neighbourhood sub-basis of 0, given by
	\begin{equ}
		\cU(U,V) \eqdef \left\{ \psi \in \CL(E_{\tau}' ; \CC^\alpha_{\s}(\R^d)) \, \middle| \, \psi(U^\circ) \subset V \right\}
	\end{equ}
	for $U = \left\{  e \in E \, \middle| \, \mfp(e) \leqslant \eps \right\} $ for $\eps > 0$, $\mfp \in \mfP$, and $V = \left\{ \phi \in \CC^\alpha_{\s}(\R^d) \, \middle| \, \vvvert\phi\vvvert_{\alpha; \mfK} \leqslant 1 \right\}$. We now see that
	\begin{equs}
		\label{eq:EqTopos}
		\phi \in \big\{ \rho \in \CC^\alpha_{\s}(\R^d ; E) \, & \big| \, \vvvert \rho \vvvert_{\alpha; \mfK, \mfp} \leqslant \eps \big\}   \iff \| \phi \|_{\alpha; \K, \mfp} \leqslant \eps \iff\\
		& \iff \forall e' \in U^\circ : \left\vvvert \eta \mapsto \Braket{ e' , \phi \left( \CS^\lambda_{\s,x} \eta \right) } \right\vvvert_{\alpha;\K} \leqslant 1  \iff\\
		& \iff \forall e' \in U^\circ : \Phi(\phi)(e') \in V \iff \Phi(\phi) \in \cU(U,V) \; ,
	\end{equs}
	i.e.\ the topology coming from $\CC^\alpha_{\s}(\R^d; E)$ and $\CL(E_\tau'; \CC^\alpha_\s(\R^d))$ agree.
	Above, $\Phi$ is as in \eqref{eq:def_of_Phi}.

\end{proof}

\begin{proof}[Proof of Proposition~\ref{prop:TensProdPropSec}]

	The only technical point here is to note that $\Gamma^{\alpha,\gamma}_{\s}(\R^d; E) $ is a complete space. Since $\R^n$ admits countable compact exhaustions and $E$ is Fr\'echet, $\Gamma^{\alpha,\gamma}_{\s}(\R^d; E)$ is metrisable and we can restrict ourselves to Cauchy sequences. Now suppose that $(F_n)_n$ is a Cauchy sequence, then by the usual arguments it converges weakly to a map  $F \colon \R^d \to \cD'(\R^d ; E)$. This map is measurable as it is the limit of a sequence of measurable maps and thus by the Cauchy property it is an element of $\Gamma^{\alpha,\gamma}_{\s}(\R^d; E)$.

	The rest of the proof proceeds as that of Theorem~\ref{thm:PrimTensProd}. One defines a map $\Phi \colon E' \to \Gamma^{\alpha,\gamma}_{\s}(\R^d ; \mathbb{K})$ via
	\begin{equ}
		e' \longmapsto \left( x \mapsto \left( \eta \mapsto \scal{e' , F_x(\eta)} \right) \right)
	\end{equ}
	This is an element of $\CL(E', \R ) \to \Gamma^{\alpha,\gamma}_{\s}(\R^d ; \mathbb{K})$, by the same argument replacing $K$ with the weak closure of
	\begin{equs}
		& \left\{ \frac{(F_y-F_x)\left( \CS^\lambda_{\s,x} \eta \right)}{\lambda^\alpha \left( \left|y-x \right|_{\s} + \lambda \right)^{\gamma-\alpha}  } \, \middle| \, \eta \in \CB^r_{\s,0} \, , \, \lambda \in (0,1] \, , \, x,y \in \K : x\neq y \right\} \cup \\
		& \qquad  \qquad \qquad \qquad \qquad  \cup \left\{ \frac{F_x\left( \CS^\lambda_{\s,x} \eta \right)}{\lambda^\alpha  } \, \middle| \, \eta \in \CB^r_{\s,0} \, , \, \lambda \in (0,1] \, , \, x \in \K  \right\}
	\end{equs}
	Similarly, the injective topology on the image of $\Phi$ coincides with the one defined by $\| \bigcdot \|_{\Gamma^{\alpha,\gamma}; \K , \mfp}$ by the same sequence of equivalences in \eqref{eq:EqTopos} with $U = \left\{  e\in E \, \big| \, \mfp(e) \leqslant \eps \right\}$ and $V = \left\{ \phi \in \Gamma^{\alpha,\gamma}_{\s}(\R^d) \, \big| \, \|\phi\|_{\Gamma^{\alpha,\gamma}_{\s}; \K} \leqslant 1 \right\}$.

\end{proof}

\subsection{Proof of Lemma~\ref{lemma:2SYoung}}
\label{sec:NonComYng}

\begin{proof}[Proof of Lemma~\ref{lemma:2SYoung}]
	We shall proceed similarly to the proof of Theorem~\ref{thm:YoungMult}. We build an $\cA_q$-regularity structure $\cT$ from the symbols $X^k$, $\<1>X^k$, and $\<1>a\<1>X^k$ for $k \in \N^3$ and $a \in \cA_q$. We assign all these symbols the regularity
	\begin{equ}
		\left| X^k \right| = \left| \<1> X^k \right| = \left| \<1>a\<1>X^k \right| = |k|_{\s}
	\end{equ}
	and we have the structure group be the usual one for polynomial regularity structure, i.e.\
	\begin{equs}
		\Gamma_h X^k &= (X+h)^k \; ,\\
		\Gamma_h \left( \<1> X^k \right) &= \<1> (X+h)^k \; , \\
		\Gamma_h \left( \<1> a \<1> X^k\right) &= \<1> a \<1> (X+h)^k \; .
	\end{equs}
	Finally, we define the model to be
	\begin{equs}
		\left( \Pi_x X^k \right)(y) &= (y-x)^k \; , \\
		\left( \Pi_x \<1> X^k \right)(y) &= \<1>(y) (y-x)^k \; , \\
		\left( \Pi_x \<1> a \<1> X^k \right)(y) &= \CM_q^{(1,1)}\left({\mfZ[\<2>](y)} ; a \right) (y-x)^k \; , \\
	\end{equs}
	which are well-defined as $\<2>,\<1> \in \CC^0_{\s}(\spacetime ; \cA_q)$.

	Now a function $\upsilon \in \CC^\alpha_\s(\spacetime; \cA_q)$ with $\alpha \in (0,\infty) \setminus \N$ uniquely lifts to a modelled distribution $\hat \upsilon \in \CD^\alpha(\cT_{3,\s}) \subset \CD^\alpha$ and $\<1> \in \CD^\infty$, when viewed as the constant modelled distribution. Thus, $\<1> \hat \upsilon \<1> \in \CD^\alpha$ by Theorem~\ref{thm:RSMult}, and $\<2_1Rprime> \eqdef \CR \left( \<1> \hat \upsilon \<1> \right) \in \CC^0_\s$ yields the desired extension of the pointwise multiplication.
\end{proof}

\subsection{$\CA$-Analytic Functions}

\begin{definition}[$\CA$-Analytic Functions]
\label{def:AAnaFct}
	Let $n,m \in \N$ and $k \in [n]^m$. We define the $t$-continuous map $\CA^{m+1} \times \CA^n \to \CA$
	\begin{equ}
		\left\llbracket A_1, \dots,  A_{m+1} ; X^k \right\rrbracket \eqdef A_1 X_{k_1} A_2 \cdots A_{m} X_{k_m} A_{m+1}
	\end{equ}
	with $A_i, X_j \in \CA$. This map is multilinear in its first $m+1$ arguments and extends to a $t$-continuous map $\CA^{\wotimes_\pi (m+1)} \times \CA^n \to \CA$.

	We say that a function $F\colon \CA^n \to \CA$ is $\CA$-analytic if there exist $A_{m,k} \in \CA^{\wotimes_\pi (m+1)}$, s.t.\
	\begin{equ}[eq:APowSer]
		F(X_1, \dots, X_n) = \sum_{m = 0}^\infty \sum_{k \in [n]^m} \llbracket A_{m,k} ; X^k \rrbracket \; .
	\end{equ}
	We say that it is entire if $U = \CA$.

	We will denote these function spaces by $\cC^\omega(\CA^n ; \CA)$ and $\cC^\omega(U^n ; \CA)$ respectively.
\end{definition}

\begin{proposition}
	For a seminorm $\mfp \in \mfP$ of $\CA$, we set the $\mfp$-convergence radius $R_{\mfp}$ of a formal power series with coefficients $\left(A_{m,k}\right)_{m \in \N ,k \in [n]^m}$ with $A_{m,k} \in \CA^{\wotimes_\pi (m+1)}$ to be
	\begin{equ}
		R_{\mfp} \eqdef \Bigl( \limsup_{m \to \infty} \Bigl( \sum_{k \in [n]^m} \mfp^{\wotimes_\pi (m+1)} \left( A_{m,k}\right) \Bigr)^{\frac{1}{m}}\Bigr)^{-1} \; .
	\end{equ}
	The function $F$ defined by $\left(A_{m,k}\right)_{m,k}$ as in \eqref{eq:APowSer} is well-defined and entire if $R_{\mfp} = \infty$ for all $\mfp \in \mfP$. Furthermore, $F \in \cC^\infty(\CA^n ; \CA)$ and $t$-continuous.

	If $\CA$ is a Banach space, then $F \colon U^n \to \CA$ is well-defined and $\CA$-analytic with $U = B_{R}(0)$ where $R$ is the unique convergence radius of $\left(A_{m,k}\right)_{m,k}$.
\end{proposition}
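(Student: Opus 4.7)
The plan is to reduce the $\CA$-analytic convergence question to a scalar Cauchy--Hadamard statement, seminorm by seminorm, using the $m$-convexity of $\CA$. First I would observe that for every $\mfp \in \mfP$ the multiplication map $\CA^{m+1} \to \CA$ underlying $\llbracket \,\bigcdot\, ; X^k \rrbracket$ is $t$-continuous of norm $1$, so that it factors through a continuous map $\CA^{\wotimes_\pi(m+1)} \to \CA$. Combined with the $m$-convexity of $\CA$ this yields, for any $A \in \CA^{\wotimes_\pi(m+1)}$ and any $X_1,\dots,X_n \in \CA$,
\begin{equ}
	\mfp\bigl(\llbracket A ; X^k \rrbracket\bigr) \leqslant \mfp^{\wotimes_\pi(m+1)}(A) \prod_{i=1}^{m} \mfp(X_{k_i}) \leqslant \mfp^{\wotimes_\pi(m+1)}(A)\, M^m,
\end{equ}
with $M \eqdef \max_{i \in [n]}\mfp(X_i)$. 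Summing over $k$ gives
\begin{equ}
	\sum_{k \in [n]^m} \mfp\bigl(\llbracket A_{m,k} ; X^k \rrbracket\bigr) \leqslant M^m \sum_{k \in [n]^m} \mfp^{\wotimes_\pi(m+1)}(A_{m,k}).
\end{equ}

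From this the classical scalar Cauchy--Hadamard criterion in the dummy variable $M$ shows that the full power series \eqref{eq:APowSer} is absolutely $\mfp$-summable as soon as $M < R_\mfp$. If all radii $R_\mfp$ are infinite, then for every $X \in \CA^n$ and every $\mfp$ the partial sums form an absolutely $\mfp$-convergent, hence Cauchy, sequence; by completeness of $\CA$ they converge in $\CA$ to a limit $F(X)$ that is independent of $\mfp$, giving the ``entire'' conclusion. In the Banach setting there is a single norm, and the same estimate localises convergence to the open ball $B_R(0)$ with $R$ the Cauchy--Hadamard radius of the sequence $\bigl(\sum_{k \in [n]^m} \|A_{m,k}\|^{\wotimes_\pi(m+1)}\bigr)_{m}$.

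For $t$-continuity I would use a telescoping identity to write $F(X) - F(Y)$ as a sum of terms each of which carries exactly one difference $X_j - Y_j$ in one of the multilinear slots; the same submultiplicative bound then shows
\begin{equ}
	\mfp\bigl(F(X) - F(Y)\bigr) \leqslant C_\mfp\bigl(\max_i \mfp(X_i),\max_i \mfp(Y_i)\bigr) \cdot \max_j \mfp(X_j - Y_j),
\end{equ}
where $C_\mfp$ is a finite constant obtained by differentiating the scalar majorant series once; this is exactly the $t$-continuity requirement. For smoothness I would observe that the formal partial derivative $\partial_{X_j} F$ is again a power series of the form \eqref{eq:APowSer}, obtained by summing over insertions of a slot of ``type $j$'' among the $X_{k_i}$, with new coefficients bounded by $m \sum_{k \in [n]^{m-1}} \mfp^{\wotimes_\pi(m+2)}(A_{m,k})$ (up to a splitting of the outer $\CA$-factors). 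Since $m^{1/m} \to 1$, the $\mfp$-convergence radius of the differentiated series is at least $R_\mfp$, so the same argument as above yields a $t$-continuous derivative and, by induction, membership in $\cC^\infty(\CA^n;\CA)$.

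The only genuinely delicate point, which is also the step I would be most careful about, is controlling the interchange of the algebraic multilinear evaluation and the topological tensor completion: one has to justify that the bound $\mfp(\llbracket A;X^k\rrbracket) \leqslant \mfp^{\wotimes_\pi(m+1)}(A)\prod_i \mfp(X_{k_i})$ extends from simple tensors $A = a_1 \otimes \cdots \otimes a_{m+1}$ to all of $\CA^{\wotimes_\pi(m+1)}$. This, however, follows at once from $m$-convexity together with the infimum definition of the $t$-projective tensor seminorm (Definition~\ref{def:tProjTens}) and Proposition~\ref{prop:ProjTensExt}. Everything else is essentially the proof of the scalar Cauchy--Hadamard theorem applied simultaneously to the family of seminorms~$\mfP$.
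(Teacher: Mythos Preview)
The paper states this proposition without proof, so there is nothing to compare against directly. Your argument is correct and is essentially the natural one: reduce to a scalar Cauchy--Hadamard statement via the $m$-convex bound $\mfp(\llbracket A;X^k\rrbracket)\leqslant \mfp^{\wotimes_\pi(m+1)}(A)\prod_i\mfp(X_{k_i})$, then handle $t$-continuity and smoothness by telescoping and term-by-term differentiation, with the radius preserved because $m^{1/m}\to 1$. Your identification of the only nontrivial step --- extending the multilinear bound from simple tensors to all of $\CA^{\wotimes_\pi(m+1)}$ via Definition~\ref{def:tProjTens} and Proposition~\ref{prop:ProjTensExt} --- is exactly right. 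The indexing in your description of the differentiated coefficients is slightly off (the derivative of a degree-$m$ term produces a degree-$(m-1)$ term with coefficient in $\CA^{\wotimes_\pi m}$, not $\CA^{\wotimes_\pi(m+2)}$), but this is cosmetic and does not affect the argument.
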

Within this paper, we will only consider $F$ that are themselves lifts of functions $f \colon \mathbb{K}^n \to \mathbb{K}$, which we shall do using holomorphic functional calculus.
\begin{corollary}
	Let $f \colon \mathbb{K}^n \to \mathbb{K}$ be an entire $\mathbb{K}$-analytic function. Then $f$ lifts to a unique $t$-continuous smooth function $F \in \cC^\infty(\CA; \CA)$. If $\CA$ is furthermore a Banach algebra, then we need only assume that $f \colon U \to \mathbb{K}$ is analytic for some open $0 \in U \subset \mathbb{K}$, i.e.\ the power series defining $f$ need only have a finite radius of convergence.
\end{corollary}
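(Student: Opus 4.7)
The plan is to build $F$ by substituting $\CA$-valued arguments into the scalar Taylor expansion of $f$ at the origin, and then to invoke the preceding proposition to verify convergence, smoothness, and $t$-continuity. Concretely, entirety of $f$ gives unique scalars $c_k \in \mathbb{K}$, indexed by $k \in \N^n$, such that $f(x)=\sum_{k}c_k x^k$ with $\limsup_m\bigl(\sum_{|k|=m}|c_k|\bigr)^{1/m}=0$. I would then set, for every $j \in [n]^m$,
\begin{equ}
A_{m,j} \eqdef \frac{c_{\mu(j)}}{N(\mu(j))}\, \bone_\CA^{\otimes(m+1)} \in \CA^{\wotimes_\pi(m+1)}\;,
\end{equ}
where $\mu(j) \in \N^n$ records the multiplicity of each index appearing in $j$ and $N(\mu(j))=\binom{m}{\mu(j)_1,\dots,\mu(j)_n}$ is the corresponding multinomial coefficient; the lift is defined by the formal series $F(X)=\sum_m\sum_{j\in[n]^m}\llbracket A_{m,j};X^j\rrbracket$, which, by construction, restricts to $f$ on the scalar copy $\mathbb{K}^n\bone \subset \CA^n$.

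Next I would verify the hypotheses of the preceding proposition seminorm by seminorm. For every $\mfp \in \mfP$, submultiplicativity of the projective crossnorm yields
\begin{equ}
\sum_{j\in[n]^m}\mfp^{\wotimes_\pi(m+1)}(A_{m,j}) \leqslant \mfp(\bone_\CA)^{m+1}\sum_{|k|=m}|c_k|\;,
\end{equ}
so the $m^{\text{th}}$-root of the right-hand side tends to zero by entirety of $f$, hence $R_\mfp = \infty$ uniformly in $\mfp$. The proposition then delivers $F$ as an entire, $t$-continuous element of $\cC^\infty(\CA^n;\CA)$ with term-by-term differentiability in every seminorm. In the Banach algebra version of the statement, the same computation goes through verbatim, but with $R_\mfp$ equal to the (finite) scalar radius of convergence of $f$, forcing $F$ to be defined only on the open ball $U^n \subset \CA^n$.

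Uniqueness follows by matching coefficients: any two $\CA$-analytic lifts $F,\tilde F$ of $f$ whose coefficients lie in $\mathbb{K}\cdot \bone_\CA^{\otimes(m+1)}$ and are symmetric under permutations of ordered multi-indices must, upon restriction to the scalar diagonal, share the same power series as $f$, forcing all homogeneous coefficients to agree and hence $F=\tilde F$. The main obstacle, and the only non-mechanical ingredient, is pinning down the correct uniqueness class: for $n>1$ there is no canonical lift of a commutative monomial to a noncommutative product, so the uniqueness assertion must be read within the class of symmetric scalar-coefficient lifts described above (equivalently, the lifts produced by holomorphic functional calculus). Once this class is fixed, the remainder of the argument reduces to a standard absolute-convergence computation for noncommutative power series.
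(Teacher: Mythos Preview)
Your argument is correct and follows the same scheme as the paper: write $f(x)=\sum_{\ell}c_\ell x^\ell$, build scalar coefficients $A_{m,k}\in\mathbb{K}\cdot\bone^{\otimes(m+1)}$ from the $c_\ell$, and invoke the preceding proposition after checking that $\sum_{k\in[n]^m}\mfp^{\wotimes_\pi(m+1)}(A_{m,k})\leqslant \mfp(\bone)^{m+1}\sum_{|\ell|=m}|c_\ell|$, so that $R_\mfp=\infty$ (resp.\ $R_\mfp=R$ in the Banach case).

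The one genuine difference is in how you distribute the coefficient of $x^\ell$ over the $\binom{|\ell|}{\ell}$ ordered multi-indices representing it. The paper simply picks, for each $\ell$, a single representative $k\in[n]^{|\ell|}$ and sets $A_{|\ell|,k}=c_\ell\bone^{\otimes(|\ell|+1)}$ with all other $A_{|\ell|,k'}=0$; you instead symmetrise, dividing by the multinomial coefficient. Both choices yield the same convergence estimate, but for $n>1$ they produce \emph{different} functions $F$ on $\CA^n$ (e.g.\ $X_1X_2$ versus $\tfrac12(X_1X_2+X_2X_1)$). Your symmetrised lift is the more canonical one and aligns with what holomorphic functional calculus would give; the paper's proof is slightly shorter but makes an arbitrary choice. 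You are also more careful than the paper about the uniqueness claim: the paper's proof does not address uniqueness at all, and as you correctly observe, for $n>1$ uniqueness only holds within a specified class of lifts (symmetric scalar-coefficient, say), not among all $\CA$-analytic lifts.
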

\begin{proof}
	Let $(c_\ell)_{\ell \in \N^n}$, s.t.\
	\begin{equ}
		f(x) = \sum_{\ell \in \N^n} c_\ell x^\ell \; .
	\end{equ}
	For each $\ell \in \N^n$ choose some $k \in [n]^{|\ell|}$, s.t.\ $\ell_i  = \left|\left\{j \in [|\ell|] \, \big| \, k_j = i\right\}\right|$. Set $A_{m, k} = c_\ell \bone^{\otimes (m+1)}$ and $A_{m, k'} = 0$ for all $k' \neq k$. Defining $F \colon \CA^n \to \CA$
	\begin{equ}
		F(X) = \sum_{m = 0}^\infty \sum_{k \in [n]^m} [ A_{m,k} ; X^k]  \; ,
	\end{equ}
	it is a straightforward calculation to verify that $R_{\mfp} = \infty$ for all $\mfp \in \mfP$.

	If $\CA$ is a Banach algebra, suppose that $f$ has radius of convergence $R < \infty$ around $0$. Then, $F$ has radius of convergence $R$.
\end{proof}
\begin{remark}
	The reason we need to restrict ourselves to Banach algebras when considering functions that are not entire is that we have to be able to restrict ourselves to the set
	\begin{equ}
		\left\{ a \in \CA \, \big| \, \forall \mfp \in \mfP : \mfp(a) < R \right\}
	\end{equ}
	which is open if and only if the topology of $\CA$ can be generated by a single norm.
\end{remark}
\begin{remark}
	To go beyond functions that are analytic, we need to assume at least a locally $C^*$-algebraic structure in order to employ non-holomorphic functional calculus.
\end{remark}

\section{Proofs of Algebraic Propositions}
\label{app:alg}

\begin{proof}[Proof of Lemma~\ref{lemma:CharHomo}]
    We shall prove these statements by induction on the tree structure. For $\1$, $X_\mu$, $\Xi_\mfl$, and multiplication, the group and action properties follow trivially. Furthermore, the action property also follows trivially by induction on trees that are root-renormalised, as
	\begin{equs}
		\Gamma_g \Gamma_{\bar g} \tau & = \Gamma_g \Bigl( \Bigl(\Gamma_{\bar g} (\CF) \otimes \bigotimes_{v \in N_T} \Gamma_{\bar g} \left( X^{\mfn(v)} \right) \Bigr) \curvearrowright \tau_\rho \Bigr) = \\
		& =   \Bigl(\Gamma_g \Gamma_{\bar g} (\CF) \otimes \bigotimes_{v \in N_T} \Gamma_g \Gamma_{\bar g} \left( X^{\mfn(v)} \right) \Bigr) \curvearrowright \tau_\rho  = \\
		&= \Gamma_{g \cdot \bar g} \tau \; .
	\end{equs}
	Therefore, consider an element $\CI_{\mfk} \tau \in \CH$. We have for all $g, \bar g \in \CG(\CH^+)$
	\begin{equs}
		\Gamma_{g} \Gamma_{\bar g} \CI_{\mfk} \tau &= \Gamma_g \left(\CI_{\mfk}(\Gamma_{\bar g} \tau) + \sum_{\ell} \frac{X^\ell}{\ell!} \bar g \left( \CJ_{\mfk+\ell} \tau\right)\right) = \\
		&= \CI_{\mfk}(\Gamma_g \Gamma_{\bar g} \tau) + \sum_{\ell} \frac{X^\ell}{\ell!} g \left( \CJ_{\mfk+\ell} \Gamma_{\bar g} \tau\right) + \sum_{\ell} \frac{\Gamma_{g} \left( X^\ell \right)}{\ell!} \bar g \left( \CJ_{\mfk+\ell} \tau\right) = \\
		&= \CI_{\mfk}( \Gamma_{g \cdot \bar g} \tau) + \sum_{\ell} \frac{X^\ell}{\ell!} g \left( \CJ_{\mfk+\ell} \Gamma_{\bar g} \tau\right) + \sum_{\ell} \frac{\left(  X+g(X) \right)^\ell}{\ell!} \bar g \left( \CJ_{\mfk+\ell} \tau\right)\\
		\Gamma_{g \cdot \bar g} \CI_{\mfk} \tau & = \CI_{\mfk} \left(\Gamma_{g \cdot \bar g} \tau \right) + \sum_{\ell} \frac{X^\ell}{ \ell !} \left(g \cdot \bar g\right)(\CJ_{\mfk + \ell} \tau) =\\
		&= \CI_{\mfk} \left(\Gamma_{g \cdot \bar g} \tau \right) + \sum_{\ell} \frac{X^\ell}{ \ell !}  \biggl(  g\left( \CJ_{\mfk+\ell} \left( \Gamma_{\bar g} \tau \right) \right) +  \sum_{k} \frac{g(X)^k}{k!} \bar g\left( \CJ_{\mfk +k+ \ell} \tau \right)  \biggr) = \\
		&= \CI_{\mfk} \left(\Gamma_{g \cdot \bar g} \tau \right) + \sum_{\ell} \frac{X^\ell}{ \ell !}   g\left( \CJ_{\mfk+\ell} \left( \Gamma_{\bar g} \tau \right) \right) +  \sum_{k} \frac{(X+g(X))^k}{k!} \bar g\left( \CJ_{\mfk +k} \tau \right)  \; ,
	\end{equs}
	where in the last line we used $g(X)$ and $X$ are in the centre of $\CH^+$ and $\CA$.
	\begin{equs}
		\Gamma_{\1^*} \left(\CI_{\mfk} \tau \right) &= \CI_{\mfk} \bigl( \underbrace{\Gamma_{\1^*} \tau}_{ = \tau} \bigr) + \sum_{\ell} \frac{X^\ell}{\ell!} \underbrace{\1^* \left( \CJ_{\mfk + \ell} \tau\right)}_{ = 0} = \CI_{\mfk} \tau\\
		\Gamma_{g^{-1}} \Gamma_{g} \left(\CI_{\mfk} \tau \right) & = \Gamma_{g^{-1}} \biggl( \CI_{\mfk}(\Gamma_g \tau) + \sum_{\ell} \frac{X^\ell}{\ell!} g \left( \CJ_{\mfk+\ell} \tau\right)\biggr) = \\
		&= \CI_{\mfk} \bigl( \underbrace{ \Gamma_{g^{-1}} \Gamma_g \tau}_{ = \tau} \bigr) + \sum_{\ell} \frac{X^\ell}{\ell!} g^{-1} \left(\CJ_{\mfk + \ell} \Gamma_g \tau \right) +\\
		& \qquad + \sum_{\ell!} \frac{\Gamma_{g^{-1}} \left( X^\ell \right)  }{\ell!} g(\CJ_{\mfk+\ell}\tau) = \\
		&= \CI_{\mfk} \bigl( \tau \bigr) -\sum_{k,\ell} \frac{X^\ell}{\ell!} \frac{(-g(X))^k}{k!} g \bigl(\CJ_{\mfk+k + \ell} \bigl( \underbrace{\Gamma_{g^{-1}} \Gamma_g \tau}_{ = \tau} \bigr)\bigr) + \\
		&  \qquad +\sum_{\ell!} \frac{\left( X - g(X) \right)^\ell   }{\ell!} g(\CJ_{\mfk+\ell}\tau) = \\
		&= \CI_{\mfk} \bigl( \tau \bigr) -\sum_{k,\ell} \frac{(g-g(X))^\ell}{\ell!} g \bigl(\CJ_{\mfk+ \ell} \bigl( \underbrace{\Gamma_{g^{-1}} \Gamma_g \tau}_{ = \tau} \bigr)\bigr) + \\
		&  \qquad +\sum_{\ell!} \frac{\left( X - g(X) \right)^\ell   }{\ell!} g(\CJ_{\mfk+\ell}\tau) = \CI_\mfk(\tau)\\
		\Gamma_{g} \Gamma_{g^{-1}} \left(\CI_{\mfk} \tau \right) & = \Gamma_g \biggl(\CI_{\mfk}(\Gamma_{g^{-1}} \tau) + \sum_{\ell} \frac{X^\ell}{\ell!} g^{-1} \left( \CJ_{\mfk+\ell} \tau\right)  \biggr) = \\
		& = \CI_{\mfk}(\tau) + \sum_{\ell} \frac{X^\ell}{\ell!} g \left( \CJ_{\mfk+\ell} \left( \Gamma_{g^{-1}} \tau \right) \right)  -\\
		&\qquad - \sum_{k,\ell} \frac{\bigl(-g(X)\bigr)^k }{k!} \frac{\bigl(\Gamma_{g}(X)\bigr)^\ell}{\ell!} g \left( \CJ_{\mfk+k+\ell} \left( \Gamma_{g^{-1}} \tau \right) \right)  =
		\\
		& = \CI_{\mfk}(\tau) + \sum_{\ell} \frac{X^\ell}{\ell!} g \left( \CJ_{\mfk+\ell} \left( \Gamma_{g^{-1}} \tau \right) \right)  -\\
		&\qquad - \sum_{\ell}  \frac{\bigl(\Gamma_{g}(X)-g(X)\bigr)^\ell}{\ell!} g \left( \CJ_{\mfk+k+\ell} \left( \Gamma_{g^{-1}} \tau \right) \right)  = \CI_{\mfk}(\tau)
	\end{equs}
	Finally, to check the group properties, we only need to consider an element $\CJ_{\mfk} \tau \in \CH^+$.
	\begin{itemize}
	\item Associativity:
  	\begin{equs}
    	(g\cdot (\bar g \cdot \bar{\bar g}))(\CJ_{\mfk} \tau ) &= g\left( \CJ_\mfk \left( \Gamma_{\bar g \cdot \bar{\bar g}} \tau \right) \right) +  \sum_{\ell} \frac{g(X)^\ell}{\ell!} \left( \bar g\cdot  \bar{\bar g}\right)\left( \CJ_{\mfk + \ell} \tau \right) =\\
		&= g\left( \CJ_\mfk \left( \Gamma_{\bar g }\Gamma_{\bar{\bar g}} \tau \right) \right) +  \sum_{\ell} \frac{g(X)^\ell}{\ell!} \bar g\left( \CJ_{\mfk+\ell} \left( \Gamma_{\bar{\bar g}} \tau \right) \right) +\\
		& \qquad +  \sum_{k,\ell}  \frac{g(X)^\ell}{\ell!}\frac{\bar g(X)^k}{k!} \bar{\bar g}\left( \CJ_{\mfk + k+\ell} \tau \right) \\
		&= g\left( \CJ_\mfk \left( \Gamma_{\bar g }\Gamma_{\bar{\bar g}} \tau \right) \right) +  \sum_{\ell} \frac{g(X)^\ell}{\ell!} \bar g\left( \CJ_{\mfk+\ell} \left( \Gamma_{\bar{\bar g}} \tau \right) \right) +\\
		& \qquad +  \sum_{\ell}  \frac{(g(X)+\bar g(X))^\ell}{\ell!}\bar{\bar g}\left( \CJ_{\mfk + \ell} \tau \right) \\
		((g\cdot \bar g) \cdot \bar{\bar g})(\CJ_{\mfk} \tau ) &=(g \cdot \bar g) \left(\CJ_{\mfk} \left(\Gamma_{\bar{\bar g}} \tau\right) \right) + \sum_{\ell} \frac{\bigl((g\cdot \bar g)(X)\bigr)^\ell}{\ell!} \bar{\bar g}(\CJ_{\mfk + \ell} \tau) = \\
		&=g\left( \CJ_\mfk \left( \Gamma_{\bar g}  \Gamma_{\bar{\bar g}} \tau \right) \right) +  \sum_{\ell} \frac{g(X^\ell)}{\ell!} \bar g\left( \CJ_{\mfk + \ell} \left( \Gamma_{\bar{\bar g}} \tau \right) \right)  +\\
		& \qquad + \sum_{\ell} \frac{\bigl(g(X)+ \bar g(X) \bigr)^\ell}{\ell!} \bar{\bar g}(\CJ_{\mfk + \ell} \tau)
  	\end{equs}

 	\item Unit:
  	\begin{equs}
    	(g \cdot \1^*)(\CJ_\mfk \tau) &=  g\bigl( \CJ_\mfk \bigl( \underbrace{\Gamma_{\1^*} \tau}_{ = \tau} \bigr) \bigr) +  \sum_{\ell} \frac{g(X^\ell)}{\ell!} \underbrace{\1^*\left( \CJ_{\mfk + \ell} \tau \right)}_{ = 0}  = g(\CJ_{\mfk}\tau)  \\
    	(\1^*\cdot g)(\CJ_\mfk \tau) &=  \underbrace{\1^*\left( \CJ_\mfk \left( \Gamma_{g} \tau \right) \right)}_{ = 0} +  \sum_{\ell} \underbrace{\frac{\1^*(X^\ell)}{\ell!}}_{ = \delta_{\ell,0}} g\left( \CJ_{\mfk + \ell} \tau \right) = g\left(\CJ_{\mfk} \tau\right)
  	\end{equs}

	\item Inverse:
  	\begin{equs}
    	\left( g \cdot g^{-1} \right)(\CJ_\mfk \tau) &= g \left( \CJ_{\mfk} \left( \Gamma_{g^{-1}} \tau \right) \right) + \sum_{\ell} \frac{g(X^\ell)}{\ell!} g^{-1} \left( \CJ_{\mfk + \ell} \tau \right)  = \\
		&= g \left( \CJ_{\mfk} \left( \Gamma_{g^{-1}} \tau \right) \right) - \sum_{k, \ell} \frac{(g(X))^\ell}{\ell!} \frac{(-g(X))^k}{k!} g\left( \CJ_{\mfk +k+ \ell} \left(\Gamma_{g^{-1}} \tau\right) \right) =\\
		&= g \left( \CJ_{\mfk} \left( \Gamma_{g^{-1}} \tau \right) \right) - g\left( \CJ_{\mfk} \left(\Gamma_{g^{-1}} \tau\right) \right) = 0 = \1^*(\CJ_{\mfk} \tau)\\
		\left( g^{-1} \cdot g \right)(\CJ_\mfk \tau) &= g^{-1} \left( \CJ_{\mfk} \left( \Gamma_{g} \tau \right) \right) + \sum_{\ell} \frac{g^{-1}(X^\ell)}{\ell!} g \left( \CJ_{\mfk + \ell} \tau \right)  =\\
		&= - \sum_{\ell} \frac{(-g(X))^\ell}{\ell!} g \left( \CJ_{\mfk + \ell} \left( \Gamma_{g^{-1}} \Gamma_g \tau \right) \right) +\\
		& \qquad + \sum_{\ell} \frac{(-g(X))^\ell}{\ell!} g \left( \CJ_{\mfk + \ell} \tau \right) =\\
		&= - \sum_{\ell} \frac{(-g(X))^\ell}{\ell!} g \left( \CJ_{\mfk + \ell}  \tau  \right) + \sum_{\ell} \frac{(-g(X))^\ell}{\ell!} g \left( \CJ_{\mfk + \ell} \tau \right) =0
	\end{equs}
\end{itemize}
\end{proof}

\begin{proof}[Proof of Proposition~\ref{prop:RootDerCom}]
	We shall proceed by induction. First we have for all $i,j \in [d]$, $\mfk \in \mfL^- \times \N^d$, and $g \in \CG(\CH^+)$ we have
	\begin{equs}
		\Gamma_g D_i \1 &= \Gamma_g 0 = 0 = D_i \1  = D_i \Gamma_g \1 \; , \\
		\Gamma_g D_i X_j &= \delta_{ij} \Gamma_g \1 = \delta_{ij} \1  = D_i \left( X_j + g(X_j) \1 \right) = D_i \Gamma_g X_j \; , \\
		\Gamma_g D_i \Xi_{\mfk}  &= \Gamma_g \Xi_{\mfk + e_i} = \Xi_{\mfk + e_i} = D_i \Xi_{\mfk} =  D_i \Gamma_g \Xi_{\mfk} \; ,
	\end{equs}
	and therefore by induction we have $\Gamma_g D^k = D^k \Gamma_g$ for all base cases and $k \in \N^d$.

	Now supposing that $\Gamma_g$ and $D^k$ commute for $\tau_1$ and $\tau_2$ we have
	\begin{equs}
		D^k \Gamma_g(\tau_1 \tau_2)  & = D^k \left( \Gamma_g (\tau_1) \Gamma_g(\tau_2)\right) = \sum_{\substack{\ell_1, \ell_2 \in \N^d \\ \ell_1 + \ell_2 = k}} \binom{k}{\ell_1, \ell_2} D^{\ell_1} \Gamma_g(\tau_1) D^{\ell_2} \Gamma_g(\tau_2) = \\
		&= \sum_{\substack{\ell_1, \ell_2 \in \N^d \\ \ell_1 + \ell_2 = k}} \binom{k}{\ell_1, \ell_2}  \Gamma_g(D^{\ell_1} \tau_1)  \Gamma_g(D^{\ell_2} \tau_2) = \\
		&= \Gamma_g \Bigl(  \sum_{\substack{\ell_1, \ell_2 \in \N^d \\ \ell_1 + \ell_2 = k}} \binom{k}{\ell_1, \ell_2}  D^{\ell_1} \tau_1 D^{\ell_2} \tau_2  \Bigr) = \Gamma_g \left( D^k \left( \tau_1 \tau_2 \right)\right)
	\end{equs}
	and finally let $\mfk \in \mfL^+ \times \N^d$
	\begin{equs}
		 D^k \Gamma_g \CI_{\mfk} \tau &= D^k \Bigl( \CI_{\mfk} \left( \Gamma_g \tau \right) + \sum_{\ell \in \N^d} \frac{1}{\ell!}  X^\ell g \left( \CJ_{\mfk + \ell} \tau \right)\Bigr) =\\
		 &= \CI_{\mfk + k} \left( \Gamma_g \tau\right) + \sum_{\substack{\ell \in \N^d \\ \ell \geqslant k}} \frac{1}{(\ell - k)!} X^{\ell - k} g \left( \CJ_{\mfk + \ell \tau } \right) = \\
		 & = \CI_{\mfk + k} \left( \Gamma_g \tau\right) + \sum_{\ell \in \N^d } \frac{1}{\ell!} X^{\ell } g \left( \CJ_{\mfk + k+ \ell \tau } \right) = \\
		 & = \Gamma_g \left(  \CI_{\mfk + k} \tau \right) = \Gamma_g \left( D^k \left( \CI_{\mfk} \tau \right) \right)
	\end{equs}
\end{proof}

\begin{proof}[Proof of Lemma~\ref{lemma:StGrForm}]
	\begin{equs}
		\Gamma_{xy} \CI_{\mfk} \tau & = \CI_{\mfk} \Gamma_{xy} \tau + \sum_{\ell} \frac{X^\ell}{\ell!} \left( g_x^{-1} \cdot g_y \right) \left( \CJ_{\mfk + \ell} \tau \right)	= \\
		&= \CI_{\mfk} \Gamma_{xy} \tau + \sum_{\ell} \frac{X^\ell}{\ell!}  g_x^{-1}  \left( \CJ_{\mfk + \ell} \Gamma_{g_y} \tau \right) + \sum_{k,\ell} \frac{X^\ell}{\ell!} \frac{x^k}{k!} g_y  \left( \CJ_{\mfk + k+ \ell} \tau \right)	= \\
		&= \CI_{\mfk} \Gamma_{xy} \tau - \sum_{k, \ell} \frac{X^\ell}{\ell!} \frac{x^k}{k!}  g_x  \left( \CJ_{\mfk + k+ \ell} \Gamma_{g_x^{-1}} \Gamma_{g_y} \tau \right) + \\
		 &\hspace{5cm}+ \sum_{k,\ell} \frac{X^\ell}{\ell!} \frac{x^k}{k!} g_y  \left( \CJ_{\mfk + k+ \ell} \tau \right)	= \\
		&= \CI_{\mfk} \Gamma_{xy} \tau + \sum_{k,k', \ell} \frac{X^\ell}{\ell!} \frac{x^k}{k!} \frac{(-x)^{k'}}{k'!}  f_x  \left( \CJ_{\mfk + k+k'+ \ell} \Gamma_{xy} \tau \right) - \\
		& \hspace{5cm} - \sum_{k, \ell} \frac{X^\ell}{\ell!} \frac{(x-y)^k}{k!} f_y  \left( \CJ_{\mfk +k+ \ell} \tau \right)	= \\
		&= \CI_{\mfk} \Gamma_{xy} \tau + \sum_{\ell} \frac{X^\ell}{\ell!}  \left( f_x  \left( \CJ_{\mfk + \ell} \Gamma_{xy} \tau \right) - \sum_{k}  \frac{(x-y)^k}{k!} f_y  \left( \CJ_{\mfk+k+ \ell} \tau \right) \right) \; . 
	\end{equs}
	Computing the coefficient of $X^\ell$ we see that 
	\begin{equs}
		\ell! \left( \Gamma_{xy} \CI_{\mfk} \tau \right)_{\ell} & = \int D^{\mfk_2 + \ell}_1 K_{\mfk_1}(x,z) \left( \Pi_x \Gamma_{xy} \tau \right) (z) \d z - \\
		& \qquad  -\sum_{\substack{k \in \N^d \\ |k|_{\s} < |\mfk + \ell|_{\s} + |\tau|_{\s} }} \frac{(x-y)^k}{k!} \int D^{\mfk_2 + \ell + k}_1 K_{\mfk_1}(y,z) \left( \Pi_y \tau \right)(z) \d z = \\
		& = \int K_{ \mfk+ \ell,|\tau|_{\s}} (x, y ;  z) \left( \Pi_y \tau \right)(z) \d z \; . 
	\end{equs}
	Here the condition that $|k|_{\s} < |\mfk + \ell|_{\s} +|\tau|_{\s}$ appears since $\CJ_{\mfk + k + \ell} \tau$ is non-zero if and only if the regularity of $\CI_{\mfk + k + \ell} \tau$ is positive.
\end{proof}

\begin{proof}[Proof of Lemma~\ref{lemma:StatMod}]
	We only need to check this for the cases $\Xi_\mfl$, $X_\mu$, and $\CI_{\mfk}\tau$. It holds trivially for $\Xi_{\mfl}$, for $X_\mu$ we have 
	\begin{equ}
		\PPi \left( \Gamma_{g_x} X_\mu\right)(y) =   \PPi \left( X - x_\mu \1\right) (y) = \left(y_\mu - x_\mu\right) \bone_{\CA} \; ,
	\end{equ}
	and for $\CI_{\mfk} \tau$ we have 
	\begin{equs}
		\PPi \left( \Gamma_{g_x} \CI_{\mfk} \tau \right)(y)& = \PPi \left( \CI_{\mfk} \Gamma_{g_x} \tau + \sum_{\ell} \frac{X^\ell}{\ell!} g_x \left( \CJ_{\mfk + \ell} \tau \right)  \right) (y) = \\
		& = \PPi \left( \CI_{\mfk} \Gamma_{g_x} \tau - \sum_{k,\ell} \frac{X^\ell}{\ell!} \frac{(-x)^k}{k!} f_x \left( \CJ_{\mfk +k+ \ell} \tau \right)  \right) (y) =\\
		& = \PPi \left( \CI_{\mfk} \Gamma_{g_x} \tau - \sum_{\ell} \frac{(X-x)^\ell}{\ell!}  f_x \left( \CJ_{\mfk + \ell} \tau \right)  \right) (y) =  \\
		& = \int D^{\mfk_2} K_{\mfk_1}(y,z) \PPi \left( \Gamma_{g_x} \tau \right) (z) \d z - \sum_{\ell} \frac{(y-x)^\ell}{\ell!} f_x\left( \CJ_{\mfk+\ell} \tau \right) =  \\
		& = \int D^{\mfk_2} K_{\mfk_1}(y,z) \Pi_x \tau  (z) \d z - \sum_{\ell} \frac{(y-x)^\ell}{\ell!} f_x\left( \CJ_{\mfk+\ell} \tau \right) \; . 
	\end{equs}
	This proves the assertion. 
\end{proof}

\section{BPHZ Supplement}

\begin{proposition}
\label{prop:TreeModIndep}
	Let $\cT = \cT(\cR)$ be the $\CA$-regularity structure built from a subcritical rule $\cR$. Let $\left(\xi_{\mfl}\right)_{\mfl \in \mfL^-} \subset \cD'(\R^d)$ be a choice of noise realisations and $\left( K_{\mft} \right)_{\mft \in \mfL^+} \subset \cC^\infty( \R^d \setminus \{0\} )$ of scalar-valued integration kernels satisfying the usual assumptions and let $Z_{(\eps)}$ be the canonical model w.r.t.\ these.

	For every scalar tree $\tau$ with $n$ uncontracted negative leaves there is a possibly singular integration $G^{(\eps)}_{\tau} \in \cD'( \R^{d (n+2)})$ independent of the algebra $\CA$, s.t.\
	\begin{equ}
		\Pi_x^{(\eps)} \tau(y)  =
		\int\limits_{\R^{dn}} G_\tau \left(y , x, z_{1}, \dots, z_n \right) \Delta(\mfo) \Bigl(\bigotimes_{i =1 }^n \Pi_0^{(\eps)}\Xi_{\mfl_i} (z_i)\Bigr) \prod_{i = 1}^n \d z_i
	\end{equ}
	with $\Delta(\mfo) \in \CB^n( \CA ; \CA)$ independent of the variables $x,y, z_i$.
\end{proposition}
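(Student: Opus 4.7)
The proof proceeds by induction on the tree structure, closely following the recursive definition of $\Pi_x^{(\eps)}$ given in \eqref{eq:CanMod} (and the equivalent stationary formulation \eqref{eq:StatMod}). The crucial observation is that, by Assumption~\ref{as:ScalarV}, every integration kernel $K_\mft$ takes values in $\mathbb{K}$ rather than in $\CA$, and since we are working with scalar trees, the only genuinely $\CA$-valued objects that can appear are the noise values $\Pi_0^{(\eps)}\Xi_{\mfl_i}(z_i)$ associated with the uncontracted negative leaves; all contracted leaves (those recorded in $\mfo$) contribute $\mathbb{K}$-valued factors that are absorbed into $G_\tau$.

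For the base cases one sets $G_\1 \equiv 1$, $G_{X^k}(y,x) = (y-x)^k$ (with trivial $\Delta$), and $G_{\Xi_{(\mfl,k)}}(y,x,z) = D^k_z \delta(y-z)$, using the translation invariance of the mollified noise to identify $\Pi_x^{(\eps)}\Xi_{(\mfl,k)}(y)$ with $\Pi_0^{(\eps)}\Xi_{(\mfl,k)}(y)$. For the integration step $\tau = \CI_\mfk\bar\tau$, the formula \eqref{eq:CanMod} expresses $\Pi_x^{(\eps)}\tau(y)$ as an integral of the scalar kernel $D_1^{\mfk_2}K_{\mfk_1}(y,\cdot)$ against $\Pi_x^{(\eps)}\bar\tau$, minus polynomial-in-$(y-x)$ counterterms of exactly the same type evaluated at $x$; applying the inductive hypothesis to $\bar\tau$ and using Fubini to swap integrations yields the claimed representation with $\Delta_\tau = \Delta_{\bar\tau}$. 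For a product $\tau = \tau_1\tau_2$ of non-root-renormalised trees, multiplicativity of $\Pi_x^{(\eps)}$ combined with the inductive representations for $\tau_1$ and $\tau_2$ gives $G_\tau = G_{\tau_1}\,G_{\tau_2}$ (which remains scalar-valued since both factors are) and $\Delta_\tau(a_I\otimes a_J) \eqdef \Delta_{\tau_1}(a_I)\Delta_{\tau_2}(a_J)\in\CB^{|I|+|J|}(\CA;\CA)$, where the product on the right uses the multiplication in $\CA$ and the ordering of the arguments is inherited from the ordering of the leaves in $\tau_1\tau_2$.

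For a root-renormalised scalar tree $\tau$, the decomposition \eqref{eq:scalarGraftRep} lets us write $\tau = (\tilde\tau_1,\dots,\tilde\tau_{|T_\rho|};\mfn)\curvearrowright \tau_\rho$ with each $\tilde\tau_i \in \CH_0$ scalar, and the definition of $\Pi_x^{(\eps)}$ on such trees gives
\begin{equ}
\Pi_x^{(\eps)}\tau(y) = \Delta(\mfo_\rho)\Bigl(\bigotimes_{i=1}^{|T_\rho|} \Pi_x^{(\eps)}\tilde\tau_i(y)\Bigr)\prod_{v\in N_{T_\rho}}(y-x)^{\mfn(v)}\;.
\end{equ}
Since $\Delta(\mfo_\rho)$ is $\mathbb K$-multilinear and $t$-continuous, it commutes with the scalar integrations that produce the individual $G_{\tilde\tau_i}$ via the inductive hypothesis; pulling $\Delta(\mfo_\rho)$ outside these integrations and composing with the multilinear maps $\Delta_{\tilde\tau_i}$ coming from the inductive step yields a combined $\Delta_\tau\in\CB^n(\CA;\CA)$ (with $n = \sum_i|I_i|$), while the remaining scalar factors assemble into $G_\tau$.

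The only genuine subtleties are (i) ensuring that all kernels arising during the induction remain scalar-valued, which is guaranteed by Assumption~\ref{as:ScalarV} together with the absence of non-trivial algebra decorations on scalar trees, and (ii) correctly tracking the order in which the noise arguments are fed into the resulting multilinear map $\Delta_\tau$. The latter is purely bookkeeping once the canonical ordering of negative leaves induced by the ordered tree structure is fixed; noncommutativity of $\CA$ is handled automatically by the fact that the order in which products $\Delta_{\tau_1}(a_I)\Delta_{\tau_2}(a_J)$ and the operator $\Delta(\mfo_\rho)$ act matches the order in which $\Pi_x^{(\eps)}$ multiplies its factors, both being dictated by the same tree order.
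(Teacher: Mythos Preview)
Your proof is correct and follows essentially the same inductive scheme as the paper's: the same base cases, the same product and integration steps (the paper packages the polynomial counterterms into the single kernel $K_{\mft+k,|\tau'|_\s}(y,x;w)$ of Lemma~\ref{lemma:StGrForm}, which is exactly your ``integral minus polynomial counterterms''), and the same treatment of root-renormalised trees via the grafting decomposition. If anything, you are slightly more explicit than the paper about the composition $\Delta_\tau = \Delta(\mfo_\rho)\circ\bigotimes_i\Delta_{\tilde\tau_i}$ in the root-renormalised case.
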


\begin{proof}
	We proceed by straightforward induction. For $\tau = \1$ or $X_\mu$ it is obvious, as is the case for $\tau = \Xi_{(\mfl, k)}$ with $G_\tau = (y,x,z) = \delta^{(k)}(y-z)$, with $\Delta(\mfo) \equiv \bone$ in all of these cases. Similarly, for $\tau = \tau_1 \tau_2$, we have $G_{\tau}(y,x,z_1, \dots, z_n) = G_{\tau_1}(y,x, z_1, \dots, z_k)G_{\tau_2}(y,x , z_{k+1}, \dots, z_n)$ and $\Delta(\mfo) = \Delta(\mfo_1) \otimes \Delta(\mfo_2)$.

	For $\tau = \CI_{(\mft , k)} \tau'$, one sets
	\begin{equs}
		G_{\tau}(y,x, z_1, \dots, z_n) &\eqdef \int\limits_{\R^d}  K_{\mft+k, |\tau'|_{\s}}(y,x;w)   G_{\tau'}(w,x, z_1, \dots, z_n) \d w \; .
	\end{equs}
	For root-renormalised trees we have $G_{\tau}(y,x,z_1, \dots, z_n) = \prod_{i = 1}^k G_{\tau_k}(y,x, z_{I_i})$, where $\tau_i$ are the (scalar) trees that are attached to the root-renormalised tree in $\tau  = (\tau_1 \otimes \cdots \otimes \tau_k) \curvearrowright \tau_\rho$. In the grafting decomposition $\CF = \tau_1 \otimes \cdots \otimes \tau_k$, since $\tau$ is scalar. Letting $\Delta_i \in \CB^{|I_i|}(\CA;\CA)$ be the linear maps of the model of $\tau_i$, we have
	\begin{equs}
		\Pi_x^{\eps} \tau(y) = \int\limits_{\R^{dn}} G_{\tau}(y,x,z_1, \dots, z_n) \prod_{i  = 1}^k \Delta_i \Bigr( \bigotimes_{j \in I_i} \Pi_0^{(\eps)} \Xi_{\mfl_j}(z_j) \Bigr) \prod_{i = 1}^n \d z_i \; ,
	\end{equs}
	i.e.\ $\Delta(\mfo) = \bigotimes_{i  = 1}^k \Delta_i$. This finishes the induction.
\end{proof}

%
%

\endappendix

\bibliographystyle{Martin}
\bibliography{./refs}

\def\cprime{$'$} \def\polhk#1{\setbox0=\hbox{#1}{\ooalign{\hidewidth
  \lower1.5ex\hbox{`}\hidewidth\crcr\unhbox0}}}
\begin{thebibliography}{HvNVW16}
\def\myhref#1#2{\href{#2}{\nolinkurl{#1}}}
\ifdefined\urlprefix\else\def\urlprefix{}\fi

\bibitem[ABDG22]{Gub20}
\textsc{S.~Albeverio}, \textsc{L.~Borasi}, \textsc{F.~C. {De Vecchi}}, and
  \textsc{M.~Gubinelli}.
\newblock Grassmannian stochastic analysis and the stochastic quantization of
  {E}uclidean fermions.
\newblock \emph{Probab. Theory Related Fields} \textbf{183}, no. 3-4, (2022),
  909--995.
\newblock \myhref{arXiv:2004.09637}{https://arxiv.org/abs/2004.09637}.
\newblock
  \myhref{doi:10.1007/s00440-022-01136-x}{https://doi.org/10.1007/s00440-022-01136-x}.

\bibitem[AH84]{ApplebaumHudson1984}
\textsc{D.~B. Applebaum} and \textsc{R.~L. Hudson}.
\newblock Fermion ito's formula and stochastic evolutions.
\newblock \emph{Communications in Mathematical Physics} \textbf{96}, no.~4,
  (1984), 473--496.
\newblock \myhref{doi:10.1007/BF01212531}{https://doi.org/10.1007/BF01212531}.

\bibitem[BCCH21]{BCCH21}
\textsc{Y.~Bruned}, \textsc{A.~Chandra}, \textsc{I.~Chevyrev}, and
  \textsc{M.~Hairer}.
\newblock Renormalising {SPDE}s in regularity structures.
\newblock \emph{J. Eur. Math. Soc. (JEMS)} \textbf{23}, no.~3, (2021),
  869--947.
\newblock \myhref{doi:10.4171/jems/1025}{https://doi.org/10.4171/jems/1025}.

\bibitem[BG97]{MR1462228}
\textsc{L.~Bertini} and \textsc{G.~Giacomin}.
\newblock Stochastic {B}urgers and {KPZ} equations from particle systems.
\newblock \emph{Comm. Math. Phys.} \textbf{183}, no.~3, (1997), 571--607.

\bibitem[BG22]{BellingeriGilliers22}
\textsc{C.~Bellingeri} and \textsc{N.~Gilliers}.
\newblock On the signature of a path in an operator algebra.
\newblock \emph{SIGMA. Symmetry, Integrability and Geometry: Methods and
  Applications} \textbf{18}, (2022), 096.

\bibitem[BHZ19]{BHZ19}
\textsc{Y.~Bruned}, \textsc{M.~Hairer}, and \textsc{L.~Zambotti}.
\newblock Algebraic renormalisation of regularity structures.
\newblock \emph{Invent. Math.} \textbf{215}, no.~3, (2019), 1039--1156.
\newblock
  \myhref{doi:10.1007/s00222-018-0841-x}{https://doi.org/10.1007/s00222-018-0841-x}.

\bibitem[BK25]{BK25}
\textsc{C.~Bellingeri} and \textsc{H.~Kern}.
\newblock A stochastic reconstruction theorem on rectangular increments with an
  application to a mixed hyperbolic spde.
\newblock \emph{Stochastics and Partial Differential Equations: Analysis and
  Computations} (2025).
\newblock
  \myhref{doi:10.1007/s40072-025-00373-0}{https://doi.org/10.1007/s40072-025-00373-0}.

\bibitem[BKS97]{BKS97}
\textsc{M.~Bo{\.z}ejko}, \textsc{B.~K\"{u}mmerer}, and \textsc{R.~Speicher}.
\newblock {$q$}-{G}aussian processes: Non-commutative and classical aspects.
\newblock \emph{Comm. Math. Phys.} \textbf{185}, no.~1, (1997), 129--154.
\newblock
  \myhref{doi:10.1007/s002200050084}{https://doi.org/10.1007/s002200050084}.

\bibitem[Bo{\.z}99]{Boz99}
\textsc{M.~Bo{\.z}ejko}.
\newblock Ultracontractivity and strong {S}obolev inequality for
  {$q$}-{O}rnstein-{U}hlenbeck semigroup {$(-1<q<1)$}.
\newblock \emph{Infin. Dimens. Anal. Quantum Probab. Relat. Top.} \textbf{2},
  no.~2, (1999), 203--220.
\newblock
  \myhref{doi:10.1142/S0219025799000114}{https://doi.org/10.1142/S0219025799000114}.

\bibitem[BR87]{BR87}
\textsc{O.~Bratteli} and \textsc{D.~W. Robinson}.
\newblock \emph{Operator Algebras and Quantum Statistical Mechanics. 1}.
\newblock Texts and Monographs in Physics. Springer-Verlag, New York, second
  ed., 1987,  xiv+505.
\newblock
  \myhref{doi:10.1007/978-3-662-02520-8}{https://doi.org/10.1007/978-3-662-02520-8}.

\bibitem[Bru18]{Br18}
\textsc{Y.~Bruned}.
\newblock Recursive formulae in regularity structures.
\newblock \emph{Stoch. Partial Differ. Equ. Anal. Comput.} \textbf{6}, no.~4,
  (2018), 525--564.
\newblock
  \myhref{doi:10.1007/s40072-018-0115-z}{https://doi.org/10.1007/s40072-018-0115-z}.

\bibitem[BS91]{BS91}
\textsc{M.~Bo{\.z}ejko} and \textsc{R.~Speicher}.
\newblock An example of a generalized {B}rownian motion.
\newblock \emph{Comm. Math. Phys.} \textbf{137}, no.~3, (1991), 519--531.
\newblock \urlprefix\url{http://projecteuclid.org/euclid.cmp/1104202738}.

\bibitem[BS94]{BS94}
\textsc{M.~Bo{\.z}ejko} and \textsc{R.~Speicher}.
\newblock Completely positive maps on coxeter groups, deformed commutation
  relations, and operator spaces.
\newblock \emph{Mathematische Annalen} \textbf{300}, no.~1, (1994), 97--120.
\newblock \myhref{doi:10.1007/BF01450478}{https://doi.org/10.1007/BF01450478}.

\bibitem[BS98]{BS98}
\textsc{P.~Biane} and \textsc{R.~Speicher}.
\newblock Stochastic calculus with respect to free {B}rownian motion and
  analysis on {W}igner space.
\newblock \emph{Probab. Theory Related Fields} \textbf{112}, no.~3, (1998),
  373--409.
\newblock
  \myhref{doi:10.1007/s004400050194}{https://doi.org/10.1007/s004400050194}.

\bibitem[BS01]{BS01}
\textsc{P.~Biane} and \textsc{R.~Speicher}.
\newblock Free diffusions, free entropy and free {F}isher information.
\newblock \emph{Ann. Inst. H. Poincar\'e{} Probab. Statist.} \textbf{37},
  no.~5, (2001), 581--606.
\newblock
  \myhref{doi:10.1016/S0246-0203(00)01074-8}{https://doi.org/10.1016/S0246-0203(00)01074-8}.

\bibitem[BSS25]{BSS25}
\textsc{L.~Broux}, \textsc{H.~Singh}, and \textsc{R.~Steele}.
\newblock Renormalised models for variable coefficient singular spdes.
\newblock \emph{arXiv preprint} (2025).
\newblock \myhref{arXiv:2507.06851}{https://arxiv.org/abs/2507.06851}.

\bibitem[BSW82]{BSW1982}
\textsc{C.~Barnett}, \textsc{R.~Streater}, and \textsc{I.~Wilde}.
\newblock The {I}tô-{C}lifford integral.
\newblock \emph{Journal of Functional Analysis} \textbf{48}, no.~2, (1982),
  172--212.
\newblock
  \myhref{doi:https://doi.org/10.1016/0022-1236(82)90066-0}{https://doi.org/https://doi.org/10.1016/0022-1236(82)90066-0}.

\bibitem[CCHS22]{CCHS2d}
\textsc{A.~Chandra}, \textsc{I.~Chevyrev}, \textsc{M.~Hairer}, and
  \textsc{H.~Shen}.
\newblock Langevin dynamic for the 2{D} {Y}ang-{M}ills measure.
\newblock \emph{Publ. Math. Inst. Hautes \'Etudes Sci.} \textbf{136}, (2022),
  1--147.
\newblock
  \myhref{doi:10.1007/s10240-022-00132-0}{https://doi.org/10.1007/s10240-022-00132-0}.

\bibitem[CDM01]{CapitaineDonatiMartin2001}
\textsc{M.~Capitaine} and \textsc{C.~Donati-Martin}.
\newblock The {L}évy area process for the free brownian motion.
\newblock \emph{Journal of Functional Analysis} \textbf{179}, no.~1, (2001),
  153--169.
\newblock
  \myhref{doi:https://doi.org/10.1006/jfan.2000.3679}{https://doi.org/https://doi.org/10.1006/jfan.2000.3679}.

\bibitem[CH77]{CockroftHudson77}
\textsc{A.~Cockroft} and \textsc{R.~Hudson}.
\newblock Quantum mechanical {W}iener processes.
\newblock \emph{Journal of Multivariate Analysis} \textbf{7}, no.~1, (1977),
  107--124.
\newblock
  \myhref{doi:https://doi.org/10.1016/0047-259X(77)90035-5}{https://doi.org/https://doi.org/10.1016/0047-259X(77)90035-5}.

\bibitem[CH16]{CH16}
\textsc{A.~{Chandra}} and \textsc{M.~{Hairer}}.
\newblock {An Analytic {BPHZ} Theorem for Regularity Structures}.
\newblock \emph{ArXiv e-prints} (2016).
\newblock \myhref{arXiv:1612.08138}{https://arxiv.org/abs/1612.08138}.

\bibitem[CHP25]{CHP23}
\textsc{A.~Chandra}, \textsc{M.~Hairer}, and \textsc{M.~Peev}.
\newblock A dynamical {Y}ukawa$_2$ model.
\newblock \emph{Comm. Math. Phys.} \textbf{406}, no.~1, (2025), Paper No. 2,
  65.
\newblock
  \myhref{doi:10.1007/s00220-024-05147-8}{https://doi.org/10.1007/s00220-024-05147-8}.

\bibitem[CZ20]{CZ20}
\textsc{F.~Caravenna} and \textsc{L.~Zambotti}.
\newblock Hairer's reconstruction theorem without regularity structures.
\newblock \emph{EMS Surv. Math. Sci.} \textbf{7}, no.~2, (2020), 207--251.
\newblock \myhref{doi:10.4171/emss/39}{https://doi.org/10.4171/emss/39}.

\bibitem[Dab14]{Dabrowski}
\textsc{Y.~Dabrowski}.
\newblock {A Free Stochastic Partial Differential Equation}.
\newblock \emph{Annales de l'Institut Henri Poincaré, Probabilités et
  Statistiques} \textbf{50}, no.~4, (2014), 1404 -- 1455.
\newblock \myhref{doi:10.1214/13-AIHP548}{https://doi.org/10.1214/13-AIHP548}.

\bibitem[DFGG23]{Gub23}
\textsc{F.~C. {De Vecchi}}, \textsc{L.~Fresta}, \textsc{M.~Gordina}, and
  \textsc{M.~Gubinelli}.
\newblock Non-commutative $l^{p}$ spaces and {G}rassmann stochastic analysis.
\newblock \emph{arXiv preprint} (2023).
\newblock \myhref{arXiv:2305.08497}{https://arxiv.org/abs/2305.08497}.

\bibitem[DM03]{Donati-Martin2003}
\textsc{C.~Donati-Martin}.
\newblock Stochastic integration with respect to $q$ {B}rownian motion.
\newblock \emph{Probability Theory and Related Fields} \textbf{125}, no.~1,
  (2003), 77--95.

\bibitem[DS18]{DS18}
\textsc{A.~Deya} and \textsc{R.~Schott}.
\newblock On stochastic calculus with respect to {$q$}-{B}rownian motion.
\newblock \emph{J. Funct. Anal.} \textbf{274}, no.~4, (2018), 1047--1075.
\newblock
  \myhref{doi:10.1016/j.jfa.2017.08.019}{https://doi.org/10.1016/j.jfa.2017.08.019}.

\bibitem[DS19]{DS19}
\textsc{A.~Deya} and \textsc{R.~Schott}.
\newblock {Integration with respect to the Non-Commutative Fractional
  {B}rownian Motion}.
\newblock \emph{Bernoulli} \textbf{25}, no.~3, (2019), 2137 -- 2162.
\newblock \myhref{doi:10.3150/18-BEJ1048}{https://doi.org/10.3150/18-BEJ1048}.

\bibitem[EP03]{EffrosPopa03}
\textsc{E.~G. Effros} and \textsc{M.~Popa}.
\newblock {Feynman Diagrams and {W}ick Products Associated with $q$-{F}ock
  Space}.
\newblock \emph{Proceedings of the National Academy of Science} \textbf{100},
  no.~15, (2003), 8629--8633.
\newblock \myhref{arXiv:math/0303045}{https://arxiv.org/abs/math/0303045}.
\newblock
  \myhref{doi:10.1073/pnas.1531460100}{https://doi.org/10.1073/pnas.1531460100}.

\bibitem[FB70]{FB70}
\textsc{U.~{Frisch}} and \textsc{R.~{Bourret}}.
\newblock {Parastochastics}.
\newblock \emph{Journal of Mathematical Physics} \textbf{11}, no.~2, (1970),
  364--390.
\newblock \myhref{doi:10.1063/1.1665149}{https://doi.org/10.1063/1.1665149}.

\bibitem[GG95]{DG95}
\textsc{R.~Gopakumar} and \textsc{D.~J. Gross}.
\newblock Mastering the master field.
\newblock \emph{Nuclear Physics B} \textbf{451}, no.~1, (1995), 379--415.
\newblock
  \myhref{doi:https://doi.org/10.1016/0550-3213(95)00340-X}{https://doi.org/https://doi.org/10.1016/0550-3213(95)00340-X}.

\bibitem[GH21]{Gub21}
\textsc{M.~Gubinelli} and \textsc{M.~Hofmanov{\'a}}.
\newblock A pde construction of the euclidean $\phi ^4_3$ quantum field theory.
\newblock \emph{Communications in Mathematical Physics} \textbf{384}, no.~1,
  (2021), 1--75.
\newblock
  \myhref{doi:10.1007/s00220-021-04022-0}{https://doi.org/10.1007/s00220-021-04022-0}.

\bibitem[GIP15]{GIP15}
\textsc{M.~Gubinelli}, \textsc{P.~Imkeller}, and \textsc{N.~Perkowski}.
\newblock Paracontrolled distributions and singular {PDE}s.
\newblock \emph{Forum Math. Pi} \textbf{3}, (2015), e6, 75.
\newblock \myhref{arXiv:1210.2684v3}{https://arxiv.org/abs/1210.2684v3}.
\newblock \myhref{doi:10.1017/fmp.2015.2}{https://doi.org/10.1017/fmp.2015.2}.

\bibitem[GJ71]{GJ71}
\textsc{J.~Glimm} and \textsc{A.~Jaffe}.
\newblock {The {Y}ukawa$_2$ Quantum Field Theory without Cutoffs}.
\newblock \emph{J. Funct. Anal.} \textbf{7}, no.~2, (1971), 323--357.
\newblock
  \myhref{doi:10.1016/0022-1236(71)90039-5}{https://doi.org/10.1016/0022-1236(71)90039-5}.

\bibitem[Gro55]{Gro55}
\textsc{A.~Grothendieck}.
\newblock Produits tensoriels topologiques et espaces nucl\'eaires.
\newblock \emph{Mem. Amer. Math. Soc.} \textbf{16}, (1955), Chapter 1: 196 pp.;
  Chapter 2: 140.

\bibitem[GS09]{GS09}
\textsc{A.~Guionnet} and \textsc{D.~Shlyakhtenko}.
\newblock Free diffusions and matrix models with strictly convex interaction.
\newblock \emph{Geometric and Functional Analysis} \textbf{18}, no.~6, (2009),
  1875--1916.
\newblock
  \myhref{doi:10.1007/s00039-009-0704-0}{https://doi.org/10.1007/s00039-009-0704-0}.

\bibitem[Hai14]{Hai14}
\textsc{M.~Hairer}.
\newblock A theory of regularity structures.
\newblock \emph{Inventiones mathematicae} \textbf{198}, no.~2, (2014),
  269--504.
\newblock
  \myhref{doi:10.1007/s00222-014-0505-4}{https://doi.org/10.1007/s00222-014-0505-4}.

\bibitem[HP84]{HudsonParthasarathy84}
\textsc{R.~L. Hudson} and \textsc{K.~R. Parthasarathy}.
\newblock {Quantum Ito's Formula and Stochastic Evolutions}.
\newblock \emph{Communications in Mathematical Physics} \textbf{93}, no.~3,
  (1984), 301 -- 323.

\bibitem[HS22]{HS21}
\textsc{M.~Hairer} and \textsc{R.~Steele}.
\newblock The {$\Phi_3^4$} measure has sub-{G}aussian tails.
\newblock \emph{J. Stat. Phys.} \textbf{186}, no.~3, (2022), Paper No. 38, 25.
\newblock \myhref{arXiv:2102.11685}{https://arxiv.org/abs/2102.11685}.
\newblock
  \myhref{doi:10.1007/s10955-021-02866-3}{https://doi.org/10.1007/s10955-021-02866-3}.

\bibitem[HS24]{HS24}
\textsc{M.~Hairer} and \textsc{R.~Steele}.
\newblock The {BPHZ} theorem for regularity structures via the spectral gap
  inequality.
\newblock \emph{Arch. Ration. Mech. Anal.} \textbf{248}, no.~1, (2024), Paper
  No. 9, 81.
\newblock
  \myhref{doi:10.1007/s00205-023-01946-w}{https://doi.org/10.1007/s00205-023-01946-w}.

\bibitem[Hum90]{Hum90}
\textsc{J.~E. Humphreys}.
\newblock \emph{Reflection Groups and {C}oxeter Groups}, vol.~29 of
  \emph{Cambridge Studies in Advanced Mathematics}.
\newblock Cambridge University Press, Cambridge, 1990,  xii+204.
\newblock
  \myhref{doi:10.1017/CBO9780511623646}{https://doi.org/10.1017/CBO9780511623646}.

\bibitem[HvNVW16]{Hyt16}
\textsc{T.~Hyt{\"o}nen}, \textsc{J.~van Neerven}, \textsc{M.~Veraar}, and
  \textsc{L.~Weis}.
\newblock \emph{Analysis in Banach Spaces Volume I: Martingales and
  Littlewood-Paley Theory}.
\newblock Ergebnisse der Mathematik und ihrer Grenzgebiete. 3. Folge / A Series
  of Modern Surveys in Mathematics, 63. Springer International Publishing, 1st
  ed. 2016. ed., 2016.

\bibitem[Jar81]{Jar81}
\textsc{H.~Jarchow}.
\newblock \emph{Locally Convex Spaces}.
\newblock Mathematische Leitf\"aden. [Mathematical Textbooks]. B. G. Teubner,
  Stuttgart, 1981,  548.

\bibitem[JKN23]{JKN2023}
\textsc{D.~A. Jekel}, \textsc{T.~A. Kemp}, and \textsc{E.~A. Nikitopoulos}.
\newblock A martingale approach to noncommutative stochastic calculus.
\newblock \emph{arXiv preprint} (2023).
\newblock \myhref{arXiv:2308.09856}{https://arxiv.org/abs/2308.09856}.

\bibitem[Kar11]{Kar11}
\textsc{V.~Kargin}.
\newblock On free stochastic differential equations.
\newblock \emph{J. Theoret. Probab.} \textbf{24}, no.~3, (2011), 821--848.
\newblock
  \myhref{doi:10.1007/s10959-011-0341-z}{https://doi.org/10.1007/s10959-011-0341-z}.

\bibitem[Kem05]{Kemp05}
\textsc{T.~Kemp}.
\newblock {Hypercontractivity in Non-Commutative Holomorphic Spaces}.
\newblock \emph{Communications in Mathematical Physics} \textbf{259}, no.~3,
  (2005), 615--637.
\newblock \myhref{arXiv:math/0410534}{https://arxiv.org/abs/math/0410534}.
\newblock
  \myhref{doi:10.1007/s00220-005-1379-5}{https://doi.org/10.1007/s00220-005-1379-5}.

\bibitem[Ker24]{Kern24}
\textsc{H.~L. Kern}.
\newblock A stochastic reconstruction theorem.
\newblock \emph{Ann. Inst. Henri Poincar\'e{} Probab. Stat.} \textbf{60},
  no.~4, (2024), 2468--2507.
\newblock
  \myhref{doi:10.1214/23-aihp1407}{https://doi.org/10.1214/23-aihp1407}.

\bibitem[KS92]{KS1992}
\textsc{B.~K\"ummerer} and \textsc{R.~Speicher}.
\newblock Stochastic integration on the {C}untz algebra {$O_\infty$}.
\newblock \emph{J. Funct. Anal.} \textbf{103}, no.~2, (1992), 372--408.
\newblock
  \myhref{doi:10.1016/0022-1236(92)90126-4}{https://doi.org/10.1016/0022-1236(92)90126-4}.

\bibitem[Lyo98]{Lyons}
\textsc{T.~J. Lyons}.
\newblock Differential equations driven by rough signals.
\newblock \emph{Rev. Mat. Iberoamericana} \textbf{14}, no.~2, (1998), 215--310.
\newblock \myhref{doi:10.4171/RMI/240}{https://doi.org/10.4171/RMI/240}.

\bibitem[Mey95]{MeyerQuantum}
\textsc{P.-A. Meyer}.
\newblock \emph{Quantum probability for probabilists}, vol. 1538 of
  \emph{Lecture notes in mathematics}.
\newblock Springer-Verlag, Berlin, 2nd ed. ed., 1995.

\bibitem[MW17]{MourratWeber}
\textsc{J.-C. Mourrat} and \textsc{H.~Weber}.
\newblock Convergence of the two-dimensional dynamic {I}sing-{K}ac model to
  {$\Phi^4_2$}.
\newblock \emph{Comm. Pure Appl. Math.} \textbf{70}, no.~4, (2017), 717--812.
\newblock \myhref{arXiv:1410.1179}{https://arxiv.org/abs/1410.1179}.
\newblock \myhref{doi:10.1002/cpa.21655}{https://doi.org/10.1002/cpa.21655}.

\bibitem[OS73]{OS73}
\textsc{K.~Osterwalder} and \textsc{R.~Schrader}.
\newblock {E}uclidean {F}ermi fields and a {F}eynman--{K}ac formula for
  {B}oson--{F}ermion models.
\newblock \emph{Helvetica Physica Acta} \textbf{46}, no.~3, (1973), 277.
\newblock
  \myhref{doi:10.5169/seals-114484}{https://doi.org/10.5169/seals-114484}.

\bibitem[Par12]{parthasarathy2012introduction}
\textsc{K.~R. Parthasarathy}.
\newblock \emph{An Introduction to Quantum Stochastic Calculus}.
\newblock Springer Science \& Business Media, 2012.

\bibitem[PW81]{ParisiWu}
\textsc{G.~Parisi} and \textsc{Y.~S. Wu}.
\newblock Perturbation theory without gauge fixing.
\newblock \emph{Sci. Sinica} \textbf{24}, no.~4, (1981), 483--496.
\newblock
  \myhref{doi:10.1360/ya1981-24-4-483}{https://doi.org/10.1360/ya1981-24-4-483}.

\bibitem[Tak02]{Tak02}
\textsc{M.~Takesaki}.
\newblock \emph{Theory of Operator Algebras. {I}}, vol. 124 of
  \emph{Encyclopaedia of Mathematical Sciences}.
\newblock Springer-Verlag, Berlin, 2002,  xx+415.
\newblock Reprint of the first (1979) edition, Operator Algebras and
  Non-commutative Geometry, 5.

\bibitem[Tak03]{Tak04}
\textsc{M.~Takesaki}.
\newblock \emph{Theory of Operator Algebras. {III}}, vol. 127 of
  \emph{Encyclopaedia of Mathematical Sciences}.
\newblock Springer-Verlag, Berlin, 2003,  xxii+548.
\newblock Operator Algebras and Non-commutative Geometry, 8.
\newblock
  \myhref{doi:10.1007/978-3-662-10453-8}{https://doi.org/10.1007/978-3-662-10453-8}.

\bibitem[Tr{\`e}67]{Trev67}
\textsc{F.~Tr{\`e}ves}.
\newblock \emph{Topological Vector Spaces, Distributions and Kernels}.
\newblock Academic Press, New York-London, 1967,  xvi+624.

\end{thebibliography}

\end{document}